\newcommand\J{\mathscr J}
\def\D{\mathcal{D}}
\def\be{\begin{equation}}
\def\ee{\end{equation}}
\def\bea{\begin{eqnarray}}
\def\eea{\end{eqnarray}}
\def\beas{\begin{eqnarray*}}
\def\eeas{\end{eqnarray*}}
\def\g{\partial}
\def\l{\lambda}
\def\pa{\partial }
\def\r{ r\partial_r}
\def\rr{(r\partial_r)}
\def\et{(\frac{r^n}{\tau})}
\def\l{\lambda}
\def\pet{p_{\l,-\frac2n}\et}
\def\lv{\left\vert}
\def\rv{\right\vert}
\def\bcr{\begin{color}{red}}
\def\ec{\end{color}}
\def\phia{\phi_{\text{app}}}
\def\bcv{\begin{color}{violet}}
\def\bcg{\begin{color}{green}}
\def\chid{\chi_{\text{dust}}}
\def\ve{\varepsilon}
\def\DL{\Lambda}
\newtheorem{theorem}{Theorem}[section]
\newtheorem{definition}[theorem]{Definition}
\newtheorem{proposition}[theorem]{Proposition}
\newtheorem{corollary}[theorem]{Corollary}
\newtheorem{lemma}[theorem]{Lemma}
\newtheorem{remark}[theorem]{Remark}
\renewcommand{\theequation}{\arabic{section}.\arabic{equation}}
\title{Continued Gravitational Collapse for Newtonian Stars}
\author{Yan Guo\thanks{Division of Applied Mathematics, Brown University, Providence, RI 02912, USA, Email: Yan\_Guo@brown.edu.}, \ Mahir Had\v zi\'c\thanks{Department of Mathematics, King's College London, Strand, London WC2S 2LR, UK. Email: mahir.hadzic@kcl.ac.uk.}, \ and Juhi Jang\thanks{Department of Mathematics, University of Southern California, Los Angeles, CA 90089, USA, and Korea Institute for Advanced Study, Seoul, Korea.  Email: juhijang@usc.edu.}}
\date{}
\begin{document}

\maketitle

\abstract{
The classical model of an isolated selfrgavitating  gaseous
star is given by the Euler-Poisson system with a polytropic pressure law 
$P(\rho)=\rho^\gamma$, $\gamma>1$. For any $1<\gamma<\frac43$,
we construct an infinite-dimensional family of collapsing solutions to the Euler-Poisson system whose density
is in general space inhomogeneous and
undergoes gravitational blowup along a prescribed space-time surface, with continuous mass
absorption at the origin. The leading order singular behavior is described by an explicit collapsing solution
of the pressureless Euler-Poisson system.
}

\tableofcontents

\section{Introduction}

\setcounter{equation}{0}

The basic model of a Newtonian star is given by the 3-dimensional compressible Euler-Poisson system~\cite{ZeNo,BiTr,Ch},
\begin{subequations}
\label{E:EULERPOISSON}
\begin{alignat}{2}
\g_t\rho + \text{div}\, (\rho \mathbf{u})& = 0, && \,\label{E:CONTINUITYEP}\\
\rho\left(\pa_t  \mathbf{u}+ ( \mathbf{u}\cdot\nabla) \mathbf{u}\right) +\nabla P(\rho) +\rho \nabla\Phi&= 0, && \, \label{E:VELOCITYEP} \\
\Delta \Phi  = 4\pi \,\rho, \ \lim_{|x|\to\infty}\Phi(t,x) & = 0.&& \label{E:POISSONEULER}
%p&=0&& \ \text{ on } \ \partial B(t)\,; \label{E:VACUUME} \\
%\mathcal{V} (\partial B(t))&= \mathbf{u}\cdot {\bf n}(t)  && \ \text{ on } \ \partial B(t)\,;\label{E:VELOCITYBDRYE}\\
%(\rho(0,\cdot),  \mathbf{u}(0,\cdot))=(\rho_0,  \mathbf{u}_0)\,, & \ B(0)=B_0&&\,.\label{E:INITIALE}
\end{alignat}
\end{subequations}
Here $\rho,{\bf u}, P(\rho), \Phi$ denote the gas density, the gas velocity vector, the gas pressure, and the gravitational potential respectively. 
To close the system we impose the so-called polytropic equation of state: 
\begin{align}\label{E:EOS}
P(\rho) = \rho^\gamma, \ \ \gamma>1.
\end{align} 
The power $\gamma$ is called the adiabatic exponent.

Here star is modelled as a compactly supported compressible gas surrounded by vacuum, which interacts with a self-induced gravitational field. 
To describe the motion of the boundary of the star we must consider the corresponding free-boundary formulation of~\eqref{E:EULERPOISSON}. In this case,
a further unknown in the problem is the support of $\rho(t,\cdot)$ denoted by $\Omega(t)$. We prescribe the natural boundary conditions
\begin{subequations}
\label{E:EULERPOISSONBDRY}
\begin{alignat}{2}
%\pa_s\rho + \text{div}(\rho {\bf u})& = 0, && \ \text{ in } \ \Omega(s),\,\label{E:CONTINUITYEPEPS}\\
%\rho\left(\pa_s  {\bf u}+ ( {\bf u}\cdot\nabla) {\bf u}\right) +\ve \nabla (\rho^\gamma)  +\rho \nabla\Phi&= 0, && \ \text{ in } \ \Omega(s), \, \label{E:VELOCITYEPEPS} \\
%\Delta \Phi  = 4\pi \,\rho, \ \lim_{|x|\to\infty}\Phi(s,x) & = 0,&& \ \text{ in } \ \mathbb R^3, \label{E:POISSONEULEREPS} \\
\rho&=0,&& \ \text{ on } \ \partial \Omega(t),\label{E:VACUUMEP} \\
\mathcal{V}(\partial\Omega(t))&= {\bf u}\cdot {\bf n}  && \ \text{ on } \ \partial\Omega(t),\label{E:VELOCITYBDRYEP}
\end{alignat}
\end{subequations} 
and the initial conditions
\be
(\rho(0,\cdot),  {\bf u}(0,\cdot))=(\rho_0, {\bf u}_0)\,,  \ \Omega(0)=\Omega.\label{E:INITIALEP}
\ee
Here $\mathcal{V}(\partial\Omega(t))$ is the normal velocity of the moving boundary $\pa\Omega(t)$ and condition~\eqref{E:VELOCITYBDRYEP} 
simply states that the movement of the boundary in normal direction is determined by the normal component of the velocity vector field.
We refer to the system~\eqref{E:EULERPOISSON}--\eqref{E:EULERPOISSONBDRY} as the EP$_\gamma$-system. %Here star is modelled as a compactly supported compressible gas surrounded by vacuum, which interacts with a self-induced gravitational field.
We point the reader to the classical text~\cite{Ch} where the existence of static solutions of EP$_\gamma$ is studied under the natural boundary condition~\eqref{E:VACUUMEP}.

We next impose the {\em physical vacuum condition} on the initial data:
\begin{align}\label{E:PHYSICALVACUUM}
-\infty<\nabla\left(\frac{dP}{d\rho}(\rho)\right)\cdot {\bf n}\big\vert_{\pa\Omega} <0.
\end{align}
Condition~\eqref{E:PHYSICALVACUUM} implies that the normal derivative of the squared speed of sound $c_s^2(\rho)=\frac{dP}{d\rho}(\rho)$ is discontinuous at the vacuum boundary. This condition is famously satisfied by the 
well-known class of steady states of the EP$_\gamma$-system known as the Lane-Emden stars. At the same time, condition~\eqref{E:PHYSICALVACUUM} is the key 
assumption that guarantees the well-posedness of the Euler-Poisson system with vacuum regions.

For any $\bar\ve>0$ consider the mass preserving rescaling applied to the EP$_\gamma$-system:
%We consider a rescaling of the EP$_\gamma$ system given by 
\begin{align}
\rho = \bar\ve^{-3} \tilde\rho(s,y), \ \  
u  = \bar\ve^{-1/2}\tilde u(s,y), \ \ 
\Phi  = \bar\ve^{-1} \tilde \Phi(s,y), \label{E:SCALING}
\end{align}
where 
\[
s=\bar\ve^{-3/2}t, \ \ y = \bar\ve^{-1}x.
\]

It is easy to see that the above rescaling is mass-critical, i.e.
$
M[\rho] = M[\tilde\rho].
$
A simple calculation reveals that if $(\rho, {\bf u}, \Phi)$ solve the EP$_\gamma$-system, then the rescaled quantities $(\tilde\rho, \tilde{\bf u}, \tilde\Phi)$ solve 
\begin{subequations}
\label{E:EULERPOISSON2}
\begin{alignat}{2}
\g_s\tilde\rho + \text{div}\, (\tilde\rho \tilde u)& = 0, && \label{E:CONTINUITYEP2}\\
\tilde\rho\left(\pa_s  \tilde u+ ( \tilde u\cdot\nabla) \tilde u\right) +\ve\nabla (\tilde\rho^\gamma) 
+\tilde\rho \nabla\tilde\Phi&= 0, &&\label{E:VELOCITYEP2} \\
\Delta \tilde\Phi  = 4\pi \,\tilde\rho, \ \lim_{|x|\to\infty}\tilde\Phi(t,x) & = 0,&& \label{E:POISSONEULER2}
\end{alignat}
\end{subequations}
where
\[
\ve: = \bar\ve^{4-3\gamma}.
\]

Observe that for $\bar\ve \ll1$ the factor $\ve$ in front of the pressure in~\eqref{E:VELOCITYEP2} is small precisely in the supercritical range $1<\gamma<\frac43$. The system obtained by dropping the $\ve$-term in~\eqref{E:VELOCITYEP2} is known as the pressureless- or dust-Euler system.
This gives a vague heuristics that, if one for a moment thinks of $\ve$ as a sufficiently small length scale of density concentration, the effects of the pressure term may become negligible and the leading order singular behavior will be driven by the pressure-less dynamics. On the other hand, at this stage, this scaling heuristics is at best doubtful, as the pressure term enters the equation at the top order from the point of view of the derivative count. 

Parameter $\ve$ serves the purpose of a ``small" parameter in our analysis. 
Defining $\tilde\Omega(s) = \bar\ve^{-1}\Omega(t)= \ve^{-\frac{1}{4-3\gamma}} \Omega(t)$, a homothetic image of $\Omega(t)$, 
boundary conditions~\eqref{E:EULERPOISSONBDRY} take the form
\begin{subequations}
\label{E:EULERPOISSONEPS}
\begin{alignat}{2}
\tilde\rho&=0,&& \ \text{ on } \ \partial \tilde\Omega(s),\label{E:VACUUMEPS} \\
\mathcal{V}(\partial \tilde\Omega(s))&= \tilde{\bf u}\cdot \tilde{\bf n}  && \ \text{ on } \ \partial \tilde\Omega(s),\label{E:VELOCITYBDRYEPS}
\end{alignat}
\end{subequations} 
and the initial conditions read
\be
(\tilde\rho(0,\cdot),  \tilde{\bf u}(0,\cdot))=(\tilde\rho_0,  \tilde{\bf u}_0)\,,  \ \tilde\Omega(0)=\tilde\Omega.\label{E:INITIALEPS}
\ee

\subsection{Lagrangian coordinates}

To address the problem of collapse we  express~\eqref{E:EULERPOISSONEPS} in the Lagrangian coordinates.
Firstly, if we wish to follow the collapse
process in its entirety until all of the stellar mass is absorbed, it is clear that the Eulerian description becomes inadequate at and after the first collapse time. In order to describe particle trajectories after the first collapse time, it is advantageous to work in a coordinate system that avoids this issue.
Secondly, the free boundary is automatically fixed in Lagrangian description and thus more amenable to rigorous analysis.

For the remainder of the paper we make the assumption of radial symmetry and 
assume that the reference domain $\tilde\Omega$ is the unit ball $\{y\in\mathbb R^3\,\big| \ |y|\le1\}$. 
Let the flow map $\eta:\tilde\Omega\to\tilde\Omega(s)$ be a solution of: 
\begin{align}
\pa_s\eta(s,y) &= \tilde {\bf u}(s,\eta(s,y)), \label{E:FLOWMAP} \\
\eta(0,y) &= \eta_0(y). \label{E:ETAINITIAL}
\end{align}
Here the boundary $\pa\tilde\Omega$ is mapped to the moving boundary $\pa\tilde\Omega(s)$.
The choice of $\eta_0$ corresponds to the initial particle labelling and represents a gauge freedom in the problem. Equation~\eqref{E:FLOWMAP} automatically incorporates the dynamic boundary condition~\eqref{E:VELOCITYBDRYEPS} when we pull-back the problem on $\tilde\Omega$

Since the flow is spherically symmetric, $\eta$ is parallel to the vectorfield $y$. We introduce the ansatz
\be\label{E:ETARADIAL}
\eta(s,y) = \chi(s,r) y, \ \ r=|y|, \ \ r\in [0,1],
\ee
and denote $\chi(0,r)$ by $\chi_0(r)$.
The Jacobian determinant of $D\eta$ expressed in terms of $\chi$ takes the form
\be
\mathscr J[\chi] := \chi^2(\chi+r\pa_r \chi). \label{E:JFORMULA}
\ee

Since $\pa_s \J = \J (\text{div}\tilde{\bf u})\circ \eta$, as
a consequence of the continuity equation the Eulerian density $\tilde\rho$ evaluated along the particle world-lines satisfies 
\be\label{E:CONTLAGR}
\frac{d}{ds} \big( \tilde \rho(s,\chi(s,r)y) \J[\chi](s,r)\big) =0.
\ee

Let
\be\label{E:ALPHADEF}
\alpha := \frac1{\gamma-1}.
\ee
The fluid enthalpy is a function $r\mapsto w(r)$ defined through the relationship
\begin{align}
w(r)^\alpha & = \tilde\rho_0(\chi_0(r) r) \J[\chi_0](r), \label{wrho_0}
\end{align}
and it is a fundamental object in our work. Instead of specifying $\tilde\rho_0$ and $\chi_0$, 
throughout the paper we fix the choice of the fluid enthalpy $w$ satisfying properties 
(w1)--(w3) below.

\begin{enumerate}
\item[(w1)]
We assume that $w:\tilde\Omega\to \mathbb R_+$ is a non-negative radial function such that $[0,1)\ni r\mapsto w(r)^\alpha$ is $C^\infty$, $w>0$ on $[0,1)$ and $w(1)=0$. 
\end{enumerate}

Assuming further that $\chi_0(r)$ is uniformly bounded from below and $C^2$, from $\tilde\rho(\chi_0(1))=0$ and the physical vacuum condition~\eqref{E:PHYSICALVACUUM},  we conclude $\nabla w\cdot \tilde {\bf n}<0$ at the boundary $\pa\tilde\Omega$ of the reference domain. 
\begin{enumerate}
\item[(w2)]
This leads us to the second basic assumption on $w$ 
\[
\nabla w\cdot\tilde {\bf n}\Big|_{\pa\tilde\Omega} = w'(1)<0.
\]

\item[(w3)]
Finally we denote the {\em mean density} of the gas by
\be\label{E:GDEF}
G(r): = \frac{1}{r^3}\int_0^r4\pi w^{\alpha} s^2\,ds,
\ee
and let 
\begin{align}\label{E:LITTLEGDEF}
g(r) :=3\sqrt{\frac{G(r)}{2}}, \ \ r\in[0,1].
\end{align}
Clearly $g>0$. Observe that $G(0) = \frac{4\pi}{3}w(0)^\alpha$. We shall require that $g:[0,1]\to\mathbb R$ is a smooth function such that 
there exist positive constants $c_1,c_2>0$ and $n\in\mathbb Z_{>0}$ so that 
\begin{align}
c_1 r^n \le -r\pa_r \left(\log g(r)\right) & \le c_2r^n, \ \ r\in[0,1].
\label{E:FM}
\end{align}
\end{enumerate}

The purpose of the following lemma is to show that there exist choices of the enthalpy $w$
consistent with the above assumptions. 

%%%%%%%%%%%%%%%%%%%%%%%%%%%%%%%%%%

\begin{lemma}\label{L:ENTHALPYOK}
For any $n\in\mathbb N$ there exists a choice of the enthalpy $w$ satisfying properties {\em (w1)}--{\em (w3)}. In particular, the resulting map $g$ defined by~\eqref{E:LITTLEGDEF} satisfies~\eqref{E:FM}. 
\end{lemma}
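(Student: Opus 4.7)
The plan is to exhibit an explicit enthalpy $w(r) := 1 - r^n$ on $[0,1]$ and verify each of (w1)--(w3) in turn. Conditions (w1) and (w2) are immediate: the function $r \mapsto w(r)^\alpha = (1 - r^n)^\alpha$ is real analytic on $(-1,1)$ (hence $C^\infty$ on $[0,1)$), strictly positive on $[0,1)$, and vanishes at $r = 1$; meanwhile the polynomial $w$ is $C^\infty$ on $[0,1]$ with $w'(1) = -n < 0$.

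The substantive step is (w3). Differentiating the identity $r^3 G(r) = 4\pi \int_0^r w^\alpha(s) s^2\,ds$ yields $rG'(r) = 4\pi w^\alpha(r) - 3G(r)$, which rearranges to
\[
-r\pa_r \log g(r) \;=\; -\tfrac{1}{2}\, r\pa_r \log G(r) \;=\; \tfrac{1}{2}\!\left(3 - \frac{4\pi w^\alpha(r)}{G(r)}\right) \;=:\; \tfrac{1}{2} F(r).
\]
My plan is to prove that $F(r)/r^n$ extends continuously to a strictly positive function on $[0,1]$; the estimate \eqref{E:FM} then follows from compactness. Positivity of $F$ on $(0,1]$ is a monotonicity argument: since $r \mapsto w^\alpha(r)$ is strictly decreasing on $(0,1)$, for every $r \in (0,1]$ one has $G(r) > \frac{4\pi w^\alpha(r)}{3}$, hence $F(r) > 0$. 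The endpoint value $F(1) = 3$ is immediate from $w^\alpha(1) = 0$. The behavior at $r = 0$ is captured by Taylor expansion: plugging $w^\alpha(r) = 1 - \alpha r^n + O(r^{2n})$ into $G(r) = \frac{4\pi}{r^3}\int_0^r w^\alpha(s) s^2\,ds$ gives $G(r) = \frac{4\pi}{3} - \frac{4\pi\alpha}{n+3} r^n + O(r^{2n})$, and a short calculation then yields $F(r) = \frac{3n\alpha}{n+3} r^n + O(r^{2n})$ as $r \to 0$.

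To finish, $\tilde F(r) := F(r)/r^n$ is a continuous, strictly positive function on $[0,1]$ with $\tilde F(0) = \frac{3n\alpha}{n+3}$ and $\tilde F(1) = 3$, so it attains a positive minimum and a positive maximum; halving these yields the constants $c_1, c_2 > 0$ in \eqref{E:FM}. The smoothness of $g = 3\sqrt{G/2}$ follows from the representation $G(r) = 4\pi \int_0^1 u^2 (1 - r^n u^n)^\alpha\,du$, obtained by the substitution $s = r u$: since $\alpha > 3$ throughout the supercritical range $1 < \gamma < \frac{4}{3}$, differentiation under the integral sign is justified to the order needed for the subsequent analysis, and $G > 0$ on $[0,1]$ allows the square root to preserve regularity. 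The main technical point of the proof is pinpointing the leading $r^n$ behavior of $F$ at the origin; once this is isolated by the Taylor expansion, the rest is a routine compactness argument on $[0,1]$.
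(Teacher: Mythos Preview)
Your proposal is correct and follows essentially the same route as the paper: both take $w(r)=a(1-r^n)$ (you set $a=1$) and extract the leading $r^n$ behavior of $r\pa_r\log G$ near the origin via Taylor expansion of $G$. Your argument is in fact more complete than the paper's sketch---the paper records only the expansion $G(r)=\tfrac{4\pi a^\alpha}{3}-\tfrac{a^\alpha c_{n,\alpha}}{n}r^n+o(r^n)$ and asserts \eqref{E:FM}, whereas you supply the monotonicity argument for strict positivity of $F$ on $(0,1]$, the endpoint value $F(1)=3$, and the compactness step that turns pointwise positivity into the two-sided bound. Your use of the identity $-r\pa_r\log g=\tfrac12\bigl(3-\tfrac{4\pi w^\alpha}{G}\bigr)$ is exactly the relation \eqref{E:IMPORTANTAPRIORI0} that the paper derives immediately after the lemma, so the underlying mechanism is identical.

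One minor caveat: the requirement in (w3) that $g$ be ``smooth'' on the closed interval $[0,1]$ is delicate at $r=1$ for non-integer $\alpha$, since $\int_0^r(1-s^n)^\alpha s^2\,ds$ has a $(1-r)^{\alpha+1}$ contribution there. Your hedge (``to the order needed for the subsequent analysis'') is the right attitude---only finitely many derivatives are ever used downstream---and the paper's own proof does not address this point either.
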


%%%%%%%%%%%%%%%%%%%%%%%%%%%%%%%%%%%

\begin{proof}
Let $w(r) =a (1-r^n)_+$ 
We observe that for any $r\in(0,1]$
$G(r)=\frac {a^\alpha}{r^3} \int_0^r 4\pi (1-s^n)^\alpha s^2\,ds = \frac{4\pi a^\alpha}{3} - \frac{a^\alpha c_{n,\alpha}}{n}r^{n} + o_{r\to0}(r^n)$, with $1\lesssim c_{n,\alpha}\lesssim1$.
Note that 
\[
\r (\log g(r)) = \frac12 \r (\log G(r))  %- a^\alpha c_{n,\alpha} r^n +  o_{r\to0}(r^n),
\]
which implies~\eqref{E:FM}.
\end{proof}

%%%%%%%%%%%%%%%%%%%%%%%%%%%%%%%%%%%

\begin{remark}
It is evident from the proof that one can easily modify the enthalpy $w$ in the regions away from $r=0$ so that~\eqref{E:FM} is still satisfied. In fact, the family of enthalpies $w$ which satisfy the assumptions {\em (w1)--(w3)} is infinite-dimensional.
\end{remark}

%%%%%%%%%%%%%%%%%%%%%%%%%%%%%%%%%%%

As a simple, but important corollary of (w3), specifically~\eqref{E:FM}, we have

\begin{corollary}
Let $g$ be given by~\eqref{E:LITTLEGDEF}. Then the following properties hold:
\begin{enumerate}
\item[(i)] the map $r\mapsto g(r)$ is monotonically decreasing on $[0,1]$;
\item[(ii)] in the vicinity of the origin 
the following Taylor expansion for $g$ holds:
\be\label{E:GTAYLOR}
g(r)= g(0)-\frac cn r^n +o_{r\to0}(r^n)
\ee
for some constant $c>0$;
\item[(iii)]
for any $k\in\mathbb N$ there exists a positive constant $c_k$ such that 
\be\label{E:GTAYLOR2}
\left\vert \rr^k g(r)\right\vert \le c_k r^n.
\ee
\end{enumerate}
\end{corollary}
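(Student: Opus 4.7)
My plan is to deduce all three claims from the pointwise logarithmic-derivative bound \eqref{E:FM} together with the smoothness of $g$ postulated in (w3). The core observation is that \eqref{E:FM} forces the first $n-1$ Taylor coefficients of $g$ at the origin (past the constant term) to vanish and the $n$th one to be strictly negative; from there each of (i)--(iii) falls out with very little extra work.

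Before doing anything else I would establish $g(r) > 0$ on $[0,1]$: by (w1), $w(0) > 0$ and $w \ge 0$, so the integrand defining $G$ in \eqref{E:GDEF} is nonnegative with $G(0) = \tfrac{4\pi}{3} w(0)^\alpha > 0$, whence $g = 3\sqrt{G/2} > 0$ throughout $[0,1]$. Claim (i) then drops out immediately: the lower bound in \eqref{E:FM} yields $-r g'(r)/g(r) \ge c_1 r^n > 0$ on $(0,1]$, so $g'(r) < 0$ there and $g$ is strictly decreasing.

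For (ii) the key step is to Taylor-expand the smooth function $\log g$ (smooth near $0$ since $g(0) > 0$) as
\[
\log g(r) = \log g(0) + \sum_{k=1}^{n} a_k r^k + O(r^{n+1}),
\]
so that $-r \partial_r (\log g(r)) = -\sum_{k=1}^{n} k a_k r^k + O(r^{n+1})$. A matching-of-powers argument against the two-sided bound in \eqref{E:FM} forces $a_1 = \cdots = a_{n-1} = 0$ (otherwise some $r^k$ with $k < n$ would dominate near the origin), and the strict positivity $c_1 > 0$ of the lower bound then forces $-n a_n \ge c_1 > 0$. Exponentiating produces \eqref{E:GTAYLOR} with the positive constant $c := -n g(0) a_n$.

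For (iii), once (ii) is in hand I would apply Hadamard's lemma to obtain a smooth $h: [0,1] \to \mathbb R$ with $g(r) - g(0) = r^n h(r)$; then for any $k \ge 1$, $(r\partial_r)^k g(r) = (r\partial_r)^k [r^n h(r)]$. A short induction based on the identity $(r\partial_r)[r^n f(r)] = r^n (n f(r) + r f'(r))$ shows that this quantity always has the form $r^n h_k(r)$ for some smooth $h_k$, which is bounded by a constant $c_k$ on the compact interval $[0,1]$. The mildly delicate step in the whole argument is the matching-of-powers in (ii); this is completely standard, and I do not expect a genuine obstacle.
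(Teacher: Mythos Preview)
Your proposal is correct. The paper does not supply an explicit proof of this corollary at all---it merely announces it as ``a simple, but important corollary of (w3), specifically~\eqref{E:FM}''---so your argument fills in exactly the details the authors leave to the reader, and it does so along the route they evidently have in mind: each of (i)--(iii) is read off directly from the two-sided logarithmic-derivative bound~\eqref{E:FM} together with the smoothness of $g$.
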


As shown in~\cite{Jang2014}, the momentum equation~\eqref{E:VELOCITYEP2} expressed in the Lagrangian variables $(s,r)$ reduces to a nonlinear second order degenerate hyperbolic equation for $\chi$:
%which leads to the following evolution equation for $\chi$~\cite{Jang2014} : 
\be\label{E:BASICPDE}
 \chi_{ss} +\frac{G(r)}{\chi^2}+ \ve P[\chi] = 0,
\ee
where $r\mapsto G(r)$ is given above in~\eqref{E:GDEF}
and the nonlinear pressure operator $P$ is given by 
\be\label{E:FCHI}
P[\chi] : = \frac{\chi^2}{ w^\alpha r^2}(r\pa_r)\left(w^{1+\alpha}\mathscr J[\chi]^{-\gamma}\right).
\ee
We may explicitly relate the Eulerian density, the fluid enthalpy and the Jacobian determinant; 
as long as $\J[\chi]>0$ by~\eqref{E:CONTLAGR} and~\eqref{wrho_0} we have the fundamental formula
\be\label{E:DENSITYFORMULA}
\tilde \rho(s,\chi(s,r)y) = w^\alpha(r) \J[\chi]^{-1}.
\ee

\begin{remark}
Without being precise about the definition of the gravitational collapse for the moment, our goal is to prove that there exists a choice of initial conditions
\be\label{E:INITIALCONDITIONS}
\chi(0) = \chi_0, \ \ \pa_s\chi(0) = \chi_1
\ee
with a particular choice of the enthalpy $w$ so that $\J[\chi]$ becomes zero in finite time. 
We shall then show that there indeed exists a density $\tilde\rho_0$ satisfying the physical vacuum condition 
\[
\nabla(\tilde\rho_0^{\gamma-1})\cdot \tilde{\bf n} <0 \ \ \text{ on } \ \ \partial\tilde\Omega(0),
\]
as both the profile $w^\alpha$ and the initial labelling of the particles $\chi_0$ are necessary to recover the Eulerian density $\tilde\rho_0$, see~\eqref{wrho_0}. 
\end{remark}

\begin{remark}
In the special  case when $\chi_0=1$, $w^\alpha$ and $\tilde\rho_0$ coincide. We refrain from imposing the initial condition $\chi_0=1$, but we shall prove a posteriori that the initial conditions 
that we use for the construction of the collapsing stars indeed satisfy $\chi_0=1+O(\ve)$ in a suitable norm.
\end{remark}

%%%%%%%%%%%%%%%%%%%%%%%%%%%%%%%%%%%

Finally, from~\eqref{E:GDEF} we have $r\pa_r G + 3G = 4\pi w^\alpha$ and therefore %Since $\pa_r G\le 0$ it follows that 
%$|r\pa_r (\log G)| =3 -\frac{4\pi \rho_0}G $ and therefore
\be\label{E:IMPORTANTAPRIORI0}
\r \log g+ \frac32 = \frac{9\pi w^\alpha}{g^2}. 
\ee
Since $\pa_r g \le 0$, $w^{\alpha}|_{r=1}=0$ and $\frac{9\pi w^{\alpha}}{g^{2}}>0$ for $r\in[0,1)$ it follows that 
\be\label{E:IMPORTANTAPRIORI}
\left\vert r\pa_r (\log g)\right\vert < \frac32, \ \ r\in[0,1), \ \  r\pa_r (\log g)\big|_{r=1}=\frac32.
\ee
Bounds~\eqref{E:IMPORTANTAPRIORI0}--\eqref{E:IMPORTANTAPRIORI} are crucial in proving sharp coercivity properties of our high-order energies later in the article.

\subsection{Pressureless collapse}
\label{SS:DUST}

%\bcr
The first step in our analysis is to describe the solutions of~\eqref{E:BASICPDE} when $\ve=0$.
We are led to the ordinary differential equation (ODE)
\be\label{E:BASICPDEDUST}
 \chi_{ss} +\frac{G(r)}{\chi^2}= 0,
\ee
with initial conditions
\be\label{E:DUSTINITIAL}
\chi(0,r) =\chi_0>0, \ \ \chi_s(0,r)=\chi_1.
\ee

We now give a detailed description of the dust collapse from both the Lagrangian and Eulerian perspective, as this will serve as the leading order description of the collapsing stars for the EP$_\gamma$ system.

Notice that for any fixed $r\in[0,1]$ the coefficient $G(r)$ merely serves as a parameter in the above ODE.
The total energy 
\begin{align}\label{E:ENERGYDUST}
E(s) = \frac12\chi_s^2 - \frac{G(r)}{\chi}
\end{align}
is clearly a conserved quantity. We are interested in the collapsing solutions, i.e. solutions of~\eqref{E:BASICPDEDUST}--\eqref{E:DUSTINITIAL} 
such that there exists a $0<T<\infty$
so that $\lim_{s\to T^-}\chi(s,r)=0$ for some $r\in[0,1]$. We consider the inward moving initial velocities with $\chi_1<0$. From the conservation of~\eqref{E:ENERGYDUST}
we obtain the formula
\begin{align}\label{E:PDEDUST2}
\chi_s = - \sqrt{\chi_1^2 + 2G\left(\frac1{\chi}-\frac1{\chi_0}\right)}.
\end{align}
Integrating~\eqref{E:PDEDUST2} one sees that for every $r$ there exists a $0<t^\ast(r)<\infty$ such that $\chi(t^\ast(r),r)=0$. 
A simple calculation reveals that for any $r\in[0,1]$ 
we have the universal blow-up exponent $2/3$ 
\begin{align}\label{E:UNIVERSALSCALING}
\chi(s,r) \sim c(r) (t^\ast(r) - s)^{\frac23}, \ \ s\to t^\ast(r). 
\end{align}

We may further define
the first blow-up time  
\[
t^\ast:=\min_{r\in[0,1]}t^\ast(r).
\]
Observe that the Eulerian description of the solution seizes to make sense at and after time $s\ge t^\ast$. On the
other hand, for different values of $r$ the Lagrangian solution may make sense even after $t^\ast$. In particular, when $t^\ast(r)$ is a non-constant 
function, we can speak of a ``fragmented" or ``continued" collapse, wherein particles with a different Lagrangian label $r$ collapse at different times.
This is the hallmark behavior of {\em inhomogeneous} collapse.

For simplicity, we shall consider a special subclass of solutions of~\eqref{E:BASICPDEDUST}--\eqref{E:DUSTINITIAL} with zero energy. Up to multiplication by a constant
such profiles have the form
\begin{align} \label{E:FUNDPROFILE0}
\chid(s,r) = (1-g(r)s)^{\frac23},
\end{align}
where $g$ is given by~\eqref{E:LITTLEGDEF}.
It follows that $\chid$ becomes zero along the space-time curve 
\begin{align}\label{E:GAMMADEF}
\Gamma : = \{(s,r)\, | \, 1-g(r)s = 0\}.
\end{align}
The solution is only well-defined 
in the region 
\[
\Xi:=\{(s,r)\, \big|\, 1-g(r)s>0\}.
\]

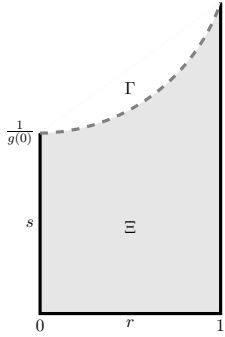
\begin{figure}

\begin{center}
\begin{tikzpicture}

\begin{scope}[scale=0.6, transform shape]
\coordinate [label=below:$0$] (A) at (-3,7){};
      \coordinate [label=below:$1$] (B) at (1,7) {};
      \coordinate [label=below:$r$] (C) at (-1,7) {};
      \coordinate [label=left:$s$] (E) at (-3,9) {};

\coordinate [label=left:$\frac1{g(0)}$] (D) at (-3,11){};
\coordinate  (L) at (0.2,12.5){};
\coordinate  (M) at (1,13.9){};
\tkzCircumCenter(D,L,M)\tkzGetPoint{O}
\draw [fill=gray!20, very thick] (D) -- (A) -- (B) -- (M);
\tkzDrawArc[fill=white, dashed, very thick](O,D)(M)
\coordinate [label=below: $\Xi$] (B) at (-1,9.2) {};
\node at (-1,12) {$\Gamma$};

\end{scope}
\end{tikzpicture}
 \caption{Dust collapse in Lagrangian coordinates}
    \label{compression}

\end{center}
\end{figure}

After a simple calculation we have
\[
\J[\chid](s,r) = (1-g(r)s)^2\left(1-\frac23\frac{srg'(r)}{1-g(r)s}\right), \ \ (s,r)\in\Xi.
\]
In particular, $\chid$ and $\J[\chid]$ vanish along $\Gamma$ and therefore, since the Eulerian density  satisfies
\begin{align}\label{E:DUSTDENSITY}
\tilde\rho_{\text{dust}}(s,\chid(s,r)y) = w^\alpha(r) \J[\chid](s,r)^{-1}, \ \ r=|y|, \ sg(r)<1,
\end{align}
the value of $\tilde\rho_{\text{dust}}(s,0)$ diverges to infinity at the {\em first blow-up time} $t^\ast:=\frac1{g(0)}$. In the region $\chid>0$, the Eulerian density $Y\mapsto \tilde\rho_{\text{dust}}(s,Y)$ is always well-defined away from the origin $Y=0$.
Moreover for any $r\in[0,1]$
\[
\lim_{s\to\frac1{g(r)}} \tilde\rho_{\text{dust}}(s,\chid(s,r)y)=\infty.
\]
Since $r\mapsto g(r)$ is monotonically decreasing, 
particles that start out closer to the boundary of the star take longer to vanish into the singularity.

\medskip
\noindent
{\em Remaining mass.}
For any time $s\in(\frac1{g(0)},\frac1{g(1)})$ the remaining star mass is given by
\be\label{E:REMAININGMASS}
M(s) =4\pi \int_{g^{-1}\circ(\frac1s)}^1 w^\alpha(z) z^2\,dz
= \int_{(0,  \chi_{\text{dust}}(s,1))} 4\pi \tilde\rho_{\text{dust}} (s, Z) Z^2 dZ, 
\ee
where we have changed variables: $z \to Z=\chi(s,z) z$ and used $w^\alpha (z) =\tilde\rho(s, \chi(s,z) z) \J[\chid] $ and $4\pi  \J[\chid]  z^2 dz = 4\pi Z^2 dZ$. 
Since for any $\frac1{g(0)}< s<\frac1{g(1)}$ $\J[\chid](s,r)>0$ for all  $r\in(g^{-1}\circ (\frac1s),1]$, this change of variables is justified.

Finally, the support of the collapsing dust star shrinks to zero as $s\to\frac{1}{g(1)}$. This is clear, as the free boundary in the Eulerian description is at distance $\chid(s,1)=(1-g(1)s)^{\frac23}$ from the origin.
As $s\to\frac1{g(1)}$ the star concentrates with its mass completely absorbed at the origin:
\[
\lim_{s\to \frac1{g(1)}}\chid(s,1)=0 \ \text{ and } \ \lim_{s\to \frac1{g(1)}} M(s)=0.
\]

Therefore the time $s=\frac1{g(1)}$ has a natural interpretation as the {\em end-point of star collapse} for the dust example considered here.

%%%%%%%%%%%%%%%%%%%%%%%%%%%%%%%%%%%%%%%%%%%%
%%%%%%%%%%%%%%%%%%%%%%%%%%%%%%%%%%%%%%%%%%%%

\subsection{Main theorem and related works}

Stellar collapse is one of the most important phenomena of both Newtonian and relativistic astrophysics. Even though extensively studied in the physics literature, very little is rigorously known about the compactly supported solutions to EP$_\gamma$-system that lead to the gravitational collapse. 

\begin{enumerate} 

\item[(1)] When $P(\rho)=0$ and therefore the star content is the pressureless dust, there exists an infinite-dimensional family of collapsing dust solutions, as described in Section~\ref{SS:DUST}.

\item[(2)] If $\gamma=\frac43$ in~\eqref{E:EOS}, due to the special symmetries of the problem, ``homologous" self-similar collapsing solutions exist and were discovered by Goldreich and Weber~\cite{GoWe} in 1980. Further rigorous mathematical works about such solutions are given in~\cite{Makino92,FuLin,DengXiangYang}. Here all the gas  contracts to a point at the same time and the dynamics is described by a reduction to a finite-dimensional system of ODEs.

\item[(3)] When $\gamma>\frac43$ it is shown in~\cite{DLYY} that the collapse by density concentration cannot occur.
\end{enumerate}

We refer to the values $1<\gamma<\frac43$, $\gamma=\frac43$, and $\gamma>\frac43$ of the adiabatic exponent as the {\em mass supercritical}, {\em mass-critical}, and {\em mass subcritical} cases respectively. This terminology is motivated by the invariant scaling analysis of the EP$_\gamma$-system, see e.g.~\cite{HaJa2016-1}.

It has  been an outstanding open problem to prove or disprove the existence of collapsing  solutions in the supercritical range $1<\gamma<\frac43$.

%%%%%%%%%%%%%%%%%%%%%%%%%%%%%%%%%%%%%%%%%

\begin{theorem}[Main theorem] \label{T:MAINMAIN}
For any $\gamma\in(1,\frac43)$ there exist  classical solutions $\chi(s,r)$ of~\eqref{E:BASICPDE} defined in $\Xi=\{(s,r) \,\big| \, 1-g(r)s>0\}$.
The solution behaves qualitatively like the collapsing dust solution $\chid$ and in particular
\begin{align}
 1\lesssim \lv \frac{\chi}{\chid} \rv \lesssim 1, \ \ 1\lesssim\lv \frac{\J[\chi]}{\J[\chid]}\rv &\lesssim  1,  \ \ (s,r)\in
 \Xi. \label{E:CLAIM1}
 \end{align}
 Further, for any $r\in[0,1]$
 \begin{align}
 \lim_{s\to\frac1{g(r)}}\frac{\chi}{\chid}= \lim_{s\to\frac1{g(r)}}\frac{\J[\chi]}{\J[\chid]} &= 1. \label{E:CLAIM2}
\end{align}
Finally, the following three properties hold
\begin{enumerate}
\item[(1)]{\em (Density blows up)}
For any $r\in[0,1]$
\be\label{E:CP1}
 \lim_{s\to\frac1{g(r)}} \tilde\rho(s,\chi(s,r)r) = \lim_{s\to\frac1{g(r)}} w(r)^\alpha \J[\chi]^{-1} = \infty.
\ee

\item[(2)]{\em (Support shrinks to a point)}
\be\label{E:CP2}
\lim_{s\to \frac1{g(1)}} \chi(s,1) = 0.
\ee

\item[(3)]{\em (Mass is continuously absorbed into the singularity)}
\be\label{E:CP3}
\lim_{s\to \frac1{g(1)}} M(s) = \lim_{s\to \frac1{g(1)}} 4\pi \int_{g^{-1}\circ\frac1s}^1 w(z)^\alpha z^2\,dz = 0.
\ee

\end{enumerate}
\end{theorem}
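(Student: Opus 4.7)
The overall strategy is to construct the collapsing solution as a small perturbation of the dust profile $\chid$, using a weighted high-order energy method combined with a bootstrap/continuity argument driven by smallness of $\ve$. Concretely, I would write $\chi = \chid(1+\theta)$ (or an equivalent factorized ansatz) and derive the nonlinear equation for the perturbation $\theta$. The linear part of this equation, after using $\chid$'s own ODE~\eqref{E:BASICPDEDUST}, is a degenerate wave-type operator whose principal part degenerates simultaneously at the free boundary $r=1$ (because of the physical vacuum weight $w^\alpha$) and along the collapse surface $\Gamma$ (because of the $(1-g(r)s)^{2/3}$ factors embedded in $\chid$ and $\J[\chid]$). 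The small pressure term $\ve P[\chi]$ is the only nontrivial forcing; the task is to show it can be absorbed as a perturbation.

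The first technical step is to choose initial data close to the dust data: pick $\chi(0,r)=1+O(\ve)$ and $\chi_s(0,r)=(\chid)_s(0,r)+O(\ve)$ so that the initial perturbation $\theta(0,\cdot)$ is small in every norm we care about. Next I would introduce a family of high-order energies $\energy(s)$ and dissipations $\dissipation(s)$ built from the tangential vector field $r\pa_r$ and time derivatives $\pa_s$, weighted by powers of $w$ to match the physical vacuum (following the framework of Jang and collaborators, e.g.~\cite{Jang2014}) and additionally by explicit powers of $(1-g(r)s)$ tuned to the $2/3$ self-similar exponent~\eqref{E:UNIVERSALSCALING}. The identities~\eqref{E:IMPORTANTAPRIORI0}--\eqref{E:IMPORTANTAPRIORI}, together with (w3)/\eqref{E:FM}, will supply the sharp coercivity that lets the dust-level terms control the energy, while the smallness condition~\eqref{E:GTAYLOR2} ensures that the drift of the singular surface $\Gamma$ near the origin is quantitatively tame.

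The heart of the proof is an a priori estimate of the form
\begin{equation*}
\energy(s) + \int_0^s \dissipation(\sigma)\,d\sigma \;\le\; \energy(0) + C\ve\int_0^s \energy(\sigma)\,d\sigma + C\,(\energy(s))^{3/2},
\end{equation*}
valid as long as a bootstrap assumption $\energy(s)\le \ve^{1/2}$ holds. Closing this requires: (i) commuting $\rr$ and weighted time derivatives through the nonlinear pressure operator $P[\chi]$ in \eqref{E:FCHI}, keeping track of the weight $w^{1+\alpha}\J[\chi]^{-\gamma}$ and using the bootstrap to treat $\J[\chi]\asymp\J[\chid]$; (ii) Hardy inequalities adapted to the joint weight $w^\alpha(1-g(r)s)^{\beta}$; (iii) Gronwall to absorb the $\ve$-term. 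The standard continuity argument then improves the bootstrap and extends the solution all the way up to $\Gamma$, yielding existence on all of $\Xi$.

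Once the uniform-in-$\Xi$ energy bound is in hand, the quantitative claims follow: the pointwise comparisons in~\eqref{E:CLAIM1} come from Sobolev embedding applied to $\theta$ and its $\J$-analog; the limits~\eqref{E:CLAIM2} follow because the weighted norms force $\theta\to 0$ and its $\J$-counterpart $\to 0$ as $(s,r)\to\Gamma$. From~\eqref{E:DENSITYFORMULA} and $\J[\chi]\to 0$ along $\Gamma$ we immediately get the density blowup~\eqref{E:CP1}. The estimate $\chi(s,1)\asymp\chid(s,1)=(1-g(1)s)^{2/3}$ yields shrinking of support~\eqref{E:CP2}, and mass absorption~\eqref{E:CP3} is then just the dust computation~\eqref{E:REMAININGMASS}, justified by the same change of variables since $\J[\chi]>0$ off $\Gamma$. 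The main obstacle, as always in such problems, is the simultaneous treatment of the two independent degeneracies (vacuum boundary and collapse surface): one must design a single family of weights under which the quasilinear pressure term, its commutators with $\rr$ and $\pa_s$, and the nonlinear remainders all obey estimates with consistent weight-arithmetic, and under which the coercivity supplied by~\eqref{E:IMPORTANTAPRIORI0} survives the $\ve$-dependent corrections.
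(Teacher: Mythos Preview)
Your proposal has a genuine gap, and the paper explicitly discusses why the ansatz you chose does not work. Writing $\chi=\chid(1+\theta)$ is precisely the naive splitting~\eqref{E:ANSATZINTRO0} (with $R=\ve^{-1}\theta$). When you linearize the gravitational term $G(r)/\chi^2$ around $\chid$, you pick up $-\tfrac{2}{3\phi_0^3}\theta$, which in the foliation coordinate $\tau=1-g(r)s$ is $-\tfrac{2}{3\tau^2}\theta$. This term is \emph{not} small in $\ve$ --- it comes from gravity, not pressure --- so it cannot sit on the right-hand side of your a priori estimate as part of the $C\ve\int\mathcal E$ term. And in the energy identity it produces $-\tfrac{2}{3}\int \theta^2/\tau^3$, a non-integrable bad-sign contribution as $\tau\to0$ that the positive part of the natural energy cannot absorb (see the computation following~\eqref{linear0}). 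No amount of weight-tuning fixes this: it reflects the focusing nature of the dust collapse.

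The paper's remedy is a two-layer construction that your outline is missing. First, one builds a refined approximate profile $\phia=\sum_{j=0}^M\ve^j\phi_j$ by solving the ODE hierarchy~\eqref{E:HPHI}; the crucial output (Theorem~\ref{T:MAINBOUNDPHI}) is that each $\phi_j$ gains a factor $\tau^{j\delta}$ relative to $\phi_0$, with $\delta=2(\tfrac43-\gamma-\tfrac1n)>0$ precisely because $\gamma<\tfrac43$ and the enthalpy is flat enough at the origin. Second, one writes the remainder as $\phi=\phia+\tau^m H/r$ with $m$ large; the extra $\tau^m$ turns the dangerous potential into $\bigl[\tfrac{m(m-1)}{\tau^2}-\tfrac{4}{9\phia^3}\bigr]H$, which is now \emph{coercive} near $\tau=0$ (see~\eqref{linear}). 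Only after this sign flip does an energy scheme of the shape you describe become viable, and even then the paper needs the $(\tau,r)$ foliation, the spatial operators $\mathcal D_j$ (not $\partial_\tau$, which is too singular), the $r^n/\tau$-dependent weights $q_\nu$, a delicate cancellation lemma (Lemma~\ref{L:CANCEL}) for the term $\mathscr M[H]$, and finally a $\kappa\to0$ compactness argument rather than direct propagation from $s=0$. Your closing paragraph on deducing~\eqref{E:CLAIM1}--\eqref{E:CP3} from uniform bounds is correct in spirit, but the bounds themselves are not obtainable from the ansatz you start with.
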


%%%%%%%%%%%%%%%%%%%%%%%%%%%%%%%%%%%%%%%%%%

\begin{remark}
One distinctive feature of our proof is that the singularity occurs along the {\em prescribed} space-like surface 
$\Gamma$~\eqref{E:GAMMADEF} 
which coincides with the blow-up surface of the underlying dust solution $\chid$.
\end{remark}

Theorem~\ref{T:MAINMAIN} identifies an infinite-dimensional family of monotonically decreasing initial densities that lead to the gravitational collapse. This is a global characterization of the dynamics, as the region $\Xi$ corresponds to the maximal forward development of the data at $s=0$.

The best known class of global solutions to the EP$_\gamma$ system are the famous static Lane-Emden stars~\cite{Ch,BiTr,ZeNo}. In the range $\frac65<\gamma<2$ one finds compactly supported radially symmetric
time-independent solutions of finite mass, whose stability still remains an outstanding open problem. In the subcritical range $\gamma>\frac43$ the question of nonlinear stability is open despite the promising conditional nonlinear stability result proven by Rein~\cite{Rein} (see~\cite{LuSm} for rotating stars). If the solution exists globally in time when $\gamma>\frac43$ and the energy is strictly positive, then the support of the star must grow at least linearly in $t$, as shown in~\cite{MaPe1990}. A similar conditional result holds when $\gamma=\frac43$~\cite{DLYY}. In the supercritical range $\frac65\le\gamma<\frac43$ it has been shown by Jang~\cite{J0,Jang2014} that the Lane-Emden stars are dynamically nonlinearly unstable. Besides the stationary states and the homologous collapsing stars in the mass-critical case $\gamma=\frac43$, the only other global solutions of EP$_\gamma$ were constructed by Had\v zi\'c and Jang~\cite{HaJa2016-1,HaJa2017}. 

Since the works of Sideris~\cite{Si1985,Si1991} it has been well-known that solutions of the compressible Euler equation (without gravity) develop singularities even with small and smooth initial perturbations 
of the steady state $(\rho,{\bf u})=(1,{\bf 0})$. This type of blow up is generally attributed to the loss of regularity in the fluid unknowns which typically results in a shock. Under the assumption of irrotationality, Christodoulou~\cite{Chr2007} gave a very precise information on the dynamic process of shock formation for the relativistic Euler equation. In the context of nonrelativistic fluids, a related result was given by Christodoulou and Miao in~\cite{ChMi}, while a wider range of quasilinear wave equations is treated extensively by Speck~\cite{Sp}, Holzegel, Luk, Speck, and Wong~\cite{HoLuSpWo}. Most recently shock formation results have been obtained even in the presence of vorticity by Luk and Speck~\cite{LuSp}, for an overview we refer the reader to~\cite{Sp2}. A very different type of singular behavior which results in a wild nonuniqueness for the weak solutions of compressible Euler flows was obtained by Chiodaroli, DeLellis, and Kreml~\cite{ChDeKr}, inspired by the methods of convex integration, see~\cite{DeSz} for an overview. 

The above mentioned mechanisms of singularity formation are different from the singularity exhibited in Theorem~\ref{T:MAINMAIN}, where the density and the velocity remain smooth in the vicinity of the origin and no shocks are formed before the gravitational collapse occurs.

In the absence of gravity, a finite dimensional class of special {\em affine} expanding solutions to the vacuum free boundary compressible Euler flows was constructed by Sideris~\cite{Sideris,Sideris2014}. Their support takes on the shape of an expanding ellipsoid. Related finite-dimensional reductions of compressible flows with the affine ansatz on the Lagrangian flow map go back to the works of Ovsiannikov~\cite{Ov1956} and Dyson~\cite{Dyson1968}, with different variants of the equation of state. Nonlinear stability of the Sideris motions was shown by Had\v zi\'c and Jang~\cite{HaJa2016-2} for the range of adiabatic exponents $1<\gamma\le\frac53$ and it was later extended to the range $\gamma>\frac53$ by Shkoller and Sideris~\cite{ShSi2017}. 

In the setting of compressible non-isentropic gaseous stars (where the equation of state~\eqref{E:EOS} is replaced by the requirement $p=P(\rho,T)$, $T$ being the internal temperature) it is possible to impose an affine ansatz (separation of variables) for the Lagrangian flow map and thus reduce the infinite-dimensional PDE dynamics to a finite-dimensional system of ODEs.  The resulting solutions have space-homogeneous gas densities and the system is therefore closed - the star takes on the shape of a moving ellipsoid. For an overview we refer to~\cite{Bo, BoKiMa}.
A number of  finite-dimensional reductions in the absence of vacuum regions relying on self-similarity and scaling arguments can be found in the physics literature e.g.~\cite{La, Pe, Shu, Yahil1983, To, BoFeFiMu, BlBoCh, BrWi, RB}.

Without the free boundary, in the context of finite-time break up of $C^1$-solutions for the gravitational Euler-Poisson system with a fixed background we refer to~\cite{ChTa} and references therein. 
There are various models in the literature where the stabilizing effects of the pressure are contrasted to the attractive effects of a nonlocal interaction; we refer the reader to~\cite{CaWrZa,CaCaHo,CaCaHo2} for a review and many references for different choices of repulsive/attractive potentials.

The analogues of the collapsing dust solutions in the general relativistic context were discovered in 1934 by Tolman~\cite{To1934}. In their seminal work from 1939, Oppenheimer and Snyder~\cite{OpSn1939} studied in detail the causal structure of a subclass of asymptotically flat Tolman solutions with space-homogeneous density distributions, thus providing basic intuition for the concept of gravitational collapse. Nevertheless, in 1984 Christodoulou~\cite{Ch1984} showed that the causal structure of solutions described in~\cite{OpSn1939} is in a certain sense non-generic in the wider family of Tolman collapsing solutions, proving thereby that for densities given as small inhomogeneous perturbations of the Oppenheimer-Snyder density, one generically obtains naked singularities. This in particular highlights the importance of the rigorous study of the gravitational collapse of gaseous stars with more realistic equations of state, i.e. with nontrivial pressure. In the absence of any matter, existence of singular solutions containing black holes has been known since 1915. This is the 1-parameter family of Schwarzschild solutions, which is embedded in the larger family of Kerr solutions. The nonlinear stability of the Kerr solution has been an important  open problem in the field. Substantial progress has been made over the recent years by Dafermos, Rodnianski, Holzegel, Shlapentokh-Rothman, Taylor, see~\cite{DaHoRo,DaRoSh,Ta} and references therein.

\subsection{Foliation by the level sets of $\chid$}
\label{SS:FOLIATION}

We would like to build a solution of~\eqref{E:BASICPDE} ``around" the fundamental collapsing profile~\eqref{E:FUNDPROFILE0}. To that end it is natural to consider the change of variables
\begin{align}
\tau = 1 - g(r)s,  
\end{align}
and introduce the unknown
\[
\phi(\tau,r):=\chi(s,r).
\]
Note that $0\le\tau\le 1$ and $\tau=0$ corresponds to the space-time curve $\Gamma$, while $\tau= 1$ represents the initial time. It is clear that the change of variables $(s,r)\mapsto (\tau,r)$ is nonsingular since $g(r)>0$ on $[0,1]$.
\begin{figure}
\begin{center}
\begin{tikzpicture}
%\centering
\begin{scope}[scale=0.7, transform shape]
\coordinate [label=below:$0$] (A) at (-2,0);
      \coordinate [label=below:$1$] (B) at (2,0);

\tkzDefPoint(-2,1){P}\tkzDefPoint(-0.5,1.3){Q}\tkzDefPoint(2,2.4){S}

\tkzDefPoint(-2,2.1){A2}\tkzDefPoint(-0.6,2.6){M2}\tkzDefPoint(2,4.4){B2}

\tkzDefPoint(-2,4){D}\tkzDefPoint(1.2,5.5){L}\tkzDefPoint(2,6.9){M}

\draw [fill=gray!20, very thick] (D) -- (A) -- (B) -- (M);

\tkzCircumCenter(P,Q,S)\tkzGetPoint{T}
\tkzDrawArc[thick](T,P)(S);

\node at (0,1.5) {$1-g(r)s=\text{const.}$};

\tkzCircumCenter(A2,M2,B2)\tkzGetPoint{T2}
\tkzDrawArc[thick](T2,A2)(B2);

\node at (0,2.8) {$1-g(r)s=\text{const.}$};

\tkzCircumCenter(D,L,M)\tkzGetPoint{O}
\tkzDrawArc[fill=white, dashed, very thick](O,D)(M)

\node at (0,5) {$1-g(r)s=0$};

\coordinate [label=below:$0$] (X) at (6,0){};
      \coordinate [label=below:$1$] (Y) at (10,0) {};
        \coordinate [label=right:] (Z) at (10,4) {};
      \coordinate [label=left:] (U) at (6,4) {};
\draw [fill=gray!20, very  thick] (U) -- (X) -- (Y) -- (Z);
\draw [dashed, very thick] (Z) -- (U);

\coordinate [label=left:] (X1) at (6,1){};
      \coordinate [label=right:] (X2) at (6,2.1) {};
\coordinate [label=left:] (Y1) at (10,1){};
      \coordinate [label=right:] (Y2) at (10,2.1) {};
\draw [] (X1) to (Y1);
\draw [] (X2) to (Y2);

\node at (8,1.3) {$\tau=\text{const.}$};
\node at (8,2.4) {$\tau =\text{const.}$};
\node at (8,4.3) {$\tau=0$};

\coordinate [label=left:] (L1) at (2.5,2.5){};
\coordinate [label=left:] (R) at (5.5,2.5){};
\draw [<->, thick] (L1) to [out=40,in=140] (R);

\node at (4,3.5) {$(s,r)\mapsto (\tau,r)$};
\end{scope}
\end{tikzpicture}
\caption{Foliation by the level sets of $\chid$}
\end{center}

\end{figure}

The operator $r\pa_r$ expressed in the new variables is denoted by $\DL$ and it reads
\begin{align}
\DL : = - \frac{ r  g'( r )(1-\tau)}{g( r )} \pa_\tau + \r = (\tau-1)\r (\log g) \pa_\tau + \r. \label{E:DDEF}
\end{align}
We also use the abbreviation
\be\label{E:BETADEF}
M_g ( \tau, r) := (\tau-1)\r (\log g),
\ee
so that 
\be\label{E:DLAMBDADEF}
\DL = M_g  \pa_\tau + \r.
\ee
From~\eqref{E:BASICPDE} we immediately see that the unknown $\phi$ solves 
\be\label{E:BASICPDEPHI}
\phi_{\tau\tau} + \frac{2}{9\phi^2} + \ve P[\phi]=0,
\ee
where
\begin{align}
P[\phi] & : =  
\frac{\phi^2}{ g^2( r )w^\alpha r^2}\DL
 \left(w^{1+\alpha}\left[\phi^2\left(\phi+\DL\phi\right)\right]^{-\gamma}\right) \label{E:PRESSURETERM}
\end{align}
is the pressure term in new variables $(\tau,r)$.
In $( \tau, r)$-coordinates the dust collapse solution~\eqref{E:FUNDPROFILE0} is denoted by $\phi_0$, it solves
\be\label{E:DUST}
\pa_{\tau\tau}\phi_0 + \frac29\phi_0^{-2} = 0, 
\ee
and is given explicitly by 
\begin{align} \label{E:FUNDPROFILE}
\phi_0(\tau,r) = \tau^{\frac23}.
\end{align}
After a simple calculation we obtain 
\begin{align}\label{E:DUSTJACOBIAN}
\J[\phi_0](\tau,r) = \tau^2 \left(1+\frac23 \frac{M_g}{\tau}\right). 
\end{align}
In particular $\J[\phi_0](\tau,r)>0$ for all $(\tau,r)\in(0,1]\times[0,1]$ and
\[
\lim_{\tau\to0^+}\J[\phi_0]=0, \ \ \J[\phi_0]\Big|_{\tau=1} =1.
\]
The connection between the above formulas and mass conservation for the dust solution is detailed in Section~\ref{SS:DUST}.
From the formula~\eqref{E:DUSTJACOBIAN},~\eqref{E:BETADEF}, and~\eqref{E:FM} we conclude that 
for $0<\tau\ll1$ 
\be\label{E:DUSTJACOBIAN2}
\J[\phi_0](\tau,r)\approx \tau^2 (1+\frac{r^n}{\tau}),
\ee
wherefrom the scale $r^n/\tau$ emerges naturally and will play an important role in our work.

We will prove Theorem~\ref{T:MAINMAIN} in the $(\tau,r)$-coordinate system, using~\eqref{E:BASICPDEPHI} as a starting point. This is natural, as the collapse surface in the new coordinates takes on a simpler description
$\Gamma=\{\tau=0\}$.

\subsection{Methodology and outline of the proofs}\label{SS:METHODS}

The continuity equation in Lagrangian coordinates reduces to~\eqref{E:DENSITYFORMULA}, which implies
that the blow-up points of the density
coincide with
the zero set of the Jacobian determinant
\[
\J[\phi]:=\phi^{2}(\phi+\DL\phi), \ \ \DL = M_g\pa_\tau + r\pa_r.
\]
Therefore, the key goal of this work is to identify a class of initial data
that in a suitable sense mimic the bahavior of the dust solution and we do that by showing
\[
1\lesssim\frac{\J[\phi]}{\J[\phi_{0}]}\lesssim1.
\] 

A natural idea is to consider the dynamic splitting
\be\label{E:ANSATZINTRO0}
\phi=\phi_{0}+\varepsilon\phi_{0}R
\ee
where the relative remainder $R$ is expected to be small in an appropriate sense.  
A straightforward calculation gives a partial differential equation satisfied by $R$, 
which at the leading order  takes the schematic form,
\begin{equation}
\bar g^{00}\partial_{\tau\tau
}R+ \bar g^{01}\partial_{r}\partial_{\tau
}R
+\frac4{3\tau}\pa_\tau R-\frac{2}{3\phi_{0}^{3}}R-\varepsilon\gamma c[\phi_{0}]\frac{1}{w^{\alpha}%
}\partial_{r}(\frac{w^{1+\alpha}}{r^{2}}\partial_{r}[r^{2}%
R])=\bar F,\label{linear0}%
\end{equation}
where one can show that 
\[
\bar g^{00}\approx1, \ \bar g^{01}\approx \frac \ve \tau, \ c[\phi_0] \approx \frac{\tau^{\frac53 -\gamma}}{\tau + r^n}.
\]
The simplest way of interpreting the relative ``strength" of each of the terms in~\eqref{linear0}
is to compute the associated energies by 
taking the inner product with $\pa_\tau R$. 
A key term emerges
\begin{align*}
-(\frac{2}{3\phi_{0}^{3}}R,\pa_\tau R)_{L^2} & =-\frac13\pa_\tau\int \frac{R^2}{\phi_0^3} + \frac13\int \pa_\tau
\left(\frac{1}{\phi_{0}^{3}}\right)R^{2} \\
& = -\frac13\pa_\tau\int \frac{R^2}{\tau^2} - \frac23 \int \frac{R^2}{\tau^{3}}.
\end{align*}
The severe $\tau^{-3}$-singularity has a bad sign and
it cannot be controlled by the positive definite part of the natural energy.
This issue is not a mere technicality, it is deeply connected with the focusing nature of
the dust collapse and it is unlikely that the ansatz~\eqref{E:ANSATZINTRO0} can be successful.

We note that we have already implicitly used the assumption $\gamma<\frac43$ via
the scaling transformation~\eqref{E:SCALING}, which resulted in the occurrence of the small parameter $\ve$
in~\eqref{E:BASICPDEPHI}.
We want to further use $\gamma<\frac43$, but with a more refined dynamic splitting ansatz.
Namely, our main idea is to seek a more \textit{special} solution $\phi$
of the form
\be\label{E:ANSATZINTRO}
\phi=\phi_{\text{app}}+\frac{\tau^{m}}{r}H
\ee
where $\phia$ will be chosen as a more accurate {\em approximate solution} of the Euler-Poisson system~\eqref{E:BASICPDEPHI} in hope of mitigating the issue explained above. The exponent $m>0$ is a sufficiently large positive number, so that $H$ is a weighted remainder, small relative to $\tau^m = \phi_0^{\frac32 m}\ll \phi_0$ for small values of $\tau$.

\medskip 
\textit{Step 1. Hierarchy and the construction of the approximate solution {\em $\phia$} (Section~\ref{S:HIERARCHY})}.

We shall find the approximate profile $\phia$ as a finite order expansion into the powers of $\ve$ around
the background dust profile $\phi_0$, i.e.
\be\label{E:EXPANSIONINTRO}
\phia = \phi_0 + \varepsilon \phi_1 + \varepsilon^2\phi_2 + \dots + \varepsilon^M \phi_M, \ \ M\gg1.
\ee
With the solution ansatz~\eqref{E:EXPANSIONINTRO} we can formally Taylor expand the pressure term
$\ve P[\phi_0 + \ve \phi_1 + \dots]$ into the powers of $\ve$, thus giving us a hierarchy
of ODEs satisfied by the $\phi_j$:
\begin{align}\label{E:ODEINTRO}
\partial_{\tau\tau}\phi_{j+1} - \frac{4}{9\tau^2}\phi_{j+1} = f_{j+1}[\phi_0,\phi_1,\dots,\phi_j], \ \ j=0,1,\dots, M.
\end{align} 
Functions $f_{j+1}$, $j=0,1,\dots,M$ are explicit and generally depend nonlinearly on $\phi_k$, $0\le k\le j$, 
 and their spatial derivatives (up to the second order). 
 
The system of ODEs~\eqref{E:ODEINTRO} can be solved iteratively as the right-hand side $f_{j+1}$ is always
known as a function of the first $j$ iterates. To show that finite sums of the form~\eqref{E:EXPANSIONINTRO} 
are good approximate solutions of~\eqref{E:BASICPDEPHI}, we must prove that the iterates $\phi_j$, $j\ge1$, are effectively ``small" with respect to $\phi_0$.
The mechanism by which this is indeed true is one of the key ingredients of the paper, in both the conceptual and the technical sense. In particular we shall have to choose special solutions of~\eqref{E:ODEINTRO}, as they are in general not unique (the two general solutions of the homogeneous problem are $\tau^{4/3}$ and $\tau^{-1/3}$), which will allow us to see the above mentioned {\em gain}.

We now proceed to explain these ideas in more detail.
To provide a quantitative statement, we assume that the enthalpy profile 
$w$ satisfies
\be\label{E:RHOTAYLORINTRO}
w^\alpha(r) = 1 - c r^n + o_{r\to0}(r^n)
\ee
in a neighbourhood of the center of symmetry $r=0$. The exponent $n\in\mathbb N$ is our effective measure of {\em flatness} of the star close to the center. 
For a given $\gamma\in(1,\frac43)$ we consider densities~\eqref{E:RHOTAYLORINTRO} with $n$ so large that 
\be\label{E:DELTAINTRO}
\delta : = 2\left(\frac43-\gamma-\frac1n\right) >0.
\ee

With this assumption in place we prove that the iterates $\{\phi_j\}_{j\in\mathbb N}$ ``gain" smallness and this conclusion is summarized in the following theorem:

%%%%%%%%%%%%%%%%%%%%%%%%%%%%%%%%%
%%%%%%%%%%%%%%%%%%%%%%%%%%%%%%%%%

\begin{theorem}\label{T:MAINBOUNDPHI}
Let $M,K\in\mathbb Z_{>0}$ be given. There exists a sequence $\{\phi_{j}\}_{j\in\{0,\dots,M\}}$  of
solutions to~\eqref{E:ODEINTRO} with $\phi_0(\tau,r)=\tau^{\frac23}$, constants $C_{jkm}$ depending on $K$ and $M$, and a $\l>\frac2n$ such that 
for $j\in \{1,\dots,M\}$ and $\ell,m \in \{0,1,\dots,K\}$ we have
\be\label{E:GAININTRO}
\lv \pa_\tau^m(r\pa_r)^\ell \phi_j \rv \le C_{jkm} \tau^{\frac23+j\delta-m} \frac{\et^{\l-\frac2n}}{(1+\et)^\l}.
\ee 
\end{theorem}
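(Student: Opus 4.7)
The plan is to prove Theorem~\ref{T:MAINBOUNDPHI} by induction on $j \in \{1,\dots,M\}$, constructing each $\phi_{j+1}$ by variation of parameters applied to~\eqref{E:ODEINTRO}. The linear operator $\partial_{\tau\tau} - \frac{4}{9\tau^{2}}$ is of Euler type with fundamental pair $u_{1}(\tau) = \tau^{4/3}$, $u_{2}(\tau) = \tau^{-1/3}$ and constant Wronskian $W = -\tfrac{5}{3}$, so the general solution of $\partial_{\tau\tau}\phi_{j+1} - \frac{4}{9\tau^{2}}\phi_{j+1} = f_{j+1}$ can be written as
\[
\phi_{j+1}(\tau,r) = -\frac{3}{5}\tau^{4/3}\int_{\tau}^{1} s^{-1/3} f_{j+1}(s,r)\,ds - \frac{3}{5}\tau^{-1/3}\int_{0}^{\tau} s^{4/3} f_{j+1}(s,r)\,ds
\]
up to a homogeneous mode. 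The lower limit $0$ in the second integral is crucial: any admixture of the singular mode $\tau^{-1/3}$ would dwarf $\phi_{0} = \tau^{2/3}$ near $\tau = 0$ and destroy the splitting ansatz~\eqref{E:ANSATZINTRO} needed for the nonlinear analysis. The upper limit $1$ in the first integral amounts to normalizing the data at $\tau = 1$ and can be adjusted by adding a $\tau^{4/3}$ homogeneous piece without affecting the asymptotics near $\tau = 0$.

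For the base case $j = 0$ one has $f_{1} = -P[\phi_{0}]$. Since $\phi_{0} = \tau^{2/3}$ gives $\DL\phi_{0} = \tfrac{2}{3} M_{g}\tau^{-1/3}$ and $\J[\phi_{0}] = \tau^{2}(1 + \tfrac{2}{3} M_{g}/\tau)$, while $M_{g}$ vanishes like $r^{n}$ near the origin by~\eqref{E:FM}, the scale $\et$ emerges naturally through $\J[\phi_{0}]^{-\gamma} = \tau^{-2\gamma}(1 + O(\et))^{-\gamma}$. Expanding $P[\phi_{0}] = \phi_{0}^{2}(g^{2}w^{\alpha}r^{2})^{-1}\DL\big(w^{1+\alpha}\J[\phi_{0}]^{-\gamma}\big)$ term by term yields a finite sum of terms each of size $\tau^{-4/3+\delta}$ multiplied by a smooth factor bounded by $\et^{\lambda - 2/n}/(1+\et)^{\lambda}$: the gain $\delta = 2(\tfrac{4}{3}-\gamma-\tfrac{1}{n}) > 0$ emerges by combining the $\tau^{4/3}$ from $\phi_{0}^{2}$, the $\tau^{-2\gamma}$ from $\J[\phi_{0}]^{-\gamma}$, the extra $\tau^{-1}$ produced by the $M_{g}\partial_{\tau}$-part of $\DL$, and the compensating $r^{n}$-smallness of $M_{g}$ and $\r w^{1+\alpha}$. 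Substituting into the integral formula and evaluating the $s$-integrals (splitting according to $s \lessgtr r^{n}$ to match the two regimes of $\et$) produces the claimed bound for $\phi_{1}$. The inductive step proceeds analogously: $f_{j+1}$ arises as the coefficient of $\ve^{j+1}$ in the Taylor expansion of $\partial_{\tau\tau}\phi + 2/(9\phi^{2}) + \ve P[\phi]$ evaluated at $\phi = \sum_{k \le M}\ve^{k}\phi_{k}$, and is therefore a polynomial in $\{\phi_{k}, \partial_{\tau}\phi_{k}, \r\phi_{k}\}_{k \le j}$ with coefficients explicit in $\phi_{0}, w, g, M_{g}$. By the inductive hypothesis and the base-step analysis, each monomial carries total $\tau$-power $-\tfrac{4}{3} + (j+1)\delta$, while the $\et$-profile is preserved under products via $\et^{a}/(1+\et)^{b} \cdot \et^{c}/(1+\et)^{d} \lesssim \et^{a+c}/(1+\et)^{b+d}$; any accumulated growth in the exponent is absorbed by fixing $\lambda$ sufficiently large from the start, depending on $M$ and $K$.

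Spatial derivatives $(r\partial_{r})^{\ell}\phi_{j+1}$ follow by commuting $\r$ through the integral formula, using $\r\et = n\et$ so that $\et/(1+\et)$ stays bounded, while temporal derivatives $\partial_{\tau}^{m}\phi_{j+1}$ come either from direct differentiation or by invoking the ODE to convert $\partial_{\tau}^{2}$ into $\frac{4}{9\tau^{2}}\phi_{j+1} + f_{j+1}$ and then using the bounds already obtained at one lower order. The principal obstacle is the combinatorial bookkeeping inside $f_{j+1}$: expanding $\J[\phi_{0}+\ve(\dots)]^{-\gamma}$ in powers of $\ve$ generates many cross-terms, and one must verify that each one gains exactly the factor $\tau^{\delta}$ per iterate, neither more nor less, and carries the correct $\et$-profile. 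The analysis has to treat the regimes $\et \lesssim 1$ and $\et \gg 1$ separately: in the former, the flatness exponent $n$ from~\eqref{E:FM} controls the smallness of $M_{g}/\tau$, while in the latter one uses the saturation identity~\eqref{E:IMPORTANTAPRIORI0} to ensure that no iteratively generated power of $\et$ exceeds the universal ceiling $\et^{-2/n}$ imposed by the factor $\et^{\lambda - 2/n}/(1+\et)^{\lambda}$.
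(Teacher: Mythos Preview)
Your overall induction-by-variation-of-parameters strategy matches the paper's, but there is a genuine gap: your single solution formula
\[
\phi_{j+1}(\tau,r) = -\tfrac{3}{5}\tau^{4/3}\int_{\tau}^{1} s^{-1/3} f_{j+1}\,ds - \tfrac{3}{5}\tau^{-1/3}\int_{0}^{\tau} s^{4/3} f_{j+1}\,ds
\]
fails to deliver the claimed bound once $(j+1)\delta > \tfrac{2}{3}$. Indeed, with $|f_{j+1}| \lesssim s^{-4/3+(j+1)\delta}$ the first integrand is $\sim s^{-5/3+(j+1)\delta}$; when the exponent exceeds $-1$ the integral $\int_\tau^1$ tends to a positive constant as $\tau\to 0$, so the first term is only $O(\tau^{4/3})$, which is \emph{strictly weaker} than the target $O(\tau^{2/3+(j+1)\delta})$. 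Your remark that the upper limit $1$ ``can be adjusted by adding a $\tau^{4/3}$ homogeneous piece without affecting the asymptotics near $\tau=0$'' is precisely where the argument breaks: for large $j$ the homogeneous mode $\tau^{4/3}$ \emph{is} the dominant contribution and must be removed by choosing the lower limit $0$ instead of $1$. The paper handles this by introducing two distinct solution operators $S_1$ and $S_2$ (see~\eqref{E:S1}--\eqref{E:S2} and~\eqref{E:CHINFORMULA}) and switching from $S_1$ to $S_2$ at the threshold $j=\lfloor 2/(3\delta)\rfloor$; Lemma~\ref{L:NUMERICAL} is devoted to ensuring this threshold is well-defined. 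Since $M\gg \lfloor 2/(3\delta)\rfloor$, this switch is essential and cannot be avoided.

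A second, smaller issue: you write that ``any accumulated growth in the exponent is absorbed by fixing $\lambda$ sufficiently large from the start, depending on $M$ and $K$.'' The paper does the opposite: $\lambda=2N/n<1$ is fixed once (Definition~\ref{D:ALAMBDADEF}), and the iteration closes because $p_{\lambda,-2/n}\le 1$ so products of such factors are again bounded by $p_{\lambda,-2/n}$. Letting $\lambda$ grow would push the integrability threshold for $S_2$ (which requires $-\tfrac{5}{3}+k\delta-\lambda+\tfrac{2}{n}>-1$) further out and is not the right mechanism. The preservation of the $p_{\lambda,-2/n}$ profile under the $\tau$-integrals is handled by the precise two-regime estimates of Lemma~\ref{L:lem2.12}, not by enlarging $\lambda$.
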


%%%%%%%%%%%%%%%%%%%%%%%%%%%%%%%%%
%%%%%%%%%%%%%%%%%%%%%%%%%%%%%%%%%

Therefore
the iterates $\phi_{j}$ exhibit a crucial gain of 
$\tau^{j\delta}$ with respect to the dust profile $\phi_0=\tau^{\frac23}$!
This is one manifestation of the supercriticality (i.e. $1<\gamma<\frac43$) of the problem
and it can be viewed as the gain of smallness in the singular regime $0<\tau\ll1$.

To motivate~\eqref{E:GAININTRO}, we explain informally how the gain happens for $\phi_1$.
To find $\phi_1$ we solve the ODE
\be\label{E:PHI1MOTIVATION}
\partial_{\tau\tau}\phi_{1} - \frac{4}{9\tau^2}\phi_{1} = -P[\phi_0] = -\frac{\phi_0^2}{ g^2( r )w^\alpha r^2}\DL
 \left(w^{1+\alpha} \J[\phi_0]^{-\gamma}\right)
\ee
For $0\le r\ll1$ we have $w\approx g(r)\approx 1$. Approximating 
$\DL \approx r^n\pa_\tau + r\pa_r$, and by~\eqref{E:DUSTJACOBIAN2} 
$\J[\phi_0]\approx \tau (\tau +r^n)$, $r\ll1$, we obtain
\[
P[\phi_0] \approx 
\tau^{\frac23} \tau^{\frac23-2\gamma} \frac{\et^{1-\frac2n}}{(1+\frac{r^n}{\tau})^{\gamma}}, \ \ r\ll1.
\]
We expect $\phi_1$ to ``gain" 2 powers of $\tau$ with respect to the right-hand side of~\eqref{E:PHI1MOTIVATION}, and thus 
\[
\phi_1 \approx \tau^{\frac23} \tau^{\frac83-2\gamma-\frac2n} \frac{\et^{1-\frac2n}}{(1+\frac{r^n}{\tau})^{\gamma}}
= \tau^{\frac23+\delta}  \frac{\et^{1-\frac2n}}{(1+\frac{r^n}{\tau})^{\gamma}},
\]
with $\delta$ defined in~\eqref{E:DELTAINTRO}. Of course $\delta$ can be positive if and only if $\gamma<\frac43$ and the exponent $n$ from~\eqref{E:RHOTAYLORINTRO} is sufficiently large!

Most important consequence  of Theorem~\ref{T:MAINBOUNDPHI} is that it leads to a source term $\mathscr S(\phia)$ generated by $\phia$ (see Lemma~\ref{L:SOURCETERMBOUNDS}) which satisfies a natural {\em improved} bound 
\[
\lv \mathscr S(\phia) \rv \lesssim \varepsilon^{2M+1}\tau^{-\frac{4}{3}+(M+1)\delta-1}.
\]
Therefore, if $M \gg1,$ it is reasonable to expect that the remainder ansatz  
$\tau^{m}\frac{H}{r}$ (with $m\ge M\delta$)
is consistent  with our strategy of treating $H$ as an error term, in the regime $\tau\ll1$.

Another crucial input in~\eqref{E:GAININTRO} is the factor
$\frac{\et^{\l-\frac2n}}{(1+\et)^\l}\le 1$. The gain is visible only in the asymptotic 
regime $r^n/\tau\ll1$, which suggests that the scale 
\[
r^n/\tau
\]
plays a critical role in our problem. Indeed,
this gain is important in the closure of the energy estimate for $H$ --
it is used to absorb 
various negative powers of $r$ which inevitably appear 
in our high order energy scheme intimately tied to the assumed spherical symmetry.

The proof of Theorem~\ref{T:MAINBOUNDPHI} is complex and delicate. It is based on
the introduction of \textit{special} solution operators $S_{1}$ and $S_{2}$~\eqref{E:S1}--\eqref{E:S2} 
for the ODE~\eqref{E:ODEINTRO}.
In addition to a careful and precise
tracking of the powers of $\tau$ and $\frac{r^{n}}{\tau}$, to
see the gain of $\tau^{j\delta}$ one has to
use different solution operators $S_{1}$ and $S_{2}$ for $j\leq
\lfloor\frac{2}{3\delta}\rfloor$ and for $j>[\frac{2}{3\delta}]$, respectively.
The precise estimate~\eqref{E:GAININTRO} and the emergence of $\frac{r^n}{\tau}$ as a critical
quantity is intimately tied to the algebraic structure of $f_j$, $j=1,\dots,M$, which in turn possesses a rich
geometric information related to the Taylor expansion of the negative powers of the Jacobian determinant
$\J[\phi]$.

\medskip

{\em Step 2. Equation for the remainder $H$ (Section~\ref{S:MAINRESULT})}.

Thanks to the crucial gain of $\tau^{j\delta}$ and in the presence of
$\tau^{m}$ factor, now $H$ satisfies the following quasilinear
wave-like equation:
\begin{align}
g^{00}\partial_{\tau\tau}H 
+ 2g^{01}\partial_{r}\partial_{\tau}H +\frac{2m}{\tau}\partial_{\tau}H &
+ \left[  \frac{m(m-1)}{\tau^{2}}-\frac{4}{9\phia^{3}}\right] H \notag \\
&-\varepsilon\gamma c[\phi]\frac{1}{w^{\alpha}}\partial_{r}(\frac{w^{1+\alpha}}{r^{2}}\partial_{r}[r^{2}%
H])=F, \label{linear}
\end{align}
where at the leading order
\[
g^{00}=g^{00}[\phi]\approx1, \  g^{01}=g^{01}[\phi] \approx \frac\ve{\tau}, \ c[\phi]\approx c[\phia] \approx \frac{\tau^{\frac53 -\gamma}}{\tau + r^n}.
\]
The precise formulas for the right-hand side $F$, $g^{00}$, $g^{01}$, and $c[\phi]$ are given in~\eqref{E:H2}--\eqref{E:GZEROONEDEF},~\eqref{E:CDEF} respectively. 
In comparison to (\ref{linear0}), the remarkable new feature of (\ref{linear})
is the presence of the coefficient $\frac{m(m-1)}{\tau^{2}}$ so that
\[
\frac{m(m-1)}{\tau^{2}}>\frac{4}{9\phia^{3}}\backsim\frac{1}{\tau
^{2}},
\]
for $m$ sufficiently large.
This leads to a {\em coercive positive definite} control of the solution at the 
singular surface $\{\tau=0\}$.

\medskip 

{\em Step 3. The physical vacuum and weighted energy spaces (Section~\ref{S:ENERGIES})}

Much of the difficulty in producing energy estimates for~\eqref{linear} comes
from an antagonism between two different singularities present in the equation.

\begin{itemize}
\item at $\tau=0$ the coefficient $c[\phia]$ and various others formally blow up to infinity. This is the singularity associated with the collapse at the singular surface $\tau=0$ and already explained above;
\item  at $r=1$ we have $w=0$ and therefore the elliptic part of the quasilinear operator on the left-hand side of~\eqref{linear} does not scale like the Laplacian as $r\to1$. This is a well-known degeneracy associated with the presence of the vacuum boundary. 
\end{itemize}

The assumption of physical vacuum can be recast as the requirement that the enthalpy $w\ge0$ behaves like a distance function when $r\sim1$, i.e. 
\be\label{E:PHYSICALVACUUM2}
\frac1C (1-r) \le w(r) \le C(1-r), \ \ r\in[0,1].
\ee
Requirement~\eqref{E:PHYSICALVACUUM2} is important in 
establishing the well-posedness of~\eqref{linear}. The local well-posedness
theory for the physical vacuum problem was first developed in the Euler case~\cite{JaMa2015, CoSh2012},
while the well-posedness statements for the gravitational Euler-Poisson system can be found in~\cite{Jang2014, LXZ, GuLe, HaJa2016-1, HaJa2017}.
Nevertheless, the well-posedness theory cannot be directly applied to our setting, as~\eqref{linear} differs from the above mentioned 
works in two important aspects: the problem has explicit singularities at $\tau=0$ and the space time domain $(\tau,r)\in(0,1]\times[0,1]$ is strictly larger
than the domain $(s,r)\in[0,\frac1{g(0)})\times[0,1]$ which only covers the star dynamics up to the first stipulated 
collapse time $t^\ast=\frac 1{g(0)}$, see Section~\ref{SS:FOLIATION}.

\medskip 

{\em Step 4. Energy estimates and the conclusion (Sections~\ref{S:ENERGYESTIMATES} and~\ref{S:COMPACTNESS}).}

Since $\phia\sim \phi_{0}=\tau^{2/3},$ $\tau$-derivatives of $\phia$ create severe
singularities in $\tau$ as $\tau\to0$, which leads to difficulties in our energy estimates. We
must in particular abandon the use vector field $\partial_{\tau}$ to form the natural high-order energy and
instead rely on purely spatial derivatives. 
Due to very precise and
delicate features of the approximate solution $\phia$ near the center $r=0$ (as described in Theorem~\ref{T:MAINBOUNDPHI}),
we are forced to use polar coordinates throughout $[0,1]$, which results in the introduction of many 
novel analytic
tools to control the singularity at $r=0.$ 

To motivate the definition of high-order energy spaces we isolate the leading order spatial derivatives contribution 
from the left-hand side of~\eqref{linear}:
\begin{align}
L_\alpha H := -\frac{1}{w^\alpha} \pa_r \left[w^{1+\alpha} D_r H \right],
\end{align}
where 
\[
D_r : = \frac 1{r^2}\pa_r\left(r^2\cdot\right)
\]
is the radial expression for the three-dimensional divergence operator. 
This particular form of $L_\alpha$ suggests that we have to carefully 
apply high-order derivatives to~\eqref{linear} in order to avoid singularities
at $r=0$. We therefore introduce a class of operators defined as concatenations
of $\pa_r$ and $D_r$:
\begin{equation}
\label{E:FUNDOP}
\mathcal D_j : = 
\begin{cases}
( \pa_r D_r)^\frac{j}{2} & \text{ if $j$ is even}\\
D_r( \pa_r D_r)^\frac{j-1}{2} & \text{ if $j$ is odd}
\end{cases}
\end{equation}
and set $\D_0 =1$. The operators $\D_j$ are then commuted with the equation~\eqref{linear}.
For some $N$ sufficiently large,  the idea is to form the energy spaces by evaluating the inner product of the commuted equation 
with $\D_j H_\tau$, $j=1,\dots, N$. 
However, following the ideas developed in~\cite{JaMa2015, HaJa2016-1, HaJa2016-2}, we
need to perform our energy estimates in a cascade of weighted Sobolev-like spaces. For any 
given $j\in\{1,\dots,N\}$ the correct choice is the inner product associated with the weights $w^{\alpha+j}$.

%%%%%%%%%%%%%%%%%%%%%%%%%%%%%%%%%%%%%%%%%%%
%%%%%%%%%%%%%%%%%%%%%%%%%%%%%%%%%%%%%%%%%%%

\begin{definition}[Weighted spaces]\label{Def:InnerProduct}
For any $i\in\mathbb Z_{\ge0}$ 
%and a continuous function $w:[0,1]\to\mathbb R_{\geq 0}$, 
we define weighted spaces $L^2_{\alpha+i}$ as a completion of the space $C_c^\infty(0,1)$ with respect to the norm $\|\cdot\|_{\alpha+i}$ 
generated by the inner product 
\be\label{E:WEIGHTEDINNERPRODUCT}
(\chi_1,\chi_2)_{\alpha+i} : = \int_0^1 \chi_1\chi_2 w^{\alpha +i} r^2 dr 
\ee
and denote the associated norm by $\|\cdot\|_{\alpha+i}.$ 
\end{definition}

\begin{definition}[Weighted space-time norm]\label{D:NORMDEF}
For any $0< \kappa\le1$, $N\in\mathbb Z_{>0}$, $\kappa\le \tau\le1$ we define the weighted space-time norm
\begin{align}
&S_\kappa^N(H, H_\tau)(\tau) =S_\kappa^N(\tau):= \notag \\
&
 \sum_{j=0}^N \sup_{\kappa\le \tau'\le\tau} \left\{(\tau')^{\gamma - \frac53} \|\mathcal D_j H_\tau\|_{\alpha+j}^2 
+(\tau')^{\gamma - \frac{11}3} \|\mathcal D_j H\|_{\alpha+j}^2
+ \ve (\tau')^{-\gamma-1}\|q_{-\frac{\gamma+1}{2}}\left(\frac{r^n}{\tau'}\right)  \mathcal D_{j+1} H\|_{\alpha+j+1}^2\right\} \notag \\
& +  \sum_{j=0}^N \int_\kappa^\tau \left\{ (\tau')^{\gamma-\frac83}\|\mathcal D_j H_\tau\|_{\alpha+j}^2  
+(\tau')^{\gamma - \frac{14}3} \| \mathcal D_j H\|_{\alpha+j}^2
+ \ve (\tau')^{-\gamma-2}\| q_{-\frac{\gamma+2}{2}}\left(\frac{r^n}{\tau'}\right)  \mathcal D_{j+1} H\|_{2\alpha+j+1}^2 \right\} \,d\tau' \notag
\end{align}
where $q_\nu(x) = (1+x)^\nu$, $\nu\in\mathbb R$. 
\end{definition}

We see that the powers of the $w$-weights increase with the number of derivatives.
Such spaces are carefully designed to control the motion of the free boundary at $r=1$
and the key technical tool in our estimates is the Hardy inequality. This
is natural since $w\sim 1-r$ near $r=1$. Similarly, the presence of $\tau$-weights
allows us to precisely capture the degeneration of our 
wave operator at the singular space-time curve $\{\tau=0\}$.

The positive function 
$x\mapsto q_\nu(x)$ 
serves as a weight for the top order spatial derivative contributions in the above definition, with powers $\nu=-\frac{\gamma+1}{2}$ and $\nu=-\frac{\gamma+2}{2}$ 
respectively. Such weights appear in the dust Jacobian $\J[\phi_0]$ and by means of expanding the true solution around $\phi_0$, functions $q_\nu$ appear naturally in our energies. The presence of $q_\nu$
highlights again the importance of the characteristic scale $r^n/\tau$ in our problem. We prove the key theorem:

%%%%%%%%%%%%%%%%%%%%%%%%%%%%%%%%%%%%

\begin{theorem}[The $\kappa$-problem]\label{T:MAINEXISTENCETHEOREM} Let $\gamma\in(1,\frac43)$ and $m\ge \frac52$ be given. Set
 $N=N(\gamma)=\lfloor \frac1{\gamma-1} \rfloor +6$. For a sufficiently large $n=n(\gamma)\in\mathbb Z_{>0}$,
 %$Let $N=\lfloor \alpha \rfloor +6$ and $m\ge\frac52$ be given. 
there exist  $\sigma_\ast,\ve_\ast>0$, $M=M(m,\gamma,n)\gg1$ and $C_0>0$, such that for any $0<\sigma<\sigma_\ast$ and any $0<\ve<\ve_\ast$
%there exists an $\ve_\ast=\ve_\ast(\sigma)\ll1$ such that 
the following is true: for any $\kappa\in(0,1)$ and any initial data $(H_0^\kappa,H_1^\kappa]$ satisfying 
\[
S_\kappa^N(H_0^\kappa,H_1^\kappa)(\tau=\kappa) \le\sigma^2,
\]
there exists a unique solution solution $\tau\mapsto H^\kappa(\tau,\cdot)$ to~\eqref{linear} on $[\kappa,1]$ satisfying 
\[
S_\kappa^N(H^\kappa, H^\kappa_\tau)(\tau) \le C_0\left(\sigma^2+\ve^{2M+1}\right), \quad  \tau \in[\kappa,1].
\]
\end{theorem}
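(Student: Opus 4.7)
The plan is to prove Theorem~\ref{T:MAINEXISTENCETHEOREM} by combining a local well-posedness statement starting from $\tau=\kappa$ with a uniform-in-$\kappa$ a priori energy estimate in the norm $S_\kappa^N$, and to close the two by a standard continuity argument on $[\kappa,1]$. Since $\tau\ge\kappa>0$ throughout, the coefficients $g^{00}$, $g^{01}$, $c[\phi]$, and $1/\phia^3$ of equation~\eqref{linear} are all smooth in $\tau$, and the only genuine degeneracy is the physical vacuum $w(r)\sim 1-r$ at $r=1$. I would therefore adapt the hyperbolic theory for the physical vacuum problem developed in~\cite{JaMa2015,CoSh2012} and extended to the gravitational Euler-Poisson setting in~\cite{Jang2014,HaJa2016-1,HaJa2017}, producing a local-in-$\tau$ solution on $[\kappa,\kappa+\delta_0]$ that lies in the $S_\kappa^N$ class. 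The extra zeroth-order term $[m(m-1)/\tau^2 - 4/(9\phia^3)]H$ is actually helpful here: for $m\ge 5/2$ and $\tau\le 1$ the bracket is strictly positive, so it furnishes positive definite control of $H$ itself (not merely of its derivatives) at no extra cost.

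\textbf{A priori estimate.} The heart of the proof is the closed energy inequality for $S_\kappa^N$. I would commute~\eqref{linear} with the operators $\mathcal{D}_j$ from~\eqref{E:FUNDOP} for $j=0,\dots,N$ and pair the resulting equation with $\mathcal{D}_j H_\tau$ in the weighted inner product $(\cdot,\cdot)_{\alpha+j}$ of Definition~\ref{Def:InnerProduct}. Three effects then combine to reproduce exactly the terms of $S_\kappa^N$. First, the elliptic part $-\varepsilon\gamma c[\phi]L_\alpha H$ yields, after integration by parts with weight $w^{\alpha+j}r^2$, the space-time dissipation terms carrying the $q_\nu(r^n/\tau)$ factors; the specific powers $\nu=-(\gamma+1)/2$ and $\nu=-(\gamma+2)/2$ reflect the Jacobian structure $\J[\phi_0]\sim\tau(\tau+r^n)$ via $c[\phi]\approx\tau^{5/3-\gamma}/(\tau+r^n)$. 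Second, the coercive term, through the identity $(VH,H_\tau)=\tfrac12\partial_\tau(VH,H)-\tfrac12(V_\tau H,H)$, contributes simultaneously the energy weight $\tau^{\gamma-11/3}\|\mathcal{D}_j H\|_{\alpha+j}^2$ and, because $\partial_\tau(1/\tau^2)<0$, a positive dissipation of order $\tau^{\gamma-14/3}$. Third, the forcing $F$ is controlled by $\varepsilon^{2M+1}$ using Theorem~\ref{T:MAINBOUNDPHI}: the pointwise gain $\tau^{j\delta}$ in the iterates $\phi_j$ together with the $\tau^m/r$ ansatz for the remainder produces the bound $|\mathscr{S}(\phia)|\lesssim\varepsilon^{2M+1}\tau^{-4/3+(M+1)\delta-1}$, which upon Cauchy--Schwarz in $\tau$ contributes an additive constant of size $\varepsilon^{2M+1}$ provided $M$ is large enough. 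The mixed term $2g^{01}\partial_r\partial_\tau H$ carries the small factor $\varepsilon/\tau$ and is absorbed into the dissipation for $\varepsilon$ sufficiently small after a standard Young splitting. Summing over $j\le N$ and applying Gr\"onwall to the inequality $S_\kappa^N(\tau)\le S_\kappa^N(\kappa)+C\varepsilon\!\int_\kappa^\tau S_\kappa^N(\tau')\,d\tau'+C\varepsilon^{2M+1}$ closes the bound with constant $C_0$, and the continuity argument then extends the solution to all of $[\kappa,1]$.

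\textbf{Main obstacle.} The principal technical difficulty I anticipate lies in the commutator analysis at top order. The operators $\mathcal{D}_j$ alternate between $\partial_r D_r$ and $D_r\partial_r D_r\cdots$, and when commuted through the variable coefficients --- in particular through $c[\phi]$ and through the weighted divergence $\frac{1}{w^\alpha}\partial_r(w^{1+\alpha}\cdot)$ --- spurious negative powers of $r$ appear as a consequence of working in polar coordinates across the whole ball $[0,1]$. Absorbing these singularities at the center requires exactly the pointwise factor $(r^n/\tau)^{\lambda-2/n}/(1+r^n/\tau)^\lambda$ from~\eqref{E:GAININTRO}, so the sharp spatial form of Theorem~\ref{T:MAINBOUNDPHI} is indispensable, not merely the temporal size $\tau^{j\delta}$. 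Simultaneously, at the vacuum boundary $r=1$ the commutator and lower-order terms demand repeated application of a Hardy inequality tailored to the weight $w^{\alpha+j}$, whose endpoint exponent is dictated by~\eqref{E:IMPORTANTAPRIORI}. Finally, every constant produced must be uniform in $\kappa\in(0,1)$, so that the compactness argument of Section~\ref{S:COMPACTNESS} can send $\kappa\to 0$; this rules out any approach that loses even a fractional power of $1/\kappa$ and is precisely why the coercivity of the $m(m-1)/\tau^2$ term, rather than any time-of-existence or spectral argument, must be used to control $H$ itself.
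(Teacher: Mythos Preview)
Your overall architecture---local well-posedness at $\tau=\kappa$, a priori energy estimates in $S_\kappa^N$, and a continuity argument---matches the paper's, and you correctly identify the coercivity of $m(m-1)/\tau^2-4/(9\phia^3)$ as the mechanism that makes the $H$-equation tractable near $\tau=0$. However, there are two genuine gaps.

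First, your treatment of the right-hand side $F$ is incomplete: $F$ is not just the source $\mathscr S(\phia)$ but also contains the nonlinear terms $\mathscr N[H]$ and the lower-order operator $\ve\mathscr L_{\text{low}}H$, and moreover the principal coefficients $g^{00}$, $g^{01}$, $c[\phi]$ themselves depend on $H$ through $\phi=\phia+\tau^m H/r$. These produce error terms of the schematic form $\sqrt{E^N}\,D^N$ in the energy identity, which after integration give a contribution $(S_\kappa^N)^{3/2}$ on the right. The closing inequality is therefore not linear in $S_\kappa^N$ and cannot be handled by Gr\"onwall; the paper instead obtains
\[
S_\kappa^N(\tau)\le \tfrac{C_0}{2}S_\kappa^N(\kappa)+\ve^{2M+1}+C\sqrt\ve\,S_\kappa^N(\tau)+\bigl(S_\kappa^N(\tau)\bigr)^{3/2}
\]
and closes it by a bootstrap/continuity argument under the a priori assumption $S_\kappa^N\le 1$. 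Your proposed linear Gr\"onwall step would not go through.

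Second, Hardy inequalities alone do not close the estimates near the vacuum boundary. When you commute with $\D_i$ and isolate the top-order elliptic contribution, a residual term $\mathscr M[H]=-\ve\gamma\,\pa_r(c[\phi]/g^{00})L_\alpha H-\ve D_r(\mathscr N_0[H]/g^{00})$ appears which, by naive $w$-power counting, carries two spatial derivatives of $H$ without a compensating factor of $w$. The paper resolves this not by Hardy but by an explicit algebraic cancellation (Lemma~\ref{L:CANCEL}): after expanding $\pa_r\mathscr N_0[H]$ via the structure of $K_{-\gamma}[\theta]$, the dangerous second-derivative terms without $w$ cancel identically, leaving an expression~\eqref{A3K} containing at most one spatial derivative of $H$. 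Without this cancellation your scheme would lose a power of $w$ at $r=1$ and the top-order estimate would not close.
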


%%%%%%%%%%%%%%%%%%%%%%%%%%%%%%%%%%%%

Theorem~\ref{T:MAINEXISTENCETHEOREM} gives uniform-in-$\kappa$ bounds 
for the sequence $H^\kappa$ with initial data specified at time $\tau=\kappa$.
One may for example choose trivial data at $\tau=\kappa$, i.e. set $\sigma=0$ in the above theorem to generate a family of solutions $\{H_\kappa\}_{\kappa\in(0,1]}$.
As $\kappa\to0$ we conclude the existence of
a solution $H$ on $(0,1]$. By~\eqref{E:ANSATZINTRO}, 
this gives a solution $\phi = \phia + \tau^m\frac{H}{r}$ 
of the original problem~\eqref{E:BASICPDEPHI}, thus allowing us to prove Theorem~\ref{T:MAINMAIN} (after going back to the $(s,r)$ coordinate system).
The proof of Theorem~\ref{T:MAINEXISTENCETHEOREM} is given in Section~\ref{SS:PROOFOFKAPPATHEOREM}, while the proof of Theorem~\ref{T:MAINMAIN} is given in Section~\ref{S:COMPACTNESS}.

\begin{remark}
Note that the small parameter $\ve$ used for the construction of the approximate solution $\phia$ enters explicitly in~\eqref{linear}.
\end{remark}

\begin{remark}
As part of the proof of Theorem~\ref{T:MAINEXISTENCETHEOREM}, we also obtain a lower bound on the parameter $M$ - the expansion order of the approximate solution $\phia = \sum_{j=0}^M\ve^j\phi_j.$ A precise formula is given in~\eqref{E:MFORMULA}.
\end{remark}

Many of our energy estimates depend crucially on both the gain of a $\tau^\delta$-power and a power of $\frac{r^{n}}{\tau}$ in
Theorem~\ref{T:MAINBOUNDPHI}. The former allows us to obtain a crucial gain of integrability-in-$\tau$ close to the singular surface 
$\tau=0$, while the latter is needed to absorb negative powers of $r$ arising from the application of the operators $\D_j$ on $\phia$.
This delicate interplay works out, but requires a certain ``numerological" constraint, namely the coefficient $n$ has to be large enough
relative to the total number of derivatives $N$ used in our energy scheme.

Despite the delicate tools and analysis, one term stands
out and seemingly causes a major obstruction to our method.
After commuting the equation with high-order operators $\D_j$ and
evaluating the $(\cdot,\cdot)_{\alpha+j}$-inner product with $\D_j H_\tau$,
an error term $\mathscr M[H]$ defined in~\eqref{E:MH} emerges. A 
simple counting argument suggests that the number of powers of $w$ in $\mathscr M[H]$
is insufficient to close the estimates near the vacuum boundary,
but we carefully exploit a remarkable algebraic structure within the term
and obtain the necessary cancellation, see Lemma~\ref{L:CANCEL}.

The last claim of Theorem~\ref{T:MAINMAIN} shows that the infinitesimal volume of the shrinking domain of our collapsing solution behaves like the infinitesimal volume of the collapsing dust profile. More importantly, 
using~\eqref{E:DENSITYFORMULA}, one can conclude that the qualitative behavior on approach to the singular surface $\tau=0$ 
of the Eulerian density $\tilde\rho$ is the same as that of the dust density, see Section~\ref{S:COMPACTNESS}.

\subsubsection*{Plan of the paper}

Section~\ref{S:HIERARCHY} is devoted to the derivation of the hierarchy of ODEs~\eqref{E:ODEINTRO} and the proof of Theorem~\ref{T:MAINBOUNDPHI}. In Section~\ref{S:MAINRESULT} we derive the equation for the remainder term $H$.  In Section~\ref{S:ENERGIES} we introduce the high-order differentiated version of the $H$-equation derived in Section~\ref{S:MAINRESULT}. We also define high-order energies that arise naturally from integration-by-parts and show (Section~\ref{SS:APRIORI}) that they are  equivalent to norm $S_N^\kappa$ from Definition~\ref{D:NORMDEF}. The remainder of the section is devoted to various a priori estimates and preparatory bounds. In Section~\ref{S:ENERGYESTIMATES} we prove the key energy estimates, culminating in the proof of Theorem~\ref{T:MAINEXISTENCETHEOREM} in Section~\ref{SS:PROOFOFKAPPATHEOREM}. Finally, Theorem~\ref{T:MAINMAIN} is shown in Section~\ref{S:COMPACTNESS}. In Appendices~\ref{A:proofs}--\ref{A:HSembedding} many important properties and analytic tools used in our estimates are shown. We present details of the product and chain rule within vector field classes $\mathcal P$ and $\bar{\mathcal P}$ (Appendix~\ref{A:proofs}), commutator identities (Appendix~\ref{A:COMM}), and the Hardy-Sobolev embeddings (Appendix~\ref{A:HSembedding}). Finally, for 
the sake of completeness, we sketch the local well-posedness argument in Appendix~\ref{A:LOCAL}.

%%%%%%%%%%%%%%%%%%%%%%%%%%%%%%%%%%
%%%%%%%%%%%%%%%%%%%%%%%%%%%%%%%%%%

\subsection{Notation}\label{SS:NOTATION}

\begin{itemize}
\item 
By $\mathbb Z_{\ge0}$, $\mathbb Z_{>0}$ we denote the sets of non-negative  and strictly positive integers respectively.

\item
$C^0([a,b], [c,d])$ denotes the space of continuous functions $(\tau,r)\mapsto f(\tau,r)$ on the set $[a,b]\times [c,d]$.

\item
We use $\| \cdot\|_{L^2}$ to denote $\| \cdot\|_{L^2([0,1];r^2dr)}$ and $\|\cdot\|_\infty$ to denote $\| \cdot\|_{L^\infty([0,1])}$.  

\item
Writing $A\lesssim B$ means that there exists a universal constant $C>0$ such that $A\le CB$. $A\gtrsim B$ simply means $B\lesssim A$. If we write $A\approx B$ we mean $A\lesssim B$ and $A\gtrsim B$.

\item 
For a given $a>0$ we denote the closed three-dimensional ball of radius $a$ centered at $0$ by $B_a(0)$.
\end{itemize}

\section{The hierarchy}\label{S:HIERARCHY}

%\bcr
Formally we would like to 
build a solution of~\eqref{E:BASICPDEPHI}
as a sum of the approximate profile $\phia$ (given as a finite series expansion in the powers of $\ve$) and the remainder term $\theta$ which we
hope to show to be suitably small. In other words, we are looking to write
\be\label{E:BASICPHI}
\phi = \phia+ \theta = \sum_{j=0}^M\ve^j \phi_j + \theta.
\ee

Plugging~\eqref{E:BASICPHI} into~\eqref{E:BASICPDEPHI}, we will now derive a formal
hierarchy of ODEs satisfied by the functions $\phi_j$, $j\in\{1,\dots, M\}$. 
We define the {\em source term $S(\phia)$}:
\begin{align}
S(\phia):=- \pa_\tau^2 \phia - \frac{2}{9\phia^2} - \ve P[\phia] \label{E:SOURCETERM}
\end{align}

We first recall the formula of Faa Di Bruno (see e.g.~\cite{Jo}) which will be repeatedly used in this section. 
Given two functions $f,g$ with formal power series expansions,
\begin{align}\label{E:series}
f(x) = \sum_{n=0}^\infty \frac{f_n}{n!} x^n,  \ \ g(x) = \sum_{n=1}^\infty \frac{g_n}{n!} x^n,
\end{align}
we can compute the formal Taylor series expansion of the composition $h= f \circ g$ %$h(x)=f(g(x))$
via 
\[
h(x) =  \sum_{n=0}^\infty \frac{h_n}{n!} x^n, 
\]
where $f_n$, $g_n$ and $h_n$ are constants with respect to $x$. Faa Di Bruno's formula gives 
\be\label{E:FDB}
h_n = \sum_{k=1}^n\sum_{\pi(n,k)} \frac{n!}{\lambda_1!\dots\lambda_{n}!} f_k \left(\frac{g_1}{1!}\right)^{\lambda_1}\dots \left(\frac{g_{n}}{n!}\right)^{\lambda_{n}}, \ \ 
h_0 = f_0,
\ee
where %$f_k,$ $g_k$ and $h_k$ are constants with respect to $x$ and 
\begin{align}\label{E:PELLKDEF}
\pi(n,k) = \{(\lambda_1,\dots, \lambda_n) : \lambda_i\in\mathbb Z_{\ge0}, \, \sum_{i=1}^n\lambda_i = k, \,
\sum_{i=1}^n i \lambda_i = n\}.
\end{align}

An element of $\pi(n,k)$ encodes the partitions of the first $n$ numbers into $\lambda_i$ classes of cardinality $i$ for $i\in\{1,\dots,k\}$. 
Observe that by necessity $\lambda_j=0$ for any $n-k+2\le j\le n$.
%%%%%%%%%%%%%%%%%%%%%%%%%%%%%%%%%%%%%%%%%%%%%%

\begin{lemma}[Detailed structure of the source term $S(\phia)$] \label{L:SOURCEDETAIL}
The source term $S(\phia)$ given by~\eqref{E:SOURCETERM} satisfies
\begin{align}\label{E:SOURCETERMEXACT}
S(\phia)& = - \sum_{j=0}^M\ve^j\left(\pa_{\tau\tau} \phi_j - \frac49 \phi_j\tau^{-2} - f_j\right) - \ve^{M+1} \left( R_P^\ve+ \phi_0^{-2}R^\ve_{M,2}\right) 
\end{align}
where $R_P^\ve = R_P^\ve[\phi_0,\phi_1,\dots,\phi_M]$ and $R^\ve_{M,2}=R^\ve_{M,2}[\phi_0,\phi_1,\dots,\phi_M]$
are explicitly given by~\eqref{E:RPEPSILONDEF},~\eqref{E:RMEPSILON} below,
$f_0 := 0$, and 
\[
f_j : = - \phi_0^{-2}\frac{\tilde O_{j}}{j!} - \sum_{m+i=j-1,\atop 0\le m,i\le M}\sum_{k=0}^i
\frac{\phi_k\phi_{i-k}}{w^\alpha  r^2} \DL\left(w^{1+\alpha}\mathscr J[\phi_0]^{-\gamma}\frac{h_m}{m!}\right), \ \ j=1,\dots,M-1,
\]
with $\tilde O_j $ and $h_m$ given explicitly below by~\eqref{E:HJTILDE} and~\eqref{E:HFORMULAJ}.
\end{lemma}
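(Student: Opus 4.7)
My plan is a systematic Taylor expansion in $\ve$ of each of the three summands of $S(\phia) = -\pa_\tau^2\phia - \frac{2}{9\phia^2} - \ve P[\phia]$, followed by matching at each order of $\ve$. The first summand is polynomial in $\ve$ and contributes $-\sum_{j=0}^M \ve^j\pa_{\tau\tau}\phi_j$ with no remainder. The other two require two applications of the Faa Di Bruno formula~\eqref{E:FDB}.

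For $-\frac{2}{9\phia^2}$ I would factor $\phia = \phi_0(1+u(\ve))$ with $u(\ve) := \sum_{i=1}^M \ve^i \phi_i/\phi_0$, $u(0)=0$, so that $\phia^{-2} = \phi_0^{-2}(1+u(\ve))^{-2}$. Applying~\eqref{E:FDB} with outer function $f(x) = (1+x)^{-2}$ (derivatives at zero $(-1)^k(k+1)!$) produces at each order $\ve^j$ an explicit polynomial in $\phi_1/\phi_0,\ldots,\phi_j/\phi_0$. The crucial move is to split this polynomial into its linear-in-$u$ piece $-2\phi_j/\phi_0$ and the strictly higher-order tail. The former, multiplied by $-\frac{2}{9\phi_0^2}$, produces $\frac{4\phi_j}{9\phi_0^3} = \frac{4\phi_j}{9\tau^2}$ (using $\phi_0^3 = \tau^2$), which combines with $-\pa_{\tau\tau}\phi_j$ on the left to give exactly the linearized dust operator $-(\pa_{\tau\tau}\phi_j - \frac{4}{9\tau^2}\phi_j)$. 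The higher-order tail is what I call $-\phi_0^{-2}\tilde O_j/j!$, with $\tilde O_j$ declared by~\eqref{E:HJTILDE}, and the order-$M$ Taylor remainder in integral form defines $\phi_0^{-2}R_{M,2}^\ve = O(\ve^{M+1})$.

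For $-\ve P[\phia] = -\frac{\ve\phia^2}{g^2 w^\alpha r^2}\DL\bigl(w^{1+\alpha}\J[\phia]^{-\gamma}\bigr)$, only $\phia^2$ and $\J[\phia]^{-\gamma}$ depend on $\ve$. The Cauchy square yields $\phia^2 = \sum_i \ve^i \sum_{k=0}^i\phi_k\phi_{i-k}$. For the Jacobian I would write $\J[\phia] = \J[\phi_0](1+v(\ve))$ with $v(0)=0$, using $\J[\chi] = \chi^2(\chi + \DL\chi)$ and the polynomial structure of $\phia$, and apply Faa Di Bruno once more with outer function $(1+x)^{-\gamma}$; the resulting Taylor coefficients in $\ve$ are exactly the $h_m/m!$ of~\eqref{E:HFORMULAJ}, with $h_0 = 1$. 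Multiplying the two truncated series and reading off the coefficient of $\ve^{j-1}$ (which becomes the coefficient of $\ve^j$ once the external $\ve$ is absorbed) produces the double sum $\sum_{m+i=j-1}\sum_{k=0}^i \phi_k\phi_{i-k}\,h_m/m!$ of the lemma. Since $\DL$, the weights, and the overall sign are $\ve$-independent, they commute through the sum verbatim and reproduce the second summand of $f_j$. The corresponding Taylor tail yields $R_P^\ve = O(\ve^{M+1})$.

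Combining the three expansions and matching at each order of $\ve$ gives~\eqref{E:SOURCETERMEXACT}. At $j=0$ the dust equation~\eqref{E:DUST} forces the left-hand coefficient to vanish, consistent with $f_0 = 0$: no pressure contribution arises at $\ve^0$, and the linear-in-$u$ piece is exactly what balances the $-\frac{4}{9\tau^2}\phi_0$ piece inside the bracket. For $1 \le j \le M-1$ the formula for $f_j$ is recovered directly from the two expansions. The main obstacle is not conceptual but combinatorial: carefully tracking which portion of each Faa Di Bruno expansion is absorbed into the linearized operator $\pa_{\tau\tau} - \frac{4}{9\tau^2}$, which into $f_j$, and which into the $O(\ve^{M+1})$ tails $R_P^\ve$ and $R_{M,2}^\ve$. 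The index set $\pi(n,k)$ of~\eqref{E:PELLKDEF} delivers the requisite closed forms, but I would display the explicit expressions for $\tilde O_j$, $h_m$, $R_P^\ve$, and $R_{M,2}^\ve$ as separately-referenced identities~\eqref{E:HJTILDE},~\eqref{E:HFORMULAJ},~\eqref{E:RPEPSILONDEF},~\eqref{E:RMEPSILON} rather than unfolding them inline.
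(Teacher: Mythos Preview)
Your proposal is correct and follows essentially the same route as the paper: both expand $\phia^{-2}$ and $\J[\phia]^{-\gamma}$ via Faa Di Bruno, isolate the linear-in-$\phi_j$ piece of the former to produce the linearized dust operator $\pa_{\tau\tau} - \tfrac49\tau^{-2}$, take the Cauchy square for $\phia^2$, and collect the resulting coefficients into $f_j$ and the $O(\ve^{M+1})$ remainders. The paper carries out these steps with exactly the organization you describe, displaying $\tilde O_j$, $h_m$, $R_P^\ve$, $R_{M,2}^\ve$ as separately labeled formulas.
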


%%%%%%%%%%%%%%%%%%%%%%%%%%%%%%%%%%%%%%%%%%%%%%

\begin{proof}
For any $m\in\mathbb N$, $\nu\in\mathbb R$, there exists a smooth function $R^\ve_{m,\nu}:\mathbb R^m\to \mathbb R$ such that 
\be\label{E:EPSILONEXPANSION}
(1+ \ve x_1 + \ve^2 x_2 + \dots \ve^m x_m)^{-\nu} = 1 + \sum_{j=1}^m \ve^j \frac{F_j}{j!} + \ve^{m+1}R^\ve_{m,\nu}(x_1,\dots,x_m),
\ee
where by the formula of Faa Di Bruno
\[
F_j = \sum_{k=1}^j\sum_{\pi(j,k)}  (-\nu)_k \frac{j!}{\lambda_1!\dots\lambda_{j}!} x_1^{\l_1}\dots x_j^{\l_j}, \ \ j=1,\dots,m,
\]
and $R^\ve_{m,\nu}$ is smooth in a neighbourhood of ${\bf 0}$, 
\be\label{E:REPSILONMNUPROPERTIES}
R^\ve_{m,\nu}({\bf 0})=0, \ \ 
\pa_{x^1}R^\ve_{m,\nu}({\bf 0})=0,
\ee
and for any $\ve\in(0,1)$
\[
\|R^\ve_{m,\nu}\|_{C^\ell} \le C_\ell, 
\]
for some constant $C_\ell>0$ which grows as $\ell$ gets larger. 

Recalling~\eqref{E:BASICPHI}, 
\begin{align}
\phia^{-2} & = \phi_0^{-2} \left(1+\sum_{j=1}^M \ve^j \frac{\phi_j}{\phi_0}\right)^{-2} \notag \\
& = \phi_0^{-2} \left(1 + \sum_{j=1}^M \ve^j \frac{O_j }{j!} + \ve^{M+1}R^\ve_{M,2}(\frac{\phi_1}{\phi_0},\dots,\frac{\phi_M}{\phi_0})\right) \notag \\
& = \phi_0^{-2} + \sum_{j=1}^M \ve^j \phi_0^{-2}\frac{O_j }{j!}+  \ve^{M+1}\phi_0^{-2}R^\ve_{M,2}(\frac{\phi_1}{\phi_0},\dots,\frac{\phi_M}{\phi_0}), \label{E:RMEPSILON}
\end{align}
where
\begin{align}
O_j   &= \sum_{\pi(j,k)} \frac{j!}{\lambda_1!\dots\lambda_{j}!} (-2)_k
\left(\frac{\phi_1}{\phi_0}\right)^{\lambda_1}\dots \left(\frac{\phi_{j}}{\phi_0}\right)^{\lambda_{j}} \notag \\
& = \sum_{\pi(j,k)} \frac{j!}{\lambda_1!\dots\lambda_{j}!} (-2)_k \phi_0^{-k}\left(\phi_1\right)^{\lambda_1}\dots \left(\phi_{n}\right)^{\lambda_{j}} \notag \\
& =  \Big[ \sum_{\pi(j,k) \atop k\ge 2}  + \sum_{\pi(j,k) \atop k=1}    \Big] \frac{j!}{\lambda_1!\dots\lambda_{j}!} (-2)_k \phi_0^{-k}\left(\phi_1\right)^{\lambda_1}\dots \left(\phi_{j}\right)^{\lambda_{j}}   \notag\\
& = \sum_{\pi(j,k) \atop k\ge 2} \frac{j!}{\lambda_1!\dots\lambda_{j}!} (-2)_k \phi_0^{-k}\left(\phi_1\right)^{\lambda_1}\dots \left(\phi_{n}\right)^{\lambda_{j}}
-2 \frac{\phi_j}{\phi_0} \notag \\
& = : \tilde O_j  - 2\frac{\phi_j}{\phi_0}. \label{E:HJTILDE}
\end{align}

Note that from \eqref{E:PELLKDEF}, for $k=1,$ $\lambda_{j}=1,$ and $\lambda_{i}=0$ for
$i<j,$ so that the summation for $k=1$ is given by $-2\frac{\phi_{j}}{\phi_{0}}.$ From \eqref{E:PELLKDEF} again, 
for $k\geq2,$ $\lambda_j=0$, and therefore in the definition of $\tilde O_j $, the expression depends only on $\phi_1,\dots\phi_{j-1}$, justifying the notation
$
\tilde O_j  = \tilde O_j[\phi_0,\dots,\phi_{j-1}].
$ 
Note that $\tilde O_1=0$.

Our next goal is to expand the function $\J[\phia]^{-\gamma}$ in the powers of $\ve$. To that end we first observe that 
\begin{align}
\J[\phia]  & = \J[\sum_{k=0}^{M}\ve^k \phi_k] \notag \\
& = \mathscr J[\phi_0] \sum_{k=0}^{M-1} \ve^k \bar{\mathscr J}_k + \ve^{M} R_{\J}[\phi_0,\phi_1,\dots,\phi_M], \label{E:JBARDEFPHI}
%& =: \mathscr J[\chi_0] \sum_{A\ge0} \ve^A \bar{\mathscr J}^A , 
\end{align}
where
\be\label{E:JAFORMULAPHI}
\bar{\mathscr J}_k   = \frac{\J_k}{\J[\phi_0]} : = \sum_{m+i+j=k \atop 0\le m,i,j\le M-1} \frac{\phi_m \phi_i \left(\phi_j+\DL\phi_j\right)}{\J[\phi_0]}, \ \ k\in \{0,1,\dots,M-1\}
\ee
where we note that $\bar{\mathscr J}_0=1$ since $\mathscr J_0=\mathscr J[\phi_0]$. The remainder $R_{\J}$ is given by the formula
\be\label{E:RJDEF}
R_{\J}[\phi_0,\phi_1,\dots,\phi_M]:= \sum_{m+i+j\ge M \atop 0\le m,i,j\le M} \ve^{m+i+j-M}\frac{\phi_m \phi_i \left(\phi_j+\DL\phi_j\right)}{\J[\phi_0]}.
\ee

%We now set $f(x)=(1+x)^{-\gamma}=\sum_{k=0}^\infty \frac{(-\gamma)_k}{k!} x^k$. Observe that 
We have
\begin{align}
\left(\J[\phia]\right)^{-\gamma} & = \left(\mathscr J[\phi_0]\right)^{-\gamma} \left(1+\sum_{k=1}^{M-1} \ve^k\bar\J_k + \ve^{M} R_{\J}[\phi_0,\phi_1,\dots,\phi_M]\right)^{-\gamma} \notag \\ 
& = \left(\mathscr J[\phi_0]\right)^{-\gamma} \left( \sum_{j=0}^{M-1} \ve^j \frac{h_j}{j!} + \ve^{M}\frac{h_{M}}{M!} + \ve^{M+1}R^\ve_{M,\gamma}(\bar\J_1,\dots,\bar \J_{M-1},R_{\J})\right), 
\end{align}
where we use~\eqref{E:EPSILONEXPANSION}. Here $h_0=1$ and the formula of Faa Di Bruno gives
\begin{align}
h_j &= \sum_{k=1}^j \sum_{\pi(j,k)} \frac{j!}{\lambda_1!\dots\lambda_{j}!} (-\gamma)_k \left(\bar\J_1\right)^{\lambda_1}\dots \left(\bar\J_j\right)^{\lambda_{j}} \notag \\
& = \sum_{k=1}^j \sum_{\pi(j,k)} \frac{j!}{\lambda_1!\dots\lambda_{j}!} (-\gamma)_k \J[\phi_0]^{-k}\left(\J_1\right)^{\lambda_1}\dots \left(\J_j\right)^{\lambda_{j}} \label{E:HFORMULAJ}, 
\ \ j=1,\dots M-1,
\end{align}
and
\[
h_{M} =  \sum_{k=1}^{M}\sum_{\pi(M,k)} \frac{M!}{\lambda_1!\dots\lambda_{M}!} (-\gamma)_k \J[\phi_0]^{-k}\left(\J_1\right)^{\lambda_1}\dots \left(\J_{M-1}\right)^{\lambda_{M-1}}R^\ve_{M,\gamma}(\bar\J_1,\dots,\bar \J_{M-1},R_{\J})^{\l_{M}}.
\]
Similarly
\[
\phia^2  = \sum_{j=0}^{M-1} \ve^j \sum_{k=0}^j \phi_k\phi_{j-k} + \ve^{M}R^\ve[\phi_1,\dots,\phi_M],
\]
where
\[
R^\ve[\phi_1,\dots,\phi_M] := \sum_{k+m\ge M \atop 0\le k,m\le M} \ve^{k+m-M}\phi_k\phi_m
\]
we finally have 
\begin{align}
& P[\phia] \notag \\
&  = \frac{\sum_{j=0}^{M-1} \ve^j \sum_{k=0}^j \phi_k\phi_{j-k} + \ve^{M}R^\ve}{g^2(r) w^\alpha  r^2} \DL\left(w^{1+\alpha}\mathscr J[\phi_0]^{-\gamma}
\left(\sum_{j=0}^{M-1} \ve^j \frac{h_j}{j!} + \ve^{M}\frac{h_{M}}{M!} + \ve^{M+1}R^\ve_{M,\gamma}\right) \right) \notag \\
& = \sum_{j=0}^{M-1} \ve^j \left\{ \sum_{m+i=j,\atop 0\le m,i\le M-1}\sum_{k=0}^i
\frac{\phi_k\phi_{i-k}}{w^\alpha  r^2} \DL\left(w^{1+\alpha}\mathscr J[\phi_0]^{-\gamma}\frac{h_m}{m!}\right) \right\} 
+\ve^{M} R_P^\ve[\phi_0,\phi_1,\dots,\phi_M], 
\label{E:PEXPANSION}
\end{align}
where 
\begin{align}
R_P^\ve  = & 
 \sum_{m+i\ge M,\atop 0\le m,i\le M}\ve^{m+i-M}\sum_{k=0}^j
\frac{\phi_k\phi_{i-k}}{w^\alpha  r^2} \DL\left(w^{1+\alpha}\mathscr J[\phi_0]^{-\gamma}\frac{h_m}{m!}\right) \notag \\
 & + \frac{R^\ve}{g^2(r) w^\alpha  r^2} \DL\left(w^{1+\alpha}\J[\phia]^{-\gamma} \right) \notag \\
& +\frac{\sum_{j=0}^{M-1} \ve^j \sum_{k=0}^j \phi_k\phi_{i-k}}{g^2(r) w^\alpha  r^2} \DL\left(w^{1+\alpha}\mathscr J[\phi_0]^{-\gamma}\left(\frac{h_{M}}{M!} + \ve R^\ve_{M,\gamma} \right)\right) \label{E:RPEPSILONDEF}
\end{align}

From the definition of the source term~\eqref{E:SOURCETERM} in therefore follows
\begin{align}
S(\phia) & = - \sum_{j=0}^M \ve^j \pa_{\tau\tau} \phi_j + \frac49 \sum_{j=0}^M\ve^j\phi_j\tau^{-2} - \sum_{j=0}^{M-1} \ve^{j+1} \phi_0^{-2}\frac{\tilde O_{j+1}}{(j+1)!} \notag \\
& \ \ \ \ - \sum_{j=0}^{M-1} \ve^{j+1} \left\{ \sum_{m+i=j,\atop 0\le m,i\le M}\sum_{k=0}^i
\frac{\phi_k\phi_{i-k}}{w^\alpha  r^2} \DL\left(w^{1+\alpha}\mathscr J[\phi_0]^{-\gamma}\frac{h_m}{m!}\right) \right\}  \notag \\
& \ \ \ \ - \ve^{M+1} \left( R_P^\ve[\phi_0,\phi_1,\dots,\phi_M] + \phi_0^{-2}R^\ve_{M,2}[\frac{\phi_1}{\phi_0},\dots,\frac{\phi_M}{\phi_0}]\right) \notag \\
& = - \sum_{j=0}^M\ve^j\left(\pa_{\tau\tau} \phi_j - \frac49 \phi_j\tau^{-2} - f_j\right) - \ve^{M+1} \left( R_P^\ve+ \phi_0^{-2}R^\ve_{M,2}\right),
%& = - \ve^{M+1} \left( R_P^\ve+ \phi_0^{-2}R^\ve_{M,2}\right),
\end{align}
with $f_0 := 0$ and 
\be\label{E:FNDEFPHI}
f_j : = - \phi_0^{-2}\frac{\tilde O_{j}}{j!} - \sum_{m+i=j-1,\atop 0\le m,i\le M}\sum_{k=0}^i
\frac{\phi_k\phi_{i-k}}{w^\alpha  r^2} \DL\left(w^{1+\alpha}\mathscr J[\phi_0]^{-\gamma}\frac{h_m}{m!}\right), \ \ j=1,\dots,M-1,
\ee
as claimed.
\end{proof}

Motivated by the previous lemma, we  define the {\em hierarchy} of ODEs 
\begin{align}\label{E:HPHI}
\pa_{\tau\tau}\phi_{j+1} - \frac{4\phi_{j+1}}{9\tau^2} = f_{j+1}, \ \ j\in\{0,1,\dots M-1\},
\end{align}
where $\phi_{0}^{3}=\tau^{2}$, $f_j$ is given by~\eqref{E:FNDEFPHI} and 
$\tilde O_j $ and $h_j$ given by~\eqref{E:HJTILDE} and~\eqref{E:HFORMULAJ} respectively.

With $\{\phi_j\}_{j=1,\dots,M}$ satisfying~\eqref{E:HPHI}, Lemma~\ref{L:SOURCEDETAIL}
in particular implies that 
\begin{align}\label{E:SOURCETERM1}
S(\phia) = - \ve^{M+1} \left( R_P^\ve+ \phi_0^{-2}R^\ve_{M,2}\right), 
\end{align}
with $R_P^\ve = R_P^\ve[\phi_0,\phi_1,\dots,\phi_M]$ and $R^\ve_{M,2}=R^\ve_{M,2}[\phi_0,\phi_1,\dots,\phi_M]$
 given by~\eqref{E:RPEPSILONDEF},~\eqref{E:RMEPSILON}. Therefore, by solving the hierarchy up to order $M$ we force the source term 
 to be of order $\ve^{M+1}$.
% \ec

\subsection{Solution operators and definition of $\phi_j$, $j\in\mathbb Z_{>0}$}
For any $\gamma\in(1,\frac43)$  we define
\be\label{E:NDEF}
N= N(\gamma) := %2\left\lfloor \frac1{\gamma-1}\right\rfloor + 4.
\left\lfloor \frac1{\gamma-1}\right\rfloor + 6 %\bcg 
=\lfloor\alpha\rfloor+6. %\ec
\ee
The number $N$ will later correspond to the total number of derivatives used in our energy estimates. 

%%%%%%%%%%%%%%%%%%%%%%%%%%%%%%%%%%%%%

\begin{definition}[The ``gain" $\delta$ and $\delta^\ast$]
Let $\gamma\in(1,\frac43)$ be given and let $\bar\gamma = \frac43-\gamma$. For any natural number $n>\frac{N+2}{2\bar\gamma}$ 
%$1<\gamma<\frac43$ choose a natural number $n$ such that $n\ge \max\{\lceil \frac{1}{\frac43-\gamma}\rceil+1, ..\}$. We then 
we define 
\begin{align}
\delta = \delta(n) & : = 2 \left(\frac43-\gamma-\frac1n\right) \label{E:DELTADEF} \\
\delta^\ast =\delta^\ast(n)& : = \delta(n) - \frac{N}{n} = \frac83-2\gamma-\frac{N+2}n \label{E:DELTASTARDEF}
\end{align}
\end{definition}

%%%%%%%%%%%%%%%%%%%%%%%%%%%%%%%%%%%%%

\begin{lemma}\label{L:NUMERICAL}
Let $\gamma\in(1,\frac43)$ be given and fix an arbitrary natural number $a\in \mathbb Z_{>0}$. Then there exists an $n^*=n^*(\gamma, a)$ such that 
\be\label{E:NUMERICAL}
\left\lfloor \frac2{3\delta(n)}\right\rfloor \delta(n) + \frac2n <\frac23 <\left(\left\lfloor \frac2{3\delta(n)}\right\rfloor+1\right)\delta(n) - \frac{a}{n}, \ \ n\ge n^*.
\ee
In fact
\[
\left\lfloor \frac2{3\delta(n)}\right\rfloor = \left\lfloor \frac1{3\bar\gamma}\right\rfloor, \ \ n\ge n^*.
\]
\end{lemma}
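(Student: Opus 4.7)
The plan is to observe that $\delta(n) = 2\bar\gamma - \frac{2}{n}$ and hence $\frac{2}{3\delta(n)} = \frac{1}{3\bar\gamma - 3/n}$ converges to $q := \frac{1}{3\bar\gamma}$ from above, and then to check all three assertions as $O(1/n)$ perturbations of the limiting identity $3\bar\gamma q = 1$.

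First I would fix $k := \lfloor q \rfloor$. Since $\gamma > 1$ forces $\bar\gamma < \frac13$, we have $q > 1$, so $k \geq 1$; moreover, if $q$ is itself an integer, then in fact $q \geq 2$. A direct computation shows that $\frac{2}{3\delta(n)} \geq k$ is equivalent to $3k\bar\gamma - 1 \leq \frac{3k}{n}$, which holds for every $n$ because $3k\bar\gamma \leq 1$, while $\frac{2}{3\delta(n)} < k+1$ is equivalent to $\frac{3(k+1)}{n} < 3(k+1)\bar\gamma - 1$, whose right-hand side is a strictly positive constant since $k+1 > q$. This gives $\lfloor \frac{2}{3\delta(n)} \rfloor = k$ for $n \geq n_0(\gamma)$, proving the final assertion of the lemma.

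Next I would substitute $\delta(n) = 2\bar\gamma - 2/n$ into the two main inequalities. The left one reduces to
\begin{equation*}
2k\bar\gamma + \frac{2 - 2k}{n} < \frac{2}{3}.
\end{equation*}
If $k < q$ strictly, then $\frac{2}{3} - 2k\bar\gamma$ is a fixed positive number and the $O(1/n)$ correction is absorbed for $n$ large; if $k = q$ is an integer, then $2k\bar\gamma = \frac{2}{3}$ exactly, and the correction $(2-2k)/n$ is strictly negative since $k \geq 2$, so the inequality holds for every $n \geq 1$. The right inequality reduces to
\begin{equation*}
2(k+1)\bar\gamma - \frac{2(k+1)+a}{n} > \frac{2}{3},
\end{equation*}
and because $k+1 > q$ in both sub-cases, $2(k+1)\bar\gamma - \frac{2}{3}$ is a strictly positive constant, so the correction is absorbed once $n$ exceeds a threshold $n_1(\gamma, a)$.

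Taking $n^*(\gamma, a)$ to be the maximum of the three thresholds produced above completes the proof. The only mild subtlety is the exceptional case in which $q = 1/(3\bar\gamma)$ happens to be an integer, and this is cleanly handled by the elementary observation $q \geq 2$, which makes the $O(1/n)$ correction in the left inequality point in the favorable direction rather than requiring it to be dominated by a positive limiting margin.
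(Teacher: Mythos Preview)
Your proof is correct and follows essentially the same elementary perturbation approach as the paper: both recognise that $\delta(n)\to 2\bar\gamma$ and $\frac{2}{3\delta(n)}\to q=\frac{1}{3\bar\gamma}$, stabilise the floor, and then check the two inequalities as $O(1/n)$ corrections. The paper divides the inequality through by $\delta(n)$ to rewrite it as
\[
j+\frac{1}{n\bar\gamma-1}<\frac{1}{3\bar\gamma}+\frac{1}{3\bar\gamma(n\bar\gamma-1)}<j+1-\frac{a}{2(n\bar\gamma-1)},
\]
and then simply invokes $1<\frac{1}{3\bar\gamma}$, whereas you substitute $\delta(n)=2\bar\gamma-\frac{2}{n}$ directly and separate out the integer case $q\in\mathbb Z$ explicitly; your treatment of that borderline case (noting $q\ge 2$ so that the coefficient $2-2k$ is negative) is more careful than the paper's terse argument, but the underlying idea is the same.
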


%%%%%%%%%%%%%%%%%%%%%%%%%%%%%%%%%%%%%

\begin{proof}
For the simplicity of notation let $j:=\left\lfloor \frac2{3\delta(n)}\right\rfloor$. Then it is easy to check that~\eqref{E:NUMERICAL} is equivalent to
\begin{align}
j+  \frac 1{n\bar\gamma-1}<\frac1{3\bar\gamma} + \frac1{3\bar\gamma(n\bar\gamma-1)} < j+1 - \frac a{2(n\bar\gamma-1)}.
\end{align}
Since $1<\frac1{3\bar\gamma}$ it is
clear that the above inequality will be true if  $n$ is chosen sufficiently large.
\end{proof}

%%%%%%%%%%%%%%%%%%%%%%%%%%%%%%%%%%%%%
%%%%%%%%%%%%%%%%%%%%%%%%%%%%%%%%%%%%%

\begin{remark}
Lemma~\ref{L:NUMERICAL} implies in particular $\frac2{3\delta} \notin \mathbb Z_{>0}$ since by~\eqref{E:NUMERICAL} $\left\lfloor \frac2{3\delta}\right\rfloor  <\frac{2}{3\delta}$.
\end{remark}

%%%%%%%%%%%%%%%%%%%%%%%%%%%%%%%%%%%%%

\begin{definition}[Regularity parameter $\l$]\label{D:ALAMBDADEF}
Let $\gamma\in(1,\frac43)$ be given. Choose an 
$n>n^*(\gamma,2N(\gamma))$ (where $n^*(\gamma,a)$ is given by Lemma~\ref{L:NUMERICAL}) sufficiently large so that 
\[
\l: = \frac{2N}{n}<1.
\] 
\end{definition}

%%%%%%%%%%%%%%%%%%%%%%%%%%%%%%%%%%%%%

%%%%%%%%%%%%%%%%%%%%%%%%%%%%%%%%%%%%%
\begin{remark}
A simple consequence of Lemma~\ref{L:NUMERICAL} and Definition~\ref{D:ALAMBDADEF}
is the bound
\begin{align}\label{E:DELTALOWBOUND}
\delta > \frac{2N+2}{n}, \ \ \text{ i.e. } \ \delta^\ast >\frac{N+2}{n}.
\end{align}
\end{remark}
%%%%%%%%%%%%%%%%%%%%%%%%%%%%%%%%%%%%%

%\bcr
Motivated by~\eqref{E:HPHI}, consider for a moment a general inhomogeneous ODE of the form
\[
\pa_{\tau\tau}\phi - \frac{4}{9\tau^2}\phi = f.
\]
A simple calculation shows that the previous ODE is formally equivalent to
\[
\tau^{-\frac43}\pa_\tau \left(\tau^{\frac83}\pa_\tau \left(\tau^{-\frac43} \phi\right)\right) = f.
\]
This motivates the following definition of the {\em solution} operators:
%\ec 
%Let us introduce the solution operators:
\begin{align}\label{E:S1}
S_1[f,g,h]( \tau, r) = f( \tau, r) \int_\tau^1 g(\tau', r ) \int_{\tau'}^0 h(\tau'', r )\,d\tau''d\tau', \ \ \tau\in[0,1],
\end{align}

\begin{align}\label{E:S2}
S_2[f,g,h]( \tau, r) = f( \tau, r) \int_0^\tau g(\tau', r ) \int_{0}^{\tau'} h(\tau'', r )\,d\tau''d\tau', \ \ \tau\in[0,1].
\end{align}

By direct inspection, one can check that for a given $f$ functions $S_i[\tau^{\frac43},\tau^{-\frac83},\tau^{\frac43} f]$, $i=1,2$ are
solutions of $\pa_{\tau\tau}\phi - \frac{4}{9\tau^2}\phi = f$.
We define
\begin{align} \label{E:CHINFORMULA}
\phi_j : = 
\begin{cases}
 S_1[\tau^{\frac43},\tau^{-\frac83},\tau^{\frac43} f_j]& \text{ if } \ j\le %\bcr 
 \left\lfloor \frac1{3\bar\gamma}\right\rfloor %\ec
 ,\\  
S_2[\tau^{\frac43},\tau^{-\frac83},\tau^{\frac43} f_j]& \text{ if } \ j>%\bcr 
\left\lfloor \frac1{3\bar\gamma}\right\rfloor. %\ec
\end{cases}
\end{align}

The above definition of the solution is designed to enforce the gain of $\tau^\delta$ with respect to the previous iterate for all $j\in\{1,\dots,M\}$. Since $M\gg\lfloor \frac1{3\bar\gamma}\rfloor$, the above choice of the formula at the index values $j>\lfloor \frac1{3\bar\gamma}\rfloor$ is crucial, see Proposition~\ref{P:MAINBOUNDPHI} and Lemma~\ref{L:lem2.12}.

%\bcr
\subsection{Bounds on $\phi_j$ and proof of Theorem~\ref{T:MAINBOUNDPHI} }
%\ec
%{\bf Inductive Assumptions.}
The main goal of this section is the proof of Theorem~\ref{T:MAINBOUNDPHI}. To that end, we need a number of preparatory steps. We first introduce the notation
\begin{align}
q_\nu(x)&: = (1+x)^\nu, \ \ \nu\in\mathbb R, \ \ x\ge0, \label{E:QDEF} \\
p_{\mu,\nu}(x) & : = \frac{x^{\mu+\nu}}{(1+x)^\mu}, \ \ \mu,\nu\in\mathbb R, \ \  x\ge0.  \label{E:PFUNCTIONDEF}
\end{align}

%\bcr
For the remainder of the section, constants $M,K\in\mathbb Z_{>0}$ are arbitrarily large fixed constants. 

%%%%%%%%%%%%%%%%%%%%%%%%%%%%%%%%%%%%%%%%%%

\begin{lemma}[Basis of the induction]\label{L:BASIS}
Let $\phi_1$ be given by \eqref{E:CHINFORMULA}. 
Then
\begin{align}
\left\vert\pa_\tau^m \rr^{\ell}\phi_{1}\right\vert & \lesssim  \tau^{\frac23 +\delta-m}\pet, \ \ \ell,m \in \{0,1,\dots,K\}.
\label{E:INDUCTIONBASIS}
\end{align}
\end{lemma}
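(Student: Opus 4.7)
The plan is to exploit the explicit formula $\phi_1 = S_1[\tau^{4/3}, \tau^{-8/3}, \tau^{4/3} f_1]$ from~\eqref{E:CHINFORMULA} (the use of $S_1$ rather than $S_2$ is legitimate since $1 \le \lfloor 1/(3\bar\gamma)\rfloor$ by Lemma~\ref{L:NUMERICAL}): first compute $f_1$ and derive pointwise bounds in the $p_{\mu,\nu}$-framework of~\eqref{E:PFUNCTIONDEF}, then carry those bounds through the nested $\tau$-integrations, and finally differentiate.

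From~\eqref{E:FNDEFPHI} with $j=1$, the identities $\tilde O_1 = 0$ and $h_0 = 1$ leave only
\[
f_1 = -\frac{\phi_0^2}{w^\alpha r^2}\,\DL\bigl(w^{1+\alpha}\,\J[\phi_0]^{-\gamma}\bigr).
\]
I would substitute $\phi_0 = \tau^{2/3}$ and $\J[\phi_0] = \tau^2(1 + \tfrac{2M_g}{3\tau})$ from~\eqref{E:DUSTJACOBIAN}, expand $\DL = M_g\pa_\tau + r\pa_r$, and use property (w3) together with~\eqref{E:GTAYLOR2} to show that every occurrence of $M_g$ or $(r\pa_r)^k g$ carries an $r^n$-factor near the origin, so that $1 + \tfrac{2M_g}{3\tau}$ is comparable to $1 + r^n/\tau$. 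A direct product- and chain-rule argument then yields
\[
\bigl|\pa_\tau^m (r\pa_r)^\ell f_1\bigr| \lesssim \tau^{4/3 - 2\gamma - m}\, p_{\gamma + m,\, 1 - 2/n - \gamma}\!\left(\tfrac{r^n}{\tau}\right)
\]
for every $\ell, m \in \{0, 1, \ldots, K\}$, since each $\pa_\tau$ lowers the $\tau$-power by one and raises the $\mu$-parameter of $p_{\mu,\nu}$ by one, while each $r\pa_r$ either produces a bounded derivative of $g$ or an extra $r^n$-factor. Behavior near $r=1$ causes no trouble because each term inherits a non-negative power of $w$ from the $w^{1+\alpha}$ sitting inside $\DL$, and (w2) keeps the expressions smooth.

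For the $S_1$ integration, the spatial operator $(r\pa_r)^\ell$ commutes with the $\tau'$- and $\tau''$-integrals, so the $\ell$-dependence is inherited directly from the bound on $f_1$. For $\pa_\tau^m$, I would commute $\pa_\tau$ through the outer and inner integrals, producing boundary terms of the form $\tau^{\text{good}}$ plus lower-order nested integrals that reduce to the base case. In that base case ($m = \ell = 0$), I would split the inner integral $\int_0^{\tau'}(\tau'')^{4/3} f_1(\tau'', r)\, d\tau''$ at $\tau'' = r^n$ to treat separately the regimes $r^n/\tau'' \lessgtr 1$ of the $p$-factor; a routine computation gives
\[
\Bigl|\int_0^{\tau'}(\tau'')^{4/3} f_1(\tau'', r)\, d\tau''\Bigr| \lesssim (\tau')^{11/3 - 2\gamma}\, p_{\gamma,\, 1 - 2/n - \gamma}\!\left(\tfrac{r^n}{\tau'}\right).
\]
The outer integral $\int_\tau^1(\tau')^{-8/3}\cdot d\tau'$ carries an integrand factor $(\tau')^{1-2\gamma}$ with $1 - 2\gamma < -1$, so it is dominated by its $\tau' = \tau$ endpoint and contributes $\tau^{2-2\gamma}$. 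Multiplication by the prefactor $\tau^{4/3}$ produces $\tau^{10/3 - 2\gamma}\, p_{\gamma, 1-2/n-\gamma}(r^n/\tau)$. Since $10/3 - 2\gamma = 2/3 + \delta + 2/n$, writing $\tau^{2/n} = r^2 (r^n/\tau)^{-2/n}$ converts the excess into a weight that is absorbed by the relaxation $p_{\gamma, 1-2/n-\gamma}(x) \le C\, p_{\lambda, -2/n}(x)\, x^{-(1-2/n-\lambda)}$, which is valid uniformly in $x \ge 0$ precisely because $\lambda < 1 - 2/n$ by Definition~\ref{D:ALAMBDADEF}.

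The main obstacle will be the detailed bookkeeping of the $p_{\mu,\nu}$-parameters through the nested integrations, in particular the case split at $\tau'' = r^n$. A clean auxiliary lemma of the shape $\int_0^{\tau'} (\tau'')^a p_{\mu,\nu}(r^n/\tau'')\, d\tau'' \lesssim (\tau')^{a+1}\, p_{\mu-1, \nu}(r^n/\tau')$, valid in a suitable parameter range containing the one that appears here, would clarify the argument considerably, and I would isolate and prove such a statement as an elementary preliminary before invoking it for $\phi_1$. A secondary concern is verifying the commutation of $\pa_\tau^m$ with the $S_1$ integration and checking that all boundary contributions at $\tau' = 1$ are in fact strictly better behaved than the leading $\tau' = \tau$ contribution; this is straightforward but tedious.
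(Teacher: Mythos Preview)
Your approach is correct and follows the same overall strategy as the paper: bound $f_1$ pointwise in the $p_{\mu,\nu}$-framework, then push those bounds through the two nested integrals of $S_1$. The paper organises this a little differently, and the differences are worth noting.

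First, rather than deriving the $f_1$ bound from scratch, the paper proves a single general estimate $|\pa_\tau^m(r\pa_r)^\ell f_j|\lesssim \tau^{-4/3+j\delta-m}\,p_{\lambda,-2/n}(r^n/\tau)$ for all $j$ (Lemma~\ref{L:FNPLUSONEBOUND}) and then observes (Remark~\ref{R:BASIS}) that for $j=1$ the argument uses only $\phi_0$ and not the inductive hypothesis. So the base case comes for free from the general lemma.

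Second, and more substantively, the paper \emph{relaxes the weight first and integrates afterward}: the sharp bound $q_{-\gamma}(x)\,x^{1-2/n}$ is immediately weakened to $p_{\lambda,-2/n}(x)$ at the level of $f_1$, so that only the single weight $p_{\lambda,-2/n}$ ever enters the integration. Your auxiliary lemma is then precisely Lemma~\ref{L:lem2.12}, but in the simpler form $\int_0^{\tau}(\tau')^\beta p_{\lambda,-2/n}(r^n/\tau')\,d\tau'\lesssim \tau^{\beta+1}p_{\lambda,-2/n}(r^n/\tau)$ (and the analogous bound for $\int_\tau^1$), with no $\mu$-shift to track. This eliminates exactly the bookkeeping you flag as the main obstacle; the case split at $\tau''=r^n$ is the content of that lemma.

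Third, for $m\ge 2$ the paper does not differentiate under the integral but instead uses the ODE $\pa_{\tau\tau}\phi_1=(4/9)\tau^{-2}\phi_1+f_1$ directly: once $\phi_1$ and $\pa_\tau\phi_1$ are controlled, higher $\tau$-derivatives follow by bootstrapping from the equation and the already-established bounds on $\pa_\tau^{m-2}(r\pa_r)^\ell f_1$. This avoids the boundary terms you mention entirely.

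In short: your route works, but the paper's ordering --- relax to $p_{\lambda,-2/n}$ first, isolate the integration lemma in that single weight, and use the ODE for higher $\tau$-derivatives --- cuts out most of the tedious tracking you anticipate.
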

%\ec

%%%%%%%%%%%%%%%%%%%%%%%%%%%%%%%%%%%%%%%%%%

The main result of this section is the quantitative estimate on the space-time derivatives of the iterates $\phi_j$. 

%%%%%%%%%%%%%%%%%%%%%%%%%%%%%%%%%
%%%%%%%%%%%%%%%%%%%%%%%%%%%%%%%%%

\begin{proposition}[Inductive step]\label{P:MAINBOUNDPHI}
Let $\phi_j$ be given by \eqref{E:CHINFORMULA}. Let $1\le I< M$ be given and assume that for any $j\in\{1,\dots, I\}$ 
and any $\ell,m \in \{0,1,\dots,K\}$ we have
\begin{align}
\left\vert \pa_\tau^m \rr^{\ell} \phi_j \right\vert  & \lesssim  \tau^{\frac23 + j\delta-m} p_{\l,-\frac2n}\et,
\ \ % \bcr 
\text{(Inductive Assumptions).} %\ec 
\label{E:ASS1} 
%\\
%\left\vert D^\ell \phi_j \right\vert & \lesssim \tau^{\frac23 + j\delta} q_\ell\et, \ \ \ell\in\mathbb Z_{>0}. \label{E:ASS2}
\end{align}
Then for any $\ell,m \in \{0,1,\dots,K\}$ the following bound holds
\begin{align}
\left\vert\pa_\tau^m \rr^{\ell}\phi_{I+1}\right\vert & \lesssim  \tau^{\frac23 + (I+1)\delta-m}\pet,\label{E:ASS1RECOVERED}
%\left\vert D^\ell \phi_{K+1} \right\vert & \lesssim \tau^{\frac23 + (K+1)\delta} q_\ell\et. \label{E:ASS2RECOVERED}
\end{align}
%\bcr 
where $\l = \frac{2N}{n}$ is given in Definition~\ref{D:ALAMBDADEF}. %\ec
\end{proposition}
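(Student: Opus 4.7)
The plan is to derive \eqref{E:ASS1RECOVERED} in two stages: first bound the forcing $f_{I+1}$ in \eqref{E:FNDEFPHI} using the inductive hypothesis \eqref{E:ASS1}, then insert the bound into the explicit formula \eqref{E:CHINFORMULA} for $\phi_{I+1}$ and extract the advertised decay in $\tau$ together with the $p_{\lambda,-2/n}(r^n/\tau)$ shape.

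I first claim that under \eqref{E:ASS1} one has, for every $\ell,m\in\{0,\dots,K\}$,
\[
\lv \pa_\tau^m (r\pa_r)^\ell f_{I+1}\rv \lesssim \tau^{-\frac43+(I+1)\delta-m}\,p_{\lambda,-\frac2n}\!\left(\tfrac{r^n}{\tau}\right).
\]
This is obtained termwise from \eqref{E:FNDEFPHI}. For the algebraic part $-\phi_0^{-2}\tilde O_{I+1}/(I+1)!$, each summand of $\tilde O_{I+1}$ is a product $\phi_0^{-k}\phi_1^{\lambda_1}\cdots\phi_I^{\lambda_I}$ with $k\ge 2$ and $\sum i\lambda_i = I+1$, so the inductive hypothesis and $\phi_0=\tau^{2/3}$ combine to give exactly the $\tau$-exponent $-2(k+2)/3 + \sum\lambda_i(2/3+i\delta) = -4/3+(I+1)\delta$; the product of $k$ factors $p_{\lambda,-2/n}$ is bounded by a single $p_{\lambda,-2/n}$ since this function is uniformly bounded. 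For the pressure summands $\frac{\phi_k\phi_{i-k}}{w^\alpha r^2}\DL\!\bigl(w^{1+\alpha}\J[\phi_0]^{-\gamma}h_m/m!\bigr)$ with $m+i=I$, I use $\J[\phi_0]\approx\tau(\tau+r^n)$ from \eqref{E:DUSTJACOBIAN2}, so $\J[\phi_0]^{-\gamma}\sim\tau^{-2\gamma}q_{-\gamma}(r^n/\tau)$, and I bound $h_m$ via the induction applied to the ratios $\bar{\mathscr J}_k = \mathscr J_k/\J[\phi_0]$ appearing in \eqref{E:HFORMULAJ}; the operator $\DL = M_g\pa_\tau + r\pa_r$ preserves this structure because $M_g/\tau \sim r^n/\tau$ and $(r^n/\tau)q_{-\gamma}\lesssim q_{1-\gamma}$. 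After the cancellation $w^{1+\alpha}/w^\alpha = w$ with the boundary weights and the elementary exponent count, one recovers the displayed bound on $f_{I+1}$ and, by differentiating the same formulas, on its derivatives.

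With this bound on $f_{I+1}$, I now apply the appropriate solution operator. If $I+1\le\lfloor 1/(3\bar\gamma)\rfloor$, then $(I+1)\delta<2/3$ and $\phi_{I+1}=S_1[\tau^{4/3},\tau^{-8/3},\tau^{4/3}f_{I+1}]$. The inner integral $\int_0^{\tau'}\tau''^{4/3}f_{I+1}\,d\tau''$ is absolutely convergent and behaves as $\tau'^{1+(I+1)\delta}p_{\lambda,-2/n}(r^n/\tau')$, the outer integral $\int_\tau^1 \tau'^{-5/3+(I+1)\delta}\,d\tau'$ is dominated by $\tau^{-2/3+(I+1)\delta}/|2/3-(I+1)\delta|$, and multiplying by the prefactor $\tau^{4/3}$ yields exactly $\tau^{2/3+(I+1)\delta}$. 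If instead $I+1>\lfloor 1/(3\bar\gamma)\rfloor$, then by Lemma~\ref{L:NUMERICAL} we have $(I+1)\delta>2/3$, and $\phi_{I+1}=S_2[\tau^{4/3},\tau^{-8/3},\tau^{4/3}f_{I+1}]$ with both integrals taken from $0$; convergence at $0$ is now guaranteed by the surplus $(I+1)\delta-2/3>0$, and the same exponent count produces $\tau^{2/3+(I+1)\delta}$. In both regimes the $r$-dependence in the bound on $f_{I+1}$ factors through the $\tau$-integrals, delivering a single $p_{\lambda,-2/n}(r^n/\tau)$ in the final bound by monotonicity in $\tau$.

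The bounds on $\pa_\tau^m(r\pa_r)^\ell \phi_{I+1}$ then follow by differentiating under the $S_i$-integrals: $r\pa_r$ commutes with the $\tau$-integration and hits $f_{I+1}$ directly, for which an analogous estimate is already in place; each $\pa_\tau$ either acts on the prefactor $\tau^{4/3}$, on the upper limit $\tau'=\tau$ via the fundamental theorem, or differentiates under the integral sign, and in every case lowers the net $\tau$-power by exactly one, matching the $\tau^{-m}$ on the right-hand side of \eqref{E:ASS1RECOVERED}. The main obstacle I anticipate is the bookkeeping of the $\DL$-derivatives of the composite factors $w^{1+\alpha}\J[\phi_0]^{-\gamma}h_m$: a naive count spawns extra powers of $r^n/\tau$ from $M_g\pa_\tau\J[\phi_0]^{-\gamma}$ and from $\pa_r h_m$, and the key check is that each such extra power is absorbed by the factor $q_{-\gamma}(r^n/\tau)$ already present in $\J[\phi_0]^{-\gamma}$, so that the output still carries only a single $p_{\lambda,-2/n}(r^n/\tau)$. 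The alignment of these cancellations with the gain $(I+1)\delta$ in $\tau$, and with the switch between $S_1$ and $S_2$ at the index $\lfloor 1/(3\bar\gamma)\rfloor$, is precisely where $\gamma<4/3$ (through $\delta>0$) and the largeness of $n$ (through $\lambda>2/n$) are decisive.
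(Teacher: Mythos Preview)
Your overall strategy matches the paper's: bound $f_{I+1}$ (this is Lemma~\ref{L:FNPLUSONEBOUND}), feed it into $S_1$ or $S_2$, then handle derivatives. The exponent count for $\tilde O_{I+1}$ is correct, and you correctly identify the switch between $S_1$ and $S_2$ at the index $\lfloor 1/(3\bar\gamma)\rfloor$.

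There is, however, a genuine gap in how you pass the weight $p_{\lambda,-2/n}(r^n/\tau')$ through the $\tau$-integrals. You write that the $r$-dependence ``factors through \ldots\ by monotonicity in $\tau$,'' but $p_{\lambda,-2/n}(r^n/\tau)$ is \emph{not} monotone in $\tau$ (it behaves like $(r^n/\tau)^{\lambda-2/n}$ near $r^n/\tau=0$ and like $(r^n/\tau)^{-2/n}$ near $\infty$), so this step does not go through as stated. The paper isolates exactly this point in Lemma~\ref{L:lem2.12}, which shows
\[
\int_0^\tau (\tau')^\beta p_{\lambda,-2/n}\!\left(\tfrac{r^n}{\tau'}\right)d\tau' \lesssim \tau^{\beta+1}p_{\lambda,-2/n}\!\left(\tfrac{r^n}{\tau}\right)
\quad\text{if }\beta-\lambda+\tfrac2n>-1,
\]
and the analogous bound on $\int_\tau^1$ under $b+2/n<-1$, each via a change of variables $x=\tau'/r^n$ and a case split $r^n\gtrless\tau$. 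In the $S_2$ branch the outer integral has $\beta=-5/3+(I+1)\delta$, so you need $(I+1)\delta>2/3+\lambda-2/n$, strictly stronger than the $(I+1)\delta>2/3$ you invoke; this is precisely where the second inequality in Lemma~\ref{L:NUMERICAL} (with $a=2N-2$) and the choice $\lambda=2N/n$ are used. Likewise, in the $S_1$ branch the outer integral requires $(I+1)\delta+2/n<2/3$, the first inequality in Lemma~\ref{L:NUMERICAL}.

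A smaller point: for $m\ge 2$ the paper does not differentiate the integral representation but instead uses the ODE $\pa_{\tau\tau}\phi_{I+1}-\tfrac{4}{9\tau^2}\phi_{I+1}=f_{I+1}$ directly, expressing $\pa_\tau^m\phi_{I+1}$ in terms of $\pa_\tau^{m'}\phi_{I+1}$ with $m'<m$ and $\pa_\tau^{m-2}f_{I+1}$. Your approach can be made to work, but note that the integrands in $S_i$ do not depend on the outer $\tau$, so there is nothing to ``differentiate under the integral sign''; only boundary terms and the prefactor contribute.
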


\begin{remark}
The constants in the above statement depend on $K,M\in \mathbb Z_{>0}$ and they generally grow as $K$ and $M$ get larger.
\end{remark}

Proofs of Lemma~\ref{L:BASIS}, Proposition~\ref{P:MAINBOUNDPHI}, and finally Theorem~\ref{T:MAINBOUNDPHI} are contained in Section~\ref{SSS:PROOFS}. Before that we need a number of 
auxiliary bounds.

\subsubsection{Auxiliary lemmas}

Since 
\[
[(\tau-1)\r(\log g)\pa_\tau, \r] = - \rr^2(\log g) (\tau-1)\pa_\tau,
\]
it is easy to see that for any $\ell\in\mathbb N$ there exist some universal constants $k_{abc_1\dots c_a}\ge0$, $a,b,c_j=1,\dots, \ell$, $j=1,\dots a$, such that 
%This is an immediate consequence of the Leibniz formula
\be\label{E:DELLLEIBNIZ}
\DL^\ell = \left((\tau-1)\r(\log g) \pa_\tau + \r\right)^\ell = \sum_{a+b+c_1+\dots c_a= \ell \atop 1\le a,b,c_j\le \ell} 
k_{abc_1\dots c_a} \prod_{j=1}^a \rr^{c_j}(\log g)\left((\tau-1)\pa_\tau\right)^a \rr^b.
\ee

\begin{lemma}[Auxiliary estimates]
Let $\ell,m \in \{0,1,\dots,K\}$ be given nonnegative integers. 
Under the inductive assumptions~\eqref{E:ASS1} the following estimates hold:
\begin{align}
\left\vert \pa_\tau^m\rr^{\ell} \J[\phi_0] \right\vert &\lesssim 
\begin{cases}
\tau^{2} r^{-nm}q_{m+1}\et, & \ \ell=0 \\
\tau^{2} r^{-nm}q_{m}\et \frac{ r^n}\tau, & \ \ell>0;
\end{cases} 
\label{E:PARTIALTAUMRRELLJBOUND} \\
%%%%%%%%%%%%%%%%%%%%%%%%%%%%%%%%%%%%%%%%%%%%%%
\left\vert  \pa_\tau^m\rr^{\ell}\left(\phi_0^{-k}\right)\right\vert  & \lesssim \tau^{-\frac{2k}3-m}, \ \ k\in\mathbb Z_{\ge0};\label{E:PARTIALTAULCHI0} \\
\left\vert  \pa_\tau^m\left(\J[\phi_0]^{-k}\right)\right\vert 
&\lesssim  \tau^{-2k-m }q_{-k}\et,  \ \ k\in\mathbb Z_{\ge0}; \label{E:PARTIALTAUJPOWERMINUSK}\\
%%%%%%%%%%%%%%%%%%%%%%%%%%%%%%%%%%%%%%%%%%%%%%
\left\vert  \pa_\tau^m\rr^{\ell}\left(\J[\phi_0]^{-k}\right)\right\vert 
&\lesssim  \tau^{-2k -m}q_{-k-1}\et \frac{ r^n}{\tau}, \ \ k\in\mathbb Z_{\ge0}, \ \ell>0; 
\label{E:BARLAMBDAPOWERMINUSK}\\
%%%%%%%%%%%%%%%%%%%%%%%%%%%%%%%%%%%%%%%%%%%%%%
\left\vert \pa_\tau^m\rr^{\ell}\mathscr J_k\right\vert &\lesssim 
 \tau^{2+k\delta-m} q_{1}\et \pet, \ \ k\in \{1,\dots,I\}, \label{E:PARTIALBARLAMBDAJBOUNDK} \\
%%%%%%%%%%%%%%%%%%%%%%%%%%%%%%%%%%%%%%%%%%%%%%
\left\vert \pa_\tau^m\rr^{\ell}\left(\left(\J_k\right)^a\right) \right\vert &\lesssim 
 \tau^{(2+k\delta)a-m} q_{a}\et \pet, \ \ k\in \{1,\dots,I\}, \ a\ge0,\label{E:PARTIALBARLAMBDAJBOUNDKALT} \\
%%%%%%%%%%%%%%%%%%%%%%%%%%%%%%%%%%%%%%%%%%%%%%
\left\vert \pa_\tau^m\rr^{\ell}\left(\left(\phi_k\right)^a\right) \right\vert &\lesssim  \, \tau^{(\frac23+k\delta)a-m} \pet^a, \ \ k\in \{1,\dots,I\}, \ a\ge0. \label{E:PARTIALBARLAMBDACHIBOUNDKM} 
\end{align}
%\item 
%\begin{align}
%%\label{E:KABOUND}
%%\left\vert K^A\right\vert \le \beta_A \sigma^{A\delta}, \ \ A\in \{0,\dots,n\}, \\
%\left\vert \pa_r^{\ell} K^A\right\vert \le C^K_{A,\ell}  \, \sigma^{A\delta-\frac\ell m}, \ \ A\in \{0,\dots,n\}, \label{E:PARTIALRKABOUND}
%\end{align}
\end{lemma}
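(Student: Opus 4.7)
All of the bounds are structural consequences of three inputs: the explicit formula $\phi_0=\tau^{2/3}$ together with $\J[\phi_0]=\tau^2+\tfrac23\tau M_g$ (see \eqref{E:DUSTJACOBIAN}); the hypothesis (w3), which gives $|\rr^\ell(\log g)|\lesssim r^n$ for every $\ell\ge 1$ and hence $|M_g|\lesssim r^n$; and the inductive assumption~\eqref{E:ASS1} for $\phi_1,\dots,\phi_I$. I would prove the estimates in the stated order since each later bound uses the earlier ones. Throughout, I would exploit the two elementary product identities
\[
q_{\mu_1}(x)\,q_{\mu_2}(x)=q_{\mu_1+\mu_2}(x),\qquad p_{\lambda,-2/n}(x)\,p_{\lambda,-2/n}(x)\lesssim q_{-2/n}(x)\,p_{\lambda,-2/n}(x),
\]
together with the comparison $\J[\phi_0]\asymp \tau^2 q_1(r^n/\tau)$ that follows from \eqref{E:DUSTJACOBIAN2}, in order to convert all intermediate products of $q_\nu$ and $p_{\mu,\nu}$ into the template shapes appearing on the right-hand sides.

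\textbf{Bounds for $\J[\phi_0]$.} I would differentiate \eqref{E:DUSTJACOBIAN} directly. Since $M_g=(\tau-1)\,r\partial_r(\log g)$ is linear in $\tau$, at most two $\tau$-derivatives of $\J[\phi_0]$ are nonzero, while each $\rr$-derivative hits only $r\partial_r(\log g)$ and produces, by (w3) and \eqref{E:GTAYLOR2}, a factor of size $O(r^n)$. The resulting upper bounds are sharpest of the form $\tau^{2-m}+r^n\tau^{1-m}$ for $\ell=0$ and $r^n \tau^{1-m}$ for $\ell\ge 1$, which are easily majorised by the stated $\tau^2 r^{-nm}q_{m+1}(r^n/\tau)$ and $\tau^2 r^{-nm}q_{m}(r^n/\tau)\,r^n/\tau$ respectively (the factor $r^{-nm}$ is loose but harmless, since $q_{m+1}(r^n/\tau)\ge (r^n/\tau)^{m+1}$).

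\textbf{Bounds for $\phi_0^{-k}$ and $\J[\phi_0]^{-k}$.} The bound \eqref{E:PARTIALTAULCHI0} is trivial: $\phi_0^{-k}=\tau^{-2k/3}$ depends only on $\tau$, so $\rr^\ell$ kills it for $\ell\ge 1$. For \eqref{E:PARTIALTAUJPOWERMINUSK}--\eqref{E:BARLAMBDAPOWERMINUSK} I would write $\J[\phi_0]^{-k}=\tau^{-2k}A^{-k}$ with $A:=1+\tfrac{2M_g}{3\tau}$, note that $A\asymp q_1(r^n/\tau)$ is bounded away from $0$ on $(0,1]\times[0,1]$ (by \eqref{E:DUSTJACOBIAN2}), and apply the Faa di Bruno formula \eqref{E:FDB} to the composition $A\mapsto A^{-k}$. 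Each inner derivative of $A$ is controlled by the $\J[\phi_0]$-bounds already established; the resulting sum telescopes into $\tau^{-2k-m}q_{-k}(r^n/\tau)$ in the $\ell=0$ case, with the extra $r^n/\tau$ gain in the $\ell>0$ case coming from the fact that every $\rr$-derivative must hit a $\log g$-factor.

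\textbf{Bounds for $\J_k$, $(\J_k)^a$, $(\phi_k)^a$.} Using \eqref{E:JAFORMULAPHI}, $\J_k$ is a finite sum of triple products $\phi_m\phi_i(\phi_j+\DL\phi_j)$ with $m+i+j=k$, $k\ge 1$. Since $\J_0=\J[\phi_0]$ is subtracted off (the $k=0$ case), each term contributing to $\J_k$ for $k\ge 1$ contains at least one $\phi_a$ with $a\ge 1$. I would apply the Leibniz rule to distribute $\pa_\tau^m\rr^\ell$ across each triple product and bound each factor by \eqref{E:ASS1} (or \eqref{E:PARTIALTAULCHI0} for the $\phi_0$ factors). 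The $\tau$-exponents add correctly to give $\tau^{2+k\delta-m}$, while the accompanying weights multiply to give a $p_{\lambda,-2/n}$-factor times a $q_\nu$ with $\nu\le 1$. The $\DL\phi_j$ term is handled with \eqref{E:DDEF} and carries the additional $M_g$ factor, whose $r^n$ size converts a $p_{\lambda,-2/n}$ weight into $q_1(r^n/\tau)\,p_{\lambda,-2/n}$. Finally, \eqref{E:PARTIALBARLAMBDAJBOUNDKALT} and \eqref{E:PARTIALBARLAMBDACHIBOUNDKM} follow by the generalised Leibniz rule applied to $(\J_k)^a$ and $(\phi_k)^a$, using the bound for a single factor and the product identities for $q$ and $p$.

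\textbf{Main obstacle.} The computational work is routine but the bookkeeping of the weights $q_\nu(r^n/\tau)$ and $p_{\lambda,-2/n}(r^n/\tau)$ through products and the chain rule is the real subtle point. In particular, one must verify that every $\rr$-derivative of $\log g$ that appears during differentiation of $A^{-k}$ or $\J_k$ indeed converts a $1$ into an $r^n$, so that the degeneracy at $r=0$ is absorbed by $q$-factors rather than producing uncontrolled negative powers of $r$; this is what keeps the bounds self-consistent as one iterates the argument.
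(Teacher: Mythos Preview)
Your proposal is correct and follows essentially the same approach as the paper: Leibniz rule on the triple-product formula for $\J_k$, Fa\`a di Bruno for the negative powers $\J[\phi_0]^{-k}$, $(\J_k)^a$, $(\phi_k)^a$, and the inductive hypothesis \eqref{E:ASS1} throughout, with the $r^n$ gain coming from $|\rr^\ell(\log g)|\lesssim r^n$. The only minor variation is that for \eqref{E:PARTIALTAUMRRELLJBOUND} you differentiate the explicit quadratic-in-$\tau$ expression $\J[\phi_0]=\tau^2+\tfrac23\tau M_g$ directly, whereas the paper differentiates the triple product $\phi_0^2(\phi_0+\DL\phi_0)$ after first recording a general bound on $\pa_\tau^{k_1}\rr^{k_2}\DL\varphi$; both routes are equivalent and yield the same intermediate estimate $\tau^{2-m}q_1(r^n/\tau)$ (resp.\ $\tau^{2-m}\,r^n/\tau$), which is then majorised by the loose stated form.
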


\begin{proof}
\noindent
{\bf Proof of~\eqref{E:PARTIALTAUMRRELLJBOUND}.}
By the Leibniz rule for any $k_1,k_2\in\mathbb N$ and any smooth function $\varphi$ we have
\begin{align}
& \left\vert \pa_\tau^{k_1}\rr^{k_2} \DL \varphi \right\vert  \notag \\
& \lesssim \left\vert  \pa_\tau^{k_1}\rr^{k_2+1} \varphi \right\vert + 
\sum_{m=0}^{k_2} \left\vert \rr^{m+1}\left(\log g\right) \right\vert \left( \left\vert \rr^{k_2-m}\pa_\tau^{k_1}\varphi \right\vert+ |(\tau-1)
\rr^{k_2-m}\pa_\tau^{k_1+1}\varphi |  \right)
\notag \\
& \lesssim  \left\vert  \pa_\tau^{k_1}\rr^{k_2+1} \varphi \right\vert + 
 r^n \sum_{m=0}^{k_2} \left( \left\vert \pa_\tau^{k_1}\rr^{k_2-m}\varphi\right\vert  + \left\vert \pa_\tau^{k_1+1}\rr^{k_2-m}\varphi\right\vert \right), \label{E:PHIUSEFUL}
\end{align}
where we have used~\eqref{E:GTAYLOR} and $\tau\le1$ in the last estimate.
Letting $\varphi=\phi_0 = \tau^{\frac23}$ above we obtain
\begin{align}\label{E:DPHIZEROBOUND}
\left\vert \pa_\tau^{k_1}\rr^{k_2} \DL \phi_0 \right\vert \lesssim  r^n\tau^{-\frac13-k_1}.
\end{align}
Now for any $\ell,m \in \{0,1,\dots,K\}$, we have
%Applying the Leibniz rule to~\eqref{E:JFORMULA} we obtain
\begin{align}
&\left\vert \pa_\tau^m\rr^\ell\left(\mathscr J[\phi_0]\right)\right\vert \notag\\
&\lesssim  \sum_{\alpha_1+\alpha_2+\alpha_3 = m \atop
\beta_1+\beta_2+\beta_3 = \ell} \left\vert \pa_\tau^{\alpha_1}\rr^{\beta_1}\phi_0\pa_\tau^{\alpha_2}\rr^{\beta_2}\phi_0\left(\pa_\tau^{\alpha_3}\rr^{\beta_3}\phi_0
+ \pa_\tau^{\alpha_3}\rr^{\beta_3}\DL\phi_0\right)\right\vert 
\end{align}
where we recall $\J[\phi_0]=\phi_0^2(\phi_0+\DL\phi_0)$. Therefore, if $\ell=0$, we have 
\begin{align}
\left\vert \pa_\tau^m \left(\mathscr J[\phi_0]\right)\right\vert& \lesssim \tau^{\frac23-\alpha_1} \tau^{\frac23-\alpha_2} \left( \tau^{\frac23-\alpha_3} + r^n \tau^{-\frac13-\alpha_3}\right) = \tau^{2-m} q_1\et \notag 
\end{align}
and if $\ell>0$, since $(r\partial_{r})^{\beta}\phi_{0}=0$ for $\beta\neq0,$ we have 
\[
\left\vert \pa_\tau^m\rr^\ell\left(\mathscr J[\phi_0]\right)\right\vert \lesssim\tau^{\frac{2}{3}-\alpha_{1}}\tau^{\frac{2}{3}-\alpha_{2}}r^{n}\tau^{-\frac
{1}{3}-\alpha_{3}}=\tau^{2-m}(\frac{r^{n}}{\tau})
\]
%where we recall $\phi_0=\tau^{\frac23}$
which lead to \eqref{E:PARTIALTAUMRRELLJBOUND}.

\noindent
{\bf Proof of~\eqref{E:PARTIALTAULCHI0}}
The bound is obvious. 

\noindent
{\bf Proof of~\eqref{E:PARTIALTAUJPOWERMINUSK}}
We use the formula of Faa Di Bruno.
We may write $\J[\phi_0]^{-k}(t,r) = f(h(r))$ where $f(x) = x^{-k}$ and $h(r) = \J[\phi_0]$. 
%Note that 
%\[
%f^{(k)}(\sigma) = \prod_{j=0}^{k-1} \, \left(\frac23-j\right)\sigma^{\frac23-k} = 
%\frac{\Gamma(k-\frac53)}{\Gamma(-\frac{2}3)} (-1)^k \sigma^{\frac23-k}.
%\]
Derivatives of $x\mapsto f(x)$ are easily computed:
\[
f^{(j)}(\sigma) =  
\left(-k\right)_{j} \sigma^{-k-j}, \ \ k\in\mathbb N.
\]
Formula of Faa Di Bruno then gives
\begin{align}
\pa_\tau^m\left(\J[\phi_0]^{-k}\right) &= \sum_{j=1}^m \left(-k\right)_{j} \J[\phi_0]^{-k-j}
 \sum_{\pi(m,j)} \,  m !  \, \prod_{i=1}^m \,  \frac{ %(-1)^{i\lambda_i}
 \left(\pa_\tau^i\J[\phi_0]\right)^{\lambda_i}}{\lambda_i! (i!)^{\lambda_i}},  \label{E:FDB1}
%& = \sum_{k=1}^\ell \left(\frac23\right)_k \sigma(t,r)^{\frac23-k}  \sum_{\pi(\ell,k)} \,  \ell !  \, \prod_{i=1}^\ell \,  \frac{(-1)^i \left(g^{(i)}(r)\right)^{\lambda_i}t}{\lambda_i! (i!)^{\lambda_i}} ,
\end{align}
where we refer to~\eqref{E:PELLKDEF} for the definition of $\pi(m,j)$.  
Since 
\be
\J[\phi_0] = \tau^2\left(1+\frac23\frac{(\tau-1)}\tau \r(\log g)\right)
\ee
and 
\be
 r^n\gtrsim - \r(\log g) \gtrsim  r^n, \ \  r \in[0,1],
\ee
it follows 
\be
\tau^2q_1\et\gtrsim \J[\phi_0] \gtrsim \tau^2q_1\et, \ \ \tau\in(0,1].
\ee

Therefore, since $\sum\lambda_{i}=j$ and $\sum i\lambda_{i}=m,$ we have
\begin{align}
\left\vert \pa_\tau^m\left(\J[\phi_0]^{-k}\right)  \right\vert
& \lesssim \sum_{j=1}^m \tau^{-2k-2j} q_{-(k+j)}\et \sum_{\pi(m,j)}\prod_{i=1}^m \tau^{(2-i)\l_i} q_{\l_i}\et \notag \\
& \lesssim \tau^{-2k-m} q_{-(k+j)}\et q_{j}\et \notag \\
& \lesssim \tau^{-2k-m} q_{-k}\et.
\end{align}

\noindent
{\bf Proof of~\eqref{E:BARLAMBDAPOWERMINUSK}}
Using the formula of Faa Di Bruno like above, replacing formally $\pa_\tau^m$ by $\rr^\ell$ we obtain
\begin{align}
\rr^\ell\left(\J[\phi_0]^{-k}\right) &= \sum_{j=1}^\ell \left(-k\right)_{j} \J[\phi_0]^{-k-j}
 \sum_{\pi(\ell,j)} \,  \ell !  \, \prod_{i=1}^\ell \,  \frac{ \left(\rr^i\J[\phi_0]\right)^{\lambda_i}}{\lambda_i! (i!)^{\lambda_i}},  \label{E:FDB1}
%& = \sum_{k=1}^\ell \left(\frac23\right)_k \sigma(t,r)^{\frac23-k}  \sum_{\pi(\ell,k)} \,  \ell !  \, \prod_{i=1}^\ell \,  \frac{(-1)^i \left(g^{(i)}(r)\right)^{\lambda_i}t}{\lambda_i! (i!)^{\lambda_i}} ,
\end{align}
where we refer to~\eqref{E:PELLKDEF} for the definition of $\pi(\ell,k)$.  Therefore, for $\ell\in\mathbb Z_{\ge1}$, by using the Leibniz rule
\begin{align}
&\pa_\tau^m\rr^\ell\left(\J[\phi_0]^{-k}\right) \notag \\
&= \sum_{m'=0}^m{m \choose m'}\sum_{j=1}^\ell \left(-k\right)_{j} \pa_\tau^{m-m'}\left(\J[\phi_0]^{-k-j}\right) 
 \sum_{\pi(\ell,j)} \,  \ell !  \, \pa_\tau^{m'}\left(\prod_{i=1}^\ell \,  \frac{ \left(\rr^i\J[\phi_0]\right)^{\lambda_i}}{\lambda_i! (i!)^{\lambda_i}}\right). \label{E:FDB1PRIME}
%& = \sum_{k=1}^\ell \left(\frac23\right)_k \sigma(t,r)^{\frac23-k}  \sum_{\pi(\ell,k)} \,  \ell !  \, \prod_{i=1}^\ell \,  \frac{(-1)^i \left(g^{(i)}(r)\right)^{\lambda_i}t}{\lambda_i! (i!)^{\lambda_i}} ,
\end{align}
Notice that for any $d,\ell\in\mathbb Z_{\ge0}, i\in\mathbb Z_{\ge1}$,
\begin{align}
\left\vert \pa_\tau^d\left(\left(\rr^i\J[\phi_0]\right)^{\ell}\right)\right\vert 
&\lesssim \sum_{d_1+\dots + d_{\ell}=d} \prod_{j=1}^{\ell} \left\vert \left(\pa_\tau^{d_j}\rr^i\J[\phi_0]\right)\right\vert \notag \\
& \lesssim \sum_{d_1+\dots + d_{\ell}=d} \prod_{j=1}^{\ell} \left(\tau^{2-d_j} \frac{ r^n}{\tau}\right) \notag\\
& \lesssim \tau^{2\ell -d} \et^\ell \label{E:PARTIALTAUDRRI}
%& \lesssim \tau^{2\ell -d } q_{d}\left(\frac{ r^n}{\tau}\right), \label{E:PARTIALTAUDRRI}
\end{align}
where we have made use of~\eqref{E:PARTIALTAUMRRELLJBOUND}.
From this bound, the product rule, and~\eqref{E:FDB1PRIME}, we conclude 
\begin{align}
&\left\vert \pa_\tau^m\rr^\ell\left(\J[\phi_0]^{-k}\right) \right\vert \notag \\ %& \lesssim    \sum_{m'=0}^m\sum_{j=1}^\ell \tau^{-2(k+j)} r^{-n(m-m')}q_{-(k+j)+m-m'}\et  \notag \\
%&  \sum_{\pi(\ell,j)} \sum_{m_1+\dots +m_\ell = m'} \prod_{i=1}^\ell \, \left\vert \pa_\tau^{m_i}\left(\left(\rr^i\J[\phi_0]\right)^{\lambda_i}\right)\right\vert, \notag\\
& \lesssim  \sum_{m'=0}^m\sum_{j=1}^\ell \tau^{-2(k+j)-m+m'}q_{-(k+j)}\et   \sum_{\pi(\ell,j)} \sum_{m_1+\dots +m_\ell = m'} \prod_{i=1}^\ell \tau^{2\l_i -m_i}%\et 
\et^{\l_i} \notag\\
& \lesssim \tau^{-2k-m}\sum_{m'=0}^m\sum_{j=1}^\ell \sum_{\pi(\ell,j)} \sum_{m_1+\dots +m_\ell = m'} \tau^{-2j} \tau^{2\sum_{i=1}^\ell\l_i} q_{-k-j}\et \et^{\sum_{i=1}^\ell\l_i} \notag\\
& \lesssim  \tau^{-2k-m}q_{-k-1}\et \frac{ r^n}{\tau}\label{E:FDB2}
\end{align}
where we have used~\eqref{E:PARTIALTAUMRRELLJBOUND},~\eqref{E:PARTIALTAUDRRI},~\eqref{E:QDEF}, the identity
$\sum_{i=1}^\ell \l_i = j$ which follows from the definition of the index set $\pi(\ell,j)$, and the trivial estimate $x\le q_1(x)$.

\noindent
{\bf Proof of~\eqref{E:PARTIALBARLAMBDAJBOUNDK}}
By letting $\varphi=\phi_j$, $j\in\{1,\dots, I\}$, in~\eqref{E:PHIUSEFUL} we obtain
\begin{align}
\left\vert \pa_\tau^a\rr^{b} \DL\phi_j  \right\vert& \lesssim  \left\vert  \pa_\tau^{a}\rr^{b+1} \phi_j \right\vert 
+  r^n \sum_{m=0}^{b} \left( \left\vert\pa_\tau^{a}\rr^{b-m}\phi_j\right\vert+\left\vert\pa_\tau^{a+1}\rr^{b-m}\phi_j\right\vert\right) \notag \\
& \lesssim \tau^{\frac23 +j\delta-a}p_{\l,-\frac2n}\et  +  r^n \tau^{\frac23 +j\delta-(a+1)}p_{\l,-\frac2n}\et \notag \\
& \lesssim \tau^{\frac23 +j\delta-a} q_1\et \pet, \label{E:PHIJUSEFUL}
\end{align}
where we have used the inductive assumption~\eqref{E:ASS1}. 
%and the bound $ r^{-na}q_{a}\et \lesssim  r^{-n(a+1)}q_{a+1}\et$. 
If $j=0$, 
from~\eqref{E:DPHIZEROBOUND} we have 
\be\label{E:PHIZEROUSEFUL}
\left\vert \pa_\tau^a\rr^{b} \DL\phi_0  \right\vert \lesssim \tau^{\frac{2}{3}-a}(\frac{r^{n}}{\tau})\leq\tau
^{\frac{2}{3}-a}q_{1} \et .%  \tau^{\frac23}  q_{1}\et.
\ee
Recalling $\J_{k}$ from \eqref{E:JAFORMULAPHI}, applying the Leibniz rule and using~\eqref{E:ASS1} and~\eqref{E:PHIJUSEFUL}--\eqref{E:PHIZEROUSEFUL} we obtain
\begin{align}
\left\vert \pa_\tau^m\rr^{\ell} {\mathscr J}_k \right\vert &\lesssim \sum_{d+n+j=k \atop d,n,j\ge0}\sum_{\alpha_1+\alpha_2+\alpha_3 = m \atop
\beta_1+\beta_2+\beta_3 = \ell} \left\vert
\pa_\tau^{\alpha_1}\rr^{\beta_1}\phi_d\pa_\tau^{\alpha_2}\rr^{\beta_2}\phi_n\left(\pa_\tau^{\alpha_3}\rr^{\beta_3}\phi_j+\pa_\tau^{\alpha_3}\rr^{\beta_3}\DL\phi_j\right) \right\vert \notag \\
& \lesssim  \sum_{d+n+j=k \atop d,n,j\ge0}\sum_{\alpha_1+\alpha_2+\alpha_3 = m \atop
\beta_1+\beta_2+\beta_3 = \ell} \tau^{\frac23+d\delta-\alpha_1} \tau^{\frac23 +n\delta-\alpha_2} \notag \\
& \ \ \ \ \left( \tau^{\frac23 +j\delta-\alpha_3} +\tau^{\frac23 +j\delta-\alpha_3} q_{1}\et \right) \pet \notag \\
& \lesssim  \sum_{d+n+j=k \atop d,n,j\ge0}\sum_{\alpha_1+\alpha_2+\alpha_3 = m \atop
\beta_1+\beta_2+\beta_3 = \ell} 
\tau^{3\times \frac23+(d+n+j)\delta- (\alpha_1+\alpha_2+\alpha_3)}q_{1}\et\pet \notag \\
&\lesssim  \tau^{2+k\delta-m}q_{1}\et \pet.
\label{E:PARTIALRLEIBNIZ1}
\end{align}

\noindent
{\bf Proof of~\eqref{E:PARTIALBARLAMBDAJBOUNDKALT}.}
We use the Faa Di Bruno formula again. %\bcr 
Analogously to~\eqref{E:FDB1PRIME} we obtain
\begin{align*}
&\pa_\tau^m\rr^\ell\left(\J_k^a\right) = \sum_{m'=0}^m{m \choose m'}\sum_{j=1}^\ell \left(a\right)_{j} \pa_\tau^{m-m'}\left(\J_k^{a-j}\right) 
 \sum_{\pi(\ell,j)} \,  \ell !  \, \pa_\tau^{m'}\left(\prod_{i=1}^\ell \,  \frac{ \left(\rr^i\J_k\right)^{\lambda_i}}{\lambda_i! (i!)^{\lambda_i}}\right), 
 \end{align*}
 where $(a)_j=a(a-1)\dots(a-j+1)$.
Notice that for any $d,\ell\in\mathbb Z_{\ge0}, i\in\mathbb Z_{\ge1}$,
\begin{align*}
\left\vert \pa_\tau^d\left(\left(\rr^i\J_k\right)^{\ell}\right)\right\vert 
&\lesssim \sum_{d_1+\dots + d_{\ell}=d} \prod_{j=1}^{\ell} \left\vert \left(\pa_\tau^{d_j}\rr^i\J_k\right)\right\vert \notag \\
& \lesssim \sum_{d_1+\dots + d_{\ell}=d} \prod_{j=1}^{\ell} \left(\tau^{2+k\delta-d_j} q_1\et \pet\right) \notag\\
& \lesssim \tau^{(2+k\delta)\ell -d}q_\ell \et \pet^{\ell}
%& \lesssim \tau^{2\ell -d } q_{d}\left(\frac{ r^n}{\tau}\right), \label{E:PARTIALTAUDRRI}
\end{align*}
where we have made use of~\eqref{E:PARTIALBARLAMBDAJBOUNDK}. Using this bound just like in~\eqref{E:FDB2}, we obtain~\eqref{E:PARTIALBARLAMBDAJBOUNDKALT}.

\noindent
{\bf Proof of~\eqref{E:PARTIALBARLAMBDACHIBOUNDKM}.}
Using the formula of Faa Di Bruno, for any $k\in\{1,\dots, I\}$ we have
\begin{align*}
\left\vert\pa_\tau^m\left(\phi_k^a\right)\right\vert &\lesssim \sum_{j=1}^m|\phi_k|^{a-j}\sum_{\pi(m,j)}\prod_{i=1}^m\left\vert \pa_\tau^i \phi_k\right\vert^{\l_i} \\ 
& \lesssim \sum_{j=1}^m \tau^{(\frac23+k\delta)(a-j)} \pet^{a-j} \sum_{\pi(m,j)}\prod_{i=1}^m \left(\tau^{\frac23+k\delta-i} \pet\right)^{\l_i} \\
& \lesssim \tau^{(\frac23+k\delta)a-m} \pet^a,
\end{align*}
where we have used the inductive assumption~\eqref{E:ASS1}, identities $\sum_{i=1}^m\l_i = j$, $\sum_{i=1}^m(i\l_i)=m$ from \eqref{E:PELLKDEF}, and the additive property of $p_{\mu,\nu}$.

By analogy to~\eqref{E:FDB2} we have
\begin{align*}
&\left\vert \pa_\tau^m\rr^\ell \left(\phi_k\right)^a \right\vert 
\lesssim \sum_{m'=0}^m\sum_{j=1}^\ell   \left\vert \pa_\tau^{m-m'}(\phi_k)^{a-j}\right\vert  \sum_{\pi(\ell,j)}\sum_{m_1+\dots +m_\ell = m'} \prod_{i=1}^\ell \,  
\left\vert \pa_\tau^{m_i}\left(\left(\rr^i\phi_k\right)^{\l_i}\right) \right\vert \\
& \lesssim \sum_{m'=0}^m\sum_{j=1}^\ell \tau^{(\frac23+k\delta)(a-j)-m+m'}\pet^{a-j} 
\sum_{\pi(\ell,j)}\sum_{m_1+\dots +m_\ell = m'} \tau^{(\frac23+k\delta)\l_i-m_i} \pet^{\l_i}\\
& \lesssim \tau^{(\frac23+k\delta)a-m} \pet^a, \ \ k\in\{1,\dots, I\},
\end{align*}
where we have used the inductive assumption~\eqref{E:ASS1}, identities $\sum_{i=1}^m\l_i = j$, $\sum_{i=1}^m(i\l_i)=m$ and the additive property of $q_\nu$.
%where
%\[
% \bar C^{\mathscr J}_{k, \ell,m} : =  \sum_{j=1}^\ell  |(m)_j| \bar C_{k,0}^{m-j} t\sum_{\pi(\ell,j)} \ell!  \prod_{i=1}^\ell\frac{\left(\bar C_{k,i}\right)^{\lambda_i} }{\lambda_i! (i!)^{\lambda_i}}
%\]
%Bound~\eqref{E:PARTIALLAMBDACHIBOUNDKM} follows from~\eqref{E:PARTIALBARLAMBDACHIBOUNDKM} and the formula~\eqref{E:BINOMIALEXPANSION}, with
%\[
%C^{\J}_{k,\ell,m} : = \sum_{j=0}^\ell {\ell \choose j}\bar C^{\J}_{k,\ell,m}.
%\]
\end{proof}

\begin{lemma} Recall $h_j$ and $\tilde O_j $ from \eqref{E:HFORMULAJ} and \eqref{E:HJTILDE}. 
Under the inductive assumptions~\eqref{E:ASS1} the following estimates hold:
\begin{align}
\left\vert \pa_\tau^m \rr^{\ell} h_j\right\vert & \lesssim   \tau^{j\delta-m} p_{1+\l,-\frac{2}n}\et, \ \ j\in\{1,\dots, I\},
\label{E:LAMBDABARSMALLHBOUND}\\
\left\vert\pa_\tau^m \rr^{\ell}\tilde O_{j+1}\right\vert & \lesssim \tau^{j\delta-m} p_{2\l,-\frac4n}\et, \ \ j\in\{1,\dots, I\},\label{E:LAMBDABARHBOUND}\\
\left\vert \pa_\tau^m \rr^{\ell}\left(w^{-\alpha}\DL\left(w^{1+\alpha}\mathscr J[\phi_0]^{-\gamma}h_j\right)\right) \right\vert %\ec 
&
\lesssim
\begin{cases}  
\tau^{-2\gamma +j\delta-m} q_{-\gamma+1}\et p_{2+\l,-\frac2n}\et & \ j\in\{1,\dots, I\} \\
\tau^{-2\gamma -m} q_{-\gamma+1}\et p_{1,0}\et& \ j=0.
\end{cases}
\label{E:JSMALLHBOUND}
\end{align}
\end{lemma}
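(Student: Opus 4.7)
The proof of all three estimates follows the same pattern: substitute the algebraic definition of the quantity, apply $\pa_\tau^m(r\pa_r)^\ell$ and distribute the derivatives via Leibniz, control each constituent factor by one of the auxiliary bounds \eqref{E:PARTIALTAUMRRELLJBOUND}--\eqref{E:PARTIALBARLAMBDACHIBOUNDKM}, and then collect the $\tau$-powers and $(r^n/\tau)$-weights, exploiting the multiplicative identity $p_{\mu_1,\nu_1}(x)\,p_{\mu_2,\nu_2}(x)=p_{\mu_1+\mu_2,\nu_1+\nu_2}(x)$ together with the boundedness of $\pet$ (so $\pet^a\lesssim\pet$ for $a\ge 1$).

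For \eqref{E:LAMBDABARSMALLHBOUND}, I expand $h_j$ using the Faa Di Bruno formula \eqref{E:HFORMULAJ}: a finite sum over partitions $\pi(j,k)$, $k\in\{1,\dots,j\}$, of terms $C_\pi\,\J[\phi_0]^{-k}\prod_i\J_i^{\lambda_i}$ with $\sum\lambda_i=k$ and $\sum i\lambda_i=j$. Controlling the factor on $\J[\phi_0]^{-k}$ by \eqref{E:PARTIALTAUJPOWERMINUSK} (or \eqref{E:BARLAMBDAPOWERMINUSK} when a spatial derivative lands on it) and each factor $\J_i^{\lambda_i}$ by \eqref{E:PARTIALBARLAMBDAJBOUNDKALT}, the $\tau$-exponents sum to $-2k+\sum(2+i\delta)\lambda_i-m=j\delta-m$; the $q$-indices cancel via $-k+\sum\lambda_i=0$; and the residual $(r^n/\tau)$-factor, combined with the $\pet$ produced by the $\J_i$-terms, is rewritten as $p_{1+\lambda,-2/n}\et$. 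The bound \eqref{E:LAMBDABARHBOUND} is analogous, starting from \eqref{E:HJTILDE}: \eqref{E:PARTIALTAULCHI0} controls $\phi_0^{-k}$ (which is annihilated by $r\pa_r$), \eqref{E:PARTIALBARLAMBDACHIBOUNDKM} controls each $\phi_i^{\lambda_i}$; the $\tau$-exponent assembles to $(j+1)\delta-m\le j\delta-m$ (using $\tau\le 1$), and the partition constraint $k\ge 2$ is exactly what yields $\pet^k\lesssim\pet^2=p_{2\lambda,-4/n}\et$.

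For \eqref{E:JSMALLHBOUND}, since $w$ depends only on $r$, one has $\DL(w^{1+\alpha})=r\pa_r(w^{1+\alpha})=(1+\alpha)w^\alpha rw'$, so that
\[
w^{-\alpha}\DL\bigl(w^{1+\alpha}\J[\phi_0]^{-\gamma}h_j\bigr)=(1+\alpha)\,rw'\,\J[\phi_0]^{-\gamma}h_j+w\,\DL\bigl(\J[\phi_0]^{-\gamma}h_j\bigr).
\]
Applying $\pa_\tau^m(r\pa_r)^\ell$ and Leibniz, I bound $w$, $rw'$ and their iterated $r\pa_r$-derivatives via (w1)--(w2); $\J[\phi_0]^{-\gamma}$ via \eqref{E:PARTIALTAUJPOWERMINUSK}--\eqref{E:BARLAMBDAPOWERMINUSK} with $k=\gamma$; and $h_j$ via \eqref{E:LAMBDABARSMALLHBOUND} (with $h_0=1$ in the base case $j=0$). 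The $\tau$-exponent collects to $-2\gamma+j\delta-m$, while the compound weight consolidates to $q_{-\gamma+1}\et\,p_{2+\lambda,-2/n}\et$, the increase in both the $q$-index and the $p$-index stemming from the additional $(r^n/\tau)$ generated when $\DL$ (or a $r\pa_r$-derivative) acts on $\J[\phi_0]^{-\gamma}$.

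The main obstacle is combinatorial rather than analytic: carefully tracking, across the numerous terms of the Leibniz expansion, how the $q$- and $p$-indices combine, and verifying that the compound weight at each step can be rewritten in the precise form claimed by the lemma. The subtlest point is the improved $p_{1+\lambda,-2/n}$-weight in \eqref{E:LAMBDABARSMALLHBOUND}: the gain relies on the fact that every term in $h_j$ with $j\ge 1$ carries at least one $\J_i$-factor with $i\ge 1$ (contributing $\pet$) and one $\J[\phi_0]^{-k}$-factor with $k\ge 1$ (which, when differentiated in $r$, contributes an extra factor $(r^n/\tau)/(1+r^n/\tau)^{k+1}$); their product is exactly of $p_{1+\lambda,-2/n}$-form, and this is what one must keep track of carefully rather than falling back on the weaker $\pet$-bound one would obtain by merely invoking boundedness.
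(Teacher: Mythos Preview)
Your proposal is correct and follows essentially the same route as the paper: Leibniz/Fa\`a di Bruno expansion of the explicit formulas \eqref{E:HFORMULAJ}, \eqref{E:HJTILDE}, factor-by-factor control via the auxiliary bounds \eqref{E:PARTIALTAULCHI0}--\eqref{E:PARTIALBARLAMBDACHIBOUNDKM}, and algebraic consolidation of the $\tau$- and $(r^n/\tau)$-weights using the partition identities $\sum\lambda_i=k$, $\sum i\lambda_i=j$. One trivial slip: you write ``the $\tau$-exponent assembles to $(j+1)\delta-m\le j\delta-m$,'' but of course $(j+1)\delta-m>j\delta-m$; what you mean (and what the paper does) is $\tau^{(j+1)\delta-m}\le\tau^{j\delta-m}$ for $\tau\le1$.
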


\begin{proof}
\noindent
{\bf Proof of~\eqref{E:LAMBDABARSMALLHBOUND}.} Recall  \eqref{E:HFORMULAJ}. 
For any $j\in\{1,\dots, I\}$ by the Leibniz rule
\begin{align*}
&\left\vert \pa_\tau^m \rr^{\ell} h_j\right\vert  \\
&\lesssim \sum_{k=1}^j\sum_{\pi(j,k)}\sum_{\alpha_0+\alpha_1+\dots +\alpha_j = m \atop
\beta_0+\beta_1+\dots+\beta_j = \ell} \left\vert \pa_\tau^{\alpha_0} \rr^{\beta_0}
\left(\J[\phi_0]^{-k}\right)\right\vert \left\vert \pa_\tau^{\alpha_1} \rr^{\beta_1}\left(\left(\J_1\right)^{\lambda_1}\right)\right\vert \dots 
\left\vert \pa_\tau^{\alpha_j} \rr^{\beta_j}\left(\left(\J_j\right)^{\lambda_{j}}\right) \right\vert \\
& \lesssim 
 \sum_{k=1}^j\sum_{\pi(j,k)}\sum_{\alpha_0+\alpha_1+\dots +\alpha_j = m \atop
\beta_0+\beta_1+\dots+\beta_j= \ell} \tau^{-2k+ \sum_{i=1}^j(2+i\delta)\l_i -(\alpha_0+\alpha_1+\dots\alpha_j)} q_{-k}\et p_{1,0}\et   \\
& \ \ \ \ \quad q_{\l_1}\et \pet^{\l_1}\dots 
q_{\l_j}\et \pet^{\l_j} \\
& \lesssim \tau^{j\delta-m} \sum_{k=1}^j p_{1+k\l,-\frac{2k}n}\et \lesssim \tau^{j\delta-m} p_{1+\l,-\frac{2}n}\et,
\end{align*}
where we have used~\eqref{E:BARLAMBDAPOWERMINUSK},~\eqref{E:PARTIALBARLAMBDAJBOUNDK}, the additive property of $p_{\mu,\nu}$ and the exponent of $\tau$ is simplified from \eqref{E:PELLKDEF}:%
\[
-2k+\sum_{i=1}^{j}(2+i\delta)\lambda_{i}-(\alpha_{0}+...\alpha_{j}%
)=-2k+2j+2j\delta-m\leq j\delta-m.
\]

\noindent
{\bf Proof of~\eqref{E:LAMBDABARHBOUND}.} Recall \eqref{E:HJTILDE}. 
By the Leibniz rule
\begin{align*}
&\left\vert \pa_\tau^m \rr^{\ell}  \tilde O_{j+1}\right\vert   \\
&\lesssim\sum_{\alpha_0+\alpha_1+\dots +\alpha_{j+1} = m \atop
\beta_0+\beta_1+\dots+\beta_j= \ell}\sum_{k=2}^{j+1}\sum_{\pi(j+1,k)} 
\left\vert\pa_\tau^{\alpha_0} \rr^{\beta_0}\left(\phi_0^{-k}\right)\pa_\tau^{\alpha_1} \rr^{\beta_1}\left(\left(\phi_1\right)^{\lambda_1}\right)\dots 
\pa_\tau^{\alpha_{j+1}} \rr^{\beta_{j+1}}\left(\left(\phi_{j+1}\right)^{\lambda_{j+1}}\right)\right\vert 
 \\
& \lesssim \sum_{\alpha_0+\alpha_1+\dots +\alpha_{j+1} = m \atop
\beta_0+\beta_1+\dots+\beta_{j+1} = \ell} \sum_{k=2}^{j+1} \sum_{\pi(j+1,k)} \tau^{-\frac23 k +\sum_{i=1}^{j+1}(\frac23+i\delta)\l_i-\sum_{i=0}^{j+1}\alpha_i}
 \pet^{\sum_{i=1}^{j+1}\l_i} \\
& \lesssim \tau^{(j+1)\delta-m} p_{2\l,-\frac4n}\et, \ \ j\in\{1,\dots,I\},
\end{align*}
where we have used~\eqref{E:PARTIALTAULCHI0},~\eqref{E:PARTIALBARLAMBDACHIBOUNDKM}, additive property of $p_{\mu,\nu}$,  
the bound $p_{\lambda, -\frac2n}\le1$ and the bound $\sum_{i=1}^j\l_i = k\ge2$. %\bcr 
Note that for any $k\ge2$ and $(\l_1,\dots,\l_{j+1})\in \pi(j,k)$, we have
$\l_{j+1}=0$. %\ec

\noindent
{\bf Proof of~\eqref{E:JSMALLHBOUND}.}
%\bcr 
Assume first $j\in\{1,\dots, I\}$. 
Note that 
\[
w^{-\alpha}\DL\left(w^{1+\alpha}\mathscr J[\phi_0]^{-\gamma}h_j\right) = (1+\alpha) \r w \mathscr J[\phi_0]^{-\gamma}h_j + w \DL\left(\mathscr J[\phi_0]^{-\gamma}h_j\right).
\]
Using the bound $|\rr^{a} w| \lesssim r^n$ for  $a\ge1$,  by the previous identity and~\eqref{E:PHIUSEFUL} 
\begin{align*}
&\left\vert\pa_\tau^m \rr^{\ell} \left(w^{-\alpha}\DL\left(w^{1+\alpha}\mathscr J[\phi_0]^{-\gamma}h_j\right)\right)\right\vert \\
%& \lesssim \sum_{\ell_1+\ell_2=\ell \atop \ell_1,\ell_2\ge0} \rr^{\ell_1}(w^{-\alpha}) \pa_\tau^m \rr^{\ell_2} \DL\left(w^{1+\alpha}\mathscr J[\phi_0]^{-\gamma}h_j\right)\\
& \lesssim \pa_\tau^m \rr^{\ell+1}\left(\mathscr J[\phi_0]^{-\gamma}h_j\right) \\
& \ \ \ \ + r^n \sum_{d=0}^\ell\left( \left\vert \pa_\tau^{m+1} \rr^{d}\left(\mathscr J[\phi_0]^{-\gamma}h_j\right)\right\vert 
+ \left\vert \pa_\tau^{m} \rr^{d}\left(\mathscr J[\phi_0]^{-\gamma}h_j\right)\right\vert \right) \\
&\lesssim\sum_{\alpha_1+\alpha_2 = m \atop
\beta_1+\beta_2 = \ell+1} \left\vert \pa_\tau^{\alpha_1} \rr^{\beta_1}\left(\J[\phi_0]^{-\gamma}\right)
\pa_\tau^{\alpha_2} \rr^{\beta_2}h_j\right\vert  \\
& \ \ \ \ +  r^n \sum_{d=0}^\ell \sum_{\alpha_1+\alpha_2 = m+1 \atop
\beta_1+\beta_2 = d} \left\vert\pa_\tau^{\alpha_1} \rr^{\beta_1}\left(\J[\phi_0]^{-\gamma}\right)
\pa_\tau^{\alpha_2} \rr^{\beta_2}h_j\right\vert \\
& \ \ \ \ +  r^n \sum_{d=0}^\ell \sum_{\alpha_1+\alpha_2 = m \atop
\beta_1+\beta_2 = d} \left\vert \pa_\tau^{\alpha_1} \rr^{\beta_1}\left(\J[\phi_0]^{-\gamma}\right)
\pa_\tau^{\alpha_2} \rr^{\beta_2}h_j\right\vert \\
& 
\lesssim  \sum_{\alpha_1+\alpha_2=m}\tau^{-2\gamma +j\delta-\alpha_1-\alpha_2}q_{-\gamma}\et p_{1,0}\et p_{1+\l,-\frac2n}\et \\
& \quad +
\sum_{\alpha_1+\alpha_2=m+1} r^n\tau^{-2\gamma +j\delta-\alpha_1-\alpha_2}q_{-\gamma}\et p_{1,0}\et p_{1+\l,-\frac2n}\et \\
& \lesssim \tau^{-2\gamma +j\delta-m} q_{-\gamma+1}\et p_{2+\l,-\frac2n}\et,
%\\
%& \le \sum_{\ell_1+\ell_2+\ell_3= \ell}\frac{\ell!}{\ell_1!\ell_2!\ell_3!} \bar C^w_{1+\alpha,\ell_1} \bar C^{\J*}_{\gamma,\ell_2} \bar C^h_{m,\ell_3} \sigma^{-2\gamma+m\delta} \\
%& = \bar{\mathcal C}_{m,\ell} \sigma^{-2\gamma+m\delta},
\end{align*}
%where
%\[
%\bar{\mathcal C}_{m,\ell} : = \sum_{\ell_1+\ell_2+\ell_3= \ell}\frac{\ell!}{\ell_1!\ell_2!\ell_3!} \bar C^w_{1+\alpha,\ell_1} \bar C^{\J}_{\gamma,\ell_2} \bar C^h_{m,\ell_3}.
%\]
where we have used~\eqref{E:LAMBDABARSMALLHBOUND},~\eqref{E:BARLAMBDAPOWERMINUSK}, and the additive property of $q_\nu, p_{\mu,\nu}$.
If on the other hand $j=0$, then the above proof and $h_0=1$ give
\begin{align*}
\left\vert\pa_\tau^m \rr^{\ell}\left(w^{-\alpha} \DL\left(w^{1+\alpha}\mathscr J[\phi_0]^{-\gamma}h_0\right)\right)\right\vert 
&=\left\vert\pa_\tau^m \rr^{\ell} \left(w^{-\alpha} \DL\left(w^{1+\alpha}\mathscr J[\phi_0]^{-\gamma}\right)\right)\right\vert  \\
& \lesssim  \tau^{-2\gamma-m} q_{-\gamma+1}\et p_{1,0}\et.
\end{align*}
%\ec
\end{proof}

\begin{lemma}\label{L:FNPLUSONEBOUND}
Under the inductive assumptions~\eqref{E:ASS1}, for any $\ell,m \in \{0,1,\dots,K\}$
%-\eqref{E:ASS2} 
the following estimate holds:
\begin{align}
\left\vert \pa_\tau^m \rr^{\ell} f_{j} \right\vert & \lesssim \tau^{-\frac43 + j\delta-m} \pet, \ \ j\in\{1,\dots,I+1\}  \label{E:LAMBDABARFBOUND}
%\tau^{-\frac43 + (j+1)\delta}  r^{-nm}q_m\et \left(1+ q_{-\gamma}\et \et^{1-\frac2n}\right)   \label{E:LAMBDABARFBOUND}
%\left\vert \Lambda^\ell f_{n+1} \right\vert & \le C^f_{n+1,\ell} \sigma^{-\frac43+(n+1)\delta} \label{E:LAMBDAFBOUND}
\end{align}
\end{lemma}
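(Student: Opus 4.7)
The proof plan is to apply Leibniz's rule to each summand in the definition~\eqref{E:FNDEFPHI} of $f_j$ and then assemble the auxiliary single-factor estimates already established. Write $f_j = T_1 + T_2$, where $T_1 := -\phi_0^{-2}\tilde O_j/j!$ and $T_2$ is the remaining pressure-type double sum over $m+i=j-1$ and $0\le k\le i$. The per-factor bounds we will invoke are \eqref{E:PARTIALTAULCHI0} for $\phi_0^{-k}$, \eqref{E:LAMBDABARHBOUND} for $\tilde O_j$, \eqref{E:PARTIALBARLAMBDACHIBOUNDKM} for the $\phi_k, \phi_{i-k}$ factors (with the understanding that $\phi_0 = \tau^{2/3}$ carries no $p_{\l,-2/n}$ weight), \eqref{E:JSMALLHBOUND} for the $w^{-\alpha}\DL(w^{1+\alpha}\J[\phi_0]^{-\gamma}h_m)$ factor, and the trivial observation $\rr^b(r^{-2}) = (-2)^b r^{-2}$ to handle the bare $r^{-2}$ against angular derivatives. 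Since all $\phi$-indices appearing in $f_j$ are bounded by $I$ whenever $j \le I+1$, the inductive hypothesis~\eqref{E:ASS1} applies throughout.

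The algebraic input that makes the $\tau$-exponents add up correctly is the identity $\delta = \tfrac{8}{3} - 2\gamma - \tfrac{2}{n}$, equivalently $\tfrac{4}{3} - 2\gamma = -\tfrac{4}{3} + \delta + \tfrac{2}{n}$. For $T_1$ the $\tau$-powers from $\phi_0^{-2}$ and $\tilde O_j$ combine directly to $\tau^{-4/3+j\delta-m}$, while the weight $p_{2\l,-4/n}\et$ is immediately dominated pointwise by $p_{\l,-2/n}\et$. For $T_2$ the $\tau$-exponents from $\phi_k\phi_{i-k}$ and the $\DL$-factor add to $\tfrac{4}{3} - 2\gamma + (j-1)\delta$, which the identity converts to $-\tfrac{4}{3} + j\delta + \tfrac{2}{n}$ -- the desired target up to a single extra $\tau^{2/n}$ factor that must be absorbed elsewhere.

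The subtle point is reconciling the bare $r^{-2}$ in $T_2$ with the $p_{2+\l,-2/n}\et$ (or $p_{1,0}\et$ in the $m=0$ case) weight coming from~\eqref{E:JSMALLHBOUND}. A direct computation yields the exact identity
\begin{equation*}
r^{-2}\, p_{2+\l, -2/n}\et \;=\; \tau^{-2/n}\, p_{2+\l, -4/n}\et,
\end{equation*}
and similarly $r^{-2}\, p_{1,0}\et = \tau^{-2/n}\, p_{1,-2/n}\et$, converting the $r^{-2}$ singularity at the center into precisely the $\tau^{-2/n}$ factor needed to cancel the leftover $\tau^{2/n}$ from the previous step. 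A final pointwise weight comparison -- splitting into the regimes $r^n/\tau \le 1$ and $r^n/\tau \ge 1$ separately and exploiting $\gamma > 1$ together with $q_{-\gamma+1} \le 1$ -- then shows that the full combined weight is dominated by $p_{\l,-2/n}\et$, completing the bound~\eqref{E:LAMBDABARFBOUND}.

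The one conceptual obstacle in this argument is precisely the $r^{-2}$ bookkeeping: a naive estimate that ignored the $-2/n$ shift in $\nu$ carried by the $\DL$-weight would leave an unmanageable $r^{-2}$ at the center. Retaining this shift, and using it to absorb $r^{-2}$ at the cost of a $\tau^{-2/n}$ factor that is paid back by the algebraic identity for $\delta$, is another instance of the leitmotif that the scale $r^n/\tau$ governs the delicate singular behavior of the iterates $\phi_j$ near the origin.
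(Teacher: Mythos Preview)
Your proof is correct and follows essentially the same route as the paper's: the same Leibniz-rule decomposition of $f_j$ into the $\tilde O_j$ piece and the pressure sum, the same per-factor bounds, the same algebraic identity $\tfrac{4}{3}-2\gamma+(j-1)\delta = -\tfrac{4}{3}+j\delta+\tfrac{2}{n}$, and the same mechanism of absorbing the $r^{-2}$ via $r^{-2}=\tau^{-2/n}(r^n/\tau)^{-2/n}$ followed by a pointwise weight comparison using $\gamma>1$. The only cosmetic difference is that the paper immediately replaces $p_{2+\l,-2/n}$ by the cruder $p_{1,0}$ (noting $p_{2+\l,-2/n}\le p_{1,0}$) and handles only the worst case $d=0$, whereas you carry both cases through separately; either way closes.
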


\begin{proof}
%\bcr
Using the Leibniz rule and the formula~\eqref{E:FNDEFPHI}, we have
\begin{align*}
&\left\vert\pa_\tau^m \rr^{\ell} f_{j} \right\vert \\
& = \left\vert\pa_\tau^m \rr^{\ell}\left(-\frac29\phi_0^{-2} \frac{\tilde O_{j}}{j!}+  \sum_{d+i=j-1,\atop d,j\ge0}\sum_{k=0}^i
\frac{\phi_k\phi_{i-k}}{w^\alpha  r^2} \DL\left(w^{1+\alpha}\mathscr J[\phi_0]^{-\gamma}\frac{h_d}{d!}\right),\right)\right\vert \\
& \lesssim\sum_{\alpha_1+\alpha_2 = m \atop
\beta_1+\beta_2= \ell}  \left\vert  \pa_\tau^{\alpha_1} \rr^{\beta_1}\left(\phi_0^{-2}\right) 
\pa_\tau^{\alpha_2} \rr^{\beta_2}\tilde O_{j}\right\vert  \\
& \ \ \ \ +   \sum_{d+i=j-1,\atop d,i\ge0}\sum_{k=0}^i \sum_{\alpha_1+\dots +\alpha_4 = m \atop \beta_1+\dots+\beta_4= \ell}  \\
& \ \ \ \ \left\vert\pa_\tau^{\alpha_1} \rr^{\beta_1}\phi_k \pa_\tau^{\alpha_2} \rr^{\beta_2}\phi_{i-k} 
%\pa_\tau^{\alpha_3} \rr^{\beta_3}(w^{-\alpha})
\pa_\tau^{\alpha_3} \rr^{\beta_3}( r^{-2})\pa_\tau^{\alpha_4} \rr^{\beta_4}\left(w^{-\alpha}\DL\left(w^{1+\alpha}\mathscr J[\phi_0]^{-\gamma}\frac{h_d}{d!}\right)\right)\right\vert.
\end{align*}
The worst case is $j-1=0$ where we have $d=i=k=0$, as in \eqref{E:JSMALLHBOUND}, since $p_{2+\lambda,-2/n}\le p_{1,0}$ by our choices of $\lambda$ in Definition \ref{D:ALAMBDADEF}. We now choose $p_{1,0}$ in \eqref{E:JSMALLHBOUND} to obtain: 
 %Therefore the above expression is bounded by 
\begin{align*}
& \lesssim \sum_{\alpha_1+\alpha_2 = m} \tau^{-\frac43 +j\delta -(\alpha_1+\alpha_2)}p_{2\l,-\frac4n}\et\\
&\ \ \ \  +p_{1,0}\et \sum_{d+i=j-1,\atop d,i\ge0}\sum_{k=0}^i \sum_{\alpha_1+\dots +\alpha_4= m \atop \beta_1+\dots+\beta_4= \ell}
 \tau^{\frac23+k\delta-\alpha_1} \tau^{\frac23+(i-k)\delta-\alpha_2} \\
& \quad \ \ \  r^{-2}\tau^{-2\gamma + d\delta-\alpha_4 }  q_{-\gamma+1}\et \\
& \lesssim  \tau^{-\frac43 + j\delta -m} p_{2\l,-\frac4n}\et  + p_{1,0}\et\sum_{d+i=j-1,\atop d,i\ge0}
\tau^{\frac43 -2\gamma+ (d+i)\delta-m}  r^{-2}q_{-\gamma+1}\et  \\
& \lesssim  \tau^{-\frac43 + j\delta-m}  p_{2\l,-\frac4n}\et+ \tau^{\frac43 -2\gamma+ (j-1)\delta-m }  r^{-2} q_{-\gamma+1}\et p_{1,0}\et \\
& =  \tau^{-\frac43 + j\delta-m} \left(p_{2\l,-\frac4n}\et+ q_{-\gamma}\et \et^{1-\frac2n}\right) \\
& \lesssim  \tau^{-\frac43 + j\delta-m}p_{\l,-\frac2n}\et
\end{align*}
%\ec
where we have used \eqref{E:ASS1}, \eqref{E:LAMBDABARHBOUND}, \eqref{E:JSMALLHBOUND}, and from the definition of $\delta$ \eqref{E:DELTADEF}, the exponent
\begin{align*}
\frac{4}{3}-2\gamma+(j-1)\delta-m  & =-\frac{4}{3}+j\delta-m+\frac{8}%
{3}-2\gamma-\delta\\
& =-\frac{4}{3}+j\delta-m+\frac{2}{n}
\end{align*}
and the estimates $p_{2\l,-\frac4n}\le p_{\l,-\frac2n}$ and $q_{-\gamma}(x) x^{1-\frac2n} = \frac{x^{1-\frac2n}}{(1+x)^\gamma} \le \frac{x^{\l-\frac2n}}{{(1+x)^\l}} = p_{\l,-\frac2n}(x)$.
(We remind the reader of definitions~\eqref{E:PFUNCTIONDEF} and~\eqref{E:QDEF} of $x\mapsto p_{\mu,\nu}(x)$ and $x\mapsto q_\nu(x)$ respectively.)
\end{proof}

%\bcr
\begin{remark}\label{R:BASIS}
When $j=1$ we have $\tilde O_1=0$, and thus from~\eqref{E:FNDEFPHI}
\[
f_1 = - \frac{\phi_0^2}{w^\alpha  r^2} \DL\left(w^{1+\alpha}\mathscr J[\phi_0]^{-\gamma}\right)= - P[\phi_0].
\]
In particular $f_1$ depends only on $\phi_0$ and the inductive assumption~\eqref{E:ASS1} is not used in the proof of~\eqref{E:LAMBDABARFBOUND}.
\end{remark}
%\ec

\begin{lemma}\label{L:lem2.12}
\begin{enumerate}
\item
Let $0<\lambda<1$ be given and let $\beta$ satisfy 
\[
\beta -\l + \frac2n >-1.
\]
Then the following bound holds
\begin{align} \label{E:BETABOUND}
\left\vert \int_0^{\tau} (\tau')^\beta p_{\l,-\frac2n}(\frac{ r^n}{\tau'}) \,d\tau' \right\vert \lesssim \tau^{\beta+1} p_{\l,-\frac2n}\et,
\end{align}
where $x\mapsto p_{\mu,\nu}(x)$ is defined in~\eqref{E:PFUNCTIONDEF}.
%\[
%p_{\mu,\nu}(x) =\frac{x^{\mu+\nu}}{(1+x)^\mu}. 
%\]
\item
Let $b$ satisfy 
\[
b + \frac2n <-1.
\]
Then the following bound holds
\begin{align} \label{E:BBOUND}
\left\vert \int_\tau^{1} (\tau')^b p_{\l,-\frac2n}\left(\frac{ r^n}{\tau'}\right) \,d\tau' \right\vert \lesssim \tau^{b+1} p_{\l,-\frac2n}\et.
\end{align}
\end{enumerate}
\end{lemma}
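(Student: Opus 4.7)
The plan is to reduce both inequalities to the observation that the maps $\tau' \mapsto (\tau')^{\beta+1} p_{\lambda,-2/n}(r^n/\tau')$ and $\tau'\mapsto (\tau')^{b+1} p_{\lambda,-2/n}(r^n/\tau')$ are, respectively, increasing and decreasing in $\tau'$ with derivative comparable to the integrand. The main input is an explicit bound on the logarithmic derivative of $p_{\lambda,-2/n}$: since $p_{\lambda,-2/n}(x) = x^{\lambda-2/n}(1+x)^{-\lambda}$, one computes
\[
\frac{x\,p_{\lambda,-2/n}'(x)}{p_{\lambda,-2/n}(x)} \;=\; \Bigl(\lambda - \tfrac{2}{n}\Bigr) - \frac{\lambda x}{1+x} \;\in\; \Bigl(-\tfrac{2}{n},\;\lambda-\tfrac{2}{n}\Bigr],\qquad x\ge 0,
\]
where the bound uses only $0<\lambda<1$ from Definition~\ref{D:ALAMBDADEF}.

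Writing $x=r^n/\tau'$ and using $\frac{dx}{d\tau'} = -\frac{x}{\tau'}$, a direct differentiation gives
\[
\frac{d}{d\tau'}\!\Bigl[(\tau')^{\beta+1}\,p_{\lambda,-2/n}(r^n/\tau')\Bigr] \;=\; (\tau')^{\beta}\,p_{\lambda,-2/n}(r^n/\tau')\!\left[(\beta+1)-\frac{xp_{\lambda,-2/n}'(x)}{p_{\lambda,-2/n}(x)}\right].
\]
By the bracket estimate above, the factor $(\beta+1) - xp'/p$ lies in the fixed interval $[\beta+1-\lambda+\tfrac{2}{n},\,\beta+1+\tfrac{2}{n})$. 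Under the hypothesis $\beta-\lambda+\tfrac{2}{n}>-1$ of Part~1 this interval lies in $(0,\infty)$, so the derivative is pointwise comparable to $(\tau')^\beta p_{\lambda,-2/n}(r^n/\tau')$; similarly, under the hypothesis $b+\tfrac{2}{n}<-1$ of Part~2 the analogous bracket lies in a bounded negative interval and the function is decreasing in $\tau'$.

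To conclude Part~1 I integrate the identity from $0$ to $\tau$. The boundary contribution at $\tau'=0$ vanishes: as $\tau'\to 0^+$ we have $p_{\lambda,-2/n}(r^n/\tau')\sim (r^n/\tau')^{-2/n}$, so $(\tau')^{\beta+1}p_{\lambda,-2/n}(r^n/\tau')=O\bigl((\tau')^{\beta+1+2/n}r^{-2}\bigr)\to 0$ since $\beta+1+\tfrac{2}{n}>\lambda>0$. This yields
\[
\int_0^\tau (\tau')^\beta p_{\lambda,-2/n}(r^n/\tau')\,d\tau' \;\approx\; \tau^{\beta+1}\,p_{\lambda,-2/n}(r^n/\tau),
\]
which is stronger than the claimed one-sided bound. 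For Part~2 I integrate from $\tau$ to $1$; the monotonicity gives
\[
\tau^{b+1}p_{\lambda,-2/n}(r^n/\tau) - p_{\lambda,-2/n}(r^n) \;\approx\; \int_\tau^1 (\tau')^{b}p_{\lambda,-2/n}(r^n/\tau')\,d\tau',
\]
and dropping the nonnegative term $p_{\lambda,-2/n}(r^n)$ on the left gives the desired upper bound. The only delicate point is checking that the bracket factor is bounded \emph{away} from zero under each hypothesis; this is where the strict inequalities $\beta-\lambda+\tfrac{2}{n}>-1$ and $b+\tfrac{2}{n}<-1$ are used, together with the explicit endpoints of the range of $xp'/p$.
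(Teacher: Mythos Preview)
Your proof is correct and takes a genuinely different route from the paper. The paper argues by the change of variables $x=\tau'/r^n$, reducing the integral to $r^{n(\beta+1)}\int_0^{\tau/r^n}x^{\beta+2/n}(1+x)^{-\lambda}\,dx$, and then splits into the two regimes $r^n\ge\tau$ and $r^n\le\tau$, using the crude bounds $(1+x)^{-\lambda}\le 1$ and $(1+x)^{-\lambda}\le x^{-\lambda}$ respectively; Part~2 is handled the same way with the integral over $[\tau/r^n,1/r^n]$. Your argument instead identifies the target $\tau^{\beta+1}p_{\lambda,-2/n}(r^n/\tau)$ as an antiderivative up to a bounded factor, via the logarithmic-derivative estimate $xp'/p\in(-2/n,\lambda-2/n]$, and then integrates. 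This avoids any case distinction and in fact yields a two-sided equivalence in Part~1, which is stronger than what is stated. The paper's approach is more hands-on and makes the role of the scale $r^n/\tau$ explicit, whereas yours packages the same information more efficiently into a monotonicity statement. One small remark: your parenthetical ``the bound uses only $0<\lambda<1$'' is not quite accurate --- the range of $xp'/p$ follows from the explicit formula for any $\lambda>0$, and $\lambda<1$ plays no role at this step.
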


\begin{proof}
{\bf Proof of part (i).}
Applying the change of variables $x=\tau'/ r^n$ we have
\begin{align}\label{E:INTIDENTITY}
\int_0^{\tau} (\tau')^\beta p_{\l,-\frac2n}\left(\frac{ r^n}{\tau'}\right) \,d\tau' =  r^{n(\beta+1)} \int_0^{\frac{\tau}{ r^n}} \frac{x^{\beta+\frac2n}}{(1+x)^\l}\,dx.
\end{align}
Case 1:  $ r^n\ge\tau$. We have
\begin{align*}
 r^{n(\beta+1)} \int_0^{\frac{\tau}{ r^n}} \frac{x^{\beta+\frac2n}}{(1+x)^\l}\,dx 
& \le  r^{n(\beta+1)} \int_0^{\frac{\tau}{ r^n}}x^{\beta+\frac2n}\,dx \\
& \lesssim \tau^{\beta+1+\frac2n}  r^{-2} = \tau^{\beta+1} \left(\frac{ r^n}{\tau}\right)^{-\frac2n} \lesssim  \tau^{\beta+1}p_{\l,-\frac2n}\et, 
\end{align*}
where the very last inequality follows from $1\lesssim p_{\l,0}\et$ which in turn relies on $ r^n\ge\tau$. 

\noindent
Case 2:  $ r^n\le\tau$. We have  from $\beta-\lambda+\frac{2}{n}>-1$ 
\begin{align*}
 r^{n(\beta+1)} \int_0^{\frac{\tau}{ r^n}} \frac{x^{\beta+\frac2n}}{(1+x)^\l}\,dx 
& \le  r^{n(\beta+1)} \int_0^{\frac{\tau}{ r^n}}x^{\beta-\l+\frac2n}\,dx \\
&  \lesssim \tau^{\beta+1} \et^{\l-\frac2n} \lesssim  
\tau^{\beta+1} p_{\l,-\frac2n}\et, 
\end{align*}
where the very last inequality follows from $1\lesssim \frac1{(1+\frac{ r^n}{\tau})^\l}$ which in turn relies on $ r^n\le\tau$. 
%The two previous estimates together with~\eqref{E:INTIDENTITY} give
%\begin{align} \label{E:BETABOUND}
%\left\vert \int_0^{\tau'} (\tau'')^\beta P_{1-\frac2n}\et \,d\tau'' \right\vert \lesssim (\tau')^{\beta+1} P_{1-\frac2n}\left(\frac{ r^n}{\tau'}\right).
%\end{align}

\noindent
{\bf Proof of part (ii).}
By the same change of variables as in~\eqref{E:INTIDENTITY} we have
\begin{align}\label{E:INTIDENTITY2}
\int_\tau^{1} (\tau')^b p_{\l,-\frac2n}\left(\frac{ r^n}{\tau'}\right) \,d\tau' =  r^{n(b+1)} \int_{\frac{\tau}{ r^n}}^{\frac1{ r^n}} 
\frac{x^{b+\frac2n}}{(1+x)^\l}\,dx .
%\le  r^{n(b+1)} \int_{\frac{\tau}{ r^n}}^{\infty} 
%\frac{x^{b+\frac2n}}{(1+x)^\l}\,dx.
\end{align}
We distinguish two cases again. 

\noindent
Case 1:  $ r^n\ge\tau$. We have from $b+\frac{2}{n}<-1,$
\begin{align*}
 r^{n(b+1)} \int_{\frac{\tau}{ r^n}}^{\frac{1}{ r^n}} 
\frac{x^{b+\frac2n}}{(1+x)^\l}\,dx &\le  r^{n(b+1)}  \int_{\frac{\tau}{ r^n}}^{\infty} x^{b+\frac2n}\,dx \\
& \lesssim  r^{n(b+1)} \et^{-b-1-\frac2n}  \\
& = \tau^{b+1} \et^{-\frac2n}\lesssim \tau^{b+1} p_{\l,-\frac2n}\et,
\end{align*}
where the last inequality follows from $1\lesssim p_{\l,0}\et$ which in turn relies on $ r^n\ge\tau$, just like in Case 1 in part (i). 

\noindent
Case 2:  $ r^n\le\tau$. 
We have 
\begin{align*}
 r^{n(b+1)} \int_{\frac{\tau}{ r^n}}^{\infty} 
\frac{x^{b+\frac2n}}{(1+x)^\l}\,dx \le 
 r^{n(b+1)}  \int_{\frac{\tau}{ r^n}}^{\infty}  x^{b-\l+\frac2n}\,dx
\lesssim \tau^{b+1} \et^{\l-\frac2n}\le \tau^{b+1} p_{\l,-\frac2n}\et,
\end{align*}
where the very last inequality follows from $1\le \frac1{(1+\frac{ r^n}{\tau})^\l}$ which in turn relies on $ r^n\le\tau$. 
The two previous estimates together with~\eqref{E:INTIDENTITY2} give~\eqref{E:BBOUND}.
%\begin{align} \label{E:BBOUND}
%\left\vert \int_\tau^{\infty} (\tau')^b P_{1-\frac2n}\left(\frac{ r^n}{\tau'}\right) \,d\tau' \right\vert \lesssim (\tau)^{b+1} P_{1-\frac2n}\et.
%\end{align}
\end{proof}

\subsubsection{Proofs of Proposition~\ref{P:MAINBOUNDPHI}, Lemma~\ref{L:BASIS}, and Theorem~\ref{T:MAINBOUNDPHI}} \label{SSS:PROOFS}

{\em Proof of Proposition~\ref{P:MAINBOUNDPHI}.}
We first assume that $m=0$. 
Let $k\in\mathbb N$ and $|f|\lesssim \tau^{4/3}$, $|g|\lesssim \tau^{-8/3}$, $|h(\tau, r)|\lesssim \tau^{k\delta}\pet$. 
If $k\le %\bcr 
\left\lfloor \frac1{3\bar\gamma}\right\rfloor %\ec
$ we then have
\begin{align}
\left\vert S_1[f,g,h]( \tau, r) \right\vert
&\lesssim  \tau^{\frac43} \int_\tau^1 (\tau')^{-\frac83} \int_0^{\tau'} \left(\tau''\right)^{k\delta}p_{\l,-\frac2n}\left(\frac{ r^n}{\tau''}\right)\,d\tau''d\tau' \notag \\
& \lesssim    \tau^{\frac43} \int_\tau^1 (\tau')^{-\frac53+k\delta} p_{\l,-\frac2n}\left(\frac{ r^n}{\tau'}\right) \,d\tau'  \lesssim \tau^{\frac23+k\delta}\pet,\label{E:S1BOUND}
\end{align}
since $k\delta-\lambda+\frac{2}{n}>-1$, where we have first used~\eqref{E:BETABOUND} and then~\eqref{E:BBOUND}. 
Note that we have used the assumption $k\le %\bcr 
\left\lfloor \frac1{3\bar\gamma}\right\rfloor %\ec
$ and Lemma~\ref{L:NUMERICAL} to ensure that
$
-\frac53+k\delta + \frac2n <-1 
$
and therefore~\eqref{E:BBOUND} is applicable in the last line of~\eqref{E:S1BOUND}.
If $k>%\bcr 
\left\lfloor \frac1{3\bar\gamma}\right\rfloor %\ec
$ we then have
\begin{align}
\left\vert S_2[f,g,h](t, r ) \right\vert &\lesssim  \tau^{\frac43} \int_0^\tau (\tau')^{-\frac83} \int_0^{\tau'} \left(\tau''\right)^{k\delta} p_{\l,-\frac2n}\left(\frac{ r^n}{\tau''}\right)\,d\tau''d\tau'  \notag \\
& \lesssim    \tau^{\frac43}\int_{0}^{\tau} (\tau')^{-\frac53+k\delta}p_{\l,-\frac2n}\left(\frac{ r^n}{\tau'}\right)\,d\tau' 
 \lesssim \tau^{\frac23+k\delta}p_{\l,-\frac2n}\et,\label{E:S2BOUND}
\end{align}
where we have used~\eqref{E:BETABOUND} twice. We note that for any $k>%\bcr
 \left\lfloor \frac1{3\bar\gamma}\right\rfloor %\ec
 $ we have by Lemma~\ref{L:NUMERICAL}
$-\frac53+k\delta -\l +\frac{2}{n}>-1$ where we set $a=2N-2$ and we recall Definition~\ref{D:ALAMBDADEF} of $\lambda$. Therefore~\eqref{E:BETABOUND} is applicable in the second line.

By \eqref{E:HPHI} and \eqref{E:CHINFORMULA}, and the facts $\phi_{0}%
^{2}=\tau^{4/3},\phi_{0}^{-4}=\tau^{-8/3},$
\begin{align*}
\left\vert \rr^\ell \phi_{I+1}\right\vert \lesssim \sum_{\ell_1+\ell_2+\ell_3+\ell_4=\ell}
\left\vert S_i[\rr^{\ell_1} \left(\phi_0^2\right), \rr^{\ell_2}(\phi_0^{-4}),\rr^{\ell_3}(\phi_0^2)\rr^{\ell_4}f_{I+1}] \right\vert,
\end{align*}
where $i=1$ or $i=2$ according to~\eqref{E:CHINFORMULA}.
Since $\left\vert \rr^{\ell_1} \left(\phi_0^2\right)\right\vert\lesssim \tau^{\frac43}$, $\left\vert \rr^{\ell_2} \left(\phi_0^{-4}\right)\right\vert\lesssim \tau^{-\frac83}$
by~\eqref{E:BARLAMBDAPOWERMINUSK} and $\left\vert \rr^{\ell_4}f_{I+1}\right\vert\lesssim\tau^{-\frac43+(I+1)\delta}\pet$ by~\eqref{E:LAMBDABARFBOUND}, we may apply~\eqref{E:S1BOUND}--\eqref{E:S2BOUND} to conclude 
\begin{align}
\left\vert \rr^\ell \phi_{I+1}\right\vert & \lesssim  \tau^{\frac23+(I+1)\delta} \pet.
\end{align}
When $m=1$ we observe by taking $\tau$ derivative of \eqref{E:S1} and \eqref{E:S2}
and by Lemma ~\ref{L:FNPLUSONEBOUND}, \ref{L:lem2.12} %from~\eqref{E:CHINFORMULA} and Lemma~\ref{L:FNPLUSONEBOUND}
\begin{align*}
\left\vert \pa_\tau \phi_{I+1}\right\vert 
& \le \frac43 \left\vert \tau^{-1}\phi_{I+1}\right\vert + \tau^{-\frac43} \left\vert \int_0^{\tau'}(\tau'')^{(I+1)\delta}p_{\l,-\frac2n}\left(\frac{ r^n}{\tau''}\right) \,d\tau''\right\vert \\
& \lesssim \tau^{\frac23+(I+1)\delta -1} \pet. 
%\lesssim  \tau^{\frac23+(I+1)\delta}  r^{-n} q_1\et  P_{1-\frac2n}\et. 
\end{align*}
Similarly, using the Leibniz rule like above, 
\begin{align*}
\left\vert \pa_\tau \rr^\ell \phi_{I+1}\right\vert 
& \lesssim \tau^{\frac23+(I+1)\delta -1} \pet.
% \lesssim  \tau^{\frac23+(I+1)\delta}  r^{-n} q_1\et  P_{1-\frac2n}\et. 
\end{align*}

For $m\ge 2$ we simply use the equation
\be\label{E:PHINPLUSONE}
\pa_{\tau\tau} \phi_{I+1} - \frac49 \phi_{I+1}\tau^{-2} = f_{I+1}
\ee
Applying $\pa_\tau^{m-2}\rr^\ell$ to~\eqref{E:PHINPLUSONE} we obtain
\begin{align*}
\left\vert \pa_\tau^m\rr^{\ell} \phi_{I+1} \right\vert &\lesssim \sum_{m'=0}^{m-2} \left\vert \pa_\tau^{m'}(\tau^{-2})\right\vert  \left\vert \pa_\tau^{m-2-m'}\phi_{I+1} \right\vert
+ \left\vert \pa_\tau^{m-2}\rr^{\ell} f_{I+1}\right\vert  \\
& \lesssim  \sum_{m'=0}^{m-2} \tau^{-2-m'}\tau^{\frac23+(I+1)\delta -m+2+m'}\pet \\
& \ \ \ \  +\tau^{-\frac43+(I+1)\delta-(m-2)}\pet\\
& \lesssim \tau^{\frac23+(I+1)\delta -m}   \pet,
\end{align*}
where
we have used the inductive assumption (that has been verified for all $m'<m$) and Lemma~\ref{L:FNPLUSONEBOUND}.
%\bcr
This completes the proof of Proposition~\ref{P:MAINBOUNDPHI}.

{\em Proof of Lemma~\ref{L:BASIS}.}
It remains to show the basis of induction, i.e. Lemma~\ref{L:BASIS}. By Lemma~\ref{L:FNPLUSONEBOUND} for any $\ell,m \in \{0,1,\dots,K\}$ we have
the bound
\begin{align}\label{E:F1BOUND}
\left\vert \pa_\tau^m \rr^{\ell} f_{1} \right\vert & \lesssim \tau^{-\frac43 + \delta-m} \pet.
\end{align}
By Remark~\ref{R:BASIS}, bound~\eqref{E:F1BOUND} does not rely on the inductive assumptions~\eqref{E:ASS1}. Using an argument identical to the proof of Proposition~\ref{P:MAINBOUNDPHI}, we conclude Lemma~\ref{L:BASIS}. 

{\em Proof of Theorem~\ref{T:MAINBOUNDPHI}.}
The proof follows by induction on the index $j\in\{1,\dots,M\}$. The claim is shown for $j=1$ in  Lemma~\ref{L:BASIS}, while the inductive step follows from Proposition~\ref{P:MAINBOUNDPHI}.

%%%%%%%%%%%%%%%%%%%%%%%%%%%%%%%%%%%%%%%%%%%%%%%%
%%%%%%%%%%%%%%%%%%%%%%%%%%%%%%%%%%%%%%%%%%%%%%%%

\section{Remainder equations and the main results}\label{S:MAINRESULT}

%%%%%%%%%%%%%%%%%%%%%%%%%%%%%%%%%%%%%%%%%%%%%%%%
%%%%%%%%%%%%%%%%%%%%%%%%%%%%%%%%%%%%%%%%%%%%%%%%

We look for a solution of~\eqref{E:BASICPDEPHI} in the form
\be\label{E:THETAQDEF}
\phi( \tau, r) = \sum_{k=0}^M \ve^k\phi_k( \tau, r) + \theta( \tau, r) =: \phia+\theta
\ee
%\bcv
where $M$ is to be specified later. 
%\ec
%\begin{color}{red}
%We need to specify how large $M$ is. See Remark \ref{R:M}. 
%\end{color}

%%%%%%%%%%%%%%%%%%%%%%%%%%%%%%%%%%%%%%%%%%%%%%%%

\subsection{Derivation of the remainder equations}

%%%%%%%%%%%%%%%%%%%%%%%%%%%%%%%%%%%%%%%%%%%%%%%%

\begin{lemma}[PDE satisfied by $\theta$]\label{L:THETAEQUATION}
Let $\phi$, $\phia$, and $\theta$ be related by~\eqref{E:THETAQDEF}. Then the equation satisfied by $\theta$ reads
\be\label{E:theta1}
\begin{split}
&\left(1-\ve \gamma w c \frac{M_g ^2}{ r^2}\right)\pa_\tau^2\theta   - 2\ve \gamma w c \frac{M_g }{ r^2} \pa_\tau \pa_r  ( r  \theta) - \ve\gamma  c   \frac{1}{ r w^{\alpha}}\pa_r \left(w^{1+\alpha} \frac{1}{ r^2}\pa_r[ r ^3 \theta] \right) + \ve \mathfrak K_3[\theta]   \\
&-\frac{4\theta}{9\phia^3} + 2\ve \frac{P[\phia]}{\phia} \theta + \ve \mathfrak K_1[\theta] 
+\frac{2}{9}\left(  \frac{1}{\phi^2}-\frac{1}{\phia^2} +  \frac{2\theta}{\phia^3} \right)+\ve \frac{P[\phia]\theta^2}{\phia^2}+\ve \mathfrak K_2[\theta] =S(\phia)
\end{split}
\ee
where the source term $S(\phia)$ and the expressions $\mathfrak K_j[\theta]$, $j=1,2,3$ are given by \eqref{E:SOURCETERM}, \eqref{E:MATHFRAKK1}, \eqref{E:MATHFRAKK2}, \eqref{E:MATHFRAKK3} below respectively and $c$ is given by~\eqref{E:CDEF}.
\end{lemma}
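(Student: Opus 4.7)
The plan is to substitute the decomposition $\phi = \phia + \theta$ directly into the evolution equation~\eqref{E:BASICPDEPHI} and subtract the identity $\pa_\tau^2 \phia + \tfrac{2}{9\phia^2} + \ve P[\phia] = -S(\phia)$, which is nothing but the definition~\eqref{E:SOURCETERM} of the source term rearranged. This produces at once
\[
\pa_\tau^2\theta + \frac{2}{9}\left(\frac{1}{\phi^2} - \frac{1}{\phia^2}\right) + \ve\bigl(P[\phi] - P[\phia]\bigr) = S(\phia),
\]
and the entire proof reduces to extracting the leading-order-in-$\theta$ linear structure of the two parenthetical differences. For the $1/\phi^2$ piece, adding and subtracting the first-order Taylor term gives
\[
\frac{2}{9}\left(\frac{1}{\phi^2} - \frac{1}{\phia^2}\right) = -\frac{4\theta}{9\phia^3} + \frac{2}{9}\left(\frac{1}{\phi^2} - \frac{1}{\phia^2} + \frac{2\theta}{\phia^3}\right),
\]
which accounts for the coercive linear term $-4\theta/(9\phia^3)$ and the purely quadratic $\theta$-remainder appearing in~\eqref{E:theta1}.

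The main step is the linearization of $\ve(P[\phi] - P[\phia])$ starting from~\eqref{E:PRESSURETERM}. The outer factor expands as $\phi^2 = \phia^2 + 2\phia\theta + \theta^2$; pairing $2\phia\theta$ with the unperturbed $\DL\bigl(w^{1+\alpha}\J[\phia]^{-\gamma}\bigr)$ produces exactly $2\ve\frac{P[\phia]}{\phia}\theta$, while pairing $\theta^2$ with the same factor gives $\ve\frac{P[\phia]\theta^2}{\phia^2}$. For the variation of $\J[\phi]^{-\gamma}$ the key algebraic identity is
\[
\J[\phi] - \J[\phia] = \phia^2(\theta + \DL\theta) + 2\phia\theta(\phia + \DL\phia) + O(\theta^2) = 3\phia^2\theta + \DL(\phia^2\theta) + O(\theta^2),
\]
obtained by expanding $\J[\cdot] = \phi^2(\phi+\DL\phi)$ and using the Leibniz rule $\DL(\phia^2\theta) = 2\phia(\DL\phia)\theta + \phia^2\DL\theta$. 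Inserting $-\gamma\J[\phia]^{-\gamma-1}$ times this perturbation into $\frac{\phia^2}{g^2 w^\alpha r^2}\DL\bigl(w^{1+\alpha}\,\cdot\,\bigr)$ and retaining only the contributions in which $\DL$ falls on the interior $\DL\theta$ produces, with $c$ as defined in~\eqref{E:CDEF}, the principal operator $-\ve\gamma\frac{wc}{r^2}\DL^2\theta$. Expanding $\DL^2 = M_g^2\pa_\tau^2 + 2M_g\, r\pa_\tau\pa_r + r^2\pa_r^2 + \text{(first-order)}$ and then repackaging the mixed term as $\frac{1}{r^2}\pa_\tau\pa_r(r\theta)$ (the $\pa_\tau\theta$ piece of $r\theta$ being algebraically equivalent to a first-order commutator) and the spatial term via the integration-by-parts identity $r^2\pa_r^2\theta + 4r\pa_r\theta + \text{l.o.t.} = \frac{1}{rw^\alpha}\pa_r\bigl(w^{1+\alpha}\frac{1}{r^2}\pa_r[r^3\theta]\bigr) + \text{l.o.t.}$ yields exactly the three top-order operators on the left-hand side of~\eqref{E:theta1}. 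All remaining pieces — commutators from $\DL$ hitting $w^{1+\alpha}\J[\phia]^{-\gamma-1}$, the cross terms between $2\phia\theta + \theta^2$ and the $\J$-variation, and the higher-order remainder in the Taylor expansion of $\J[\phi]^{-\gamma}$ — are gathered into $\ve\mathfrak K_1[\theta]$, $\ve\mathfrak K_2[\theta]$, $\ve\mathfrak K_3[\theta]$ in accordance with the upcoming formulas~\eqref{E:MATHFRAKK1}--\eqref{E:MATHFRAKK3}.

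The main obstacle is not analytical but combinatorial: verifying that the groupings $\pa_\tau\pa_r(r\theta)$ and $\frac{1}{r^2}\pa_r[r^3\theta]$ arise naturally from the linearization, rather than being introduced by hand. The cleanest route is to work from the identity $\J[\phi] = \phi^3 + \tfrac{1}{3}\DL(\phi^3)$, the $(\tau,r)$-analog of the Cartesian identity $\J[\chi] = \frac{1}{3r^2}\pa_r(r^3\chi^3)$. Its linearization gives $3\phia^2\theta + \DL(\phia^2\theta) = M_g\pa_\tau(\phia^2\theta) + \frac{1}{r^2}\pa_r(r^3\phia^2\theta)$, from which the appearance of $r^3\theta$ inside the elliptic term and $r\theta$ in the mixed term is transparent; the $\phia^2$-to-$\phia^2$ variation produced by differentiating this expression supplies precisely the right combinations for the symmetric form of~\eqref{E:theta1}, while the $\pa_r$ falling on the factor $\phia^2$ (rather than on $\theta$) is lower order in $\theta$-derivatives and goes into $\mathfrak K_3[\theta]$.
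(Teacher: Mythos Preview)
Your opening strategy is correct and matches the paper: substitute $\phi=\phia+\theta$ into~\eqref{E:BASICPDEPHI}, subtract the definition of $S(\phia)$, and split off $-4\theta/(9\phia^3)$ from the $\phi^{-2}$ difference. The gap is in how you extract the principal part of $\ve(P[\phi]-P[\phia])$.

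You linearise by Taylor-expanding $\J[\phi]^{-\gamma}$ about $\phia$, writing the leading piece as $-\gamma\J[\phia]^{-\gamma-1}(\J[\phi]-\J[\phia])$, and pairing it with the outer factor $\phia^2/(g^2 w^\alpha r^2)$. Carried through, this produces the coefficient $\phia^4/(g^2\J[\phia]^{\gamma+1})=c[\phia]$ in front of the second-order operator, \emph{not} $c[\phi]$. Your assertion that this ``produces, with $c$ as defined in~\eqref{E:CDEF}, the principal operator $-\ve\gamma\frac{wc}{r^2}\DL^2\theta$'' is therefore incorrect: \eqref{E:CDEF} defines $c=c[\phi]=\phi^4/(g^2\J[\phi]^{\gamma+1})$, which depends on the full solution. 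The equation in the lemma is genuinely quasilinear, and your route yields a semilinear principal part plus a correction that would have to be stuffed into the $\mathfrak K_j$---but then the $\mathfrak K_j$ would not agree with the explicit formulas~\eqref{E:MATHFRAKK1}--\eqref{E:MATHFRAKK3}.

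The paper avoids Taylor expansion entirely. It introduces $K_m[\theta]:=\J[\phi]^m-\J[\phia]^m$ and uses the \emph{exact} chain-rule identity
\[
\partial K_{-\gamma}[\theta]=-\gamma\,\J[\phi]^{-\gamma-1}\,\partial K_1[\theta]\;-\;\gamma\,K_{-\gamma-1}[\theta]\,\partial\J[\phia],\qquad \partial\in\{\pa_\tau,\pa_r\},
\]
so that the top-order derivatives of $\theta$ (sitting inside $\partial K_1$) come multiplied by $\J[\phi]^{-\gamma-1}$ rather than $\J[\phia]^{-\gamma-1}$. Since $K_1[\theta]=\phi^2\DL\theta+\text{lower order}$ carries the full $\phi^2$, and the outer prefactor of $P[\phi]$ is $\phi^2/(g^2 w^\alpha r^2)$, the three $\phi$-dependent factors combine exactly to $c[\phi]$. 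The paper then computes $\pa_\tau K_1$ and $\pa_r K_1$ separately, isolates $\phi^2(M_g\pa_\tau^2\theta+\pa_\tau\pa_r(r\theta))$ and $\phi^2(M_g\pa_r\pa_\tau\theta+r\pa_r^2\theta+4\pa_r\theta)$, and merges the spatial piece with the $(1+\alpha)rw'$ contribution from $w^{-\alpha}\DL(w^{1+\alpha}\cdot)$ to assemble the divergence-form operator $\frac{1}{rw^\alpha}\pa_r\bigl(w^{1+\alpha}\frac{1}{r^2}\pa_r[r^3\theta]\bigr)$. The residual terms are then sorted by type: $\mathfrak K_1$ collects the first-order-in-$\theta$ leftovers (with $\phi$-dependent coefficients), $\mathfrak K_2$ the quadratic-and-higher pieces, and $\mathfrak K_3$ specifically the $(1+\alpha)rw'$-weighted nonlinear remainder $K_{-\gamma}+\gamma\J[\phia]^{-\gamma-1}K_1+\gamma K_{-\gamma-1}\phi^2[r\pa_r\theta+3\theta]$---not, as you suggest, the commutator from $\pa_r$ hitting $\phia^2$.
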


%%%%%%%%%%%%%%%%%%%%%%%%%%%%%%%%%%%%%%%%%%%%%%%%

\begin{proof}
We recall the formulas~\eqref{E:BETADEF},~\eqref{E:DLAMBDADEF},~\eqref{E:PRESSURETERM} of $M_g$, the operator $\DL$, and the nonlinear pressure term $P[\phi]$ respectively. Finally, recall the fundamental formula
\[
\J[\phi]=\phi^2 (\phi + \DL\phi)
\]

Let 
\be\label{E:KM}
K_{m}[\theta]:=  \J[\phi]^{m} - \J[\phia]^{m}.
\ee 
Then
\[
\begin{split}
K_{1}[\theta]&= (2\phia\theta +\theta^2) (\phia + \DL\phia) + \phi^2(\theta + \DL\theta)\\
&=\phi^2 \DL\theta + (3\phia^2 + 2\phia \DL\phia )\theta +(3\phia + \DL\phia  )\theta^2 +\theta^3
\end{split}
\]

We want to find alternative expression for 
\begin{align}
P[\phi] - P[\phia]&=  \frac{\phi^2}{ g^2( r )w^\alpha  r^2}\DL
 \left(w^{1+\alpha} \left( \J[\phi]^{-\gamma} - \J[\phia]^{-\gamma}\right)\right) \notag \\
&\quad +  \frac{\phi^2 - \phia^2 }{ g^2( r )w^\alpha  r^2}\DL
 \left(w^{1+\alpha}\J[\phia]^{-\gamma}\right) \label{E:PMINUSQ}
\end{align}

Note that 
\begin{equation}
\begin{split}\label{exp0}
\frac{1}{w^\alpha} \DL\left(w^{1+\alpha} \left( \J[\phi]^{-\gamma} - \J[\phia]^{-\gamma}\right)\right)& =
\frac{1}{w^\alpha} \DL\left(w^{1+\alpha} K_{-\gamma}[\theta]\right) \\
&= w M_g  \pa_\tau K_{-\gamma}[\theta]  + w r\pa_r K_{-\gamma}[\theta]  +(1+\alpha)  r w'    K_{-\gamma}[\theta]
\end{split}
\end{equation}

Since 
\[
\begin{split}
\pa_\tau K_{-\gamma}[\theta]&=-\gamma \J[\phi]^{-\gamma-1} \pa_\tau\left(  \J[\phi] -\J[\phia]\right)
-\gamma (\J[\phi]^{-\gamma-1} -\J[\phia]^{-\gamma-1} ) \pa_\tau \J[\phia] \\
&=-\gamma \J[\phi]^{-\gamma-1} \pa_\tau K_1[\theta] - \gamma K_{-\gamma-1}[\theta] \pa_\tau \J[\phia] 
\end{split}
\]
and 
\begin{align}
\pa_\tau K_1[\theta] &= \phi^2 (M_g  \pa_\tau^2 \theta +\pa_\tau \pa_r  ( r  \theta) )  \\
&\quad+ \left[\phi^2\pa_\tau M_g  + 2\phi\pa_\tau\phi M_g  +2\phi^2+ 2\phi \DL\phia %-\phi^2 +3\phia^2  + 2\phia \DL\phia   + 2( 3\phia + \DL\phia  )\theta + 3\theta^2
\right] \pa_\tau \theta \\
&\quad+ 2\phi\pa_\tau\phi \r\theta+\left[ \pa_\tau (3\phia^2 + 2\phia \DL\phia ) +\pa_\tau(3\phia + \DL\phia  )\theta\right]\theta \\
&=:  \phi^2 (M_g  \pa_\tau^2 \theta + \pa_\tau \pa_r  ( r  \theta) ) + \mathcal K_1
\end{align}
where we have used the identity $3\phia^2  + 6\phia \theta+3\theta^2 = 3\phi^2$. We may rewrite 
\begin{align}
wM_g  \pa_\tau K_{-\gamma}[\theta] = & - \gamma w \J[\phi]^{-\gamma-1} \phi^2 
(M_g ^2 \pa_\tau^2 \theta +M_g  \pa_\tau \pa_r  ( r  \theta) ) - \gamma w M_g  \J[\phi]^{-\gamma-1} \mathcal K_1 \notag \\
& - \gamma wM_g   K_{-\gamma-1}[\theta] \pa_\tau \J[\phia]  \label{exp1}
\end{align}

Similarly, 
\[
\begin{split}
\pa_r K_{-\gamma}[\theta]%&=-\gamma \J[\phi]^{-\gamma-1} \pa_r\left(  \J[\phi] -\J[\phia]\right) -\gamma (\J[\phi]^{-\gamma-1} -\J[\phia]^{-\gamma-1} ) \pa_r \J[\phia] \\
&=-\gamma \J[\phi]^{-\gamma-1} \pa_r K_1[\theta] - \gamma K_{-\gamma-1}[\theta] \pa_r \J[\phia] 
\end{split}
\]
and 
\begin{align*}
\pa_r K_1[\theta] &= \phi^2 (M_g  \pa_r \pa_\tau \theta +  r  \pa_ r^2 \theta ) \\
&\quad + \left[\phi^2 + 2\phi\pa_r\phi  r +3\phi^2 + 2\phia \DL\phia  + 2\DL\phia \theta %3\phia^2  + 2\phia \DL\phia   + 2( 3\phia + \DL\phia  )\theta + 3\theta^2
\right] \pa_r \theta \\
&\quad+(\phi^2 \pa_rM_g  + 2\phi\pa_r\phi M_g )\pa_\tau\theta+\left[ \pa_r (3\phia^2 + 2\phia \DL\phia ) +\pa_r(3\phia + \DL\phia  )\theta\right]\theta
\end{align*}
We may write 
\begin{equation}\label{exp2}
\begin{split}
wr\pa_r K_{-\gamma}[\theta] &= - \gamma w  \J[\phi]^{-\gamma-1} \phi^2  M_g  \pa_\tau \pa_r  ( r  \theta) - \gamma w r  \J[\phi]^{-\gamma-1} \phi^2 \left(  r  \pa_ r^2\theta + 4\pa_r\theta  \right) \\
&\quad  - \gamma w  r  \J[\phi]^{-\gamma-1} \mathcal K_2 - \gamma w r K_{-\gamma-1}[\theta] \pa_r \J[\phia] 
\end{split}
\end{equation}
where 
\begin{equation}
\begin{split}
\mathcal K_2 &:= \left[ 2\phi\pa_r\phi  r  + 2\phia \DL\phia   + 2\DL\phia \theta \right] \pa_r \theta \\
&\quad+(\phi^2 \pa_rM_g  + 2\phi\pa_r\phi M_g  - r^{-1}\phi^2 M_g  )\pa_\tau\theta+\left[ \pa_r (3\phia^2 + 2\phia \DL\phia ) +\pa_r(3\phia + \DL\phia  )\theta\right]\theta
\end{split}
\end{equation}

Plugging \eqref{exp1} and \eqref{exp2} into \eqref{exp0}, 
we deduce that 
\begin{equation}
\begin{split}\label{exp3}
&\frac{1}{w^\alpha} \DL\left(w^{1+\alpha} \left( \J[\phi]^{-\gamma} - \J[\phia]^{-\gamma}\right)\right)\\
& = - \gamma w \J[\phi]^{-\gamma-1} \phi^2 
(M_g ^2 \pa_\tau^2 \theta +2M_g  \pa_\tau \pa_r  ( r  \theta) )  - \gamma  r  \J[\phi]^{-\gamma-1} \phi^2w^{-\alpha} \pa_r \left(w^{1+\alpha} \frac{1}{ r^2}\pa_r[ r ^3 \theta] \right)  \\
& - \gamma w M_g  \J[\phi]^{-\gamma-1} \mathcal K_1
 - \gamma w  r  \J[\phi]^{-\gamma-1} \mathcal K_2 - \gamma w  K_{-\gamma-1}[\theta] \DL \J[\phia] \\
 & +(1+\alpha) r w' \left(  K_{-\gamma}[\theta] + \gamma \J[\phi]^{-\gamma-1} \phi^2[\r\theta + 3\theta]\right)
\end{split}
\end{equation}
Note that 
\[
\begin{split}
&- \gamma w M_g  \J[\phi]^{-\gamma-1} \mathcal K_1 - \gamma w  r  \J[\phi]^{-\gamma-1} \mathcal K_2 \\
&= - \gamma w \J[\phi]^{-\gamma-1} \big[  (\DL(\phi^2M_g ) + \phi^2M_g  + 2\phi \DL\phia M_g ) \pa_\tau \theta +(\DL(\phi^2) + 2\phi \DL\phia)  \r\theta + \DL(3\phia^2 + 2\phia \DL\phia ) \theta  \big] \\
&\quad  - \gamma w \J[\phi]^{-\gamma-1}   \DL(3\phia + \DL\phia  )\theta^2  
\end{split}
\]

By writing $$K_{-\gamma}[\theta] = - \gamma \J[\phia]^{-\gamma-1} K_1[\theta] + \left( K_{-\gamma}[\theta] + \gamma \J[\phia]^{-\gamma-1} K_1[\theta]\right) $$ and $$  \gamma \J[\phi]^{-\gamma-1} \phi^2[\r\theta + 3\theta]= \gamma
\J[\phia]^{-\gamma-1}\phi^{2}[r\partial_{r}\theta+3\theta]+\gamma K_{-\gamma-1}%
\phi^{2}[r\partial_{r}\theta+3\theta],$$
 the last line of \eqref{exp3} can be rewritten as 
\[
\begin{split}
& K_{-\gamma}[\theta] + \gamma \J[\phi]^{-\gamma-1} \phi^2[\r\theta + 3\theta]\\
%&= K_{-\gamma}[\theta] +\gamma\J[\phia]^{-\gamma-1}\phi^{2}[r\partial_{r}\theta+3\theta]+\gamma K_{-\gamma-1}%\phi^{2}[r\partial_{r}\theta+3\theta] \\
 &= -\gamma \J[\phia]^{-\gamma-1} \big[ \phi^2M_g \pa_\tau\theta + 2\phia \DL\phia  \theta \big] -\gamma \J[\phia]^{-\gamma-1}\big[ (\DL\phia -3\phia  )\theta^2 -2\theta^3 \big]\\
 &\quad+K_{-\gamma}[\theta] + \gamma \J[\phia]^{-\gamma-1} K_1[\theta] + \gamma K_{-\gamma-1}[\theta]\phi^2[\r\theta + 3\theta]
 \end{split}
 \] 
 Observe that 
\[
K_{-\gamma}[\theta]  + \gamma \J[\phia]^{-\gamma-1} K_1[\theta]= \gamma(\gamma+1)\J[\phia]^{-\gamma-2} \left(\int_0^1(1-s)(1+s \frac{K_1[\theta]}{\J[\phia]}  )^{-\gamma-2} ds\right)  (K_1[\theta])^2
\]
which asserts that the expression is a nonlinear term. 
Therefore by splitting $$K_{-\gamma-1}[\theta]=-(\gamma+1)\J[\phia]^{-\gamma-2}K_{1}[\theta]+\left(K_{-\gamma-1}[\theta]+(\gamma+1)\J[Q]^{-\gamma-2}%
K_{1}[\theta]\right),$$ we obtain 
 \be
 \begin{split}
& \frac{\phi^2}{ g^2(r) w^\alpha  r^2}\DL
 \left(w^{1+\alpha} \left( \J[\phi]^{-\gamma} - \J[\phia]^{-\gamma}\right)\right)\\
& = - \gamma w  \frac{\phi^4}{g^2\J[\phi]^{\gamma+1}  r^2}
(M_g ^2 \pa_\tau^2 \theta +2M_g \pa_\tau \pa_r  ( r \theta) )  - \gamma  \frac{\phi^4}{ g^2\J[\phi]^{\gamma+1}  r w^\alpha}  \pa_r \left(w^{1+\alpha} \frac{1}{ r^2}\pa_r[ r^3 \theta] \right) \\
&\quad + \mathfrak K_1[\theta]  + \mathfrak K_2[\theta] +  \mathfrak K_3[\theta] 
 \end{split}
 \ee
where 
\begin{align}
\mathfrak K_1[\theta]:=& -\gamma w\frac{\phi^2}{g^2\J[\phi]^{\gamma+1}  r^2} \big[ (\DL(\phi^2M_g ) + \phi^2M_g  + 2\phi \DL\phia M_g ) \pa_\tau \theta +4\phi \DL\phia \r\theta + \DL(3\phia^2 + 2\phia \DL\phia ) \theta  \big] \notag \\
&+ \gamma(\gamma+1) w\frac{\phi^2}{g^2\J[\phia]^{\gamma+2}  r^2} \big[ \phi^2 \DL\theta + (3\phia^2 + 2\phia \DL\phia )\theta \big] \DL\J[\phia] \notag \\
&-\gamma (1+\alpha) r w' \frac{\phi^2}{g^2\J[\phia]^{\gamma+1}  r^2} \big[ \phi^2M_g \pa_\tau\theta + 2\phia \DL\phia  \theta \big]  \label{E:MATHFRAKK1}
\end{align}
\begin{align}
\mathfrak K_2[\theta]:=&  -2\gamma w\frac{\phi^3}{g^2\J[\phi]^{\gamma+1}  r^2}  (\r\theta)^2-2\gamma w\frac{\phi^3}{g^2\J[\phi]^{\gamma+1}  r^2} M_g  \pa_\tau \theta (\r\theta) \notag  \\
& -\gamma w\frac{\phi^2}{g^2\J[\phi]^{\gamma+1}  r^2} \DL(3\phia + \DL\phia  )\theta^2 \notag \\
&+ \gamma(\gamma+1) w\frac{\phi^2}{g^2\J[\phia]^{\gamma+2}  r^2} \big[  (3\phia + \DL\phia  )\theta^2 +\theta^3 \big] \DL\J[\phia]\notag\\
&-\gamma (1+\alpha) r w' \frac{\phi^2}{g^2\J[\phia]^{\gamma+1}  r^2} \big[ (\DL\phia -3\phia  )\theta^2 -2\theta^3 \big] \notag \\
&-\gamma w\frac{\phi^2}{g^2 r^2} (K_{-\gamma-1}[\theta] + (\gamma+1) \J[\phia]^{-\gamma-2} K_1[\theta] )  \DL\J[\phia] \label{E:MATHFRAKK2}
\end{align} 
and
\be
\mathfrak K_3[\theta]:= (1+\alpha) r w' \frac{\phi^2}{g^2  r^2} \left( K_{-\gamma}[\theta] + \gamma \J[\phia]^{-\gamma-1} K_1[\theta] +\gamma  K_{-\gamma-1}[\theta]\phi^2[\r\theta + 3\theta]  \right)  \label{E:MATHFRAKK3}
\ee
Note that $\mathfrak K_1[\theta]$ contains both linear and nonlinear terms in terms of $\theta$ and we view them as linear terms with nonlinear coefficients.  $\mathfrak K_2[\theta]$ and   $\mathfrak K_3[\theta]$ consist of quadratic and higher terms. We have distinguished them because $\mathfrak K_3[\theta]$ needs to be estimated together with the main linear elliptic operator in higher order estimates due to the presence of nonlinear factor $c$. 

The $\phi$ equation \eqref{E:BASICPDEPHI} can be written as 
\be\label{E:theta1}
\begin{split}
&\left(1-\ve \gamma w c \frac{M_g ^2}{ r^2}\right)\pa_\tau^2\theta   - 2\ve \gamma w c \frac{M_g }{ r^2} \pa_\tau \pa_r  ( r \theta) - \ve\gamma  c   \frac{1}{ r w^{\alpha}}\pa_r \left(w^{1+\alpha} \frac{1}{ r^2}\pa_r[ r^3 \theta] \right) + \ve \mathfrak K_3[\theta]   \\
&-\frac{4\theta}{9\phia^3} + 2\ve \frac{P[\phia]}{\phia} \theta + \ve \mathfrak K_1[\theta] 
+\frac{2}{9}\left(  \frac{1}{\phi^2}-\frac{1}{\phia^2} +  \frac{2\theta}{\phia^3} \right)+\ve \frac{P[\phia]\theta^2}{\phia^2}+\ve \mathfrak K_2[\theta] =S(\phia)
\end{split}
\ee
where %\bcr 
the source term $S(\phia)$ is given by~\eqref{E:SOURCETERM} and
\begin{align}
c:=c[\phi]= \frac{\phi^4}{g^2\J[\phi]^{\gamma+1}} \label{E:CDEF}
%S(\phia):=- \pa_\tau^2 Q - \frac{2}{9\phia^2 } - \ve P[\phia] \label{E:SOURCETERM}
\end{align}
%\ec

%and $\mathfrak C$ refers to the last two lines of \eqref{exp3}.
%\end{color}
\end{proof}

%%%%%%%%%%%%%%%%%%%%%%%%%%%%%%%%%%%%%%%
%%%%%%%%%%%%%%%%%%%%%%%%%%%%%%%%%%%%%%%

%\begin{color}{blue}
\begin{lemma}[The H equation]
Let 
\be\label{E:HDEF}
H : = \tau^{-m} r \theta.
\ee 
Then $H$ solves 
\be\label{E:H1}
\begin{split}
&\left(1-\ve \gamma w c[\phi] \frac{M_g ^2}{ r^2}\right)\pa_\tau^2 H - 2\ve \gamma w c[\phi] \frac{M_g }{r} \pa_r\pa_\tau H+\frac{2m}{\tau}  \pa_\tau H +\left[ \frac{m(m-1)}{\tau^2}  -\frac{4}{9\phia^3} \right]H  \\
& \qquad- \ve\gamma  c[\phi]   \frac{1}{ w^{\alpha}}\pa_r \left(w^{1+\alpha} \frac{1}{ r^2}\pa_r[ r^2 H] \right) +
\ve \mathscr N_0[H]
+\ve \mathscr L_{\text{low}} H %-\ve \gamma w c \frac{M_g ^2}{ r^2} \left[ \frac{2m}{\tau}  \pa_\tau H  + \frac{m(m-1)}{\tau^2} H \right] \\
%&   - 2m\ve \gamma w c \frac{M_g }{ r\tau} \pa_r H + 2\ve \frac{P[\phia]}{\phia} H  + \ve\frac{r}{\tau^m}  \mathfrak K_1[\frac{\tau^m H}{r}] 
 =\mathscr S(\phia) + \mathscr N [H]
\end{split}
\ee
where 
\begin{align}
\mathscr N_0[H] &:= \frac{r}{\tau^m}  \mathfrak K_3[\frac{\tau^m H}{r}] \label{E:NZERODEF}\\
\mathscr L_{\text{low}} H&: =  - \gamma w c[\phi] \frac{M_g ^2}{ r^2} \left[ \frac{2m}{\tau}  \pa_\tau H  + \frac{m(m-1)}{\tau^2} H \right]
 - 2m \gamma w c[\phi] \frac{M_g }{ r\tau} \pa_r H + 2 \frac{P[\phia]}{\phia} H  + \frac{r}{\tau^m}  \mathfrak K_1[\frac{\tau^m H}{r}] \\
\mathscr S(\phia)&: = \frac{r}{\tau^m} S(\phia) \\
\mathscr N [H]&:=- \frac{r}{\tau^m}\mathfrak N[\frac{\tau^m H}{r}] , \quad \mathfrak N[\theta]:=\ve \mathfrak K_2[\theta]+ \frac{2}{9}\left(  \frac{1}{\phi^2}-\frac{1}{\phia^2} +  \frac{2\theta}{\phia^3} \right)+\ve \frac{P[\phia]\theta^2}{\phia^2}, \label{E:NONLINEARTERM}
\end{align}
where the source term $S(\phia)$ and the expressions $\mathfrak K_j[\theta]$, $j=1,2,3$ are given by \eqref{E:SOURCETERM},
 \eqref{E:MATHFRAKK1}, \eqref{E:MATHFRAKK2}, \eqref{E:MATHFRAKK3}. 
\end{lemma}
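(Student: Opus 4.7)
The plan is to substitute the ansatz $\theta=\tau^m H/r$ directly into the $\theta$-equation \eqref{E:theta1} from Lemma~\ref{L:THETAEQUATION}, multiply through by $r/\tau^m$, and reorganize the resulting identity. The guiding principle is that $m$ will be chosen so large that the $m(m-1)/\tau^2$ coefficient of the undifferentiated $H$ (produced purely by the time-derivative chain rule) dominates the focusing singularity $-4/(9\phia^3)\sim -4/(9\tau^2)$; every other term will turn out to be either part of the principal wave operator, a lower-order (in $\tau$) piece collected into $\mathscr{L}_{\text{low}}$, a genuinely nonlinear piece collected into $\mathscr{N}[H]$, or the known source $\mathscr{S}(\phia)$.

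First I will compute the derivatives under the substitution. A direct application of the Leibniz rule gives
\[
\pa_\tau^2\theta=\frac{\tau^m}{r}\Bigl(H_{\tau\tau}+\tfrac{2m}{\tau}H_\tau+\tfrac{m(m-1)}{\tau^2}H\Bigr),
\]
\[
\pa_\tau\pa_r(r\theta)=\tau^m\Bigl(\pa_r H_\tau+\tfrac{m}{\tau}\pa_r H\Bigr),\qquad \frac{1}{r^2}\pa_r[r^3\theta]=\frac{\tau^m}{r^2}\pa_r[r^2 H].
\]
Plugging these into the first line of \eqref{E:theta1} and multiplying through by $r/\tau^m$ produces the principal operator of \eqref{E:H1} -- the $H_{\tau\tau}$ term, the mixed $\pa_r H_\tau$ term, and the $\varepsilon$-small elliptic piece -- together with the crucial diagonal contribution $\tfrac{2m}{\tau}\pa_\tau H+\tfrac{m(m-1)}{\tau^2}H$ coming from the chain rule on $\pa_\tau^2\theta$. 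Combining this last piece with the $-4\theta/(9\phia^3)$ term of \eqref{E:theta1} yields the coefficient $[m(m-1)/\tau^2-4/(9\phia^3)]$ of $H$ in \eqref{E:H1}. The $\varepsilon\mathfrak K_3[\theta]$ term rescales directly into $\varepsilon\mathscr{N}_0[H]$ by definition \eqref{E:NZERODEF}.

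Next I will collect the remaining pieces into $\mathscr{L}_{\text{low}}$. The Leibniz expansions of $\pa_\tau^2\theta$ and $\pa_\tau\pa_r(r\theta)$ each generate additional summands when contracted against the small coefficients $\varepsilon\gamma w c M_g^2/r^2$ and $\varepsilon\gamma w c M_g/r^2$ from the first line of \eqref{E:theta1}; these produce precisely the first three summands of $\mathscr{L}_{\text{low}}$ in the statement. The linear contribution $2\varepsilon P[\phia]\theta/\phia$ and the rescaled $\varepsilon\frac{r}{\tau^m}\mathfrak K_1[\tau^m H/r]$ supply the remaining two summands. On the other hand, the genuinely quadratic and higher-order trio
\[
\mathfrak N[\theta]=\ve\mathfrak K_2[\theta]+\tfrac{2}{9}\bigl(\phi^{-2}-\phia^{-2}+2\theta\phia^{-3}\bigr)+\ve P[\phia]\theta^2/\phia^2
\]
is rescaled into $-\mathscr{N}[H]$ via \eqref{E:NONLINEARTERM} and transferred to the right-hand side. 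The source $S(\phia)$ becomes $\mathscr{S}(\phia)=\frac{r}{\tau^m}S(\phia)$ by definition, giving the right-hand side of \eqref{E:H1}.

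The only point requiring genuine care is the chain-rule bookkeeping: the two successive $\tau$-derivatives applied to $\tau^m H$ generate three distinct summands (the $H_{\tau\tau}$, $\frac{2m}{\tau}H_\tau$, and $\frac{m(m-1)}{\tau^2}H$ pieces), and the mixed derivative generates two, so one must track exactly which factor each inherits from the $(1,-\ve\gamma wcM_g^2/r^2)$ and $(-2\ve\gamma wcM_g/r^2)$ coefficients in \eqref{E:theta1}. Misattributing even one summand would either destroy the positivity of the $m(m-1)/\tau^2$ contribution -- which is the whole algebraic raison d'\^etre of the $\tau^m/r$ weighting -- or misplace a term between $\mathscr{L}_{\text{low}}$ and the principal operator. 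Beyond this bookkeeping the proof is a direct re-grouping with no analytic obstacle; the real work lies in the subsequent energy estimates for \eqref{E:H1}.
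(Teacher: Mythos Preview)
Your proof is correct and follows exactly the same approach as the paper, which simply states that the result follows by direct verification after plugging $\theta=\tau^m r^{-1}H$ into \eqref{E:theta1}. Your write-up merely makes explicit the chain-rule bookkeeping that the paper leaves to the reader.
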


%%%%%%%%%%%%%%%%%%%%%%%%%%%%%%%%%%%%%%%
%%%%%%%%%%%%%%%%%%%%%%%%%%%%%%%%%%%%%%%

\begin{proof}
The proof follows by a direct verification after plugging in $\theta = \tau^m  r^{-1} H$ in~\eqref{E:theta1}. 
\end{proof}

%%%%%%%%%%%%%%%%%%%%%%%%%%%%%%%%%%%%%%%%%
%%%%%%%%%%%%%%%%%%%%%%%%%%%%%%%%%%%%%%%%%

We rewrite~\eqref{E:H1} in the form
\begin{align}
&g^{00}\pa_\tau^2 H + 2g^{01}\pa_r\pa_\tau H+\frac{2m}{\tau}  \pa_\tau H +d(\tau,r)^2\frac{H}{\tau^2} 
- \ve\gamma  c[\phi]   \frac{1}{ w^{\alpha}}\pa_r \left(w^{1+\alpha} \frac{1}{ r^2}\pa_r[ r^2 H] \right)  + \ve\mathscr N_0[H]
\notag \\%-\ve \gamma w c \frac{M_g ^2}{ r^2} \left[ \frac{2m}{\tau}  \pa_\tau H  + \frac{m(m-1)}{\tau^2} H \right] \\
& \qquad  =\mathscr S(\phia) -\ve \mathscr L_{\text{low}} H + \mathscr N [H], \label{E:H2}
\end{align}
where
\begin{align}
g^{00}& :=1-\ve \gamma w c[\phi] \frac{M_g ^2}{ r^2}, \label{E:GZEROZERODEF}\\
g^{01}& : = -\ve \gamma w c[\phi] \frac{M_g }{r}, \label{E:GZEROONEDEF}\\
d^2( \tau, r) & : =  m(m-1)  -\frac{4\tau^2}{9\phia^3}.
\end{align}

The leading order operator
\be
\Box:= g^{00}\pa_\tau^2  + 2g^{01}\pa_r\pa_\tau  - \ve\gamma  c[\phi]   \frac{1}{ w^{\alpha}}\pa_r \left(w^{1+\alpha} \frac{1}{ r^2}\pa_r[ r^2 \cdot] \right)
\ee
will be shown to be hyperbolic due to the bound $1\lesssim g^{00} \lesssim1$ shown later in 
Lemma~\ref{L:IMPORTANT}. We shall see that the former estimate is crucially tied to the supercriticality ($\gamma<\frac43$) and the flatness assumption on the enthalpy $w$ near $r=0$ (i.e. $n$ sufficiently large in~\eqref{E:FM}, i.e. Lemma~\ref{L:ENTHALPYOK}). 
Moreover, $\Box$ is also manifestly quasilinear as $c[\phi]$ depends on the space-time derivatives of $H$. 
The twofold singular nature of $\Box$ coming from the gravitational singularity at $\tau=0$ and the vacuum singularity at $r=1$
is discussed at length in Section~\ref{SS:METHODS}.

The basic equation for our energy estimates is obtained by dividing~\eqref{E:H2} by $g^{00}$:
\begin{align}
&\pa_\tau^2 H + 2\frac{g^{01}}{g^{00}}\pa_r\pa_\tau H+\frac{2m}{g^{00}}  \frac{\pa_\tau H}{\tau} +\frac{d^2}{g^{00}}\frac{H}{\tau^2} 
- \ve\gamma  \frac{c[\phi]}{g^{00}}   \frac{1}{ w^{\alpha}}\pa_r \left(w^{1+\alpha} \frac{1}{ r^2}\pa_r[ r^2 H] \right)  + \ve\frac{\mathscr N_0[H]}{g^{00}}
\notag \\%-\ve \gamma w c \frac{M_g ^2}{ r^2} \left[ \frac{2m}{\tau}  \pa_\tau H  + \frac{m(m-1)}{\tau^2} H \right] \\
%&   - 2m\ve \gamma w c \frac{M_g }{ r\tau} \pa_r H + 2\ve \frac{P[\phia]}{\phia} H  + \ve\frac{r}{\tau^m}  \mathfrak K_1[\frac{\tau^m H}{r}] 
& \qquad  =\frac1{g^{00}}\left(\mathscr S(\phia) -\ve \mathscr L_{\text{low}} H + \mathscr N [H]\right).
\label{E:H3}
\end{align}

%%%%%%%%%%%%%%%%%%%%%%%%%%%%%%%%%%%%%%%%%
%%%%%%%%%%%%%%%%%%%%%%%%%%%%%%%%%%%%%%%%%

We denote the first summation without $\sup$ in Definition~\ref{D:NORMDEF} of $S_\kappa^N$ by $E^N$ and the second summation without the time integral by $D^N$, i.e. for any $\tau\in(0,1]$ we let
\begin{align}
E^N(\tau) &:= \sum_{j=0}^N  \left\{\tau^{\gamma - \frac53} \|\mathcal D_j H_\tau\|_{\alpha+j}^2 
+\tau^{\gamma - \frac{11}3} \|\mathcal D_j H\|_{\alpha+j}^2
+ \ve \tau^{-\gamma-1}\|q_{-\frac{\gamma+1}{2}}\et  \mathcal D_{j+1} H\|_{\alpha+j+1}^2\right\} \label{E:ENDEF}\\
D^N(\tau) & :=  \sum_{j=0}^N \left\{ \tau^{\gamma-\frac83}\|\mathcal D_j H_\tau\|_{\alpha+j}^2  
+\tau^{\gamma - \frac{14}3} \| \mathcal D_j H\|_{\alpha+j}^2
+ \ve \tau^{-\gamma-2}\| q_{-\frac{\gamma+2}{2}}\et  \mathcal D_{j+1} H\|_{2\alpha+j+1}^2 \right\}.
\label{E:DNDEF}
\end{align}
Then the space-time norm can be written as 
$$S_\kappa^N(\tau)= \sup_{\kappa\le \tau'\le\tau} E^N(\tau') + \int_\kappa^\tau D^N(\tau') d\tau'.$$

%%%%%%%%%%%%%%%%%%%%%%%%%%%%%%%%%%%%%%%%%%%%
%%%%%%%%%%%%%%%%%%%%%%%%%%%%%%%%%%%%%%%%%%%%

%\bcg
\section{High-order energies and preparatory bounds} \label{S:ENERGIES}
%\ec

%%%%%%%%%%%%%%%%%%%%%%%%%%%%%%%%%%%%%%%%%%%%

\subsection{High-order equations and energies}

%%%%%%%%%%%%%%%%%%%%%%%%%%%%%%%%%%%%%%%%%%%%
%%%%%%%%%%%%%%%%%%%%%%%%%%%%%%%%%%%%%%%%%%%%

In order to derive high-order equations, we first introduce the elliptic operators
\begin{align}
L_k f &:= -\frac{1}{w^k} \pa_r \left[w^{1+k} D_r f \right], \label{E:LBETADEF}\\
L_k^* h & := - \frac{1}{w^k} D_r \left[w^{1+k} \pa_r h\right]. \label{E:LBETASTARDEF}
\end{align}
Then for any $f,h$ we have
\begin{align}
%\int  w^k h (L_k f  )\,  r^2 \, d r & =  \int w^{1+k} (D_r f )(D_r h)\,  r^2 \,d r, \\
%\int  w^k f (L_k^* h ) r^2 \,d r & = \int w^{1+k} (\pa_r f )(\pa_r h)  r^2 \,d r.
( f, L_k h)_{k} = (D_r f, D_r h)_{1+k} \ \  \text{ and } \ \  (f, L_k^*h)_k= (\pa_r f, \pa_rh)_{1+k} 
\end{align}
where we recall the inner product $(\cdot,\cdot)_k$ given in \eqref{E:WEIGHTEDINNERPRODUCT}. 

We recall here the definition of the fundamental high-order differential operators $\D_j$ given in~\eqref{E:FUNDOP}.
We then define 
\begin{equation}
\mathcal L_{j+\alpha} \mathcal D_j : = 
\begin{cases}
L_{j+\alpha} \mathcal D_j & \text{ if $j$ is even}\\
L^\ast_{j+\alpha}\mathcal D_j & \text{ if $j$ is odd}
\end{cases}. 
\end{equation}

Important role is played by the operator
$\bar{\mathcal D}_i$ defined as 
\be
\bar{\mathcal D}_i =
\begin{cases}
 \mathcal D_0 & \text{ for } \ \  i=0 \\
 \mathcal D_{i-1}  \pa_r & \text{ for } \  \ i \geq 1 
\end{cases}
\ee

Let $1\le i\le N$. After applying $\mathcal D_i$ to \eqref{E:H3}  we use  Lemmas~\ref{L:COMM1}--\ref{L:COMM2} to derive the equation for $\mathcal D_i H$:  
\begin{align}
&\pa_\tau^2 \mathcal D_i H + 2\frac{g^{01}}{g^{00}}\pa_r \mathcal D_i \pa_\tau  H+\frac{2m}{g^{00}}  \frac{ \mathcal D_i \partial_\tau H}{\tau} +\frac{d^2}{g^{00}}\frac{\mathcal D_i H}{\tau^2} 
+ \ve\gamma  \frac{c[\phi]}{g^{00}}  \mathcal L_{i+\alpha} \mathcal D_i H \notag \\
%& + 2\left[\D_i,  \frac{g^{01}}{g^{00}} \pa_r\right]\pa_\tau H + 2m\left[\D_i, \frac{1}{g^{00}} \right] \frac{\pa_\tau H }{\tau} + \left[\D_i, \frac{d^2}{g^{00}}\right]
%\frac{H}{\tau^2}  \notag \\
% &+ \ve\gamma  \frac{c[\phi]}{g^{00}}  \sum_{j=0}^{i-1} p_{ij} \mathcal D_{i-j} H +  \ve\gamma \left[\bar\D_{i-1},   \frac{c[\phi]}{g^{00}}  \right] D_r L_\alpha H   \notag \\
 %+\ve  \D_i\left( \frac{\mathscr N_0[H]}{g^{00}}\right)\\
&  = \mathcal D_i \left( \frac1{g^{00}}\left(\mathscr S(\phia) -\ve \mathscr L_{\text{low}} H + \mathscr N [H]\right)\right) + \mathcal C_i[H] + \bar\D_{i-1} \mathscr M[H].\label{E:DiHP}
\end{align}
Here $\mathcal C_i$ contains all the commutators
\begin{align}
\mathcal C_i[H] : =&  - 2\left[\D_i,  \frac{g^{01}}{g^{00}} \pa_r\right]\pa_\tau H - 2m\left[\D_i, \frac{1}{g^{00}} \right] \frac{\pa_\tau H }{\tau} - \left[\D_i, \frac{d^2}{g^{00}}\right]
\frac{H}{\tau^2}  \notag \\
 &- \ve\gamma  \frac{c[\phi]}{g^{00}}  \sum_{j=0}^{i-1} \zeta_{ij} \mathcal D_{i-j} H -  \ve\gamma \left[\bar\D_{i-1},   \frac{c[\phi]}{g^{00}}  \right] D_r L_\alpha H,   
 \label{E:COMMTOTALDEF}
 \end{align}  
where the functions $\zeta_{ij}$ are given by~\eqref{pij} and the commutators $[\cdot,\cdot]$ are defined in~\eqref{E:COMMDEF}. 
Furthermore, 
\be\label{E:MH}
\mathscr M[H] : = -\ve\gamma \pa_r ( \frac{c[\phi]}{g^{00}}  ) L_\alpha H -  \ve D_r ( \frac{\mathscr N_0[H]}{g^{00}}).
\ee
Note that we have written for $i\geq 1$, 
\[
\begin{split}
&\D_i (  \ve\gamma  \frac{c[\phi]}{g^{00}} L_\alpha H + \ve  \frac{\mathscr N_0[H]}{g^{00}} ) \\
&=  \ve\gamma  \frac{c[\phi]}{g^{00}}  \mathcal L_{i+\alpha} \mathcal D_i H +  \ve\gamma  \frac{c[\phi]}{g^{00}}  \sum_{j=0}^{i-1} \zeta_{ij} \mathcal D_{i-j} H +  \ve\gamma \left[\bar\D_{i-1},   \frac{c[\phi]}{g^{00}}  \right] D_r L_\alpha H + \bar\D_{i-1} \mathscr M[H] 
\end{split}
\]

%\bcr
\begin{definition}[Weighted high-order energies] \label{D:ENERGYDEF}
For any $0< \kappa\le1$ and $N\in\mathbb N$ we define the high-order energies
\be\label{E:TOTALENERGY}
\mathscr E^N(\tau) = \sum_{j=0}^N  \mathscr E_j(\tau'), \ \ \ \ \mathscr D^N(\tau) = \sum_{j=0}^N \mathscr D_j(\tau),
\ee
where for any $0\leq  j \leq N$ we have %\bcg
\begin{equation}
\mathscr E_j(\tau)=  \frac12 \int_0^1 \left\{ \tau^{\gamma-\frac53} \left\vert \mathcal D_jH_\tau\right\vert^2  
 +    \frac{d^2}{g^{00}}\tau^{\gamma-\frac{11}3}\left\vert \mathcal D_jH\right\vert^2+ \ve\gamma \tau^{\gamma-\frac53} \frac{c[\phi]}{g^{00}} w \left\vert \mathcal D_{j+1}H\right\vert^2\right\} \, w^{\alpha+j}  r^{2}\,d r
\end{equation}
and 
\begin{equation}
\begin{split}
\mathscr D_j(\tau)= &  \int_0^1
\left( \left[ \frac{2m}{g^{00}} + \frac12(\frac53-\gamma) \right]\tau^{\gamma-\frac83} 
- \tau^{\gamma-\frac53}\frac{\pa_r\left(\frac{g^{01}}{g^{00}} w^{\alpha +j} r^2 \right)}{w^{\alpha+j} r^2} \right)\left\vert \mathcal D_jH_\tau\right\vert^2 \, w^{\alpha+j} r^{2}\,d r \notag \\
 &   -\frac12\ve\gamma \int_0^1 \left(\tau^{\gamma-\frac53} c[\phi_0]\right)_\tau \frac{c[\phi]}{c[\phi_0]g^{00}}  \left\vert \mathcal D_{j+1}H\right\vert^2 \,w^{1+\alpha+j} r^{2}\,d r \\
&   - \frac12 \int_0^1 \left(\frac{d^2}{g^{00}}\tau^{\gamma-\frac{11}3}\right)_\tau \left\vert \mathcal D_jH\right\vert ^2 \, w^{\alpha+j} r^{2}\,d r. 
\end{split}
\end{equation}%\ec
\end{definition}
%\ec

\begin{remark}
It will be shown in Section~\ref{SS:APRIORI}, %\bcr 
Lemma~\ref{L:IMPORTANT}, %\ec 
that every summand appearing in the definition of $\mathscr D_j$ above is positive in our bootstrap regime.
\end{remark}

%%%%%%%%%%%%%%%%%%%%%%%%%%%%%%%%%%%%%%%%%%%%%%%
%%%%%%%%%%%%%%%%%%%%%%%%%%%%%%%%%%%%%%%%%%%%%%%

\begin{proposition}\label{P:HIGHORDER}
Assume that $H$ is a sufficiently smooth solution to~\eqref{E:H2}.
%\bcr
The the following energy identity holds
\begin{align}\label{E:ENERGYIDENTITY}
\pa_\tau \mathscr E^N(\tau) + \mathscr D^N(\tau) = \sum_{i=0}^N\mathcal R_i,
\end{align}
%\ec
where for any $i\in\{1,\dots, N\}$, the error terms $\mathcal R_i$ are explicitly given by 
\begin{align}
\mathcal R_i  = & \tau^{\gamma-\frac53}\left( \mathcal D_i \left(\frac{\mathscr S(\phia)}{g^{00}} - \frac\ve{g^{00}}\mathscr L_{\text{low}} H + \frac{\mathscr N[H]}{g^{00}}\right) \ , \ \mathcal D_i H_\tau\right)_{\alpha+i} \notag \\
& + \tau^{\gamma-\frac53}\left(\mathcal C_{i}[H] +\bar{\D}_{i-1}\mathscr M[H] \ , \ \mathcal D_i H_\tau\right)_{\alpha+i} \notag \\
&  \frac12 \ve\gamma \tau^{\gamma-\frac53}  
\int_0^1  c[\phi_0]\left(\frac{c[\phi]}{c[\phi_0]g^{00}}\right)_\tau w^{1+\alpha} \left\vert \mathcal D_{j+1}H\right\vert^2 \,w^j r^{2}\,d r, \label{E:Ri}
\end{align}
where $\mathcal C_i[H]$ is given by~\eqref{E:COMMTOTALDEF} and $\mathscr M[H]$ by~\eqref{E:MH}.
When $i=0$, we replace $\mathcal C_{i}[H] +\bar{\D}_{i-1}\mathscr M[H]$ in the above formula by $-\ve\frac{\mathscr N_0[H]}{g^{00}}$. 
\end{proposition}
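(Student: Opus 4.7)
The plan is to derive the identity by testing each high-order equation~\eqref{E:DiHP} against $\mathcal D_i H_\tau$ in the inner product $(\cdot,\cdot)_{\alpha+i}$, multiplying by the weight $\tau^{\gamma-\frac{5}{3}}$, and then summing in $i$ from $0$ to $N$. Before starting I would note that integration-by-parts in $r$ has no boundary contribution: at $r=1$ the factor $w^{\alpha+i}$ vanishes by~\eqref{E:PHYSICALVACUUM2}, and at $r=0$ the explicit $r^2$ weight together with the $r^n$-smallness of $M_g=(\tau-1)r\partial_r(\log g)$ inherited from~\eqref{E:FM} kills all boundary terms that would otherwise arise from the transport coefficient $g^{01}/g^{00}$ and from the elliptic piece.

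For a fixed $i\ge 1$ the treatment of each term on the left-hand side of~\eqref{E:DiHP} follows a standard pattern. The $\partial_\tau^2\mathcal D_i H$ term becomes $\tfrac12\partial_\tau\|\mathcal D_i H_\tau\|_{\alpha+i}^2$; multiplied by $\tau^{\gamma-5/3}$, the Leibniz rule contributes $\partial_\tau[\tfrac12\tau^{\gamma-5/3}\|\mathcal D_i H_\tau\|_{\alpha+i}^2]$ into $\partial_\tau\mathscr E_i$ and $\tfrac12(\tfrac{5}{3}-\gamma)\tau^{\gamma-8/3}\|\mathcal D_i H_\tau\|_{\alpha+i}^2$ into $\mathscr D_i$. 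The transport term is handled by writing $2\frac{g^{01}}{g^{00}}\partial_r\mathcal D_i H_\tau\cdot\mathcal D_i H_\tau=\frac{g^{01}}{g^{00}}\partial_r(\mathcal D_i H_\tau)^2$ and integrating by parts, yielding the $-\tau^{\gamma-5/3}\partial_r(\frac{g^{01}}{g^{00}}w^{\alpha+i}r^2)/(w^{\alpha+i}r^2)$ coefficient in $\mathscr D_i$. The $\frac{2m}{g^{00}\tau}$-friction term feeds directly into $\mathscr D_i$, and the zero-order potential term $\frac{d^2}{g^{00}\tau^2}\mathcal D_i H\cdot\mathcal D_i H_\tau=\tfrac12\frac{d^2}{g^{00}\tau^2}\partial_\tau(\mathcal D_i H)^2$ splits via the Leibniz rule into a piece of $\partial_\tau\mathscr E_i$ and the $-\tfrac12\partial_\tau(\tau^{\gamma-11/3}d^2/g^{00})$ contribution to $\mathscr D_i$.

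The delicate step is the elliptic contribution $\varepsilon\gamma\frac{c[\phi]}{g^{00}}\mathcal L_{i+\alpha}\mathcal D_i H$. Using the adjoint pairings~\eqref{E:LBETADEF}--\eqref{E:LBETASTARDEF}, which hold regardless of the parity of $i$ and convert $\mathcal L_{i+\alpha}\mathcal D_i$ into $\mathcal D_{i+1}$ in the inner product on $L^2_{1+\alpha+i}$, I would integrate by parts to produce
\[
\varepsilon\gamma\tau^{\gamma-5/3}\int_0^1\frac{c[\phi]}{g^{00}}\mathcal D_{i+1}H_\tau\,\mathcal D_{i+1}H\,w^{1+\alpha+i}r^2\,dr.
\]
The extra term generated by letting $\partial_r$ fall on the coefficient $c[\phi]/g^{00}$ is exactly what is already absorbed in the construction of $\mathcal C_i[H]$ and $\bar{\mathcal D}_{i-1}\mathscr M[H]$ in the derivation of~\eqref{E:DiHP}; this is the key consistency point and I would verify it by matching against~\eqref{E:COMMTOTALDEF} and~\eqref{E:MH} (invoking the commutator algebra of Appendix~\ref{A:COMM}). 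Then writing $\mathcal D_{i+1}H_\tau\,\mathcal D_{i+1}H=\tfrac12\partial_\tau(\mathcal D_{i+1}H)^2$, the time derivative decomposes as
\[
\partial_\tau\!\left(\varepsilon\gamma\tau^{\gamma-5/3}\tfrac{c[\phi]}{g^{00}}\right)=\left(\tau^{\gamma-5/3}c[\phi_0]\right)_\tau\tfrac{c[\phi]}{c[\phi_0]g^{00}}+\tau^{\gamma-5/3}c[\phi_0]\left(\tfrac{c[\phi]}{c[\phi_0]g^{00}}\right)_\tau,
\]
whose first summand is referenced to the dust profile $\phi_0$ (for which a clean sign can be exploited thanks to the bounds of Lemma~\ref{L:IMPORTANT}) and hence enters $\mathscr D_i$, while the second summand, being a genuine perturbative remainder, is placed in the third line of $\mathcal R_i$ in~\eqref{E:Ri}.

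Summing over $i\in\{1,\dots,N\}$ and adjoining the $i=0$ case, where no commutators occur but the non-commuted term $\varepsilon\mathscr N_0[H]/g^{00}$ from~\eqref{E:H3} migrates to the right-hand side as $-\varepsilon\mathscr N_0[H]/g^{00}$ as prescribed by the statement, delivers~\eqref{E:ENERGYIDENTITY}. The main technical obstacle I foresee is not the arithmetic of the energy identity but the bookkeeping confirming that the extra term produced by differentiating $c[\phi]/g^{00}$ in the elliptic integration-by-parts is consistently tracked by the structural terms $\mathcal C_i[H]$ and $\bar{\mathcal D}_{i-1}\mathscr M[H]$ already appearing in~\eqref{E:DiHP}, together with verifying the vanishing of all boundary contributions at $r=0,1$.
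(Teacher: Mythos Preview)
Your strategy is exactly the paper's one-sentence proof: take the $(\cdot,\cdot)_{\alpha+i}$-inner product of~\eqref{E:DiHP} with $\tau^{\gamma-5/3}\mathcal D_i H_\tau$ and invoke Definition~\ref{D:ENERGYDEF}. Your treatment of the $\partial_\tau^2$, transport, friction, and potential terms, and your splitting of $\partial_\tau\bigl(\tau^{\gamma-5/3}c[\phi]/g^{00}\bigr)$ into the $c[\phi_0]$-referenced piece plus a perturbative remainder, all match the structure of $\mathscr E_i$, $\mathscr D_i$, and $\mathcal R_i$ precisely.

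The one genuine gap is your claim that the cross term
\[
\ve\gamma\int_0^1\partial_r\!\Bigl(\tfrac{c[\phi]}{g^{00}}\Bigr)\,\mathcal D_{i+1}H\,\mathcal D_i H_\tau\,w^{1+\alpha+i}r^2\,dr
\]
generated by the elliptic integration by parts is ``already absorbed'' in $\mathcal C_i[H]$ and $\bar{\mathcal D}_{i-1}\mathscr M[H]$. Those objects arise from commuting $\mathcal D_i$ past the coefficients in the \emph{derivation} of~\eqref{E:DiHP}; they already sit on its right-hand side and therefore feed the second line of~\eqref{E:Ri} independently of any subsequent spatial integration by parts. A term of the same structure does occur inside $\bar{\mathcal D}_{i-1}\mathscr M[H]$ (through the $-\ve\gamma\,\partial_r(c/g^{00})\,L_\alpha H$ piece of~\eqref{E:MH}), but it carries the same sign, so there is no cancellation. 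When you carry out the matching you propose against~\eqref{E:COMMTOTALDEF} and~\eqref{E:MH} you will find it fails. This cross term is a genuine additional remainder; it is of exactly the same type as the leading contribution handled in~\eqref{E:COMMAUX2} and causes no difficulty for the estimates of Section~\ref{S:ENERGYESTIMATES}, but neither your argument nor the formula~\eqref{E:Ri} as literally written accounts for it, and it should be carried along explicitly.
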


%%%%%%%%%%%%%%%%%%%%%%%%%%%%%%%%%%%%%%%%%%%%%%%

\begin{proof}
We evaluate the $(\cdot,\cdot)_{\alpha+i}$-inner product of \eqref{E:DiHP} with $\tau^{\gamma-\frac53}\mathcal D_i H_\tau$, and use Definition~\ref{D:ENERGYDEF}. 
\end{proof}

%%%%%%%%%%%%%%%%%%%%%%%%%%%%%%%%%%%%%%%%%%%%%%%
%%%%%%%%%%%%%%%%%%%%%%%%%%%%%%%%%%%%%%%%%%%%%%%

\subsection{A priori bounds and the energy-norm equivalence}\label{SS:APRIORI}

%\subsubsection{Bootstrap assumptions}
Assume that $H$ is a solution to~\eqref{E:H2} on a time interval $[\kappa,T]$ for some $T\le1$. For a sufficiently small $\sigma'<1$, to be fixed later, 
 %to be specified later, 
we stipulate the following a priori bounds.
\begin{align}
\left\|(\r)^{\ell_1} (\tau\pa_\tau)^{\ell_2}\left(\frac{H}{r}\right)\right\|_{C^0([\kappa,T]\times[0,1])} & \le \sigma', \  \  0\le \ell_1+\ell_2 \le 2, \ \ %\bcr 
\ell_1,\ell_2\in\mathbb Z_{\ge0}.%\ec  
\label{E:APRIORI} 
%\left\|(\tau\pa_\tau)^\ell\left(\frac{H}{r}\right)\right\|_{C^0([\kappa,T]\times[0,1])} & \le \sigma' &&, \ \ \ell=1, \label{E:APRIORIHTAUCZERO}
\end{align}

%%%%%%%%%%%%%%%%%%%%%%%%%%%%%%%%%%%%%%%%%%%
%%%%%%%%%%%%%%%%%%%%%%%%%%%%%%%%%%%%%%%%%%%

\begin{lemma}
Assume that $H$ is a solution to~\eqref{E:H2} on a time interval $[\kappa,T]$ for some $T\le1$ and assume that the a priori assumptions~\eqref{E:APRIORI} hold. Then for any $( \tau, r)\in[\kappa,T]\times[0,1]$
\begin{align}
%%%%%%%%%%%%%%%%%%%%%%%%%%%%%%%%%%%%%%%%%%%
1\lesssim \left\vert \frac{\phi}{\phi_0} \right\vert  &\lesssim 1, \label{E:PHIBOUNDAPRIORI} \\
%%%%%%%%%%%%%%%%%%%%%%%%%%%%%%%%%%%%%%%%%%%
1\lesssim \left\vert \frac{\J[\phi]}{\J[\phi_0]} \right\vert  &\lesssim 1, \label{E:JPHIBOUNDAPRIORI} \\
%%%%%%%%%%%%%%%%%%%%%%%%%%%%%%%%%%%%%%%%%%%
\left\vert \pa_\tau \phi \right\vert &\lesssim %\bcr 
\tau^{-\frac13}, 
%\ec 
%+ \ve \tau^{\frac23+\delta} r^{-n}q_1\et, %\ \ \ell=0,1, 
\label{E:PARTIALTAUPHIBOUND} \\
%%%%%%%%%%%%%%%%%%%%%%%%%%%%%%%%%%%%%%%%%%%
\left\vert (\r) \pa_\tau \phi \right\vert &\lesssim  %\bcr 
\left(\ve+\sigma'\right)  \tau^{-\frac13+\delta} 
%\ec
%\ve \tau^{\frac23+\delta} r^{-n}q_1\et, %\ \ \ell=0,1,
\label{E:PARTIALTAUPHIBOUND1} \\
%%%%%%%%%%%%%%%%%%%%%%%%%%%%%%%%%%%%%%%%%%%
\left\vert \phi_{\tau\tau} \right\vert &\lesssim %\bcr 
\tau^{-\frac43},  %\ec
%+ \ve \tau^{\frac23+\delta} r^{-2n}q_2\et, 
\label{E:PARTIALTAUTAUPHIBOUND}\\
%%%%%%%%%%%%%%%%%%%%%%%%%%%%%%%%%%%%%%%%%%%
\left\vert \rr^\ell \phi \right\vert &\lesssim \left(\ve+\sigma'\right)\tau^{\frac23+\delta}, \ \ \ell=1,2, \label{E:PARTIALETAPHIBOUND}  \\
%%%%%%%%%%%%%%%%%%%%%%%%%%%%%%%%%%%%%%%%%%%
\left\vert \DL\phi \right\vert &\lesssim %\bcr 
\tau^{\frac23} q_1\et, %\ec 
%+\ve \tau^{\frac23+\delta} q_1\et 
\label{E:DPHIBOUND}\\
%%%%%%%%%%%%%%%%%%%%%%%%%%%%%%%%%%%%%%%%%%%
\left\vert \pa_\tau \DL\phi \right\vert &\lesssim  %\bcr
 \tau^{-\frac13}q_1\et, %\ec
%+ \ve \tau^{\frac23+\delta} r^{-n}q_2\et,  
\label{E:PARTIALTAUDPHIBOUND} \\
%%%%%%%%%%%%%%%%%%%%%%%%%%%%%%%%%%%%%%%%%%%
\left\vert \r \DL\phi \right\vert &\lesssim    %\bcr 
\tau^{\frac23}q_1\et, %\ec 
%+ \ve \tau^{\frac23+\delta} r^{-n}q_2\et          
\label{E:PARTIALETADPHIBOUND}\\
%%%%%%%%%%%%%%%%%%%%%%%%%%%%%%%%%%%%%%%%%%%
 \left\vert (\frac{\phi}{\phi_0})_\tau \right\vert  &\lesssim \left(\ve+\sigma'\right) \tau^{\delta-1}, \label{E:PHIBOUNDAPRIORITAU} \\
%%%%%%%%%%%%%%%%%%%%%%%%%%%%%%%%%%%%%%%%%%%
 \left\vert (\frac{\J[\phi]}{\J[\phi_0]})_\tau \right\vert  &\lesssim \left(\ve+\sigma'\right) \tau^{\delta-1} \label{E:JPHIBOUNDAPRIORITAU} 
%%%%%%%%%%%%%%%%%%%%%%%%%%%%%%%%%%%%%%%%%%%
\end{align}
\end{lemma}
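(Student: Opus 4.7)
The proof rests on the decomposition $\phi = \phia + \tau^{m}H/r$, combined with the iterate bounds of Theorem~\ref{T:MAINBOUNDPHI} for $\phia = \sum_{k=0}^{M}\ve^{k}\phi_{k}$ and the bootstrap assumption~\eqref{E:APRIORI} for $H/r$. The organising observation is that $\phi_{0}=\tau^{2/3}$ is always the dominant contribution, since for $j\ge 1$ each iterate carries a factor $\ve^{j}\tau^{j\delta}$ (Theorem~\ref{T:MAINBOUNDPHI}) while the remainder contributes $O(\sigma'\tau^{m-2/3})\phi_{0}$ with $m\ge 5/2$.

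First I would establish \eqref{E:PHIBOUNDAPRIORI} directly from
\[
\frac{\phi}{\phi_{0}} \;=\; 1 \;+\; \sum_{j=1}^{M}\ve^{j}\frac{\phi_{j}}{\phi_{0}} \;+\; \tau^{m-2/3}\,\frac{H}{r},
\]
where Theorem~\ref{T:MAINBOUNDPHI} bounds the sum by $C\ve\tau^{\delta}\pet\le C\ve$ and \eqref{E:APRIORI} bounds the last term by $\sigma'\tau^{m-2/3}$; choosing $\ve,\sigma'$ small places $\phi/\phi_{0}\in[1/2,2]$. Bounds \eqref{E:PARTIALTAUPHIBOUND}--\eqref{E:PARTIALTAUTAUPHIBOUND} follow by differentiating this identity: $\pa_{\tau}\phia = \tfrac{2}{3}\tau^{-1/3}+O(\ve\tau^{-1/3+\delta})$ and $\pa_{\tau}(\tau^{m}H/r)=\tau^{m-1}\bigl(m H/r+\tau\pa_{\tau}(H/r)\bigr)=O(\sigma'\tau^{m-1})$, which is subdominant to $\tau^{-1/3}$ because $m\ge 5/2$; a second differentiation, controlled via the $(\tau\pa_{\tau})^{2}$ factor allowed by \eqref{E:APRIORI}, yields $\phi_{\tau\tau}$. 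The spatial bound \eqref{E:PARTIALETAPHIBOUND} uses $\rr\phi_{0}=0$, so $\rr^{\ell}\phi=\sum_{j\ge 1}\ve^{j}\rr^{\ell}\phi_{j}+\rr^{\ell}(\tau^{m}H/r)$ is $O(\ve\tau^{2/3+\delta})+O(\sigma'\tau^{m})$, and $\tau^{m}\le \tau^{2/3+\delta}$ since $m\ge 2/3+\delta$ (recall $\delta<2/3$). The mixed estimate \eqref{E:PARTIALTAUPHIBOUND1} follows in the same way from $\rr\pa_{\tau}\phi_{0}=0$.

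For the $\DL$-estimates \eqref{E:DPHIBOUND}--\eqref{E:PARTIALETADPHIBOUND}, I would use $\DL = M_{g}\pa_{\tau}+\rr$ together with the bound $|M_{g}|\le Cr^{n}=C\tau(r^{n}/\tau)$ from \eqref{E:BETADEF} and (w3). The leading piece is $M_{g}\pa_{\tau}\phi_{0}=\tfrac{2}{3}M_{g}\tau^{-1/3}=O(\tau^{2/3}\et)$, which together with the already-proved bounds on $\phia-\phi_{0}$ and $\tau^{m}H/r$ produces the claimed $\tau^{2/3}q_{1}\et$ size; the $\pa_{\tau}$ and $\rr$ variants are obtained by commuting the corresponding derivative past $\DL$ and reusing the earlier bounds. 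For the Jacobian estimate \eqref{E:JPHIBOUNDAPRIORI}, I factor
\[
\frac{\J[\phi]}{\J[\phi_{0}]} \;=\; \Bigl(\frac{\phi}{\phi_{0}}\Bigr)^{\!2}\,\frac{\phi+\DL\phi}{\phi_{0}+\DL\phi_{0}},
\]
observe that $\phi_{0}+\DL\phi_{0}=\phi_{0}(1+\tfrac{2}{3}M_{g}/\tau)$ is bounded below by a strictly positive multiple of $\tau^{2/3}q_{1}\et$ thanks to~\eqref{E:IMPORTANTAPRIORI}, and conclude that the correction $(\phi+\DL\phi)-(\phi_{0}+\DL\phi_{0})$ is a factor $O(\ve+\sigma')$ smaller by the bounds just proved. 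The time-derivative bounds \eqref{E:PHIBOUNDAPRIORITAU}--\eqref{E:JPHIBOUNDAPRIORITAU} then follow by $\pa_{\tau}$-differentiating the ratios and reusing all of the above; the gain $\tau^{\delta-1}$ appears because one must first subtract the leading constant $1$ before differentiating, so that only the $O(\ve\tau^{\delta})+O(\sigma'\tau^{m-2/3})$ remainder contributes.

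The main obstacle is bookkeeping rather than conceptual: one must simultaneously track the small parameters $\ve$ and $\sigma'$, the hierarchy gains $\tau^{j\delta}$, the remainder weight $\tau^{m}$, and the various $r^{n}/\tau$-weights furnished by Theorem~\ref{T:MAINBOUNDPHI}. The only delicate point is the lower bound on $\phi+\DL\phi$ needed for \eqref{E:JPHIBOUNDAPRIORI}, which rests on the sharp coercivity inequality~\eqref{E:IMPORTANTAPRIORI} that prevents $1+\tfrac{2}{3}M_{g}/\tau$ from degenerating on $r<1$ and therefore allows it to absorb an $O(\ve+\sigma')$ perturbation without changing sign.
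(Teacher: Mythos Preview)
Your proposal is correct and follows essentially the same route as the paper's proof: decompose $\phi=\phi_{0}+\sum_{j\ge1}\ve^{j}\phi_{j}+\tau^{m}H/r$, use Theorem~\ref{T:MAINBOUNDPHI} for the iterates and \eqref{E:APRIORI} for the remainder, and for the Jacobian ratio factor out $(\phi/\phi_{0})^{2}$ and control $(\phi+\DL\phi)/(\phi_{0}+\DL\phi_{0})$. One small correction: the nondegeneracy of $1+\tfrac{2}{3}M_{g}/\tau$ does not actually require the sharp bound~\eqref{E:IMPORTANTAPRIORI}; since $\r(\log g)\le 0$ and $\tau-1\le 0$ one has $M_{g}\ge 0$, so $1+\tfrac{2}{3}M_{g}/\tau\ge 1$ automatically, and the two-sided equivalence $\phi_{0}+\DL\phi_{0}\approx \tau^{2/3}q_{1}(r^{n}/\tau)$ comes from~\eqref{E:FM} (equivalently~\eqref{E:GTAYLOR}--\eqref{E:GTAYLOR2}), as the paper cites.
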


%%%%%%%%%%%%%%%%%%%%%%%%%%%%%%%%%%%%%%%%%%%

\begin{proof}
\noindent
{\bf Proof of~\eqref{E:PHIBOUNDAPRIORI}.}
%\bcr 
Let $h: =\frac{\phi}{\phi_0}$. %\ec
By~\eqref{E:THETAQDEF} and~\eqref{E:HDEF} we have 
\be\label{E:LITLLEHFORMULA}
h = \frac{\phi}{\phi_0} =1 + \sum_{j=1}^M\ve^j \frac{\phi_j}{\phi_0} +\tau^{m-\frac23}\frac{H} r.
\ee
By Proposition~\ref{P:MAINBOUNDPHI} and the a priori assumption~\eqref{E:APRIORI} for any $( \tau, r)\in[\kappa,T]\times[0,1]$ we have
\[
\left\vert %\bcr 
h % \ec 
- 1 \right\vert \lesssim \sum_{j=1}^M\ve^j \tau^{j\delta} + \sigma' \tau^{m-\frac23} \le \frac1{10}
\]
for $\ve,\sigma'>0$ sufficiently small. 

%%%%%%%%%%%%%%%%%%%%%%%%%%%%%%%%%%%%%%%%%%%

\noindent
{\bf Proof of~\eqref{E:JPHIBOUNDAPRIORI}.}
Note that  
\be
\frac{\J[\phi]}{\J[\phi_0]} = \left\vert\frac\phi{\phi_0}\right\vert^2 \frac{\phi + \DL\phi}{\phi_0+\DL\phi_0}
= h^2 \frac{\phi + \DL\phi}{\phi_0+\DL\phi_0}. \label{E:Jacobianf}
\ee
 Therefore, in view of~\eqref{E:PHIBOUNDAPRIORI} it suffices to prove
\be\label{E:JPHIINTERIM}
\phi_0+\DL\phi_0 \lesssim \phi+\DL\phi\lesssim \phi_0+\DL\phi_0.
\ee
Recall that 
\[
\phi_0+\DL\phi_0 = \tau^{\frac23} + \frac23 M_g  \tau^{-\frac13} = \tau^{\frac23} \left(1 + \frac23\frac{(\tau-1)}\tau \r(\log g)\right).
\]
By~\eqref{E:GTAYLOR}--\eqref{E:GTAYLOR2}  we have
\be\label{E:PHIZERODPHIZEROFORMULA}
\tau^{\frac23} q_1(\frac{ r^n}{\tau}) \lesssim \left\vert \phi_0+\DL\phi_0\right\vert \lesssim \tau^{\frac23} q_1(\frac{ r^n}{\tau})
\ee
%\bcr 
Moreover, %\ec
\begin{align}\label{E:PHIDPHIFORMULA}
\phi+\DL\phi = h (\phi_0 + \DL\phi_0) + \phi_0  \DL h.
\end{align}
%\bcr 
From~\eqref{E:LITLLEHFORMULA}, and Proposition~\ref{P:MAINBOUNDPHI} with the crude bound $\pet\lesssim1$ and the bound 
\[
\tau^{-1} = \et r^{-n} \lesssim r^{-n} q_1\et,
\]
we have %\ec 
\begin{align*}
\left\vert  \DL h \right\vert &
% \lesssim \ve \sum_{j=1}^M\DL\left(\frac{\phi_j}{\phi_0}\right) + r^{n}\\
 \lesssim q_1\et \sum_{j=1}^M \ve^j \tau^{j\delta}  +  r^n \tau^{m-\frac23} \left\vert \frac{H_\tau}{r}\right\vert 
+ r^n \tau^{m-\frac53} \left\vert \frac{H}{r}\right\vert + \tau^{m-\frac23} \left\vert \r\left(\frac{H}{r}\right)\right\vert \notag \\
& \lesssim  \ve\tau^\delta q_1\et  + q_1\et \tau^{m-\frac23} \left(\left\vert \frac{\tau H_\tau}{r}\right\vert+\left\vert \frac{H}{r}\right\vert  \right)
+\tau^{m-\frac23} \left\vert \r\left(\frac{H}{r}\right)\right\vert \notag \\
& \lesssim q_1\et \left(\ve + \left\vert \frac{\tau H_\tau}{r}\right\vert+\left\vert \frac{H}{r}\right\vert +  \left\vert \r\left(\frac{H}{r}\right)\right\vert  \right) \notag \\
& \lesssim q_1\et \left(\ve + \sigma'\right).
\end{align*}
Now the bound~\eqref{E:JPHIBOUNDAPRIORI} follows from~\eqref{E:PHIDPHIFORMULA},~\eqref{E:PHIZERODPHIZEROFORMULA},~\eqref{E:PHIBOUNDAPRIORI}, and~\eqref{E:APRIORI}.

%%%%%%%%%%%%%%%%%%%%%%%%%%%%%%%%%%%%%%%%%%%

\noindent
{\bf Proof of~\eqref{E:PARTIALTAUPHIBOUND}.}
By~\eqref{E:THETAQDEF},~\eqref{E:HDEF}, and Proposition~\ref{P:MAINBOUNDPHI} we have
%\bcr
\begin{align*}
|\phi_\tau| &\lesssim \tau^{-\frac13} + \sum_{j=1}^M\ve^j \tau^{-\frac13+j\delta} \pet + \tau^{m-1}\left\vert \frac H r \right\vert
+ \tau^m \left\vert \frac{H_\tau} r \right\vert \notag \\
& \lesssim \tau^{-\frac13} + \ve \tau^{-\frac13+\delta} \pet + \sigma' \tau^{m-1} 
 \lesssim \tau^{-\frac13}, 
\end{align*}
where we have used the a priori bounds~\eqref{E:APRIORI}, the crude bound $\ve\tau^\delta\pet \lesssim1$ and the assumption $m\ge\frac{5}{2}$.
%\ec 
%(\begin{color}{red} Make this assumption earlier \end{color}). 

%%%%%%%%%%%%%%%%%%%%%%%%%%%%%%%%%%%%%%%%%%%

\noindent
{\bf Proof of~\eqref{E:PARTIALTAUPHIBOUND1}.} This is similar to the proof of \eqref{E:PARTIALTAUPHIBOUND}. With $\r {\phi_0} =0$,    applying $\r$ we obtain
%\bcr
\begin{align*}
\left\vert \r \phi_\tau \right\vert &\lesssim \ve \tau^{-\frac13+\delta} \pet + \tau^{m-1}\left\vert \r\left(\frac H r\right) \right\vert
+ \tau^m \left\vert \r\left(\frac{H_\tau} r\right) \right\vert  \\
&\lesssim (\ve+\sigma') \tau^{-\frac13+\delta},
\end{align*}
where we have used~\eqref{E:APRIORI} in the last line and the crude bound $\pet\lesssim1$. 
%and $\sigma\le\ve$.
%\ec

%%%%%%%%%%%%%%%%%%%%%%%%%%%%%%%%%%%%%%%%%%%

\noindent
{\bf Proof of~\eqref{E:PARTIALTAUTAUPHIBOUND}.}
By~\eqref{E:THETAQDEF},~\eqref{E:HDEF}, and Proposition~\ref{P:MAINBOUNDPHI} we have
%\bcr
\begin{align*}
|\phi_{\tau\tau}| &\lesssim \tau^{-\frac43} + \sum_{j=1}^M\ve^j \tau^{-\frac43+j\delta} \pet 
+ \tau^{m-2}\left\vert \frac H r \right\vert
+ \tau^{m-1} \left\vert \frac{H_\tau} r \right\vert + \tau^m \left\vert \frac{H_{\tau\tau}} r \right\vert \\
& \lesssim \tau^{-\frac43} + \sigma' \tau^{m-2}
 \lesssim \tau^{-\frac43}, 
\end{align*}
where we have used the a priori bounds~\eqref{E:APRIORI}, $\sigma'< 1$, $\pet\lesssim1$, and the assumption  $m\ge\frac{5}{2}$.
%\ec 
%(\begin{color}{red} Make this assumption earlier \end{color}).

%%%%%%%%%%%%%%%%%%%%%%%%%%%%%%%%%%%%%%%%%%%

\noindent
{\bf Proof of~\eqref{E:PARTIALETAPHIBOUND}.}
By~\eqref{E:THETAQDEF},~\eqref{E:HDEF}, and Proposition~\ref{P:MAINBOUNDPHI}, for any $\ell=0,1,2$, we have
\begin{align*}
|\rr^\ell \phi| &\lesssim \sum_{j=1}^M\ve^j \tau^{\frac23+j\delta} + \tau^{m}\left\vert \rr^\ell\left( \frac H r\right) \right\vert  \lesssim  \ve \tau^{\frac23+\delta} + \sigma' \tau^{m} 
 \lesssim \left( \ve+\sigma'\right) \tau^{\frac23+\delta} 
\end{align*}
where we have used the a priori bounds~\eqref{E:APRIORI} and the assumption $m\ge\frac{5}{2}$. %(\begin{color}{red} Make this assumption earlier \end{color}).

%%%%%%%%%%%%%%%%%%%%%%%%%%%%%%%%%%%%%%%%%%%

\noindent
{\bf Proof of~\eqref{E:DPHIBOUND}.}
By~\eqref{E:PARTIALTAUPHIBOUND} and~\eqref{E:PARTIALETAPHIBOUND} we have
\begin{align*}
\left\vert \DL\phi\right\vert &\lesssim  r^n\left( \tau^{-\frac13} + \left(\ve+\sigma'\right) \tau^{\frac23+\delta} r^{-n}q_1\et\right) + \left(\ve+\sigma'\right)\tau^{\frac23+\delta}  \lesssim \tau^{\frac23} q_1\et+  \left(\ve+\sigma'\right) \tau^{\frac23+\delta} q_1\et \\
& %\bcr 
\lesssim \tau^{\frac23} q_1\et.% \ec
\end{align*}

%%%%%%%%%%%%%%%%%%%%%%%%%%%%%%%%%%%%%%%%%%%

\noindent
{\bf Proof of~\eqref{E:PARTIALTAUDPHIBOUND}.}
From the definition of $\DL$ we have
\begin{align*}
\left\vert \pa_\tau \DL\phi \right\vert &\lesssim \left\vert \r (\log g) \phi_\tau\right\vert + \left\vert \r (\log g) \phi_{\tau\tau}\right\vert 
+ \left\vert \r \phi_\tau\right\vert 
% \bcr
  \lesssim r^n\tau^{-\frac43} + (\ve+\sigma') \tau^{-\frac13+\delta} %\ec
  \\
 &%\bcr 
 \lesssim \tau^{-\frac13}q_1\et,
 %\lesssim \tau^{-\frac13}q_1\et + \ve \tau^{\frac23+\delta} r^{-n}q_2\et. 
% \ec
\end{align*} 
%\bcr 
where we have used the crude bound $\ve \tau^\delta \pet \lesssim1$. %\ec
%%%%%%%%%%%%%%%%%%%%%%%%%%%%%%%%%%%%%%%%%%%

\noindent
{\bf Proof of~\eqref{E:PARTIALETADPHIBOUND}.}
From the definition of $\DL$ we have
\begin{align*}
\left\vert \r \DL\phi \right\vert &\lesssim \left\vert \rr^2 (\log g) \phi_\tau\right\vert + \left\vert \r (\log g) \r\phi_{\tau}\right\vert 
+ \left\vert \rr^2 \phi\right\vert \\
& %\bcr 
\lesssim r^n \tau^{-\frac13} + \left(\ve+\sigma'\right) r^n \tau^{-\frac13+\delta} + \left(\ve+\sigma'\right) \tau^{\frac23 +\delta} %\ec 
\\
& %\bcr 
\lesssim \tau^{\frac23} q_1\et, %\ec
% \lesssim \tau^{\frac23}q_1\et+ \ve \tau^{\frac23+\delta} r^{-n}q_2\et.
\end{align*} 
%\bcr 
where we have used~\eqref{E:PARTIALTAUPHIBOUND},~\eqref{E:PARTIALTAUPHIBOUND1}, and~\eqref{E:PARTIALTAUTAUPHIBOUND}.%\ec
%%%%%%%%%%%%%%%%%%%%%%%%%%%%%%%%%%%%%%%%%%%

\noindent
{\bf Proof of~\eqref{E:PHIBOUNDAPRIORITAU} and \eqref{E:JPHIBOUNDAPRIORITAU}.}
By \eqref{E:LITLLEHFORMULA} and \eqref{E:APRIORI}, we have \eqref{E:PHIBOUNDAPRIORITAU}. 
%\bcr 
To show~\eqref{E:JPHIBOUNDAPRIORITAU} we first observe that $| \DL h|+|\tau \pa_\tau  \DL h| \lesssim \tau^\delta$, which is a simple consequence of the bounds shown above. We recall here $h = \frac{\phi}{\phi_0}$. Now the bound follows %\ec 
from \eqref{E:Jacobianf}, \eqref{E:PHIDPHIFORMULA}, \eqref{E:APRIORI}. 
\end{proof}

%%%%%%%%%%%%%%%%%%%%%%%%%%%%%%%%%%%%%%%%%%%
%%%%%%%%%%%%%%%%%%%%%%%%%%%%%%%%%%%%%%%%%%%

\begin{lemma}
Assume that $H$ is a solution to~\eqref{E:H2} on a time interval $[\kappa,T]$ for some $T\le1$ and assume that the a priori assumptions~\eqref{E:APRIORI} hold. Then for any $( \tau, r)\in[\kappa,T]\times[0,1]$
\begin{align}
\tau^{\delta-2+\frac2n}q_{-\gamma-1}\et\lesssim c[\phi]  & \lesssim \tau^{\delta-2+\frac2n}q_{-\gamma-1}\et. \label{E:CEQUIVALENCE}\\
%%%%%%%%%%%%%%%%%%%%%%%%%%%%%%%%%%%%%%%%%%%
\left\vert \pa_\tau\J[\phi] \right\vert & \lesssim  %\bcr 
\tau q_1\et, %\ec 
%+   \ve \tau^{2+\delta}  r^{-n} q_2\et, 
\label{E:PARTIALTAUJBOUND}\\
%%%%%%%%%%%%%%%%%%%%%%%%%%%%%%%%%%%%%%%%%%%
\left\vert \r\J[\phi] \right\vert & \lesssim  \tau^2 q_1\et, \label{E:PARTIALETAJBOUND} \\
%%%%%%%%%%%%%%%%%%%%%%%%%%%%%%%%%%%%%%%%%%%
\left\vert \pa_\tau c[\phi] \right\vert & \lesssim  %\bcr 
c[\phi] \tau^{-1}, %\ec 
%\left(1+ \ve\tau^\delta \et^{-1}q_1\et\right), 
\label{E:PARTIALTAUCBOUND}\\
%%%%%%%%%%%%%%%%%%%%%%%%%%%%%%%%%%%%%%%%%%%
\left\vert \r c[\phi] \right\vert & \lesssim \tau^{\delta-2+\frac2n}q_{-\gamma-1}\et. \label{E:PARTIALETACBOUND} 
%%%%%%%%%%%%%%%%%%%%%%%%%%%%%%%%%%%%%%%%%%%
\end{align}
\end{lemma}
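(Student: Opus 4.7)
All five bounds will follow from the already-established a priori estimates \eqref{E:PHIBOUNDAPRIORI}--\eqref{E:JPHIBOUNDAPRIORITAU}, together with the background estimate $\J[\phi_0]\approx \tau^2 q_1(\tfrac{r^n}{\tau})$ (which is a direct consequence of \eqref{E:PHIZERODPHIZEROFORMULA} and $\J[\phi_0]=\phi_0^2(\phi_0+\DL\phi_0)=\tau^{4/3}(\phi_0+\DL\phi_0)$). The only genuinely new observation is the arithmetic identity
\[
\tfrac{8}{3}-2(\gamma+1)\;=\;\tfrac{2}{3}-2\gamma\;=\;\delta-2+\tfrac{2}{n},
\]
which follows from $\delta=\tfrac{8}{3}-2\gamma-\tfrac{2}{n}$. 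This identity is what reconciles the power of $\tau$ coming from $\phi^4/\J[\phi]^{\gamma+1}$ with the stated exponent $\delta-2+\tfrac{2}{n}$, so the bounds are compatible with the gain $\tau^\delta$ provided by Theorem~\ref{T:MAINBOUNDPHI}. First I would verify this identity and record $\J[\phi_0]\approx\tau^2 q_1\!\et$.

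For \eqref{E:CEQUIVALENCE} I would simply write
$c[\phi]=\dfrac{\phi^4}{g^2\,\J[\phi]^{\gamma+1}}$,
use $g\approx 1$, and combine $\phi\approx\phi_0=\tau^{2/3}$ from \eqref{E:PHIBOUNDAPRIORI} with $\J[\phi]\approx \J[\phi_0]\approx\tau^2 q_1$ from \eqref{E:JPHIBOUNDAPRIORI} to get $c[\phi]\approx \tau^{8/3-2\gamma-2}q_{-\gamma-1}\!\et$; the exponent identity above then gives \eqref{E:CEQUIVALENCE}. For \eqref{E:PARTIALTAUJBOUND} and \eqref{E:PARTIALETAJBOUND} I would Leibniz-expand
\[
\pa\,\J[\phi]=2\phi(\pa\phi)(\phi+\DL\phi)+\phi^2\,\pa(\phi+\DL\phi), \qquad \pa\in\{\pa_\tau,\,r\pa_r\},
\]
and insert, respectively, \eqref{E:PARTIALTAUPHIBOUND}, \eqref{E:DPHIBOUND}, \eqref{E:PARTIALTAUDPHIBOUND} (for the $\pa_\tau$-case) and \eqref{E:PARTIALETAPHIBOUND}, \eqref{E:DPHIBOUND}, \eqref{E:PARTIALETADPHIBOUND} (for the $r\pa_r$-case). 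A quick power count gives $\tau^{2/3}\cdot\tau^{-1/3}\cdot\tau^{2/3}q_1+\tau^{4/3}\cdot\tau^{-1/3}q_1\lesssim\tau q_1$ in the first case and $\tau^{2/3}\cdot(\ve+\sigma')\tau^{2/3+\delta}\cdot\tau^{2/3}q_1+\tau^{4/3}\cdot\tau^{2/3}q_1\lesssim\tau^2 q_1$ in the second, where the $(\ve+\sigma')\tau^\delta$ factor is absorbed.

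For the two derivative-of-$c$ bounds \eqref{E:PARTIALTAUCBOUND} and \eqref{E:PARTIALETACBOUND} I would use the logarithmic-derivative identity
\[
\frac{\pa c[\phi]}{c[\phi]}\;=\;4\,\frac{\pa\phi}{\phi}\;-\;(\gamma+1)\,\frac{\pa\J[\phi]}{\J[\phi]}\;-\;2\,\frac{\pa g}{g}.
\]
For $\pa=\pa_\tau$: $|\pa_\tau\phi/\phi|\lesssim \tau^{-1}$ by \eqref{E:PARTIALTAUPHIBOUND}--\eqref{E:PHIBOUNDAPRIORI}; $|\pa_\tau \J[\phi]/\J[\phi]|\lesssim \tau^{-1}$ from \eqref{E:PARTIALTAUJBOUND} and $\J[\phi]\approx \tau^2 q_1$; and $\pa_\tau g=0$. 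This yields \eqref{E:PARTIALTAUCBOUND}. For $\pa=r\pa_r$: $|r\pa_r\phi/\phi|\lesssim(\ve+\sigma')\tau^\delta\lesssim 1$ by \eqref{E:PARTIALETAPHIBOUND}; $|r\pa_r\J[\phi]/\J[\phi]|\lesssim 1$ by \eqref{E:PARTIALETAJBOUND}; and $|r\pa_r\log g|\lesssim r^n\lesssim 1$ by \eqref{E:FM}. So $|r\pa_r c[\phi]|\lesssim c[\phi]$, which together with \eqref{E:CEQUIVALENCE} gives \eqref{E:PARTIALETACBOUND}.

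None of the steps is an obstacle per se; the only point that requires care is the bookkeeping of the weight $q_\nu\!\et$ in \eqref{E:PARTIALTAUJBOUND}--\eqref{E:PARTIALETAJBOUND}. In particular the $q_1$-weight on the right-hand side of \eqref{E:PARTIALTAUJBOUND}/\eqref{E:PARTIALETAJBOUND} must come precisely from the weight $q_1$ attached to $\phi+\DL\phi$ via \eqref{E:PHIZERODPHIZEROFORMULA}, with all the correction pieces being strictly better by factors of $(\ve+\sigma')\tau^\delta$. Ensuring that no contribution is worse than $\tau q_1$ (respectively $\tau^2 q_1$) is what forces one to use the refined estimates \eqref{E:PARTIALTAUDPHIBOUND}, \eqref{E:PARTIALETADPHIBOUND} rather than the cruder ones. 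Once this accounting is done, the lemma follows.
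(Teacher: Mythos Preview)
Your proof is correct and matches the paper's argument essentially line for line: the same use of $\phi\approx\tau^{2/3}$, $\J[\phi]\approx\tau^2 q_1$, the same Leibniz expansion of $\partial\J[\phi]$, and the same logarithmic-derivative identity for $c[\phi]$. If anything you are slightly more careful than the paper, which silently drops the $-2\,r\partial_r\log g$ term in the $r\partial_r c[\phi]$ computation (harmless since $|r\partial_r\log g|\lesssim r^n\lesssim 1$, as you note).
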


%%%%%%%%%%%%%%%%%%%%%%%%%%%%%%%%%%%%%%%%%%%

\begin{proof}
{\bf Proof of~\eqref{E:CEQUIVALENCE}.}
%\bcr 
Recall the definition of $c[\phi]$~\eqref{E:CDEF}. By~\eqref{E:JPHIBOUNDAPRIORI} we have $\J[\phi]\approx \J[\phi_0]\approx \tau^2 q_1\et$, where we have used~\eqref{E:PHIZERODPHIZEROFORMULA} to infer the last equivalence. By~\eqref{E:JPHIBOUNDAPRIORI} $\phi^4 \approx \tau^{\frac83}$.Therefore
\[
c[\phi] \approx \tau^{\frac23-2\gamma} q_{-\gamma-1}\et = \tau^{\delta-2+\frac2n}q_{-\gamma-1}\et.
\] 
%\ec

%%%%%%%%%%%%%%%%%%%%%%%%%%%%%%%%%%%%%%%%%%%

{\bf Proof of~\eqref{E:PARTIALTAUJBOUND}.}
Since $\pa_\tau \J[\phi] = 2\phi\phi_\tau(\phi + \DL\phi) + \phi^2(\phi_\tau + \pa_\tau \DL\phi)$, bounds~\eqref{E:PARTIALTAUPHIBOUND},~\eqref{E:DPHIBOUND}, and~\eqref{E:PARTIALTAUDPHIBOUND} imply
%\bcr
\begin{align*}
\left\vert \pa_\tau\J[\phi] \right\vert & \lesssim \tau^{\frac23}  \tau^{-\frac13} \left(\tau^{\frac23}+\tau^{\frac23} q_1\et\right) + \tau^{\frac43} \left(\tau^{-\frac13} \tau^{-\frac13} q_1\et\right)  \lesssim \tau q_1\et.
\end{align*}
%\ec
%%%%%%%%%%%%%%%%%%%%%%%%%%%%%%%%%%%%%%%%%%%

\noindent
{\bf Proof of~\eqref{E:PARTIALETAJBOUND}.}
Since $\r \J[\phi] = 2\phi \r\phi (\phi + \DL\phi) + \phi^2(\r\phi + \r \DL\phi)$, bounds~\eqref{E:PARTIALETAPHIBOUND},~\eqref{E:DPHIBOUND}, and~\eqref{E:PARTIALETADPHIBOUND} imply
\begin{align*}
\left\vert \r\J[\phi] \right\vert & \lesssim \tau^{\frac23} \left(\ve+\sigma'\right) \tau^{\frac23+\delta} \left(\tau^{\frac23}+\tau^{\frac23} q_1\et \right)
+ \tau^{\frac43} \left(\left(\ve+\sigma'\right) \tau^{\frac23+\delta} + \tau^{\frac23}q_1\et\right) 
 \lesssim \tau^2 q_1\et.
\end{align*}

%%%%%%%%%%%%%%%%%%%%%%%%%%%%%%%%%%%%%%%%%%%

\noindent
{\bf Proof of~\eqref{E:PARTIALTAUCBOUND}.}
From the definition of $c[\phi]$ it is easy to check the identity
$\pa_\tau c[\phi]  = c[\phi] \left(4 \frac{\phi_\tau}{\phi} - (\gamma+1)\frac{\pa_\tau\J[\phi]}{\J[\phi]}\right)$.
%\bcr 
Therefore
\begin{align*}
\left\vert \pa_\tau c[\phi] \right\vert & \lesssim c[\phi] \left(\tau^{-1}+ \frac{\tau q_1\et }{\tau^2q_1\et}\right)
\lesssim c[\phi] \tau^{-1},
%\left\vert \pa_\tau c[\phi] \right\vert & \lesssim c[\phi] \left(\tau^{-1}+ \frac{\tau q_1\et 
%+ \ve \tau^{2+\delta} r^{-n}q_2\et}{\tau^2q_1\et}\right) \\
%& =c[\phi] \tau^{-1} \left(1+ \ve\tau^\delta \et^{-1}q_1\et\right),
\end{align*}
where we have used~\eqref{E:PARTIALTAUPHIBOUND},~\eqref{E:PARTIALTAUJBOUND}, 
and~\eqref{E:JPHIBOUNDAPRIORI}.
%\ec

%%%%%%%%%%%%%%%%%%%%%%%%%%%%%%%%%%%%%%%%%%%

\noindent
{\bf Proof of~\eqref{E:PARTIALETACBOUND}.}
Like in the proof of~\eqref{E:PARTIALTAUCBOUND} we have
\begin{align*}
\left\vert \r c[\phi]\right\vert & \lesssim \left\vert c[\phi] \right\vert \left(\left\vert\frac{ r\pa_r \phi}{\phi}  \right\vert 
+ \left\vert\frac{\r\J[\phi]}{\J[\phi]} \right\vert \right) \\
& \lesssim \tau^{\delta-2+\frac2n}q_{-\gamma-1}\et \left(\left(\ve+\sigma'\right) \tau^\delta + 1 \right) 
\lesssim \tau^{\delta-2+\frac2n}q_{-\gamma-1}\et,
\end{align*}
where we have used bounds~\eqref{E:PARTIALETAPHIBOUND},~\eqref{E:CEQUIVALENCE}, and~\eqref{E:PARTIALETAJBOUND}.
\end{proof}

%%%%%%%%%%%%%%%%%%%%%%%%%%%%%%%%%%%%%%%%%%%
%%%%%%%%%%%%%%%%%%%%%%%%%%%%%%%%%%%%%%%%%%%

\begin{lemma}\label{L:IMPORTANT}
Assume that $H$ is a solution to~\eqref{E:H2} on a time interval $[\kappa,T]$ for some $T\le1$ and assume that the a priori assumptions~\eqref{E:APRIORI} hold. Then for any $( \tau, r)\in[\kappa,T]\times[0,1]$ the following bounds hold:
\begin{align}
1\lesssim g^{00} & \lesssim 1\label{E:GZEROZEROBOUND} \\
\left\vert \pa_r g^{00} \right\vert & \lesssim \ve \tau^{\delta-\frac1n} q_{-\gamma-1}\et \et^{2-\frac3n},\label{E:GZEROZEROBOUND1} \\
\left\vert \pa_\tau g^{00} \right\vert & \lesssim \ve \tau^{\delta-1}\label{E:GZEROZEROBOUND2} \\
\left\vert  r^{-1} g^{01} \right\vert + \left\vert \pa_r g^{01} \right\vert & \lesssim \ve \tau^{\delta-1} q_{-\gamma-1}\et \et^{1-\frac2n}, \label{E:GZEROONEBOUND} \\
%\tau^{\gamma-\frac83}\lesssim-\frac12\pa_\tau\left(\tau^{\gamma-\frac53}g^{00}\right) - \tau^{\gamma-\frac53}
\left\vert \frac{\pa_r\left(\frac{g^{01}}{g^{00}}w^\alpha r^2\right)}{w^\alpha r^2}\right\vert  & \lesssim \ve \tau^{\delta-1} \label{E:GZEROONEBOUND1}\\
\tau^{\gamma-\frac{14}3}\lesssim - \left(\frac{d(\tau,r)^2}{g^{00}}\tau^{\gamma-\frac{11}3}\right)_\tau & \lesssim \tau^{\gamma-\frac{14}3} \label{E:DBOUND}\\
w^\alpha (\tau+M_g )^{-\gamma-2}\lesssim -\left(\tau^{\gamma-\frac53} c[\phi_0]\right)_\tau & \lesssim w^\alpha (\tau+M_g )^{-\gamma-2} \label{E:CTAUBOUND}
\end{align}
\end{lemma}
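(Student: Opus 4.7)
The plan is to verify each bound by direct computation using the explicit forms $g^{00}=1-\ve\gamma wc[\phi]M_g^2/r^2$ and $g^{01}=-\ve\gamma wc[\phi]M_g/r$, together with the previously established size estimates: $c[\phi]\approx \tau^{\delta-2+\frac2n}q_{-\gamma-1}\et$ from \eqref{E:CEQUIVALENCE}, the flatness bound $|M_g|\lesssim r^n$ and $|\pa_\tau M_g|=|r\pa_r\log g|\lesssim r^n$ coming from (w3), and $|w|\lesssim 1$, $|w'|\lesssim r^{n-1}$ (the latter valid for $w(r)=a(1-r^n)$, the choice made throughout).

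First I would handle \eqref{E:GZEROZEROBOUND}--\eqref{E:GZEROONEBOUND}. The scheme is to write $r^{kn-j}=\tau^{k-j/n}\left(\frac{r^n}\tau\right)^{k-j/n}$ and balance with the decay of $q_{-\gamma-1}$: one splits into cases $r^n\le\tau$ and $r^n\ge\tau$, exactly as was done in Lemma~\ref{L:lem2.12}. For example, in the basic bound $|\ve wc[\phi]M_g^2/r^2|\lesssim \ve\tau^\delta\,\frac{x^{2-2/n}}{(1+x)^{\gamma+1}}$ with $x=r^n/\tau$, the quantity in the fraction is uniformly bounded provided $2-2/n-\gamma-1<0$, which holds for $n$ sufficiently large (this is automatic under Definition~\ref{D:ALAMBDADEF}). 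Choosing $\ve$ small then gives \eqref{E:GZEROZEROBOUND}. The bounds \eqref{E:GZEROZEROBOUND1}--\eqref{E:GZEROONEBOUND} follow identically after using the product rule and \eqref{E:PARTIALETACBOUND}, \eqref{E:PARTIALTAUCBOUND}: the dominant term in $\pa_r$ always comes from differentiating $M_g/r$ (which drops one power of $r$) or from $\pa_r c[\phi]$, and each produces exactly the indicated factor $\left(\frac{r^n}{\tau}\right)^{k-j/n}$. For \eqref{E:GZEROONEBOUND1} one uses the crucial cancellation $\frac{g^{01}}{g^{00}}w^\alpha r^2=-\ve\gamma \frac{w^{1+\alpha}c[\phi]M_gr}{g^{00}}$ which has an extra factor of $w$, so that differentiating and dividing by $w^\alpha r^2$ never produces the dangerous $w'/w$ singularity at $r=1$.

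Next, for \eqref{E:DBOUND} I would use that from Theorem~\ref{T:MAINBOUNDPHI} and the ansatz \eqref{E:THETAQDEF}, $\phia = \phi_0(1+O(\ve\tau^\delta))$, hence $\tau^2/\phia^3=1+O(\ve\tau^\delta)$. Thus $d^2=m(m-1)-\frac49+O(\ve\tau^\delta)$ is bounded above and below by strictly positive constants for $m\ge\frac52$ and $\ve$ small. Expanding the derivative, the dominant contribution is $(\gamma-\frac{11}3)\frac{d^2}{g^{00}}\tau^{\gamma-\frac{14}{3}}$, which is strictly negative (since $\gamma<\frac43<\frac{11}3$) and of the required size; the remaining terms $\pa_\tau(d^2/g^{00})\tau^{\gamma-\frac{11}3}$ are of smaller order, as $\pa_\tau(d^2)=O(\tau^{-1})$ and $\pa_\tau g^{00}=O(\ve\tau^{\delta-1})$ by \eqref{E:GZEROZEROBOUND2}.

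The main algebraic input, and the step I expect to be most delicate, is \eqref{E:CTAUBOUND}. The key observation is that with $\J[\phi_0]=\phi_0^2(\phi_0+\DL\phi_0)=\tau(\tau+\tfrac23 M_g)$, one has $c[\phi_0]=\frac{\tau^{5/3-\gamma}}{g^2(\tau+\frac23 M_g)^{\gamma+1}}$, so that
\begin{equation*}
\left(\tau^{\gamma-\frac53}c[\phi_0]\right)_\tau = -\frac{\gamma+1}{g^2}\frac{1+\tfrac23 r\pa_r\log g}{(\tau+\tfrac23 M_g)^{\gamma+2}}.
\end{equation*}
This is where \eqref{E:IMPORTANTAPRIORI0} enters crucially: it asserts the \emph{exact} identity $1+\tfrac23 r\pa_r\log g = \frac{6\pi w^\alpha}{g^2}$, producing the $w^\alpha$ factor in \eqref{E:CTAUBOUND}. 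The bound then follows from $g^2\approx 1$ on $[0,1]$. Without this algebraic identity the sign of $1+\tfrac23 r\pa_r\log g$ could vary on $[0,1]$ (it vanishes at the boundary because $w(1)=0$), and the required sharp two-sided control would fail — so this cancellation is the one nontrivial ingredient beyond the routine case analyses sketched above.
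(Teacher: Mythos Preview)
Your approach is essentially identical to the paper's: product-rule expansions combined with \eqref{E:CEQUIVALENCE}, \eqref{E:PARTIALTAUCBOUND}, \eqref{E:PARTIALETACBOUND} for \eqref{E:GZEROZEROBOUND}--\eqref{E:GZEROONEBOUND1}, and the exact identity coming from \eqref{E:IMPORTANTAPRIORI0} for \eqref{E:CTAUBOUND}. One slip to fix in your \eqref{E:DBOUND} argument: you write $\pa_\tau(d^2)=O(\tau^{-1})$, but with only this bound the ``remaining term'' $\pa_\tau(d^2/g^{00})\,\tau^{\gamma-\frac{11}3}$ is of the \emph{same} order as the leading one, so the two-sided estimate does not follow. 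You need (and in fact already have, from your own observation $\tau^2/\phia^3=1+O(\ve\tau^\delta)$) the sharper estimate $\pa_\tau(d^2)=O(\ve\tau^{\delta-1})$; this is exactly what the paper establishes.
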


%%%%%%%%%%%%%%%%%%%%%%%%%%%%%%%%%%%%%%%%%%%

\begin{proof}
{\bf Proof of~\eqref{E:GZEROZEROBOUND}.}
By definition~\eqref{E:GZEROZERODEF} of $g^{00}$ it suffices to check that $\left\| c[\phi] (\pa_r g)^2\right\|_{C^0([\kappa,T]\times[0,1])} \lesssim 1$. 
%By the definition~\eqref{E:CDEF} of $c[\phi]$ 
%\bcr 
By~\eqref{E:CEQUIVALENCE} and the bound $\left\vert \pa_r g\right\vert \lesssim  r^{n-1}$ for all $ r\in[0,1]$ (by~\eqref{E:GTAYLOR2})
we have
\begin{align*}
\left\vert c[\phi] (\pa_r g)^2\right\vert & \lesssim \tau^{\delta-2+\frac2n} q_{-\gamma-1}\et r^{2n-2}   
 \lesssim \tau^\delta \frac{\et^{2-\frac2n}}{q_{\gamma+1}\et} \lesssim \tau^\delta
\end{align*}
where we recall $\delta = \frac83-2\gamma-\frac2n>0$ and $x\mapsto \frac{x^{2-\frac2n}}{(1+x)^{\gamma+1}}$ is clearly bounded for all $x\ge0$ and any $\gamma>1$. This proves~\eqref{E:GZEROZEROBOUND}. 
%\ec

%%%%%%%%%%%%%%%%%%%%%%%%%%%%%%%%%%%%%%%%%%%

\noindent
{\bf Proof of~\eqref{E:GZEROZEROBOUND1}.}
From~\eqref{E:GZEROZERODEF} we have 
\begin{align*}
\left\vert \pa_r g^{00}\right\vert & \lesssim \ve |\pa_r w| |c[\phi]|  r^{2n-2} + \ve \left\vert \pa_r c[\phi]\right\vert  r^{2n-2}
+ \ve \left\vert c[\phi]\right\vert  r^{2n-3} \\
& \lesssim \ve \tau^{\delta-2+\frac2n}q_{-\gamma-1}\et  r^{2n-3} 
 = \ve \tau^{\delta-\frac1n} q_{-\gamma-1}\et \et^{2-\frac3n},
\end{align*}
where we have used~\eqref{E:PARTIALETACBOUND},~\eqref{E:CEQUIVALENCE}.

%%%%%%%%%%%%%%%%%%%%%%%%%%%%%%%%%%%%%%%%%%%

\noindent
{\bf Proof of~\eqref{E:GZEROZEROBOUND2}.}
Like above, we need to show $\left\vert \pa_\tau c[\phi]\right\vert  r^{2n-2} \lesssim \tau^{\delta-1}$.  Applying~\eqref{E:PARTIALTAUCBOUND}, it then follows
%\bcr
\begin{align*}
\left\vert \pa_\tau c[\phi]\right\vert  r^{2n-2} \lesssim \tau^{\delta-1}\et^{2-\frac2n}q_{-\gamma-1}\et
\lesssim \tau^{\delta-1}.
\end{align*} 
%\ec
%%%%%%%%%%%%%%%%%%%%%%%%%%%%%%%%%%%%%%%%%%%

\noindent
{\bf Proof of~\eqref{E:GZEROONEBOUND}.}
From~\eqref{E:GZEROONEDEF} we have 
\begin{align*}
\left\vert \pa_r g^{01}\right\vert & \lesssim \ve |\pa_r w| |c[\phi]|  r^{n-1} + \ve \left\vert \pa_r c[\phi]\right\vert  r^{n-1}
+ \ve \left\vert c[\phi]\right\vert  r^{n-2} \\
& \lesssim \ve \tau^{\delta-2+\frac2n}q_{-\gamma-1}\et  r^{n-2} 
 = \ve \tau^{\delta-1} q_{-\gamma-1}\et \et^{1-\frac2n},
\end{align*}
where we have used~\eqref{E:PARTIALETACBOUND},~\eqref{E:CEQUIVALENCE}. The bound for $\left\vert \frac{g^{01}}{r}\right\vert$ follows analogously.

%%%%%%%%%%%%%%%%%%%%%%%%%%%%%%%%%%%%%%%%%%%

\noindent
{\bf Proof of~\eqref{E:GZEROONEBOUND1}.}
It is clear that
%\bcr
\begin{align*}
\left\vert \frac{\pa_r\left(\frac{g^{01}}{g^{00}}w^\alpha r^2\right)}{w^\alpha r^2}\right\vert
& \lesssim  r^{-1}\left\vert \frac{g^{01}}{g^{00}w} \right\vert 
+ \left\vert \frac{\pa_r g^{01}}{g^{00}} \right\vert + \left\vert \frac{g^{01}\pa_r g^{00}}{(g^{00})^2} \right\vert \\
& \lesssim \left\vert  c[\phi]r^{n-2} \right\vert + \left\vert \pa_r g^{01} \right\vert 
+ \left\vert g^{01} \right\vert \left\vert \pa_r g^{00} \right\vert  \\
& \lesssim \ve \tau^{\delta-1} q_{-\gamma-1}\et \et^{1-\frac2n} 
+ \ve^2  r \tau^{\delta-1} q_{-\gamma-1}\et \et^{1-\frac2n}  \\
& \quad +  \tau^{\delta-\frac1n} q_{-\gamma-1}\et \et^{2-\frac3n} \\
& \lesssim \ve \tau^{\delta-1}
\end{align*}
where we have used~\eqref{E:GZEROZEROBOUND},~\eqref{E:GZEROONEBOUND},~\eqref{E:GZEROZEROBOUND1} and $g^{01}w^{-1} = -\ve \gamma c[\phi] \frac{M_g }{r}$, $M_g $ defined in~\eqref{E:BETADEF}. Note that a negative power of $w$ is fortunately cancelled away as one positive power of $w$ is contained in the definition of $g^{01}$.
%\ec

%%%%%%%%%%%%%%%%%%%%%%%%%%%%%%%%%%%%%%%%%%%

\noindent
{\bf Proof of~\eqref{E:DBOUND}.}
It clearly suffices to show $\pa_\tau\left(\frac{d(\tau,r)^2}{g^{00}}\right)\lesssim \ve\tau^{\delta-1}$.
Observe that $\pa_\tau\left(d^2\right) = \frac43 \left(\frac{\phia}{\phi_0}\right)^{-4} \pa_\tau\left(\frac{\phia}{\phi_0}\right)$. 
Since $\pa_\tau\left(\frac{\phia}{\phi_0}\right) = \sum_{j=1}^M\ve^j\pa_\tau \left(\frac{\phi_j}{\phi_0}\right)$, it follows that
%(\begin{color}{red} cite reference \end{color}) 
$\left\vert \pa_\tau\left(\frac{\phia}{\phi_0}\right)\right\vert \lesssim \ve \tau^{\delta-1}$. Therefore
$\left\vert\pa_\tau\left(d(\tau,r)^2\right)\right\vert\lesssim \ve\tau^{\delta-1}$. Together with~\eqref{E:GZEROZEROBOUND2} the claim follows.

%%%%%%%%%%%%%%%%%%%%%%%%%%%%%%%%%%%%%%%%%%%

\noindent
{\bf Proof of~\eqref{E:CTAUBOUND}.}
Observe the identity $\tau^{\gamma-\frac53}c[\phi_0] = g^{-2}\left(\tau+\frac23M_g \right)^{-\gamma-1}$. 
Taking a $\tau$-derivative we obtain 
\[
-(\gamma+1)g^{-2}\left(\tau+\frac23M_g \right)^{-\gamma-2} (1+\frac23 \r \log r)
=-(\gamma+1)g^{-2}\left(\tau+\frac23M_g \right)^{-\gamma-2} \frac{8\pi w^\alpha}{3G},
\]
where we have used~\eqref{E:IMPORTANTAPRIORI0}. Since $1\lesssim g, G \lesssim 1$, the claim follows.
\end{proof}

%%%%%%%%%%%%%%%%%%%%%%%%%%%%%%%%%%%%%%%%%%%
%%%%%%%%%%%%%%%%%%%%%%%%%%%%%%%%%%%%%%%%%%%

A corollary of Lemma~\ref{L:IMPORTANT} is the proof of equivalence between the norms and energies given respectively by Definitions~\ref{D:NORMDEF} and~\ref{D:ENERGYDEF}.

%%%%%%%%%%%%%%%%%%%%%%%%%%%%%%%%%%%%%%%%%%%

\begin{proposition}\label{P:EQUIVALENCE}
Let $H$ be a solution to~\eqref{E:H2} on a time interval $[\kappa,T]$ for some $T\le1$. We assume that the a priori bound~\eqref{E:APRIORI} are valid on $[\kappa,T]$ for some sufficiently small $\sigma'$. 
Then there exists a $\kappa$-independent constant $C>0$ such that
%\bcr
\begin{align}
\frac1C S_\kappa^N(\tau) \le \sup_{\kappa\le\tau'\le\tau} \mathscr E^N(\tau') + \int_\kappa^\tau \mathscr D^N(\tau')\,d\tau' 
\le C S_\kappa^N(\tau), \ \ \tau\in [\kappa,T].
\end{align}
%\ec
\end{proposition}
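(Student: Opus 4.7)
The plan is to establish the equivalence summand-by-summand: each of the three building blocks of $\mathscr E_j$ is pointwise equivalent (with constants independent of $\kappa$) to the corresponding sup-term in the definition of $S_\kappa^N$, and similarly each building block of $\mathscr D_j$ is pointwise equivalent to the corresponding integrand. Summing $j=0,\dots,N$ and then taking $\sup_{\kappa\le\tau'\le\tau}$ resp.\ integrating in $\tau'$ then produces the two-sided inequality. Throughout, the a priori bounds \eqref{E:APRIORI} are in force, so Lemmas 4.1--4.3 of Section~\ref{SS:APRIORI} are applicable with constants depending only on $\sigma'$, $m$, $\gamma$, $n$ (not on $\kappa$).

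\emph{Energy side.} The first summand of $\mathscr E_j$ is literally the first sup-term in $E^N$. For the second, I use \eqref{E:GZEROZEROBOUND} and the identity $d^2=m(m-1)-\tfrac{4\tau^2}{9\phia^3}$: since $\phia\approx\phi_0=\tau^{2/3}$ by the a priori bound \eqref{E:PHIBOUNDAPRIORI}, one has $\tfrac{4\tau^2}{9\phia^3}\approx\tfrac{4}{9}$, and then for $m\ge 5/2$ (which gives $m(m-1)\ge 15/4$) we get $d^2/g^{00}\approx 1$, yielding equivalence with the second sup-term. For the third summand, combine \eqref{E:CEQUIVALENCE}, which gives $c[\phi]\approx \tau^{2/3-2\gamma}q_{-\gamma-1}(r^n/\tau)$, with $g^{00}\approx 1$:
\[
\ve\gamma\tau^{\gamma-\frac{5}{3}}\,\frac{c[\phi]}{g^{00}}\,w\,w^{\alpha+j}\approx \ve\,\tau^{-\gamma-1}\,q_{-\frac{\gamma+1}{2}}\!\left(\tfrac{r^n}{\tau}\right)^{2}w^{\alpha+j+1},
\]
exactly matching the third sup-term under $\|\cdot\|_{\alpha+j+1}$.

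\emph{Dissipation side.} For the coefficient of $|\D_j H_\tau|^2$ in $\mathscr D_j$: the bracket $\frac{2m}{g^{00}}+\frac12(\frac53-\gamma)$ is bounded above and below by positive constants (for $m\ge 5/2$, using $g^{00}\approx 1$), so its first piece is $\approx\tau^{\gamma-8/3}$; the second piece is, by \eqref{E:GZEROONEBOUND1} (whose proof adapts from $w^\alpha$ to $w^{\alpha+j}$ with no change, because the apparent $\partial_r w/w$-singularity is removed by the $w$-factor inside $g^{01}$), bounded by $\ve\tau^{\gamma-\frac53+\delta-1}=\ve\tau^{\gamma-\frac83+\delta}\ll\tau^{\gamma-\frac83}$. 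Hence the total coefficient is equivalent to $\tau^{\gamma-8/3}$, matching the first integrand of $D^N$. For the third summand, apply \eqref{E:DBOUND} to get $-\partial_\tau(\frac{d^2}{g^{00}}\tau^{\gamma-11/3})\approx\tau^{\gamma-14/3}$, which matches the second integrand of $D^N$. The crucial middle summand uses \eqref{E:CTAUBOUND}:
\[
-\tfrac12\ve\gamma\bigl(\tau^{\gamma-\frac53}c[\phi_0]\bigr)_\tau\,\tfrac{c[\phi]}{c[\phi_0]g^{00}}\,w^{1+\alpha+j}\ \approx\ \ve\,w^\alpha(\tau+M_g)^{-\gamma-2}\,w^{1+\alpha+j}=\ve(\tau+M_g)^{-\gamma-2}w^{2\alpha+j+1},
\]
and the observation (using $g'<0$, (w3), and $\tau\le 1$) that $M_g=(1-\tau)(-r\partial_r\log g)$ satisfies $M_g\approx (1-\tau)r^n$, whence $\tau+M_g\approx \tau+r^n$ uniformly on $[\kappa,1]\times[0,1]$. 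Therefore $(\tau+M_g)^{-\gamma-2}\approx \tau^{-\gamma-2}q_{-\frac{\gamma+2}{2}}(r^n/\tau)^2$, producing exactly the third integrand in $D^N$ with the correct weight $\|\cdot\|_{2\alpha+j+1}$.

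\emph{Conclusion and main subtlety.} Summing the three equivalences over $j=0,\dots,N$, taking the supremum in $\tau'\in[\kappa,\tau]$ of the $\mathscr E^N$ piece and integrating the $\mathscr D^N$ piece yields the two-sided estimate. The only step requiring a little care beyond direct substitution is verifying (i) that the ``drift'' contribution from $\partial_r(g^{01}/g^{00}\cdot w^{\alpha+j}r^2)/(w^{\alpha+j}r^2)$ in the first term of $\mathscr D_j$ is genuinely negligible with respect to the leading $\frac{2m}{g^{00}}\tau^{\gamma-8/3}$ (requiring $\ve$ small and a small gain $\delta>0$ from Theorem~\ref{T:MAINBOUNDPHI}), and (ii) the two-sided bound $\tau+M_g\approx\tau+r^n$, which is needed to identify $(\tau+M_g)^{-\gamma-2}$ with $\tau^{-\gamma-2}q_{-\frac{\gamma+2}{2}}(r^n/\tau)^2$ and thus recognize the third integrand in $D^N$. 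No step requires $\kappa$ in the constants, so the equivalence constant $C$ is uniform in $\kappa\in(0,1]$.
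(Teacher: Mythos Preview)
Your proof is correct and follows essentially the same approach as the paper, which simply states that Proposition~\ref{P:EQUIVALENCE} is a corollary of Lemma~\ref{L:IMPORTANT} without spelling out the details. You have carefully carried out the term-by-term equivalence using exactly the bounds \eqref{E:GZEROZEROBOUND}, \eqref{E:GZEROONEBOUND1}, \eqref{E:DBOUND}, \eqref{E:CTAUBOUND}, and \eqref{E:CEQUIVALENCE}, including the two points you flag as subtle (the negligibility of the $g^{01}$-drift term and the equivalence $\tau+M_g\approx\tau+r^n$); both are handled correctly.
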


%%%%%%%%%%%%%%%%%%%%%%%%%%%%%%%%%%%%%%%%%%%%
%%%%%%%%%%%%%%%%%%%%%%%%%%%%%%%%%%%%%%%%%%%%

\subsubsection{Vector field classes $\mathcal P$ and $\bar{\mathcal P}$}

%%%%%%%%%%%%%%%%%%%%%%%%%%%%%%%%%%%%%%%%%%%%
%%%%%%%%%%%%%%%%%%%%%%%%%%%%%%%%%%%%%%%%%%%%

We now introduce a set of auxiliary, admissible vector fields associated with differential operators $\D_i$ and $\bar\D_i$ that allow us to circumvent coordinate singularities near the origin and to obtain high order estimates effectively. They are obtained by allowing $\frac{1}{r}$ in addition to $D_r$ whenever $D_r$ appears in the chains of $\D_i$ and $\bar\D_i$. In other words, 
\be
\mathcal P_{2j+2} :=\left\{  \prod_{k=1}^{j+1} \pa_r V_k : V_k\in \left\{D_r, \ \frac{1}{r}\right\} \right\}, \ \ \
\mathcal P_{2j+1} :=\left\{ V_{j+1} \prod_{k=1}^{j} \pa_r V_k : V_k\in \left\{D_r, \ \frac{1}{r}\right\} \right\}
\ee
for $j\geq0$ and set $\mathcal P_0=\{1\} $. Likewise, we define 
\be
\begin{split}
\bar{\mathcal P}_{2j+2} :=\left\{ W \pa_r  : W \in \mathcal P_{2j+1}\right\}, \quad 
\bar{\mathcal P}_{2j+1} :=\left\{ W \pa_r : W \in \mathcal P_{2j} \right\}
\end{split}
\ee
for $j\geq0$ and set $\bar{\mathcal P}_0=\{1\} $. The properties of $\mathcal P$ and $\bar{\mathcal P}$ are presented in detail in Appendix \ref{A:proofs}. 

In what follows, we derive the bounds of  $\bar{\mathcal P}$ of various quantities involving  $\phia$, $\phi$, $\phi+\DL\phi$ and so on that will be useful for the high-order energy estimates.

%%%%%%%%%%%%%%%%%%%%%%%%%%%%%%%%%%%%%%%%%%%
%%%%%%%%%%%%%%%%%%%%%%%%%%%%%%%%%%%%%%%%%%%
%%%%%%%%%%%%%%%%%%%%%%%%%%%%%%%%%%%%%%%%%%%

%%%%%%%%%%%%%%%%%%%%%%%%%%%%%%%%%%%%%%%%%%%
%%%%%%%%%%%%%%%%%%%%%%%%%%%%%%%%%%%%%%%%%%%

\subsection{Pointwise bounds on $\phia$}

%\bcg
Recall $\phia$ in \eqref{E:EXPANSIONINTRO}. 
%\ec

\begin{lemma}\label{L:BARDIQBOUNDS}
%For any $1\le i\le N$ we have 
%\bcg 
The following bounds hold true: %\ec
\begin{align}
\sum_{V\in \bar{\mathcal P}_i}\left\vert V \phia \right\vert & \lesssim   \ve r^{-i} \tau^{\frac{2}{3}+\delta} \pet, 
%\bcr 
 \ \ i=1,\dots,N, %\ec
\label{E:BARDIQ} \\
\sum_{V\in \bar{\mathcal P}_i}\left\vert V \DL\phia \right\vert & \lesssim  r^{n-i}\tau^{-\frac13} + \ve  r^{-i} \tau^{\frac{2}{3}+\delta} \pet \notag \\
&% \bcr 
\lesssim \tau^{\frac23}r^{-i} q_1\et \left(p_{1,0}\et + p_{\l,-\frac2n}\et\right),
 \ \ i=0,1,\dots,N. %\ec 
  \label{E:BARDIDQ}.
\end{align}
\end{lemma}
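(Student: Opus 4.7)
My plan is to decompose $\phia=\phi_0+\sum_{j=1}^M\ve^j\phi_j$ and exploit the fact that $\phi_0=\tau^{2/3}$ is $r$-independent. Every vector field $V\in\bar{\mathcal P}_i$ with $i\ge 1$ contains at least one factor acting in the $r$-direction ($\pa_r$, $D_r$, or $1/r$), so it annihilates any purely $\tau$-dependent function. Consequently $V\phia=\sum_{j=1}^M\ve^j V\phi_j$. I would then invoke the vector-field calculus of Appendix~\ref{A:proofs} to rewrite $V\phi_j$ as a weighted sum of terms of the form $r^{-i}(r\pa_r)^k\phi_j$ with $0\le k\le i$; the $r^{-i}$ weight arises from converting the flat-Euclidean operators $\pa_r$, $D_r$, $1/r$ into the polar operator $r\pa_r$, and is legitimate because each $\phi_j$ is smooth and even in $r$ near the origin (inherited from radial symmetry and the construction via $S_1,S_2$).

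Feeding this into Theorem~\ref{T:MAINBOUNDPHI}, which supplies $|(r\pa_r)^k\phi_j|\lesssim\tau^{\frac{2}{3}+j\delta}p_{\lambda,-\frac{2}{n}}(r^n/\tau)$, and summing $\ve^j$ for $j=1,\dots,M$, the dominant contribution is the $j=1$ term, giving exactly $|V\phia|\lesssim\ve r^{-i}\tau^{\frac{2}{3}+\delta}p_{\lambda,-\frac{2}{n}}(r^n/\tau)$, which is~\eqref{E:BARDIQ}.

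For~\eqref{E:BARDIDQ} I would split $\DL\phia=M_g\pa_\tau\phia+r\pa_r\phia$. The $\phi_0$ contribution reduces to $M_g\pa_\tau\phi_0=\tfrac{2}{3}M_g\tau^{-\frac{1}{3}}$, and since $M_g=(\tau-1)r\pa_r(\log g)$ satisfies $|(r\pa_r)^k M_g|\lesssim r^n$ for every $k$ by property (w3) and~\eqref{E:GTAYLOR2}, the application of $V\in\bar{\mathcal P}_i$ produces the term $r^{n-i}\tau^{-\frac{1}{3}}$ after the same vector-field reduction as above (with the negative powers of $r$ coming from $V$ partially cancelled by the $r^n$ coming from $M_g$). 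The $\phi_j$, $j\ge 1$, contributions to $\DL\phia$ are handled as before, using the Theorem~\ref{T:MAINBOUNDPHI} bounds for both $\pa_\tau\phi_j$ and $r\pa_r\phi_j$, both of which retain the gain $\tau^{j\delta}$ and can be absorbed into $\ve r^{-i}\tau^{\frac{2}{3}+\delta}p_{\lambda,-\frac{2}{n}}(r^n/\tau)$. The second inequality in~\eqref{E:BARDIDQ} is then a routine rearrangement based on the identity $q_1(x)\,p_{1,0}(x)=x$, which rewrites the first term as $r^{-i}\tau^{\frac{2}{3}}q_1(r^n/\tau)\,p_{1,0}(r^n/\tau)$, together with the trivial bound $1\lesssim q_1(r^n/\tau)$ to absorb the second term.

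The main technical obstacle is the vector-field reduction step: one must verify that the mixture of $D_r$ and $1/r$ in $\bar{\mathcal P}_i$ produces no spurious inverse powers of $r$ beyond $r^{-i}$. This is precisely the content of the product and chain rule lemmas in Appendix~\ref{A:proofs}, which rely on the radial-symmetry-induced smoothness of $\phi_j$ at the origin. Once those identities are in place, the rest is mechanical bookkeeping combined with the pointwise estimates of Theorem~\ref{T:MAINBOUNDPHI} and the flatness bound~\eqref{E:GTAYLOR2} for $\log g$.
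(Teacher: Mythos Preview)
Your proposal is correct and follows essentially the same route as the paper's proof: reduce $V\in\bar{\mathcal P}_i$ to $r^{-i}\sum_{k=1}^i\hat c_k(r\pa_r)^k$ via Lemma~\ref{L:SIMPLERELATIONS}(iv), note that the sum begins at $k=1$ so $\phi_0$ drops out, and then feed in the bounds from Theorem~\ref{T:MAINBOUNDPHI}; for~\eqref{E:BARDIDQ} you split $\DL\phia=M_g\pa_\tau\phia+r\pa_r\phia$ and use~\eqref{E:GTAYLOR2} exactly as the paper does. One minor point: the reduction identity is purely algebraic (Lemma~\ref{L:SIMPLERELATIONS}), so the smoothness/evenness of $\phi_j$ at the origin is not needed for the argument---the $r^{-i}$ factor simply appears on the right-hand side of the estimate and causes no harm.
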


%%%%%%%%%%%%%%%%%%%%%%%%%%%%%%%%%%%%%%%%%

\begin{proof} Let $V\in \bar{\mathcal P}_i$ be given. 
By Lemma~\ref{L:SIMPLERELATIONS} we have 
\begin{align}
\left\vert V \phia  \right\vert & \lesssim  r^{-i}\sum_{\ell=1}^i \left\vert\rr^\ell \phia \right\vert 
 \lesssim  r^{-i}\sum_{j=1}^M \sum_{\ell=1}^i \ve^j\left\vert \rr^\ell \phi_j\right\vert \notag \\
 & \lesssim  r^{-i} \sum_{j=1}^M \ve^j \tau^{\frac23+ j\delta} \pet 
\lesssim \ve r^{-i} \tau^{\frac{2}{3}+\delta} \pet
% \lesssim \tau^{\frac{2}{3}+\delta - \frac{N}{n}} \et^{-\frac{N}{n}} \pet \notag \\
% & =\tau^{\frac23+\delta^*} p_{\l,-\frac{N+2}{n}}\et \lesssim  \tau^{\frac23+\delta*},
\end{align}
where we have used Proposition~\ref{P:MAINBOUNDPHI} in the second line.
%Note that the very last inequality follows from the estimate $\l>\frac{N+2}{2}$.

Recall that $\DL\phia = M_g \pa_\tau \phia  + \r \phia$. By Lemma~\ref{L:SIMPLERELATIONS}, definition~\eqref{E:BETADEF} of $M_g $, and the property~\eqref{E:GTAYLOR2} we obtain
\begin{align}
\left\vert V \DL\phia \right\vert &\lesssim  r^{n-i}\sum_{j=0}^M\sum_{\ell=0}^i\left\vert \pa_\tau\rr^\ell \phi_j\right\vert +  r^{-i}\sum_{j=1}^M\sum_{\ell=2}^{i+1} \ve^j\left\vert \rr^\ell \phi_j\right\vert \notag \\
&\lesssim  r^{n-i}\tau^{-\frac13} +  \ve r^{-i} \tau^{\frac{2}{3}+\delta} \pet,
% = \tau^{\frac23-\frac Nn} \et^{1-\frac{N}{n}}+ \tau^{\frac23+\delta*} p_{\l,-\frac{N+2}{n}}\et \notag \\
%& 
\end{align}
where we have used the same argument as in the proof of~\eqref{E:BARDIQ} to obtain the second summand in the last bound above.
\end{proof}

%%%%%%%%%%%%%%%%%%%%%%%%%%%%%%%%%%%%%%%%%
%%%%%%%%%%%%%%%%%%%%%%%%%%%%%%%%%%%%%%%%%

A simple consequence of Lemma~\ref{L:BARDIQBOUNDS} is the following corollary.

\begin{corollary}\label{C:ROUGHBOUNDS}
%[Rough bounds on $\bar \D_i\phia$ and $\bar \D_i\DL\phia$]
%For any $1\le i\le N$ we have 
%\bcg 
The following bounds hold true: %\ec
\begin{align}
\sum_{V\in \bar{\mathcal P}_i}\left\vert V \phia \right\vert & \lesssim \ve \tau^{\frac23+\delta^\ast} p_{\l,-\frac{N+2}{n}}\et \lesssim  \ve \tau^{\frac23+\delta^\ast}, 
%\bcr 
\ \ i=1,\dots,N. %\ec   
 \label{E:BARDIQROUGH} \\
\sum_{V\in \bar{\mathcal P}_i}\left\vert V \DL\phia \right\vert & \lesssim \tau^{\frac23}  r^{-i} q_1\et,
%\bcr 
\ \ i=0,1,\dots,N, %\ec 
\label{E:BARDIDQROUGH}
\end{align}
where we recall that $\delta^\ast$ is given by~\eqref{E:DELTASTARDEF}.
\end{corollary}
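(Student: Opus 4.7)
The plan is to start from the bounds in Lemma~\ref{L:BARDIQBOUNDS} and rewrite the factor $r^{-i}$ in terms of $\frac{r^n}{\tau}$, then use the definition $\delta^\ast = \delta - \frac{N}{n}$ together with $\tau\le 1$ and $r\le 1$ to absorb the leftover powers.

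First I would establish the key algebraic identity
\[
r^{-i}\,p_{\l,-\frac{2}{n}}\et \;=\;\tau^{-\frac{i}{n}}\Bigl(\frac{r^n}{\tau}\Bigr)^{-\frac{i}{n}}
\frac{\et^{\l-\frac{2}{n}}}{(1+\et)^\l}
\;=\;\tau^{-\frac{i}{n}}\,p_{\l,-\frac{2+i}{n}}\et,
\]
and the ``conversion'' identity
\[
p_{\l,-\frac{2+i}{n}}\et \;=\;\Bigl(\frac{r^n}{\tau}\Bigr)^{\frac{N-i}{n}}p_{\l,-\frac{N+2}{n}}\et
\;=\;\tau^{-\frac{N-i}{n}}\,r^{N-i}\,p_{\l,-\frac{N+2}{n}}\et.
\]
Plugging these into~\eqref{E:BARDIQ} gives, for $i\in\{1,\dots,N\}$,
\[
\ve\, r^{-i}\tau^{\frac{2}{3}+\delta}p_{\l,-\frac{2}{n}}\et
\;=\;\ve\,\tau^{\frac{2}{3}+\delta-\frac{N}{n}}\,r^{N-i}\,p_{\l,-\frac{N+2}{n}}\et
\;\le\;\ve\,\tau^{\frac{2}{3}+\delta^\ast}\,p_{\l,-\frac{N+2}{n}}\et,
\]
where in the last step I use $r\le 1$ and $\delta-\frac{N}{n}=\delta^\ast$. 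This is the first asserted inequality. For the second, note that by Definition~\ref{D:ALAMBDADEF} we have $\l=\frac{2N}{n}$, hence $\l-\frac{N+2}{n}=\frac{N-2}{n}>0$ (as $N\ge 6$ by~\eqref{E:NDEF}); the function $x\mapsto p_{\l,-\frac{N+2}{n}}(x)=x^{(N-2)/n}(1+x)^{-\l}$ is then bounded on $[0,\infty)$ (it vanishes at $0$ and decays like $x^{-(N+2)/n}$ at infinity), giving $p_{\l,-\frac{N+2}{n}}\et\lesssim 1$.

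For~\eqref{E:BARDIDQROUGH} I would bound each of the two terms in~\eqref{E:BARDIDQ} separately. The first term rewrites as
\[
r^{n-i}\tau^{-\frac{1}{3}} \;=\; r^{-i}\,\tau^{\frac{2}{3}}\,\frac{r^n}{\tau}
\;\le\; r^{-i}\,\tau^{\frac{2}{3}}\,q_1\et,
\]
since $\frac{r^n}{\tau}\le 1+\frac{r^n}{\tau}=q_1\et$. For the second term,
\[
\ve\, r^{-i}\tau^{\frac{2}{3}+\delta}p_{\l,-\frac{2}{n}}\et
\;\le\; r^{-i}\,\tau^{\frac{2}{3}}\,q_1\et,
\]
using $\ve,\tau\le 1$, $\delta>0$, $p_{\l,-\frac{2}{n}}\et\lesssim 1$ (bounded by the same reasoning as above since $\l>\frac{2}{n}$), and $q_1\et\ge 1$. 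Adding the two contributions yields the claim.

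The only subtlety worth flagging is the conversion step for the first bound: one must carefully split the extra $\tau^{-N/n}$ factor into $\tau^{-N/n}=\tau^{-(N-i)/n}\tau^{-i/n}$ so as to match the negative $r$-power $r^{-i}$ against $r^{N-i}\cdot r^{-N}$, which works precisely because $i\le N$. Everything else is a direct application of Lemma~\ref{L:BARDIQBOUNDS} combined with the trivial monotonicity $\tau\le 1$, $r\le 1$.
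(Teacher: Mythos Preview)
Your proof is correct and is essentially the argument the paper has in mind: the paper does not give any details beyond calling the corollary ``a simple consequence of Lemma~\ref{L:BARDIQBOUNDS}'', and your derivation---rewriting $r^{-i}$ as $\tau^{-i/n}\et^{-i/n}$, using $\delta^\ast=\delta-\tfrac{N}{n}$, $r\le 1$, and the boundedness of $p_{\l,-\frac{N+2}{n}}$ and $p_{\l,-\frac{2}{n}}$---is exactly how one fills in those details. For~\eqref{E:BARDIDQROUGH} the paper in fact already records the intermediate bound $\tau^{2/3}r^{-i}q_1\et\bigl(p_{1,0}\et+p_{\l,-2/n}\et\bigr)$ in~\eqref{E:BARDIDQ}, from which the result follows immediately since both $p$-factors are bounded; your route from the first line of~\eqref{E:BARDIDQ} is equivalent.
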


\begin{lemma}\label{L:DIQBOUNDS} For any $1\le i\le N$ we have 
\begin{align}
\sum_{V\in \bar{\mathcal P}_i}\left\vert V  \DL^2 \phia \right\vert &\lesssim \tau^{\frac23}   r^{-i} 
%\bcr 
q_2\et \left(p_{1,0}\et + p_{\l,-\frac2n}\et\right) 
%\ec 
\label{E:D2QBOUND}\\
\sum_{V\in \bar{\mathcal P}_i}\left\vert V \DL\left(3\phia^2+2\phia D \phia  \right)\right\vert & \lesssim \tau^{\frac43}  r^{-i} %\bcr 
q_2\et \left(p_{1,0}\et + p_{\l,-\frac2n}\et\right) %\ec  
\label{E:QBOUND2}\\
\sum_{V\in \bar{\mathcal P}_i}\left\vert V \DL\J[\phia]\right\vert & \lesssim \tau^2 r^{-i} 
%\bcr 
q_2\et \left(p_{1,0}\et + p_{\l,-\frac2n}\et\right) %\ec 
\label{E:QBOUND3}\\
\sum_{V\in \bar{\mathcal P}_i}\left\vert V \left(\J[\phia]^a\right)\right\vert & \lesssim \tau^{2a} r^{-i} q_a\et \label{E:JQPOWERABOUND}\\
\sum_{V\in \bar{\mathcal P}_i}\left\vert V \left(\frac{ \DL\J[\phia]}{\J[\phia]^{\gamma+2}}\right)\right\vert & \lesssim \tau^{-2\gamma-2} r^{-i} %\bcr 
q_{-\gamma}\et \left(p_{1,0}\et + p_{\l,-\frac2n}\et\right) %\ec 
\label{E:DJQBOUND}\\
\sum_{V\in \bar{\mathcal P}_i}\left\vert V \left(\frac{\phia \DL\phia }{\J[\phia]^{\gamma+1}}\right)\right\vert &\lesssim \tau^{-2\gamma-\frac23}   r^{-i} %\bcr  
q_{-\gamma}\et \left(p_{1,0}\et + p_{\l,-\frac2n}\et\right)% \ec 
\\
\sum_{V\in \bar{\mathcal P}_i}\left\vert V \left(\frac{\DL\phia }{\J[\phia]^{\gamma+1}}\right)\right\vert &\lesssim \tau^{-2\gamma-\frac43}   r^{-i} % \bcr 
q_{-\gamma}\et \left(p_{1,0}\et + p_{\l,-\frac2n}\et\right).  %\ec 
\label{E:DQJBOUND} \\
\sum_{V\in \bar{\mathcal P}_i}\left\vert V \left(\frac{P[\phia]}{\phia}\right)\right\vert &\lesssim 
%\bcr 
\tau^{\frac23-2\gamma-\frac{i+2}n} q_{-\gamma+1}\et \left(\tau p_{1,-\frac{i+2}{n}}\et +  p_{\l,-\frac{i+4}{n}}\et \right) \notag 
%\ec
 \\ 
& % \bcr 
\lesssim \tau^{\frac23-2\gamma-\frac{i+2}n} 
%\ec
\label{E:PQBOUND} 
\end{align}
\end{lemma}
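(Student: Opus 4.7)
The plan is to prove each of the seven bounds as a consequence of Lemma~\ref{L:BARDIQBOUNDS}, Corollary~\ref{C:ROUGHBOUNDS}, and Theorem~\ref{T:MAINBOUNDPHI}, combined with a systematic application of the product rule and chain rule for vector fields in $\bar{\mathcal P}_i$ developed in Appendix~\ref{A:proofs}. The common pattern is that every factor of $V\in\bar{\mathcal P}_i$ either (i) lands on $\phia$, producing a small $\ve\tau^{2/3+\delta}$-factor via~\eqref{E:BARDIQROUGH}, or (ii) lands on $\DL\phia$, producing a $\tau^{2/3}q_1\et$-factor via~\eqref{E:BARDIDQROUGH}, or (iii) lands on a negative power of $\J[\phia]$ and is reduced to (i)--(ii) via Faa Di Bruno. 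The combinatorics is controlled by the additivity of the weights $q_\nu$ and $p_{\mu,\nu}$.

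First I would prove~\eqref{E:D2QBOUND}. Writing $\DL^2\phia = \DL(M_g\pa_\tau\phia + \r\phia)$ and distributing, every term is either of the form $\DL^{(\cdot)}(\log g)\cdot \rr^a\pa_\tau^b\phi_j$ or $\rr^a\pa_\tau^b\phi_j$ itself; then~\eqref{E:GTAYLOR2} supplies the extra $r^n$-factor and Theorem~\ref{T:MAINBOUNDPHI} provides the weighted bound, with the gain of one power of $q_1\et$ relative to $\DL\phia$ explaining the jump from $q_1$ in~\eqref{E:BARDIDQ} to $q_2$ in~\eqref{E:D2QBOUND}. Estimates~\eqref{E:QBOUND2} and~\eqref{E:QBOUND3} then follow by applying the Leibniz rule in $\bar{\mathcal P}_i$ to the polynomial expressions $3\phia^2+2\phia\DL\phia$ and $\J[\phia]=\phia^2(\phia+\DL\phia)$, using~\eqref{E:BARDIQROUGH},~\eqref{E:BARDIDQROUGH}, and~\eqref{E:D2QBOUND}; the additivity property of $q_\nu$ collapses the many product terms into a single $q_2\et$-weight. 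For~\eqref{E:JQPOWERABOUND}, I would invoke the chain rule in $\bar{\mathcal P}_i$ (Appendix~\ref{A:proofs}) applied to $f(x)=x^a$ composed with $\J[\phia]$; using $\J[\phia]\approx\tau^2 q_1\et$ and the Faa Di Bruno-type expansion, the sum over partitions telescopes to $\tau^{2a}r^{-i}q_a\et$.

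Bounds~\eqref{E:DJQBOUND}--\eqref{E:DQJBOUND} are now routine: I write each expression as a product of previously-estimated factors (e.g. $\DL\J[\phia]\cdot\J[\phia]^{-\gamma-2}$), apply the Leibniz rule in $\bar{\mathcal P}_i$, and combine~\eqref{E:BARDIQ},~\eqref{E:BARDIDQ},~\eqref{E:QBOUND3}, and~\eqref{E:JQPOWERABOUND}. The exponents of $\tau$ add correctly because the $\DL$-derivative increases the $\tau$-power by $-1$ relative to the function differentiated, while the $q_\nu$-exponents combine additively thanks to $q_{\nu_1}\cdot q_{\nu_2}=q_{\nu_1+\nu_2}$.

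The hard part is~\eqref{E:PQBOUND} for $V(P[\phia]/\phia)$. Recalling~\eqref{E:PRESSURETERM}, we have
\[
\frac{P[\phia]}{\phia} = \frac{\phia}{g^2(r)w^\alpha r^2}\DL\bigl(w^{1+\alpha}\J[\phia]^{-\gamma}\bigr) = \frac{\phia}{g^2 r^2}\Bigl((1+\alpha)w^{-\alpha}(rw')\J[\phia]^{-\gamma} + w\,\DL(\J[\phia]^{-\gamma})\Bigr),
\]
so the factor of $w$ prefactor protects the estimate from a negative power of $w$ at the vacuum boundary, while the prefactor $r^{-2}$ combined with the $r^{-i}$ coming from each $V\in\bar{\mathcal P}_i$ produces the total $r^{-(i+2)}$-loss. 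To translate this into a uniform bound in the stated form, I would trade $r^{-(i+2)}$ for positive powers of $r^n/\tau$ using the identity $r^{-1}=\tau^{-1/n}(r^n/\tau)^{-1/n}$; this explains the exponent $\tau^{2/3-2\gamma-(i+2)/n}$. The two summands inside the parenthesis of~\eqref{E:PQBOUND} encode the two contributions in the display above: $\tau\cdot p_{1,-(i+2)/n}\et$ comes from the $rw'\cdot \J[\phia]^{-\gamma}$-term (which is pointwise $O(r^n)$, hence carries the extra $\tau\cdot(r^n/\tau)$), while $p_{\l,-(i+4)/n}\et$ comes from the $w\,\DL\J[\phia]^{-\gamma}$-term, whose derivative structure is governed by~\eqref{E:DJQBOUND}. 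The obstacle is the careful exponent bookkeeping and the use of Lemma~\ref{L:NUMERICAL} / Definition~\ref{D:ALAMBDADEF} to guarantee that the resulting weight is dominated by the stated $p_{\mu,\nu}$; the bound $p_{\mu,\nu}\le 1$ then yields the second inequality in~\eqref{E:PQBOUND}.
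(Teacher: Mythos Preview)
Your approach is correct and essentially the same as the paper's: expand $\DL^2$ explicitly, then derive \eqref{E:QBOUND2}--\eqref{E:QBOUND3} from the product rule and \eqref{E:D2QBOUND}, obtain \eqref{E:JQPOWERABOUND} via the chain rule in $\bar{\mathcal P}_i$, combine these for \eqref{E:DJQBOUND}--\eqref{E:DQJBOUND}, and treat \eqref{E:PQBOUND} via the decomposition of $P[\phia]/\phia$ into the $w'$-term and the $w\DL\J[\phia]$-term, followed by the substitution $r^{-(i+2)}=\tau^{-(i+2)/n}\et^{-(i+2)/n}$. One minor correction: in your displayed formula for $P[\phia]/\phia$ the factor $w^{-\alpha}$ should not appear (it cancels against the $w^\alpha$ from differentiating $w^{1+\alpha}$), and the final boundedness $q_{-\gamma+1}\cdot p_{\cdot,\cdot}\lesssim 1$ uses only $\gamma>1$, not Lemma~\ref{L:NUMERICAL}.
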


\begin{proof}
{\bf Proof of~\eqref{E:D2QBOUND}.}
By a simple calculation $ \DL^2=M_g ^2\pa_{\tau\tau} + 2 r M_g \pa_{ r\tau}+ M_g \pa_\tau M_g  \pa_\tau+\r M_g  \pa_\tau + \rr^2$.
By the product rule $V(M_g ^2\pa_{\tau\tau}\phia )$ can be written as a linear combination of expression of the form
\begin{align*}
A (M_g ^2) \ B \pa_{\tau\tau}\phia , \ \ A\in\bar{\mathcal P}_k, \ \ B\in\bar{\mathcal P}_{i-k}, \ \ 0\le k\le i.
\end{align*}
Any such expression is bounded by $ r^{2n-i} \tau^{\frac23-2} = \tau^{\frac23} \et^2  r^{-i}$. A similar argument shows that
$\left\vert V ( r M_g \pa_{ r\tau}\phia )\right\vert \lesssim \ve \tau^{\frac23+\delta}\frac{ r^n}{\tau}\pet  r^{-i}$, $\left\vert V (M_g \pa_\tau M_g  \pa_\tau \phia )\right\vert \lesssim \tau^{\frac23}\frac{ r^{2n}}{\tau}  r^{-i}$,
$\left\vert V (\r M_g  \pa_\tau \phia )\right\vert \lesssim \tau^{\frac23}\frac{ r^n}{\tau}  r^{-i}$, 
$\left\vert V (\rr^2\phia )\right\vert \lesssim \ve \tau^{\frac23+\delta} \pet  r^{-i}$.
Summing the above bounds we obtain~\eqref{E:D2QBOUND}.

\noindent
{\bf Proof of~\eqref{E:QBOUND2}.}
Note that $\DL\left(3\phia^2+2\phia D \phia  \right)= 6\phia \DL\phia +2(\DL\phia)^2+2\phia  \DL^2\phia$. Using the product rule, bounds~\eqref{E:BARDIQ},~\eqref{E:BARDIDQ}, and~\eqref{E:D2QBOUND} we obtain~\eqref{E:QBOUND2}.

\noindent
{\bf Proof of~\eqref{E:QBOUND3}.}
The proof is similar to~\eqref{E:QBOUND2}. %\bcr 
From~\eqref{E:JFORMULA} we have %\ec 
$ \DL\J[\phia]=3\phia^2\DL\phia +2\phia(\DL\phia)^2+\phia^2  \DL^2\phia$. Now the statement follows from the product rule and bounds~\eqref{E:BARDIQ},~\eqref{E:BARDIDQ}, and~\eqref{E:D2QBOUND}.

\noindent
{\bf Proof of~\eqref{E:JQPOWERABOUND}.}
We must use the chain rule. We note that $V (\J[\phia]^a)$ can be expressed as a linear combination of expressions of the form
\[
\J[\phia]^a \left(\prod_{j=1}^{j_m}\frac{W_j\J[\phia]}{\J[\phia]}\right)_{W_j\in\bar{\mathcal P}_{i_j},\ i_1+\dots+i_{j_m}=i}.
\]
We may use~\eqref{E:BARDIQ} %\bcr 
and ~\eqref{E:BARDIDQROUGH} %\ec 
to conclude that $\left\vert W\J[\phia]\right\vert\lesssim \tau^2 q_1\et  r^{-j}$ for any $W\in\bar{\mathcal P}_j$.
Since $\tau^2 q_1\et \lesssim \J[\phia] \lesssim \tau^2q_1\et$, we can bound the above expression by $\tau^{2a}q_a\et  r^{-i}$.

\noindent
{\bf Proof of~\eqref{E:DJQBOUND}-\eqref{E:DQJBOUND}.}  The proof follows by the product rule~\eqref{E:product} and \eqref{E:QBOUND3}, \eqref{E:JQPOWERABOUND}, \eqref{E:BARDIQ}, \eqref{E:BARDIDQ}.

\noindent
{\bf Proof of~\eqref{E:PQBOUND}.}
Recalling~\eqref{E:PRESSURETERM} it is easy to check that
\begin{align}\label{E:PQFORMULA}
\frac{P[\phia]}{\phia} = (1+\alpha) \frac{w'}{g^2 r} \phia\J[\phia]^{-\gamma} - \gamma\frac{w}{g^2 r^2} \phia \J[\phia]^{-\gamma-1} \DL\J[\phia].
\end{align}
We now apply the product rule~\eqref{E:product} and bounds~\eqref{E:JQPOWERABOUND},~\eqref{E:QBOUND3},~\eqref{E:BARDIQ} %\bcr 
and the estimate $|w'|\lesssim r^{n-1}$ to conclude 
\begin{align*}
\left\vert V \left(\frac{P[\phia]}{\phia}\right)\right\vert &\lesssim 
\tau^{\frac53 -2\gamma}\frac{r^n}{\tau} r^{-(i+2)}q_{-\gamma}\et
+ \tau^{\frac23-2\gamma}r^{-(i+2)} q_{-\gamma+1}\et \left(p_{1,0}\et + p_{\l,-\frac2n}\et\right) \\
& \lesssim \tau^{\frac23-2\gamma}r^{-(i+2)}  q_{-\gamma+1}\et \left( \tau p_{1,0}\et + p_{1,0}\et + p_{\l,-\frac2n}\et\right) \\
& \lesssim \tau^{\frac23-2\gamma}r^{-(i+2)}  q_{-\gamma+1}\et \left(p_{1,0}\et + p_{\l,-\frac2n}\et\right) 
\end{align*}
since $\tau\le1$. Replacing $r^{-(i+2)}$ by $\tau^{-\frac{i+2}{n}} \et^{-\frac{i+2}{n}}$ above, we obtain the claim, where in particular we use $\gamma>1$.
%\ec
\end{proof}

\subsection{Preparatory bounds}

%%%%%%%%%%%%%%%%%%%%%%%%%%%%%%%%%%%%%
%%%%%%%%%%%%%%%%%%%%%%%%%%%%%%%%%%%%%

Recall $\phi= \phia + \tau^m \frac{H}{r}$.

\begin{lemma}\label{L:Diphi} %\bcg 
For any $1\leq i\leq N$, we have %\ec
\begin{align}
| \bar\D_i \phi | &\lesssim | \bar\D_i \phia | + \tau^m\big|  \bar\D_i \left( \frac{H}{r}\right) \big| \label{E:BARDIPHIBOUND1} \\
|\bar\D_i (\phi+ \DL\phi )| &\lesssim
 | \bar\D_i (\phia + \DL\phia)| \notag \\
& \ \ \ \ + \tau^m \left(  \left| \frac{M_g }{r} \D_{i} \pa_\tau H \right|+ \sum_{1\leq k\leq i\atop B\in {\bar{\mathcal P}}_{i-k}}  \left| \pa_r^k(M_g ) B (\frac{\pa_\tau H}{r}) \right| + |\D_{i+1} H| + \left| \bar\D_i (\frac{H}{r})\right| \right) \notag \\
& \ \ \ \   + %\bcv 
\tau^{m-1}\left(    \left| \frac{M_g }{r} \D_{i}  H \right|+  \sum_{1\leq k\leq i\atop B\in {\bar{\mathcal P}}_{i-k}}   \left| \pa_r^k(M_g ) B (\frac{H}{r}) \right| \right)% \ec
\label{E:BARDIPHIBOUND2}
\end{align}
\end{lemma}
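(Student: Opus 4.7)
\medskip\noindent\textbf{Proof plan.} The approach is to substitute $\phi=\phia+\tau^m H/r$ directly into both left-hand sides, apply $\bar{\mathcal D}_i$, and use the Leibniz and chain rules for the vector-field class $\bar{\mathcal P}_i$ developed in Appendix~\ref{A:proofs}. Since every element of $\bar{\mathcal P}_i$ is purely spatial, $\bar{\mathcal D}_i$ commutes with multiplication by $\tau^m$; the first inequality is then immediate from $\bar{\mathcal D}_i\phi=\bar{\mathcal D}_i\phia+\tau^m\bar{\mathcal D}_i(H/r)$ and the triangle inequality.

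For the second inequality, we expand using $\DL=M_g\pa_\tau+r\pa_r$ and $r\pa_r(H/r)=\pa_r H-H/r$:
\[
\phi+\DL\phi=\phia+\DL\phia+\frac{\tau^m H}{r}+m\tau^{m-1}\frac{M_g}{r}H+\tau^m\frac{M_g}{r}\pa_\tau H+\tau^m\!\left(\pa_r H-\frac{H}{r}\right).
\]
Apply $\bar{\mathcal D}_i$ term by term. The contributions from $\phia+\DL\phia$ and $\tau^m H/r$ yield the $|\bar{\mathcal D}_i(\phia+\DL\phia)|$ and $\tau^m|\bar{\mathcal D}_i(H/r)|$ slots on the right-hand side. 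For the two $M_g$-terms we invoke the product rule in $\bar{\mathcal P}_i$ to distribute $\bar{\mathcal D}_i$, indexing by the number $k\in\{0,\dots,i\}$ of $r$-derivatives landing on $M_g$: for $k\geq 1$ we retain $\pa_r^k(M_g)$ and the remaining $i-k$ derivatives collapse into some $B\in\bar{\mathcal P}_{i-k}$ acting on $\pa_\tau H/r$ or $H/r$, reproducing the explicit summations in~\eqref{E:BARDIPHIBOUND2}; for $k=0$ we pull out the smooth coefficient $M_g/r=(\tau-1)\pa_r\log g$ (recall $M_g=(\tau-1)r\pa_r\log g$). The residual $\tau^m(\pa_r H-H/r)$ piece is treated similarly, using the linearity of $\bar{\mathcal D}_i$.

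The one nontrivial algebraic step is converting the $k=0$ expressions, which naturally contain $\bar{\mathcal D}_i\pa_\tau H$ or $\bar{\mathcal D}_i\pa_r H$, into the claimed forms $\mathcal D_i\pa_\tau H$, $\mathcal D_i H$, $\mathcal D_{i+1}H$. Unfolding $\bar{\mathcal D}_i=\mathcal D_{i-1}\pa_r$ and using the elementary commutator $[D_r,\pa_r]u=-2u/r^2$ gives, by a short induction on $i$, identities of the form
\[
\bar{\mathcal D}_i\pa_\tau H=\mathcal D_i\pa_\tau H+R_i(\pa_\tau H/r),\qquad\bar{\mathcal D}_i\pa_r H=\mathcal D_{i+1}H+\widetilde R_i(H/r),
\]
with $R_i,\widetilde R_i$ linear combinations of operators from $\bigcup_{0\leq j\leq i-1}\bar{\mathcal P}_j$. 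The remainders are absorbed into the summation slots of~\eqref{E:BARDIPHIBOUND2}, using the flatness assumption~\eqref{E:FM} to compare $M_g/r$ with $\pa_r^k(M_g)$ (both of size $\sim r^{n-1}$ up to the $(\tau-1)$ factor), and into the $|\bar{\mathcal D}_i(H/r)|$ slot. The main anticipated obstacle is purely combinatorial: verifying that no term produced by the $\bar{\mathcal P}_i$ Leibniz expansion falls outside the explicit list on the right-hand side of~\eqref{E:BARDIPHIBOUND2}. Once the product and commutator identities from Appendix~\ref{A:proofs} are in hand this reduces to routine bookkeeping.
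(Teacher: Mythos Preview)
Your plan is essentially the paper's own argument: expand $\phi=\phia+\tau^m H/r$, note that $\bar\D_i$ is purely spatial so $\tau^m$ pulls out, and for the second bound apply the $\bar{\mathcal P}_i$-product rule to the $M_g$-terms and then convert the $k=0$ piece into the $\D_i$-form. The one place where your sketch is imprecise is the $k=0$ step. After the product rule on $M_g\cdot(\pa_\tau H/r)$ the $k=0$ term is $M_g\,\bar\D_i(\pa_\tau H/r)$, \emph{not} $(M_g/r)\,\bar\D_i\pa_\tau H$; your identity $\bar\D_i X=\D_i X+R_i(X/r)$ therefore does not apply directly. The paper resolves this in one line by writing $M_g\,\bar\D_i(X/r)=\tfrac{M_g}{r}\cdot r\bar\D_i(X/r)$ and invoking the ready-made identity~\eqref{XtoXover}, which gives $r\bar\D_i(X/r)=\D_iX+\text{const}\cdot\bar\D_{i-1}(X/r)$; the $\bar\D_{i-1}(X/r)$ remainder lands in the explicit summation slot (it is a $B\in\bar{\mathcal P}_{i-1}$ term after absorbing $\tfrac{M_g}{r}\sim\pa_r M_g$ via~\eqref{E:FM}). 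Likewise for $\bar\D_i\pa_rH$ the paper simply writes $\pa_r=D_r-\tfrac{2}{r}$, giving $\bar\D_i\pa_rH=\D_{i+1}H-2\bar\D_i(H/r)$; note the remainder is $-2\bar\D_i$, which lives in $\bar{\mathcal P}_i$ (not $\bigcup_{j\le i-1}\bar{\mathcal P}_j$ as you wrote), but this is harmless since $|\bar\D_i(H/r)|$ sits explicitly on the right of~\eqref{E:BARDIPHIBOUND2}.

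In short: correct approach, same as the paper; just replace your commutator-induction sketch by a direct appeal to~\eqref{XtoXover}, which is exactly the relation you need and is already proved.
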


%%%%%%%%%%%%%%%%%%%%%%%%%%%%%%%%%%%%%

\begin{proof}
Bound~\eqref{E:BARDIPHIBOUND1} follows directly follows from 
\begin{align*}
\bar\D_i \phi = \bar\D_i (\phia + \tau^m \frac{H}{r})= \bar\D_i \phia  + \tau^m \bar\D_i \left( \frac{H}{r}\right).
\end{align*}

Further
\begin{align}
\bar\D_i (\phi+ \DL\phi )&= \bar\D_i (\phia + \DL\phia) + \bar\D_i (1+M_g \pa_\tau + \r) ( \tau^m \frac{H}{r}) \notag \\
&=\bar\D_i (\phia + \DL\phia) +\tau^m  \underbrace{\bar\D_i (M_g  \frac{\pa_\tau H}{r}) }_{(\ast)} + %\bcr 
m %\ec
\tau^{m-1} \underbrace{\bar\D_i ( M_g  \frac{H}{r})}_{(\ast\ast) } + \tau^m\left( \D_{i+1} H - 2 \bar\D_i ( \frac{H}{r}) 
\right),
\end{align}
%\bcr 
where we have used the identities $r\pa_r\left(\frac Hr\right) = \pa_r H - \frac{H}{r}$ and $\bar\D_i \pa_r H
= \bar\D_i \left(D_r H-\frac2r H\right) = \D_{i+1}H - 2\bar D_i \left(\frac Hr\right)$.
%\ec
For $(\ast)$, we first note that 
\be
(\ast) =M_g  \bar\D_i (\frac{\pa_\tau H}{r}) + \sum_{1\leq k\leq i\atop A\in {\bar{\mathcal P}}_k, B\in {\bar{\mathcal P}}_{i-k}} c_k^{iAB} A(M_g ) B( \frac{\pa_\tau H}{r}) 
\ee
For the first term, we use \eqref{XtoXover} to rewrite 
\be
M_g  \bar\D_i (\frac{\pa_\tau H}{r}) =
\begin{cases}
 \frac{M_g }{r} \left( \D_i \pa_\tau H - (i-1) \bar \D_{i-1}(\frac{\pa_\tau H}{r}) \right) &\text{ if } i \text{ is even}\\
  \frac{M_g }{r} \left( \D_i \pa_\tau H - (i+1) \bar \D_{i-1}(\frac{\pa_\tau H}{r}) \right) &\text{ if } i \text{ is odd}
\end{cases}
\ee
Therefore we deduce that 
\be
|(\ast)| \lesssim  \left| \frac{M_g }{r} \D_{i} \pa_\tau H \right|+ \sum_{1\leq k\leq i\atop B\in {\bar{\mathcal P}}_{i-k}}  \left| \pa_r^k(M_g ) B (\frac{\pa_\tau H}{r}) \right|
\ee
It is easy to see that 
\be
|(\ast\ast)| \lesssim  % \bcv   
  \left| \frac{M_g }{r} \D_{i}  H \right|+  \sum_{1\leq k\leq i\atop B\in {\bar{\mathcal P}}_{i-k}}   \left| \pa_r^k(M_g ) B (\frac{H}{r}) \right| % \ec
\ee
Putting together the above bounds we obtain~\eqref{E:BARDIPHIBOUND2}.
\end{proof}

The same conclusions hold in Lemma \ref{L:Diphi} when we replace $\bar\D_i$ by any $V\in\bar{ \mathcal P}_i$.

%%%%%%%%%%%%%%%%%%%%%%%%%%%%%%%%%%%%%
%%%%%%%%%%%%%%%%%%%%%%%%%%%%%%%%%%%%%

%%%%%%%%%%%%%%%%%%%%%%%%%%%%%%%%%%%%%%%%%%%%%%%%%%
%%%%%%%%%%%%%%%%%%%%%%%%%%%%%%%%%%%%%%%%%%%%%%%%%%

\begin{lemma}[High-order $\phi$-bounds]
The following $L^\infty$-bounds hold:
\begin{align}
\sum_{V\in\bar{\mathcal P}_j}\left\| \frac{V \phi}{\phi} \right\|_\infty & \lesssim \ve \tau^{\delta^\ast} + \tau^{m-\frac23+\frac12(\frac{11}{3}-\gamma)} (E^N)^{\frac12} \  \text{ for } \ 1\leq  j \leq 2 \label{LinftyV1}\\
\sum_{V\in\bar{\mathcal P}_j}\left\| w^{j -2} \frac{V \phi}{\phi} \right\|_\infty & \lesssim \ve \tau^{\delta^\ast} + \tau^{m-\frac23+\frac12(\frac{11}{3}-\gamma)} (E^N)^{\frac12} \  \text{ for } \ 2\leq  j \leq N-3 \label{LinftyV2}\\
\sum_{V\in\bar{\mathcal P}_j}\left\|  r w^{j-2} \frac{V \phi}{\phi} \right\|_\infty & \lesssim \ve \tau^{\delta^\ast} + \tau^{m-\frac23+\frac12(\frac{11}{3}-\gamma)} (E^N)^{\frac12} \  \text{ for } \  j = N-2\label{LinftyV3}
\end{align}
The following $L^2$-bounds hold:
\begin{align}
\sum_{V\in\bar{\mathcal P}_j}\left\| \frac{V \phi}{\phi}  \right\|_{\alpha+2j +2-N} &\lesssim \ve \tau^{\delta^\ast} + \tau^{m-\frac23+\frac{1}{2}(\frac{11}{3}-\gamma)} (E^N)^{\frac12}  \  \text{ for } \ \frac{N-\alpha-2}{2}\leq  j \leq N-1 \label{L2V1}\\
\ve^\frac12 \sum_{V\in\bar{\mathcal P}_N}\left\|  \frac{V \phi}{\phi}  \right\|_{\alpha+N+1} &  \lesssim  \ve^\frac32 \tau^{\delta^\ast} + \tau^{m-\frac23} (E^N)^{\frac12}  \label{L2V2}
\end{align}
\end{lemma}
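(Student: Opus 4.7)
\medskip
\noindent\textbf{Proof proposal.}
The starting point is the decomposition $\phi = \phia + \tau^{m}\frac{H}{r}$, which gives
\[
\frac{V\phi}{\phi} \;=\; \frac{1}{\phi}\,V\phia \;+\; \frac{\tau^{m}}{\phi}\,V\!\left(\frac{H}{r}\right).
\]
By~\eqref{E:PHIBOUNDAPRIORI} we have $\phi \approx \phi_{0} = \tau^{2/3}$, so the first summand is controlled by Corollary~\ref{C:ROUGHBOUNDS}: for any $V\in\bar{\mathcal P}_{j}$,
\[
\left|\frac{V\phia}{\phi}\right| \lesssim \tau^{-2/3}\,\ve\,\tau^{2/3+\delta^{\ast}} = \ve\,\tau^{\delta^{\ast}},
\]
which furnishes the first term on the right-hand side of each of the six inequalities uniformly in $j$. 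Thus the entire task reduces to bounding $\tau^{m-2/3}\,V(H/r)$ in the various norms on the left in terms of $(E^{N})^{1/2}$.

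The key reduction is to pass from $V \in \bar{\mathcal P}_{j}$ acting on $H/r$ to the operators $\mathcal D_{k}$ (which appear in the energy norm) acting on $H$. Using the relations developed in Appendix~\ref{A:proofs} — in particular the identity $\bar{\mathcal P}_{j}(H/r)$ can be expanded as a linear combination of terms of the form $r^{-a}\mathcal D_{b}H$ with $a+b = j$, possibly up to lower-order remainders coming from the $(r\partial_{r})$ versus $D_{r}$ commutators — we reduce the proof to estimating the model quantities $r^{-a}\mathcal D_{b}H$ in the norms on the left. At this step the definition of $E^{N}$ gives the identity $\|\mathcal D_{b}H\|_{\alpha+b}\lesssim \tau^{(11/3-\gamma)/2}(E^{N})^{1/2}$, which yields the correct $\tau$-weight $\tau^{m-2/3+(11/3-\gamma)/2}$ on the right-hand side of~\eqref{LinftyV1}--\eqref{L2V1}. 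The improved weight $\tau^{m-2/3}$ in~\eqref{L2V2} instead uses the energy term $\ve \tau^{-\gamma-1}\|q_{-(\gamma+1)/2}\mathcal D_{N+1}H\|_{\alpha+N+1}^{2}$, giving $\ve^{1/2}\|\mathcal D_{N+1}H\|_{\alpha+N+1}\lesssim \tau^{(\gamma+1)/2}(E^{N})^{1/2}$, which combined with the $\phi^{-1}\approx \tau^{-2/3}$ factor accounts for the missing powers and absorbs the $q_{-(\gamma+1)/2}$ weight via $q_{-(\gamma+1)/2}\et\gtrsim 1$ on $\{r^{n}/\tau\lesssim 1\}$.

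The $L^{2}$ estimates~\eqref{L2V1}--\eqref{L2V2} follow directly from this scheme: the weights $w^{\alpha+2j+2-N}$ and $w^{\alpha+N+1}$ are arranged precisely so that after distributing the $j$ differentiations one recovers the energy weight $w^{\alpha+b}$ for each appearing $\mathcal D_{b}H$, while any leftover negative powers of $r$ are absorbed using the Hardy inequality (Appendix~\ref{A:HSembedding}) against available powers of $w$ and the symmetric $r^{2}\,dr$ measure. The $L^{\infty}$ bounds~\eqref{LinftyV1}--\eqref{LinftyV3} are where the main difficulty lies and require the full strength of the Hardy–Sobolev embeddings of Appendix~\ref{A:HSembedding}: one spends roughly $\lceil 3/2\rceil +1$ extra spatial derivatives to upgrade an $L^{2}$ bound on $\mathcal D_{b}H$ with weight $w^{\alpha+b}$ to an $L^{\infty}$ bound, and the powers $w^{0}$, $w^{j-2}$, $r\,w^{j-2}$ are exactly those produced by the embedding. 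The main obstacle is bookkeeping: for $j = N-2$ one runs out of spare derivatives and an additional factor of $r$ must be extracted from $V(H/r)$ — this is why~\eqref{LinftyV3} carries the prefactor $r\,w^{N-4}$ — and similarly the threshold $j\geq \frac{N-\alpha-2}{2}$ in~\eqref{L2V1} is precisely the range in which the Hardy inequality, iterated enough times, closes without requiring a further $L^{\infty}$ upgrade. Once these embedding bookkeeping issues are settled, each of the six inequalities follows by summing the $\ve\tau^{\delta^{\ast}}$ contribution from $\phia$ and the $(E^{N})^{1/2}$ contribution from the $H$-remainder as described.
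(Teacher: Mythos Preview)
Your overall strategy matches the paper's proof exactly: decompose $\phi=\phia+\tau^{m}H/r$, bound the $\phia$-contribution pointwise by $\ve\tau^{\delta^\ast}$ via Corollary~\ref{C:ROUGHBOUNDS}, and control the $H$-contribution through the Hardy--Sobolev embeddings of Appendix~\ref{A:HSembedding}. The paper simply cites the ready-made inequalities \eqref{Linfty11}, \eqref{Linfty21}, \eqref{Linfty31} for the $L^\infty$ bounds and \eqref{L21}, \eqref{L22} for the $L^2$ bounds, whereas you sketch their mechanism; but the route is the same.

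There is, however, a genuine gap in your handling of~\eqref{L2V2}. You assert that the energy gives
\[
\ve^{1/2}\|\mathcal D_{N+1}H\|_{\alpha+N+1}\lesssim \tau^{(\gamma+1)/2}(E^N)^{1/2},
\]
but the top-order energy term only controls $\ve^{1/2}\bigl\|q_{-(\gamma+1)/2}\bigl(\tfrac{r^n}{\tau}\bigr)\,\mathcal D_{N+1}H\bigr\|_{\alpha+N+1}\lesssim \tau^{(\gamma+1)/2}(E^N)^{1/2}$ --- the weight $q_{-(\gamma+1)/2}$ is still there. Your proposed removal, namely $q_{-(\gamma+1)/2}\gtrsim 1$ on $\{r^n/\tau\lesssim 1\}$, covers only part of the domain; on $\{r^n/\tau\gg1\}$ the weight is genuinely small and you say nothing. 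The paper's fix is global and avoids any splitting: since $\tau^{-\gamma-1}q_{-\gamma-1}\bigl(\tfrac{r^n}{\tau}\bigr)\approx(\tau+\tfrac23 M_g)^{-\gamma-1}$ and $\tau+\tfrac23 M_g\lesssim 1$ on all of $[0,1]\times[0,1]$, one has $(\tau+\tfrac23 M_g)^{-\gamma-1}\gtrsim 1$ everywhere, hence
\[
\ve\int_0^1 w^{\alpha+N+1}|\mathcal D_{N+1}H|^2\,r^2dr \;\lesssim\; \ve\int_0^1 \frac{w^{\alpha+N+1}}{(\tau+\tfrac23 M_g)^{\gamma+1}}|\mathcal D_{N+1}H|^2\,r^2dr \;\lesssim\; E^N.
\]
This yields $\ve^{1/2}\|\mathcal D_{N+1}H\|_{\alpha+N+1}\lesssim (E^N)^{1/2}$ with no residual $\tau$-power, and~\eqref{L2V2} then follows from $\tau^{m-2/3}\|\bar\D_N(H/r)\|_{\alpha+N+1}$ after one more application of~\eqref{E:TOPORDERBOUND} (or its unweighted analogue) to pass from $\bar\D_N(H/r)$ to $\mathcal D_{N+1}H$.
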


%%%%%%%%%%%%%%%%%%%%%%%%%%%%%%%%%%%%%%%%%%%%%%%%%%

\begin{proof}
%Recall $k_0=\left[ \frac{N-[\alpha] -2}{2} \right]$. 
Note that from \eqref{E:BARDIPHIBOUND1} and \eqref{E:BARDIQROUGH} 
\be
\left| \frac{V_j \phi}{\phi}\right| \lesssim \ve \tau^{\delta^\ast} + \tau^{m-\frac23} |V_j (\frac{H}{r}) | 
\ee
Therefore from \eqref{Linfty11}, \eqref{Linfty21} and \eqref{Linfty31} we deduce~\eqref{LinftyV1}--\eqref{LinftyV3}.
Bounds~\eqref{L2V1}--\eqref{L2V2} follow from \eqref{L21} and \eqref{L22}, where 
we use the bound 
\begin{align*}
\ve \int  w^{\alpha+2k+1-N}   |\D_{k+1} H |^2  r^2 d r  \lesssim \ve \int \frac{ w^{\alpha+2k+1-N}}{(\tau+\frac23M_g )^{1+\gamma}}  
|\D_{k+1} H |^2  r^2 d r  \lesssim E^N 
\end{align*}
\end{proof}

%%%%%%%%%%%%%%%%%%%%%%%%%%%%%%%%%%%%%%%%%%%%%%%%%%
%%%%%%%%%%%%%%%%%%%%%%%%%%%%%%%%%%%%%%%%%%%%%%%%%%

\begin{lemma}[High-order $\phi+\DL\phi$-bounds]
The following $L^\infty$-bounds hold:
\begin{align}
\sum_{W\in\bar{\mathcal P}_j}\left\|  \frac{ W (\phi+\DL\phi ) }{\phi+ \DL\phi}  \right\|_\infty & \lesssim  \tau^{-\frac{j}{n}} \left(1+ \tau^{m-\frac23+\frac12(\frac{5}{3}-\gamma)+1} (E^N)^{\frac12}\right) \  \text{ for } \  j =1  
\label{LinftyW1}\\
\sum_{W\in\bar{\mathcal P}_j} \left\| w^{j -1} \frac{ W (\phi+\DL\phi ) }{\phi+ \DL\phi}  \right\|_\infty & \lesssim \tau^{-\frac{j}{n}} \left(1+ \tau^{m-\frac23+\frac12(\frac{5}{3}-\gamma)+1} (E^N)^{\frac12}\right) \  \text{ for } \ 2\leq  j \leq N-3\label{LinftyW2} \\
\sum_{W\in\bar{\mathcal P}_j}\left\|  r w^{j-1} \frac{ W (\phi+\DL\phi ) }{\phi+ \DL\phi}  \right\|_\infty & \lesssim \tau^{-\frac{j}{n}} \left(1+ \tau^{m-\frac23+\frac12(\frac{5}{3}-\gamma)+1} (E^N)^{\frac12}\right) \  \text{ for } \  j= N-2\label{LinftyW3}
\end{align}
The following $L^2$-bounds hold:
\begin{align}
\sum_{W\in\bar{\mathcal P}_j}\left\| \frac{ W (\phi+\DL\phi ) }{\phi+ \DL\phi}  \right\|_{\alpha+2j +2-N} &\lesssim  \tau^{-\frac{j}{n}} \left(1+ \tau^{m-\frac23+\frac12(\frac{5}{3}-\gamma)+1} (E^N)^{\frac12}\right)  \  \text{ for } \ \frac{N-\alpha-2}{2}\leq  j \leq N-1 \label{L2W1}\\
\ve^\frac12\sum_{W\in\bar{\mathcal P}_N} \left\|  \frac{ W (\phi+\DL\phi ) }{\phi+ \DL\phi}  \right\|_{\alpha+N+1} &  \lesssim  \ve^\frac12  \tau^{-\frac{N}{n}} + \tau^{m-\frac23} (E^N)^{\frac12} \label{L2W2}
\end{align}
\end{lemma}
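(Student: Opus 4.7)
The plan is to mirror the preceding argument for $V\phi/\phi$, adjusting for the presence of the convective derivative. First, I apply \eqref{E:BARDIPHIBOUND2} to decompose $W(\phi+\DL\phi)$, $W\in\bar{\mathcal P}_j$, into the background piece $W(\phia+\DL\phia)$ plus a remainder linear in $H$ and $H_\tau$, including the extra $M_g$-weighted contributions that are absent in the $V\phi$ calculation. The denominator $|\phi+\DL\phi|$ is estimated from below by $\tau^{2/3}q_1(r^n/\tau)$, combining the dust identity \eqref{E:PHIZERODPHIZEROFORMULA} with the a priori Jacobian bound \eqref{E:JPHIBOUNDAPRIORI}.

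For the background contribution, \eqref{E:BARDIDQ} gives
\[
|V(\phia+\DL\phia)| \lesssim \tau^{\frac{2}{3}}\, r^{-j}\, q_1(r^n/\tau) \left[p_{1,0}(r^n/\tau)+p_{\lambda,-2/n}(r^n/\tau)\right].
\]
The $q_1$ factors cancel with the denominator, leaving $r^{-j}(p_{1,0}+p_{\lambda,-2/n})$. A two-region argument then yields $\tau^{-j/n}$: when $r^n\ge\tau$, $r^{-j}\le\tau^{-j/n}$ directly; when $r^n\le\tau$, the identity $r^{-j}p_{1,0}(r^n/\tau)=r^{n-j}/(\tau+r^n)\le r^{n-j}/\tau$ combined with $r^{n-j}\le\tau^{(n-j)/n}$ again gives $\tau^{-j/n}$, provided $n\ge j$, which holds by our choice of $n$. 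The $p_{\lambda,-2/n}$ piece is strictly better.

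For the $H$-dependent part of \eqref{E:BARDIPHIBOUND2}, I combine $|M_g|\lesssim r^n$ with the same region-splitting to obtain $|M_g/r|/q_1(r^n/\tau)\lesssim \tau^{1-1/n}$. This cancellation is precisely what produces the extra power of $\tau$ relative to the $V\phi/\phi$ bounds and delivers the target time exponent $m-\frac{2}{3}+\frac12(\frac{5}{3}-\gamma)+1$ after multiplication by the $\tau$-weights from $E^N$. The spatial derivatives $\D_i H_\tau$, $\D_i H$, $\D_{i+1}H$ and $\bar\D_i(H/r)$ arising in the expansion are then controlled by $(E^N)^{1/2}$ with the correct $\tau$-scaling via the Hardy--Sobolev embeddings \eqref{Linfty11}--\eqref{Linfty31} in $L^\infty$ and \eqref{L21}--\eqref{L22} in $L^2$, exactly as in the proof of the preceding lemma. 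The top-order $L^2$ bound \eqref{L2W2} carries the $\ve^{1/2}$ prefactor because $\D_{N+1}H$ is controlled only through the $\ve$-weighted top-order term in $E^N$.

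The main obstacle is the simultaneous bookkeeping of three interacting mechanisms: the $q_1(r^n/\tau)$ cancellation between numerator and denominator, the conversion $r^{-j}\to\tau^{-j/n}$ via region-splitting near the center (requiring $n$ large relative to $N$), and the vacuum weight $w^{j-1}$ near $r=1$ that dictates the case splits $j=1$, $2\le j\le N-3$, $j=N-2$ in the $L^\infty$ statements and the range $\frac{N-\alpha-2}{2}\le j\le N-1$ in the $L^2$ statements. All three are already present in the proof of the preceding lemma; the novelty here is verifying that the additional $M_g$-terms produced by \eqref{E:BARDIPHIBOUND2} do not destroy any of these cancellations, which is exactly what the bound $|M_g/r|/q_1\lesssim \tau^{1-1/n}$ ensures.
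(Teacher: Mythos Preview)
Your proposal is correct and follows essentially the same route as the paper: decompose via \eqref{E:BARDIPHIBOUND2}, bound the background piece by \eqref{E:BARDIDQ}, divide by $|\phi+\DL\phi|\approx\tau^{2/3}q_1(r^n/\tau)$ via \eqref{E:PHIZERODPHIZEROFORMULA} and \eqref{E:JPHIBOUNDAPRIORI}, and close with the embeddings \eqref{Linfty11}--\eqref{Linfty31} and \eqref{L21}--\eqref{L22}. The paper compresses all of this into a single pointwise inequality and a one-line citation of the embeddings; your region-splitting argument for $r^{-j}(p_{1,0}+p_{\lambda,-2/n})\lesssim\tau^{-j/n}$ and your verification of $|M_g/r|\,q_1^{-1}\lesssim\tau^{1-1/n}$ simply make explicit the mechanisms the paper leaves implicit. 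One minor expository point: the exponent $m-\tfrac23+\tfrac12(\tfrac53-\gamma)+1$ equals $m-\tfrac23+\tfrac12(\tfrac{11}3-\gamma)$, so it is not ``extra'' relative to the $V\phi/\phi$ lemma; the $M_g/q_1$ cancellation is what ensures the $H_\tau$-terms (carrying only $\tau^{\frac12(5/3-\gamma)}$ from $E^N$) do not \emph{lose} a power of $\tau$ compared to the $\D_{i+1}H$-terms.
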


%%%%%%%%%%%%%%%%%%%%%%%%%%%%%%%%%%%%%%%%%%%%%%%%%%

\begin{proof}
From \eqref{E:BARDIPHIBOUND2} and %\bcr 
\eqref{E:BARDIDQ}, %\ec 
we note that 
\be
\left| \frac{ W (\phi+\DL\phi ) }{\phi+ \DL\phi}  \right| \lesssim  r^{-j}%\bcr 
\left( p_{1,0}(\frac{ r^n}{\tau}) +\pet \right)%\ec 
+ \tau^{m-\frac23} q_{-1}(\frac{ r^n}{\tau}) 
\left| W \left( \frac{H}{r} + \DL(\frac{H}{r}) \right)\right|
\ee 
Therefore, bounds~\eqref{LinftyW1}--\eqref{LinftyW3} follow from~\eqref{Linfty11}--\eqref{Linfty31}. Bounds~\eqref{L2W1}--\eqref{L2W2} follow 
from~\eqref{L21} and~\eqref{L22} respectively.
\end{proof}

%%%%%%%%%%%%%%%%%%%%%%%%%%%%%%%%%%%%%%%%%%%%%%%%%%
%%%%%%%%%%%%%%%%%%%%%%%%%%%%%%%%%%%%%%%%%%%%%%%%%%

Finally, the key collection of a priori bounds is provided by the following lemma, and will be used repeatedly 
in our energy estimates in Section~\ref{S:ENERGYESTIMATES}.

\begin{lemma}\label{L:PHILEMMA}
Let $a,b,c\in\mathbb R$, $b<0$, $c\le-b$, be given.
For any $i\in\{0,1\}$ we have
\begin{align}\label{E:PHILEMMA1}
\sum_{V\in\bar{\mathcal P}_i}|V\left(\phi^a\J[\phi]^b\right)| \lesssim \tau^{\frac23a+2b-\frac in}q_b\et.
\end{align}
If $2\le i\le N-1$ then
\begin{align}\label{E:PHILEMMA2}
\sum_{V\in\bar{\mathcal P}_i}\left\| q_c\et  V\left(\phi^a\J[\phi]^b\right)\right\|_{\alpha-N+2i+2} \lesssim \tau^{\frac23a+2b-\frac in}(1+(E^N)^{\frac12}).
\end{align}
If $2\le i\le N-3$ then
\begin{align}\label{E:PHILEMMA3}
\sum_{V\in\bar{\mathcal P}_i}\left\|w^{i} q_c\et V\left(\phi^a\J[\phi]^b\right)\right\|_{\infty} \lesssim \tau^{\frac23a+2b-\frac in}(1+(E^N)^{\frac12}).
\end{align}
Finally, if $i=N$ we have
\begin{align}\label{E:PHILEMMA4}
\sqrt\ve \sum_{V\in\bar{\mathcal P}_N}\left\| q_c\et V\left(\phi^a\J[\phi]^b\right)\right\|_{\alpha+N+2} \lesssim \tau^{\frac23a+2b-\frac Nn}(1+(E^N)^{\frac12}).
\end{align}
\end{lemma}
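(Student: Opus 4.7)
The plan is to reduce \eqref{E:PHILEMMA1}--\eqref{E:PHILEMMA4} to the previously established high-order bounds on $V\phi/\phi$ (\eqref{LinftyV1}--\eqref{L2V2}) and on $V(\phi+\DL\phi)/(\phi+\DL\phi)$ (\eqref{LinftyW1}--\eqref{L2W2}) via the chain- and product-rule calculus for the vector field class $\bar{\mathcal P}$ developed in Appendix~\ref{A:proofs}. Writing $\phi^a\J[\phi]^b = \phi^{a+2b}(\phi+\DL\phi)^b$ and iterating the chain/product rule, for any $V\in\bar{\mathcal P}_i$ one obtains a representation
\[
V\!\left(\phi^a\J[\phi]^b\right) \;=\; \phi^a\,\J[\phi]^b \sum_{\pi} c_\pi \prod_{B\in \pi^{(1)}} \frac{V_B\phi}{\phi} \prod_{B'\in \pi^{(2)}} \frac{W_{B'}(\phi+\DL\phi)}{\phi+\DL\phi},
\]
where the sum ranges over decompositions of $i$ into blocks with $V_B\in\bar{\mathcal P}_{|B|}$, $W_{B'}\in\bar{\mathcal P}_{|B'|}$, $\sum_B|B|+\sum_{B'}|B'|=i$, and the combinatorial constants $c_\pi$ are universally bounded. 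For the prefactor, the a priori bounds \eqref{E:PHIBOUNDAPRIORI}--\eqref{E:JPHIBOUNDAPRIORI} together with \eqref{E:PHIZERODPHIZEROFORMULA} give $|\phi^a\J[\phi]^b|\lesssim \tau^{\frac{2a}{3}+2b}\,q_b\et$; since $c\le -b$ implies $q_c\et\,q_b\et = q_{b+c}\et \le 1$, the $q_c$ weight in the statements is absorbed.

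This settles \eqref{E:PHILEMMA1} for $i=0$ immediately, while for $i=1$ one applies \eqref{LinftyV1} and \eqref{LinftyW1} to the single partition factor: the $V(\phi+\DL\phi)/(\phi+\DL\phi)$ term supplies precisely the $\tau^{-1/n}$ loss, and $V\phi/\phi$ is bounded. For \eqref{E:PHILEMMA2}--\eqref{E:PHILEMMA4} with $i\ge 2$, we distribute norms across each partition term by placing the single factor of highest index into $L^2$ (via \eqref{L2V1}, \eqref{L2V2}, \eqref{L2W1}, or \eqref{L2W2}) and the remaining factors into $L^\infty$ (via \eqref{LinftyV2}, \eqref{LinftyV3}, \eqref{LinftyW2}, or \eqref{LinftyW3}). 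Each factor of index $k$ of $(\phi+\DL\phi)$-type contributes a $\tau^{-k/n}$ loss, while $\phi$-type factors contribute only the harmless $\ve\tau^{\delta^\ast}+(E^N)^{1/2}\tau^{\cdots}$ piece; summing $\tau^{-k/n}$ over all factors and using $\sum k = i$ yields the required $\tau^{-i/n}$. The borderline cases where the highest index is $N-2$ or $N$ are treated with the $r$-weighted pointwise variants \eqref{LinftyV3}, \eqref{LinftyW3} and the top-order $L^2$ bounds \eqref{L2V2}, \eqref{L2W2}; the $\sqrt\ve$ prefactor in \eqref{E:PHILEMMA4} matches the $\sqrt\ve$ on the right-hand side of the latter exactly. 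The $L^\infty$ version \eqref{E:PHILEMMA3} follows in the same fashion with $w^i$ playing the role of the $L^2$ weight and three $L^\infty$ bounds combined.

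The principal technical point is the $w$-weight bookkeeping. An $L^\infty$ bound on a $\phi$-factor of index $j$ costs $w^{-(j-2)}$ and on a $(\phi+\DL\phi)$-factor of index $j$ costs $w^{-(j-1)}$, while the $L^2$ factor of index $j_0$ carries weight $w^{\alpha-N+2j_0+2}$. One must verify, partition by partition, that the sum of these contributions fits inside the target weight $w^{\alpha-N+2i+2}$ of \eqref{E:PHILEMMA2}. A direct count using $j_0 + \sum_{k\ge 1} j_k = i$ shows that the surplus equals $2K_1+K_2\ge 0$ (where $K_1,K_2$ count $\phi$- and $(\phi+\DL\phi)$-type $L^\infty$ factors), which is absorbed harmlessly by $\|w\|_\infty\lesssim 1$. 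The analogous count for \eqref{E:PHILEMMA3} and the absorption of the extra $r^{-1}$ coming from the borderline bounds \eqref{LinftyV3}, \eqref{LinftyW3} (using the $r^2$ measure and the structure of the surviving factors) is the step that requires the most care, but is otherwise routine once the expansion is in hand.
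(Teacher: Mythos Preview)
Your approach matches the paper's proof: the same chain/product-rule expansion of $\phi^{a+2b}(\phi+\DL\phi)^b$ into products of $V\phi/\phi$ and $W(\phi+\DL\phi)/(\phi+\DL\phi)$ factors, the same prefactor bound via $q_c\,q_b\le 1$, the same strategy of placing the top-index factor in $L^2$ with the remaining factors in $L^\infty$, and essentially the same $w$-weight count (your surplus $2K_1+K_2\ge 0$ is in fact a cleaner restatement of the paper's bookkeeping). One small correction: the $r$-weighted variants \eqref{LinftyV3}, \eqref{LinftyW3} are never actually needed in this proof, because whenever the maximal index satisfies $k_*\ge N-2$ the remaining indices sum to at most $i-k_*\le 2$, so every non-maximal factor automatically has index $\le N-3$ and \eqref{LinftyV1}--\eqref{LinftyV2}, \eqref{LinftyW1}--\eqref{LinftyW2} already suffice.
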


%%%%%%%%%%%%%%%%%%%%%%%%%%%%%%%%%%%%%%%%%%%%%%%%%%

\begin{proof}
By definition of $\J[\phi]$ we have
\be\label{E:PHIAB}
\phi^a\J[\phi]^{b} = \phi^{a+2b} (\phi + \DL\phi)^{b}.
\ee 
Applying the product and the chain rule, for any $V\in \bar{\mathcal P}_i $, $i\geq1$, 
$V\left(\phi^a\J[\phi]^{b} \right) $ can be written as a linear combination of 
\be\label{E:LC}
\phi^{a+2b} (\phi + \DL\phi)^{b}  \left( \prod_{j=1}^{j_m} \frac{ V_j \phi }{\phi }   \right)_{V_j\in {\bar{\mathcal P}}_{i_j}, i_1+...+i_{j_m}=i-p}
\left( \prod_{\ell =1}^{\ell_m} \frac{ W_\ell (\phi+\DL\phi ) }{\phi+ \DL\phi} \right)_{W_\ell\in {\bar{\mathcal P}}_{a_\ell}, a_1+...+a_{\ell_m}=p}
\ee
where %\bcr 
$0\leq p\leq i$. %\ec  
In order to estimate $V_j\phi$ and $W_j (\phi+\DL\phi )$, it suffices to estimate $\bar\D_i \phi$ and $\bar\D_i (\phi + \DL\phi)$. 

Let $k_* = \max\{i_j, a_\ell\}$ in~\eqref{E:LC}.
Without loss of generality, we may assume that indices appearing in \eqref{E:LC} are non-decreasing: $i_1\leq...\leq i_{j_m}$ and $a_1\leq...\leq a_{\ell_m}$. Then $k_\ast=\max\{i_{j_m}, a_{\ell_m}\}$.

\noindent
{\em Proof of~\eqref{E:PHILEMMA1}.}
Bound~\eqref{E:PHILEMMA1} is obvious from~\eqref{E:PHIAB} if $i=0$.
If $i=1$ then the claim follows from~\eqref{LinftyV1} and~\eqref{LinftyW1}.

\noindent
{\em Proof of~\eqref{E:PHILEMMA2}.}
If $k_\ast=1$, by using \eqref{LinftyV1} and \eqref{LinftyW1}, the expression in \eqref{E:LC} is bounded by 
\[
%\tau^{1-\gamma}(\tau+\frac{2}{3}M_g )^{-(\gamma+1)} 
\tau^{\frac23(a+3b)}q_{b}\et\left( \ve \tau^{\delta^\ast} + \tau^{m-\frac23+\frac12(\frac{11}{3}-\gamma)} (E^N)^{\frac12}  \right)^{k-p}  \left( \tau^{-\frac{1}{n}} + \tau^{m-\frac23+\frac12(\frac{5}{3}-\gamma)+1-\frac{1}{n}} (E^N)^{\frac12} \right)^p
\]
and therefore, the worst bound occurs at $p=k$ and the last line is bounded by 
\be
 \tau^{\frac23a+b -\frac{k}{n} + m} q_b\et(1+(E^N)^{\frac12}),
\ee
%\bcr 
where we note that that $\|w^{\alpha-N+2i+2}\|_{L^\infty} \lesssim 1$ since $i\ge2$ and $N= \lfloor \alpha \rfloor +6$. %\ec
Suppose that $2\leq k_\ast \leq N-1$.  

We first consider  $k_\ast=a_{\ell_m}\geq i_{j_m}$. Let $j_{m_0}+1$ be the first index for which $i_{j_{m_0}+1}\geq 2 $ so that   $i_j=1$ for $j\leq j_{m_0}$. In this case, we rearrange the $w$-weight in~\eqref{E:LC} as follows:
\begin{align}
&\big\vert
w^{\frac{\alpha-N+2i+2}{2}}\phi^{a+2b} (\phi + \DL\phi)^{b}  \left( \prod_{j=1}^{j_m} \frac{ V_j \phi }{\phi } \right)_{V_j\in {\bar{\mathcal P}}_{i_j}, i_1+...+i_{j_m}=i-p}
\left( \prod_{\ell =1}^{\ell_m} \frac{ W_\ell (\phi+\DL\phi ) }{\phi+ \DL\phi} \right)_{W_\ell\in {\bar{\mathcal P}}_{a_\ell}, a_1+...+a_{\ell_m}=p}
\big\vert 
\notag \\
&=\big\vert
\phi^{a+2b} (\phi + \DL\phi)^{b}
\prod_{j=1}^{j_{m_0}} \left( \frac{ V_j \phi }{\phi }  \right) \prod_{j=j_{m_0} +1}^{j_m} \left( w^{i_j -2}\frac{ V_j \phi }{\phi }  \right)
  \prod_{\ell =1}^{\ell_{m-1}} \left( w^{a_j-1} \frac{ W_\ell (\phi+\DL\phi ) }{\phi+ \DL\phi} \right) \notag \\
&  w^{i-a_{\ell_m} -\sum_{j=j_{m_0}+1}^{j_m}(i_j-2)-\sum_{\ell=1}^{\ell_{m-1}}(a_\ell-1) } w^{\frac{\alpha + 2a_{\ell_m} + 2 -N}{2}} \frac{ W_{\ell_m} (\phi+\DL\phi ) }{\phi+ \DL\phi} \label{E:PROOF1}
\big\vert
\end{align}
%\bcr 
The goal is to estimate the last term $w^{\frac{\alpha + 2a_{\ell_m} + 2 -N}{2}} \frac{ W_{\ell_m} (\phi+\DL\phi ) }{\phi+ \DL\phi}$ in $L^2$-norm and all the remaining ones in $L^\infty$. %\ec
Note that  
%\bcr
\begin{align*}
&  \sum_{j=j_{m_{0}}+1}^{j_{m}}(i_{j}-2)+\sum_{l=1}^{l_{m-1}}(a_{l}-1)\\
&  =i-p-2\{j_{m}-j_{m_{0}}\}+p-a_{l_{m}}-\{l_{m-1}\}\\
&  =i-a_{l_{m}}-2\{j_{m}-j_{m_{0}}\}-l_{m-1}\\
&  \leq i-a_{l_{m}}.
\end{align*}
%\ec
Therefore, the exponent of the first $w$ in the second line is non-negative and therefore, that factor is bounded. 
 Now all $i_j$'s and $a_\ell$'s except $a_{\ell_m}$ cannot be bigger than $N-3$, otherwise, it would contradict the definition of $k_\ast$. Thus we can apply \eqref{LinftyV2} and \eqref{LinftyW2} to the first line above. Moreover, since $2\leq a_{\ell_m}\leq N-1$, we can apply the weighted $L^2$-embedding~\eqref{L2W1} to the $W_{\ell_m}$ term in the second line of the right-hand side of~\eqref{E:PROOF1}. 
By~\eqref{E:PHILEMMA1} 
$\|q_c\et \phi^{a+2b} (\phi + \DL\phi)^{b} \|_{L^\infty} \le \tau^{\frac23 a+b}$ since $b+c\le0$ by our assumptions. This gives the bound
\begin{align}
\|w^{\frac{\alpha-N+2i+2}{2}} q_c\et V\left(\phi^a\J[\phi]^b\right)\|_{L^2} \lesssim \tau^{\frac23a+2b-\frac in}(1+(E^N)^{\frac12}), \ \ V\in \bar{\mathcal P}_i.
\end{align}

The case $k_\ast=i_{j_m} > a_\ell$ can be treated in the same fashion where we use~\eqref{L2V1} instead of~\eqref{L2W1}. 

\noindent
{\em Proof of~\eqref{E:PHILEMMA3}.}
In this case, since $k_\ast\le N-3$ and as above we first consider the case $k_\ast = a_{\ell_m}$. We then have
\begin{align}
&\big\vert
w^{i}\phi^{a+2b} (\phi + \DL\phi)^{b}  \left( \prod_{j=1}^{j_m} \frac{ V_j \phi }{\phi } \right)_{V_j\in {\bar{\mathcal P}}_{i_j}, i_1+...+i_{j_m}=i-p}
\left( \prod_{\ell =1}^{\ell_m} \frac{ W_\ell (\phi+\DL\phi ) }{\phi+ \DL\phi} \right)_{W_\ell\in {\bar{\mathcal P}}_{a_\ell}, a_1+...+a_{\ell_m}=p}
\big\vert 
\notag \\
&=\big\vert
  w^{i-\sum_{j=j_{m_0}+1}^{j_m}(i_j-2)-\sum_{\ell=1}^{\ell_{m}}(a_\ell-1) } 
\phi^{a+2b} (\phi + \DL\phi)^{b}
\prod_{j=1}^{j_{m_0}} \left( \frac{ V_j \phi }{\phi }  \right) \prod_{j=j_{m_0} +1}^{j_m} \left( w^{i_j -2}\frac{ V_j \phi }{\phi }  \right) \notag \\
 & \ \ \ \  \prod_{\ell =1}^{\ell_{m}} \left( w^{a_\ell-1} \frac{ W_\ell (\phi+\DL\phi ) }{\phi+ \DL\phi} \right) 
\big\vert
\end{align}
Note that  $\sum_{j={j_{m_0}} +1}^{j_m}(i_j-2)+\sum_{\ell=1}^{\ell_{m}}(a_\ell-1) \leq i$ and therefore, the exponent of the first $w$ in the next-to-last line above is non-negative %\bcr 
as before. % \ec 
Using~\eqref{LinftyV1}--\eqref{LinftyV2}, ~\eqref{LinftyW1}--\eqref{LinftyW2}, and the bound 
$\|q_c\et \phi^{a+2b} (\phi + \DL\phi)^{b} \|_{L^\infty} \le \tau^{\frac23 a+b},$ we can bound all the remaining factors to finally obtain~\eqref{E:PHILEMMA3}.

\noindent
{\em Proof of~\eqref{E:PHILEMMA4}.}
When $i=N$ and $k_\ast\le N-1$ we may use the already proven~\eqref{E:PHILEMMA2} to infer that the $\|\cdot\|_{\alpha+N+2}$-norm of~\eqref{E:LC} is bounded by the right-hand side of~\eqref{E:PHILEMMA4}. It now remains to discuss the case $k_\ast=N$ in which case either 
1) $j_m=1$ and $i_{j_m}=N$ ($a_{\ell_m}=0$) or 2) $\ell_m=1$ and $a_{\ell_m}=N$ ($i_{j_m}=0$).  
When $a_{\ell_m}=N$, the expression~\eqref{E:LC} reads
\begin{align}
\phi^{a+2b} (\phi + \DL\phi)^{b}  \frac{ W_{\ell_m} (\phi+\DL\phi ) }{\phi+ \DL\phi} 
\end{align}
and therefore by~\eqref{L2W2} and~\eqref{E:PHILEMMA1}, we deduce
\begin{align*}
\ve \|\phi^{a+2b} (\phi + \DL\phi)^{b} q_c\et  \frac{ W_{\ell_m} (\phi+\DL\phi ) }{\phi+ \DL\phi}\|_{\alpha+N+2} 
&\lesssim \ve^\frac12 \tau^{\frac23a + 2b}  (\ve^\frac12 \tau^{-\frac{N}{n}} + \tau^{m-\frac23} (E^N)^\frac12) \\
& \lesssim \tau^{\frac23a+2b-\frac Nn}(1+(E^N)^{\frac12}),
\end{align*}
as claimed.
When $i_{j_m}=N$, the corresponding estimate reads  
\be
\ve \|\phi^{a+2b} (\phi + \DL\phi)^{b} q_c\et \frac{ V_{j_m} \phi }{\phi } \|_{\alpha+N+2} 
\lesssim \ve^\frac12 \tau^{\frac23a + 2b}  (\ve^\frac32 \tau^{\delta^\ast} + \tau^{m-\frac23} (E^N)^\frac12)
\lesssim \tau^{\frac23a+2b-\frac Nn}(1+(E^N)^{\frac12}), \notag
\ee
where we have used~\eqref{L2V2}.
\end{proof}

We conclude the section with several a priori estimates that will be important for the energy estimates in Section~\ref{S:ENERGYESTIMATES}.

\begin{lemma}\label{L:GZEROZEROLEMMA} Recall $g^{00}$ defined in \eqref{E:GZEROZERODEF}. The following bounds hold: 
\begin{align}
\sum_{V\in\bar{\mathcal P}_1}\left(\lv Vg^{00} \rv +\lv V(\frac1{g^{00}}) \rv\right) & \lesssim \ve\tau^{\delta - \frac 1n} %q_{-\gamma-1}\et 
\label{E:GZEROZERO1}\\
\sum_{V\in\bar{\mathcal P}_i}\left(\left\| Vg^{00}\right\|_{\alpha-N+2i+2}+\left\| V(\frac1{g^{00}})\right\|_{\alpha-N+2i+2}\right)& \lesssim 
\ve \tau^{\delta - \frac in} (1+(E^N)^{\frac12}), \ \ 2\le i \le N-1, \label{E:GZEROZERO2}\\
\sum_{V\in\bar{\mathcal P}_i}\left(\left\|w^{i}Vg^{00}\right\|_{\infty}+\left\|w^{i}V(\frac1{g^{00}})\right\|_{\infty}\right)&  
\lesssim \ve \tau^{\delta - \frac in} (1+(E^N)^{\frac12}),  \ \ 2\le i \le N-3, \label{E:GZEROZERO3}\\
\sqrt\ve \sum_{V\in\bar{\mathcal P}_N}\left(\left\| Vg^{00}\right\|_{\alpha+N}+\left\|V(\frac1{g^{00}})\right\|_{\alpha+N} \right)&  
\lesssim \ve\tau^{\delta - \frac in} (1+(E^N)^{\frac12})\label{E:GZEROZERO4}
\end{align}
\end{lemma}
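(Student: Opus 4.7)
The plan is to reduce all four bounds to Lemma~\ref{L:PHILEMMA} through the algebraic identity
\[
g^{00}-1 \;=\; -\tfrac{\ve\gamma}{g^{2}}\,\tfrac{wM_g^{2}}{r^{2}}\cdot \phi^{4}\J[\phi]^{-\gamma-1},
\]
which follows directly from~\eqref{E:GZEROZERODEF} and~\eqref{E:CDEF}. The entire $\phi$-dependence is isolated in the last factor $\phi^{4}\J[\phi]^{-\gamma-1}$, which is precisely of the form treated by Lemma~\ref{L:PHILEMMA} with $a=4$ and $b=-\gamma-1$, while the prefactor $wM_g^{2}/(g^{2}r^{2})$ is a smooth radial function of $r$ alone. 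Because $|M_g|\lesssim r^{n}$ by~\eqref{E:FM} and~\eqref{E:BETADEF}, a short induction using~\eqref{E:GTAYLOR2} gives, for every $A\in\bar{\mathcal P}_j$ with $j\le N$, the pointwise bound $|A(wM_g^{2}/(g^{2}r^{2}))|\lesssim r^{2n-2-j}$, and by the choice $n\gg N$ from Definition~\ref{D:ALAMBDADEF} these exponents remain safely positive.

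For $V\in\bar{\mathcal P}_i$ with $i\ge 1$, applying the product rule from Appendix~\ref{A:proofs} decomposes $Vg^{00}$ into a finite sum of terms of the form $A(wM_g^{2}/(g^{2}r^{2}))\cdot B(\phi^{4}\J[\phi]^{-\gamma-1})$ with $A\in\bar{\mathcal P}_{i-k}$, $B\in\bar{\mathcal P}_{k}$, $0\le k\le i$. I would control the second factor by Lemma~\ref{L:PHILEMMA} with weight parameter $c=\gamma+1$ (so that $c\le -b$ and the $q_{-\gamma-1}$ factor produced by the lemma is matched), and the first factor by the pointwise estimate above. Regrouping $r^{2n-2-(i-k)}=\tau^{2-2/n-(i-k)/n}(r^n/\tau)^{2-2/n-(i-k)/n}$, each resulting term is bounded in the relevant norm by a multiple of
\[
\ve\,\tau^{\,\delta-i/n}\,\left(\tfrac{r^n}{\tau}\right)^{2-2/n-(i-k)/n}\!q_{-\gamma-1}\!\left(\tfrac{r^n}{\tau}\right)\cdot\bigl(1+(E^{N})^{1/2}\bigr),
\]
and the function $x\mapsto x^{2-2/n-(i-k)/n}q_{-\gamma-1}(x)$ is uniformly bounded on $[0,\infty)$ since $\gamma+1>2-2/n$ and the exponent is positive (as $n\gg N\ge i$). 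The four estimates for $Vg^{00}$ in~\eqref{E:GZEROZERO1}--\eqref{E:GZEROZERO4} then correspond one-to-one to parts (i)--(iv) of Lemma~\ref{L:PHILEMMA}; in the top-order case the $\sqrt\ve$ prefactor is exactly the one supplied by~\eqref{E:PHILEMMA4}.

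The bounds on $V(1/g^{00})$ follow from the lower bound $g^{00}\gtrsim 1$ established in Lemma~\ref{L:IMPORTANT} together with the Fa\`a di Bruno formula applied to $x\mapsto 1/x$ at $x=g^{00}$:
\[
V\!\left(\tfrac{1}{g^{00}}\right)\;=\;\sum_{k=1}^{i}\frac{(-1)^{k}\,k!}{(g^{00})^{k+1}}\sum_{\pi(i,k)}\,i!\prod_{j=1}^{i}\frac{(V_j g^{00})^{\lambda_j}}{\lambda_j!\,(j!)^{\lambda_j}},
\]
with $V_j\in\bar{\mathcal P}_j$ and $\pi(i,k)$ as in~\eqref{E:PELLKDEF}. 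I would plug in the bounds just established for $V_j g^{00}$ and estimate each product by placing exactly one factor in the top weighted $L^{2}$ space and the rest in $L^{\infty}$, mirroring the placement strategy of Lemma~\ref{L:PHILEMMA}. The main obstacle is the bookkeeping at top order $i\in\{N-1,N\}$: one must verify that in each term of the Fa\`a di Bruno expansion at most one factor $V_j g^{00}$ occurs at the highest derivative count, so that the Sobolev-type placements inherited from Lemma~\ref{L:PHILEMMA} remain admissible. This is immediate because any partition in $\pi(i,k)$ with $\lambda_i\ge 1$ forces $\lambda_j=0$ for all remaining $j\ge 1$ by the constraint $\sum j\lambda_j=i$.
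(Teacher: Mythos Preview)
Your proposal is correct and follows essentially the same route as the paper: both write $g^{00}-1=-\ve\gamma\,\tfrac{wM_g^{2}}{g^{2}r^{2}}\,\phi^{4}\J[\phi]^{-\gamma-1}$, apply the product rule in $\bar{\mathcal P}$, bound the purely radial prefactor pointwise by $r^{2n-2-(i-k)}$, and invoke Lemma~\ref{L:PHILEMMA} on the $\phi^{4}\J[\phi]^{-\gamma-1}$ factor (the paper chooses $c=2-\tfrac{2}{n}$ rather than your $c=\gamma+1$, but either works since both satisfy $c\le -b$). For $V(1/g^{00})$ the paper simply cites the chain rule~\eqref{E:CR} together with $g^{00}\gtrsim 1$, which is exactly your Fa\`a~di~Bruno argument written in the paper's vector-field language.
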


%%%%%%%%%%%%%%%%%%%%%%%%%%%%%%%%%%%%%%%

\begin{proof}
It suffices to prove the bounds for $Vg^{00}$ as the corresponding bound for $V(\frac1{g^{00}})$ is a simple consequence of the chain rule~\eqref{E:CR} and
the bound~\eqref{E:GZEROZEROBOUND}.
From~\eqref{E:GZEROZERODEF} and~\eqref{E:CDEF} it follows that for any $V\in\bar{\mathcal P}_i$ with $i\ge 1$ we have
\begin{align}
Vg^{00} = - \ve\gamma \sum_{A_{1,2}\in \bar{\mathcal P}_{\ell_1,\ell_2} \atop \ell_1+\ell_2=i}
c_i^{A_1A_2} A_1(\frac{wM_g ^2}{g^2 r^2}) A_2(\phi^4\J[\phi]^{-\gamma-1}). 
\end{align}
In particular, if $\ell_2\le i-1$ we may estimate
\[
\lv A_1(\frac{wM_g ^2}{g^2 r^2}) A_2(\phi^4\J[\phi]^{-\gamma-1}) \rv \lesssim  r^{2n-2-i} \lv A_2(\phi^4\J[\phi]^{-\gamma-1}) \rv.
\]
Using Lemma~\ref{L:PHILEMMA} now %\bcg 
with $c=2-\frac{2}{n} (<\gamma+1)$, % \ec
 estimates~\eqref{E:GZEROZERO1}--\eqref{E:GZEROZERO3} follow easily. If $\ell_2=i$ and additionally $i\le N-1$ we may still run the same argument.
If however $\ell_2=N$, we lose $\sqrt\ve$ in~\eqref{E:GZEROZERO4} due to~\eqref{E:PHILEMMA4}.
\end{proof}

Since by~\eqref{E:GZEROONEDEF} for any $V\in\bar{\mathcal P}_i$ with $i\ge 1$ we have
\[
V(g^{00}) = V(\frac{M_g }{r} g^{01})
\]
by an analogous argument we have the following lemma: 

%%%%%%%%%%%%%%%%%%%%%%%%%%%%%%%%%%%%%%%
%%%%%%%%%%%%%%%%%%%%%%%%%%%%%%%%%%%%%%%

\begin{lemma}\label{L:GZEROONELEMMA} Recall $g^{01}$ defined in \eqref{E:GZEROONEDEF}. The following bounds hold: 
\begin{align}
\sum_{V\in\bar{\mathcal P}_1}\left(\lv V(\frac{g^{01}}{r}) \rv+\lv V(\frac{g^{01}}{ rg^{00}}) \rv\right) & \lesssim \ve \tau^{\delta -1-\frac 1n} %q_{-\gamma-1}\et 
\label{E:GZEROONE1}\\
\sum_{V\in\bar{\mathcal P}_i}\left(\left\| V(\frac{g^{01}}{r})\right\|_{\alpha-N+2i+2}+\left\|V(\frac{g^{01}}{rg^{00}})\right\|_{\alpha-N+2i+2}\right)&\lesssim \ve \tau^{\delta -1 - \frac in} (1+(E^N)^{\frac12}), \ \ 2\le i \le N-1, \label{E:GZEROONE2}\\
\sum_{V\in\bar{\mathcal P}_i}\left(\left\|w^{i}V(\frac{g^{01}}{r})\right\|_{\infty}+\left\|w^{i}V(\frac{g^{01}}{rg^{00}})\right\|_{\infty}\right)&\lesssim \ve \tau^{\delta - 1-\frac in} (1+(E^N)^{\frac12}),  \ \ 2\le i \le N-3, \label{E:GZEROONE3}\\
\sqrt\ve \sum_{V\in\bar{\mathcal P}_N}\left(\left\|V(\frac{g^{01}}{r})\right\|_{\alpha+N+2}+\left\|V(\frac{g^{01}}{rg^{00}})\right\|_{\alpha+N+2}\right) &\lesssim \ve \tau^{\delta -1- \frac in} (1+(E^N)^{\frac12})\label{E:GZEROONE4}
\end{align}
\end{lemma}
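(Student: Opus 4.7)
The plan is to adapt the proof of Lemma~\ref{L:GZEROZEROLEMMA} by exploiting the factorization
\[
\frac{g^{01}}{r} = -\ve\gamma\, \frac{w\,M_g}{g^2\,r^2}\, \phi^4\,\J[\phi]^{-\gamma-1},
\]
and comparing it termwise with $g^{00}-1 = -\ve\gamma\, \frac{w\,M_g^2}{g^2\,r^2}\, \phi^4\,\J[\phi]^{-\gamma-1}$. The structural difference is that the purely spatial prefactor carries only one power of $M_g$ in place of two. Consequently the pointwise bound $|A_1(\tfrac{wM_g^2}{g^2 r^2})|\lesssim r^{2n-2-\ell_1}$ used in Lemma~\ref{L:GZEROZEROLEMMA} is replaced by $|A_1(\tfrac{wM_g}{g^2 r^2})|\lesssim r^{n-2-\ell_1}$ for $A_1\in\bar{\mathcal P}_{\ell_1}$, which follows from (w1)--(w3) together with $|(r\partial_r)^k(\log g)|\lesssim r^n$ for $k\ge 1$ and $|g|\gtrsim 1$. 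Writing $r^{n-2-\ell_1} = r^{-\ell_1}\,\tau^{1-\frac{2}{n}}\,(\frac{r^n}{\tau})^{1-\frac{2}{n}}$ exposes exactly one extra power of $\tau^{-1}$ relative to the $g^{00}$-case, accounting for the $\tau^{\delta-1-i/n}$-factor appearing throughout~\eqref{E:GZEROONE1}--\eqref{E:GZEROONE4}.

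With this pointwise bound in hand, the product rule gives, for any $V\in\bar{\mathcal P}_i$ with $i\ge 1$,
\[
V\!\left(\tfrac{g^{01}}{r}\right) = -\ve\gamma\!\!\sum_{\substack{A_1\in\bar{\mathcal P}_{\ell_1},\,A_2\in\bar{\mathcal P}_{\ell_2}\\ \ell_1+\ell_2=i}}\!\! c_i^{A_1 A_2}\, A_1\!\left(\tfrac{wM_g}{g^2 r^2}\right)\, A_2\!\left(\phi^4\,\J[\phi]^{-\gamma-1}\right).
\]
The factor $A_2(\phi^4\J[\phi]^{-\gamma-1})$ is estimated via Lemma~\ref{L:PHILEMMA} with $a=4$, $b=-\gamma-1$, and a weight exponent $c$ chosen to absorb $q_{-\gamma-1}(\tfrac{r^n}{\tau})$ together with the factor $(\tfrac{r^n}{\tau})^{1-2/n}$ extracted above. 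Splitting cases exactly as in Lemma~\ref{L:GZEROZEROLEMMA}: bound~\eqref{E:PHILEMMA1} yields~\eqref{E:GZEROONE1}; for $2\le i\le N-1$ one places the factor with the highest-index vector field in $L^2_{\alpha-N+2i+2}$ using~\eqref{E:PHILEMMA2} and all remaining factors in $L^\infty$ using~\eqref{E:PHILEMMA3}, arranging the $w^{i_j-2}$ and $w^{a_\ell-1}$ weights as in the proof of Lemma~\ref{L:PHILEMMA}, giving~\eqref{E:GZEROONE2}--\eqref{E:GZEROONE3}. Finally, when $i=N$ and $\ell_2=N$, bound~\eqref{E:PHILEMMA4} yields~\eqref{E:GZEROONE4} at the cost of the $\sqrt{\ve}$ prefactor already present in the statement.

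The bound on $V(\tfrac{g^{01}}{r g^{00}})$ is then obtained from the Leibniz rule
\[
V\!\left(\tfrac{g^{01}}{r\,g^{00}}\right) = \sum_{\ell_1+\ell_2=i}\sum_{V_1,V_2} c_i^{V_1V_2}\, V_1\!\left(\tfrac{g^{01}}{r}\right)\,V_2\!\left(\tfrac{1}{g^{00}}\right),
\]
combined with the just-proved bounds on $V_1(\tfrac{g^{01}}{r})$ and the bounds of Lemma~\ref{L:GZEROZEROLEMMA} on $V_2(\tfrac{1}{g^{00}})$. The term with $V_2$ the identity uses the pointwise bound $\tfrac{1}{g^{00}}\lesssim 1$ from~\eqref{E:GZEROZEROBOUND}; in every other term one factor carries enough derivatives to be estimated in the appropriate weighted $L^2$ and the other is bounded in $L^\infty$ with matching $w$-weights, so no additional powers of $\tau^\delta$ or $\ve$ are lost beyond those already present in Lemma~\ref{L:GZEROZEROLEMMA}. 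The only genuine obstacle is purely technical bookkeeping: distributing the $w$-weights correctly at the top derivative levels $i\in\{N-1,N\}$ and matching the exponents of $\tau$ and of $q_c(\tfrac{r^n}{\tau})$ to the target bounds, both of which are handled uniformly by Lemma~\ref{L:PHILEMMA} exactly as in the proof of Lemma~\ref{L:GZEROZEROLEMMA}.
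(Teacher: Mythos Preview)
Your proposal is correct and takes essentially the same approach as the paper. The paper's own proof is a one-line remark that $V(g^{00})=V(\tfrac{M_g}{r}g^{01})$ for $V\in\bar{\mathcal P}_i$, $i\ge1$, after which it declares the argument ``analogous'' to Lemma~\ref{L:GZEROZEROLEMMA}; you have simply unpacked this analogy explicitly, correctly identifying that the single missing factor of $M_g$ replaces $r^{2n-2-\ell_1}$ by $r^{n-2-\ell_1}$ and hence costs exactly one power of $\tau^{-1}$, and then invoking Lemma~\ref{L:PHILEMMA} and the product rule in the same case-by-case fashion.
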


%%%%%%%%%%%%%%%%%%%%%%%%%%%%%%%%%%%%%%%
%%%%%%%%%%%%%%%%%%%%%%%%%%%%%%%%%%%%%%%

%%%%%%%%%%%%%%%%%%%%%%%%%%%%%%%%%%%%%%%
%%%%%%%%%%%%%%%%%%%%%%%%%%%%%%%%%%%%%%%

\section{Energy estimates} \label{S:ENERGYESTIMATES}

To facilitate our proof and carry out the energy estimates, for the remainder of this section we assume that 
$H$ be a solution to~\eqref{E:H2} on a time interval $[\kappa,T]$ for some $T\le1$, the a priori assumptions~\eqref{E:APRIORI} hold, and the following (rough) bootstrap condition is true:
\begin{align}\label{E:BOOTSTRAP}
S_\kappa^N(\tau) \le 1, \ \ \tau\in[\kappa,T].
\end{align}

\subsection{Estimates for $\mathscr L_{\text{low}}$-terms}\label{SS:LLOWESTIMATES}

The goal of this section is the following proposition.

\begin{proposition}\label{P:LLOW}
Let $H$ be a solution to~\eqref{E:H2} on a time interval $[\kappa,T]$ for some $T\le1$ and assume that the a priori assumptions~\eqref{E:APRIORI} and the bootstrap assumption~\eqref{E:BOOTSTRAP} hold. Then for any $( \tau, r)\in[\kappa,T]\times[0,1]$ 
the following bound holds:
\begin{align}
\ve  \tau^{\frac12(\gamma-\frac23)} \| \D_i\left(\frac1{g^{00}}\mathscr L_{\text{low}} H\right)\|_{\alpha+i} 
\lesssim \sqrt \ve \tau^{\delta^\ast} (D^N)^{\frac12} + \sqrt \ve \tau^{\min\{\delta^\ast,\frac\delta2\}-\frac12} (E^N)^{\frac12}  , \ \ i=0,1,\dots, N.
\end{align}
\end{proposition}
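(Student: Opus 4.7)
After dividing by $g^{00}$ and distributing $\mathcal{D}_i$ via the product rule from Appendix~\ref{A:proofs}, $\mathcal{D}_i(\mathscr{L}_{\text{low}}H/g^{00})$ becomes a sum of bilinears of the schematic form $V(A)\cdot W(X H)$, where $V\in\bar{\mathcal{P}}_{\ell_1}$, $W\in\bar{\mathcal{P}}_{\ell_2}$, $\ell_1+\ell_2=i$, $X\in\{1,\partial_\tau,\partial_r\}$, and $A$ is a smooth combination of $\phi, \DL\phi, \J[\phi], w, w', g, M_g, 1/g^{00}$. One family of such bilinears comes from each of the five summands defining $\mathscr{L}_{\text{low}}$: the two $c[\phi] M_g^2/r^2$-terms, the $c[\phi] M_g/r$-term, $P[\phia]/\phia\cdot H$, and the constituent pieces of $\mathfrak{K}_1$ from~\eqref{E:MATHFRAKK1}. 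The key pointwise coefficient bounds, which drive everything, follow from~\eqref{E:CEQUIVALENCE}, $|M_g|\lesssim r^n$, and~\eqref{E:PQBOUND}:
\[
\left|w c[\phi]\tfrac{M_g^2}{r^2}\right|\lesssim \tau^{\delta}, \quad
\left|w c[\phi]\tfrac{M_g}{r}\right|\lesssim \tau^{\delta - 1 + 1/n}, \quad
\left|\tfrac{P[\phia]}{\phia}\right|\lesssim \tau^{\frac{2}{3}-2\gamma-\frac{2}{n}};
\]
the coefficients inside $\mathfrak{K}_1$ inherit the same scaling. Combined with the singular $\tau^{-1}$ or $\tau^{-2}$ prefactors in $\mathscr{L}_{\text{low}}$, every term has a net $\tau^{\delta-2}$-scaling acting on $H$ and $\tau^{\delta-1}$-scaling acting on $\partial_\tau H$ or $\partial_r H$.

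\textbf{Placement and estimation.}
For each bilinear $V(A)\cdot W(XH)$ the plan is to use the case split on $\ell_1$ governed by Lemma~\ref{L:PHILEMMA}: when $\ell_1\le N-3$, I will place $V(A)$ in $L^\infty$ by~\eqref{E:PHILEMMA3} (the required $w^{\ell_1}$-weight being furnished by the $w$-factor already present in $A$ together with the $w^{\alpha+i}$-measure) and $W(X H)$ in $L^2_{\alpha+i}$; when $N-2\le\ell_1\le N-1$, the placements will be exchanged via~\eqref{E:PHILEMMA2} and the Hardy--Sobolev embeddings of Appendix~\ref{A:HSembedding}; when $\ell_1 = N$, \eqref{E:PHILEMMA4} must be used at the cost of one power of $\sqrt\ve$. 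The derivative factor $W(XH)$ will then be controlled by $E^N$ or $D^N$: $W(H)$ via $\tau^{\gamma-11/3}\|\mathcal{D}_j H\|_{\alpha+j}^2$ in $E^N$; $W(\partial_\tau H)$ via $\tau^{\gamma-5/3}\|\mathcal{D}_j H_\tau\|_{\alpha+j}^2$ in $E^N$ (or $\tau^{\gamma-8/3}$ in $D^N$ when more $\tau$-gain is needed); $W(\partial_r H)$, rewritten as $W(D_r H) - 2W(H/r)$, whose first piece enters the $\ve$-weighted elliptic part $\ve\tau^{-\gamma-1}\|q_{-(\gamma+1)/2}\mathcal{D}_{j+1}H\|_{\alpha+j+1}^2$ of $E^N$, and whose second is controlled by Hardy.

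\textbf{The dichotomy $\delta^\ast$ vs.\ $\delta/2$.}
At $\ell_1 = N$, the $\sqrt\ve$-loss from~\eqref{E:PHILEMMA4} will be exactly balanced by the $\ve$-prefactor on the left-hand side, leaving the $\sqrt\ve$ appearing in the final estimate. The coefficient-gain is $\tau^{\delta - N/n} = \tau^{\delta^\ast}$, which explains the $\tau^{\delta^\ast}$-factor multiplying $(D^N)^{1/2}$. The competing $\tau^{\delta/2 - 1/2}$ multiplying $(E^N)^{1/2}$ is expected to arise exclusively from the $c[\phi] M_g/r\cdot\partial_r H$-term (and analogous $\partial_r H$-pieces of $\mathfrak{K}_1$): the decomposition $\partial_r H = D_r H - 2H/r$ together with the $\ve$-weighted elliptic norm introduces a factor $\ve^{-1/2}\tau^{(\gamma+1)/2}$ which, combined with the coefficient bound $\tau^{\delta-1+1/n}$ and the LHS weight $\tau^{(\gamma-2/3)/2}$, yields $\sqrt\ve\,\tau^{\delta+\gamma-11/6+1/n}$; the identity $\delta = 8/3-2\gamma-2/n$ then reduces this precisely to $\sqrt\ve\,\tau^{\delta/2-1/2}$.

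\textbf{Main obstacle.}
The chief technical challenge will be the bookkeeping of negative $r$-powers: each $V\in\bar{\mathcal{P}}_{\ell_1}$ applied to $M_g^k/r^k$ ($k\in\{1,2\}$) can produce $r^{-\ell_1-k}$ near the origin, which must be absorbed into the $(r^n/\tau)^{2-2/n}$ or $(r^n/\tau)^{1-1/n}$-factors hidden inside $q_{-\gamma-1}(r^n/\tau)$. For this absorption to survive $N$ derivatives, $n$ must be large enough that $\delta - N/n = \delta^\ast > 0$ --- exactly the numerological content of Definition~\ref{D:ALAMBDADEF} and~\eqref{E:DELTALOWBOUND}. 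Summing over the five pieces of $\mathscr{L}_{\text{low}}$ and over $i = 0, \ldots, N$ then yields the claimed inequality.
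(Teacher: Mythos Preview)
Your plan is correct and matches the paper's approach: the paper also splits $\mathscr L_{\text{low}}$ according to whether a spatial derivative $\partial_r H$ appears (writing $\mathscr L_{\text{low}}=\mathscr L_{\text{low}}^1+\mathscr L_{\text{low}}^2$, Lemmas~\ref{L:LLOW1LEMMA}--\ref{L:LLOW2LEMMA}), applies the product rule~\eqref{E:product}, and then runs exactly the case analysis you describe via Lemma~\ref{L:PHILEMMA} together with the embeddings of Appendix~\ref{A:HSembedding}; in particular the $\tau^{\delta/2-1/2}$ exponent arises precisely from pairing the $\partial_r H$-pieces with the $\ve$-weighted top-order norm, as you computed. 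One point you will want to sharpen in execution is the $w$-weight bookkeeping: the coefficients $A$ are not pure $\phi^a\J[\phi]^b$ but products involving $\DL\phia$, $\DL\J[\phia]$ etc., so a further product-rule layer (using Lemma~\ref{L:DIQBOUNDS}) is needed before Lemma~\ref{L:PHILEMMA} applies, and the distribution of the $w^{\alpha+i}$-weight among the factors requires tracking several indices simultaneously (as in Cases~I-1, I-2, II of the paper's proof) rather than just $\ell_1$ versus $i-\ell_1$.
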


\subsubsection{Decomposition of $ \mathscr L_{\text{low}} H $}
%{\underline{\bf{Decomposition of $ \mathscr L_{\text{low}} H $.}}}

We rewrite the linear operator $\mathscr L_{\text{low}}$ in the form
\be
\mathscr L_{\text{low}} H = \mathscr L_{\text{low}}^1 H + \mathscr L_{\text{low}}^2 H %+ \mathscr N_1[H] +  \mathscr N_2[H] 
\ee
where 
\begin{align}
\mathscr L_{\text{low}}^1 H : =
  &  2 \frac{P[\phia]}{\phia} H  -\gamma w\frac{\phi^2}{g^2\J[\phi]^{\gamma+1}  r^2} \DL(3\phia^2 + 2\phia \DL\phia ) H  \notag \\
&+ \gamma(\gamma+1) w\frac{\phi^4M_g  \DL\J[\phia] }{g^2\J[\phia]^{\gamma+2}  r^2}   \left[  \pa_\tau H  + \frac{m}{\tau } H \right]   \notag \\
&+ \gamma(\gamma+1) w\frac{\phi^2 \DL\J[\phia] }{g^2\J[\phia]^{\gamma+2}  r^2 } (3\phia^2  + 2\phia \DL\phia ) H \notag \\
&-\gamma (1+\alpha) r w' \frac{\phi^4 M_g }{g^2\J[\phia]^{\gamma+1}  r^2}  \left[  \pa_\tau H  + \frac{m}{\tau } H \right] \notag 
\\& - 2 \gamma (1+\alpha) r w' \frac{\phi^2 \phia \DL\phia }{g^2\J[\phia]^{\gamma+1}  r^2} H \notag \\
&  - \gamma w c[\phi] \frac{M_g ^2}{ r^2} \left[ \frac{2m}{\tau}  \pa_\tau H  + \frac{m(m-1)}{\tau^2} H \right] \notag \\
  &   -\gamma w\frac{\phi^2}{g^2\J[\phi]^{\gamma+1}  r^2} \big[ (\DL(\phi^2M_g ) + \phi^2M_g  + 2\phi \DL\phia M_g ) \left[  \pa_\tau H  + \frac{m}{\tau } H \right]
   \label{E:LLOWONE}
\end{align}

\begin{align}\label{E:LLOWTWO}
\mathscr L_{\text{low}}^2 H : = 
  - 4 \gamma w\frac{\phi^3 \DL\phia }{g^2\J[\phi]^{\gamma+1}  r}   r \pa_r \left(
 \frac{H}{r}\right)- 2m \gamma w c[\phi] \frac{M_g }{ r\tau} \pa_r H   + \gamma(\gamma+1) w\frac{\phi^4 \DL\J[\phia] }{g^2\J[\phia]^{\gamma+2}  r }   r \pa_r \left(
 \frac{H}{r}\right) 
\end{align}

%%%%%%%%%%%%%%%%%%%%%%%%%%%%%%%%%%%%%%%%%%%%%%%%%%
%%%%%%%%%%%%%%%%%%%%%%%%%%%%%%%%%%%%%%%%%%%%%%%%%%

\begin{lemma}[Estimates for $\mathscr L_{\text{low}}^1$]\label{L:LLOW1LEMMA} %The same assumptions as in Proposition \ref{P:LLOW} hold. Then we have 
%\bcg 
Let $H$ be a solution to~\eqref{E:H2} on a time interval $[\kappa,T]$ for some $T\le1$ and assume that the a priori assumptions~\eqref{E:APRIORI} and the bootstrap assumption~\eqref{E:BOOTSTRAP} hold. Then %\ec
\begin{align}
\ve \tau^{\frac12(\gamma-\frac23)} \| \mathcal D_i\left(\frac1{g^{00}} \mathscr L_{\text{low}}^1 H\right) \|_{\alpha+i} \lesssim \sqrt \ve \tau^{\delta^\ast} (D^N)^{\frac12}.
\end{align}
\end{lemma}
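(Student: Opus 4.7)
The operator $\mathscr L_{\text{low}}^1$, as written in \eqref{E:LLOWONE}, is a finite sum of summands each of the schematic form $\mathcal C \cdot \mathcal H$, where $\mathcal H\in\{H,\partial_\tau H,H/\tau\}$ and the coefficient $\mathcal C$ is built algebraically from $\phi,\phia,\DL\phia,w,w',M_g,g,\J[\phi],\J[\phia],P[\phia]/\phia$ and $\DL\J[\phia]$, always carrying at least one factor of $w$ or $rw'$ together with a factor coming from $c[\phi]$, $\DL\phia$, $\DL\J[\phia]$ or $P[\phia]/\phia$. My plan is to treat each summand separately, apply $\mathcal D_i(\cdot/g^{00})$ via the product and chain rules of Appendix~\ref{A:proofs}, and combine at the end.

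After distributing $\mathcal D_i$ through the product and through $1/g^{00}$, a generic term becomes a linear combination of expressions
\[
V_k\!\Bigl(\tfrac{\mathcal C}{g^{00}}\Bigr)\cdot \bar V_\ell\mathcal H,\qquad V_k\in\bar{\mathcal P}_k,\ \bar V_\ell\in\bar{\mathcal P}_\ell\cup\{\D_\ell\},\ k+\ell=i.
\]
Pointwise and weighted $L^2$ estimates on $V_k(\mathcal C/g^{00})$ are then assembled from the preparatory section: Lemma~\ref{L:PHILEMMA} for the $\phi^a\J[\phi]^b$ factors, Lemmas~\ref{L:BARDIQBOUNDS} and~\ref{L:DIQBOUNDS} for $\DL\phia$, $\DL\J[\phia]$, $\DL(3\phia^2+2\phia\DL\phia)$ and $P[\phia]/\phia$, and Lemma~\ref{L:GZEROZEROLEMMA} for $1/g^{00}$. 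The crucial point is that each coefficient in \eqref{E:LLOWONE} carries an explicit $\ve$ smallness (through a factor of $c[\phi]$, or through the $\phia-\phi_0$ difference hidden in $\DL\phia$, $\DL\J[\phia]$ or $P[\phia]/\phia$), so these lemmas yield a uniform scaling of the form $|V_k(\mathcal C/g^{00})|\lesssim \ve\, \tau^{b(\mathcal C)-k/n}\,q_{\nu(\mathcal C)}(r^n/\tau)$, placed in $L^\infty$ or in the appropriate $\|\cdot\|_{\alpha-N+2k+2}$ weighted norm according to the parity/size of $k$, exactly as dictated by Lemma~\ref{L:PHILEMMA}.

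For the remaining factor $\bar V_\ell \mathcal H$ I apply the Hardy--Sobolev embeddings of Appendix~\ref{A:HSembedding} to translate into $L^2_{\alpha+i}$ bounds in terms of $\sqrt{D^N}$, namely
\[
\|\bar V_\ell H_\tau\|_{\alpha+i}\lesssim \tau^{(8/3-\gamma)/2}\sqrt{D^N},\qquad \|\bar V_\ell H\|_{\alpha+i}\lesssim \tau^{(14/3-\gamma)/2}\sqrt{D^N},
\]
with the analogous bound for $H/\tau$ (which gains an extra $\tau^{-1}$ but is absorbed, since $(14/3-\gamma)/2-1>(8/3-\gamma)/2$). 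Multiplying by the outside prefactor $\ve\tau^{(\gamma-2/3)/2}$ and combining with the coefficient estimate, each term of $\ve\tau^{(\gamma-2/3)/2}\|\D_i(\mathcal C\mathcal H/g^{00})\|_{\alpha+i}$ is dominated by $\ve^{3/2}\tau^{b(\mathcal C)+1-k/n}\sqrt{D^N}$. A direct computation using $\delta=\delta^\ast+N/n$ from \eqref{E:DELTALOWBOUND} shows $b(\mathcal C)+1\ge \delta^\ast+k/n$ for every summand in \eqref{E:LLOWONE} provided $n$ is large enough relative to $N$, which yields the claimed $\sqrt\ve\,\tau^{\delta^\ast}\sqrt{D^N}$ bound after summing over $k,\ell$ and over the finitely many terms.

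The principal technical obstacle is a $w$-weight bookkeeping: the terms in \eqref{E:LLOWONE} involving $rw'$ effectively produce a boundary loss of one power of $w$, so one must verify that the weight $w^{\alpha+i}$ in $\|\cdot\|_{\alpha+i}$, together with the positive $w$ or $w^{1+\alpha}$ already present in $\mathcal C$, suffices to absorb every negative $w$-power generated when $V_k$ hits $w^{-\alpha}$ or $1/g^{00}$. The correct placement is secured by pairing each $V\in\bar{\mathcal P}_k$ that falls on $\phi$-type factors with the $L^\infty$ or weighted $L^2$ bound of Lemma~\ref{L:PHILEMMA} that carries the matching $w^k$ or $w^{\alpha-N+2k+2}$ weight; likewise, the negative powers of $r$ created by the $\bar{\mathcal P}_i$ expansion are swallowed by the $(r^n/\tau)^{?}$ factors from the $\phia$-bounds, which is precisely the role played by choosing $n$ large in Definition~\ref{D:ALAMBDADEF}.
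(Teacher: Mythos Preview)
Your architecture is the paper's: expand $\mathcal D_i$ of each summand of~\eqref{E:LLOWONE} via the product rule of Lemma~\ref{L:product}, estimate the coefficient factor by Lemma~\ref{L:PHILEMMA}, Lemma~\ref{L:DIQBOUNDS} and Lemma~\ref{L:GZEROZEROLEMMA}, estimate the $H$--factor through $D^N$ using the embeddings of Appendix~\ref{A:HSembedding}, and run a case distinction on which index carries the most derivatives. That is exactly how the paper proceeds.

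There is, however, a real misidentification in your explanation of where the $\sqrt\ve$ comes from. You assert that ``each coefficient in~\eqref{E:LLOWONE} carries an explicit $\ve$ smallness (through a factor of $c[\phi]$, or through the $\phia-\phi_0$ difference hidden in $\DL\phia$, $\DL\J[\phia]$ or $P[\phia]/\phia$)''. This is false: $c[\phi]=\phi^4/(g^2\J[\phi]^{\gamma+1})$ contains no $\ve$, and $\DL\phia$, $\DL\J[\phia]$, $P[\phia]/\phia$ are all $O(1)$ in $\ve$ since $\DL\phi_0=\tfrac23 M_g\tau^{-1/3}\neq0$ and $P[\phi_0]\neq0$. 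The only $\ve$ present is the one sitting outside in the statement. So your generic term is bounded by $\ve\,\tau^{\delta^\ast}\sqrt{D^N}$, not $\ve^{3/2}\tau^{\delta^\ast}\sqrt{D^N}$; since $\ve\le\sqrt\ve$ this still closes, but not for the reason you give.

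The actual reason the lemma is stated with $\sqrt\ve$ rather than $\ve$ is the top-order case you gloss over: when $k=i=N$ and all derivatives fall on the $\phi$--dependent factor $\phi^a\J[\phi]^b$, the only applicable bound in Lemma~\ref{L:PHILEMMA} is~\eqref{E:PHILEMMA4}, which carries a $\sqrt\ve$ on its left-hand side (this reflects that the top spatial norm in $E^N$ itself comes with an $\ve$--weight). Invoking it costs half a power of $\ve$, leaving precisely $\sqrt\ve$. The paper isolates this explicitly as ``Case~II, $\ell_3=i=N$'' and records it as~\eqref{E:SAMEBOUND}. You should make this step explicit rather than relying on a nonexistent $\ve$ in the coefficients. (A minor aside: your inequality $(14/3-\gamma)/2-1>(8/3-\gamma)/2$ is an equality, not a strict inequality; this is harmless but worth correcting.)
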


%%%%%%%%%%%%%%%%%%%%%%%%%%%%%%%%%%%%%%%%%%%%%%%%%%

\begin{proof}
By the product rule~\eqref{E:product}
\begin{align}
\D_i\left(\frac{P[\phia]}{g^{00}\phia } H \right) = \sum_{A_1\in \bar{\mathcal P}_{\ell_1}, A_2\in \mathcal P_{\ell_2} \atop \ell_1+\ell_2 = i} c^{A_1A_2}_i A_1(\frac{P[\phia]}{g^{00}\phia } ) A_2 H.
\end{align}
We now use~\eqref{E:PQBOUND} and the $L^2$-embeddings~\eqref{L21} if $\ell_2\ge 3$ and otherwise~\eqref{Linfty1} to conclude
\begin{align}
\ve \tau^{\frac12(\gamma-\frac23)} \|A_1(\frac{P[\phia]}{\phia} ) A_2 H\|_{\alpha+i} \lesssim 
&\ve \tau^{\frac12(\gamma-\frac23)+\frac23-2\gamma +\frac12(\frac{14}3-\gamma) - \frac{(i+2)}{n}} (D^N)^{\frac12}  \notag \\
& \lesssim \ve \tau^{\delta^\ast} (D^N)^{\frac12},
\end{align}
%\bcr 
where we have used the bound $w^{\alpha+i} \lesssim w^{\alpha+2i-N}\lesssim w^{\alpha+2\ell_2-N}$. %\ec

We now focus on the second term in the first line of~\eqref{E:LLOWONE}.
\begin{align}
-\gamma \frac{w}{g^2 r^2g^{00}}\frac{\phi^2}{\J[\phi]^{\gamma+1} } \DL(3\phia^2 + 2\phia \DL\phia ) H = -\gamma \frac{w}{g^{00}g^2 r^2} \phi^{-2\gamma}(\phi+\DL\phi)^{-(\gamma+1)} \DL(3\phia^2 + 2\phia \DL\phia ) H.
\end{align}
By the product rule~\eqref{E:product}  
\begin{align}
&\ve \tau^{\frac12(\gamma-\frac23)} \lv \mathcal D_i\left(\frac{w}{g^2 r^2g^{00}}\frac{\phi^2}{\J[\phi]^{\gamma+1} } \DL(3\phia^2 + 2\phia \DL\phia ) H\right) \rv \notag \\
& \lesssim \sum_{A_{1,2,3}\in\bar{\mathcal P}_{\ell_1,\ell_2,\ell_3}, A_4\in \mathcal P_{\ell_4} \atop \ell_1+\dots+\ell_4=i} \notag \\
& \ve \tau^{\frac12(\gamma-\frac23)} \lv A_1\left(\frac{w}{g^2 r^2g^{00}}\right) \rv \lv A_2\left(\DL(3\phia^2 + 2\phia \DL\phia )\right)\rv \lv A_3\left(\frac{\phi^2}{\J[\phi]^{\gamma+1}}\right)\rv  \lv A_4 H\rv, \label{E:LLOWTEST}
\end{align}
%\bcr 
since $\lv A_1\left(\frac{w}{g^2 r^2g^{00}}\right) \rv \lesssim r^{-2-\ell_1}$. %\ec
Consider first  
{\em Case I. $\ell_3\le i-1$.}
By~\eqref{E:QBOUND2} the third line of~\eqref{E:LLOWTEST} is bounded by 
\begin{align}
\ve \tau^{\frac43+\frac12(\gamma-\frac23)-\frac{\ell_1+\ell_2+2}{n}}
%\bcr 
\left(p_{1,-\frac{\ell_1+\ell_2+2}{n}}\et +p_{\l,-\frac{\ell_1+\ell_2+4}{n}}\et \right) % \ec 
q_2\et \lv A_3\left(\frac{\phi^2}{\J[\phi]^{\gamma+1}}\right)\rv  \lv A_4 H\rv.
\end{align}
We now distinguish two cases.

\noindent
{\em Case I-1. $\ell_3\ge\ell_4$.}
If $\ell_3\le 1$ by Lemma~\ref{L:PHILEMMA} and~\eqref{Linfty1} we then have 
\begin{align}
q_2\et \lv A_3\left(\frac{\phi^2}{\J[\phi]^{\gamma+1}}\right)\rv  \lv A_4 H\rv
 \lesssim \tau^{-\frac23-2\gamma-\frac{\ell_3}{n}+\frac12(\frac{14}3-\gamma)} (D^N)^{\frac12}. \notag
 \end{align}
 Therefore, %\bcr 
 since $w^{\alpha+i}\lesssim1$, by~\eqref{E:DELTADEF}--\eqref{E:DELTASTARDEF}, %\ec
\begin{align}
&\| \ve \tau^{\frac43+\frac12(\gamma-\frac23)-\frac{\ell_1+\ell_2+2}{n}}
%\bcr 
\left(p_{1,-\frac{\ell_1+\ell_2+2}{n}}\et +p_{\l,-\frac{\ell_1+\ell_2+4}{n}}\et \right) %\ec 
 q_2\et   A_3\left(\frac{\phi^2}{\J[\phi]^{\gamma+1}}\right) A_4 H \|_{\alpha+i} 
\notag \\
&\lesssim \ve \tau^{\delta-\frac{\ell_1+\ell_2+\ell_3+2}{n}}(D^N)^{\frac12}
 \lesssim \ve \tau^{\delta^*}(D^N)^{\frac12}. \label{E:CASEI0}
\end{align}
If $2\le \ell_3\le i-1$ then in case $\ell_4\ge3$ 
\begin{align}
&\ve \tau^{\frac43+\frac12(\gamma-\frac23)-\frac{\ell_1+\ell_2+2}{n}}
\left\|
%\bcr
 \left(p_{1,-\frac{\ell_1+\ell_2+2}{n}}\et +p_{\l,-\frac{\ell_1+\ell_2+4}{n}}\et \right)% \ec 
 q_2\et A_3\left(\frac{\phi^2}{\J[\phi]^{\gamma+1}}\right)   A_4 H \right\|_{\alpha+i} \notag \\
& = \ve \tau^{\frac43+\frac12(\gamma-\frac23)-\frac{\ell_1+\ell_2+2}{n}}
\Big\|w^{\frac{N+i-2\ell_3-2\ell_4+2}{2}}
%\bcr 
\left(p_{1,-\frac{\ell_1+\ell_2+2}{n}}\et +p_{\l,-\frac{\ell_1+\ell_2+4}{n}}\et \right)% \ec  
 q_2\et  \notag \\
 & \ \ \ \ w^{\frac{\alpha-N+2\ell_3+2}{2}} A_3\left(\frac{\phi^2}{\J[\phi]^{\gamma+1}}\right)  w^{\ell_4-2} A_4 H \Big\|_{L^2} \notag \\
& \lesssim  \ve \tau^{\frac43+\frac12(\gamma-\frac23)-\frac{\ell_1+\ell_2+2}{n}}\|w^{\frac{N+i-2\ell_3-2\ell_4+2}{2}}\|_{L^\infty} \notag \\
& \ \ \ \ 
%\bcr 
\|q_2\et w^{\frac{\alpha-N+2\ell_3+2}{2}} A_3\left(\frac{\phi^2}{\J[\phi]^{\gamma+1}}\right)\|_{L^2} \|w^{\ell_4-2} A_4 H \|_{L^\infty} %\ec 
\label{E:CASEI1}
\end{align}
%\bcr 
Recall that the total derivative number $N$ is defined in~\eqref{E:NDEF}. %\ec 
Since  $N+i-2\ell_3-2\ell_4+2 \ge N+i-2i+2= N-i+2\ge0$ the $L^\infty$-norm of $w^{\frac{N+i-2\ell_3-2\ell_4+2}{2}}$ is bounded. Moreover, by Lemma~\ref{L:PHILEMMA}
$\|q_2\et w^{\frac{\alpha-N+2\ell_3+2}{2}} A_3\left(\frac{\phi^2}{\J[\phi]^{\gamma+1}}\right)\|_{L^2 } \lesssim \tau^{-\frac23-2\gamma-\frac{\ell_3}{n}}$ and by~\eqref{Linfty2}
$\|w^{\ell_4-2} A_4 H \|_{L^\infty }\lesssim \tau^{\frac12(\frac{14}3-\gamma)}(D^N)^{\frac12}$. Plugging this into~\eqref{E:CASEI1} we obtain the upper bound $ \ve \tau^{\delta^*}(D^N)^{\frac12}$
just like in~\eqref{E:CASEI0}.
%\bcr 
If on the other hand $\ell_4\le 2$, we replace the $L^\infty$-bound of $w^{\ell_4-2} A_4 H$ by an $L^\infty$-bound on $A_4H$ provided by~\eqref{Linfty1}. % \ec 
This allows us to estimate the first line of~\eqref{E:CASEI0} by
\begin{align*}
& \ve \tau^{\frac43+\frac12(\gamma-\frac23)-\frac{\ell_1+\ell_2+2}{n}}
\|w^{\frac{N+i-2\ell_3-2}{2}}\|_{\infty}\|q_2\et w^{\frac{\alpha-N+2\ell_3+2}{2}} A_3\left(\frac{\phi^2}{\J[\phi]^{\gamma+1}}\right)\|_{L^2 }  \| A_4 H \|_{\infty}  \\
& \lesssim \ve \tau^{\delta^*}(D^N)^{\frac12},
\end{align*}
where we have used $N+i-2\ell_3-2\ge N+i-2(i-1)-2=0$, Lemma~\ref{L:PHILEMMA}, and~\eqref{Linfty1}.

\noindent
{\em Case I-2. $\ell_3<\ell_4$.}
If $\ell_4\le2$ then we are in the regime that has already been discussed above. Assume $\ell_4\ge3$. If $\ell_3\ge2$ we use~\eqref{E:PHILEMMA3} and~\eqref{L21} to obtain
\begin{align}
& \ve \tau^{\frac43+\frac12(\gamma-\frac23)-\frac{\ell_1+\ell_2+2}{n}}  \| w^{\frac{\alpha+i}{2}} p_{1,-\frac{\ell_1+\ell_2+2}{n}}\et q_2\et  A_3\left(\frac{\phi^2}{\J[\phi]^{\gamma+1}}\right)  \lv A_4 H\rv\|_{L^2 } \notag \\
& \lesssim \ve \tau^{\frac43+\frac12(\gamma-\frac23)-\frac{\ell_1+\ell_2+2}{n}}   \| w^{\frac{i-2(\ell_3+\ell_4)+N}{2}}\|_{\infty} \|q_2\et  w^{\ell_3} A_3\left(\frac{\phi^2}{\J[\phi]^{\gamma+1}}\right)\|_{\infty}
\|w^{\frac{\alpha+2\ell_4-N}{2}}A_4H\|_{L^2 } \notag \\
& \lesssim \ve \tau^{\frac43+\frac12(\gamma-\frac23)-\frac{\ell_1+\ell_2+2}{n}-\frac23-2\gamma-\frac{\ell_3}{n}+\frac12(\frac{14}3-\gamma)} (D^N)^{\frac12} \notag \\
& \lesssim \ve \tau^{\delta^*} (D^N)^{\frac12}.
\end{align}
We have used the inequality $i-2(\ell_3+\ell_4)+N\ge N-i\ge0$. The case $\ell_3\le2$ is handled similarly, with~\eqref{E:PHILEMMA1} instead of~\eqref{E:PHILEMMA3}.

\noindent
{\em Case II. $\ell_3=i$.}
In this case we need to bound 
\be\label{E:VCASE2}
\ve \tau^{\frac12(\gamma-\frac23)}  \|\frac{1}{g^2 r^2g^{00}} V\left(\frac{\phi^2}{\J[\phi]^{\gamma+1}}\right) \DL(3\phia^2 + 2\phia \DL\phia ) H \|_{\alpha+i+2}
\ee
with $V\in \bar{\mathcal P}_i$. If $i\in\{0,1\}$ we can use~\eqref{E:PHILEMMA1} and if $2\le i\le N-1$ we may use~\eqref{E:PHILEMMA2} to bound $\|V\left(\frac{\phi^2}{\J[\phi]^{\gamma+1}}\right)\|_{L^2 }$
and $\|w^{\frac{\alpha-N+2i+2}{2}}V\left(\frac{\phi^2}{\J[\phi]^{\gamma+1}}\right)\|_{L^2 }$ respectively. The remaining terms are estimated in $L^\infty$ and we conclude that the expression in~\eqref{E:VCASE2} 
is bounded by $ \ve \tau^{\delta^*} (D^N)^{\frac12}$ just like above. If however $i = N$ we must use~\eqref{E:PHILEMMA4}. It then follows that the expression in~\eqref{E:VCASE2} is bounded by
\begin{align}
&\sqrt\ve \tau^{\frac12(\gamma-\frac23)}  \sqrt\ve \|q_2\et w^{\frac{\alpha+N+2}2}V\left(\frac{\phi^2}{\J[\phi]^{\gamma+1}}\right)\|_{L^2 }\|q_{-2}\et \DL(3\phia^2 + 2\phia \DL\phia )\|_{L^\infty} \|H\|_{\infty} \notag \\
%& \sqrt\ve \tau^{\frac12(\gamma-\frac23)}  \sqrt\ve \|w^{\frac{\alpha+N+2}2}V\left(\frac{\phi^2}{\J[\phi]^{\gamma+1}}\right)\|_{L^2}\|\DL(3\phia^2 + 2\phia \DL\phia )\|_{L^\infty} \|H\|_{L^\infty}
& \lesssim \sqrt\ve  \tau^{\frac12(\gamma-\frac23)-\frac23-2\gamma-\frac{N+2}{n}+\frac43+ \frac12(\frac{14}3-\gamma)}  (D^N)^{\frac12} \notag \\
& \lesssim \sqrt\ve \tau^{\delta^*} (D^N)^{\frac12}. \label{E:SAMEBOUND}
\end{align} 

The 3rd-7th %\bcr
 term %\ec
 in~\eqref{E:LLOWONE} are estimated analogously. 
%\bcr 
Note that the terms $\pa_\tau H$ and $\frac{H}{\tau}$ and similarly $\frac{\pa_\tau H}{\tau}$ and
$\frac{H}{\tau^2}$ are on equal footing from the energy stand point or more precisely
\begin{align*}
\tau^{\gamma-\frac53} \left(\|\pa_\tau H\|_{\alpha+j}^2 + \|\frac H\tau\|_{\alpha+j}^2\right) & \lesssim E^N \\
\tau^{\gamma-\frac83} \left(\|\pa_\tau H\|_{\alpha+j}^2 + \|\frac H\tau\|_{\alpha+j}^2\right) & \lesssim D^N,
\end{align*}
where we recall the definitions~\eqref{E:ENDEF}--\eqref{E:DNDEF} of $E^N$ and $D^N$.
In particular, the estimates for the 3rd, 5th, and the 7th term in~\eqref{E:LLOWONE} are very similar and we sketch the details for the 7th (next-to-last) term.
By the product rule~\eqref{E:product} we have
\begin{align}
\D_i \left(w c[\phi] \frac{M_g ^2}{ g^2r^2} \frac{H_\tau}{\tau} \right)
= \sum_{A_{1,2}\in\bar{\mathcal P}_{\ell_1,\ell_2}, A_3 \in\mathcal P_{\ell_3} \atop \ell_1+\ell_2+\ell_3=i}
\tau^{-1}A_1\left(\frac {M_g ^2 w}{g^2r^2}\right) A_2 \left(\frac{\phi^4}{\J[\phi]^{\gamma+1}}\right) A_3 H_\tau
\end{align}
A case-by-case analysis analogous to the one above, Lemma~\ref{L:PHILEMMA}, and Lemmas~\ref{L:L2WEIGHTED}--\ref{L:LINFTYWEIGHTED} yield
\begin{align}
\tau^{\frac12(\gamma-\frac23)} \|\D_i\left(\left(w c[\phi] \frac{M_g ^2}{ g^2r^2} \frac{H_\tau}{\tau}\right)\right)\|_{\alpha+i}
&\lesssim \| r^{2n-2}q_{-\gamma-1}\et \|_{L^\infty} \tau^{\frac12(\gamma-\frac23)} \tau^{\frac23 -2\gamma +\frac 12(\frac 23-\gamma)} (D^N)^{\frac12} \notag \\
& \lesssim \tau^{\frac 83-2\gamma-\frac 2n} \left\| \frac{\et^{2-\frac 2n}}{(1+\et)^{\gamma+1}}\right\|_\infty (D^N)^{\frac12} \notag \\
& \lesssim \tau^\delta \left\|p_{2,-\frac 2n} \et\right\|_{\infty} (D^N)^{\frac12} \lesssim \tau^\delta (D^N)^{\frac12}
\end{align}
The same bound, with $\frac{H_\tau}{\tau}$ replaced by $\frac{H}{\tau^2}$ follows analogously.

The 4-th and the 6-th term on the right-hand side of~\eqref{E:LLOWONE} are easier to bound. In the 6-th term the factor $w'$ gives a regularising power of $r$ near the center $r=0$
due to the bound $|\pa_r^a w'|\le r^{n-a-1}$ (which in turn follows from~\eqref{E:FM}). Similarly, the presence of $ \DL\J[\phia]$ in the 4-th term, by virtue of~\eqref{E:QBOUND3} affords a power of $\et$ in our estimates, which again counteracts any potential singularities coming from the negative powers of $r$ near $r=0$. Routine application of Lemmas~\ref{L:L2WEIGHTED}--\ref{L:LINFTYWEIGHTED} and Lemma~\ref{L:PHILEMMA} yields the desired bound.
%\ec

To estimate the last line in~\eqref{E:LLOWONE} we first observe that
\[
 (\DL(\phi^2M_g ) + \phi^2M_g  + 2\phi \DL\phia M_g ) = \phi^2\left( \DL M_g +M_g  \right) +2\phi (2\DL\phia M_g  + \DL\phi M_g ).
\]
Therefore
\begin{align*}
& -\gamma w\frac{\phi^2}{g^2\J[\phi]^{\gamma+1}  r^2} \big[ (\DL(\phi^2M_g ) + \phi^2M_g  + 2\phi \DL\phia M_g ) \\
& = -\gamma w\frac{\phi^4}{g^2\J[\phi]^{\gamma+1}  r^2}   \left( \DL M_g +M_g  \right)  -  2\gamma w\frac{\phi^3}{g^2\J[\phi]^{\gamma+1}  r^2} (2\DL\phia M_g  + \DL\phi M_g ).
\end{align*}
We can therefore break up the last line of~\eqref{E:LLOWONE} into a sum of terms that are of similar structure as the ones showing up above, and thus the estimate follows analogously and thus obtain the same bound as in~\eqref{E:SAMEBOUND}.
\end{proof}

%%%%%%%%%%%%%%%%%%%%%%%%%%%%%%%%%%%%%%%%%%%%%%%%%%
%%%%%%%%%%%%%%%%%%%%%%%%%%%%%%%%%%%%%%%%%%%%%%%%%%

\begin{lemma}[Estimates for $\mathscr L_{\text{low}}^2$]\label{L:LLOW2LEMMA} 
%\bcg 
Let $H$ be a solution to~\eqref{E:H2} on a time interval $[\kappa,T]$ for some $T\le1$ and assume that the a priori assumptions~\eqref{E:APRIORI} and the bootstrap assumption~\eqref{E:BOOTSTRAP} hold. Then 
%\ec
%The same assumptions as in Proposition \ref{P:LLOW} hold.  Then we have 
\begin{align}
\ve \tau^{\frac12(\gamma-\frac23)} \| \mathcal D_i \left(\frac1{g^{00}}\mathscr L_{\text{low}}^2 H\right) \|_{\alpha+i} \lesssim \sqrt \ve \tau^{\min\{\delta^\ast,\frac\delta2\}-\frac12} (E^N)^{\frac12}.
\end{align}
\end{lemma}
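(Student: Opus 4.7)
The argument closely parallels the proof of Lemma~\ref{L:LLOW1LEMMA}: for each of the three summands in~\eqref{E:LLOWTWO} apply the product rule of Appendix~\ref{A:proofs} to commute $\mathcal D_i$ through, and then bound each resulting product by placing one factor in $L^\infty$ (weighted by a power of $w$) and the other in a weighted $L^2$, invoking Lemma~\ref{L:PHILEMMA} together with the Hardy--Sobolev embeddings of Appendix~\ref{A:HSembedding}. The essential structural difference from Lemma~\ref{L:LLOW1LEMMA} is that every summand of $\mathscr L_{\text{low}}^2 H$ contains a \emph{first spatial derivative of $H$} (either $\pa_r H$ directly, or $r\pa_r(H/r)=\pa_r H - H/r$), rather than $H$ or $\pa_\tau H$ alone. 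Consequently the derivative factor must be controlled through the third, elliptic piece of the $E^N$ energy (with its characteristic weight $\sqrt{\ve}\,\tau^{-(\gamma+1)/2}\|q_{-(\gamma+1)/2}(r^n/\tau)\,\mathcal D_{j+1}H\|_{\alpha+j+1}$) rather than through $D^N$; this explains both the $\sqrt{\ve}$ prefactor (instead of $\ve$) and the $\tau^{\min\{\delta^\ast,\delta/2\}-1/2}$ decay (instead of integrable-in-$\tau$).

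For the representative middle summand, write $F:=wc[\phi] M_g/(rg^{00})$. The product rule gives
\[
\mathcal D_i\!\left(\tfrac{1}{\tau}F\,\pa_r H\right) = \tfrac{1}{\tau}\!\!\sum_{\ell_1+\ell_2=i}\! c^{A_1A_2}_i\, A_1F\cdot A_2(\pa_r H),\qquad A_1\in\bar{\mathcal P}_{\ell_1},\ A_2\in\mathcal P_{\ell_2}.
\]
In the representative subcase $\ell_1\in\{0,1\}$ we rewrite $A_1F\cdot A_2\pa_rH$ as $[q_{(\gamma+1)/2}\et w^{-1/2}A_1F]\cdot[q_{-(\gamma+1)/2}\et A_2\pa_rH]$ and place the first bracket in $L^\infty$ and the second in $L^2_{\alpha+i+1}$; the latter is exactly what is bounded by $\ve^{-1/2}\tau^{(\gamma+1)/2}(E^N)^{1/2}$ from the definition of $E^N$. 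The first bracket is estimated pointwise, via $|c[\phi]|\lesssim\tau^{\delta-2+2/n}q_{-\gamma-1}\et$, $|M_g|\lesssim r^n$, and the algebraic inequality $(\gamma+1)/2\ge 1-1/n$ (valid for every $\gamma>1$), which ensures that $q_{-(\gamma+1)/2}\et\cdot\et^{(n-1)/n}$ is bounded on $[0,\infty)$; one finds $\|q_{(\gamma+1)/2}\et\,w^{-1/2}F\|_\infty\lesssim\tau^{\delta-1+1/n}$. Multiplying through,
\[
\ve\,\tau^{(\gamma-2/3)/2}\cdot\tau^{-1}\cdot\tau^{\delta-1+1/n}\cdot\ve^{-1/2}\tau^{(\gamma+1)/2}(E^N)^{1/2}\;\lesssim\;\sqrt{\ve}\,\tau^{\delta/2-1/2}(E^N)^{1/2}
\]
after absorbing $\tau^{1/n}$. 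The first and third summands in~\eqref{E:LLOWTWO}, whose coefficients $w\phi^3\DL\phia/(g^2\mathscr J[\phi]^{\gamma+1}r)$ and $w\phi^4\DL\mathscr J[\phia]/(g^2\mathscr J[\phia]^{\gamma+2}r)$ both scale pointwise like $\tau^{\delta-2+1/n}$ after pulling off $q_{(\gamma+1)/2}\et/w^{1/2}$ (using Lemma~\ref{L:DIQBOUNDS} and~\eqref{E:DPHIZEROBOUND}), produce the identical bound.

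For larger values of $\ell_1$ the argument branches exactly as in Cases~I--II of Lemma~\ref{L:LLOW1LEMMA}: the weighted $L^2$ and $L^\infty$ embeddings together with Lemma~\ref{L:PHILEMMA} place the coefficient factor in $L^2$ with $w^{\alpha-N+2\ell_1+2}$ when the derivative count on $H$ is small, and in a weighted $L^\infty$ otherwise. The extreme case $\ell_1=i=N$ invokes~\eqref{E:PHILEMMA4}, at the cost of an extra $\sqrt{\ve}$ which is exactly absorbed by the $\ve$-prefactor in $\mathscr L_{\text{low}}^2$; this case produces the $\tau^{\delta^\ast-1/2}$ piece of the minimum. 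The principal obstacle throughout is the precise bookkeeping of weights: the single peeled-off $w^{1/2}$ must compensate the $w$-mismatch between $\|\cdot\|_{\alpha+i}$ and $\|\cdot\|_{\alpha+i+1}$, while the $q_{(\gamma+1)/2}(r^n/\tau)$ factor must simultaneously absorb the $\et^{(n-1)/n}$ coming from $r^{n-1}$ and the intrinsic $q_{-\gamma}$ or $q_{-\gamma-1}$ weights in the coefficient. Both requirements rest on the supercriticality $\gamma>1$ together with the flatness condition on $n$ from Definition~\ref{D:ALAMBDADEF}.
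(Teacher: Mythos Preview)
Your proposal is correct and follows essentially the same approach as the paper: product-rule expansion, case analysis by which factor carries the most derivatives, and the crucial use of the elliptic piece of $E^N$ (at the cost of $\sqrt\ve$) to absorb the top spatial derivative of $H$, together with Lemma~\ref{L:PHILEMMA} (and~\eqref{E:PHILEMMA4} at order $N$, again costing $\sqrt\ve$) for the coefficient. Your identification of the two competing exponents---$\tau^{\delta/2-1/2}$ when the top derivative lands on $H$ and $\tau^{\delta^\ast-1/2}$ when it lands on the coefficient---matches the paper's Cases~I-2 and~I-1 exactly.

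The only substantive difference is the choice of representative summand. You lead with the middle term $wc[\phi]M_g/(r\tau g^{00})\,\pa_r H$, whose coefficient is structurally simplest (no $\DL\phia$ or $\DL\J[\phia]$ factor), whereas the paper treats the first term in full. For the first and third summands your one-line claim that the coefficients ``scale pointwise like $\tau^{\delta-2+1/n}$'' is correct but hides a point the paper makes explicit: after the $r^{-1}$ is converted to $\tau^{-1/n}\et^{-1/n}$, the resulting negative power $\et^{-1/n}$ must be absorbed by the refined gain $p_{1,0}\et+p_{\lambda,-2/n}\et$ in the bound~\eqref{E:BARDIDQ} for $\DL\phia$ (and~\eqref{E:QBOUND3} for $\DL\J[\phia]$), not by the cruder $q_1\et$. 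Your citation of ``Lemma~\ref{L:DIQBOUNDS} and~\eqref{E:DPHIZEROBOUND}'' points in the right direction but the precise inputs are~\eqref{E:BARDIDQ} and~\eqref{E:QBOUND3}; without that extra vanishing factor the $L^\infty$ bound on the coefficient would blow up at $r=0$.
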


%%%%%%%%%%%%%%%%%%%%%%%%%%%%%%%%%%%%%%%%%%%%%%%%%%

\begin{proof}
We focus on the first and the most complicated term on the right-hand side of~\eqref{E:LLOWTWO}.
Recall that $\r\left( \frac{H}{r}\right) = D_r H- 3\frac H r$.
By analogy to~\eqref{E:LLOWTEST} we have
\begin{align}
&\ve \tau^{\frac12(\gamma-\frac23)} \lv \mathcal D_i\left(\frac{w}{g^2 r g^{00}}\frac{\phi^3}{\J[\phi]^{\gamma+1} } \DL\phia (D_r H- 3\frac H r)\right) \rv \notag \\
& \lesssim \sum_{A_{2,3,4}\in\bar{\mathcal P}_{\ell_2,\ell_3,\ell_4}, A_1\in \mathcal P_{\ell_1} \atop \ell_1+\dots+\ell_4=i}  
 \ve \tau^{\frac12(\gamma-\frac23)} \lv A_1\left(\frac{w}{g^2 r g^{00}}\right) \rv \lv A_2 \DL\phia \rv \lv A_3\left(\frac{\phi^3}{\J[\phi]^{\gamma+1}}\right)\rv  \lv A_4 (D_r H- 3\frac H r)\rv \label{E:LLOWTEST2}
\end{align}

{\em Case I-1. $\ell_3=i$.}
In this case $\ell_1=\ell_2=\ell_4=0$ and we note that 
\be\label{E:NOTETHAT}
\lv \frac{w}{g^2 r g^{00}} \rv \lesssim w \et^{-\frac1n} \tau^{-\frac1n}, \ \ 
\lv \DL\phia\rv \lesssim %\bcr 
\tau^{\frac23} q_1\et \left(p_{1,0}\et + p_{\l,-\frac2n}\et\right), %\ec
\ee
%\bcr 
where we we have used~\eqref{E:BARDIDQ}.  %\ec 
Therefore we bound the $\|\cdot\|_{\alpha+i}$ norm of the last line of~\eqref{E:LLOWTEST2} by
\begin{align}
\ve \tau^{\frac12(\gamma-\frac23)-\frac1n+\frac23+\frac12(\frac{11}{3}-\gamma)} 
%\bcr 
\left(p_{1,0}\et + p_{\l,-\frac2n}\et\right) %\ec
 \|q_1\et w^{\frac{\alpha+i+2}{2}}A_3\left(\frac{\phi^3}{\J[\phi]^{\gamma+1}}\right)\|_{L^2 } (E^N)^{\frac12}
\label{E:INBETWEEN1}
\end{align}
If $i=N$ by~\eqref{E:PHILEMMA4} we have
\begin{align*}
\sqrt \ve \|q_1\et w^{\frac{\alpha+N+2}{2}}A_3\left(\frac{\phi^3}{\J[\phi]^{\gamma+1}}\right)\|_{L^2 } \lesssim \tau^{-2\gamma-\frac{N}{n}}.
\end{align*}
Since $\frac12(\gamma-\frac23)-\frac1n+\frac23+\frac12(\frac{11}{3}-\gamma)-2\gamma = \delta^*+\frac1n-\frac12$ %\bcr 
(recall~\eqref{E:DELTASTARDEF}), and $\left(p_{1,0}\et + p_{\l,-\frac2n}\et\right)\lesssim 1$, %\ec 
it follows that 
~\eqref{E:INBETWEEN1} is bounded by
\begin{align}\label{E:THEBOUND}
\sqrt\ve \tau^{\delta^\ast-\frac12}(E^N)^{\frac12}
\end{align}
as needed.
If $2\le i \le N-1$ we use~\eqref{E:PHILEMMA2} instead and if $i=1$ we use~\eqref{E:PHILEMMA1} instead, to bound~\eqref{E:INBETWEEN1} by $\ve \tau^{\delta^\ast-\frac12}(E^N)^{\frac12}$.

\noindent
{\em Case I-2. $\ell_4=i$.}
In this case $\ell_1=\ell_2=\ell_3=0$ and $A_4=\bar{\D}_i$. Using~\eqref{E:NOTETHAT} and~\eqref{E:PHILEMMA1} we can bound the last line of~\eqref{E:LLOWTEST2} by
\begin{align}
&\ve \tau^{\frac12(\gamma-\frac23)-\frac1n+\frac23} p_{1,0}\et \| w^{\frac{\alpha+i+2}{2}} q_{-\frac{\gamma+1}{2}}\et A_4(D_r H- 3\frac H r)\|_{L^2}
\| q_{\frac{3+\gamma}{2}}\et\left(\frac{\phi^3}{\J[\phi]^{\gamma+1}}\right)\|_{\infty} \notag \\
&\lesssim \ve \tau^{\frac12(\gamma-\frac23)-\frac1n+\frac23-2\gamma}  \| q_{-\frac{\gamma+1}{2}}\et w^{\frac{\alpha+i+2}{2}}A_4(D_r H- 3\frac H r)\|_{L^2}.
\label{E:INBETWEEN2}
\end{align}
If $i=N$ we have $\bar{\D}_ND_r = \D_{N+1}$. Therefore, by~\eqref{L22}, %\bcr 
$w^{\alpha+N+2}\lesssim w^{\alpha+N+1}$, and $q_{-\frac{\gamma+1}{2}}\et\lesssim 1$, %\ec
\[
\sqrt \ve\| q_{-\frac{\gamma+1}{2}} w^{\frac{\alpha+N+2}{2}}\bar{\D}_ND_r H\|_{L^2} 
\lesssim  \tau^{\frac{\gamma+1}{2}} (E^N)^{\frac12}.
\]
On the other hand, using~\eqref{E:TOPORDERBOUND} we additionally have
\begin{align}
\sqrt \ve\| q_{-\frac{\gamma+1}{2}}\et w^{\frac{\alpha+N+2}{2}}{\bar \D}_i\left(\frac H r\right)\|_{L^2}
\lesssim \sqrt \ve \| q_{-\frac{\gamma+1}{2}}\et w^{\frac{\alpha+N+2}{2}}\D_{N+1} X\|_{L^2} 
\lesssim \tau^{\frac12(\gamma+1)}(E_N)^{\frac12}. \notag 
\end{align}
Plugging the last bounds into the last line of~\eqref{E:INBETWEEN2} % \bcr 
and recalling~\eqref{E:DELTADEF} %\ec
we bound it by 
\begin{align}
\sqrt \ve\tau^{\frac12(\gamma-\frac23)-\frac1n+\frac23-2\gamma+\frac{\gamma+1}{2}} (E^N)^{\frac12} =\sqrt\ve \tau^{\frac{\delta-1}{2}}(E^N)^{\frac12}.
\end{align}
If $2\le i \le N-1$ we use~\eqref{L21} instead of~\eqref{L22} above and obtain the upper bound $\ve \tau^{\delta^\ast-\frac12}(E^N)^{\frac12}$. Similarly, if $i\le2$ we may use~\eqref{Linfty1} instead.

\noindent
{\em Case II. $\ell_3,\ell_4\le i-1$.}
%\bcr 
Recalling~\eqref{E:BARDIDQ} and the bound $|A_{1}(\frac{w}{g^{2}rg^{00}})|\lesssim r^{-1-\ell_{1}}$, %\ec 
by~\eqref{E:NOTETHAT} we have 
\begin{align}
& \ve \tau^{\frac12(\gamma-\frac23)} \lv A_1\left(\frac{w}{g^2 rg^{00}}\right) \rv \lv A_2\DL\phia\rv \lv A_3\left(\frac{\phi^3}{\J[\phi]^{\gamma+1}}\right)\rv  \lv A_4 (D_r H- 3\frac H r)\rv  \notag \\
& \lesssim \ve  \tau^{\frac12(\gamma-\frac23)+\frac23 - \frac{\ell_1+\ell_2+1}{n} }  
%\bcr
 \left(p_{1,-\frac{\ell_2}{n}}\et + p_{\l,-\frac{\ell_2+2}n}\et\right)% \ec
 \lv q_1\et A_3\left(\frac{\phi^3}{\J[\phi]^{\gamma+1}}\right)\rv  \lv A_4 (D_r H- 3\frac H r)\rv
\label{E:LLOWTEST3}
\end{align}

\noindent
{\em Case II-1. $\ell_3\le\ell_4\le i-1$.}
If $\ell_4\le1$ %\bcr 
and therefore $\ell_3\le1$,%\ec 
we can estimate the $\|\cdot\|_{\alpha+i}$-norm of the last line of~\eqref{E:LLOWTEST3}  using~\eqref{E:PHILEMMA1} and~\eqref{Linfty11} by
\begin{align}
& \ve  \tau^{\frac12(\gamma-\frac23)+\frac23 - \frac{\ell_1+\ell_2+1}{n} }  \|q_1\et A_3\left(\frac{\phi^3}{\J[\phi]^{\gamma+1}}\right)\|_{\infty}  
\| A_4 (D_r H- 3\frac H r)\|_{\infty} \notag \\
&\lesssim \ve \tau^{ \frac12(\gamma-\frac23)-\frac{\ell_1+\ell_2+\ell_3+1}{n}+\frac23+\frac12(\frac{11}{3}-\gamma)-2\gamma }(E^N)^{\frac12} \notag \\
&\lesssim \ve \tau^{\delta^\ast-\frac12}(E^N)^{\frac12}
\end{align}

If $2\le \ell_4\le i-1$, assume first that $\ell_3\ge2$. We rely on~\eqref{L21} and~\eqref{E:PHILEMMA3} to bound the $\|\cdot\|_{\alpha+i}$-norm of the last line of~\eqref{E:LLOWTEST3} by
\begin{align}
&\ve \tau^{\frac12(\gamma-\frac23)+\frac23-\frac{\ell_1+\ell_2+\ell_3+1}{n}} \|w^{\frac{i+N-2(\ell_3+\ell_4+1)}{2}}\|_{\infty}
\|w^{\ell_3}q_1\et A_3\left(\frac{\phi^3}{\J[\phi]^{\gamma+1}}\right)\|_{\infty} \notag \\
& \|w^{\frac{\alpha+2(\ell_4+1)-N}{2}}A_4(D_r H- 3\frac H r)\|_{L^2} \notag \\
&\lesssim \ve \tau^{\frac12(\gamma-\frac23)+\frac23-2\gamma+\frac12(\frac{11}3-\gamma)-\frac{\ell_1+\ell_2+\ell_3+1}{n}} (E^N)^{\frac12}
 \lesssim \ve \tau^{\delta^\ast-\frac12}(E^N)^{\frac12}, \label{E:OBTAINEDBOUND}
\end{align}
where we have used the bound $i+N-2(\ell_3+\ell_4+1)\ge0$, which is true if $\ell_3+\ell_4\le i-1$,  to bound $ \|w^{\frac{i+N-2(\ell_3+\ell_4+1)}{2}}\|_{\infty}$ by a constant. If on the other hand $\ell_3+\ell_4=i$, then $\ell_1=0$ and therefore we have an additional power of $w$ in our estimate which by the same idea as above allows us to obtain the bound~\eqref{E:OBTAINEDBOUND}.
 
If $\ell_3\le1$ we then use~\eqref{E:PHILEMMA1} instead of~\eqref{E:PHILEMMA3} and deduce the same bound analogously.

\noindent
{\em Case II-2. $\ell_4\le\ell_3\le i-1$.}
This case is handled analogously to the case II-1 above and relies on a similar case distinction ($\ell_4\ge2$ and $\ell_4\le1$) as well as Lemma~\ref{L:PHILEMMA} and
estimates~\eqref{Linfty1},~\eqref{L21}.

This completes the bound of the first term on the right-hand side of~\eqref{E:LLOWTWO}. The estimates for the remaining 2 terms proceed analogously. Note that we use~\eqref{E:DJQBOUND}
crucially to estimate the third term on the right-hand side of~\eqref{E:LLOWTWO}.
\end{proof}

%%%%%%%%%%%%%%%%%%%%%%%%%%%%%%%%%%%%%%%%%%%%%%%%%%
%%%%%%%%%%%%%%%%%%%%%%%%%%%%%%%%%%%%%%%%%%%%%%%%%%

%%%%%%%%%%%%%%%%%%%%%%%%%%%%%%%%%%%%%%%%%%%%%%%%%%

\subsection{High order commutator estimates}

The goal of this section is the following proposition.

%%%%%%%%%%%%%%%%%%%%%%%%%%%%%%%%%%%%%%%
%%%%%%%%%%%%%%%%%%%%%%%%%%%%%%%%%%%%%%%

\begin{proposition}\label{P:COMMESTIMATES}
Let $H$ be a solution to~\eqref{E:H2} on a time interval $[\kappa,T]$ for some $T\le1$ and assume that the a priori assumptions~\eqref{E:APRIORI} holds. Then 
\begin{align}
  \tau^{\frac12(\gamma-\frac23)} \| \mathcal C_i[H] \|_{\alpha+i} 
\lesssim \sqrt \ve \tau^{\delta^\ast} (D^N)^{\frac12} + \sqrt \ve \tau^{\frac\delta2-\frac12} (E^N)^{\frac12}  , \ \ i=1,\dots, N.
\end{align}
%\begin{align}
%\int_\kappa^\tau (\tau')^{\gamma-\frac53} \left\vert \left(\mathcal C_i[H] \ , \ \mathcal D_i H_\tau\right)_{\alpha+i}\right\vert \,d\tau' \lesssim \ve S_\kappa^N(\tau) +S_\kappa^N(\tau)^{\frac32}. 
%\end{align}
\end{proposition}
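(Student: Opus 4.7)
The plan is to prove Proposition~\ref{P:COMMESTIMATES} by treating each of the five summands of $\mathcal C_i[H]$ in~\eqref{E:COMMTOTALDEF} separately, following verbatim the case-analysis strategy already deployed in the proofs of Lemmas~\ref{L:LLOW1LEMMA} and~\ref{L:LLOW2LEMMA}. For every commutator $[\mathcal D_i,\Phi]\Psi$ I would first expand it via the product rule so that at least one derivative lands on the coefficient $\Phi$; the key point is that the purely top-order piece $\Phi\,\mathcal D_i\Psi$ is removed by the commutator structure, which is precisely what makes the estimate possible.

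For the first three terms, the coefficient factors are $\frac{g^{01}}{g^{00}}$, $\frac1{g^{00}}$ and $\frac{d^2}{g^{00}}$. Their weighted $L^2$ and $L^\infty$ bounds are supplied by Lemmas~\ref{L:GZEROZEROLEMMA}–\ref{L:GZEROONELEMMA}, and an analogous estimate for $d^2/g^{00}$ follows from the identity $d^2=m(m-1)-\frac{4\tau^2}{9\phi_{\text{app}}^3}$ together with Theorem~\ref{T:MAINBOUNDPHI} (each derivative of $d^2$ carries a factor of $\varepsilon\tau^{\delta-\frac{k}{n}}$ coming from $\phi_{\text{app}}-\phi_0=\sum_{j\ge1}\varepsilon^j\phi_j$). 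In each product $A_1(\Phi)\cdot A_2(\Psi)$ I would distinguish cases by which factor carries the most derivatives: if $\Psi$ carries all $i$ derivatives, I bound it in $L^2$ using the norms hidden inside $E^N$ or $D^N$ and use $L^\infty$ on $\Phi$; if $\Phi$ does, I reverse the roles, invoking~\eqref{L21}--\eqref{L22} for the coefficient; for intermediate splittings I use the Hardy–Sobolev embeddings of Lemmas~\ref{L:L2WEIGHTED}--\ref{L:LINFTYWEIGHTED}. In every split, the coefficient $\Phi$ produces a factor $\varepsilon\tau^{\delta-\frac{k}{n}}$ (or the equivalent for $d^2/g^{00}$), and the total $\tau$-exponent works out to exactly $\delta^\ast$ or $\frac{\delta-1}{2}$ as required, absorbing any negative power of $r$ via the built-in $\et$-weights, just as in~\eqref{E:CASEI0} and~\eqref{E:OBTAINEDBOUND}.

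The fourth term $\varepsilon\gamma\frac{c[\phi]}{g^{00}}\sum_{j=0}^{i-1}\zeta_{ij}\mathcal D_{i-j}H$ is the mildest: the sum is strictly lower order in $H$, $\zeta_{ij}$ (given by~\eqref{pij}) introduces controllable negative powers of $r$ which are absorbed by the $q_{-\gamma-1}(r^n/\tau)\cdot r^{2n-2}$-structure of $c[\phi]$ (see~\eqref{E:CEQUIVALENCE} and~\eqref{E:GZEROZEROBOUND}), and the prefactor $\varepsilon$ already exceeds the required $\sqrt\varepsilon$.

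The main obstacle, and the step I expect to require most care, is the last term $\varepsilon\gamma[\bar{\mathcal D}_{i-1},\frac{c[\phi]}{g^{00}}]D_rL_\alpha H$, because $D_rL_\alpha H$ is already of order $i+2$ in $H$ and matches the top-order contribution that appears in $D^N$ only with the critical weight $\sqrt\varepsilon\,\tau^{-(\gamma+2)/2}q_{-(\gamma+2)/2}(r^n/\tau)$. Expanding the commutator removes only the piece where all of $\bar{\mathcal D}_{i-1}$ falls on $D_rL_\alpha H$. In the remaining terms I would apply Lemma~\ref{L:PHILEMMA} to $c[\phi]=\phi^4\J[\phi]^{-\gamma-1}/g^2$ (with $a=4$, $b=-\gamma-1$, $c=\frac{\gamma+1}{2}$ or $\frac{\gamma+2}{2}$) to produce exactly the weight structure needed to match the top-order dissipation, pairing the case where $\bar{\mathcal D}_{i-1}$ hits the coefficient with fewer derivatives on $L_\alpha H$ via $L^\infty$, and the case where $i$ or $i+1$ derivatives hit $H$ via the $\sqrt\varepsilon$-weighted $L^2$ absorbed inside $D^N$. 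The prefactor $\varepsilon$ combines with the $\sqrt\varepsilon$ one must surrender to $D^N$ in the top-order case to leave $\varepsilon^{3/2}\lesssim\sqrt\varepsilon$, as required. The numerological constraint~\eqref{E:DELTALOWBOUND} on $n$ relative to $N$ ensures that all negative powers of $r$ arising from applying $\D_j$ to $c[\phi]$ are absorbed by $\et$-type weights, closing the estimate with exactly the $\sqrt\varepsilon\tau^{\delta^\ast}(D^N)^{1/2}+\sqrt\varepsilon\tau^{(\delta-1)/2}(E^N)^{1/2}$ bound.
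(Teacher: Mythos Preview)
Your proposal is correct and follows essentially the same approach as the paper's proof (packaged there as Lemma~\ref{L:COMMUTATORLEMMA}): the five summands are handled separately via the commutator expansions~\eqref{E:DiXP} and~\eqref{E:DYP}, case analysis on which factor carries the most derivatives, and the preparatory bounds of Lemmas~\ref{L:GZEROZEROLEMMA}, \ref{L:GZEROONELEMMA}, and~\ref{L:PHILEMMA}. One small clarification: in the fifth commutator the top-order piece $w\,\D_{N+1}H$ (which appears because $\bar\D_{i-2}D_rL_\alpha H=\D_{i-1}\bigl(-w\D_2H-(1+\alpha)w'D_rH\bigr)$) is placed in $(E^N)^{1/2}$ via~\eqref{L22} rather than in $(D^N)^{1/2}$, and it is this term that produces the $\sqrt\varepsilon\,\tau^{\frac\delta2-\frac12}(E^N)^{1/2}$ contribution.
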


%%%%%%%%%%%%%%%%%%%%%%%%%%%%%%%%%%%%%%%
%%%%%%%%%%%%%%%%%%%%%%%%%%%%%%%%%%%%%%%
%%%%%%%%%%%%%%%%%%%%%%%%%%%%%%%%%%%%%%%%

\begin{lemma}[The commutator estimates] \label{L:COMMUTATORLEMMA}
%\bcr 
Let $H$ be a solution to~\eqref{E:H2} on a time interval $[\kappa,T]$ for some $T\le1$ and assume that the a priori assumptions~\eqref{E:APRIORI} holds.  %\ec
\begin{align}
\tau^{\frac12(\gamma-\frac23)} \left(\|\left[\D_i, \frac{1}{g^{00}} \right] \frac{\pa_\tau H }{\tau}\|_{\alpha+i} +
\| \left[\D_i,  \frac{g^{01}}{g^{00}} \pa_r\right]\pa_\tau H \|_{\alpha+i}
+ \|\left[\D_i, \frac{d^2}{g^{00}}\right] \frac{H}{\tau^2}\|_{\alpha+i} \right)
& \lesssim  \sqrt\ve \tau^{\delta^\ast} (D^N)^{\frac12} \label{E:COMMESTIMATES} \\
\ve \tau^{\frac12(\gamma-\frac23)}  \|\left[\bar\D_{i-1},   \frac{c[\phi]}{g^{00}}  \right] D_r L_\alpha H \|_{\alpha+i} \lesssim  
\sqrt\ve \tau^{\frac\delta2 -\frac12} (E^N)^{\frac12} + \ve \tau^{\delta^\ast} (D^N)^{\frac12}
\label{E:COMMESTIMATES2}\\
\ve \tau^{\frac12(\gamma-\frac23)} \| \frac{c[\phi]}{g^{00}}  \sum_{j=0}^{i-1} \zeta_{ij} \mathcal D_{i-j} H\|_{\alpha+i}  \lesssim 
\ve \tau^{\delta^\ast} (D^N)^{\frac12},
\label{E:COMMESTIMATES3}
\end{align}
%\bcr 
where we remind the reader that the coefficients $\zeta_{ij}$, $i=1,\dots, N$, $j=0,\dots i-1$ are defined in Lemma~\ref{L:COMM1}. %\ec
\end{lemma}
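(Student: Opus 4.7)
\textbf{Proof proposal for Lemma~\ref{L:COMMUTATORLEMMA}.} The strategy is uniform across the three estimates: expand each commutator via the product rule for the vector field class $\bar{\mathcal P}$ (Appendix~\ref{A:proofs}) so that it becomes a finite sum
\[
\sum_{\ell_1+\ell_2=i,\ \ell_1\ge1} A_1(\text{coefficient})\,A_2(\text{derivative of }H),
\]
and then case-split based on whether $\ell_2$ is small (put $A_2H$ into $L^\infty$ and the coefficient into $L^2$) or large (the reverse), precisely as in the proof of Lemma~\ref{L:LLOW1LEMMA}. The coefficient bounds needed are supplied by Lemmas~\ref{L:GZEROZEROLEMMA}, \ref{L:GZEROONELEMMA}, \ref{L:PHILEMMA} and Lemma~\ref{L:IMPORTANT}, while the $H$-factors are controlled by the Hardy--Sobolev embeddings of Appendix~\ref{A:HSembedding} that bound $\|A_2H\|_{\infty}$, $\|A_2\partial_\tau H\|_{\infty}$, $\|A_2H\|_{\alpha+\cdot}$ by $(E^N)^{1/2}$ or $(D^N)^{1/2}$ with the correct $\tau$-weights read off from Definition~\ref{D:NORMDEF}.

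For \eqref{E:COMMESTIMATES}, the three commutators have the common schematic form $[\D_i,A]B$ with $B\in\{\partial_\tau H/\tau,\,H/\tau^2\}$ or $B=\pa_r\pa_\tau H$, and $A\in\{1/g^{00},\,d^2/g^{00},\,(g^{01}/g^{00})\pa_r\}$. Expanding the commutator means at least one derivative always lands on $A$, so we gain a factor of $\ve\tau^{\delta-k/n}$ (from Lemma~\ref{L:GZEROZEROLEMMA}/\ref{L:GZEROONELEMMA}) while the $H$-factor stays at order at most $i\le N$ with weight powers matching the dissipation norm $D^N$. Since $B/\tau$ or $B/\tau^2$ contributes $\tau^{-1/2(\gamma-8/3)}(D^N)^{1/2}$ via~\eqref{E:DNDEF}, the product absorbs the prefactor $\tau^{(\gamma-2/3)/2}$ and yields $\sqrt\ve\,\tau^{\delta^\ast}(D^N)^{1/2}$ after using $\delta^\ast=\delta-N/n$. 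For the $g^{01}$-commutator one uses that a negative power of $w$ coming from differentiating $w$ inside $g^{01}$ is cancelled by the $w$ in $g^{01}$ itself (as noted after \eqref{E:GZEROONEBOUND1}).

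The bound \eqref{E:COMMESTIMATES2} is the main technical obstacle because $D_rL_\alpha H$ is of top order $\D_{N+1}H$, and because in the commutator $[\bar\D_{i-1},\,c[\phi]/g^{00}]D_rL_\alpha H$ at least one derivative lands on the coefficient, which reduces the order on $H$ to at most $N$. The plan is to write
\[
\bigl[\bar\D_{i-1},\tfrac{c[\phi]}{g^{00}}\bigr]D_rL_\alpha H
=\sum_{\ell_1+\ell_2=i-1,\ \ell_1\ge1} c\,A_1\!\bigl(\tfrac{c[\phi]}{g^{00}}\bigr)\,A_2(D_rL_\alpha H),
\]
and further decompose $A_2(D_rL_\alpha H)$ so that the top-order factor $\D_{\ell_2+2}H$ with $\ell_2+2\le i\le N$ appears, joined by lower-order factors that can be put in $L^\infty$. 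The key step is to recognize that $c[\phi]$ has the structure $\phi^4\J[\phi]^{-\gamma-1}$, so Lemma~\ref{L:PHILEMMA} applies with $a=4$, $b=-\gamma-1$, giving $\sqrt\ve$-scaled bounds in $L^2$ with the sharp weights $q_{-(\gamma+1)/2}(r^n/\tau)$ matching the energy functional $E^N$. In the top-order case $\ell_2+2=N$, pairing $\sqrt\ve\,\|q_{-(\gamma+2)/2}\D_NH\|_{\alpha+N}$ with the $L^\infty$ bound on the remaining factors produces $\sqrt\ve\,\tau^{\delta/2-1/2}(E^N)^{1/2}$; the strictly lower-order cases produce the $\ve\,\tau^{\delta^\ast}(D^N)^{1/2}$ branch.

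Finally, \eqref{E:COMMESTIMATES3} is the easiest: the coefficients $\zeta_{ij}$ from Lemma~\ref{L:COMM1} are polynomial combinations of $1/r$ and smooth functions, and $\D_{i-j}H$ with $j\ge1$ is of order at most $i-1\le N-1$, so it is controlled by the $D^N$-piece of the norm without loss of a top-order derivative. Applying Lemma~\ref{L:PHILEMMA} (with $a=4,\,b=-\gamma-1$) to $c[\phi]/g^{00}$ in $L^\infty$ on the bulk and in $L^2$ only when $i-j$ is close to $N$, and bounding $\D_{i-j}H$ by the $D^N$-controlled weighted norms, yields $\ve\,\tau^{\delta^\ast}(D^N)^{1/2}$ directly. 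The main obstacle throughout is ensuring the correct numerology in the $r^{-k}$ versus $(r^n/\tau)^{-k/n}$ trade-off so that the combined weight $\tau^{-k/n}q_{\nu}(r^n/\tau)$ always remains pointwise bounded, which is exactly the reason for the choice of $\lambda=2N/n$ in Definition~\ref{D:ALAMBDADEF} and for the gain $\delta^\ast$ defined in \eqref{E:DELTASTARDEF}.
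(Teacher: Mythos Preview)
Your outline for \eqref{E:COMMESTIMATES} and the overall case-splitting philosophy match the paper's approach, and your handling of \eqref{E:COMMESTIMATES3} is close in spirit (though the key structural point you gloss over is the explicit bound $|\zeta_{ij}|\lesssim r^{n-j-2}$ coming from $|\pa_r^k w|\lesssim r^{n-k}$, which is what converts the negative $r$-powers in \eqref{pij} into a regularizing factor $\et^{1-\frac{j+2}{n}}$ and produces the clean pointwise estimate $\lv\frac{c[\phi]}{g^{00}}\zeta_{ij}\rv\lesssim\tau^{\delta^\ast-1}$).

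There is, however, a genuine gap in your treatment of \eqref{E:COMMESTIMATES2}. You assert that ``at least one derivative lands on the coefficient, which reduces the order on $H$ to at most $N$,'' and then work with $\D_{\ell_2+2}H$ with $\ell_2+2\le N$. This miscounts: $D_rL_\alpha H$ is already third order in $H$, so after the commutator expansion the principal term is $(i-1)\pa_r\bigl(\tfrac{c[\phi]}{g^{00}}\bigr)\bar\D_{i-2}D_rL_\alpha H$, and since $\bar\D_{i-2}D_r=\D_{i-1}$ and $L_\alpha H=-w\D_2H-(1+\alpha)w'D_rH$, the top-order piece is $w\D_{i+1}H$. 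For $i=N$ this is $\D_{N+1}H$, one order above what you claim. The only place $\D_{N+1}H$ is controlled is through the $\ve$-weighted term $\ve\|q_{-\frac{\gamma+1}{2}}\D_{N+1}H\|_{\alpha+N+1}^2$ in $E^N$, and to access it you must (a) notice that the extra factor of $w$ inherited from $L_\alpha$ upgrades the weight from $w^{\alpha+N}$ to $w^{\alpha+N+2}\le w^{\alpha+N+1}$, and (b) pair $\pa_r(c[\phi]/g^{00})$ in $L^\infty$ via \eqref{E:PHILEMMA1} with the weight $q_{\frac{\gamma+1}{2}}$ so that the combined $\tau$-exponent comes out to $\frac\delta2-\frac12$. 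Your proposal uses the dissipation weight $q_{-\frac{\gamma+2}{2}}$ and pairs with $\D_NH$, which would not close. Without the observation that the top-order term carries an extra $w$ from the structure of $L_\alpha$, the estimate at the vacuum boundary fails.
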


%%%%%%%%%%%%%%%%%%%%%%%%%%%%%%%%%%%%%%

\begin{proof}
{\em Proof of~\eqref{E:COMMESTIMATES}.}
By~\eqref{E:DiXP} we have the formula
\begin{align}
\left[\D_i,  \frac{g^{01}}{g^{00}} \pa_r\right]\pa_\tau H
= i  \pa_r \left(\frac{g^{01}}{g^{00}}\right)\D_i H_\tau 
+ \sum_{A_1\in \bar{\mathcal P}_{\ell_1}, A_2\in {\mathcal P}_{\ell_2} \atop \ell_1+\ell_2 = i, \ \ell_1\ge1} c_i^{A_1A_2} A_1(\frac{g^{01}}{r g^{00}}) A_2 H_\tau \notag \\
 +\sum_{A_1 \in \bar{\mathcal P}_{\ell_1},  A_2\in \bar{\mathcal P}_{\ell_2} \atop \ell_1+\ell_2=i, \ \ell_1\ge2} {\bar c}_i^{A_1 A_2}  r A_1(\frac{g^{01}}{r g^{00}})  A_2 D_r H_\tau \label{E:COMMEXPLICIT}
\end{align}
Since $\lv\pa_r\left(\frac{g^{01}}{g^{00}} \right) \rv\lesssim \ve \tau^{\delta^\ast -1}$ by Lemma~\ref{L:GZEROONELEMMA}, %\bcr 
the bound $w^{\alpha+i}\lesssim w^{\alpha+2i-N}$, and definitions~\eqref{E:DELTADEF}--\eqref{E:DELTASTARDEF} of $\delta$ and $\delta^\ast$, %\ec 
we have
\begin{align}
\tau^{\frac12(\gamma-\frac23)} \|w^{\frac{\alpha+i}{2}}\pa_r \left(\frac{g^{01}}{g^{00}}\right)\D_i H_\tau \|_{L^2 } 
& \lesssim \ve \tau^{\delta^\ast + \frac12(\gamma-\frac83)} \|w^{\frac{\alpha+i}{2}}\D_i H_\tau\|_{L^2 } \notag \\
& \lesssim \ve \tau^{\delta^\ast} (D^N)^{\frac12}.
\end{align}

In order to bound the second term on the right-hand side of~\eqref{E:COMMEXPLICIT}
we distinguish several cases by analogy to Lemma~\ref{L:LLOW1LEMMA}.

\noindent
{\em Case I: $\ell_1\le\ell_2$.}
If $\ell_2\le2$ %\bcr 
and therefore $\ell_1\le2$, %\ec 
we can use~\eqref{Linfty1} and~\eqref{E:GZEROONE1} to obtain
\begin{align}
\tau^{\frac12(\gamma-\frac23)}\|w^{\frac{\alpha+i}{2}}A_1(\frac{g^{01}}{rg^{00}})  A_2 H_\tau\|_{L^2} & \lesssim \ve 
\tau^{\frac12(\gamma-\frac23)+\delta -1- \frac{\ell_1}{n}+\frac12(\frac83 -\gamma)}(D^N)^{\frac12} \notag \\
& \lesssim \ve \tau^{\delta^\ast} (D^N)^{\frac12}. 
\end{align}
If $3\le \ell_2\le N$ we again distinguish 2 cases. If $\ell_1\ge2$ we can use~\eqref{L21},~\eqref{E:GZEROONE3}, %\bcr 
and~\eqref{E:DELTADEF}--\eqref{E:DELTASTARDEF} %\ec 
to obtain
\begin{align}
& \tau^{\frac12(\gamma-\frac23)}\|w^{\frac{\alpha+i}{2}}A_1(\frac{g^{01}}{g^{00} r})  A_2 H_\tau\|_{L^2}  \notag \\
&\lesssim \tau^{\frac12(\gamma-\frac23)}\|w^{\frac{i+N-2(\ell_1+\ell_2)}{2}}\|_{\infty}\|w^{\ell_1}A_1(\frac{g^{01}}{g^{00} r})\|_\infty\|w^{\frac{\alpha-N+2\ell_2}{2}}  A_2 H_\tau\|_{L^2} \notag \\
&\lesssim \ve \tau^{\delta^\ast} (D^N)^{\frac12}.
\end{align}
If $\ell_1=1$ we then use~\eqref{E:GZEROONE1} instead of~\eqref{E:GZEROONE3} and obtain the same conclusion.

\noindent
{\em Case II: $\ell_1\ge\ell_2$.}
In this case we proceed analogously and rely crucially on Lemmas~\ref{L:GZEROONELEMMA} and estimates~\eqref{Linfty1}--\eqref{Linfty2}.
The only nonstandard situation occurs when $\ell_1=N$. In that case $\ell_2=0$ and we must use the bound~\eqref{E:GZEROONE4} together with~\eqref{Linfty1}. We then obtain
\begin{align}
\tau^{\frac12(\gamma-\frac23)}\|w^{\frac{\alpha+N}{2}}A_1(\frac{g^{01}}{g^{00} r}) H_\tau\|_{L^2} 
&\lesssim \tau^{\frac12(\gamma-\frac23)}\|w^{\frac{\alpha+N}{2}}A_1(\frac{g^{01}}{g^{00} r})\|_{L^2} \|H_\tau\|_{L^\infty} \notag \\
& \lesssim  \tau^{\frac12(\gamma-\frac23)+\frac12(\frac83-\gamma)+\delta-1 - \frac{N}{n}} (D^N)^{\frac12} \notag \\
& \lesssim %\bcr 
\sqrt\ve %\ec 
\tau^{\delta^\ast} (D^N)^{\frac12}. \label{E:DELTASTARVALUE?}
\end{align}

To estimate the last term on the right-hand side of~\eqref{E:COMMEXPLICIT} we note that for any $A_2\in\bar{\mathcal P}_{\ell_2}$, we have 
$A_2D_r \in \mathcal P_{\ell_2+1}$ and since $\ell_2\le i-2$ we are in the regime treated above. This concludes the proof of the bound for
$\| \left[\D_i,  \frac{g^{01}}{g^{00}} \pa_r\right]\pa_\tau H \|_{\alpha+i}$. The remaining 2 terms on the left-hand side of~\eqref{E:COMMESTIMATES}
are %\bcr 
estimated analogously and their proofs rely %\ec 
crucially on Lemmas~\ref{L:GZEROZEROLEMMA} and~\ref{L:GZEROONELEMMA}.
%\bcr 
The second term is less singular with respect to $\tau$ and the presence of the $g^{01}$ does not change the structure of the estimates due to Lemma~\ref{L:GZEROONELEMMA}. The third term contains the factor $\frac{H}{\tau^2}$ which, from the point of view of the energy, scales just like  $\frac{H_\tau}{\tau}$ and thus the 
structure of the estimates is similar to the above. %\ec

\noindent
{\em Proof of~\eqref{E:COMMESTIMATES2}.}
From~\eqref{E:LBETADEF} we have
\be\label{E:LALPHAEXPANDED}
L_\alpha H = - w \D_2 H - (1+\alpha)w' D_r H.
\ee
By the commutator formula~\eqref{E:DYP} we have
\begin{align}
\left[\bar\D_{i-1},   \frac{c[\phi]}{g^{00}}  \right] D_r L_\alpha H
=&(i-1)\pa_r \left(\frac{c[\phi]}{g^{00}} \right) {\bar\D}_{i-2} D_r L_\alpha H  \notag \\
& +  \sum_{A_{1,2}\in \bar{\mathcal P}_{\ell_1,\ell_2}, \, \ell_1+\ell_2 =i -1\atop \ell_1\ge2} \bar c_i^{A_1A_2} A_1\left(\frac{c[\phi]}{g^{00}}\right) A_2D_r L_\alpha H.
\label{E:COMMAUX1}
\end{align}
The second sum on the right-hand side of~\eqref{E:COMMAUX1} can be estimated analogously to the estimates for~\eqref{E:COMMESTIMATES} above, using~\eqref{E:LALPHAEXPANDED}. Thereby we observe that the total number of derivatives in the operator $A_2D_r L_\alpha$ is at most $i$, since $\ell_2\le i-3$.
We next focus on the first term on the right-hand side of~\eqref{E:COMMAUX1}. Since  ${\bar\D}_{i-2} D_r = \D_{i-1}$, using~\eqref{E:LALPHAEXPANDED} we can write it as
\begin{align}\label{E:COMMAUX2}
-(i-1)\pa_r \left(\frac{c[\phi]}{g^{00}} \right) \D_{i-1}\left(w \D_2 H + (1+\alpha)w' D_r H\right).
\end{align}
By the product rule~\eqref{E:product} we can isolate the top-order term 
\begin{align*}
 \D_{i-1}\left(w \D_2 H + (1+\alpha)w' D_r H\right) = w \D_{i+1}H 
 +\sum_{A_1\in\bar{\mathcal P}_{\ell_1}, A_2\in\mathcal P_{\ell_2} \atop \ell_1+\ell_2=i-1,\, \ell_1\ge1} A_1 w A_2\D_2H + (1+\alpha)\D_{i-1}\left(w' D_r H\right)
 \end{align*}
We now use~\eqref{E:PHILEMMA1}, %\bcr 
\eqref{L22} %\ec 
and conclude, in the case $i=N$
\begin{align}
&\ve \tau^{\frac12(\gamma-\frac23)} \|w^{\frac{\alpha+N}{2}}\pa_r \left(\frac{c[\phi]}{g^{00}}\right) w \D_{N+1}H \|_{L^2}   \notag \\
& \lesssim \ve \tau^{\frac12(\gamma-\frac23)}\|q_{\frac{\gamma+1}{2}} \pa_r \left(\frac{c[\phi]}{g^{00}}\right)\|_{\infty} 
\|q_{-\frac{\gamma+1}{2}}w^{\frac{\alpha+N+2}{2}} \D_{N+1}H \|_{L^2} \notag \\
&\lesssim  \sqrt\ve \tau^{\frac12(\gamma-\frac23)+\frac23-2\gamma -\frac1n+\frac12(\gamma+1)}(E^N)^{\frac12} \notag \\
& = \sqrt\ve \tau^{\frac\delta2 -\frac12} (E^N)^{\frac12},
\end{align}
where the estimate~\eqref{E:PHILEMMA1} has been used in the third line.
%\bcr 
When using~\eqref{E:PHILEMMA1}, we first recall~\eqref{E:CDEF} and use the product rule to write 
\[
\pa_r \left(\frac{c[\phi]}{g^{00}}\right) = \pa_r\left(\frac{\phi^4}{\J[\phi]^{\gamma+1}}\right) \frac{1}{g^2 g^{00}} + \frac{\phi^4}{\J[\phi]^{\gamma+1}}\pa_r\left( \frac{1}{g^2 g^{00}}\right).
\]
We note that by Lemma~\ref{L:GZEROZEROLEMMA} and~\eqref{E:FM} we have $\lv \frac{1}{g^2 g^{00}}  \rv + \lv\pa_r\left( \frac{1}{g^2 g^{00}}\right) \rv\lesssim 1$ and therefore~\eqref{E:PHILEMMA1} yields the third line above.
%\ec
The remaining below-top-order terms can be estimated analogously to~\eqref{E:COMMESTIMATES} to finally obtain~\eqref{E:COMMESTIMATES2}.

\noindent
{\em Proof of~\eqref{E:COMMESTIMATES3}.}
Since $\left\vert \pa_r^kw\right\vert\lesssim  r^{n-k}$ for any $k\in\{1,\dots,n\}$ it follows from~\eqref{pij} $|\zeta_{ij}|\lesssim  r^{n-j-2}$.
Therefore by~\eqref{E:PHILEMMA1} for any $j\le i$ we have 
\begin{align}
\lv \frac{c[\phi]}{g^{00}}\zeta_{ij} \rv & \lesssim \tau^{\frac23-2\gamma} q_{-\gamma-1}\et  r^{n-j-2} \notag \\
& \lesssim  \tau^{\frac53-2\gamma - \frac{j+2}{n}} q_{-\gamma-1}\et \et^{1-\frac{j+2}{n}} \notag \\
& \lesssim \tau^{\delta^\ast -1}. \notag
\end{align}

Therefore for any $j\ge3$ we have
\begin{align}
\ve \tau^{\frac12(\gamma-\frac23)} \| \frac{c[\phi]}{g^{00}} \zeta_{ij} \mathcal D_{i-j} H\|_{\alpha+i} 
&\lesssim \ve \tau^{\frac12(\gamma-\frac23)+\delta^\ast-1} \|w_{\frac{N-2(i-j)+i}{2}}\|_\infty \| w^{\frac{\alpha+2(i-j)-N}{2}} \mathcal D_{i-j} H\|_{L^2 }  \notag \\
& \lesssim \ve \tau^{\frac12(\gamma-\frac23)+\delta^\ast-1+\frac12(\frac83-\gamma)} (D^N)^{\frac12} & \notag \\
& \lesssim \ve \tau^{\delta^\ast} (D^N)^{\frac12},
\end{align}
where we have used~\eqref{L21} in the second line. If $j\le2$ we use~\eqref{Linfty1} instead of~\eqref{L21} and obtain the same bound.
\end{proof}

\subsection{High-order estimates for $\mathscr M[H]$}\label{SS:MH}

We first recall $K_a[\theta]$, $a\in \mathbb R$ in \eqref{E:KM}: 
\[
K_a[\theta]= \J[\phi]^a - \J[\phia]^a 
\]
and also 
\be\label{E:QUADRATIC}
K_{a}[\theta]  = a \J[\phia]^{a-1} K_1[\theta] + a(a-1)\J[\phia]^{a-2} \left(\int_0^1(1-s)(1+s \frac{K_1[\theta]}{\J[\phia]}  )^{a-2} ds\right)  (K_1[\theta])^2
\ee
\begin{lemma} \label{L:KABOUNDS}%Recall $K_a[\theta]$, $a\in \mathbb R$ given in \eqref{E:KM}. Then 
We have the following bound: 
\begin{align}
&|K_a[\theta]|  \lesssim \tau^{m-\frac23+\frac12(\frac53-\gamma)} \tau^{2a} q_a(\frac{ r^n}{\tau})  (E^N)^\frac12 \label{Kabound} \\
&|w^{j-1} \bar \D_j K_a[\theta]| \lesssim \tau^{m-\frac23+\frac12(\frac53-\gamma)} r^{-j} \tau^{2a} q_{a}(\frac{ r^n}{\tau})  (E^N)^\frac12 , \quad 1\leq j\leq N-3 \label{Kainfty} \\
&\|  r^j q_{-1} (\frac{ r^n}{\tau}) \bar\D_j K_1[\theta]  \|_{\alpha+ 2j +2 -N} \lesssim \tau^{m+\frac43+\frac12(\frac53-\gamma)} (E^N)^\frac12 , \quad 2\leq j\leq N-1 \label{KaL2}
\end{align}
\end{lemma}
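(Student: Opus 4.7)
The plan is to reduce all three bounds to estimates on $K_1[\theta]$ via the Taylor identity~\eqref{E:QUADRATIC}, which expresses $K_a[\theta]$ as $a\J[\phia]^{a-1}K_1[\theta]$ plus a manifestly quadratic-in-$K_1$ correction. By~\eqref{E:JPHIBOUNDAPRIORI} and Lemma~\ref{L:PHILEMMA}, any $\bar{\mathcal P}_j$-derivative of $\J[\phia]^b$ is controlled in $L^\infty$ (or in the appropriate weighted $L^2$ at the top index $j=N$) by $\tau^{2b} r^{-j} q_b(r^n/\tau)$, so combining this with the product rule and the anticipated $\tau^2 q_1(r^n/\tau)$ scaling of $K_1[\theta]$ produces the claimed factor $\tau^{2a}q_a(r^n/\tau)$; the quadratic correction is lower order under the bootstrap~\eqref{E:BOOTSTRAP} since it carries an additional power of $(E^N)^{1/2}\tau^m$.

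It therefore remains to prove the $K_1$-analogues of~\eqref{Kabound}--\eqref{KaL2}. From $\theta = \tau^m H/r$ and $\phi=\phia+\theta$, a direct expansion (already used in Lemma~\ref{L:THETAEQUATION}) gives
\[
K_1[\theta] = \phi^2 \DL\theta + (3\phia^2 + 2\phia \DL\phia)\theta + (3\phia + \DL\phia)\theta^2 + \theta^3,
\]
\[
\DL\theta = m\tau^{m-1} M_g \tfrac{H}{r} + \tau^m M_g \tfrac{H_\tau}{r} + \tau^m r\pa_r(H/r).
\]
For the pointwise bound~\eqref{Kabound}, I combine the $L^\infty$ estimates $|\phi^2|,|3\phia^2+2\phia\DL\phia|\lesssim \tau^{4/3}q_1(r^n/\tau)$ coming from~\eqref{E:PHIBOUNDAPRIORI} and~\eqref{E:PHIZERODPHIZEROFORMULA}, the bound $|M_g|\lesssim r^n \lesssim \tau\, q_1(r^n/\tau)$ via~\eqref{E:GTAYLOR2}, and the $L^\infty$-embeddings underlying~\eqref{LinftyV1}, which yield $\|H/r\|_\infty\lesssim \tau^{(11/3-\gamma)/2}(E^N)^{1/2}$ and $\|H_\tau/r\|_\infty + \|r\pa_r(H/r)\|_\infty \lesssim \tau^{(5/3-\gamma)/2}(E^N)^{1/2}$. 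The worst scaling, produced by the $\tau^m M_g H_\tau/r$ piece of $\phi^2 \DL\theta$, reproduces exactly $\tau^{m+2-2/3+(5/3-\gamma)/2}q_1(r^n/\tau)(E^N)^{1/2}$, while the quadratic and cubic terms in $\theta$ carry an extra $\tau^m$ and are absorbed by bootstrap.

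The higher-derivative bounds~\eqref{Kainfty} and~\eqref{KaL2} are obtained by applying $\bar\D_j$ to the above expansion and invoking the Leibniz rule. When derivatives fall on the $\phi$- or $\phia$-valued coefficients, Lemmas~\ref{L:BARDIQBOUNDS}--\ref{L:DIQBOUNDS} and Lemma~\ref{L:PHILEMMA} deliver the $r^{-j}$ weight; when they fall on $\theta$, one invokes the weighted $L^\infty$-embeddings~\eqref{LinftyV2}--\eqref{LinftyV3} for~\eqref{Kainfty} and the weighted $L^2$-embeddings~\eqref{L2V1}--\eqref{L2W2} for~\eqref{KaL2}. The weight $w^{j-1}$ in~\eqref{Kainfty} is precisely the sum of the $w^{j-2}$ demanded by~\eqref{LinftyV2} and the single $w$-power present in the enthalpy-valued coefficients, while the factor $q_{-1}(r^n/\tau)$ in~\eqref{KaL2} absorbs the $q_1(r^n/\tau)$ growth generated by $M_g$ or $\DL\phia$. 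The main technical obstacle will be the borderline index $j=N-1$ for~\eqref{KaL2}, where a single derivative of $\DL\theta$ lands on $\pa_\tau H$ or $\pa_r(H/r)$ while all remaining derivatives are distributed among the coefficients: here the $M_g$-factor's $q_1$-growth and the $w^{\alpha+2j+2-N}$-weight must be matched via the same scale-separation $r^n/\tau$ that underpins the proof of Lemma~\ref{L:PHILEMMA}, and one must verify that no half-power of $\tau$ is lost by careful bookkeeping of the $q_\nu$-weights against the polar-coordinate weights from $\bar{\mathcal P}_j$.
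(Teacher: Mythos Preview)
Your proposal is correct and follows the same strategy as the paper: reduce to $K_1$ via~\eqref{E:QUADRATIC}, expand $K_1$ using the identity from Lemma~\ref{L:THETAEQUATION}, then combine the product rule with the $L^\infty$ and weighted-$L^2$ embeddings. One organizational difference worth noting: for the general-$a$ case of~\eqref{Kainfty} the paper does not differentiate~\eqref{E:QUADRATIC} directly (which would force the chain rule on the integrand $(1+sK_1/\J[\phia])^{a-2}$ under the integral sign) but instead iterates the elementary identity $\bar\D_1 K_a = a\,\J[\phi]^{a-1}\bar\D_1 K_1 + a\,K_{a-1}\bar\D_1\J[\phia]$ to arrive at the explicit expansion~\eqref{jKa}; either route closes, the paper's being slightly cleaner.

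Your accounting for the $w^{j-1}$ weight is off, however: there is no $w$-power hidden in the coefficients of $K_1[\theta]$. The correct reason $w^{j-1}$ suffices is that $\DL\theta$ contains the piece $\tau^m D_r H$, so the top-order contribution to $\bar\D_j K_1$ is $\phi^2\tau^m\D_{j+1}H$, and~\eqref{Linfty2} with $k=j-1$ gives $\|w^{j-1}\D_{j+1}H\|_\infty \lesssim \tau^{\frac12(\frac{11}3-\gamma)}(E^N)^{1/2}$, valid exactly for $j\le N-3$.
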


\begin{remark}\label{R:KABOUNDS}
$ \tau^{\frac12(\frac53-\gamma)} (E^N)^\frac12 $ can be replaced by $ \tau^{\frac12(\frac83-\gamma)} (D^N)^\frac12 $ in the above bounds. 
\end{remark}

%\underline{\bf On $K_a[\theta]$ for $a\in \mathbb R$.}  
\begin{proof}
First we recall that~\eqref{E:KM} implies 
\be\label{K1}
K_1[\theta]=(2\phia\theta +\theta^2) (\phia+\DL\phia ) +\phi^2 (\theta + \DL\theta) 
\ee
which %\bcr
together with~\eqref{Linfty11} and $\theta=\tau^m\frac Hr$ %\ec 
yields 
\be
|K_1[\theta]| \lesssim \tau^{m-\frac23+\frac12(\frac53-\gamma)} \tau^2 q_1(\frac{ r^n}{\tau})  (E^N)^\frac12 
\ee
or equivalently, 
\be
\left| \frac{K_1[\theta]}{\J[\phia]}\right| \lesssim \tau^{m-\frac23+\frac12(\frac53-\gamma)}  (E^N)^\frac12 
\ee
The representation \eqref{E:QUADRATIC} then gives \eqref{Kabound} or equivalently 
\be
\left| \frac{K_a[\theta]}{\J[\phia]^a} \right|\lesssim \tau^{m-\frac23+\frac12(\frac53-\gamma)}  (E^N)^\frac12 
\ee

Next we evaluate $\bar\D_j K_a [\theta]$. We start with $a=1$. By applying the product rule to \eqref{K1} and using \eqref{Linfty11},  \eqref{Linfty2} and \eqref{Linfty21},  \eqref{E:BARDIQ}, \eqref{E:BARDIDQ}, we deduce that
\be
|w^{j-1} \bar \D_j K_1[\theta]| \lesssim \tau^{m-\frac23+\frac12(\frac53-\gamma)} r^{-j} \tau^2 q_1(\frac{ r^n}{\tau})  (E^N)^\frac12 , \quad 1\leq j\leq N-3
\ee

For general $a\in \mathbb R$, let us write down the expression for $\bar\D_j K_a [\theta]$. For $j=1$, %\bcr 
using~\eqref{exp2}, %\ec 
we have 
\begin{align}
&\bar\D_1 K_a[\theta] = a \J[\phi]^{a-1} \bar\D_1 K_1[\theta] - a K_{a-1}[\theta] \bar\D_1 \J[\phia]
\end{align}
For $j\geq 2$, by applying the product rule and chain rule, we deduce that
\begin{align}
&\bar\D _j K_a[\theta] = \sum_{ 1\leq \ell\leq j \atop C_1\in\bar{\mathcal P}_{j-\ell}, C_2\in \bar{\mathcal P}_{\ell}  }c_j^{C_1C_2}C_1 \left( \J[\phi]^{a-1} \right)  C_2 \left( K_1[\theta] \right)\label{jKa}  \\
&+ \sum_{1\leq \ell\leq j, 1\leq k_1\leq k_2\leq j-1 \atop B_1\in \bar{\mathcal P}_{j-k_2-\ell}, B_2\in \bar{\mathcal P}_{\ell}}  c^{jB_1B_2}_{k_1k_2\ell}B_1 \left( \J[\phi]^{a-1-k_1}\left( \prod_{k'=1}^{k_1} V_{k'}\J[\phia] \right)_{ j_1+...+j_{k_1}=k_2, j_{k'}\geq1 \atop V_{k'}\in {\bar{\mathcal P}}_{j_{k'}}} \right)  B_2 \left( K_1[\theta] \right) \notag \\
&+ \sum_{1\leq k_1\leq k_2\leq  j}c^j_{k_1}K_{a-k_1} [\theta] \left( \prod_{k'=1}^{k_1} V_{k'}\J[\phia] \right)_{ j_1+...+j_{k}=k_2, j_{k'}\geq1\atop V_{k'}\in {\bar{\mathcal P}}_{j_{k'}}}\notag
\end{align}
which can be proved based on the induction argument on $j$. Therefore, we deduce \eqref{Kainfty}. 
Also, by~\eqref{L21} we have \eqref{KaL2}. 
\end{proof}

%%%%%%%%%%%%%%%%%%%%%%%%%%%%%%%%%%%%%%%%%%%
%%%%%%%%%%%%%%%%%%%%%%%%%%%%%%%%%%%%%%%%%%%

%\bcv
Before we proceed with the estimates, we examine the structure of $\mathscr M[H]$. 
Recall \eqref{E:MH} and the formula~\eqref{E:CDEF} $c[\phi] = \frac{\phi^4}{g^2\J[\phi]^{\gamma+1}}$. Then  
 \begin{align}
\mathscr M[H]&=\ve\gamma  \pa_r \left(  \frac{c[\phi]}{g^{00}} \right)  L_\alpha H  +\ve  D_r \left( \frac{\mathscr N_0[H]}{g^{00}}\right) \notag \\
&=\frac{\ve}{g^{00}} \left[ -\gamma (1+\alpha) w'  \frac{\phi^4 }{g^2 } \pa_r (\J[\phi]^{-\gamma-1}) D_r H + \pa_r (\mathscr N_0[H] )  \right] \notag \\
&  - \ve\gamma w \frac{\phi^4 }{g^2g^{00} } \pa_r (\J[\phi]^{-\gamma-1}) \pa_r D_r H + \ve\gamma \pa_r \left( \frac{\phi^4}{g^2 g^{00}} \right) \J[\phi]^{-\gamma-1} L_\alpha H  \notag \\
&  + \ve \left( \pa_r\left( \frac{1}{g^{00}}\right) + \frac{2}{r} \right) \mathscr N_0[H], \label{E:MHEXPANSION}
\end{align}
where we have used~\eqref{E:LBETADEF} and written $L_\alpha H =  - (1+\alpha) w' D_r H - w\pa_r D_r H$. Our source of concern is rectangular bracket above, as it contains top-order terms with two derivatives falling on $H$ (either through $\pa_r(\J[\phi]^{-\gamma-1})$ or $\pa_r\mathscr N_0[H]$) and seemingly insufficient number of $w$-powers to allow us to bound them through our $w$-weighted norms. This situation is a typical manifestation of the vacuum singularity at the outer boundary. Our key insight is that, due to {\em special algebraic} structure of the equation, the terms involving two spatial derivatives of $H$ without the corresponding multiple of $w$ will be cancelled. 
In the following lemma, we present the rearrangement of $\mathscr M[H]$ that elucidates such an important cancelation. 
%\ec

%\bcv
\begin{lemma}[Cancellation lemma]  \ \label{L:CANCEL}

\begin{itemize}
\item[(i)] 
$\mathscr M[H]$ can be rewritten into the following form 
\begin{align} 
\mathscr M[H] = &  \ve \gamma(\gamma+1) (1+\alpha) w'  \frac{\phi^4 }{g^2 g^{00} } \J[\phi]^{-\gamma-2}\pa_r \J[\phia]  D_r H   \notag \\
&   - \ve\gamma(1+\alpha)w' \pa_r \left( \frac{\phi^4}{g^2 g^{00}} \right) \J[\phi]^{-\gamma-1} D_r H - \ve\gamma w \pa_r\left( \frac{c[\phi]}{g^{00} }\right) \pa_r D_r H \notag \\
& + \ve \frac{1}{\tau^m g^{00}} \mathfrak K_4[\frac{\tau^m H}{r}] + \ve \left( \pa_r\left( \frac{1}{g^{00}}\right) + \frac{1}{g^{00}} \frac{2}{r} \right) \mathscr N_0[H]  \label{MH}
\end{align}
where 
\begin{align}
 \mathfrak K_4[\theta] &=-\gamma(1+\alpha) w' \frac{\phi^2}{g^2} K_{-\gamma-1}[\theta]\big(\pa_r K_1[\theta] -\phi^2[\r^2 \theta + 4\pa_r\theta]\big)   \notag\\
&+(1+\alpha) w' \frac{\phi^2}{g^2} \big\{ -\gamma K_{-\gamma-1}[\theta]\pa_r \J[\phia] + \gamma\pa_r( \J[\phia]^{-\gamma-1})  K_1[\theta]  \notag\\
&  -\gamma (\gamma+1)  K_{-\gamma-2}[\theta] \pa_r \J[\phia]  \phi^2[\r\theta + 3\theta]  
+2\gamma  K_{-\gamma-1}[\theta]\phi \pa_r \phi [\r\theta + 3\theta]  \big\} \notag \\
&+  (1+\alpha) \pa_r \left( w' \frac{\phi^2}{g^2} \right)\left( K_{-\gamma}[\theta] + \gamma \J[\phia]^{-\gamma-1} K_1[\theta] +\gamma  K_{-\gamma-1}[\theta]\phi^2[\r\theta + 3\theta]  \right) \label{k40}
\end{align}
and $\mathscr N_0[H]=\frac{r}{\tau^m}  \mathfrak K_3[\frac{\tau^m H}{r}]$ where $\mathfrak K_3$ is defined by~\eqref{E:MATHFRAKK3}. 

\item[(ii)]  Each expression in the right-hand side of \eqref{MH} contains at most two spatial derivatives. If two spatial derivatives of $H$ appear in the expression, they always contain a factor of  $w$.  In particular, the last bracket of the first line of $ \mathfrak K_4[\theta]$ in \eqref{k40} can be rewritten without any second spatial derivatives of $H$: 
\begin{align}
&\pa_r K_1[\theta] -\phi^2[\r^2 \theta + 4\pa_r\theta] \notag \\
&  = \phi^2 M_g  \pa_r \pa_\tau \theta + \pa_r(\phi^2M_g ) \pa_\tau\theta 
+ 2\phi ( \DL\phia +  r\pa_r \phi )\pa_r\theta  \notag \\
& \ \ \ \ +\left[ \pa_r (3\phia^2 + 2\phia \DL\phia ) +\pa_r(3\phia + \DL\phia  )\theta\right]\theta\big\}. \label{A3K}
\end{align}
\end{itemize}
\end{lemma}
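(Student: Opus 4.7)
The plan is to prove the lemma by direct algebraic rearrangement of the expression \eqref{E:MHEXPANSION} for $\mathscr M[H]$. The key observation driving (i) will be that the two ``bad'' contributions containing second derivatives of $H$ without $w$-weights---one arising from $\pa_r(\J[\phi]^{-\gamma-1})\cdot D_r H$, the other hidden inside $\pa_r\mathscr N_0[H]$---occur with exactly opposite coefficients and cancel, so that the only surviving two-derivative term in \eqref{MH} is $-\ve\gamma w\,\pa_r(c[\phi]/g^{00})\,\pa_r D_r H$, which comes with the requisite $w$-factor. For (ii) I will verify the identity \eqref{A3K} by differentiating the expansion of $K_1[\theta]$ recorded in \eqref{K1}, after which the structural claim about derivative counts becomes a matter of inspection.

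\textbf{Part (i).} First I would use $L_\alpha H = -(1+\alpha)w' D_r H - w\,\pa_r D_r H$ together with the product rule $\pa_r(c[\phi]/g^{00}) = \pa_r(\phi^4/(g^2 g^{00}))\J[\phi]^{-\gamma-1} + (\phi^4/(g^2 g^{00}))\pa_r(\J[\phi]^{-\gamma-1})$ to reassemble the two $\pa_r D_r H$-contributions in \eqref{E:MHEXPANSION} into the single term $-\ve\gamma w\,\pa_r(c[\phi]/g^{00})\,\pa_r D_r H$, yielding the second term of Line~2 of \eqref{MH}; the companion $D_r H$-piece yields the first term of Line~2. Next I would expand
\[
\pa_r(\J[\phi]^{-\gamma-1}) = -(\gamma+1)\J[\phi]^{-\gamma-2}\bigl(\pa_r\J[\phia]+\pa_r K_1[\theta]\bigr),
\]
so that the $\pa_r\J[\phia]$-half becomes Line~1 of \eqref{MH}, while the $\pa_r K_1[\theta]$-half, combined with the crucial elementary identity $D_r H = (r\pa_r\theta + 3\theta)/\tau^m$ for $\theta = \tau^m H/r$, leaves the ``bad'' residue
\[
\mathcal R_1 := \frac{\ve\gamma(\gamma+1)(1+\alpha)w'\phi^4}{\tau^m g^2 g^{00}}\,\J[\phi]^{-\gamma-2}\,\pa_r K_1[\theta]\,(r\pa_r\theta+3\theta).
\]

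Then I would rewrite $\mathscr N_0[H] = \frac{(1+\alpha)w'\phi^2}{\tau^m g^2}\Psi[\theta]$, where $\Psi[\theta] := K_{-\gamma}[\theta] + \gamma\J[\phia]^{-\gamma-1}K_1[\theta] + \gamma K_{-\gamma-1}[\theta]\phi^2(r\pa_r\theta+3\theta)$, and differentiate. Using $\pa_r K_{-\gamma}[\theta] = -\gamma K_{-\gamma-1}[\theta]\pa_r\J[\phia] - \gamma\J[\phi]^{-\gamma-1}\pa_r K_1[\theta]$ and the analogue for $\pa_r K_{-\gamma-1}[\theta]$ (both derived from \eqref{E:KM} by splitting $\pa_r\J[\phi] = \pa_r\J[\phia] + \pa_r K_1[\theta]$), the total coefficient of $\pa_r K_1[\theta]$ in $\pa_r\Psi[\theta]$ works out to $-\gamma K_{-\gamma-1}[\theta] - \gamma(\gamma+1)\J[\phi]^{-\gamma-2}\phi^2(r\pa_r\theta+3\theta)$. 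Multiplied by the prefactor $\frac{\ve(1+\alpha)w'\phi^2}{\tau^m g^2 g^{00}}$, the second summand is precisely $-\mathcal R_1$ and cancels the leftover, while the first summand combined with $\gamma K_{-\gamma-1}[\theta]\phi^2\pa_r(r\pa_r\theta+3\theta) = \gamma K_{-\gamma-1}[\theta]\phi^2(r\pa_r^2\theta+4\pa_r\theta)$ assembles into $-\gamma K_{-\gamma-1}[\theta]\bigl(\pa_r K_1[\theta] - \phi^2[\r^2\theta + 4\pa_r\theta]\bigr)$, which is the first line of \eqref{k40}. The remaining terms of $\pa_r\Psi[\theta]$ form the second line of \eqref{k40}; the outer derivative $\frac{1+\alpha}{\tau^m}\pa_r(w'\phi^2/g^2)\,\Psi[\theta]$ produces the third line verbatim; and $D_r = \pa_r + 2/r$ accounts for the last summand of \eqref{MH}.

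\textbf{Part (ii) and the main obstacle.} With $\mathfrak K_4$ in this rearranged form, each summand is manifestly at most first-order in $\theta$-derivatives except inside the bracket $(\pa_r K_1[\theta] - \phi^2[\r^2\theta + 4\pa_r\theta])$; the identity \eqref{A3K} handles exactly this residual bracket, and I would verify it by starting from $K_1[\theta] = \phi^2\DL\theta + (3\phia^2+2\phia\DL\phia)\theta + (3\phia+\DL\phia)\theta^2 + \theta^3$ (the form recorded in the proof of Lemma~\ref{L:THETAEQUATION}), applying $\pa_r$, expanding $\pa_r\DL\theta = \pa_r M_g\,\pa_\tau\theta + M_g\,\pa_r\pa_\tau\theta + \pa_r\theta + r\pa_r^2\theta$, and observing that subtracting $\phi^2[r\pa_r^2\theta+4\pa_r\theta]$ removes the single $r\pa_r^2\theta$ contribution while the $\pa_r\theta$-coefficient compresses via $2\phi r\pa_r\phi + 2\phia\DL\phia + 2\DL\phia\,\theta = 2\phi(\DL\phia + r\pa_r\phi)$. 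All other summands in \eqref{MH} contain at most one derivative of $H$ (via $D_r H$ or inside $K_a[\theta]$), and the $\pa_r D_r H$-term in Line~2 carries its two derivatives together with a $w$-factor, completing (ii). The hard part is nothing conceptual but careful signed bookkeeping: the cancellation depends on exactly matching the factors $\gamma(\gamma+1)\J[\phi]^{-\gamma-2}$, $\phi^2(r\pa_r\theta+3\theta)$, and $(1+\alpha)w'$ between the two occurrences of $\pa_r K_1[\theta]$, and the conceptual key is the identity $D_r H = (r\pa_r\theta+3\theta)/\tau^m$, without which the leftover $\mathcal R_1$ would never algebraically match the interior structure of $\Psi[\theta]$ and the cancellation would remain invisible.
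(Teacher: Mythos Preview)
Your proposal is correct and follows essentially the same route as the paper: you expand $\pa_r(\J[\phi]^{-\gamma-1})$ through $\J[\phi]=\J[\phia]+K_1[\theta]$, invoke the identity $D_r H = \tau^{-m}(r\pa_r\theta+3\theta)$ to match the residue $\mathcal R_1$ against the $\pa_r K_1[\theta]$-contribution hidden in $\pa_r\Psi[\theta]$ via the recursion $\pa_r K_a[\theta]=aK_{a-1}[\theta]\pa_r\J[\phia]+a\J[\phi]^{a-1}\pa_r K_1[\theta]$, and then verify \eqref{A3K} by direct differentiation of the expanded form of $K_1[\theta]$. This is precisely the paper's argument, with the same cancellation mechanism and the same bookkeeping.
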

%\ec

\begin{proof}%\bcv 
To verity \eqref{MH}, we will first rewrite the rectangular bracket in \eqref{E:MHEXPANSION}. 
By~\eqref{E:NZERODEF} $\mathscr N_0[H]=\frac{r}{\tau^m}  \mathfrak K_3[\frac{\tau^m H}{r}]$ where $\mathfrak K_3$ is defined by~\eqref{E:MATHFRAKK3}. Then 
\begin{align*}
\partial_{r} (r\mathfrak K_{3}[\theta] ) &  =\partial_{r}\left[(1+\alpha) w' \frac{\phi^2}{g^2 } \left( K_{-\gamma}[\theta] + \gamma \J[\phia]^{-\gamma-1} K_1[\theta] +\gamma  K_{-\gamma-1}[\theta]\phi^2[\r\theta + 3\theta]  \right)\right]\\
&  =(1+\alpha)\pa_r\left( w' \frac{\phi^2}{g^2 } \right)\left( K_{-\gamma}[\theta] + \gamma \J[\phia]^{-\gamma-1} K_1[\theta] +\gamma  K_{-\gamma-1}[\theta]\phi^2[\r\theta + 3\theta]  \right)\\
&\quad  +(1+\alpha) w' \frac{\phi^2}{g^2 } \underbrace{\pa_r\left[ K_{-\gamma}[\theta] + \gamma \J[\phia]^{-\gamma-1} K_1[\theta] +\gamma  K_{-\gamma-1}[\theta]\phi^2[\r\theta + 3\theta]  \right]}_{(\ast)}
\end{align*}
By using $
\partial_{r}K_{-\gamma}[\theta]=-\gamma \J[\phi]^{-\gamma-1}\partial_{r}
K_{1}[\theta]-\gamma K_{-\gamma-1}[\theta]\partial_{r}\J[\phia]$, we have
\begin{align*}
(\ast) &  =-\gamma \J[\phi]^{-\gamma-1}\partial_{r}K_{1}[\theta]-\gamma
K_{-\gamma-1}[\theta]\partial_{r}\J[\phia]\\
&\quad  +\gamma \J[\phia]^{-\gamma-1}\partial_{r}K_{1}[\theta]-\gamma(
\gamma+1)\J[\phia]^{-\gamma-2}\partial_{r}\J[\phia]K_{1}[\theta]\\
& \quad -\{\gamma(\gamma+1)\J[\phi]^{-\gamma-2}\partial_{r}K_{1}[\theta
]+\gamma(\gamma+1)K_{-\gamma-2}[\theta]\partial_{r}\J[\phia] \}\phi^{2}%
[r\partial_{r}\theta+3\theta]\\
& \quad +\gamma K_{-\gamma-1}[\theta] \pa_r(\phi^{2}[r\partial_{r}\theta+3\theta
])\\
&  =-\gamma(\gamma+1)\J[\phi]^{-\gamma-2}\partial_{r}K_{1}[\theta]\phi
^{2}[r\partial_{r}\theta+3\theta]\\
& \quad -\gamma K_{-\gamma-1}[\theta]\{ \partial_{r}\J[\phia]-\pa_r(\phi^{2}[r\partial
_{r}\theta+3\theta]) \}\\
& \quad +\{\gamma \J[\phia]^{-\gamma-1}-\gamma \J[\phi]^{-\gamma-1}\}\partial_{r}%
K_{1}[\theta]\\
& \quad -\gamma(\gamma+1)\J[\phia]^{-\gamma-2}\partial_{r}\J[\phia]K_{1}[\theta]\\
& \quad -\gamma(\gamma+1)K_{-\gamma-2}[\theta]\partial_{r}\J[\phia] \phi^{2}%
[r\partial_{r}\theta+3\theta]\\
&  =-\gamma(\gamma+1)\J[\phi]^{-\gamma-2}\partial_{r}K_{1}[\theta]\phi
^{2}[r\partial_{r}\theta+3\theta]\\
& \quad -\gamma K_{-\gamma-1}[\theta] \{\partial_{r}K_{1}[\theta]-\phi^2
[r\partial_{r}^2\theta+4\partial_{r}\theta]\}\\
& \quad -\gamma K_{-\gamma-1}[\theta]\partial_{r}\J[\phia]-\gamma(\gamma
+1)\J[\phia]^{-\gamma-2}\partial_{r}\J[\phia]K_{1}[\theta]   \\
&\quad  -\gamma(\gamma+1)K_{-\gamma-2}[\theta]\partial_{r}\J[\phia]\phi^{2}%
[r\partial_{r}\theta+3\theta] + 2\gamma K_{-\gamma-1}[\theta] \phi\pa_r\phi [r\pa_r\theta + 3\theta]
\end{align*}
which in turn implies 
\be\label{N0Hr}
\pa_r \mathscr N_0[H] = -  (1+\alpha)\gamma (\gamma+1)  w' \frac{\phi^2}{g^2}   \J[\phi]^{-\gamma-2} \pa_r K_1[\theta]  \phi^2 D_r H + \tau^{-m}\mathfrak K_4[\theta] 
\ee
where we have used  $ \r\left(\frac Hr\right) + 3\frac H r = D_r H$ and $\theta = \frac{\tau^m H}{r}$. 

For the first term in the rectangular bracket in \eqref{E:MHEXPANSION}, we note 
\[
\pa_r  (\J[\phi]^{-\gamma-1}) = - (\gamma+1)\J[\phi]^{-\gamma-2} \pa_r K_1[\theta] -(\gamma+1)\J[\phi]^{-\gamma-2} \pa_r \J[\phia]. 
\]
Together with \eqref{N0Hr}, the rectangular bracket in \eqref{E:MHEXPANSION} gives rise to the first line and the first term of the third line of \eqref{MH}. The following line of \eqref{E:MHEXPANSION} corresponds to the second line of \eqref{MH} where we have used $L_\alpha H =  - (1+\alpha) w' D_r H - w\pa_r D_r H$. 

Finally we will count the number of  spatial derivatives and the weight $w$. First of all, it is clear that all the terms appearing in \eqref{MH} contain at most two spatial derivatives of $H$. For instance, the first term in \eqref{MH} does not contain the second derivatives of $H$. In the second line, both terms contain the second derivatives of $H$ and they have a factor of $w$. Note that $\pa_r g^{00}$ has a term involving two derivatives (see \eqref{E:GZEROZERODEF}) but that comes with $w$. The same counting applies to the rest. The only expression that is not obvious at first sight is the first line of \eqref{k40} because we do see the two spatial derivatives of $H$ without the weight $w$. It turns out that those second derivatives disappear after cancelation. A direct computation using~\eqref{K1} yields the identity \eqref{A3K}. %\ec
\end{proof}

%%%%%%%%%%%%%%%%%%%%%%%%%%%%%%%%%%%%%%%%%%%

\begin{lemma}\label{L:MH} 
Let $H$ be a solution to~\eqref{E:H2} on a time interval $[\kappa,T]$ for some $T\le1$ and assume that the a priori assumptions~\eqref{E:APRIORI} holds. Then
\begin{align}
%\ve 
\tau^{\frac12(\gamma-\frac23)}\|\bar{\D}_{i-1}\mathscr M[H]\|_{\alpha+i} \lesssim \sqrt{\ve}\tau^{\min\{\delta^\ast, \frac{\delta}{2}\}-\frac12} (E^N)^\frac12 +  
\ve  \tau^{m-\frac12+\frac52(\frac43-\gamma)} (E^N)^\frac12 (D^N)^\frac12 
\end{align}
\end{lemma}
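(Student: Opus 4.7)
The plan is to bound the $\|\cdot\|_{\alpha+i}$-norm of $\bar\D_{i-1}$ applied to each of the five summands in the rearranged expression~\eqref{MH} obtained from Lemma~\ref{L:CANCEL}. The two terms on the right-hand side of the stated estimate correspond to two structurally different contributions: the first three terms in~\eqref{MH} are linear in $H$ and yield the $\sqrt\ve\,\tau^{\min\{\delta^\ast,\delta/2\}-1/2}(E^N)^{1/2}$ piece, while the last two (the expressions $\mathfrak K_4[\tau^m H/r]$ and $(\pa_r(1/g^{00})+2/(rg^{00}))\mathscr N_0[H]$) are quadratic or higher in $H$ and produce the $\ve\,\tau^{m-1/2+5(4/3-\gamma)/2}(E^N)^{1/2}(D^N)^{1/2}$ piece. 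The crucial input is the cancellation in Lemma~\ref{L:CANCEL}(ii): every remaining second spatial derivative of $H$ (appearing as $\pa_r D_r H$ or hidden in $\pa_r K_1[\theta]$ via the identity~\eqref{A3K}) is multiplied by at least one explicit factor of $w$, so that it can be absorbed by the high-order dissipative norm.

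For the three linear terms, I would apply the product rule~\eqref{E:product} to $\bar\D_{i-1}$ and split based on the derivative budget. The dominant contribution arises from the top-order placement, where all of $\bar\D_{i-1}$ falls on the derivatives of $H$. In the first and second linear terms one encounters $\bar\D_{i-1}D_r H = \D_i H$, controlled directly by $E^N$, and the coefficient $w'\,\phi^4(g^{00})^{-1}(g^2)^{-1}\,\J[\phi]^{-\gamma-2}\pa_r\J[\phia]$ (resp.~$w'\pa_r(\phi^4/(g^2 g^{00}))\J[\phi]^{-\gamma-1}$) is estimated pointwise via Lemma~\ref{L:PHILEMMA} with the factor of $w'\lesssim r^{n-1}$ from~\eqref{E:FM} supplying the needed positive power of $r^n/\tau$ (so that $\delta^\ast$ appears in the prefactor). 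The third linear term is the most delicate: the top-order contribution $w\,\pa_r(c[\phi]/g^{00})\,\D_{i+1}H$ is handled by using Lemma~\ref{L:PHILEMMA} (with weight $q_{(\gamma+1)/2}$) to obtain $|\pa_r(c[\phi]/g^{00})|\lesssim \tau^{2/3-2\gamma-1/n}q_{-(\gamma+1)/2}(r^n/\tau)$, coupled with the $E^N$-bound $\sqrt\ve\|q_{-(\gamma+1)/2}(r^n/\tau)w^{(\alpha+i+1)/2}\D_{i+1}H\|_{L^2}\lesssim \tau^{(\gamma+1)/2}(E^N)^{1/2}$. Combining with the $\tau^{(\gamma-2/3)/2}$ prefactor exactly yields $\sqrt\ve\,\tau^{\delta/2-1/2}(E^N)^{1/2}$. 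Below-top-order placements, treated by the case analysis already used in Lemmas~\ref{L:LLOW1LEMMA}--\ref{L:LLOW2LEMMA} (with Lemma~\ref{L:PHILEMMA} and the Hardy--Sobolev embeddings of Appendix~\ref{A:HSembedding}), produce the cleaner $\sqrt\ve\,\tau^{\delta^\ast}(D^N)^{1/2}$ contribution.

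For the quadratic-and-higher terms, I would use Lemma~\ref{L:KABOUNDS}. Each factor $K_a[\theta]$ with $\theta=\tau^m H/r$ contributes a gain of $\tau^{m-2/3+(5/3-\gamma)/2}(E^N)^{1/2}$ (or the analogous $(D^N)^{1/2}$ version from Remark~\ref{R:KABOUNDS}), which is what powers the overall $(E^N)^{1/2}(D^N)^{1/2}$ structure. In $\mathfrak K_4$ one applies the product rule and chain rule to distribute $\bar\D_{i-1}$ over the $K_a[\theta]$, $\pa_r\J[\phia]$, $\phi^j$, and $w'$ factors; the identity~\eqref{A3K} is invoked to eliminate $\pa_r^2\theta$ in favor of $\pa_r\pa_\tau\theta$ and lower-order terms, so no naked second derivative of $H$ survives. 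Every term in $\mathfrak K_4$ carries either a $w'$ (regularising to $r^{n-1}$) or a $\pa_r(w'\phi^2/g^2)$ factor, which supplies the positive power of $r$ needed to counteract negative powers coming from $\bar\D_{i-1}$. The term $(\pa_r(1/g^{00})+2/(rg^{00}))\mathscr N_0[H]$ is handled analogously since $\mathscr N_0[H]$ is itself at least quadratic and carries the $w'$-factor in~\eqref{E:MATHFRAKK3}. Careful bookkeeping of the resulting powers of $\tau$ gives the stated nonlinear exponent $m-\tfrac12+\tfrac52(\tfrac43-\gamma)$.

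The main obstacle is the third linear term, where the cancellation of Lemma~\ref{L:CANCEL} is essential: without the explicit $w$-factor in front of $\pa_r D_r H$, there would be insufficient $w$-weight to close the top-order estimate near the vacuum boundary. The rest of the work is a careful but systematic enumeration of product-rule terms combined with Lemmas~\ref{L:PHILEMMA}, \ref{L:GZEROZEROLEMMA}, \ref{L:GZEROONELEMMA} and~\ref{L:KABOUNDS}, and the Hardy--Sobolev embeddings~\eqref{Linfty1}--\eqref{L22}.
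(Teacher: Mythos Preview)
Your proposal is correct and follows essentially the same approach as the paper: decompose $\mathscr M[H]$ via the rearranged form~\eqref{MH} from the Cancellation Lemma, treat the first two lines as linear terms of the same type as $D_r(\frac1{g^{00}}\mathscr L_{\text{low}}^2 H)$ (yielding the $\sqrt\ve\,\tau^{\min\{\delta^\ast,\delta/2\}-1/2}(E^N)^{1/2}$ piece), and handle the $\mathfrak K_4$ and $\mathscr N_0$ terms via Lemma~\ref{L:KABOUNDS} together with the identity~\eqref{A3K}. The paper spends most of its effort on a detailed Case~I/II/III analysis (according to which of $\ell_1,\ell_2,\ell_3$ is largest) for a representative summand $(\star)$ of $\mathfrak K_4$, which is exactly the ``careful but systematic enumeration'' you allude to; your identification of the key ingredients and the resulting $\tau$-exponent are correct.
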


%%%%%%%%%%%%%%%%%%%%%%%%%%%%%%%%%%%%%%%%%%%

\begin{proof} %\bcv
We note that the terms in the first two lines of \eqref{MH} have the similar structure as the terms resulting from $D_r (\frac{1}{g^{00}} \mathscr L_{\text{low}}^2 H)$ in terms of the highest order derivative count and the weight $w$ count. For instance, the first line of  \eqref{MH} is comparable to the case when the derivative falls into $w$ of the last term of \eqref{E:LLOWTWO}. The difference is whether the coefficients are set by $\phia$, $\J[\phia]$ or $\phi$, $\J[\phi]$, but the coefficients enjoy similar bounds due to Lemmas \ref{L:BARDIQBOUNDS} \ref{L:DIQBOUNDS} for $\phia$, Lemma \ref{L:PHILEMMA} for $\phi$ and our a priori assumption \eqref{E:BOOTSTRAP}. 
 We therefore have
\begin{align}
\tau^{\frac12(\gamma-\frac23)}\|\bar\D_{i-1}(\mathscr M_{12})\|_{\alpha+i}  \lesssim \sqrt \ve \tau^{\min\{\delta^\ast,\frac\delta2\}-\frac12} (E^N)^{\frac12}
\end{align}
where $\mathscr M_{12}$ denotes the first two lines of  \eqref{MH}. 

We focus on the last line of \eqref{MH} and present the detail for the bound on 
$\ve\tau^{-m}\bar\|\bar\D_{i-1}(\frac{1}{g^{00}} \mathfrak K_4[\frac{\tau^m H}{r}])\|_{\alpha+i}$. We restrict our attention first to the following term coming from the first line of \eqref{k40}
\[
(\star):=\ve\frac{1}{\tau^m}w' \frac{\phi^2}{g^2 g^{00}} K_{-\gamma-1}[\theta] \left(\pa_r K_1[\theta] - \phi^2[\r^2 \theta + 4\pa_r\theta]\right) \text{ where }  \theta=\frac{\tau^m H}{r}.
\]
As shown in the previous lemma, the identity \eqref{A3K} assures that $\pa_r K_1[\theta] - \phi^2[\r^2 \theta + 4\pa_r\theta]$ contains at most one spatial derivative of $H$ and therefore no issues associated with the $w$-weights near the boundary will occur. 

We proceed with $\bar\D_{i-1} (\star)$ for $1\leq i\leq N$. By the product rule, $\bar\D_{i-1} (\star)$ can be written as a linear combination of the following form 
\be
\ve\frac{1}{\tau^m}A_1\left( \frac{w'}{r} \frac{\phi^2}{g^2 g^{00}}\right) A_2 (K_{-\gamma-1}[\theta] )A_3\left(r \{ \pa_r K_1[\theta] - \phi^2[\r^2 \theta + 4\pa_r\theta] \}\right)
\ee
where $A_1\in \bar{\mathcal P}_{\ell_1}$,  $A_2\in \bar{\mathcal P}_{\ell_2}$,  $A_3\in \bar{\mathcal P}_{\ell_3}$, $\ell_1+\ell_2+\ell_3=i-1$. As before, we divide into several cases. If $\ell_k\leq 2$ for all $k=1,2,3$, all the indices are low and we just use $L^\infty$ bounds \eqref{LinftyV1}, \eqref{E:GZEROZERO1}, \eqref{E:GZEROZERO3}, \eqref{Kainfty}. 
In the following, we assume that at least one index is greater than 2. 

\underline{Case I: $\ell_3\geq \max\{\ell_1,\ell_2\}$.} %If $\ell_3\leq 2$, all the indices are low and we can use $L^\infty$ bounds. 
In this case,  $3\leq \ell_3\leq i-1$.  
Since $\ell_1,\ell_2 \leq \frac{N-1}{3}\leq N-4$, and we apply $L^\infty$ bounds for $A_1$ and $A_2$ factors and $L^2$ bounds for $A_3$ factor. In particular, by assuming $\ell_1\geq 1$ (the case of $\ell_1=0$ follows similarly), we arrange the $w$ weights as follows: 
\[
\ve\frac{1}{\tau^m}  w^{\ell_1} A_1\left(\frac{ w' }{r}\frac{\phi^2}{g^2 g^{00}}\right) w^{\ell_2}A_2 (K_{-\gamma-1}[\theta] ) w^{\frac{\alpha+ i - 2(\ell_1+\ell_2) }{2}}A_3\left(r\{\pa_r K_1[\theta] - \phi^2[\r^2 \theta + 4\pa_r\theta]\}\right)
\]
By the product rule and by using 
\eqref{LinftyV1}, \eqref{LinftyV2}, \eqref{E:GZEROZERO1}, \eqref{E:GZEROZERO3},  we deduce that 
\be\label{A1infty}
\left| w^{\ell_1} A_1\left( \frac{w'}{r} \frac{\phi^2}{g^2 g^{00}}\right) \right| \lesssim  r^{n-2-\ell_1}\tau^{\frac43} (1+ \ve \tau^{\delta^\ast})
\ee
and by further using \eqref{Kainfty} 
\be
\left|w^{\ell_2}A_2 (K_{-\gamma-1}[\theta] )  \right|\lesssim \tau^{m-\frac23+\frac12(\frac83-\gamma)} r^{-\ell_2} \tau^{-2\gamma-2} q_{-\gamma-1}(\frac{ r^n}{\tau})  (D^N)^\frac12
\ee
We have derived so far 
\[
\begin{split}
&\| \bar\D_{i-1} (\star) \|_{\alpha+i} \lesssim  \ve \tau^{-\frac{5\gamma}{2}} (D^N)^{\frac12}   \| r^{n-\ell_1-\ell_2-2} q_{-\gamma-1}(\frac{ r^n}{\tau}) A_3\left(r\{ \pa_r K_1[\theta] - \phi^2[\r^2 \theta + 4\pa_r\theta]\}\right) \|_{\alpha+ i - 2(\ell_1+\ell_2)}
\end{split}
\]
We claim that 
\be\label{claim2}
 \| r^{n-\ell_1-\ell_2-2} q_{-\gamma-1}(\frac{ r^n}{\tau}) A_3\left(r\{\pa_r K_1[\theta] - \phi^2[\r^2 \theta + 4\pa_r\theta]\}\right) \|_{\alpha+ i - 2(\ell_1+\ell_2)} \lesssim \tau^{m+\frac{19}{6} -\frac{\gamma}{2}}(E^N)^{\frac12}
\ee
Note that from  \eqref{A3K} we may rewrite $r\{ \pa_r K_1[\theta] - \phi^2[\r^2 \theta + 4\pa_r\theta] \}$ as
\be\label{A3K2}
\begin{split}
&r\{ \pa_r K_1[\theta] - \phi^2[\r^2 \theta + 4\pa_r\theta] \}\\
&=\tau^m \Big\{ \phi^2M_g (D_r \pa_\tau H - 3 \frac{\pa_\tau H}{r} + \frac{m}{\tau} (D_r H - 3 \frac{ H}{r})  ) + r\pa_r(\phi^2M_g )(\frac{\pa_\tau H}{r} +  \frac{m}{\tau} \frac{H}{r}) \\
&+2\phi(\DL\phia +\r\phi) ( D_r H - 3 \frac{ H}{r} ) + \left[ r\pa_r (3\phia^2 + 2\phia \DL\phia ) +\tau^mr\pa_r(3\phia + \DL\phia  )\frac{H}{r}\right] \frac{H}{r}\Big\}
\end{split}
\ee
Apply $A_3$ to the above. We focus on the first term which can be written as
\[
A_{31} (\phi^2M_g ) A_{32} ( D_r \pa_\tau H - 3 \frac{\pa_\tau H}{r} + \frac{m}{\tau} \frac{H}{r} )
\]
for $A_{31}\in {\bar {\mathcal P}}_{\ell_{31}}$ and $A_{32}\in {\bar{ \mathcal P}}_{\ell_{32}}$ where $\ell_{31}+\ell_{32} = \ell_3 \leq i-1$. As previously done, depending on the size of $\ell_{31}, \ell_{32}$, we may use $L^\infty$ and $L^2$ bounds. We verify the claim \eqref{claim2} when $\ell_{31}=0$ and $\ell_{32}=\ell_3$. Note that 
\[
|q_{-\gamma-1}(\frac{ r^n}{\tau})\phi^2M_g  A_3(D_r \pa_\tau H - 3 \frac{\pa_\tau H}{r} + \frac{m}{\tau} \frac{H}{r} ) | \lesssim \tau^{\frac73}(|B \pa_\tau H | + |A_3(\frac{\pa_\tau H}{r})| + \frac{1}{\tau}|A_3(\frac{H}{r})|)
\]
for $B\in \mathcal P_{\ell_3+1}$. Now we have $w^{\alpha+i-2(\ell_1+\ell_2)}= w^{\alpha+ 2 (\ell_3+1) -N} w^{N+i - 2(\ell_1+\ell_2+\ell_3+1)}\leq w^{\alpha+ 2 (\ell_3+1) -N}$ since $\ell_1+\ell_2+\ell_3 +1=i$ and $i\leq N$ and hence by the definition of $E^N$ and $L^2$ embedding, we obtain 
\[
 \| r^{n-\ell_1-\ell_2-2} q_{-\gamma-1}(\frac{ r^n}{\tau})\phi^2M_g  A_3( D_r \pa_\tau H - 3 \frac{\pa_\tau H}{r} + \frac{m}{\tau} \frac{H}{r}) \|_{\alpha+i -2(\ell_1+\ell_2)} \lesssim \tau^{\frac{19}{6} -\frac{\gamma}{2}}(E^N)^{\frac12}
\]
which gives \eqref{claim2}. Other terms can be estimated similarly. 

Therefore we deduce that 
\be
\| \bar\D_{i-1} (\star) \|_{\alpha+i}  \lesssim  \ve \tau^{m-\frac56+3(\frac43-\gamma)} (E^N)^\frac12 (D^N)^\frac12 
\ee

\underline{Case II: $\ell_2\geq \max\{\ell_1,\ell_3\}$.} In this case,  $3\leq \ell_2\leq i-1$ and $\ell_1,\ell_3 \leq\frac{i-1}{3}$. We apply $L^\infty$ bounds for $A_1$ and $A_3$ factors and $L^2$ bounds for $A_2$ factor. We arrange the $w$ weights as follows: 
\[
\ve\frac{1}{\tau^m}  w^{\ell_1} A_1\left( \frac{w'}{r} \frac{\phi^2}{g^2 g^{00}}\right) w^{\frac{\alpha+ i -2(\ell_1+\ell_3)}{2}}A_2 (K_{-\gamma-1}[\theta] ) w^{\ell_3}A_3\left(r\{\pa_r K_1[\theta] - \phi^2[\r^2 \theta + 4\pa_r\theta]\}\right)
\]
We have the same bound for $A_1$ factor as in \eqref{A1infty}. For $A_3$ factor, from \eqref{A3K2}, we deduce that 
\[
|w^{\ell_3}A_3(r\{ \pa_r  K_1[\theta] - \phi^2[\r^2 \theta + 4\pa_r\theta]\}) | \lesssim \tau^{m+\frac73+\frac{1}{2}(\frac83-\gamma)}  r^{-\ell_3}q_1(\frac{ r^n}{\tau}) (D^N)^\frac12 
\]
It suffices to estimate 
\[
 \| r^{n-\ell_1-\ell_3-2} q_{1}(\frac{ r^n}{\tau}) \bar\D_{\ell_2}\left(K_{-\gamma-1}[\theta]\right) \|_{\alpha+i -2(\ell_1+\ell_3)} 
\]
Using \eqref{jKa} for $a=-\gamma-1$, we have the following expression 
\begin{align}
&\bar\D _{\ell_2}K_{-\gamma-1}[\theta] = \sum_{ 1\leq \ell\leq \ell_2 \atop C_1\in\bar{\mathcal P}_{\ell_2-\ell}, C_2\in \bar{\mathcal P}_{\ell}  }c_{\ell_2}^{C_1C_2}C_1 \left( \J[\phi]^{-\gamma-2} \right)  C_2 \left( K_1[\theta] \right) \label{jka1} \\
&+ \sum_{1\leq \ell\leq \ell_2, 1\leq k_1\leq k_2\leq \ell_2-1 \atop B_1\in \bar{\mathcal P}_{\ell_2-k_2-\ell}, B_2\in \bar{\mathcal P}_{\ell}}  c^{\ell_2B_1B_2}_{k_1k_2\ell}B_1 \left( \J[\phi]^{-\gamma-2-k_1}\left( \prod_{k'=1}^{k_1} V_{k'}\J[\phia] \right)_{ j_1+...+j_{k_1}=k_2, j_{k'}\geq1 \atop V_{k'}\in {\bar{\mathcal P}}_{j_{k'}}} \right)  B_2 \left( K_1[\theta] \right) \label{jka2} \\
&+ \sum_{1\leq k_1\leq k_2\leq  \ell_2}c^j_{k_1}K_{-\gamma-1-k_1} [\theta] \left( \prod_{k'=1}^{k_1} V_{k'}\J[\phia] \right)_{ j_1+...+j_{k}=k_2, j_{k'}\geq1\atop V_{k'}\in {\bar{\mathcal P}}_{j_{k'}}}\label{jka3}
\end{align}
Following the case-by-case analysis as before and using \eqref{Kabound}, \eqref{Kainfty}, \eqref{KaL2}, Lemma \ref{L:PHILEMMA} and \eqref{E:JQPOWERABOUND}, we deduce that 
\[
 \| r^{n-\ell_1-\ell_3-2} q_{1}(\frac{ r^n}{\tau}) \bar\D_{\ell_2}\left(K_{-\gamma-1}[\theta]\right) \|_{\alpha+i -2(\ell_1+\ell_3)}  \lesssim  \tau^{m-\frac23+\frac12(\frac53-\gamma)} \tau^{-2\gamma-2}  (E^N)^\frac12 
\]

Therefore we obtain the same bound as Case I 
\be
\| \bar\D_{i-1} (\star) \|_{\alpha+i}  \lesssim   \ve \tau^{m-\frac56+3(\frac43-\gamma)} (E^N)^\frac12 (D^N)^\frac12, 
\ee
where we have used~\eqref{E:GZEROZERO2}. 

\underline{Case III: $\ell_1\geq \max\{\ell_2,\ell_3\}$.} In this case,  $3\leq \ell_1\leq i-1$ and $\ell_2,\ell_3 \leq\frac{i-1}{3}$. We apply $L^\infty$ bounds for $A_2$ and $A_3$ factors and $L^2$ bounds for $A_1$ factor. For $L^2$ bounds for $A_1(\frac{1}{g^{00}})$, we use Lemma \ref{L:GZEROZEROLEMMA}. The proof follows in the same fashion and we get the same bound as in the previous cases. 

All the other terms in~\eqref{k40} are estimated analogously and we have the following bound: 
\be
\ve\tau^{-m}\|\bar\D_{i-1}(\frac{1}{g^{00}} \mathfrak K_4[\frac{\tau^m H}{r}])\|_{\alpha+i} \lesssim \ve \tau^{m-\frac56+3(\frac43-\gamma)} (E^N)^\frac12 (D^N)^\frac12 
\ee

The last term in ~\eqref{MH} can be estimated similarly by using Lemma \ref{L:GZEROZEROLEMMA}, \eqref{N0Hr}, and the previous estimates on $\mathfrak K_4[\theta]$: 
\be
\ve\| \bar\D_{i-1} \left( ( \pa_r\left( \frac{1}{g^{00}}\right) + \frac{1}{g^{00}} \frac{2}{r} ) \mathscr N_0[H]  \right) \|_{\alpha+i}  \lesssim  \ve \tau^{m-\frac56+3(\frac43-\gamma)} (E^N)^\frac12 (D^N)^\frac12 
\ee
This finishes the proof Lemma. 
\end{proof}

%%%%%%%%%%%%%%%%%%%%%%%%%%%%%%%%%%%%%%%%%%%
%%%%%%%%%%%%%%%%%%%%%%%%%%%%%%%%%%%%%%%%%%%

\subsection{Nonlinear estimates}\label{SS:NONLINEARESTIMATES}

%%%%%%%%%%%%%%%%%%%%%%%%%%%%%%%%%%%%%%%%%%%%%
%%%%%%%%%%%%%%%%%%%%%%%%%%%%%%%%%%%%%%%%%%%%%

Before we formulate the main estimate in Proposition~\ref{P:NONLINEARESTIMATE}, we collect 
several identities that can be regarded as a special form of the product rule that connects the algebraic structure
of the nonlinearity to the algebraic properties of the vector field class $\mathcal P$. 

\begin{lemma}\label{L:HH}
For any $i\in\{0,\dots,N\}$ there hold the identities
\begin{align}
\D_i\left(\frac{1}{r} \left( r\pa_r \left(
 \frac{H}{r}\right) \right)^2 \right) &= \sum_{1\leq k\leq i} \sum_{ B \in \mathcal P_{k+1} \atop C\in {\mathcal P}_{i-k+2}} c_k^{iBC} (BH ) (CH)  \label{E:DG}\\
\D_i\left(\frac{H^2}{r}\right) &= \sum_{A_{1,2}\in \mathcal P_{\ell_1,\ell_2} \atop \ell_1+\ell_2=i+1, \ \ell_1,\ell_2\le i} a_i^{A_1A_2}A_1 H A_2 H \label{E:HH} \\
\D_i\left(\frac{H^3}{ r^2}\right) & = \sum_{A_{1,2,3}\in \mathcal P_{\ell_1,\ell_2,\ell_3} \atop \ell_1+\ell_2+\ell_3=i+1, \ \ell_1,\ell_2,\ell_3\le i} \label{E:HHH}
b_i^{A_1A_2A_3}A_1 H A_2 H A_3 H,
\end{align}
where $a_i^{A_1A_2},b_i^{A_1A_2A_3}, c_k^{iBC}$ are some universal real constants.
%\bcr 
Note that the operators $A_j$, $j=1,2,3$ are at most of order $i$. %\ec
\end{lemma}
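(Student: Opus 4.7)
The plan is to establish all three identities simultaneously by induction on $i$, with the base cases verified by direct computation using the fundamental identity $r\pa_r(H/r) = D_r H - 3\frac{H}{r}$ (which follows from $\pa_r = D_r - \tfrac{2}{r}$). For instance, in identity (2) with $i=0$ one writes $H^2/r = (1/r)H\cdot H$ with $1/r\in\mathcal P_1$ and $1\in\mathcal P_0$, which fits the pattern; for identity (1) the base case $i=1$ is verified by expanding $\tfrac1r(D_rH-3H/r)^2$ and applying $D_r$ via the Leibniz rule. The inductive step uses the recursive description $\mathcal D_{i+1}=D_r\,\mathcal D_i$ when $i$ is even and $\mathcal D_{i+1}=\pa_r\,\mathcal D_i$ when $i$ is odd, applied termwise to the inductive hypothesis.

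The central algebraic input is the behavior of the vector-field classes $\mathcal P_k$ under post-composition, which is catalogued in Appendix~\ref{A:proofs}: if $A\in\mathcal P_k$ with $k$ odd then $\pa_r A\in \mathcal P_{k+1}$, while if $k$ is even then both $D_r A\in \mathcal P_{k+1}$ and $\tfrac1r A\in \mathcal P_{k+1}$. Combined with the two equivalent Leibniz rules
$$D_r(fg) = (D_r f)\,g + f\,\pa_r g = (\pa_r f)\,g + f\,D_r g,\qquad \pa_r(fg) = (\pa_r f)\,g+f\,\pa_r g,$$
this gives enough flexibility: when distributing $D_r$ across a factor $A_j H$ with $A_j\in\mathcal P_{\ell_j}$, one selects the Leibniz form whose parity matches $\ell_j$ so that the differentiated factor lands back in $\mathcal P_{\ell_j+1}$; any residual $\pa_r$ that falls on a factor of the wrong parity is rewritten as $D_r-\tfrac{2}{r}$, contributing to a lower-order $\mathcal P$-element. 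The constraint $\ell_j\le i$ is preserved because each application of Leibniz increments exactly one $\ell_j$ by one.

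For identity (1) one first rewrites $\tfrac{1}{r}(r\pa_r(H/r))^2$ as a linear combination of $\tfrac{1}{r}(D_rH)^2$, $\tfrac{1}{r^2}(D_rH)H$ and $\tfrac{1}{r^3}H^2$; each is manifestly of the form $BH\cdot CH$ with both $B,C$ containing at least one factor from $\mathcal P_{\ge 1}$, so the lower bound $k\ge 1$ in the sum is built into the base case and propagates through the induction. Identity (3) is handled by the same induction using the trilinear Leibniz rule; the bound $\ell_j\le i$ is maintained for the same reason as in (2), and the orders $\ell_1+\ell_2+\ell_3=i+1$ come from the degree of $H^3/r^2$ together with the single derivative added at each inductive step. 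The main obstacle will be the somewhat tedious parity bookkeeping for $\mathcal P$—specifically, justifying case by case that each Leibniz output still belongs to the correct $\mathcal P$-class—but no analytic input is needed beyond the composition rules from Appendix~\ref{A:proofs} and elementary algebra of the operators $\pa_r$, $D_r$, and $\tfrac1r$.
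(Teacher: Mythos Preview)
Your approach is essentially the same as the paper's: induction on $i$ with parity-based Leibniz bookkeeping so that each differentiated factor lands back in the correct $\mathcal P$-class. One small slip: your $i=0$ check for identity~\eqref{E:HH} does not actually fit the stated pattern, since $\tfrac1r\in\mathcal P_1$ violates the constraint $\ell_1\le i=0$; the induction should begin at $i=1$ (as you correctly do for identity~\eqref{E:DG}, and as the paper does).
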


\begin{proof}
{\em Proof of~\eqref{E:DG}.}
The proof is based on the induction on $i$. 
Note that 
\[
 \r \left(\frac{H}{r} \right)=  D_r H - 3 \frac{H}{r} 
\]
Let $i=1$. Then 
\begin{align*}
&D_r \left[  \frac{1}{r} (  D_r H - 3 \frac{H}{r} )^2 \right] \\
&=\frac{2}{r} (  D_r H - 3 \frac{H}{r} ) \left( \pa_r D_r H - 3\pa_r( \frac{H}{r} )\right) +  \frac{1}{ r^2} (  D_r H - 3 \frac{H}{r} )^2 \\
&= 2 \pa_r \left(\frac{H}{r} \right) \left( \pa_r D_r H - 3\pa_r( \frac{H}{r} )\right) + \left( \pa_r \left(\frac{H}{r} \right)\right)^2 \\
&= 2 \pa_r \left(\frac{H}{r} \right) \pa_r D_r H  -5 \left( \pa_r \left(\frac{H}{r} \right)\right)^2
\end{align*}
Since both $\pa_r \left(\frac{\cdot}{r} \right)$ and $\pa_r D_r $ belong to $ \mathcal P_{2}$, the claim is true for $i=1$. Now suppose the claim is true for all $i\leq \ell$ and let
%\bcr
\[
\mathscr G:=  \frac{1}{r} \left( r\pa_r \left(\frac{H}{r}\right) \right)^2.
\] 
%\ec
If $\ell$ is even, 
\[
\begin{split}
\D_{\ell+1} \mathscr G &= D_r \sum_{1\leq k\leq \ell \atop k: \text{ even}} \sum_{ B \in \mathcal P_{k+1} \atop C\in {\mathcal P}_{\ell-k+2}} c_k^{\ell BC} (BH ) (CH) 
+  D_r \sum_{1\leq k\leq \ell\atop k: \text{ odd}} \sum_{ B \in \mathcal P_{k+1} \atop C\in {\mathcal P}_{\ell-k+2}} c_k^{\ell BC} (BH ) (CH) \\
&=  \sum_{1\leq k\leq \ell \atop k: \text{ even}} \sum_{ B \in \mathcal P_{k+1} \atop C\in {\mathcal P}_{\ell-k+2}}c_k^{\ell BC}\left[ (BH ) D_r (CH) + \pa_r (BH) (CH) \right] 
\\
&\quad + \sum_{1\leq k\leq \ell\atop k: \text{ odd}} \sum_{ B \in \mathcal P_{k+1} \atop C\in {\mathcal P}_{\ell-k+2}} c_k^{\ell BC} \left[ D_r (BH ) (CH)
+(BH) \pa_r (CH) \right]
\end{split}
\]
Note that each term in the summation belongs to $\mathcal P_j$ for some $j$. Therefore, the claim is true for $i=\ell+1$. If $\ell$ is odd, we can rearrange terms as follows: 
\[
\begin{split}
\D_{\ell+1} \mathscr G &= \pa_r \sum_{1\leq k\leq \ell \atop k: \text{ even}} \sum_{ B \in \mathcal P_{k+1} \atop C\in {\mathcal P}_{\ell-k+2}} c_k^{\ell BC} (BH ) (CH) 
+  \pa_r \sum_{1\leq k\leq \ell\atop k: \text{ odd}} \sum_{ B \in \mathcal P_{k+1} \atop C\in {\mathcal P}_{\ell-k+2}} c_k^{\ell BC}(BH ) (CH) \\
&=  \sum_{1\leq k\leq \ell \atop k: \text{ even}} \sum_{ B \in \mathcal P_{k+1} \atop C\in {\mathcal P}_{\ell-k+2}}c_k^{\ell BC}\left[ \pa_r(BH ) (CH) +  (BH) \pa_r (CH) \right] 
\\
&\quad + \sum_{1\leq k\leq \ell\atop k: \text{ odd}} \sum_{ B \in \mathcal P_{k+1} \atop C\in {\mathcal P}_{\ell-k+2}} c_k^{\ell BC} \left[ (D_r -\frac{2}{r}) (BH ) (CH)
+(BH)  (D_r -\frac{2}{r}) (CH) \right]
\end{split}
\]
which shows that the claim is true for $i=\ell+1$. 
The proofs of~\eqref{E:HH}--\eqref{E:HHH} are similar.
\end{proof}

%%%%%%%%%%%%%%%%%%%%%%%%%%%%%%%%%%%%%%%%%%%%%
%%%%%%%%%%%%%%%%%%%%%%%%%%%%%%%%%%%%%%%%%%%%%

\begin{proposition}[Estimates for the nonlinear term]\label{P:NONLINEARESTIMATE}
Let $H$ be a solution of~\eqref{E:H1}. Then for any $i\in\{0,1,\dots,N\}$ the following bound holds
\begin{align}\label{E:NONLINEARESTIMATE}
\tau^{\frac12(\gamma-\frac23)} \|\D_i\mathscr N[H]\|_{\alpha+i} \lesssim 
\sqrt\ve  \tau^{m+\frac34 \delta^\ast}E^N +\tau^{m+\delta^\ast -\frac32-\frac32(\frac43-\gamma)}(E^N)^{\frac12} (D^N)^{\frac12}.
\end{align}
\end{proposition}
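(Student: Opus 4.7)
Recall from~\eqref{E:NONLINEARTERM} that
\[
\mathscr N[H]=-\frac{r}{\tau^m}\,\mathfrak N\!\left[\tfrac{\tau^m H}{r}\right],\qquad
\mathfrak N[\theta]=\ve\,\mathfrak K_2[\theta]+\tfrac{2}{9}\!\left(\tfrac{1}{\phi^2}-\tfrac{1}{\phia^2}+\tfrac{2\theta}{\phia^3}\right)+\ve\,\tfrac{P[\phia]\theta^2}{\phia^2}.
\]
Each summand of $\mathfrak N$ is at least quadratic in $\theta=\tau^m H/r$, so $\mathscr N[H]$ carries the overall factor $\tau^m$ appearing in~\eqref{E:NONLINEARESTIMATE}. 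The plan is to split $\mathscr N[H]$ into its three pieces, use the product-rule identities of Lemma~\ref{L:HH} to convert $\D_i$ of each quadratic (resp.\ cubic) nonlinearity into a sum of products $(A_1H)(A_2H)$ (resp.\ triples) with $A_j\in\mathcal P_{\ell_j}$ of order $\le i$, and then estimate by $L^\infty\cdot L^2$ (resp.\ $L^\infty\cdot L^\infty\cdot L^2$) using the Hardy--Sobolev embeddings of Appendix~\ref{A:HSembedding} together with the coefficient bounds of Lemmas~\ref{L:BARDIQBOUNDS}--\ref{L:PHILEMMA} and~\ref{L:KABOUNDS}.

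For the purely gravitational piece, Taylor expansion yields
\[
\tfrac{1}{\phi^2}-\tfrac{1}{\phia^2}+\tfrac{2\theta}{\phia^3}=\tfrac{3\theta^2}{\phia^4}+\mathcal O\!\left(\tfrac{\theta^3}{\phia^5}\right),
\]
which is legitimate since $|\theta/\phia|\lesssim\sigma'$ by~\eqref{E:APRIORI}; multiplying by $r/\tau^m$ gives a leading quadratic contribution of the schematic form $\tau^m H^2/(r\phia^4)$ plus a cubic remainder $\tau^{2m}H^3/(r^2\phia^5)$. Identities~\eqref{E:HH}--\eqref{E:HHH} then reduce $\D_i$ of these expressions to the required bilinear and trilinear sums, and their coefficients $\phia^{-4},\phia^{-5}$ are controlled in every weighted norm via the chain rule and the pointwise bound $|V\phia/\phia|\lesssim\ve\tau^{\delta^\ast}$ from~\eqref{LinftyV1}. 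The term $\ve P[\phia]\theta^2/\phia^2$ is handled in the same way via~\eqref{E:HH}, with the coefficient estimated by~\eqref{E:PQBOUND}; the explicit $\ve$ prefactor provides the $\sqrt\ve$ slack in~\eqref{E:NONLINEARESTIMATE}. For $\ve\mathfrak K_2[\theta]$ from~\eqref{E:MATHFRAKK2}, all summands are quadratic or cubic in $\theta$ and its first derivatives; the summands containing $(\r\theta)^2$ or $(\r\theta)\pa_\tau\theta$ are the structurally new ones because $\r(H/r)=D_r H-3H/r$ mixes vector fields of class $\mathcal P$, and here identity~\eqref{E:DG} is precisely tailored to convert $\D_i$ into the required products with $B,C\in\mathcal P$. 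The summands involving $K_{-\gamma-1}[\theta]+(\gamma+1)\J[\phia]^{-\gamma-2}K_1[\theta]$ are treated by the quadratic-in-$\theta$ representation~\eqref{E:QUADRATIC} combined with Lemma~\ref{L:KABOUNDS} (and Remark~\ref{R:KABOUNDS}), which reduce them to explicit quadratic expressions in $H$ with controlled coefficients.

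For each resulting product I would assign the factor of highest $\mathcal P$-index to $L^2$ (using~\eqref{L21} or~\eqref{L22}) and the remaining factor(s) to $L^\infty$ (using~\eqref{Linfty1}--\eqref{Linfty2}), distributing the $w$-weight by the cascade $w^{\alpha+i}=w^{\alpha+2\ell-N}\cdot w^{N+i-2\ell}$ already used in the proof of Lemma~\ref{L:LLOW1LEMMA}. Collecting $\tau$-exponents from the prefactor $\tau^{\frac12(\gamma-\frac23)}$, the $\tau^m$ built into $\theta$, the coefficient bounds $|\phi^a\J[\phi]^b|\lesssim\tau^{\frac{2a}{3}+2b}$ (Lemma~\ref{L:PHILEMMA}), and the embedding powers $\tfrac{11/3-\gamma}{2}$ (from $E^N$) or $\tfrac{14/3-\gamma}{2}$ (from $D^N$), and recalling~\eqref{E:DELTADEF}--\eqref{E:DELTASTARDEF}, the exponents combine to give precisely $m+\tfrac34\delta^\ast$ on the $E^N$-terms and $m+\delta^\ast-\tfrac32-\tfrac32(\tfrac43-\gamma)$ on the $(E^N)^{1/2}(D^N)^{1/2}$ cross-terms. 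The cubic products produced by~\eqref{E:HHH} yield $(E^N)^{3/2}$, which is absorbed into $E^N$ by the bootstrap assumption~\eqref{E:BOOTSTRAP}. The main technical obstacle will be closing the estimates at top order $i=N$, where Lemma~\ref{L:PHILEMMA}, in the form~\eqref{E:PHILEMMA4}, loses a $\sqrt\ve$ through the top-order pressure component of $E^N$; this is exactly the $\sqrt\ve$ appearing in the first term of~\eqref{E:NONLINEARESTIMATE}. For $i<N$ one obtains the same bound without $\sqrt\ve$, which is strictly stronger and consistent with the stated inequality.
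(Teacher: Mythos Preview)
Your proposal is correct and follows essentially the same architecture as the paper's proof: decompose $\mathscr N[H]$ into the $\ve\mathfrak K_2$ piece, the gravitational piece, and the $\ve P[\phia]\theta^2/\phia^2$ piece; convert $\D_i$ of each quadratic/cubic nonlinearity via the identities of Lemma~\ref{L:HH}; estimate the resulting products by the $L^\infty$/$L^2$ cascade with Lemma~\ref{L:PHILEMMA} and the weighted embeddings; and track the $\sqrt\ve$ loss at $i=N$ through the top-order pressure norm.

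Two minor points where the paper is slightly sharper than your sketch. First, for the gravitational piece the paper does not Taylor expand but uses the exact algebraic identity
\[
\frac{1}{\phi^2}-\frac{1}{\phia^2}+\frac{2\theta}{\phia^3}=\frac{3\phia\theta^2+2\theta^3}{\phi^2\phia^3},
\]
which closes to a finite expression with $\phi^{-2}\phia^{-2}$ and $\phi^{-2}\phia^{-3}$ as coefficients (both directly handled by Lemma~\ref{L:PHILEMMA}); your Taylor remainder would work too but requires an extra chain-rule step to bound $\D_i$ of the residual $\theta^3 f(\theta/\phia)/\phia^5$. Second, the $\sqrt\ve$ in the first term of~\eqref{E:NONLINEARESTIMATE} is produced specifically by the $(\r\theta)^2$ summand of $\ve\mathfrak K_2$ at $i=N$ (either via~\eqref{E:PHILEMMA4} on the coefficient or via~\eqref{L22} on $\D_{N+1}H$), not by the $P[\phia]$ term, whose explicit $\ve$ simply yields a full power of $\ve$ that is then weakened to $\sqrt\ve$. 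The dominant cross-term exponent $m+\delta^\ast-\tfrac32-\tfrac32(\tfrac43-\gamma)$ indeed comes from the gravitational $\tau^m H^2/(r\phi^2\phia^2)$ piece, exactly as you indicate.
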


%%%%%%%%%%%%%%%%%%%%%%%%%%%%%%%%%%%%%%%%%%%%%

Since $\phi=\phia +\theta$, we have by simple algebra
\begin{align}
 \frac{1}{\phi^2}-\frac{1}{\phia^2} +  \frac{2\theta}{\phia^3}  = \frac{3\phia \theta^2+2\theta^3}{\phi^2\phia^3}
\end{align}
From~\eqref{E:NONLINEARTERM} we may write $\mathscr N[H]$ in the form
\begin{align}\label{E:NH1}
\mathscr N [H]= - \ve  r\tau^{-m} \mathfrak K_2[\tau^m \frac{H}{r}] -\frac{2}{3\phi^2\phia^2 }\tau^m \frac{H^2}{r}-\frac{4}{9\phi^2\phia^3}\tau^{2m} \frac{H^3}{ r^2} 
- \ve \tau^m \frac{P[\phia] H^2}{\phia^2  r}
\end{align}

Using~\eqref{E:MATHFRAKK2}, the first term on the right-hand side of~\eqref{E:NH1} takes the form
\begin{align}
- \ve  r\tau^{-m} \mathfrak K_2[\tau^m \frac{H}{r}] &= 2\ve \gamma\tau^m w\frac{\phi^3 }{g^2\J[\phi]^{\gamma+1} } \frac{1}{r} \left( r\pa_r \left(
 \frac{H}{r}\right) \right)^2 \notag \\
&+ 2 \ve \gamma \tau^m w\frac{\phi^3 }{g^2\J[\phi]^{\gamma+1}} \frac{M_g }{ r^2}  \left(  \pa_\tau H  + \frac{m}{\tau } H \right)   r\pa_r \left( \frac{H}{r}\right)\notag\\
&+  \ve \gamma \tau^{m}  w\frac{\phi^2}{g^2\J[\phi]^{\gamma+1}  r} \DL(3\phia + \DL\phia  )H^2 \notag \\
&- \ve\gamma(\gamma+1) \tau^m w\frac{\phi^2}{g^2\J[\phia]^{\gamma+2}  r} \big[  (3\phia + \DL\phia  )\frac{H^2}{ r^2} +\tau^m\frac{H^3}{ r^3} \big] \DL\J[\phia]\notag\\
&+\ve \gamma (1+\alpha)\tau^m  w' \frac{\phi^2}{g^2\J[\phia]^{\gamma+1} } \big[ (\DL\phia -3\phia  )\frac{H^2}{ r^2} -2\tau^m\frac{H^3}{ r^3} \big] \notag \\
&+\ve \gamma \tau^{-m}w\frac{\phi^2}{g^2 r} (K_{-\gamma-1}[\tau^m\frac{H}{r}] + (\gamma+1) \J[\phia]^{-\gamma-2} K_1[\tau^m\frac{H}{r}] )  
 \DL\J[\phia] \label{E:MATHFRAKK2NEW}
\end{align}

%%%%%%%%%%%%%%%%%%%%%%%%%%%%%%%%%%%%%%%%%%%%%

 We denote the first and second terms of the right-hand side of \eqref{E:MATHFRAKK2NEW} by $\mathscr N_1[H]$ and $\mathscr N_2[H]$. We first present the estimation of $\mathscr N_1[H]$ and $\mathscr N_2[H]$. 

\subsubsection*{Estimates for $\mathscr N_1[H]=2\gamma\tau^m w\frac{\phi^3 }{g^2\J[\phi]^{\gamma+1} } \frac{1}{r} \left( r\pa_r \left(
 \frac{H}{r}\right) \right)^2$}

%%%%%%%%%%%%%%%%%%%%%%%%%%%%%%%%%%%%%%%%%
%%%%%%%%%%%%%%%%%%%%%%%%%%%%%%%%%%%%%%%%%

\begin{lemma} \label{L:N1}
For each $1\leq i\leq N-1$, we have the following:
\be
\ve \tau^{\frac12(\gamma-\frac23)} \|\D_i ( \mathscr N_1[H]  )\|_{\alpha+i} \lesssim 
\ve \tau^{m+\frac54\delta^\ast}E^N 
\ee
For $i=N$, 
\be
\ve\tau^{\frac12(\gamma-\frac23)} \|\D_N ( \mathscr N_1[H]  )\|_{\alpha+N} \lesssim 
\ve^\frac12  \tau^{m+\frac34 \delta^\ast}E^N 
\ee
\end{lemma}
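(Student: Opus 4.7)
The plan is to expand $\mathcal D_i(\mathscr N_1[H])$ via the product rule~\eqref{E:product}, so that derivatives either land on the coefficient $\mathcal K:=w\phi^3/(g^2\mathscr J[\phi]^{\gamma+1})$ or on the quadratic factor $\mathscr G:=\frac{1}{r}(r\pa_r(\frac Hr))^2$. This expresses $\mathcal D_i(\mathscr N_1[H])$ as a finite sum of terms of the form $\tau^m A_1 \mathcal K\cdot A_2\mathscr G$ with $A_1\in\bar{\mathcal P}_{\ell_1}$, $A_2\in\mathcal P_{\ell_2}$, $\ell_1+\ell_2=i$. The key structural input is then the identity~\eqref{E:DG} of Lemma~\ref{L:HH} (and its analogue for operators in $\mathcal P_{\ell_2}$ acting on $\mathscr G$): it lets us write $A_2\mathscr G$ as a linear combination of products $(BH)(CH)$ with $B\in\mathcal P_{k+1}$, $C\in\mathcal P_{\ell_2-k+2}$, $1\le k\le\ell_2$. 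This is the crucial algebraic step that turns $\mathcal D_i\mathscr N_1$ into a sum of \emph{bilinear} expressions in $H$-derivatives with \emph{bounded} individual orders, thereby eliminating any spurious coordinate singularity at $r=0$.

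For each resulting summand we bound the $\|\cdot\|_{\alpha+i}$ norm by an $L^\infty\times L^\infty\times L^2$ split: we place $L^2$ on the single factor of largest derivative count, and $L^\infty$ on the other two. The coefficient $A_1\mathcal K$ is handled by Lemma~\ref{L:PHILEMMA} with $a=3$, $b=-\gamma-1$ (choosing $c=-\tfrac{\gamma+1}{2}$ or similar): we invoke the $L^\infty$ bound~\eqref{E:PHILEMMA3} when $\ell_1\le N-3$, the weighted $L^2$ bound~\eqref{E:PHILEMMA2} when $\tfrac{N-\alpha-2}{2}\le\ell_1\le N-1$, and~\eqref{E:PHILEMMA4} when $\ell_1=N$. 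For factors $BH$, $CH$ of low derivative count we apply the weighted Sobolev embeddings~\eqref{Linfty11}--\eqref{Linfty31} of Appendix~\ref{A:HSembedding}, while for the factor of highest count we read off the $L^2$ bound directly from the definition~\eqref{E:ENDEF} of $E^N$. The explicit factor of $w$ built into $\mathcal K$, together with the $w^{\mathrm{ord}-2}$ weights from the Sobolev embeddings and the $w^{\alpha+\mathrm{ord}}$ weight in the $L^2$ piece, balances against the target weight $w^{\alpha+i}$ in $\|\cdot\|_{\alpha+i}$ since $\ell_1+\ell_2+3\ge i$, so there is no loss at the vacuum boundary.

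The $\tau$-bookkeeping then proceeds as follows. The coefficient obeys $|A_1\mathcal K|\lesssim w\,\tau^{-2\gamma-\ell_1/n} q_{-\gamma-1}(r^n/\tau)$ from Lemma~\ref{L:PHILEMMA}. Each $L^\infty$-bound on a low-order factor of the form $BH$ contributes $\tau^{\frac12(\frac{11}{3}-\gamma)}(E^N)^{1/2}$ via~\eqref{Linfty11}--\eqref{Linfty31}, and the $L^2$-bound on the highest-order factor contributes $\tau^{\frac12(\frac{11}{3}-\gamma)}(E^N)^{1/2}$ at orders $\le N$. Collecting powers, using $\delta=\tfrac{8}{3}-2\gamma-\tfrac{2}{n}$ and $\delta^\ast=\delta-\tfrac{N}{n}$, and multiplying through by the global prefactor $\ve\,\tau^m\cdot\tau^{\frac12(\gamma-\frac23)}$, gives the claimed $\ve\,\tau^{m+\frac54\delta^\ast}E^N$ bound for $1\le i\le N-1$.

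The main obstacle, and the reason for the loss of $\ve^{1/2}$ in the case $i=N$, is that one of the two factors $(BH),(CH)$ can have order $N+1$ (which happens precisely when $k=\ell_2$ or $k=1$ with $\ell_2=N$). Such a top-order factor is not controlled by any of the $\le N$-th order summands of $E^N$; it is governed only by the $\sqrt\ve$-weighted top-order contribution $\ve\,\tau^{-\gamma-1}\|q_{-(\gamma+1)/2}(r^n/\tau)\,\mathcal D_{N+1}H\|_{\alpha+N+1}^2\lesssim E^N$ in Definition~\ref{D:NORMDEF}. Extracting this contribution costs exactly one factor of $\ve^{1/2}$ and alters the time-weight by a factor of $\tau^{\frac12(\gamma+1)}$ in place of $\tau^{\frac12(\frac{11}{3}-\gamma)}$; redoing the exponent bookkeeping then produces $\sqrt\ve\,\tau^{m+\frac34\delta^\ast}E^N$, which is consistent with the stated bound. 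All remaining sub-cases at $i=N$ in which both factors have order $\le N$ reduce to the estimate already established for $i\le N-1$, and are absorbed into the stronger $\ve$-bound.
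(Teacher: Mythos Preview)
Your proposal is correct and follows essentially the same route as the paper: expand $\mathcal D_i\mathscr N_1$ via the product rule, invoke Lemma~\ref{L:HH} to write the $\mathscr G$-factor as bilinear products $(BH)(CH)$, distribute $L^\infty$/$L^2$ according to the highest-order factor using Lemma~\ref{L:PHILEMMA} and the weighted embeddings, and isolate the $\sqrt\ve$-loss at $i=N$ from exactly the two sources you name (the case $\ell_1=N$ via~\eqref{E:PHILEMMA4}, and the top-order factor of order $N+1$ via~\eqref{L22}). The only cosmetic difference is that the paper further splits your coefficient $\mathcal K$ into $A(w/g^2)\cdot B(\phi^3/\mathscr J[\phi]^{\gamma+1})$ before applying Lemma~\ref{L:PHILEMMA}, and carries out the $w$-weight balance more explicitly than your sketch ``$\ell_1+\ell_2+3\ge i$''; you should do the same, since Lemma~\ref{L:PHILEMMA} as stated applies to $\phi^a\mathscr J[\phi]^b$ rather than to $\mathcal K$ directly.
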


%%%%%%%%%%%%%%%%%%%%%%%%%%%%%%%%%%%%%%%%%

\begin{proof} 
Using the product rule~\eqref{E:product}
we  have
\begin{align*}
\ve \tau^{\frac12(\gamma-\frac23)} \D_i ( \mathscr N_1[H]  )
= -2\gamma\ve \tau^{\frac12(\gamma-\frac23)+m}
\sum_{j+k+\ell =i} \sum_{A\in \bar{\mathcal P}_j , B\in \bar{\mathcal P}_k \atop C \in {\mathcal P}_\ell} c^{iABC}_{kj} 
\underbrace{A\left( \frac{w}{g^2}\right) B\left( \frac{\phi^3 }{\J[\phi]^{\gamma+1}}\right)\left(C \mathscr G\right)}_{I^{iABC}}, 
\end{align*}
%\bcr 
where we recall the notation $\mathscr G =\frac{1}{r} \left( r\pa_r \left(
 \frac{H}{r}\right) \right)^2$ from the proof of Lemma~\ref{L:HH}. % \ec

\underline{Case I: $\ell=0$.} First we have $C\mathscr G=\mathscr G$ and $\mathscr G=\pa_r (\frac{H}{r}) (D_r H -3\frac{H}{r})$. By using $L^\infty$ bound \eqref{Linfty1}, we have 
\be
|\mathscr G |\lesssim \tau^{\frac12(\frac{11}{3} - \gamma)} (E^N)^\frac12 |\D_1 H| \lesssim \tau^{\frac{11}{3} - \gamma} E^N
\ee

\underline{Case I-1: $\ell=0$ and $0\le k\le1$.} By~\eqref{E:PHILEMMA1} $|A\left( \frac{w}{g^2}\right) B\left( \frac{\phi^3 }{\J[\phi]^{\gamma+1}}\right) |\lesssim \tau^{-2\gamma}q_{-\gamma-1}(\frac{ r^n}{\tau})$. %$|A\left( \frac{w}{g^2}\right) B\left( \frac{\phi^3 }{\J[\phi]^{\gamma+1}}\right) |\lesssim \tau^{1-\gamma}(\tau+\frac23M_g )^{-(\gamma+1)} $. 
Therefore, %\bcv 
recalling~\eqref{E:DELTASTARDEF} and the above definition of $I^{iABC}$ we obtain %\ec
\be\label{Bound1}
\ve  \tau^{\frac12(\gamma-\frac23)} \lv I^{iABC} \rv \lesssim \ve \tau^{\frac12(\gamma -\frac23)+m -2\gamma-\frac in+\frac{11}{3}-\gamma}E^N  = 
\ve \tau^{\frac56(4-3\gamma)- \frac in + m}E^N \lesssim \ve \tau^{m+\frac54\delta^\ast}E^N.
\ee

\underline{Case I-2: $\ell=0$, $k\geq2$.} 
If $j=0$ and $k=i\le N-1$ we use~\eqref{E:PHILEMMA2} to conclude
\begin{align}
\ve  \tau^{\frac12(\gamma-\frac23)+m} \| I^{iABC} \|_{\alpha+i} 
& \lesssim \ve  \tau^{\frac12(\gamma-\frac23)+m} \|A\left( \frac{w}{g^2}\right)\|_{L^\infty} \|B\left( \frac{\phi^3 }{\J[\phi]^{\gamma+1}}\right)\|_{\alpha -N+2i+2}
\|\left(C \mathscr G\right)\|_{L^\infty} \|w^{N-i-1}\|_{L^\infty} \notag \\
& \lesssim  \ve  \tau^{\frac12(\gamma-\frac23)-2\gamma -  \frac{k}{n}  + \frac{11}{3} - \gamma+m} E^N =\ve \tau^{\frac56(4-3\gamma)- \frac in + m}E^N \notag \\
& \lesssim \ve \tau^{m+\frac54\delta^\ast}E^N \label{E:CASE12}
\end{align}

If $j=0$ and $k=i=N$ we use~\eqref{E:PHILEMMA4} instead of~\eqref{E:PHILEMMA2} and 
%\bcr 
thanks to an additional power of $w$, % \ec 
this leads to
\begin{align*}
\ve  \tau^{\frac12(\gamma-\frac23)+m} \| I^{NABC} \|_{\alpha+ N} 
& \lesssim \ve  \tau^{\frac12(\gamma-\frac23)+m} \| \frac{w}{g^2}B\left( \frac{\phi^3 }{\J[\phi]^{\gamma+1}}\right)\|_{\alpha+ N}
\|\left(C \mathscr G\right)\|_{L^\infty} \\
& \lesssim \ve  \tau^{\frac12(\gamma-\frac23)+m} \|B\left( \frac{\phi^3 }{\J[\phi]^{\gamma+1}}\right)\|_{\alpha+ N+1}
\|\left(C \mathscr G\right)\|_{L^\infty}  \\
& \lesssim  \sqrt\ve  \tau^{\frac12(\gamma-\frac23)-2\gamma - \frac{N}{n} + \frac{11}{3} - \gamma+m} E^N 
\lesssim \sqrt\ve \tau^{m+\frac54\delta^\ast}E^N.
\end{align*}

If $j\ge1$ we have $k\le N-1$ and $\lv A\left( \frac{w}{g^2}\right) \rv \lesssim  r^{n-j} \lesssim \tau^{1-\frac jn}q_{1-\frac{j}{n}}\et.$
Using the bound analogous to~\eqref{E:CASE12} we conclude
\begin{align}
\ve  \tau^{\frac12(\gamma-\frac23)+m} \| I^{iABC} \|_{\alpha+i}  
& \lesssim \ve  \tau^{\frac12(\gamma-\frac23)+m} \|A\left( \frac{w}{g^2}\right) B\left( \frac{\phi^3 }{\J[\phi]^{\gamma+1}}\right)\|_{\alpha -N+2k+2}
\|\left(C \mathscr G\right)\|_{L^\infty} \|w^{N-k-1}\|_{L^\infty} \notag \\
& \lesssim \tau^{\frac12(\gamma-\frac23)+m+1-\frac jn} \|q_{1-\frac{j}{n}}\et B\left( \frac{\phi^3 }{\J[\phi]^{\gamma+1}}\right)\|_{\alpha -N+2k+2} \|\left(C \mathscr G\right)\|_{L^\infty} \notag \\
& \lesssim \ve  \tau^{\frac12(\gamma-\frac23)+m-2\gamma+1 - \frac{k+j}{n} + \frac{11}{3} - \gamma} E^N \notag \\
& \lesssim \ve \tau^{1+\frac56(4-3\gamma)- \frac in + m}E^N \lesssim 
\ve \tau^{m+\frac54\delta^\ast+1}E^N
\end{align}

\underline{Case II: $\ell\geq1$.}  In this case, we will make use of the representation obtained in \eqref{E:DG}: for $C\in \mathcal P_\ell$, we write it as 
\be
C\mathscr G = \sum_{1\leq q\leq \ell } \sum_{ C_1 \in \mathcal P_{q+1} \atop C_2\in {\mathcal P}_{\ell-q+2}} c_q^{\ell C_1C_2} (C_1H ) (C_2H)
\ee
Let $\ell_\ast=\max\{ q+1, \ell-q+2 \}$. Without loss of generality, we may assume that $\ell_\ast = \ell-q+2$ so that $C_2\in \mathcal P_{\ell_\ast}$ and $C_1\in \mathcal P_{\ell-\ell_\ast+3}$.  Note that $\frac{\ell+3}{2}\leq \ell_\ast\leq \ell+1$ and $1\leq q=\ell-\ell_\ast+2\leq \frac{\ell+1}{2}$.

\underline{Case II-1: $\ell\geq1$, $j=0$ and $k=0$.}  We first consider $\frac{N-\alpha}{2}\leq \ell_\ast \leq N$.  In this case, %\bcr 
by~\eqref{E:PHILEMMA1} and thanks to an additional power of $w$, %\ec
\begin{align}
&\ve  \tau^{\frac12(\gamma-\frac23)+m} \| I^{iABC} \|_{\alpha+i} \notag\\
& = \ve \tau^{\frac{10}{3}-\frac{\gamma}{2}+m}  \| \frac{\phi^3 }{g^2\J[\phi]^{\gamma+1}}  \tau^{\frac{1}{2} (\gamma-\frac{11}{3})}  w^{\frac{ N+i -2\ell_\ast +2 }{2}-q+1} w^{q-1}C_1H \tau^{\frac{1}{2} (\gamma-\frac{11}{3})}  w^{\frac{\alpha+2\ell_\ast -N}{2}} C_2H  \|_{L^2} \notag \\
& \lesssim \ve \tau^{\frac{10}{3}-\frac{\gamma}{2}-2\gamma+m}  \tau^{\frac{1}{2} (\gamma-\frac{11}{3})}  \|w^{q-1}C_1H\|_{L^\infty} 
\tau^{\frac{1}{2} (\gamma-\frac{11}{3})}  \|w^{\frac{\alpha+2\ell_\ast -N}{2}} C_2H  \|_{L^2} \notag \\
& \lesssim  \ve \tau^{\frac{10}{3}-\frac{\gamma}{2}-2\gamma+m} E^N
= \ve \tau^{\frac52(\frac43-\gamma)+m}E^N \lesssim \ve \tau^{m+\frac54\delta^\ast}E^N, \label{Bound11}
\end{align}
where we note that since $p+\ell_\ast =\ell+2$, $\frac{ N+i -2\ell_\ast +2 }{2}-q+1 = \frac{N+i - 2\ell}{2} \geq 0$, and 
thus $\|w^{\frac{ N+i -2\ell_\ast +2 }{2}-q+1} \|_{L^\infty}\lesssim 1$.  

Furthermore, since $\ell-\ell_\ast+1=q-1 \leq \frac{\ell-1}{2} \leq N-4$ due to $N=\lfloor\alpha\rfloor+6\geq 9$, $\tau^{\frac{1}{2} (\gamma-\frac{11}{3})}  w^{\frac{ N+i -2\ell_\ast +2 }{2}-q+1} w^{q-1}C_1H$ is bounded by $(E^N)^{\frac12}$ via \eqref{Linfty2}. Finally, we used the 
$L^2$ embedding \eqref{L21} to bound $\|w^{\frac{\alpha+2\ell_\ast -N}{2}} C_2H  \|_{L^2}$.

Now suppose $\ell_\ast =N+1$ ($\ell=N$ and $q=1$). We then have 
\begin{align}
\ve  \tau^{\frac12(\gamma-\frac23)+m} w^{\frac{\alpha+N}{2}}I^{NABC}=& \ve^\frac12 \tau^{\frac{3}{2} }   \tau^{\frac{\gamma+1}{2}}q_{\frac{\gamma+1}{2}}(\frac{ r^n}{\tau})  \frac{\phi^3 }{g^2\J[\phi]^{\gamma+1}} \tau^{\frac{1}{2} (\gamma-\frac{11}{3})}   C_1H \notag \\
&\ve^\frac12  \tau^{-\frac{\gamma+1}{2}}q_{-\frac{\gamma+1}{2}}(\frac{ r^n}{\tau}) w^{\frac{\alpha+N+2}{2}} C_2H 
\end{align}
We estimate the $L^2$-norm of the above expression by estimating the first line in $L^\infty$ norm and the second line in the $L^2$-norm.
Recalling~\eqref{E:NDEF}, by~\eqref{L22},~\eqref{E:PHILEMMA1},~\eqref{Linfty1}
we obtain  
\be
\ve  \tau^{\frac12(\gamma-\frac23)+m} \| I^{NABC}\|_{\alpha+i}\lesssim \sqrt\ve  \tau^{\frac12(4-3\gamma) + m}E^N \lesssim \sqrt\ve \tau^{m+\frac34 \delta^\ast }E^N .
\ee
The only remaining case is when $\ell_\ast <\frac{N-\alpha}{2}$, namely $\ell_\ast=2$ and $\ell=1$. In this case, we can just use the $L^\infty$ bound \eqref{Linfty2} to derive the same bound as in \eqref{Bound11}. 

\underline{Case II-2: $\ell\geq1$, $j=0$ and $k\geq 1$.} In this case, $2\leq \ell_\ast \leq i$ and $k\leq N-1$ since $\ell+k=i\le N$. 
If $\ell_\ast \ge k$ we have 
\begin{align}
& \ve\tau^{\frac12(\gamma-\frac23)+m}   \| I^{iABC} \|_{\alpha+i}
\lesssim  \ve \tau^{\frac{10}{3}-\frac{\gamma}{2}+m}\|w^k B\left(\frac{\phi^3 }{\J[\phi]^{\gamma+1}}\right)\|_{L^\infty} 
\|w^{\frac{ N+i -2k-2q+4 -2 \ell_\ast }{2}}\|_{L^\infty} \notag \\
& \ \ \ \  \tau^{\frac{1}{2} (\gamma-\frac{11}{3})}    \| w^{q-1}C_1H\|_{L^\infty}  
 \tau^{\frac{1}{2} (\gamma-\frac{11}{3})}  \|w^{\frac{\alpha+2\ell_\ast -N}{2}} C_2H\|_{L^2} \\
 & \lesssim \ve \tau^{\frac56(4-3\gamma)+ m}E^N 
 \lesssim \ve \tau^{m+\frac54\delta^\ast}E^N, \label{E:CASEII2}
\end{align}
where we have used~\eqref{E:PHILEMMA3} and the embeddings~\eqref{L21} and~\eqref{Linfty21}. Moreover, 
$\|w^{\frac{ N+i -2k-2q+2 -2 \ell_\ast }{2}}\|_{L^\infty}\lesssim 1$ since $N+i -2k-2q+4 -2 \ell_\ast = N-i\ge0$.

If $\ell_\ast < k$, as in Case I-2, we estimate $w^{\frac{\alpha+N-2k}{2}}B\left(\frac{\phi^3}{\J[\phi]^{\gamma+1}}\right)$ in the $L^2$-norm and the 
appropriately weighted terms $C_1H$ and $C_2H$ in the $L^\infty$-norm and obtain the same bound as in~\eqref{E:CASEII2}.

\underline{Case II-3: $\ell\geq1$ and $j\geq 1$.} In this case, we have $2\leq \ell_\ast\leq i+1-j$, $k\leq i-j-1$ and $\ell+k= i-j$. If $k=0$ we proceed as in Case II-1:%\bcv
\begin{align}
 \ve \tau^{\frac12(\gamma-\frac23)+m}   \| I^{iABC} \|_{\alpha+i}
&\lesssim  \tau^{\frac{10}{3}-\frac{\gamma}{2}+1-\frac{j}{n}+m }  \|q_{1-\frac{j}{n}}\et \frac{\phi^3}{\J[\phi]^{\gamma+1}} \|_{L^\infty} 
\|w^{\frac{ N+i -2\ell -2 }{2}}\|_{L^\infty}\notag \\
& \ \ \ \ \quad \tau^{\frac{1}{2} (\gamma-\frac{11}{3})} \|w^{q-1}C_1H \|_{L^\infty} 
 \tau^{\frac{1}{2} (\gamma-\frac{11}{3})}  \|w^{\frac{\alpha+2\ell_\ast -N}{2}} C_2H\|_{L^2} \notag \\
 & \lesssim  \ve  \tau^{\frac56(4-3\gamma) + m+1-\frac{N}{n}} E^N \lesssim 
\ve \tau^{m+\frac54\delta^\ast+1}E^N
\end{align}
where we have used $N+i -2\ell_\ast - 2q+2= N+i -2\ell -2 \geq 0$ because $\ell = i-j \leq i-1$. %\ec
If $k \geq 1$. We proceed as in Case II-2. We distinguish the two cases $\ell_\ast\ge k$ and $\ell_\ast<k$. Proceeding analogously to Case II-2, relying on the embeddings~\eqref{L21} and~\eqref{Linfty21}, and Lemma~\ref{L:PHILEMMA} we conclude
\be
\ve \tau^{\frac12(\gamma-\frac23)+m}   \| I^{iABC} \|_{\alpha+i} \lesssim \ve  \tau^{\frac56(4-3\gamma) + m+1-\frac{N}{n}} E^N \lesssim 
\ve \tau^{m+\frac54\delta^\ast+1}E^N. 
\ee
\end{proof}

%%%%%%%%%%%%%%%%%%%%%%%%%%%%%%%%%%%%%%%%%
%%%%%%%%%%%%%%%%%%%%%%%%%%%%%%%%%%%%%%%%%

\subsubsection*{Estimates for $\mathscr N_2[H]= 2\gamma \tau^m w\frac{\phi^3 }{g^2\J[\phi]^{\gamma+1}} \frac{M_g }{ r^2}  \left(  \pa_\tau H  + \frac{m}{\tau } H \right)   r\pa_r \left( \frac{H}{r}\right)$}

%%%%%%%%%%%%%%%%%%%%%%%%%%%%%%%%%%%%%%%%%%%%%%%%%%
%%%%%%%%%%%%%%%%%%%%%%%%%%%%%%%%%%%%%%%%%%%%%%%%%%

\begin{lemma}\label{L:N2} For any $i\in\{0,1,\dots,N\}$ the following bound holds
\begin{align}
\ve \tau^{\frac12(\gamma-\frac23)} \| \mathcal D_i \mathscr N_2[H] \|_{\alpha+i} \lesssim \sqrt \ve \tau^{m+\frac54\delta^\ast-\frac12} (E^N)^{\frac12}(D^N)^{\frac12}.
\end{align}
\end{lemma}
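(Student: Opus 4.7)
\textbf{Proof plan for Lemma~\ref{L:N2}.}

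The plan is to mirror the case analysis of Lemma~\ref{L:N1}, adapted to the fact that $\mathscr N_2[H]$ is the product of a coefficient $\frac{wM_g\phi^3}{g^2 r^2 \J[\phi]^{\gamma+1}}$ with two distinct $H$-factors: a ``velocity'' term $\pa_\tau H+\frac{m}{\tau}H$ and a ``shear'' term $r\pa_r(H/r)$. First I would apply the product rule~\eqref{E:product} to write $\mathcal D_i\mathscr N_2[H]$ as a finite linear combination of terms
\[
A\!\left(\tfrac{wM_g}{g^2 r^2}\right)B\!\left(\tfrac{\phi^3}{\J[\phi]^{\gamma+1}}\right)C\!\left(\pa_\tau H+\tfrac{m}{\tau}H\right)F\!\left(r\pa_r(H/r)\right),\qquad A\in\bar{\mathcal P}_j,\;B\in\bar{\mathcal P}_k,\;C\in\mathcal P_\ell,\;F\in\mathcal P_p,
\]
with $j+k+\ell+p=i$. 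Since $M_g\sim r^n$ by~\eqref{E:GTAYLOR2} and the definition~\eqref{E:BETADEF}, the $A$-factor satisfies $|A(wM_g/(g^2 r^2))|\lesssim r^{n-j-2}\lesssim\tau^{1-(j+2)/n}\et^{1-(j+2)/n}$, and is therefore always placed in $L^\infty$, supplying the ``$M_g$-gain'' of a full power of $\tau$ relative to the analogous step in Lemma~\ref{L:N1}.

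I would then perform a case analysis organised by the largest of the indices $k,\ell,p$, in direct parallel with Cases~I--II of Lemma~\ref{L:N1}. In each case the top-order factor is placed in the appropriate $w$-weighted $L^2$-space through~\eqref{L21} or~\eqref{L22}; the two remaining $H$-factors are placed in $L^\infty$ via~\eqref{Linfty1}--\eqref{Linfty2} and their weighted variants~\eqref{Linfty11}--\eqref{Linfty21},~\eqref{LinftyW2}; and the coefficient $\phi^3\J[\phi]^{-\gamma-1}$ is controlled through Lemma~\ref{L:PHILEMMA}. The key new feature relative to Lemma~\ref{L:N1} is that the velocity factor $C(\pa_\tau H+\frac{m}{\tau}H)$ is controlled by the $D^N$-norm directly from its definition, contributing $\tau^{\frac12(\frac83-\gamma)}(D^N)^{1/2}$, while the shear factor $F(r\pa_r(H/r))=F(D_r H-3H/r)\in\mathcal P_{p+1}$ is controlled by the $E^N$-norm exactly as in Lemma~\ref{L:N1}, contributing $\tau^{\frac12(\frac{11}3-\gamma)}(E^N)^{1/2}$. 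This immediately yields the product structure $(E^N D^N)^{1/2}$ on the right-hand side.

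In the base case $j=k=\ell=p=0$, adding up the individual $\tau$-contributions gives
\[
\ve\tau^{\frac12(\gamma-\frac23)+m}\cdot\tau^{1-\frac{2}{n}}\cdot\tau^{-2\gamma}\cdot\tau^{\frac12(\frac83-\gamma)}\cdot\tau^{\frac12(\frac{11}3-\gamma)}(E^N D^N)^{1/2}\;\sim\;\ve\tau^{m+\frac54\delta^\ast+\frac12+\frac{5N+2}{4n}}(E^N D^N)^{1/2},
\]
comfortably stronger than the claim since $\tau\le 1$ and $\ve\le\sqrt\ve$. The principal obstacle is the endpoint case in which the top-order factor is either the coefficient $B(\phi^3/\J[\phi]^{\gamma+1})$ with $k=N$, or the shear $F(r\pa_r(H/r))$ at total derivative count $N+1$: the former forces the use of~\eqref{E:PHILEMMA4}, the latter the use of~\eqref{L22}, and each costs a factor $\sqrt\ve$, which is exactly the $\sqrt\ve$ appearing in the conclusion. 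The weight bookkeeping here follows the template of~\eqref{Bound11}: powers of $w$ are redistributed so that the critical $L^2$-factor carries precisely the weight built into $E^N$ or $D^N$, the $q_\nu\et$-weights arising from $\phi^3\J[\phi]^{-\gamma-1}$ and from the top-order embedding cancel via $q_{(\gamma+1)/2}q_{-(\gamma+1)/2}=1$, and one verifies that even in this worst case the resulting $\tau$-exponent exceeds the target $m+\frac54\delta^\ast-\frac12$ by at least $\gamma-\frac13>0$, so that the claim holds with room to spare.
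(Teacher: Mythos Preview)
Your plan is correct and follows essentially the same route as the paper's proof. Both apply the product rule~\eqref{E:product} to split $\mathcal D_i\mathscr N_2[H]$ into four factors (the $wM_g/(g^2r^2)$ coefficient, the $\phi^3\J[\phi]^{-\gamma-1}$ coefficient, the velocity factor, and the shear), place the $A$-factor in $L^\infty$ to harvest the $r^n$-gain from $M_g$, run a case analysis on the largest remaining index using Lemma~\ref{L:PHILEMMA} together with the embeddings~\eqref{L21}--\eqref{L22} and~\eqref{Linfty1}--\eqref{Linfty21}, and identify the $\sqrt\ve$ loss at the endpoint where either the coefficient carries $N$ derivatives (forcing~\eqref{E:PHILEMMA4}) or the shear reaches order $N+1$ (forcing~\eqref{L22}). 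One cosmetic discrepancy: in your decomposition you write $C\in\mathcal P_\ell$ and $F\in\mathcal P_p$, but the iterated product rule~\eqref{E:product} yields exactly one $\mathcal P$-factor and the rest in $\bar{\mathcal P}$; the paper assigns the single $\mathcal P$-slot to the velocity factor and puts the shear in $\bar{\mathcal P}$, which is what makes $\bar\D_N D_r=\D_{N+1}$ line up cleanly with~\eqref{L22} and~\eqref{E:TOPORDERBOUND} in the endpoint case.
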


%%%%%%%%%%%%%%%%%%%%%%%%%%%%%%%%%%%%%%%%%%%%%%%%%%

\begin{proof}
By the product rule~\eqref{E:product} and the identity $\r\left(\frac H r\right)=D_r H - 3\frac H r$ we have 
\begin{align}
&\ve \tau^{\frac12(\gamma-\frac23)}\D_i\left(    2\gamma \tau^m w\frac{\phi^3 }{g^2\J[\phi]^{\gamma+1}} \frac{M_g }{ r^2}  
\left(  \pa_\tau H  + \frac{m}{\tau } H \right)   r\pa_r \left( \frac{H}{r}\right) 
\right) \notag \\
& = 
2\ve \gamma \tau^{\frac12(\gamma-\frac23)+m}  \sum_{A_{1,2,3}\in\bar{\mathcal P}_{\ell_1,\ell_2,\ell_3}, A_4\in \mathcal P_{\ell_4} \atop \ell_1+\dots+\ell_4=i}
A_1\left(\frac{wM_g }{g^2 r^2}\right) A_2\left(\frac{\phi^3 }{\J[\phi]^{\gamma+1}}\right) A_3\left(D_r H - 2\frac H r\right) \notag \\
& \ \ \ \ \left(A_4\pa_\tau H + \frac m\tau A_4H\right) \label{E:N11}
\end{align}
 
{\em Case I. $\ell_3\le i-1$.}
Each %\bcr 
factor %\ec 
in the last line of~\eqref{E:N11} can be estimated by
\begin{align}
\ve \tau^{\frac12(\gamma-\frac23)+m}  r^{n-\ell_1-2} \lv A_2\left(\frac{\phi^3 }{\J[\phi]^{\gamma+1}}\right) A_3\left(D_r H - 2\frac H r\right)
\left(A_4\pa_\tau H + \frac m\tau A_4H\right) \rv \notag \\
\ve \tau^{\frac12(\gamma-\frac23)+m+1-\frac{\ell_1+2}{n}}p_{1,-\frac{\ell_1+2}{n}} \lv q_1\et A_2\left(\frac{\phi^3 }{\J[\phi]^{\gamma+1}}\right) A_3\left(D_r H - 3\frac H r\right)
\left(A_4\pa_\tau H + \frac m\tau A_4H\right) \rv \label{E:N12}
\end{align}
We now distinguish several cases.

\noindent
{\em Case I-1. $\ell_3=\max\{\ell_2,\ell_3,\ell_4\}$.}
Assume first $\ell_2\le\ell_4\le\ell_3$ and $\ell_2\ge2$.
In this case the $\|\cdot\|_{\alpha+i}$ norm of~\eqref{E:N12} is bounded by
\begin{align}
&\ve \tau^{\frac12(\gamma-\frac23)+m+1-\frac{\ell_1+2}{n}} 
\|w^{\frac{N+i -2(\ell_2+\ell_3+\ell_4)}{2}}\|_{\infty}
\|w^{\ell_2} q_1\et A_2\left(\frac{\phi^3 }{\J[\phi]^{\gamma+1}}\right) \|_{\infty}  \notag \\
&\ \ \ \  \|w^{\ell_4-2}(A_4H_\tau-\frac m\tau A_4H)\|_{\infty} 
\| w^{\frac{\alpha+2\ell_3-N}{2}}A_3\left(D_r H - 3\frac H r\right)\|_{L^2 } \notag \\
& \lesssim \ve \tau^{\frac12(\gamma-\frac23)+m+1-\frac{\ell_1+2}{n} - 2\gamma - \frac{\ell_2}{n}+\frac{1}{2}(\frac53-\gamma)+\frac12(\frac83-\gamma)} E_N^{\frac12} D_N^{\frac12} \notag \\
& \lesssim \ve \tau^{m+\frac{17}6-\frac52 \gamma-\frac{i+2}{n}}  (E^N)^{\frac12}(D^N)^{\frac12} 
\lesssim \ve \tau^{m+\frac54\delta^\ast-\frac12}  (E^N)^{\frac12}(D^N)^{\frac12}, \label{E:B1}
\end{align}
where we have used~\eqref{E:PHILEMMA3} to bound $\|w^{\ell_2} q_1\et A_2\left(\frac{\phi^3 }{\J[\phi]^{\gamma+1}}\right) \|_{\infty}$,~\eqref{Linfty2} to 
bound $ \|w^{\ell_4-2}(A_4H_\tau-\frac m\tau A_4H)\|_{\infty} $ and~\eqref{L21} to bound $\| w^{\frac{\alpha+2\ell_3-N}{2}}A_3\left(D_r H - 3\frac H r\right)\|_{L^2}$. %\bcr 
Note that we have used the bounds
$\tau^{\frac12(\gamma-\frac53)}\|\D_j H_\tau\|_{\alpha+j}+\tau^{\frac12(\gamma-\frac{11}3)}\|\D_j H\|_{\alpha+j}
\lesssim (E^N)^{\frac12}$,  $j\in\{0,1,\dots, N\}.$ The case $\ell_4\le\ell_2\le\ell_3$ is handled analogously. %\ec

If $\ell_2\le1 $ and $\ell_3\ge3$ we then use~\eqref{E:PHILEMMA1} instead of~\eqref{E:PHILEMMA3} above and obtain the same bound. 
If $\ell_2\le1$ and $\ell_3\le2$ we then use~\eqref{E:PHILEMMA1} and~\eqref{Linfty1} instead of~\eqref{Linfty2} in the aboive argument and obtain the same 
upper bound. 

\noindent
{\em Case I-2. $\ell_4= \max\{\ell_2,\ell_3,\ell_4\}$ or $\ell_2 =\max\{\ell_2,\ell_3,\ell_4\}$.}
These cases can be treated similarly, with a similar case distinction, and with help of Lemma~\ref{L:PHILEMMA},~\eqref{Linfty1}--\eqref{Linfty2} and~\eqref{L21}.

{\em Case II. $\ell_3=i$.}
If $i\le N-1$ we proceed as in Case I. Assume now $\ell_3=N$.
Since in this case $A_3 = \bar \D_N$ the last line of~\eqref{E:N11} takes the form
\begin{align}
2\ve \gamma \tau^{\frac12(\gamma-\frac23)+m}  
\frac{wM_g }{g^2 r^2}\frac{\phi^3 }{\J[\phi]^{\gamma+1}}\left(\D_{N+1} H - 2\bar D_N\left(\frac H r\right)\right)
\left(\pa_\tau H + \frac m\tau H\right).
\end{align}
We take special notice of the additional power of $w$ available in this case.
Since $\lv \frac{M_g }{g^2 r^2}\rv \lesssim \tau^{1-\frac2n}p_{1,-\frac2n}\et q_1\et$ we can estimate the $\|\cdot\|_{\alpha+N}$-norm of the above expression
by 
\begin{align}
&\ve \tau^{\frac12(\gamma-\frac23)+m+1-\frac2n} 
\|q_{\frac{\gamma+3}{2}}\et \frac{\phi^3 }{\J[\phi]^{\gamma+1}}\left(\pa_\tau H + \frac m\tau H\right)\|_{\infty}
\|w^{\frac{\alpha+N+2}{2}}q_{-\frac{\gamma+1}{2}}\et \left(\D_{N+1} H - 2\bar D_i\left(\frac H r\right)\right)\|_{L^2 } \notag \\
& \lesssim \sqrt\ve \tau^{\frac12(\gamma-\frac23)+m-2\gamma+1-\frac{2}{n}+\frac12(\frac83-\gamma)+\frac12(\gamma+1)}   
(E^N)^{\frac12}(D^N)^{\frac12} \notag \\
& =\sqrt\ve  \tau^{\frac12(4-3\gamma)+\frac12-\frac2n + m}  (E^N)^{\frac12}(D^N)^{\frac12} 
\lesssim \sqrt\ve \tau^{m+\frac34\delta^\ast+\frac12}(E^N)^{\frac12}(D^N)^{\frac12} . \label{E:B2}
\end{align}
where we have used~\eqref{E:PHILEMMA1},%\bcr 
~\eqref{L22}, the bound
$\tau^{\frac12(\gamma-\frac53)}\|\D_j H_\tau\|_{\alpha+j}+\tau^{\frac12(\gamma-\frac{11}3)}\|\D_j H\|_{\alpha+j}
\lesssim (E^N)^{\frac12}$,  $j\in\{0,1,\dots, N\},$ %\ec 
and~\eqref{E:TOPORDERBOUND}.
The proof follows from~\eqref{E:B1} and~\eqref{E:B2}.
\end{proof}

The third, fourth, and fifth lines on the right-hand side of~\eqref{E:MATHFRAKK2NEW} are easily bounded by the same ideas as above, where we systematically use the product rule~\eqref{E:product}, Lemmas~\ref{L:PHILEMMA} and~\ref{L:DIQBOUNDS}. 

%\bcr 
We only highlight the potential difficulties and how they can be overcome. 
In the 3rd term on the right-hand side of~\eqref{E:MATHFRAKK2NEW} there is nothing dangerous; we may write it as 
$\ve \gamma \tau^{m}  w\frac{\phi^2}{\J[\phi]^{\gamma+1}}\frac1{g^1} \DL(3\phia + \DL\phia  )H \frac Hr $ and then estimate its $\D_i$ derivative using the case-by-case analysis analogous to the above, the product rule~\eqref{E:product}, Lemma~\ref{L:PHILEMMA}, and the bounds~\eqref{E:BARDIDQ},~\eqref{E:D2QBOUND}. Note that the last two estimates
afford the presence of a power of $\frac{r^n}{\tau}$ in our bounds, which in turn has the regularising effect of diminishing any potential singularities due to negative powers of $r$ at $r=0$.

The 4-th term on the right-hand side of~\eqref{E:MATHFRAKK2NEW} looks potentially dangerous due to the presence of the negative powers of $r$ in $\frac{H^2}{r^2}$ and $\frac{H^3}{r^3}$. However, by~\eqref{E:QBOUND3} the bounds on $V \DL\J[\phia]$, $V\in\bar{\mathcal P}_i$, will afford a presence of a power of of $\frac{r^n}{\tau}$, thus averting all difficulties with potential singularities at $r=0$. 

Finally, the 5th term on the right-hand side of~\eqref{E:MATHFRAKK2NEW} contains $w'$ explicitly, and since $|\pa_r^\ell w'|\lesssim r^{n-\ell-1}$ in the vicinity of $r=0$ we have the above mentioned regularising effect. The estimates are then routinely performed using the the product rule~\eqref{E:product} and Lemma~\ref{L:PHILEMMA}.
%\ec
The outcome is 
\begin{align}
\ve \tau^{\frac12(\gamma-\frac23)} \|\mathcal D_i \left(\text{ $j$-th line of~\eqref{E:MATHFRAKK2NEW}}\right) \|_{\alpha+i}
\lesssim \ve \tau^{m+\frac 54 \delta^\ast} (E^N)^{\frac12}(D^N)^{\frac12}, \ \ j=3,4,5. \label{E:B3}
\end{align}
To estimate the last line of~\eqref{E:MATHFRAKK2NEW} the  crucial insight is that
\begin{align*}
K_{-\gamma-1}[\tau^m\frac{H}{r}]  &+ (\gamma+1) \J[\phia]^{-\gamma-2} K_1[\tau^m\frac{H}{r}]  \\
& = (\gamma+1)(\gamma+2)\J[\phia]^{-\gamma-3} \left(\int_0^1(1-s)(1+s \frac{K_1[\theta]}{\J[\phia]}  )^{-\gamma-3} ds\right)  (K_1[\theta])^2,
\end{align*}
which follows from~\eqref{E:QUADRATIC} with $a=-\gamma-1$. Therefore the left-hand side above 
is in fact quadratic in $K_1[\theta]$. We now estimate the high-order derivatives of the above left-hand side using 
the product rule~\eqref{E:product}, Lemma~\ref{L:KABOUNDS}, and Remark~\ref{R:KABOUNDS}. By analogy to the proof of
Lemma~\ref{L:MH}, we obtain
\begin{align}
& \ve \tau^{\frac12(\gamma-\frac23)} \|\mathcal D_i \left(\tau^{-m}w\frac{\phi^2}{g^2 r} (K_{-\gamma-1}[\tau^m\frac{H}{r}] + (\gamma+1) \J[\phia]^{-\gamma-2} K_1[\tau^m\frac{H}{r}] )\right)   \|_{\alpha+i} \notag \\
&\lesssim \ve \tau^{m+\frac 54 \delta^\ast} (E^N)^{\frac12}(D^N)^{\frac12}, \ \ i\le N-1. \label{E:B4.0}
\end{align}
On the other hand, when $i=N$, $\D_NK_1[\theta]$ contains a top-order term $\D_{N+1}H$ in which case we have to use \eqref{L22} with loss of $\sqrt{\ve}$: 
\begin{align}
& \ve \tau^{\frac12(\gamma-\frac23)} \|\mathcal D_i \left(\tau^{-m}w\frac{\phi^2}{g^2 r} (K_{-\gamma-1}[\tau^m\frac{H}{r}] + (\gamma+1) \J[\phia]^{-\gamma-2} K_1[\tau^m\frac{H}{r}] )  
 \DL\J[\phia]\right) \|_{\alpha+i} \notag \\
& \lesssim \sqrt\ve \tau^{m+ \frac34\delta^\ast-\frac12} (E^N)^\frac12(D^N)^{\frac12}. \label{E:B4}
\end{align}

We next discuss the rest of terms in \eqref{E:NH1}. 
Using the product rule~\eqref{E:product} and~\eqref{E:HH} we have
\begin{align}
& \D_i\left(\frac{1}{\phi^2\phia^2 }\tau^m \frac{H^2}{r}\right) \notag \\
&= \tau^m \sum_{B_{1}\in \bar{\mathcal P}_{\ell_1}, B_2\in\mathcal P_{\ell_2} \atop \ell_1+\ell_2 = i, \  \ell_2\le i-1} c_i^{B_1B_2}
B_1\left(\frac{1}{\phi^2\phia^2 }\right)
B_2\left(\frac{H^2}{r}\right) + \tau^m\frac{1}{\phi^2\phia^2 } \D_i\left(\frac{H^2}{r}\right) \notag \\
& = \tau^m \sum_{B_{1}\in \bar{\mathcal P}_{\ell_1}, B_2\in\mathcal P_{\ell_2} \atop \ell_1+\ell_2 = i, \  \ell_2\le i-1} c_i^{B_1B_2}
B_1\left(\frac{1}{\phi^2\phia^2 }\right)
\sum_{B_{2,1}\in \bar{\mathcal P}_{\ell_{2,1}}, B_{2,2}\in\mathcal P_{\ell_{2,2}} \atop \ell_{2,1}+\ell_{2,2}=\ell_2}
c_{\ell_2}^{B_{2,1}B_{2,2}} 
B_{2,1}\left(\frac H r\right) B_{2,2} H \notag \\
& \ \ \ \ +  \tau^m\frac{1}{\phi^2\phia^2 } \sum_{A_{1,2}\in \mathcal P_{\ell_1,\ell_2} \atop \ell_1+\ell_2=i+1, \ \ell_1,\ell_2\le i} a_i^{A_1A_2}A_1 H A_2 H
\end{align} 
With this decomposition, Lemma~\ref{L:PHILEMMA} (applied with $b=0$), Lemma~\ref{L:BARDIQBOUNDS}, and the case-by-case analysis analogous to
the proof of Lemma~\ref{L:N2} we obtain the bound
\begin{align}
\tau^{\frac12(\gamma-\frac23)+m} \|\D_i \left(\frac{1}{\phi^2\phia^2 } \frac{H^2}{r}\right) \|_{\alpha+i}
&\lesssim 
\tau^{\frac12(\gamma-\frac23)+m-\frac83- \frac{i+2}{n} +\frac12(\frac53-\gamma)+\frac12(\frac83-\gamma)} (E^N)^{\frac12} (D^N)^{\frac12} \notag \\
& \lesssim \tau^{-\frac12 \gamma - \frac56-\frac{i+2}{n} + m}(E^N)^{\frac12} (D^N)^{\frac12} \notag \\
& \lesssim \tau^{m+\frac14\delta^\ast- \frac32-\frac34\frac{N+2}{n}}(E^N)^{\frac12} (D^N)^{\frac12} \notag \\
&= \tau^{m+\delta^\ast -\frac32-\frac32(\frac43-\gamma)}(E^N)^{\frac12} (D^N)^{\frac12} 
\label{E:B5}
\end{align}
where we have used $\frac{N+2}{n}= 2(\frac43-\gamma)-\delta^\ast$ at the last step. 
Similarly
\begin{align}
\tau^{\frac12(\gamma-\frac23)+2m} \|\D_i \left(\frac{1}{\phi^2\phia^3} \frac{H^3}{ r^2}\right) \|_{\alpha+i}
&%\bcr 
\lesssim  \tau^{\frac12(\gamma-\frac23)+2m-\frac{10}3- \frac{i+2}{n} +2\times \frac12(\frac53-\gamma)+\frac12(\frac83-\gamma)} E^N (D^N)^{\frac12}\notag %\ec 
\\
&  \lesssim \tau^{- \gamma -\frac23 -\frac{i+2}{n} + 2m}E^N (D^N)^{\frac12} \notag \\
 &\lesssim \tau^{2m+\frac12\delta^\ast -2 - \frac12\frac{N+2}n}E^N (D^N)^{\frac12}. \label{E:B6}
\end{align}

In order to estimate $ \ve \tau^m \frac{P[\phia] H^2}{\phia^2  r}$ (the last term on the right-hand side of~\eqref{E:NH1}) 
Using the product rule~\eqref{E:product}
\begin{align}
\D_i\left(\frac{P[\phia] H^2}{\phia^2  r}\right) =
\sum_{A_{1,2}\in \bar{\mathcal P}_{\ell_1,\ell_2}, A_3\in\mathcal P_{\ell_3} \atop \ell_1+\dots+\ell_3=i} c_i^{A_1A_2A_3}
A_1(\frac{1}{\phia}) A_2(\frac{P[\phia]}{\phia})A_3 \left(\frac{H^2}{r}\right).
\end{align}
For $A_3 \left(\frac{H^2}{r}\right)$, we may use \eqref{E:HH} to further decompose it into a linear combination of $A_{31}H A_{32} H$ where $A_{31}, A_{32} \in \mathcal P_{i_1},\mathcal P_{i_2} $, $i_1+i_2=\ell_3+1$, $i_1,i_2\le \ell_3$. 
%\bcr 
We next recall~\eqref{E:PQFORMULA}. %\ec 
Applying~\eqref{E:PQBOUND} and the case-by-case analysis analogous to
the proof of Lemma~\ref{L:N2} we obtain the bound
\begin{align}
\ve \tau^{\frac12(\gamma-\frac23)+m} \|\D_i \left(\frac{P[\phia] H^2}{\phia^2  r}\right) \|_{\alpha+i}
& \lesssim \ve \tau^{\frac12(\gamma-\frac23)+m -2\gamma - \frac{i+2}{n}+ \frac12(\frac53-\gamma)+\frac12(\frac83-\gamma)} 
(E^N)^{\frac12}(D^N)^{\frac12} \notag \\
& \lesssim \ve\tau^{m+\frac54 \delta^\ast -\frac{3}{2}}(E^N)^{\frac12}(D^N)^{\frac12}. \label{E:B7}
\end{align}

{\bf Proof of Proposition~\ref{P:NONLINEARESTIMATE}.}
Bound~\eqref{E:NONLINEARESTIMATE} follows from~\eqref{E:MATHFRAKK2NEW}, Lemmas~\ref{L:N1}--\ref{L:N2}, bounds~\eqref{E:B3}--\eqref{E:B4},~\eqref{E:B5}--\eqref{E:B6}, and~\eqref{E:B7}.

\subsection{Source term estimates}

Recall the definition~\eqref{E:SOURCETERM} and formula~\eqref{E:SOURCETERM1}.

\begin{lemma}[Source term estimates]\label{L:SOURCETERMBOUNDS}
For any $i\in\{0,1,\dots N\}$ the following bounds hold
\begin{align}
\| \mathcal D_i\left( r\phi_0^{-2}R^\ve_{M,2}[\frac{\phi_1}{\phi_0}, \dots, \frac{\phi_M}{\phi_0}]\right)\|_{\alpha+i} &\lesssim \tau^{-\frac43+(M+1)\delta-\frac{i}{n}}, 
\label{E:BOUNDSOURCE1} \\
\| \mathcal D_i \left( rR_P^\ve\right) \|_{\alpha+i} &\lesssim  \tau^{-\frac43+(M+1)\delta-\frac{i-1}{n}},\label{E:BOUNDSOURCE2} \\
\| \mathcal D_i \mathscr S(\phia) \|_{\alpha+i} & \lesssim\ve^{M+1} \tau^{-m} \tau^{-\frac43+(M+1)\delta
-\frac{i}{n}}.
\label{E:BOUNDSOURCE3}
\end{align}
\end{lemma}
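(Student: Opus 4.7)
The third bound~\eqref{E:BOUNDSOURCE3} follows immediately from the first two. Since the hierarchy~\eqref{E:HPHI} is satisfied, identity~\eqref{E:SOURCETERM1} gives $S(\phia)=-\ve^{M+1}(R_P^\ve+\phi_0^{-2}R^\ve_{M,2})$, hence $\mathscr S(\phia)=-\ve^{M+1}\tau^{-m}(rR_P^\ve+r\phi_0^{-2}R^\ve_{M,2})$; applying $\mathcal D_i$ and combining~\eqref{E:BOUNDSOURCE1}--\eqref{E:BOUNDSOURCE2} yields~\eqref{E:BOUNDSOURCE3} (using $\tau^{-(i-1)/n}\le\tau^{-i/n}$ on $(0,1]$).

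For~\eqref{E:BOUNDSOURCE1} I would exploit the algebraic structure of the Taylor remainder $R^\ve_{M,2}$. From~\eqref{E:EPSILONEXPANSION}, when expanded in the weighted grading $\mathrm{wt}(x_j)=j$, $R^\ve_{M,2}(x_1,\dots,x_M)$ is a linear combination (with smooth, uniformly bounded $\ve$-dependent coefficients) of monomials $\prod_j x_j^{\lambda_j}$ constrained by $\sum_j j\lambda_j\ge M+1$. After substituting $x_j=\phi_j/\phi_0$, Theorem~\ref{T:MAINBOUNDPHI} together with $\phi_0=\tau^{2/3}$ gives $|\pa_\tau^k(r\pa_r)^\ell(\phi_j/\phi_0)|\lesssim\tau^{j\delta-k}\pet$ for every $j\in\{1,\dots,M\}$. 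Expanding $\mathcal D_i(r\phi_0^{-2}R^\ve_{M,2})$ by the product rule and Faa di Bruno as in the proofs of~\eqref{E:LAMBDABARSMALLHBOUND}--\eqref{E:LAMBDABARHBOUND}, each resulting monomial decays at worst like $\tau^{-4/3+(M+1)\delta}$ in time and produces a spatial factor of the form $r^{1-i}\pet^{N_\ast}$ with $N_\ast\ge 1$. Since $w^{\alpha+i}\lesssim 1$, taking the $\|\cdot\|_{\alpha+i}$-norm reduces the problem to integrating $r^{2-2i}\pet^{2N_\ast}r^2\,dr$, which through the change-of-variables identity underlying Lemma~\ref{L:lem2.12} converts to $\tau^{-i/n}$ (up to a constant) provided $n$ is large enough that $n\lambda>N$, which is guaranteed by Definition~\ref{D:ALAMBDADEF}.

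For~\eqref{E:BOUNDSOURCE2} I would argue directly from the explicit formula~\eqref{E:RPEPSILONDEF}. Each of the three summands of $R_P^\ve$ has the same algebraic form as the terms $f_j$ in~\eqref{E:FNDEFPHI}, except that the index-summation constraint $m+i\ge M$ (respectively the presence of $h_M$ or $R^\ve_{M,\gamma}$) now forces a cumulative gain of at least $\tau^{(M+1)\delta}$; moreover $h_M$ and $R^\ve_{M,\gamma}$ satisfy the same type of bounds as~\eqref{E:LAMBDABARSMALLHBOUND}--\eqref{E:JSMALLHBOUND}. Consequently the argument of Lemma~\ref{L:FNPLUSONEBOUND} applies essentially verbatim and yields the pointwise bound $|\mathcal D_i(rR_P^\ve)|\lesssim \tau^{-4/3+(M+1)\delta}r^{1-i}\pet$. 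The improved exponent $\tfrac{i-1}{n}$ (rather than $\tfrac{i}{n}$) is gained from the explicit prefactor $r$ in $rR_P^\ve$, which absorbs one of the negative powers of $r$ produced by $\mathcal D_i$; equivalently, the leading term $w'\sim r^{n-1}$ in $\DL(w^{1+\alpha}\,\cdots)$ provides an extra $r$ that, after Lemma~\ref{L:lem2.12}, translates into a factor $\tau^{1/n}$.

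The main technical obstacle in both~\eqref{E:BOUNDSOURCE1}--\eqref{E:BOUNDSOURCE2} is the careful bookkeeping of the $\tau$-exponents and the $r^n/\tau$-weights across iterated product/chain rules, to ensure that the remaining $\pet^{N_\ast}$ factor carries exactly the right exponent $\lambda-\tfrac2n$ needed to offset the $r^{-i}$-singularity at the center when integrated in the weighted $L^2$-space. This is the same delicate case analysis already carried out in the proofs of Lemmas~\ref{L:LAMBDABARSMALLHBOUND}--\ref{L:FNPLUSONEBOUND} and Proposition~\ref{P:MAINBOUNDPHI}, and I would reuse those estimates rather than reprove them.
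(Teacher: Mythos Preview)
Your strategy is essentially the paper's: reduce~\eqref{E:BOUNDSOURCE3} to~\eqref{E:BOUNDSOURCE1}--\eqref{E:BOUNDSOURCE2} via~\eqref{E:SOURCETERM1}; for~\eqref{E:BOUNDSOURCE1} use the weighted Taylor structure of $R^\ve_{M,2}$ together with Theorem~\ref{T:MAINBOUNDPHI}; and for~\eqref{E:BOUNDSOURCE2} run the argument of Lemma~\ref{L:FNPLUSONEBOUND} on each block of~\eqref{E:RPEPSILONDEF}. This is correct.

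Two small corrections are worth noting. First, your appeal to Lemma~\ref{L:lem2.12} for the $r$-integration is misplaced: that lemma treats $\tau$-integrals of $p_{\lambda,-2/n}(r^n/\tau')$, not $r$-integrals. The cleaner mechanism (which the paper uses implicitly) is the \emph{pointwise} conversion
\[
r^{-k}\,p_{\mu,\nu}\Big(\frac{r^n}{\tau}\Big)=\tau^{-k/n}\,p_{\mu,\nu-k/n}\Big(\frac{r^n}{\tau}\Big)\lesssim \tau^{-k/n},
\]
valid whenever $\mu+\nu-\tfrac{k}{n}\ge 0$ and $\nu-\tfrac{k}{n}\le 0$; with $\lambda=\tfrac{2N}{n}$ this covers all $k\le N$. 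After this, the $\|\cdot\|_{\alpha+i}$-norm is trivially bounded since $\int_0^1 w^{\alpha+i}r^2\,dr<\infty$, so no genuine $r$-integration is needed. Second, your account of the improved exponent $(i-1)/n$ in~\eqref{E:BOUNDSOURCE2} is not quite the actual mechanism: the prefactor $r$ in $rR_P^\ve$ only partially cancels the explicit $r^{-2}$ inside $R_P^\ve$, leaving $r^{-(i+1)}$ after applying $\D_i$; the pointwise conversion then produces $\tau^{-(i+1)/n}$, and the stated exponent emerges from the algebraic identity $\tfrac43-2\gamma=-\tfrac43+\delta+\tfrac2n$, i.e.\ $\tau^{\frac43-2\gamma-(i+1)/n+M\delta}=\tau^{-\frac43+(M+1)\delta-(i-1)/n}$. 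The same identity explains why~\eqref{E:BOUNDSOURCE1} (which has no built-in $r^{-2}$) is stated with the slightly weaker exponent $-i/n$; the bound there is simply not sharp.
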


\begin{proof}
{\em Proof of~\eqref{E:BOUNDSOURCE1}.}
Recall that $R^\ve_{M,2}$ is defined through~\eqref{E:EPSILONEXPANSION}. 
A detailed look at the Taylor expansion of the function $R^\ve_{M,\nu}$ reveals
that for any $D\in\mathbb N$ there exist constants $c_{\alpha_1,\dots\alpha_M}^j$, $j\in\{1,\dots, D\}$ and a smooth function $r_{M,\nu}^{D,\ve}$ such that 
\begin{align}
R^\ve_{M,\nu}(x_1,\dots x_m) = \sum_{j=1}^D\ve^{j-1}\sum_{(\alpha_1,\dots,\alpha_M)\in\mathbb Z_{\ge0}^M \atop \sum_{i=1}^Mi\alpha_i = M+j} 
c^j_{\alpha_1,\dots,\alpha_M} x_1^{\alpha_1}\dots x_M^{\alpha_M}
+ r^{D,\ve}_{M,\nu}(x_1,\dots x_m)  \label{E:TAYLORR}
\end{align} 
where the remainder term $r^{D,\ve}_{M,\nu}(x_1,\dots x_m)$ has the property that all mixed derivatives 
$\pa_{x_1}^{\alpha_1}\dots\pa_{x_M}^{\alpha_m}r^{D,\ve}_{M,\nu}$ vanish at ${\bf 0}$ if $\sum_{i=1}^Mi \alpha_i\le M+D$.
Using the chain rule,~\eqref{E:TAYLORR}, and the bound
\[
 r^\ell \pa_r^\ell \left(\frac{\phi_i}{\phi_0}\right) \lesssim \tau^{i\delta},
\]
bound~\eqref{E:BOUNDSOURCE1} follows immediately.

\noindent
{\em Proof of~\eqref{E:BOUNDSOURCE2}.}
Recall that $R_P^\ve$ is defined through~\eqref{E:RPEPSILONDEF}.
To estimate the first term on the right-hand side of~\eqref{E:RPEPSILONDEF} we use the following crude bound
\begin{align}
&\sum_{k=0}^i\lv \D_i \left(
\frac{\phi_k\phi_{i-k}}{w^\alpha  r} \DL\left(w^{1+\alpha}\mathscr J[\phi_0]^{-\gamma}\frac{h_m}{m!}\right) \right)\rv \notag \\
& \lesssim  r^{-i} \sum_{\ell=0}^i \lv \rr^\ell
\left(\frac{\phi_k\phi_{i-k}}{w^\alpha  r} \DL\left(w^{1+\alpha}\mathscr J[\phi_0]^{-\gamma}\frac{h_m}{m!}\right) \right)\rv \notag \\
&\lesssim  
%\bcr 
r^{-i} \sum_{\ell_1+\ell_2+\ell_3\le i} 
\lv \rr^{\ell_1}\left(\frac{1}{ r}\right) \rv
\lv\rr^{\ell_2}\left(\phi_k\phi_{i-k}\right)\rv 
\lv\rr^{\ell_3}\left(w^{-\alpha}\DL\left(w^{1+\alpha}\mathscr J[\phi_0]^{-\gamma}\frac{h_m}{m!} \right)\right)\rv \notag. 
%\ec
\end{align}
By~\eqref{E:JSMALLHBOUND}, Proposition~\ref{P:MAINBOUNDPHI} we can bound the last line above by 
\begin{align}
 r^{-i-1}\tau^{\frac23+k\delta+\frac23 + (i-k)\delta-2\gamma + m\delta} p_{1,0}\et 
= \tau^{\frac43 + (i+m)\delta - 2\gamma - \frac{i+1}{n}} p_{1,-\frac{i+1}{n}}\et \lesssim \tau^{\frac43-2\gamma -\frac{i+1}{n} + M\delta}. \label{E:SB1}
\end{align}

To estimate the second term on the right-hand side of~\eqref{E:RPEPSILONDEF} we
first note that
\begin{align}
 \frac{R^\ve}{g^2(r) w^\alpha  r} \DL\left(w^{1+\alpha}\J[\phia]^{-\gamma} \right)
 =  -\gamma \frac{R^\ve w}{g^2(r)  r} \J[\phia]^{-\gamma-1} \DL\J[\phia]
 +  (1+\alpha) \frac{R^\ve}{g^2(r) } w' \J[\phia]^{-\gamma} . \label{E:SOURCEPRELIM1}
\end{align}
When we apply $\D_i$ to the first term on the right-hand side of~\eqref{E:SOURCEPRELIM1} we use
the product rule~\eqref{E:product} to break down the resulting expression into a linear combination of terms of the form
\begin{align}\label{E:SOURCEPRELIM2}
A_1\left(\frac{R^\ve}{wg^2(r)  r} \right) A_2\left(\J[\phia]^{-\gamma-1}\right) A_3 \DL\J[\phia]
\end{align}
with $A_1\in \mathcal P_{\ell_1}$ and $A_{2,3}\in\bar{\mathcal P}_{\ell_2,\ell_3}$ with $\ell_1+\ell_2+\ell_3 = i$. 
By Proposition~\ref{P:MAINBOUNDPHI} we have
\begin{align*}
\lv A_1\left(\frac{R^\ve w}{g^2(r)  r^2} \right) \rv \lesssim \tau^{\frac43 + M\delta - \frac{\ell_1+1}{n}} p_{\l, - \frac{\ell_1+3}{n}}\et.
\end{align*}
Combining the previous line with~\eqref{E:QBOUND3} and~\eqref{E:JQPOWERABOUND} we can bound the absolute value of~\eqref{E:SOURCEPRELIM2} by
\begin{align}
&\tau^{\frac43 + M\delta - \frac{\ell_1+1}{n}} p_{\l, - \frac{\ell_1+3}{n}}\et \tau^{-2\gamma-2-\frac{\ell_2}{n}} q_{-\gamma-1}\et 
%\bcr 
\et^{-\frac{\ell_2}{n}}
\tau^{2-\frac{\ell_3}{n}} \et^{-\frac{\ell_3}{n}} q_2\et \left(p_{1,0}\et + p_{\l,-\frac2n}\et\right)%\ec 
\notag \\
& \lesssim \tau^{\frac43-2\gamma + M\delta - {\frac{i+1}{n}}} 
%\bcr 
p_{\l, - \frac{\ell_1+4}{n}}\et q_{-\gamma+1}\et \left(p_{1,-\frac{\ell_2+\ell_3}{n}}\et + p_{\l,-\frac{\ell_2+\ell_3+2}{n}}\et\right) %\ec 
\notag \\
& \lesssim \tau^{\frac43-2\gamma + M\delta - {\frac{i+1}{n}}}. \label{E:SB2}
\end{align}

To bound the last term on the right-hand side of~\eqref{E:RPEPSILONDEF} we can use the refined expansion~\eqref{E:TAYLORR}
to obtain the bound
\begin{align*}
\lv \rr^\ell R_{M,\gamma}^\ve  \rv
& \lesssim \tau^{(M+1)\delta} 
\end{align*}
where we have used~\eqref{E:PARTIALBARLAMBDAJBOUNDK},~\eqref{E:PARTIALTAUJPOWERMINUSK},~\eqref{E:REPSILONMNUPROPERTIES}, 
and~\eqref{E:RJDEF}, and the bound $|R_\J|\lesssim \tau^{M\delta}$.
By an analogous argument we have the bound $\lv\rr^\ell h_M\rv \lesssim \tau^{(M+1)\delta}$.
Using the last two bounds, the product rule, Proposition~\ref{P:MAINBOUNDPHI}, by analogy to the above we obtain
\begin{align}
\lv\D_i\left( \frac{\sum_{j=0}^{M-1} \ve^j \sum_{k=0}^j \phi_k\phi_{i-k}}{g^2(r) w^\alpha  r^2} \DL\left(w^{1+\alpha}\mathscr J[\phi_0]^{-\gamma}\left(\frac{h_{M}}{M!} + \ve R^\ve_{M,\gamma} \right)\right)\right)\rv 
\lesssim \tau^{\frac43 - 2\gamma +(M+1)\delta -\frac{i+1}{n}}. \label{E:SB3}
\end{align}
Since $\tau^{\frac43 - 2\gamma +(M+1)\delta -\frac{i+1}{n}}=\tau^{-\frac43+(M+1)\delta-\frac{i-1}{n}}$, the claim follows from~\eqref{E:SB1},~\eqref{E:SB2}, and~\eqref{E:SB3}.

\noindent
{\em Proof of~\eqref{E:BOUNDSOURCE2}.}
Since $\mathscr S(\phia) =  r\tau^{-m} S(\phia)$, from~\eqref{E:SOURCETERMEXACT} and~\eqref{E:BOUNDSOURCE1}--\eqref{E:BOUNDSOURCE2} we obtain
\[
\| \mathcal D_i \mathscr S(\phia) \|_{\alpha+i}  \lesssim \ve^{M+1} \tau^{-m} \tau^{-\frac43+(M+1)\delta
-\frac{i}{n}},
\]
where we have used the bound $\tau^{-\frac{i-1}{n}}\le \tau^{-\frac in}$, for $\tau\in(0,1]$.
\end{proof}

As a corollary, we obtain the following bound for the source terms.

%%%%%%%%%%%%%%%%%%%%%%%%%%%%%%%%%%%%%%%%%%%%%
%%%%%%%%%%%%%%%%%%%%%%%%%%%%%%%%%%%%%%%%%%%%%

\begin{proposition}[Source term estimates] \label{P:SOURCETERMESTIMATE}
Let $H$ be a solution of~\eqref{E:H1}. Then for any $i\in\{0,\dots, N\}$ the following bound holds
\begin{align}
\tau^{\gamma-\frac53} \left\vert\left( \mathcal D_i\mathscr S (\phia)  \ , \ \mathcal D_i H_\tau\right)_{\alpha+i}  \right\vert
 \lesssim \ve D^N +  \ve^{2M+1} \tau^{2(1-\frac2n)+ (2M-2)\delta + 2\delta^\ast-3\gamma-2m}.
 \label{E:SOURCETERMMAINBOUND}
\end{align}
\end{proposition}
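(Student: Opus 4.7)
The plan is to bound the inner product by Cauchy--Schwarz, insert the explicit source bound from Lemma~\ref{L:SOURCETERMBOUNDS}, and then absorb the factor involving $H_\tau$ into $D^N$ via Young's inequality with weight $\ve$. Concretely, first I write
\[
\tau^{\gamma-\frac53}\left|(\mathcal D_i\mathscr S(\phia),\mathcal D_i H_\tau)_{\alpha+i}\right|
\le \tau^{\gamma-\frac53}\,\|\mathcal D_i\mathscr S(\phia)\|_{\alpha+i}\,\|\mathcal D_i H_\tau\|_{\alpha+i}.
\]
For the first factor I invoke the estimate~\eqref{E:BOUNDSOURCE3}, giving
\[
\|\mathcal D_i\mathscr S(\phia)\|_{\alpha+i}\lesssim \ve^{M+1}\tau^{-m-\frac43+(M+1)\delta-\frac{i}{n}}.
\]
For the second factor I read off from the definition~\eqref{E:DNDEF} of $D^N$ that
\[
\|\mathcal D_i H_\tau\|_{\alpha+i}\lesssim \tau^{\frac12(\frac83-\gamma)}(D^N)^{1/2}.
\]

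Multiplying these bounds and simplifying the exponent of $\tau$ produces
\[
\tau^{\gamma-\frac53}\left|(\mathcal D_i\mathscr S(\phia),\mathcal D_i H_\tau)_{\alpha+i}\right|
\lesssim \ve^{M+1}\tau^{\frac{\gamma}{2}-\frac{5}{3}-m+(M+1)\delta-\frac{i}{n}}(D^N)^{1/2}.
\]
Now I apply Young's inequality $ab\le \tfrac{\ve}{2}a^2+\tfrac{1}{2\ve}b^2$ with $a=(D^N)^{1/2}$, absorbing $\tfrac{\ve}{2}D^N$ into the first term on the right-hand side of~\eqref{E:SOURCETERMMAINBOUND}, and leaving
\[
\ve^{2M+1}\tau^{\gamma-\frac{10}{3}-2m+2(M+1)\delta-\frac{2i}{n}}
\]
as the leftover contribution.

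The last step is a numerical check: I need to verify that the exponent of $\tau$ here is at least $2(1-\tfrac2n)+(2M-2)\delta+2\delta^\ast-3\gamma-2m$ for every $i\in\{0,\dots,N\}$, so that on $\tau\in(0,1]$ we obtain the claimed upper bound. Since the exponent is monotone decreasing in $i$, the tight case is $i=N$, and I compute directly using $\delta=\tfrac83-2\gamma-\tfrac2n$ and $\delta^\ast=\delta-\tfrac{N}{n}$ that
\[
\gamma-\tfrac{10}{3}-2m+2(M+1)\delta-\tfrac{2N}{n}
=2\bigl(1-\tfrac2n\bigr)+(2M-2)\delta+2\delta^\ast-3\gamma-2m,
\]
so equality holds at $i=N$ and the bound is strictly better for $i<N$. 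This yields~\eqref{E:SOURCETERMMAINBOUND}. There is no real obstacle here; the only point that requires care is the bookkeeping of the $\tau$-exponents and the choice of Young weight $\ve$, which is what produces the correct power $\ve^{2M+1}$ in the final estimate.
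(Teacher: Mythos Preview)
Your proof is correct and follows essentially the same approach as the paper's own proof: Cauchy--Schwarz, the source bound~\eqref{E:BOUNDSOURCE3}, the definition of $D^N$, and Young's inequality with weight $\ve$ to produce $\ve D^N + \ve^{2M+1}\tau^{\cdots}$. Your verification of the $\tau$-exponent at $i=N$ via $\delta=\tfrac83-2\gamma-\tfrac2n$ and $\delta^\ast=\delta-\tfrac{N}{n}$ is in fact more explicit than the paper's, which simply states the bound after noting $i\le N$.
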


%%%%%%%%%%%%%%%%%%%%%%%%%%%%%%%%%%%%%%%%%%%%%

\begin{proof}
By the previous lemma and the Cauchy-Schwarz inequality we obtain
\begin{align}
\tau^{\gamma-\frac53} \left\vert\left( \mathcal D_i\mathscr S (\phia)  \ , \ \mathcal D_i H_\tau\right)_{\alpha+i}  \right\vert
& \lesssim  \tau^{\frac12(\gamma-\frac23)} \| \mathcal D_i \mathscr S(\phia) \|_{\alpha+i}
 \tau^{\frac12(\gamma-\frac83)}\|\mathcal D_i H_\tau\|_{\alpha+i}  \notag \\
& \lesssim \ve^{M+1} \tau^{\frac12(\gamma-\frac23)-m-\frac43+(M+1)\delta-\frac{i}{n}}(D^N)^{\frac12} \notag \\
& = \ve^{M+1} \tau^{1+M\delta - \frac{i+2}{n}-\frac32\gamma-m} (D^N)^{\frac12}.
\end{align}
Since $i\le N$ and $\delta^\ast = \delta - \frac{N}{n}$, we can estimate the above expression by a multiple of
\[
\ve D^N +  \ve^{2M+1} \tau^{2(1-\frac2n)+ (2M-2)\delta + 2\delta^\ast-3\gamma-2m}.
\]
\end{proof}

%%%%%%%%%%%%%%%%%%%%%%%%%%%%%%%%%%%%%%%%%%%%%

\begin{remark}\label{R:M}
In order for the $\tau$-power to be integrable on $[0,T]$ we need to impose
$2(1-\frac2n)+ (2M-2)\delta + 2\delta^\ast-3\gamma-2m>-1$ which is equivalent to
$(M-1)\delta+\delta^\ast >\frac32(\gamma-1)+m+\frac2n$. Since $\delta^\ast>\frac2n$ by~\eqref{E:DELTALOWBOUND} and $0<\gamma-1<\frac13$, a sufficient condition for the previous estimate is for $M$ to be sufficiently large so that
\begin{align}
(M-1)\delta>\frac12 +m.
\end{align}
\end{remark}

%%%%%%%%%%%%%%%%%%%%%%%%%%%%%%%%%%%%%%%%%%%%%

\subsection{Proof of Theorem~\ref{T:MAINEXISTENCETHEOREM}}
\label{SS:PROOFOFKAPPATHEOREM}

We are ready to estimate $ \int_\kappa^\tau \mathcal R_i  d\tau' $ where  $\mathcal R_i$ is given in \eqref{E:Ri}. The only missing estimate  is the last term of \eqref{E:Ri}. By \eqref{E:PHIBOUNDAPRIORITAU}, \eqref{E:JPHIBOUNDAPRIORITAU}, and \eqref{E:GZEROZEROBOUND2}, we have 
$|\left(\frac{c[\phi]}{c[\phi_0]g^{00}}\right)_\tau|\lesssim \ve \tau^{\delta-1} $ and hence we obtain the following estimate of the last term of $\mathcal R_i$ in \eqref{E:Ri}: 
\be\label{E:FINALTERM}
\left\vert\frac12 \ve\gamma \tau^{\gamma-\frac53}  
\int_0^1  c[\phi_0]\left(\frac{c[\phi]}{c[\phi_0]g^{00}}\right)_\tau w^{1+\alpha} \left\vert \mathcal D_{j+1}H\right\vert^2 \,w^j r^{2}\,d r \right\vert\lesssim \ve \tau^{\delta-1} E^N
\ee

Combining Propositions~\ref{P:LLOW},~\ref{P:COMMESTIMATES}, Lemma~\ref{L:MH}, Propositions~\ref{P:NONLINEARESTIMATE},~\ref{P:SOURCETERMESTIMATE}, and~\eqref{E:FINALTERM}, we obtain
the bound
\begin{align}
\sum_{i=0}^N\lv \mathcal R_i \rv \lesssim &\left(\ve + \sqrt\ve \tau^{\delta^\ast} +\ve \tau^{m-\frac12+\frac52\left(\frac43-\gamma\right)}+\tau^{m+\delta^\ast-\frac32-\frac32(\frac43-\gamma)}\sqrt{E^N}\right) D^N \notag \\
& + \left(\sqrt\ve \tau^{\min\{\delta^\ast, \frac\delta2\}-\frac12}+\sqrt\ve\tau^{m+\frac34\delta^\ast}\sqrt{E^N}+\sqrt{\ve}\tau^{\frac\delta2 -\frac12}+\sqrt\ve \tau^{\min\{\delta^\ast,\frac\delta2\}-\frac12}\right)
\sqrt{E^N}\sqrt{D^N} \notag \\
& + \ve\tau^{\delta-1}E^N \notag \\
& +\ve^{2M+1} \tau^{2(1-\frac2n)+ (2M-2)\delta + 2\delta^\ast-3\gamma-2m}. \label{E:RIBOUND1}
\end{align}
We note that the last line of~\eqref{E:RIBOUND1}
Let $\bar\delta: = \min\{\delta^\ast, \frac\delta2\}>0$. With the choices 
\be\label{E:MFORMULA}
m=\frac52, \ \ M = \lfloor 1+\frac{2m+1}{2\delta} \rfloor +1=  \lfloor 1+\frac{3}{\delta} \rfloor +1,
\ee
we have $m-\frac12+\frac52\left(\frac43-\gamma\right)\ge0$, $m+\delta^\ast-\frac32-\frac32(\frac43-\gamma)\ge0$,
$2(1-\frac2n)+ (2M-2)\delta + 2\delta^\ast-3\gamma-2m\ge0$ (for the last bound we use~\eqref{E:DELTALOWBOUND} which implies $\delta^\ast>\frac2n$). Consequently, bound~\eqref{E:RIBOUND1} together with the a priori assumption $E^N\le 1$ implies
\begin{align}
\sum_{i=0}^N\lv \mathcal R_i \rv \lesssim & \sqrt\ve D^N 
+\sqrt{E^N} D^N + \sqrt\ve \tau^{\bar\delta-\frac12}\sqrt{E^N}\sqrt{D^N} + \sqrt\ve \tau^{\frac52}E^N\sqrt{D^N}   \notag \\
& + \ve\tau^{\delta-1}E^N + \ve^{2M+1} \notag \\
\lesssim & \sqrt\ve D^N  +\sqrt{E^N} D^N 
+ \sqrt\ve \tau^{2\bar\delta-1}E^N + \sqrt\ve \tau^5 (E^N) \notag \\
& + \ve\tau^{\delta-1}E^N + \ve^{2M+1},
\label{E:RIBOUND2}
\end{align}
where we have used the bound $2|ab|\le a^2+b^2$ to go from the first to the second estimate and the a priori bound $E^N\lesssim1$.

We now integrate the energy identity~\eqref{E:ENERGYIDENTITY} over the time interval $[\kappa,\tau]$, $\tau\le1$, and obtain
by virtue of Proposition~\ref{P:EQUIVALENCE}
conclude that there exists a universal constant $C_0>2$ such that
\begin{align}
S_\kappa^N(\tau) \le& \frac{C_0}{2} S_\kappa^N(\tau)\Big|_{\tau=\kappa} +\ve^{2M+1}(\tau-\kappa) \notag \\
& + C\left(\sqrt\ve +\sup_{\kappa\le\tau'\le\tau} \sqrt{E^N(\tau')} \right) \int_\kappa^\tau D^N(\tau')\,d\tau' \notag   \\
& + C \sqrt\ve \sup_{\kappa\le\tau'\le\tau}E^N(\tau') 
\int_\kappa^\tau \left( (\tau')^{2\bar\delta-1} +(\tau')^{\delta-1} + (\tau')^5\right) \,d\tau'. \label{E:ENERGYBOUND}
\end{align}

The positivity of $\delta$ and $\bar\delta=\min\{\delta^\ast, \frac\delta2\}$ guarantees that the last time integral on the right-most side of~\eqref{E:ENERGYBOUND} is finite and bounded independently of the constant $\kappa$. As a consequence of~\eqref{E:ENERGYBOUND} and Proposition~\ref{P:EQUIVALENCE} we conclude 
\begin{align}\label{E:ENERGYBOUND2}
S_\kappa^N(\tau) \le& \frac{C_0}2 S_\kappa^N(\tau)\Big|_{\tau=\kappa} + \ve^{2M+1}
+ C\sqrt \ve S_\kappa^N(\tau) + \left(S_\kappa^N(\tau)\right)^{\frac32}, \ \ \tau\in[\kappa,1].
\end{align}

Since by the local well-posedness theorem Proposition~\ref{T:LOCAL}, the map $\tau\mapsto S_\kappa^N(\tau)$ is continuous, a standard continuity argument applied to~\eqref{E:ENERGYBOUND2} implies that there exist $0<\sigma_\ast,\ve_\ast<1$  such that for any $0<\sigma<\sigma_\ast$ the following is true: for any choice of initial data $(H,H_\tau)\big|_{\tau=\kappa}$ satisfying 
\[
S_\kappa^N(H_0^\kappa,H_1^\kappa)(\tau=\kappa) \le \sigma^2,
\]
and any $0<\ve<\ve_\ast$
the solution exists on the interval $[\kappa,1]$ and satisfies the uniform-in-$\kappa$ bound
\be\label{E:FINALBOUND}
S_\kappa^N(\tau) \le C_0\left(\sigma^2+\ve^{2M+1}\right), \ \ \tau\in[\kappa,1].
\ee

\noindent
{\bf Justification of the a priori assumptions~\eqref{E:APRIORI} and~\eqref{E:BOOTSTRAP}.}
The size restrictions $0<\ve<\ve_\ast$, $0<\sigma<\sigma_\ast$ for $\ve_\ast,\sigma_\ast$ sufficiently small are necessary to ensure that the a priori assumptions~\eqref{E:APRIORI} and~\eqref{E:BOOTSTRAP} can be consistently recovered from the standard continuity argument. 
First let $(\ell_1,\ell_2)\neq (0,2)$. The embedding inequality \eqref{Linfty11} immediately gives
\be\label{E:AB}
\left\| (r\pa_r)^{\ell_1} (\tau\pa_\tau)^{\ell_2} \left( \frac{H}{r}\right) \right\|_{\infty} \lesssim \tau^{\frac{1}{2}(\frac{11}{3}-\gamma)} (E^N)^\frac12  \lesssim \ve + \sigma
\ee
for $ 0\leq \ell_1+\ell_2\leq 2,  \  (\ell_1,\ell_2)\neq (0,2)$. Now for $(\ell_1,\ell_2)= (0,2)$, it suffices to derive the bound for $\| \tau^2\pa_\tau^2(\frac{H}{r})\|_\infty$. Since $(H,\pa_\tau H)$ is a classical solution to ~\eqref{E:H2}, we may use the equation directly:  
\begin{align*}
\tau^2\pa_\tau^2(\frac{ H }{r})&=  - 2\frac{\tau g^{01}}{rg^{00}} {\tau\pa_r\pa_\tau H}  - \frac{2m}{g^{00}}  \tau \pa_\tau(\frac{ H}{r}) - \frac{d^2}{g^{00}}\frac{H}{r} 
+ \ve\gamma  \frac{ \tau^2 c[\phi]}{g^{00}}   \frac{1}{ w^{\alpha} r}\pa_r \left(w^{1+\alpha} \frac{1}{ r^2}\pa_r[ r^2 H] \right)  
\notag \\
& \quad -  \ve \frac{\tau^2 \mathscr N_0[H]}{r g^{00}}  + \frac{\tau^2}{rg^{00}}\left(\mathscr S(\phia) -\ve \mathscr L_{\text{low}} H + \mathscr N [H]\right). 
\end{align*}
Using \eqref{E:AB}, \eqref{E:GZEROZEROBOUND}, \eqref{E:GZEROONEBOUND} it is easy to see that the first three terms of the right-hand side are bounded by $\tau^{\frac{1}{2}(\frac{11}{3}-\gamma)} (E^N)^\frac12$. For the fourth term, by  \eqref{E:CEQUIVALENCE}, $ |\tau^2 c[\phi] |\lesssim \tau^{\delta +\frac{2}{n}}q_{-\gamma-1} (\frac{r^n}{\tau})$ and moreover by \eqref{infty}, we have $\| w \frac{ \D_2 H }{r} \|_\infty \lesssim \tau^{\frac{1}{2}(\frac{11}{3}-\gamma)} (E^N)^\frac12 $. Hence, the fourth term is also bounded by $\tau^{\frac{1}{2}(\frac{11}{3}-\gamma)} (E^N)^\frac12$. Similarly, by using the source estimates and estimating $L^\infty$ norm of $\frac{\mathscr N_0[H]}{r}$, $\frac{ \mathscr L_{\text{low}} H}{r}$ and $\frac{\mathscr N[H]}{r}$, we deduce that the second line is also bounded by $\tau^{\frac{1}{2}(\frac{11}{3}-\gamma)} (E^N)^\frac12$ and $\ve^{M+1}$. Therefore, we obtain 
\[
\left\| (\tau\pa_\tau)^{2} \left( \frac{H}{r}\right) \right\|_{\infty} \lesssim  \ve+\sigma. 
\]
It is now clear that there exists a universal $C$ so that if we choose $\sigma'=C(\ve_\ast+\sigma_\ast)$ the bound~\eqref{E:APRIORI} is consistent and can be justified by a classical continuity argument. The same comment applies to~\eqref{E:BOOTSTRAP}.

%%%%%%%%%%%%%%%%%%%%%%%%%%%%%%%%%%%%%%%%%%%%%

\section{Compactness as $\kappa\to0$ and proof of the Theorem~\ref{T:MAINMAIN}}
\label{S:COMPACTNESS}

%\bcv
Let $B^k$ be the Hilbert space generated by the norm: 
\begin{align*}
\|f\|_{B^k}:=\sum_{j=0}^{k}\|\D_j  f  \|_{\alpha+j} 
\end{align*}
namely $B^k=\overline{C_c(0,1)}^{\| \cdot \|_{B^k}}$. 
From the theory of weighted Sobolev spaces \cite{GuOp1,GuOp2}, we deduce that $B^k$ is compactly embedded into $B^{k-1}$ for $k\ge1$.

Let a family of given initial data $(H_0^\kappa,H_1^\kappa)$ satisfy the uniform bound
\be\label{E:initialc}
S_\kappa^N(H_0^\kappa,H_1^\kappa)(\tau=\kappa) <\sigma^2 \ \  \text{for each} \ \ \kappa\in(0, \frac12].
\ee
 In particular,  this gives the uniform bound of $\|(\kappa^{\frac12(\gamma-\frac{11}{3})} H_0^\kappa , \kappa^{\frac12(\gamma-\frac53)} H_1^\kappa)\|_{B^N\times B^N} <\sqrt{2}\sigma$. By compact embedding of $B^N$ into $B^{N-1}$, there exists a sequence of $\{\kappa_j \}_{j=1}^\infty$ such that $\kappa_j \rightarrow 0$ and $({\kappa_j}^{\frac12(\gamma-\frac{11}{3})}  H_0^{\kappa_j} ,  {\kappa_j}^{\frac12(\gamma-\frac53)} H_1^{\kappa_j})$ converge in $B^{N-1}\times B^{N-1}$. Fix such a sequence $\kappa_j$ and initial data $H_0^{\kappa_j}$ and $H_1^{\kappa_j}$.

Now let $(H^{\kappa_j}, \pa_\tau H^{\kappa_j})$ be the solution to the initial value problem~\eqref{E:H1} with initial data  $H_0^{\kappa_j}$ and $H_1^{\kappa_j}$  given by
Theorem~\ref{T:MAINEXISTENCETHEOREM}.  
Consider its well-defined trace at time $\tau=1$ (i.e. $t=0$ in the original coordinates). Since 
$S_{\kappa_j}^N(\tau=1)<C_0\left(\sigma^2+\ve^{2M+1}\right)$ for all $\kappa_j$, %$\kappa\in(0,\frac12]$, 
in particular we have the uniform bound
\begin{align*}
\sum_{j=0}^{N}\|\D_j \pa_\tau H^{\kappa_j}\big|_{\tau=1}\|_{\alpha+j}
+\sum_{j=0}^{N}\|\D_j H^{\kappa_j}\big|_{\tau=1}\|_{\alpha+j} < \sqrt{2C_0}\left(\sigma+\ve\right),
\end{align*}
where we have used the crude bound $\ve^{2M+1}\le\ve^2$.
Therefore, there exists a subsequence of $\kappa_j$, denoted by $\kappa_j$ again and $(H_0,H_1)\in B^{N}\times B^{N}$  so that  
\[
\lim_{j\to\infty} \|(H^{\kappa_j} \big|_{\tau=1}, \pa_\tau H^{\kappa_j}\big|_{\tau=1})-(H_0,H_1)\|_{B^{N-1}\times B^{N-1}} = 0.
\]

We now consider the solution of~\eqref{E:H1} with the final value $(H_0,H_1)$ at time $\tau=1$. By the local well-posedness theory obtained similarly as in Proposition~\ref{T:LOCAL}, 
there exists a unique solution $(H, \pa_\tau H)$ to~\eqref{E:H1} on a maximal interval of existence $(T,1]$ for some $T<1$. 

We claim that $T=0$. To see this, assume the opposite, i.e. $0<T<1$. Then for each $\kappa_j\in(0,\frac T2]$, consider the sequence of solutions $(H^{\kappa_j}, \pa_\tau H^{\kappa_j}  ) $ to~\eqref{E:H1} with given initial condition $H_0^{\kappa_j}$ and $H_1^{\kappa_j}$.   
Then, on the interval $[\frac T2,1]$ the sequence $(H^{\kappa_j} , \pa_\tau H^{\kappa_j})$ satisfies the 
uniform-in-$j$ bound~\eqref{E:FINALBOUND}. In particular, as $j\to\infty$, possibly along a subsequence, 
$(H^{\kappa_j}, \pa_\tau H^{\kappa_j})$ converges to some $(\bar H,\pa_\tau \bar H)$ in 
$C^0\left([\frac T2,1], B^{N-1}\times B^{N-1}\right)$ %(\bcr same $k$ as above \ec)
 and the resulting limit $(\bar H,\pa_\tau \bar H)$ is a classical solution of~\eqref{E:H1} on $[\frac T2,1]$. 
Since the final condition at $\tau=1$ has to coincide for $(\bar H,\pa_\tau\bar H)$ and
$(H,\pa_\tau H)$, by the uniqueness part of the local well-posedness theorem, $\bar H$ and $H$ coincide on $[\frac T2,1]$ which contradicts the assumption that $(T,1]$ is the maximal interval of existence for $H$. % \ec

Therefore we have established the existence of a classical solution
\[
\phi(\tau,r) = \phia(\tau,r) + \tau^m\frac{H(\tau,r)}{r} = \tau^{\frac23} + \sum_{j=1}^M \ve^j \phi_j(\tau,r) + \tau^m\frac{H(\tau,r)}{r} 
\]
to~\eqref{E:BASICPDEPHI} on the space-time domain $(\tau,r)\in(0,1]\times[0,1]$. 
In particular, the leading order behavior of $\phi$ is driven by the dust solution $\phi_0=\tau^{\frac23}$ and
we have
\begin{align}
 1\lesssim \lv \frac{\phi}{\phi_0} \rv \lesssim 1, \ \ 1\lesssim\lv \frac{\J[\phi]}{\J[\phi_{0}]}\rv &\lesssim  1,  \ \ (\tau,r)\in(0,1]\times[0,1] ; \notag\\
 \lim_{\tau\to0^+}\frac{\phi}{\phi_0}=\lim_{\tau\to0^+}\frac{\J[\phi]}{\J[\phi_{0}]} &= 1. \notag
\end{align}
Claims~\eqref{E:CLAIM1}--\eqref{E:CLAIM2} follow easily by going back to the $(s,r)$-coordinate system, which in turn give~\eqref{E:CP1}--\eqref{E:CP3}.
This completes the proof of Theorem~\ref{T:MAINMAIN}.
%the gravitational collapse occurs at $\tau=0$.

\medskip 
{\bf Data at $s=0$.}
Note that the initial conditions~\eqref{E:INITIALCONDITIONS} that correspond to the obtained collapsing solution are now given by 
\begin{align*}
\chi_0(r)&=\phi(1,r)=1 + \sum_{j=1}^M\ve^j \phi_j(1,r) + \frac{H(1,r)}{r}, \\ 
\chi_1(r)&=-\frac1{g(r)}\phi_\tau(1,r) = -\frac2{3g(r)} + \sum_{j=1}^M \ve^j\pa_\tau \phi_j(1,r) + \frac{mH(1,r)+\pa_\tau H(1,r)}{r}.
\end{align*}

In particular, by the smallness of weighted norms of $H$ and Hardy-Sobolev embeddings, we conclude
\begin{align}\label{E:INITIALEPS}
\|\chi_0-1\|_{C^2([0,1])} + \|\chi_1+\frac2{3g(r)}\|_{C^2([0,1])} = O(\sigma+\ve).
\end{align}
We may now express the 
initial density $\tilde\rho_0$ and the initial velocity vector field $\tilde{\bf u}_0$ (at time $s=0$) in Eulerian variables. 
 Let $Y=\chi_0(r)y=\phi(1,r)y$, $\bar\chi_0(R)=\chi_0(r)$, $R=|Y|=r\chi_0(r)$.
By~\eqref{E:ETAINITIAL}
\begin{align*}
\tilde {\bf u}_0(Y) & = \frac{\chi_1(r)}{\chi_0(r)} Y  = -\frac1{g(r)} \frac{\pa_\tau\phi(1,r)}{\phi(1,r)} Y 
= -\frac2{3g(r)}Y -\frac{\pa_\tau(\phi-\phi_0)(1,r)}{g(r)\phi(1,r)}  Y \\
& =  \left( -\frac{2}{3g(r)}+O(\sigma+\ve) \right)Y \\
& = \left( -\frac{2}{3g(\frac R{\bar\chi_0(R)})}+O(\sigma+\ve) \right)Y, \ \ Y\in B_{\chi_0(1)}(0).
\end{align*}
By~\eqref{E:DENSITYFORMULA} we have
\be\label{E:TILDERHOZERO}
\tilde\rho_0(Y) = w^\alpha(\frac{R}{\bar\chi_0(R)}) \left(\J[\chi_0]\circ\frac{R}{\bar\chi_0(R)}\right)^{-1}.
\ee
From~\eqref{E:SCALING}
we then conclude 
\begin{align*}
{\bf u}_0(x) & = \ve^{-\frac3{2(4-3\gamma)}}\frac1{g_\ve(|x|)}\left(-\frac{2}{3}+O(\ve+\sigma)\right) x, \ \ 
g_\ve(R)=g(\frac{R}{\epsilon^{\frac1{4-3\gamma}}}), \\
\rho_0(x)  & = \ve^{-\frac3{4-3\gamma}}\tilde\rho_0(\frac{x}{\ve^{\frac1{4-3\gamma}}}), \\
\Omega(t)\Big|_{t=0} & = B_{\ve^{\frac1{4-3\gamma}}\chi_0(1)}(0).
\end{align*}

Since by~\eqref{E:RHOTAYLORINTRO} $w^\alpha(r) = 1-  c r^n + o_{r\to0}(r^n)$ for some $c>0$ in the vicinity of $r=0$, we conclude that we have the expansion
\begin{align}\label{E:RHOZEROFLAT}
\tilde\rho_0(Y) = \left(1- \tilde c \frac{R^n}{\bar\chi_0(R)^n} + o_{R\to0}(R^n)\right) \left(\J[\chi_0]\circ\frac{R}{\bar\chi_0(R)}\right)^{-1}.
\end{align}
This formula in view of~\eqref{E:INITIALEPS} gives a quantified sense in which the initial density is {\em flat} about the origin.

\medskip
{\bf The Eulerian description of collapsing solutions.} 
Let $0<\tau\leq 1$ be fixed. Note that $\J[\phi]>0$ and the Eulerian density is given by 
\[
\tilde \varrho (\tau, \phi(\tau,r) r) = \frac{w^\alpha(r)  }{\J[\phi](\tau,r)} 
\]
where we have written $\tilde\varrho(\tau, \phi(\tau,r) r) = \tilde \rho (\frac{1-\tau }{g(r)}, \phi(\tau,r) r) = \tilde \rho (s, \chi(s,r) r) $.  
Let $\tilde R:= \phi(\tau,r) r$. Then since $\J[\phi]>0$, there exists the inverse mapping $ r =\tilde r(\tau, \tilde R)$ such that $ \tilde r (\tau, \phi(\tau,r) r) = r $ for all $r\in [0,1]$. We may rewrite the Eulerian density 
\[
\tilde \varrho (\tau, \tilde R) = \frac{w^\alpha( \tilde r (\tau, \tilde R))  }{\J[\phi](\tau,\tilde r (\tau, \tilde R) )}  = 
\frac{w^\alpha( \frac{\tilde R}{\phi(\tau, \tilde R) })  }{\J[\phi](\tau, \frac{\tilde R}{\phi(\tau, \tilde R) })}    \ \text{ for } \ 0\leq  \tilde R \leq \phi(\tau,1)
\]
where we have written $\phi(\tau, \tilde R)=\phi(\tau, r)$ for $\tilde R = \phi(\tau,r)r$. 
By our construction, $\lim_{\tau \to 0^+}\frac{\phi }{\phi_0}=1 $ and $\lim_{\tau \to 0^+}\frac{\J[\phi] }{\J[\phi_0]}=1$ for all $\tilde R \in [0,\phi(\tau,1)]$. Therefore, we deduce that 
\[
\tilde \varrho (\tau, \tilde R) =  \tilde \rho (\frac{1-\tau }{g(r)}, \tilde R)  \approx_{\tau\to0^+}
  \frac{w^\alpha(\frac{\tilde R }{\tau^\frac23})}{\tau(\tau + \frac23 |M_g(\frac{\tilde R }{\tau^\frac23})|)}
\]
The right-hand side is nothing but the density driven by the dust profile (1.34) written in $\tau$ coordinate.
Switching back to the $(s,r)$-coordinate system, this is precisely in agreement with~\eqref{E:CP1} and highlights the role of the dust profile in our collapse.

\section{Acknowledgments}
Y. Guo is indebted to D. Christodoulou for bringing his attention to the study of the Newtonian gravitational 
collapse in 1996. He acknowledges the support of the NSF grant DMS-1810868, 
Chinese NSF grant 10828103 and BICMR. M. Had\v zi\'c acknowledges the support of the EPSRC grant 
EP/N016777/1. J. Jang acknowledges the support of the NSF grant DMS-1608494.

\appendix
\renewcommand{\theequation}{\Alph{section}.\arabic{equation}}
\setcounter{theorem}{0}\renewcommand{\theorem}{\Alph{section}.\??arabic{prop}}

\section{Vector field classes $\mathcal P$ and $\bar{\mathcal P}$}\label{A:proofs}

In what follows, we present key lemmas that highlight the importance of $\mathcal P_i$ and $\bar{\mathcal P}_i$.

%%%%%%%%%%%%%%%%%%%%%%%%%%%%%%%%%%%%%%%%%%%%%%%
%%%%%%%%%%%%%%%%%%%%%%%%%%%%%%%%%%%%%%%%%%%%%%%

The first key is that the original $\D_i$'s control those admissible vector fields in $L^2$ sense so that the members of $\mathcal P_i$ and $\bar{\mathcal P}_i$ can be freely used in the energy estimates. A clue is in the divergence structure of $D_r$ that grants an extra control of $\frac{1}{r}$. More precisely, we have   

\begin{lemma}\label{L:XtoXover} Let $i\in \mathbb N$ be given. Then we have the following identity. 
\be\label{XtoXover}
\D_i X =  r \pa_r \bar{\D}_{i-1} \left(\frac{X}{r}\right) + (i+2) \bar{\D}_{i-1} \left(\frac{X}{r}\right)
\ee
Moreover, we have the following estimate: 
\be\label{XtoXoverB}
\begin{split}
 \int_0^\frac{3}{4}   \left( \left|  r \pa_r \bar{\D}_{i-1} \left(\frac{X}{r}\right) \right|^2 + \left| \bar{\D}_{i-1} \left(\frac{X}{r}\right)\right|^2  \right)  r^2 \chi^2 d r \lesssim \int_0^\frac{3}{4} |\D_i X|^2  r^2 \chi^2 d r 
\end{split}
\ee 
where $\chi \geq 0$ is a smooth cutoff function satisfying $\chi=1$ on $[0,\frac12]$, $\chi=0$ on $[\frac34,1]$, and $\chi'\leq 0$. 
\end{lemma}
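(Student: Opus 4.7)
The plan is to establish \eqref{XtoXover} by induction on $i$, then derive \eqref{XtoXoverB} via a short weighted energy estimate.

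Write $Y_i := \bar{\mathcal D}_{i-1}(X/r)$ so the identity reads $\mathcal D_i X = r\partial_r Y_i + (i+2)Y_i$. The base case $i=1$ is direct: $\mathcal D_1 X = D_r X = \partial_r X + 2X/r$, while $r\partial_r(X/r) + 3(X/r) = \partial_r X - X/r + 3X/r = \partial_r X + 2X/r$. For the inductive step, the key observation is that the definitions of $\mathcal D_i$ and $\bar{\mathcal D}_i$ propagate consistently: when $i$ is odd one has $\mathcal D_{i+1} = \partial_r \mathcal D_i$ and $\bar{\mathcal D}_i = \mathcal D_{i-1}\partial_r = \partial_r \mathcal D_{i-2}\partial_r = \partial_r \bar{\mathcal D}_{i-1}$, so $Y_{i+1} = \partial_r Y_i$; when $i$ is even one has $\mathcal D_{i+1} = D_r \mathcal D_i$ and analogously $Y_{i+1} = D_r Y_i$.

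Apply $\partial_r$ or $D_r$ respectively to the inductive hypothesis $\mathcal D_i X = r\partial_r Y_i + (i+2)Y_i$. In the odd case, the Leibniz rule yields $\mathcal D_{i+1}X = r\partial_r^2 Y_i + (i+3)\partial_r Y_i = r\partial_r Y_{i+1} + (i+3)Y_{i+1}$. In the even case, using the identity $D_r(rf) = 3f + r\partial_r f$ gives
\begin{equation*}
\mathcal D_{i+1}X = r\partial_r^2 Y_i + (i+5)\partial_r Y_i + 2(i+2)Y_i/r = r\partial_r(D_r Y_i) + (i+3)(D_r Y_i) = r\partial_r Y_{i+1} + (i+3)Y_{i+1},
\end{equation*}
as a direct expansion verifies. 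This completes the induction.

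For \eqref{XtoXoverB}, test the identity against $Y_i r^2 \chi^2$ and integrate over $[0,1]$. Using $(r\partial_r Y_i)Y_i = \tfrac12 r\partial_r(Y_i^2)$ and integrating by parts, where the boundary term at $r=0$ vanishes by the smoothness of $X/r$ at the origin (so $r^3 Y_i^2 \to 0$) and at $r=1$ by $\chi(1)=0$, one obtains
\begin{equation*}
\bigl(i+\tfrac12\bigr)\int_0^1 Y_i^2\, r^2\chi^2\, dr - \int_0^1 Y_i^2\, r^3\chi\chi'\, dr = \int_0^1 (\mathcal D_i X)\, Y_i\, r^2\chi^2\, dr.
\end{equation*}
Since $\chi \geq 0$ and $\chi' \leq 0$, the second term on the left has a good sign and can be dropped. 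Young's inequality on the right absorbs an $\varepsilon$-fraction of $\int Y_i^2 r^2\chi^2$, yielding $\int_0^1 Y_i^2 r^2 \chi^2\, dr \lesssim \int_0^1 |\mathcal D_i X|^2 r^2 \chi^2\, dr$. The companion bound on $r\partial_r Y_i$ follows at once from $r\partial_r Y_i = \mathcal D_i X - (i+2)Y_i$ and the triangle inequality.

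The main delicate point is the inductive step across both parities, which requires correctly tracking how the alternating definitions of $\mathcal D_i$ and $\bar{\mathcal D}_i$ interact; beyond that, the argument is an elementary algebraic manipulation resting on the identity $D_r(rf) = 3f + r\partial_r f$, and the estimate is a one-line Hardy-type integration by parts exploiting the sign of $\chi'$.
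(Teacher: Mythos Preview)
Your proof of the identity \eqref{XtoXover} is correct and follows essentially the same induction-by-parity as the paper. For the estimate \eqref{XtoXoverB} you take a slightly different route: the paper squares the identity and expands, which produces a cross term that integrates by parts to give the coefficient $(i+2)(i-1)$ in front of $\int Y_i^2\,r^2\chi^2\,dr$; since this vanishes at $i=1$, the paper treats that case separately via a direct computation with $D_r X = \partial_r X + 2X/r$. Your approach---testing the identity against $Y_i\,r^2\chi^2$ and integrating by parts once---yields instead the coefficient $i+\tfrac12$, which is positive for all $i\ge 1$, so Cauchy--Schwarz (or Young) closes the estimate uniformly without a case split. Both arguments are elementary and rest on the same sign condition $\chi'\le 0$; yours is marginally more economical here, while the paper's squaring approach gives the sharper constant $(i+2)(i-1)$ for $i\ge 2$ and simultaneously controls $\int |r\partial_r Y_i|^2 r^2\chi^2\,dr$ directly rather than via the triangle inequality.
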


\begin{proof}

We first establish \eqref{XtoXover}. The proof is based on the induction on $i$. 
First observe 
\[
\D_1 X= D_r X  =\pa_r X + 2 \frac{X}{r} = \r \left(\frac{X}{r}\right) + 3 \frac{X}{r}
\]
and hence \eqref{XtoXover} holds for $i=1$. Suppose  \eqref{XtoXover} is valid for all $i\leq \ell$. If $\ell$ is even, $\D_{\ell+1} = D_r \D_\ell $, $\bar{\D}_\ell = D_r \bar{\D}_{\ell-1}$, and 
$  r\pa_r \bar{\D}_{\ell-1} \left(\frac{X}{r}\right) + (\ell+2) \bar{\D}_{\ell-1} \left(\frac{X}{r}\right)  =  r \bar{\D}_\ell  \left(\frac{X}{r}\right)  + \ell \bar{\D}_{\ell-1} \left(\frac{X}{r}\right) $. Then by the induction hypothesis, we deduce 
\[
\begin{split}
\D_{\ell+1}X & = (\pa_r + \frac{2}{r}) \left( r\pa_r \bar{\D}_{\ell-1} \left(\frac{X}{r}\right) + (\ell+2) \bar{\D}_{\ell-1} \left(\frac{X}{r}\right) \right) \\
&= (\pa_r + \frac{2}{r})  \left(  r \bar{\D}_\ell  \left(\frac{X}{r}\right)  + \ell \bar{\D}_{\ell-1} \left(\frac{X}{r}\right) \right)\\
&=\r \bar{\D}_\ell  \left(\frac{X}{r}\right)  + (\ell + 3)\bar{\D}_\ell \left( \frac{X}{r} \right)
\end{split}
\]
which verifies \eqref{XtoXover} for $i=\ell+1$. If $\ell$ is odd, $\D_{\ell+1} = \pa_r \D_\ell $, $\bar{\D}_\ell = \pa_r \bar{\D}_{\ell-1}$, and $  r\pa_r \bar{\D}_{\ell-1} \left(\frac{X}{r}\right) + (\ell+2) \bar{\D}_{\ell-1} \left(\frac{X}{r}\right)  =  r \bar{\D}_\ell  \left(\frac{X}{r}\right)  + (\ell +2 )\bar{\D}_{\ell-1} \left(\frac{X}{r}\right) $. Then 
\[
\begin{split}
\D_{\ell+1}X & =\pa_r \left( r\pa_r \bar{\D}_{\ell-1} \left(\frac{X}{r}\right) + (\ell+2) \bar{\D}_{\ell-1} \left(\frac{X}{r}\right) \right) \\
&= \pa_r \left(  r \bar{\D}_\ell  \left(\frac{X}{r}\right)  + (\ell +2 )\bar{\D}_{\ell-1} \left(\frac{X}{r}\right) \right)\\
&=\r \bar{\D}_\ell  \left(\frac{X}{r}\right)  + (\ell + 3)\bar{\D}_\ell \left( \frac{X}{r} \right)
\end{split}
\]
which also verifies \eqref{XtoXover} for $i=\ell+1$. This finishes the proof of \eqref{XtoXover}. 

We will prove \eqref{XtoXoverB} only since the other can be shown similarly. To this end, we compute the square integral of $\D_i X$ by using \eqref{XtoXover}.  Then 
\[
\begin{split}
 \int_0^\frac{3}{4} |\D_i X|^2  r^2 \chi^2 d r  & =  \int_0^\frac{3}{4} \left( r\pa_r \bar{\D}_{i-1} \left(\frac{X}{r}\right) + (i+2) \bar{\D}_{i-1} \left(\frac{X}{r}\right) \right)^2  r^2 \chi^2 d r  \\
 & =   \int_0^\frac{3}{4} \left[ \left( r\pa_r \bar{\D}_{i-1} \left(\frac{X}{r}\right) \right)^2 + (i+2)^2 \left( \bar{\D}_{i-1} \left(\frac{X}{r}\right) \right)^2 \right]  r^2 \chi^2 d r \\
 &\quad+ 2(i+2)   \int_0^\frac{3}{4}  \pa_r \bar{\D}_{i-1} \left(\frac{X}{r}\right)  \bar{\D}_{i-1} \left(\frac{X}{r}\right)    r^3 \chi^2 d r \\
 &=   \int_0^\frac{3}{4} \left[ \left( r\pa_r \bar{\D}_{i-1} \left(\frac{X}{r}\right) \right)^2 + (i+2)(i-1) \left( \bar{\D}_{i-1} \left(\frac{X}{r}\right) \right)^2 \right]  r^2 \chi^2 d r \\
 &\quad- 2(i+2)   \int_0^\frac{3}{4}   \left(\bar{\D}_{i-1} \left(\frac{X}{r}\right) \right)^2    r^3 \chi\chi' d r 
\end{split}
\]
Since $\chi'\leq 0$, the result follows for $i> 1$. For $i=1$, we need to show that $\frac{X}{r}$ is bounded by $\D_1 X$.  Observe that 
\[
\begin{split}
 \int_0^\frac{3}{4} |\D_1 X|^2  r^2 \chi^2 d r  & =  \int_0^\frac{3}{4} \left( \pa_r X +2\frac{X}{r} \right)^2  r^2 \chi^2 d r  \\
 & =   \int_0^\frac{3}{4}\left[ (\pa_r X)^2+ 4\frac{X^2}{ r^2} \right]  r^2 \chi^2 d r + 4  \int_0^\frac{3}{4} \pa_r X X    r \chi^2 d r \\
 &=   \int_0^\frac{3}{4}\left[ (\pa_r X)^2+ 2\frac{X^2}{ r^2} \right]  r^2 \chi^2 d r - 4  \int_0^\frac{3}{4} X^2   r \chi \chi' d r
 \end{split}
\]
from which   \eqref{XtoXoverB} follows also for $i=1$. 
\end{proof}

\begin{remark}
We note that the cut-off $\chi$ in the above proof can be replaced by any nonnegative $C^1$-function supported on $[0,1]$ such that  $\chi'\le0$.
In particular, choosing $\chi(r)  = q_{-\gamma-1}\et w^{\alpha+i+1}$ as a corollary we obtain 
\begin{align}\label{E:TOPORDERBOUND}
\int_0^1 w^{\alpha+i+1} q_{-\gamma-1}\et \lv {\bar \D}_i\left(\frac X r\right)\rv^2  r^2\,d r
\lesssim \int_0^1 w^{\alpha+i+1} q_{-\gamma-1}\et \lv \D_{i+1} X\rv^2  r^2\,d r.
\end{align}
\end{remark}

An important consequence of Lemma \ref{L:XtoXover} is the following: 

\begin{lemma}\label{L:control} Suppose $\D_i X$ is bounded in $L^2([0,\frac34], r^2d r)$. Then we have the following estimate: 
\be
\sum_{ \mathfrak D_i   \in \mathcal P_i}\int_0^\frac{3}{4} | \mathfrak D_i X|^2  r^2\chi^2 d r \lesssim \int_0^\frac{3}{4} |\D_i X|^2  r^2\chi^2 d r 
\ee
where $\chi$ is the same cutoff function in Lemma \ref{L:XtoXover}. 
\end{lemma}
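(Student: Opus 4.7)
The plan is to proceed by strong induction on $i$, using Lemma~\ref{L:XtoXover}---specifically the $L^2$ bound~\eqref{XtoXoverB}---as the only analytic input beyond the Hardy inequality $\|X/r\|_{L^2(r^2\chi^2\,dr)} \lesssim \|D_r X\|_{L^2(r^2\chi^2\,dr)}$, which is precisely~\eqref{XtoXoverB} at $i=1$ and serves as the base case. Throughout I write $\|\cdot\|_\chi$ for the norm on $L^2([0,3/4], r^2\chi^2\,dr)$ and take the inductive hypothesis to be the conclusion of Lemma~\ref{L:control} at every level $1 \leq j < i$, applied to an arbitrary (sufficiently regular) function in place of $X$.

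For the inductive step at $i \geq 2$, I fix $\mathfrak{D}_i \in \mathcal{P}_i$ and establish first, by a direct case split on the parity of $i$ and a count of the $\partial_r$'s and the $V_k \in \{D_r, \tfrac{1}{r}\}$ appearing in the definition of $\mathcal{P}_i$, the factorisation
\[
\mathfrak{D}_i = \bar{\mathfrak{D}}_{i-1} \circ V, \qquad V \in \{D_r,\tfrac{1}{r}\}, \quad \bar{\mathfrak{D}}_{i-1}\in\bar{\mathcal{P}}_{i-1},
\]
where $V$ is the innermost factor of $\mathfrak{D}_i$. By the defining form of $\bar{\mathcal{P}}_{i-1}$ one has $\bar{\mathfrak{D}}_{i-1} = W \partial_r$ for some $W \in \mathcal{P}_{i-2}$, so
\[
\mathfrak{D}_i X = W\bigl(\partial_r(VX)\bigr).
\]

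In the case $V = D_r$, the inner expression is $\partial_r(D_r X) = \mathcal{D}_2 X$, and applying the inductive hypothesis to $Y := \mathcal{D}_2 X$ yields
\[
\|\mathfrak{D}_i X\|_\chi = \|W Y\|_\chi \lesssim \|\mathcal{D}_{i-2} Y\|_\chi = \|\mathcal{D}_{i-2}\mathcal{D}_2 X\|_\chi = \|\mathcal{D}_i X\|_\chi,
\]
where the final identity $\mathcal{D}_{i-2}\mathcal{D}_2 = \mathcal{D}_i$ is read off from the explicit formula~\eqref{E:FUNDOP}. In the case $V = 1/r$, the inner expression is $\partial_r(X/r)$, and applying the inductive hypothesis to $Y := \partial_r(X/r)$, together with the definitional identity $\bar{\mathcal{D}}_{i-1} = \mathcal{D}_{i-2}\partial_r$, yields
\[
\|\mathfrak{D}_i X\|_\chi \lesssim \|\mathcal{D}_{i-2}\partial_r(X/r)\|_\chi = \|\bar{\mathcal{D}}_{i-1}(X/r)\|_\chi \lesssim \|\mathcal{D}_i X\|_\chi,
\]
where the last inequality is exactly~\eqref{XtoXoverB}. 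This closes the induction.

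The main subtlety is the choice of which factor to peel off: peeling off the outermost operator would introduce $D_r$ or $1/r$ acting on a lower-order expression, and in the $1/r$ subcase the resulting unweighted power of $1/r$ could not be absorbed into $\mathcal{D}_i X$ by Hardy alone. Peeling off the \emph{innermost} factor $V$ and absorbing the adjacent $\partial_r$ into $\bar{\mathfrak{D}}_{i-1}$ is what allows the remaining operator $W$ to be compared to $\mathcal{D}_{i-2}$ purely by induction, reducing the estimate to either the trivial composition identity $\mathcal{D}_{i-2}\mathcal{D}_2 = \mathcal{D}_i$ (when $V = D_r$) or directly to~\eqref{XtoXoverB} (when $V = 1/r$). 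Verifying the factorisation $\mathfrak{D}_i = \bar{\mathfrak{D}}_{i-1}\circ V$ with $\bar{\mathfrak{D}}_{i-1}\in\bar{\mathcal{P}}_{i-1}$ is routine bookkeeping once the parities of $i$ and the indexing conventions for $V_k$ in the definitions of $\mathcal{P}_i$ and $\bar{\mathcal{P}}_i$ are sorted out.
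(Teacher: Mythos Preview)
Your proof is correct and uses the same key analytic input as the paper---namely the estimate~\eqref{XtoXoverB} from Lemma~\ref{L:XtoXover}. The organizational choice differs slightly: the paper locates the \emph{outermost} occurrence of $\tfrac{1}{r}$ in $\mathfrak D_i$, writes $\mathfrak D_i X = \bar{\mathcal D}_{i-j-1}\,\tfrac{1}{r}\,Y_j X$ with $Y_j\in\mathcal P_j$, applies~\eqref{XtoXoverB} to replace this by $\mathcal D_{i-j}Y_j X$, and iterates on the number of $\tfrac{1}{r}$ factors (at most $\lfloor\tfrac{i+1}{2}\rfloor$ steps). You instead induct on the order $i$, peel off the \emph{innermost} factor $V$, and reduce to level $i-2$ applied to either $\mathcal D_2 X$ or $\partial_r(X/r)$. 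Both schemes are clean; the paper's version is marginally more direct because in the $V=D_r$ case you invoke the full inductive hypothesis only to pass through trivially, whereas the paper skips such steps entirely. Your factorisation $\mathfrak D_i = W\partial_r\,V$ with $W\in\mathcal P_{i-2}$ and the identities $\mathcal D_{i-2}\mathcal D_2=\mathcal D_i$, $\mathcal D_{i-2}\partial_r=\bar{\mathcal D}_{i-1}$ are all correct.
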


\begin{proof}
If there is no $\frac1 r$ in a given $\mathfrak D_i$,  we are done since it is the same as $\D_i$. Suppose $\frac1 r$ appears in $\mathfrak D_i$. Then we may write $\mathfrak D_i X = \bar\D_{i-j-1} \frac1 rY_j X$ for some $Y_j\in \mathcal P_j$. Apply Lemma \ref{L:XtoXover} to get the $L^2$ bound for $\mathfrak D_i X$ by $\D_{i-j} Y_j X$. If $Y_j$ does not have $\frac1 r$, we are done. If it does, then we repeat the same argument by writing $\D_{i-j} Y_j = \bar\D_{i-j-k-1} \frac1 rZ$ for some $Z \in \mathcal P_k$ and applying Lemma \ref{L:XtoXover} etc. The repetition ends in at most $\lfloor\frac{i+1}{2}\rfloor$ steps. 
\end{proof}

Another appealing feature of $\mathcal P_i$ and $\bar{\mathcal P}_i$ is that they give rise to an algebraic structure via the following Leibniz rule: 

\begin{lemma}[Product rule]\label{L:product}  
Let $i\in\mathbb N$ be given. 
\begin{enumerate}
\item[(a)]
For any $A\in \mathcal P_i$ the following identity holds:
\be\label{E:product}
A \left( f g \right)= \sum_{k=0}^i \sum_{ B \in \mathcal P_k \atop C\in \bar{\mathcal P}_{i-k}} c^{ABC}_{k} \ Bf\, Cg,
\ee
for some real-valued constants $c^{ABC}_{k}$.
\item[(b)]
For any $A\in \bar{\mathcal P}_i$ the following identity holds:
\be\label{E:PRODUCTRULEDBAR}
A \left( f g \right)= \sum_{k=0}^i \sum_{ B \in \bar{\mathcal P}_k \atop C\in \bar{\mathcal P}_{i-k}} {\bar c}^{ABC}_{k} \ Bf\, Cg,
\ee
for some real-valued constants ${\bar c}^{ABC}_{k}$.
\end{enumerate}
\end{lemma}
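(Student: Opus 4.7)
I would prove both identities simultaneously by strong induction on $i$. The base case $i=0$ is immediate since $\mathcal P_0=\bar{\mathcal P}_0=\{1\}$, and the identity reduces to $fg=(1\cdot f)(1\cdot g)$.

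For the inductive step, assume (a) and (b) hold at all indices $\leq i$, and let $A$ belong to $\mathcal P_{i+1}$ (for (a)) or $\bar{\mathcal P}_{i+1}$ (for (b)). A direct reading of the alternating-composition definitions allows one to write $A=U\circ A'$, where $U$ is the outermost operator of $A$ and $A'$ sits in the corresponding class at index $i$: for (a), $U$ is $\partial_r$ when $i+1$ is even and $U\in\{D_r,1/r\}$ when $i+1$ is odd; for (b), the reverse. Apply the inductive hypothesis to $A'(fg)$ to obtain a decomposition $A'(fg)=\sum c\,B'f\cdot C'g$ over $B'$, $C'$ in the appropriate classes; then apply $U$ to each summand using the Leibniz-type identities
\begin{equation*}
\partial_r(\varphi\psi)=(\partial_r\varphi)\psi+\varphi(\partial_r\psi),\quad D_r(\varphi\psi)=(D_r\varphi)\psi+\varphi(\partial_r\psi),\quad \tfrac1r(\varphi\psi)=(\tfrac1r\varphi)\psi=\varphi(\tfrac1r\psi).
\end{equation*}
The resulting terms take the form $\widetilde Bf\cdot C'g$ or $B'f\cdot\widetilde Cg$, where $\widetilde B$ (resp.\ $\widetilde C$) is a composition $U\circ B'$ or $B'\circ V$ and similarly for $\widetilde C$.

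The crucial step is to check that each compound operator $\widetilde B$ (resp.\ $\widetilde C$) lies in the span of the target class $\mathcal P_{k'}$ (resp.\ $\bar{\mathcal P}_{i+1-k'}$). Whenever the natural composition preserves the alternating pattern, this is immediate from the definitions. The two obstructions are a double-$\partial_r$ adjacency (arising, e.g., when $U=\partial_r$ is composed with a class element whose outermost or innermost factor is also $\partial_r$), and a $V\cdot V$ adjacency (arising, e.g., when $U=1/r$ meets a $V$-outermost factor). The first obstruction is resolved by the identity $\partial_r=D_r-\tfrac2r$, equivalently $\partial_r^2=D_r\partial_r-\tfrac2r\partial_r$, which expresses the offending double derivative as a linear combination of operators that restore the alternation. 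The second is resolved by noting that $1/r$ is a pointwise multiplication, so by the third Leibniz identity above one may shift the offending $1/r$ factor from the $f$-side to the $g$-side of the product (or vice versa), where its composition does respect the alternation.

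A short parity analysis in $(k,i-k)$ and in the choice of $U$ confirms that at least one of these two reductions (or a short combination of them) is always available, so every compound operator arising from the Leibniz expansion can be rewritten as a linear combination of elements of the appropriate $\mathcal P$- or $\bar{\mathcal P}$-class. The main obstacle is thus not conceptual but combinatorial: one must track carefully, in each parity sub-case, where to deposit each spurious $\partial_r$ or $1/r$ so that the resulting $\widetilde B$ and $\widetilde C$ have the prescribed innermost/outermost operator type. Once this bookkeeping is carried out, collecting the resulting scalar factors into the constants $c^{ABC}_k$ (resp.\ $\bar c^{ABC}_k$) closes the induction and yields both (a) and (b).
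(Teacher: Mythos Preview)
Your approach is correct and essentially the same as the paper's: induction on $i$, peel off the outermost operator, apply Leibniz, and repair any broken alternation using $\partial_r=D_r-\tfrac{2}{r}$ together with the freedom to place the $\tfrac{1}{r}$ (or equivalently the $D_r$) on either factor. The one efficiency you miss is that the paper does not run the induction for (b) at all: once (a) is proved, any $A\in\bar{\mathcal P}_i$ is written as $A=A'\partial_r$ with $A'\in\mathcal P_{i-1}$, ordinary Leibniz gives $A(fg)=A'((\partial_r f)g)+A'(f(\partial_r g))$, and applying (a) to $A'$ immediately yields (b) since $B\partial_r\in\bar{\mathcal P}_{k+1}$ whenever $B\in\mathcal P_k$.
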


\begin{proof}
The proof is based on the induction on $i$. We start with \eqref{E:product}. 
Let $i=1$. Then $\mathcal P_1 =\{D_r, \frac1 r\}$. Since we have 
\[
\D_1 (fg) = D_r (fg) = (D_r f) g + g \pa_r g = (\D_1 f) g + f \bar{\D}_1 g  
\]
and 
\[
\frac{1}{r}(fg) =( \frac{1}{r}f ) g
\]
\eqref{E:product} holds for $i=1$. Suppose  \eqref{E:product}  is true for all $i\leq \ell$. We will show that it is true for $i=\ell +1$. 

First let  $\ell$ be even. Then $A\in \mathcal P_{\ell+1}$ is either $A=D_r A'$ or $A=\frac1 rA'$ for some $A' \in \mathcal P_{\ell}$. By induction hypothesis, it suffices to show that  $D_r (B' f C' g )$ and $\frac1 r(B' f C' g )$ for $B'\in \mathcal P_{k'}$ and $C'\in \bar{\mathcal P}_{\ell-k'}$ can be written as a linear combination of $(Bf )(Cg)$ for some $B\in \mathcal P_k$ and $C\in \bar{\mathcal P}_{\ell+1-k}$. If $k'$ is even, we write $D_r (B' f C' g ) = (D_r B' f)  (C' g) + (B' f) (\pa_r C' g )$  and  $\frac1 r(B' f C' g ) = (\frac1 rB' f ) C' g$. Since $D_r B' \in \mathcal P_{k'+1}$, $\frac1 rB'\in \mathcal P_{k'+1}$ and $\pa_r C' \in \bar{\mathcal P}_{\ell+1-(k'+1)}$, both expressions are in the desirable form. If $k'$ is odd, we may write $D_r (B' f C' g ) = (\pa_r B' f)  (C' g) + (B' f) (D_r C' g )$  and  $\frac1 r(B' f C' g ) =  (B' f ) (\frac1 rC' g)$ so that they are in the desirable form. 

Now let $\ell$ be odd. Then $A=\pa_r A'$ where $A'\in \mathcal P_\ell$. We will show that $\pa_r(B' f C' g) $ for  $B'\in \mathcal P_{k'}$ and $C'\in \bar{\mathcal P}_{\ell-k'}$ can be written into a desirable form. As before, we consider $k'$ even, odd separately. If $k'$ is even, we write 
\[
\pa_r(B' f C' g) =( D_r B' f ) (C'g) + (B'f) (D_r C'g) - 4 (B'f )(\frac1 rC' g)
\]
Note that $D_r B' \in \mathcal P_{k'+1}$ and $D_r C', \  \frac{1}{r} C' \in \bar{\mathcal P}_{\ell+1-k'}$, and hence it has the desirable form.  If $k'$ is odd, $
\pa_r( B'f C' g) = (\pa_r B' f)  C' g  +B'f  (\pa_r C' g)$. The result follows since $\pa_r  B' \in \mathcal P_{k'+1}$ and $\pa_r C'  \in \bar{\mathcal P}_{\ell+1-k'}$. 

\eqref{E:PRODUCTRULEDBAR} is an easy consequence of  \eqref{E:product}, because any  $A\in \bar{\mathcal P}_i$ can be written as $A = A ' \pa_r$ for some $A' \in \mathcal P_{i-1}$. Write $A(fg) = A' ((\pa_r f) g) + A'((\pa_r g) f)$ and apply \eqref{E:product} with $A'$ to obtain the desired result. 
\end{proof}

%%%%%%%%%%%%%%%%%%%%%%%%%%%%%%%%%%%%%%%%%
%%%%%%%%%%%%%%%%%%%%%%%%%%%%%%%%%%%%%%%%%

\begin{lemma}[Chain rule]\label{L:CHAINRULE}
Let $a\in\mathbb R, i\in\mathbb N$ be given and fix a vectorfield $W\in\bar{\mathcal P}_i$.  Then for any sufficiently smooth $f$ the following identity holds
\begin{align}\label{E:CR}
W(f^a) = \sum_{k=1}^i f^{a-k} \sum_{i_1+\dots i_k=i \atop W_j\in \bar{\mathcal P}_{i_j}} c_{k,i_1,\dots,i_k} \prod_{j=1}^k W_j f
\end{align}
for some real constants $c_{k,i_1,\dots,i_k}$.
\end{lemma}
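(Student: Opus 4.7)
\medskip
\noindent\textbf{Proof plan for Lemma~\ref{L:CHAINRULE}.}

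The plan is to argue by induction on $i$, with the product rule of Lemma~\ref{L:product} as the main algebraic engine. Since any $W\in\bar{\mathcal P}_i$ factors as $W=W'\pa_r$ with $W'\in\mathcal P_{i-1}$, we must track formulas for both classes simultaneously. Accordingly, I propose running a joint induction that establishes, alongside \eqref{E:CR}, the parallel identity
\begin{equation}\label{E:CRP}
A(f^a)=\sum_{k=0}^{i} f^{a-k}\!\!\sum_{i_1+\dots+i_k=i\atop W_j\in\bar{\mathcal P}_{i_j}}
\tilde c^{A}_{k,i_1,\dots,i_k}\prod_{j=1}^k W_j f
\qquad (A\in\mathcal P_i),
\end{equation}
where the $k=0$ term has the form $\tilde c^{A}_{0}\,r^{-m_A}f^a$ and is only nonzero when $A$ is a concatenation that leaves an $f^a$-multiplier (i.e.\ no outermost $\pa_r$); in particular, the $k=0$ contribution is absent once an application of some $\pa_r$ occurs first, which is automatic for every $W\in\bar{\mathcal P}_i$.

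\textbf{Base case ($i=1$).} Here $\bar{\mathcal P}_1=\{\pa_r\}$ and $\pa_r(f^a)=af^{a-1}\pa_r f$, giving \eqref{E:CR} with $k=1$, $i_1=1$, $W_1=\pa_r$, $c_{1,1}=a$. For $\mathcal P_1=\{D_r,1/r\}$, compute $D_r(f^a)=af^{a-1}\pa_r f+\tfrac{2}{r}f^a$ and $\tfrac{1}{r}(f^a)=\tfrac{1}{r}f^a$, consistent with \eqref{E:CRP}.

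\textbf{Inductive step.} Assume both \eqref{E:CR} and \eqref{E:CRP} hold for all orders $<i$. For $W\in\bar{\mathcal P}_i$ write $W=W'\pa_r$ with $W'\in\mathcal P_{i-1}$ and compute
\[
W(f^a)=W'\bigl(af^{a-1}\pa_r f\bigr)=a\sum_{j=0}^{i-1}\sum_{B\in\mathcal P_j,\,C\in\bar{\mathcal P}_{i-1-j}}
c^{W'BC}_{j}\,B(f^{a-1})\,C(\pa_r f),
\]
by Lemma~\ref{L:product}(a). The inductive hypothesis \eqref{E:CRP} applied to $B(f^{a-1})$ expands that factor as a linear combination of $f^{a-1-\ell}\prod_{j'} W_{j'} f$ with $W_{j'}\in\bar{\mathcal P}$ and total order $j$. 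The only remaining task is to recognise each factor $C(\pa_r f)$ as a linear combination of expressions $\bar Wf$ with $\bar W\in\bar{\mathcal P}$ of the correct total order; absorbing this into the previous product will yield \eqref{E:CR} for $W$. A similar (but simpler) inductive computation using Lemma~\ref{L:product}(a) and the leading-letter alternatives $D_r$ and $1/r$ establishes the companion formula \eqref{E:CRP} at order $i$.

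\textbf{Main obstacle.} The delicate point is the last step, turning $C(\pa_r f)$ for $C\in\bar{\mathcal P}_{i-1-j}$ into $\bar W f$ with $\bar W\in\bar{\mathcal P}_{i-j}$. Writing $C=W''\pa_r$ with $W''\in\mathcal P_{i-2-j}$ gives $C(\pa_r f)=W''\pa_r^2 f$, and the naive composition has two consecutive $\pa_r$'s, which is \emph{not} admissible in the alternating definitions of $\mathcal P$ and $\bar{\mathcal P}$. The fix is the algebraic identity
\[
\pa_r^2=D_r\pa_r-\tfrac{2}{r}\pa_r,
\]
so that $C(\pa_r f)=W''D_r\pa_r f-2 W''\tfrac{1}{r}\pa_r f$. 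One now checks case-by-case on the parity of $i-2-j$ that $W''D_r\pa_r$ and $W''\tfrac{1}{r}\pa_r$ lie in $\bar{\mathcal P}_{i-j}$: indeed, $D_r\pa_r$ and $\tfrac{1}{r}\pa_r$ are legitimate length-two building blocks of type $\pa_r V$ (with $V\in\{D_r,1/r\}$) prepended by a $\pa_r$ already present in $W''$, which is exactly what the alternating structure of $\mathcal P_{i-1-j}$ requires for $(W'' D_r\pa_r)$ to sit in $\mathcal P_{i-1-j}$; appending the final $\pa_r$ of $C$ yields the desired element of $\bar{\mathcal P}_{i-j}$. Once this re-expression is in hand, the $k$ from \eqref{E:CRP} for $B(f^{a-1})$ combines with the single derivative factor from $C(\pa_r f)$ to produce $k+1$ factors on $f$ in the claimed form, with the correct total order $j+(i-j)=i$, completing the induction.
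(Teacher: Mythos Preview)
There is a genuine gap in your inductive step. Your strategy is to factor $W=W'\pa_r$ from the right and then, after applying the product rule to $W'(af^{a-1}\pa_r f)$, to recognise each factor $C(\pa_r f)$ (with $C\in\bar{\mathcal P}_{m}$, $m=i-1-j$) as a linear combination of $\bar W f$ with $\bar W\in\bar{\mathcal P}_{m+1}$. You propose to do this via $\pa_r^2=D_r\pa_r-\tfrac{2}{r}\pa_r$, writing $C\pa_r=W''D_r\pa_r-2W''\tfrac{1}{r}\pa_r$ with $W''\in\mathcal P_{m-1}$, and then asserting that $W''D_r$ and $W''\tfrac{1}{r}$ lie in $\mathcal P_m$. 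But the classes $\mathcal P_m$ are built by appending $V\in\{D_r,\tfrac{1}{r}\}$ on the \emph{left}, not the right; appending on the right produces the wrong alternation pattern.

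This is not merely a formal mismatch. Take $m=3$ and $W''=\pa_r D_r\in\mathcal P_2$, so $C=\pa_r D_r\pa_r\in\bar{\mathcal P}_3$. A direct computation gives
\[
C\pa_r f=\pa_r D_r\pa_r^2 f=f^{(4)}+\tfrac{2}{r}f'''-\tfrac{2}{r^2}f''.
\]
On the other hand, in the basis $\bigl(f^{(4)},\tfrac{1}{r}f''',\tfrac{1}{r^2}f'',\tfrac{1}{r^3}f'\bigr)$ the four operators in $\bar{\mathcal P}_4=\{V_2\pa_r V_1\pa_r\}$ are $(1,4,0,0)$, $(0,1,0,0)$, $(0,1,2,-2)$, $(0,0,1,-1)$, which span only the $3$-dimensional subspace generated by $(1,4,0,0)$, $(0,1,0,0)$, $(0,0,1,-1)$. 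The vector $(1,2,-2,0)$ is not in this span. Hence $C\pa_r$ is \emph{not} a linear combination of elements of $\bar{\mathcal P}_{m+1}$, and your term-by-term argument cannot be completed. (The bad contributions must cancel across the different $(B,C)$ pairs in the product-rule sum, since \eqref{E:CR} is true, but your argument does not exhibit that cancellation.)

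The paper's proof avoids the issue by factoring $W$ from the \emph{left}: write $W=\pa_r W'$ (if $\ell$ is even) or $W=D_rW'$, $W=\tfrac{1}{r}W'$ (if $\ell$ is odd) with $W'\in\bar{\mathcal P}_\ell$, apply the inductive hypothesis directly to $W'(f^a)$, and then apply the outermost operator to $f^{a-k}\prod_j W_jf$. This produces left-compositions $\pa_r W_j$, $D_r W_j$, $\tfrac{1}{r}W_j$, which by construction of the classes (and, in the odd-parity case, by choosing an index $m$ with $i_m$ odd) do lie in $\bar{\mathcal P}_{i_j+1}$. No companion formula for $\mathcal P$ is needed.
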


%%%%%%%%%%%%%%%%%%%%%%%%%%%%%%%%%%%%%%%%%

\begin{proof}
The proof relies on an induction argument. Let $i=1$. Then $W=\pa_r$ and 
$\pa_r (f^a) = a f^{a-1} \pa_r f $ which verifies \eqref{E:CR}. Suppose \eqref{E:CR} is true for all $i\leq \ell$. Then we will show that it is true for $i=\ell+1$. First let $\ell$ be even. Then $W\in \bar{\mathcal P}_{\ell+1}$ can be written as $W=\pa_r W'$ for some $W'\in \bar{\mathcal P}_{\ell}$. By induction hypothesis, 
\[
\begin{split}
W(f^a)=\pa_r W' (f^a) = \sum_{k=1}^{\ell}(a-k)  f^{a-k-1} \pa_r f \sum_{i_1+\dots i_k=\ell \atop W'_j\in \bar{\mathcal P}_{i_j}} c_{k,i_1,\dots,i_k}  \prod_{j=1}^k W'_j f \\
+  \sum_{k=1}^{\ell}  f^{a-k}  \sum_{i_1+\dots i_k=\ell \atop W'_j\in \bar{\mathcal P}_{i_j}} c_{k,i_1,\dots,i_k} \pa_r\Big( \prod_{j=1}^k W'_j f \Big)
\end{split}
\]
The first sum of the right-hand side is a linear combination of $f^{a-k'} \prod_{j=1}^{k'} W'_j f$ where $2\leq k'\leq \ell+1$, $W_j =W'_j$ for $j\leq k'-1$, $W_{k'}= \pa_r $ and $W_j\in  \bar{\mathcal P}_{i_j}$ with $i_1+\dots i_{k'}=\ell+1 $. Hence it is in the desirable form. For the second sum, no increase in the number of the product occurs, but the number of derivatives increases by one. Note that $\pa_r\Big( \prod_{j=1}^k W'_j f \Big) =\sum_{m=1}^k \prod_{j=1}^k W^m_j f $ where $W^m_j = W'_j$ for $j\neq m$ and $W^m_j= \pa_r W'_j$ for $j=m$. Now if the corresponding $i_m$ is even, $\pa_r W'_m \in \bar{\mathcal P}_{i_m+1}$. If $i_m$ is odd, we write 
$\pa_r W'_m = D_r W'_m - \frac{2}{r} W'_m$ such that each operation belongs to $\bar{\mathcal P}_{i_m+1}$. In both cases, we have $i_1+\dots i_k =\ell+1$ and hence \eqref{E:CR} is valid for $i=\ell+1$. Next let $\ell$ be odd. Then $W\in \bar{\mathcal P}_{\ell+1}$ can be written as either $W=D_r W'$ or $W=\frac1 r w'$ for some $W'\in \bar{\mathcal P}_{\ell}$. We consider $W=\frac1 r w'$ only since the other case follows similarly by combining the previous cases. Write 
\[
W(f^a)=\frac1 r w' (f^a) = \sum_{k=1}^{\ell}f^{a-k} \sum_{i_1+\dots i_k=\ell \atop W'_j\in \bar{\mathcal P}_{i_j}} c_{k,i_1,\dots,i_k}   \frac1 r\prod_{j=1}^k W'_j f 
\]
Now we claim that $\frac1 r\prod_{j=1}^k W'_j f  =  \prod_{j=1}^k W_j f  $ for some $W_j \in \bar{\mathcal P}_{i_j}$ with $i_1+\dots i_k =\ell+1$. To this end, we first observe because $\ell$ is odd, there exists at least one index $m$, $1\leq m\leq k$ whose corresponding $i_m$ is odd. Now let $W_j = W'_j$ for $j\neq m$ and $W_m=\frac1 r w'_m$. Then it is easy to see that all properties are satisfied so that the expression has its desirable form required by \eqref{E:CR} for $i=\ell+1$. 
\end{proof}

The next lemma implies that $\pa_r^i X$ can be expressed as a linear combination of admissible operations belonging to $\mathcal P_i$. Note that the other way around is not true in general: for instance, $D_r $ can't be expressed in terms of $\pa_r $ only. Hence, our energy built upon $\D_i$'s controls $\pa_r^i $'s as well as $\mathcal P_i$'s. 

\begin{lemma}\label{L:etaiX} Let $i\in\mathbb N$ be given. 
\be\label{E:dd}
\pa_r^i X = \sum_{A  \in \mathcal P_i}  c_{i}^{A} A X 
\ee
where $c_{i}^{A}$'s are constants. 
\end{lemma}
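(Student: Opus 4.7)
The plan is a straightforward induction on $i$, with the only nontrivial input being the single identity
\[
\partial_r = D_r - \frac{2}{r},
\]
together with the fact that both $D_r$ and $\tfrac{1}{r}$ are the admissible ``atoms'' $V_k$ appearing in the definition of $\mathcal P_i$.

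For the base case $i=1$, I would simply write $\partial_r X = D_r X - 2\cdot \tfrac{1}{r} X$, noting that $D_r, \tfrac{1}{r}\in\mathcal P_1$. For $i=2$, applying $\partial_r$ once more and commuting no derivatives gives $\partial_r^2 X=\partial_r D_r X - 2\partial_r\tfrac{1}{r} X$, and both $\partial_r D_r$ and $\partial_r\tfrac{1}{r}$ lie in $\mathcal P_2$ by definition.

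For the inductive step, assume $\partial_r^i=\sum_{A\in\mathcal P_i} c_i^A A$. One splits into the two parities. If $i=2j+1$ is odd, then a typical $A\in\mathcal P_i$ has the form $A = V_{j+1}\,\partial_r V_j\,\partial_r V_{j-1}\cdots\partial_r V_1$, so that $\partial_r A = \partial_r V_{j+1}\,\partial_r V_j\cdots\partial_r V_1$, which is manifestly an element of $\mathcal P_{2j+2}=\mathcal P_{i+1}$; no further rewriting is needed. If $i=2j+2$ is even, then $A=\partial_r V_{j+1}\,\partial_r V_j\cdots\partial_r V_1$ and
\[
\partial_r A = \partial_r \cdot \bigl(\partial_r V_{j+1}\cdots\partial_r V_1\bigr)
= D_r \bigl(\partial_r V_{j+1}\cdots\partial_r V_1\bigr) - 2\cdot\tfrac{1}{r}\bigl(\partial_r V_{j+1}\cdots\partial_r V_1\bigr),
\]
where the two resulting operators are precisely of the form $V_{j+2}\prod_{k=1}^{j+1}\partial_r V_k$ with $V_{j+2}\in\{D_r,\tfrac{1}{r}\}$, hence both lie in $\mathcal P_{2j+3}=\mathcal P_{i+1}$. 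Taking linear combinations with the inductive coefficients $c_i^A$ produces the required expansion for $\partial_r^{i+1}$.

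There is essentially no obstacle here: the entire lemma is a direct consequence of the algebraic identity $\partial_r = D_r - \tfrac{2}{r}$ combined with the recursive construction of $\mathcal P_i$. The mild subtlety is only bookkeeping — checking that in the even-to-odd step the new outermost factor $V_{j+2}$ lands in $\{D_r,\tfrac{1}{r}\}$, which is ensured precisely by the above identity — and this was already illustrated in the base cases $i=1,2$. The constants $c_i^A$ are generated from the initial coefficients $1$ and $-2$ at each parity change, and no further estimates are required.
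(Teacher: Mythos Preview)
Your proof is correct and essentially identical to the paper's: both use induction on $i$ with the identity $\partial_r = D_r - \tfrac{2}{r}$, and both split the inductive step by parity, observing that when $i$ is odd $\partial_r A\in\mathcal P_{i+1}$ directly, while when $i$ is even one rewrites $\partial_r A = D_r A - \tfrac{2}{r}A$ to land in $\mathcal P_{i+1}$. The only difference is that you spell out the structure of a generic $A\in\mathcal P_i$ a bit more explicitly.
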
 

%%%%%%%%%%%%%%%%%%%%%%%%%%%%%%%%%%%%%%%%%

\begin{proof}
 The proof is based on the induction on $i$. Let $i=1$. Then we have 
\[
\pa_r X = D_r X -2 \frac{1}{r} X
\]
Since $D_r$ and $\frac{1}{r}$ belong to $\mathcal P_1$, \eqref{E:dd} holds. Suppose \eqref{E:dd} is true for all $i\leq \ell$. We will show that \eqref{E:dd} holds for $i=\ell+1$. By induction hypothesis, 
\[
\begin{split}
\pa_r^{\ell+1} X = \sum_{ A \in \mathcal P_\ell}  c_{\ell}^ {A}  \pa_r A X 
\end{split}
\]
Now if $\ell$ is odd, $ \pa_r A$ is an admissible vector field belonging to $\mathcal P_{\ell+1}$, and hence we are done. If $\ell$ is even, then we write 
\[
\pa_r A = D_r A - 2 \frac{1}{r} A
\]
Then $D_r A$ and $\frac{1}{r} A$ are both admissible vector fields belonging to $\mathcal P_{\ell+1}$ and thus \eqref{E:dd} holds for $i=\ell+1$. 
\end{proof}

The same conclusion holds in Lemma \ref{L:etaiX} when we replace $A\in \mathcal P_i$ by $A\in\bar{ \mathcal P}_i$. 

%%%%%%%%%%%%%%%%%%%%%%%%%%%%%%%%%%%%%%%%%
%%%%%%%%%%%%%%%%%%%%%%%%%%%%%%%%%%%%%%%%%

We also write a few useful identities relating high-order $\D$, $\r$, and $\pa_r$ derivatives.

\begin{lemma}\label{L:SIMPLERELATIONS}
\begin{enumerate}
\item[(i)]
For any $j\in\mathbb Z_{\ge0}$ there exist constants $c_{k}$, $k\in\{0,\dots,j\}$ such that 
\begin{align}\label{E:AUX3}
\rr^j = \sum_{k=0}^j c_{k}  r^{k}\pa_r^k.
\end{align} 
\item[(ii)]
For any $j\in\mathbb Z_{\ge0}$ there exist constants $\bar c_{k}$, $k\in\{0,\dots,j\}$ such that 
\begin{align}
\pa_r^j =  r^{-j}\sum_{k=0}^j \bar c_{k} \rr^k.
\end{align} 
\item[(iii)]
For any $j\in\mathbb Z_{\ge0}$ there exist constants $\tilde c_{k}$, $k\in\{0,\dots,j\}$ such that 
\begin{align}
\D_j = r^{-j}\sum_{k=0}^j  \tilde c_{k} \rr^k.
\end{align} 
The same conclusion holds when we replace $\D_j$ by $A\in \mathcal P_j$.  
\item[(iv)] For any $j\in\mathbb Z_{>0}$ there exist constants $\hat c_{k}$, $k\in\{0,\dots,j\}$ such that 
\begin{align}
\bar\D_j = r^{-j}\sum_{k=1}^j  \hat c_{k} \rr^k.
\end{align} 
The same conclusion holds when we replace $\bar\D_j$ by $A\in \bar{\mathcal P}_j$. 
\end{enumerate}
\end{lemma}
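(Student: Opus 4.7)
The plan is to prove all four parts by induction on $j$, leaning on two elementary commutation rules:
\begin{equation*}
(r\partial_r)(r^k \partial_r^k) = k r^k \partial_r^k + r^{k+1}\partial_r^{k+1},
\qquad
\partial_r(r^{-j} g) = r^{-(j+1)}\bigl((r\partial_r) g - j g\bigr),
\end{equation*}
together with the trivial identity $\tfrac{1}{r}(r^{-j} g) = r^{-(j+1)} g$. These encode the fact that $r\partial_r$, $\partial_r$, and $1/r$ shift powers of $r$ in a controlled way.

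For part (i), I would induct on $j$: the base case $j=0$ is trivial, and the inductive step follows by applying $r\partial_r$ to the expansion $(r\partial_r)^j = \sum_{k=0}^j c_k r^k \partial_r^k$ and using the first commutation rule, which just redistributes coefficients among $r^k\partial_r^k$ for $k\le j+1$. A short bookkeeping check shows the coefficient of the top-order term $r^j\partial_r^j$ equals $1$. Part (ii) is then an immediate algebraic inversion: from (i), $r^j\partial_r^j = (r\partial_r)^j - \sum_{k<j} c_k r^k \partial_r^k$, so using the inductive hypothesis on each $r^k\partial_r^k$ with $k<j$ yields $\partial_r^j = r^{-j}\sum_{k=0}^j \bar c_k(r\partial_r)^k$.

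For part (iii), I would induct on $j$ directly from the definition~\eqref{E:FUNDOP}. The base case $\mathcal{D}_0 = 1$ is trivial. For the inductive step, $\mathcal{D}_{j+1}$ equals either $\partial_r \mathcal{D}_j$ (if $j$ is odd) or $D_r\mathcal{D}_j = (\partial_r + \tfrac{2}{r})\mathcal{D}_j$ (if $j$ is even). Applying the second commutation rule to $\partial_r \mathcal{D}_j = \partial_r\bigl(r^{-j}\sum_{k=0}^j \tilde c_k(r\partial_r)^k\bigr)$ gives
$r^{-(j+1)}\sum_{k=0}^j \tilde c_k\bigl((r\partial_r)^{k+1} - j(r\partial_r)^k\bigr)$,
and $(2/r)\mathcal{D}_j = 2r^{-(j+1)}\sum_{k=0}^j \tilde c_k (r\partial_r)^k$. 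Both expressions have the desired form, and summing them gives the representation at index $j+1$. For the extension to arbitrary $A\in\mathcal{P}_j$: each element of $\mathcal{P}_j$ is a product of $j$ operations drawn from $\{\partial_r, D_r, 1/r\}$ (with the prescribed parity pattern), and the same induction applies verbatim since $D_r$ is expressible through $\partial_r$ and $1/r$, each of which obeys the two commutation rules above.

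For part (iv), I would exploit the structural fact that every $A\in\bar{\mathcal{P}}_j$, $j\geq 1$, can be written as $A = W\partial_r$ with $W\in\mathcal{P}_{j-1}$. Writing $\partial_r = r^{-1}(r\partial_r)$, applying $W$ by part~(iii), and using the product rule with the two commutation identities, every resulting term carries at least one factor of $r\partial_r$; hence the sum begins at $k=1$. The main technical point is this last observation: the inductive hypothesis $\bar{\mathcal{D}}_j = r^{-j}\sum_{k=1}^j \hat c_k (r\partial_r)^k$ must be maintained, and I would verify it by tracking that neither applying $\partial_r$ nor multiplying by $1/r$ can eliminate the outermost $r\partial_r$ coming from the initial factorization. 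No serious obstacle arises; the entire argument is a bookkeeping induction with the two commutation identities at its core, and the only care required is in checking parity cases of $j$ in (iii) and (iv) and tracking the minimal power of $r\partial_r$ in (iv).
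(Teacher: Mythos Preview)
Your proposal is correct and takes the same approach as the paper, which merely states that the four claims ``follow easily by induction.'' You have simply supplied the details of that induction, and your commutation identities and case analysis are accurate.
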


\begin{proof}
The above statements follow easily by induction.
\end{proof}

\section{High-order commutators}\label{A:COMM}

From definitions~\eqref{E:LBETADEF}--\eqref{E:LBETASTARDEF} % \bcr 
and the product rule
\[
D_r(fg) = D_r f g + f \pa_r g,
\]
%\ec
it is easy to check that 
the following commutation rules hold:
%\bcv
\begin{align}
D_r L_k f = L_{1+k}^* D_r f - (1+k) (w''+\frac2 r w')D_r f \label{E:com1}\\
\pa_r  L_k^* h =  L_{1+k} \pa_r h - (1+k) (w''-\frac2 r w')\pa_r h \label{E:com2}
\end{align}
%\ec
More generally, we have the following commutation rules for $\D_i L_\alpha$:

\begin{lemma}\label{L:COMM1} 
For any $i\in\mathbb Z_{>0}$ there exist constants $c_{ijk}$, $j\in\mathbb Z_{\ge0}$, $k\in\mathbb Z_{>0}$ such that  
\begin{align}
\mathcal D_iL_\alpha X = \mathcal L_{i+\alpha} \mathcal D_i X  + \sum_{j=0}^{i-1} \zeta_{ij} \mathcal D_{i-j} X\label{E:COMM_L}
\end{align}
where 
\be
\zeta_{ij} = \sum_{k=1}^{2+j} c_{ijk} \frac{ \pa_r^k w}{ r^{2+j-k}} \label{pij}.
\ee
\end{lemma}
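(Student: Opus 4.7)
The plan is to prove the identity by induction on $i$, using only the two elementary commutation rules \eqref{E:com1}--\eqref{E:com2} together with the product rule $D_r(fg)=(D_r f)g+f\,\pa_r g$.

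\textbf{Base case $i=1$.} Since $\mathcal D_1=D_r$ and $\mathcal L_{1+\alpha}\mathcal D_1=L_{1+\alpha}^\ast D_r$, relation \eqref{E:com1} with $k=\alpha$ gives at once
\[
\mathcal D_1 L_\alpha X = L_{1+\alpha}^\ast D_r X -(1+\alpha)\Bigl(w''+\tfrac{2}{r}w'\Bigr)D_r X
= \mathcal L_{1+\alpha}\mathcal D_1 X + \zeta_{10}\mathcal D_1 X,
\]
with $\zeta_{10}=-2(1+\alpha)\tfrac{w'}{r}-(1+\alpha)w''$, which has exactly the prescribed form \eqref{pij}.

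\textbf{Inductive step.} Assume the identity at level $i$. If $i$ is even, then $\mathcal L_{i+\alpha}=L_{i+\alpha}$ and $\mathcal D_{i+1}=D_r\mathcal D_i$, so applying $D_r$ to the inductive hypothesis yields
\[
\mathcal D_{i+1}L_\alpha X = D_r\bigl(L_{i+\alpha}\mathcal D_i X\bigr)+\sum_{j=0}^{i-1}\Bigl[D_r(\zeta_{ij})\,\mathcal D_{i-j}X+\zeta_{ij}\,\pa_r\mathcal D_{i-j}X\Bigr].
\]
The top-order piece is handled by \eqref{E:com1} with $k=i+\alpha$, which produces $\mathcal L_{i+1+\alpha}\mathcal D_{i+1}X$ plus a contribution of the form $-(i+1+\alpha)(w''+\tfrac{2}{r}w')\mathcal D_{i+1}X$, i.e.\ a new $\zeta_{(i+1)0}\mathcal D_{i+1}X$ term of the correct shape. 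For each lower-order piece we note that $\pa_r\mathcal D_{i-j}$ is either $\mathcal D_{i-j+1}$ (if $i-j$ is odd) or $\mathcal D_{i-j+1}-\tfrac{2}{r}\mathcal D_{i-j}$ (if $i-j$ is even, by rewriting $\pa_r=D_r-\tfrac{2}{r}$), so the terms reassemble into a linear combination of $\mathcal D_{i+1-j'}X$ for $j'=j$ and $j'=j+1$. The odd case is identical: $\mathcal D_{i+1}=\pa_r\mathcal D_i$ and $\mathcal L_{i+\alpha}=L_{i+\alpha}^\ast$, and \eqref{E:com2} is used in place of \eqref{E:com1}.

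\textbf{Coefficient bookkeeping and the main obstacle.} What remains is to verify that the new coefficients still have the form $\sum_{k=1}^{2+j'}c_{(i+1)j'k}\pa_r^k w/r^{2+j'-k}$. This reduces to the closure property that the class
\[
\mathcal Z_m:=\Bigl\{\,\sum_{k=1}^{m}c_k\tfrac{\pa_r^k w}{r^{m-k}}\;:\;c_k\in\mathbb R\,\Bigr\}
\]
is stable under both $\pa_r$ and multiplication by $1/r$, sending $\mathcal Z_m$ into $\mathcal Z_{m+1}$: indeed $\pa_r\bigl(\pa_r^k w/r^{m-k}\bigr)=\pa_r^{k+1}w/r^{m-k}-(m-k)\pa_r^k w/r^{m-k+1}$ and $\tfrac{1}{r}\cdot\pa_r^k w/r^{m-k}=\pa_r^k w/r^{m-k+1}$. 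Since $\zeta_{ij}\in\mathcal Z_{j+2}$, applying $D_r=\pa_r+\tfrac{2}{r}$ (or $\tfrac{1}{r}$) produces elements of $\mathcal Z_{j+3}=\mathcal Z_{(j+1)+2}$, exactly the class required for $\zeta_{(i+1)(j+1)}$. The contribution from $D_r$ falling on $\mathcal D_{i-j}X$ and producing $\mathcal D_{i-j+1}X$ gives $\zeta_{(i+1)j}\in\mathcal Z_{j+2}$ unchanged. The main delicacy, and the only real obstacle, is the combinatorial alignment: one must check that after collecting like terms indexed by $\mathcal D_{i+1-j'}X$, the surviving indices $j'$ run precisely over $\{0,1,\dots,i\}$ and that every coefficient ends up in the correct class $\mathcal Z_{j'+2}$; once the parity-dependent conversion between $\pa_r$ and $D_r$ is tracked carefully, this follows mechanically from the closure above, completing the induction.
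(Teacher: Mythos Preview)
Your proof is correct and follows essentially the same induction-on-$i$ strategy as the paper, using \eqref{E:com1}--\eqref{E:com2} for the top-order commutation and the parity-dependent product rule for the lower-order terms. Your packaging of the coefficient bookkeeping via the closure of the classes $\mathcal Z_m$ under $\pa_r$ and $1/r$ is a slightly cleaner abstraction than the paper's explicit recursion formulas for $\zeta_{(i+1)j}$, but the underlying argument is identical.
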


\begin{proof} 
The proof is based on the induction on $i$. First let $i=1$. From \eqref{E:com1}, we have 
%\bcv
\[
\begin{split}
\mathcal D_1 L_\alpha X&= D_r L_\alpha X = L^\ast_{1+\alpha} D_r X - (1+\alpha) (w''+\frac2 r w')D_r X \\
&= \mathcal L_{1+\alpha}\mathcal D_1 X 
- (1+\alpha) (w''+\frac2 r w')\mathcal D_1 X
\end{split}
\]
and hence \eqref{E:COMM_L} holds with $p_{10}= -(1+\alpha) \pa_ r^2 w-2(1+\alpha)\frac{\pa_r w}{r}$. Suppose \eqref{E:COMM_L} is valid for all $i\leq \ell$. It suffices to show that \eqref{E:COMM_L} holds for $i=\ell+1$. If $\ell$ is even, by \eqref{E:COMM_L} and \eqref{E:com1}, we have 
\[
\begin{split}
&\mathcal D_{\ell+1} L_\alpha X = D_r \mathcal D_{\ell} L_\alpha X = D_r \left(  \mathcal L_{\ell+\alpha} \mathcal D_i X  + \sum_{j=0}^{\ell-1} \zeta_{\ell j} \mathcal D_{\ell-j} X  \right)  \\
&= L^\ast_{1+\ell+\alpha} D_r \mathcal D_\ell X  - (1+\ell+\alpha) (w''+\frac{2}{r} w') D_r \mathcal D_\ell X +
\sum_{j=0}^{\ell-1} D_r(  \zeta_{\ell j} \mathcal D_{\ell-j} X )\\
&= \mathcal L_{1+\ell+\alpha} \mathcal D_{1+\ell} X -  (1+\ell+\alpha) (w''+\frac{2}{r} w')  \mathcal D_{1+\ell} X 
+\sum_{\substack{0\leq j<\ell-1\\ j: \text{ even}}} ( \zeta_{\ell j} \mathcal D_{1+\ell-j} X+ \pa_r(\zeta_{\ell j}) \mathcal D_{\ell-j} X  ) \\
&\quad+ \sum_{\substack{0< j\leq \ell-1\\ j: \text{ odd}}} ( \zeta_{\ell j} \mathcal D_{1+\ell-j} X  + (\pa_r(\zeta_{\ell j}) +\frac{2}{r}\zeta_{\ell j} )\mathcal D_{\ell-j} X   )
\end{split} 
\]
The last three terms can be rearranged as $\sum_{j=0}^\ell \zeta_{1+\ell j} \mathcal D_{1+\ell -j} X$ where 
\[
\begin{split}
\zeta_{1+\ell 0} &= - (1+\ell+\alpha) (w''+\frac{2}{r} w')  + \zeta_{\ell 0},  \ \ \   j=0\\
\zeta_{1+\ell j} &=\zeta_{\ell j} + \pa_r(\zeta_{\ell j-1}), \ \ \   j\geq1 \text{ and odd} \\
\zeta_{1+\ell j} &=\zeta_{\ell j} + \pa_r(\zeta_{\ell j-1}) + \frac{2}{r} \zeta_{\ell j-1}, \ \ \   j\geq 2 \text{ and even} 
\end{split}
\]
%\ec
Note that $\zeta_{1+\ell j}$ takes the form given in \eqref{pij} because of the induction assumption. Therefore,  \eqref{E:COMM_L} holds for $i=\ell+1$. If $\ell$ is odd, we use \eqref{E:com2} in place of \eqref{E:com1} to derive the same conclusion. This finishes the proof. 
\end{proof}

Next we present the commutator identities useful for derivation of high order equations as well as for high order estimates. 

%\begin{color}{violet}
\begin{lemma}\label{L:COMM2}
Let $i\in\mathbb Z_{>0}$ be given and let $e, X$ be sufficiently smooth functions. For given differential operators A and B, let 
\begin{align}\label{E:COMMDEF}
\left[ A\,,\, e B\right] X: = A(e BX) - e ABX
\end{align}
denote the usual commutator. 
Then for any $1\le k \le i$ and any $A\in \bar{\mathcal P}_k$, $B\in\mathcal P_{i-k}$ there exists a constant$c_k^{iAB}\in\mathbb R$, and similarly for any $A\in \bar{\mathcal P}_k$, $B\in\bar{\mathcal P_{i-k}}$ there exists a constant $\bar c_k^{iAB}$ such that following identities hold 
\begin{align}
[\D_i, e\pa_r]X
& =i \pa_r e \mathcal D_i X + 
\sum_{1 \leq k\leq i \atop A\in \bar{\mathcal P}_{k}, B\in {\mathcal P}_{i-k}} c_k^{iAB} A(\frac{e}{r}) (B X)
 + 
\sum_{1 \leq k\leq i-1\atop  A \in \bar{\mathcal P}_{k+1},  B\in \bar{\mathcal P}_{i-k-1}} \bar c_k^{i A B}  r A(\frac{e}{r}) ( B D_r X) 
\label{E:DiXP} \\
[\D_i, e ]X & =  \sum_{1 \leq k\leq i \atop A\in \bar{\mathcal P}_{k}, B\in {\mathcal P}_{i-k}} c_k^{iAB} (Ae) (B X) \label{E:DXP} \\
 [\bar{\D}_i, e]X & = 
 i \pa_r e {\bar\D}_{i-1} X
+ \sum_{2 \leq k\leq i \atop A\in \bar{\mathcal P}_{k}, B\in \bar{\mathcal P}_{i-k}} \bar c_k^{iAB} (A e) (B X) \label{E:DYP}
\end{align}
\end{lemma}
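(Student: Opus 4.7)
The plan is to prove the three identities in Lemma~\ref{L:COMM2} by a simultaneous induction on $i$, drawing on the algebraic closure properties of the vector field classes $\mathcal P$ and $\bar{\mathcal P}$ established in Appendix~\ref{A:proofs} (in particular the product rule Lemma~\ref{L:product}) and the elementary commutator identities $[\pa_r, e] = [D_r, e] = \pa_r e$ (viewed as multiplication operators). The base case $i = 1$ is a direct computation. For (E:DXP), $[\D_1, e]X = [D_r, e]X = (\pa_r e)X$, which matches the right-hand side with $A = \pa_r \in \bar{\mathcal P}_1$ and $B = \mathrm{id} \in \mathcal P_0$. For (E:DYP), $[\bar\D_1, e]X = (\pa_r e)X = 1 \cdot (\pa_r e) \cdot \bar\D_0 X$, and the residual sum over $k \ge 2$ is vacuous. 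For (E:DiXP), a direct computation using $\pa_r(X/r) = \pa_r X/r - X/r^2$ yields
\begin{equation*}
[\D_1, e\pa_r]X = (\pa_r e) D_r X - 2\,\pa_r(e/r)\, X,
\end{equation*}
matching the target with $i = 1$, the leading piece captured by $1 \cdot \pa_r e \cdot \D_1 X$ and the residual contributing to the second sum with $A = \pa_r \in \bar{\mathcal P}_1$ and $B = \mathrm{id} \in \mathcal P_0$.

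For the inductive step, I would exploit the recursive structure $\D_{i+1} = \D_1' \D_i$, with $\D_1' \in \{\pa_r, D_r\}$ determined by the parity of $i$, and similarly for $\bar\D_{i+1}$. Applying $\D_1'$ to the identity $[\D_i, e]X = \sum c_k^{iAB} (Ae)(BX)$ and using $\D_1'(e \cdot Y) = (\pa_r e)Y + e \D_1' Y$ (with an extra $\tfrac{2}{r}(e Y)$ summand when $\D_1' = D_r$, which is absorbed into the $(Ae)(BX)$-type decomposition), followed by Leibniz on each product $(Ae)(BX)$, produces new summands of the form $(\pa_r A e)(BX)$ and $(Ae)(\D_1' BX)$. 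The heart of the argument is the structural closure: $\pa_r A \in \bar{\mathcal P}_{k+1}$ whenever $A \in \bar{\mathcal P}_k$, and $\D_1' B$ is expressible as a linear combination of elements of $\mathcal P_{i-k+1}$ (with parity of $\D_1'$ matched to the recursive definition of $\mathcal P_{i+1}$); both are verified by a short case analysis on parity from the definitions of $\mathcal P$ and $\bar{\mathcal P}$. Identity (E:DYP) proceeds identically, with the explicit coefficient $i$ multiplying $\pa_r e \cdot \bar\D_{i-1}X$ propagating by induction: at level $i$ the coefficient is $i$, and applying $\D_1'$ generates one additional ``derivative on $e$'' contribution, bumping it to $i+1$.

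For (E:DiXP), the cleanest route is the decomposition $e\pa_r = eD_r - 2e/r$, which gives
\begin{equation*}
[\D_i, e\pa_r]X = [\D_i, eD_r]X - 2\,[\D_i, e/r]X.
\end{equation*}
The second commutator is supplied directly by (E:DXP) applied with $e$ replaced by $e/r$, and contributes precisely to the second sum on the right-hand side of (E:DiXP) after absorbing the factor $-2$ into the constants. For the first commutator, expand $\D_i(e \cdot D_r X)$ via Lemma~\ref{L:product}(a) as $\sum (Ae)(B D_r X)$ with $A \in \mathcal P_k$, $B \in \bar{\mathcal P}_{i-k}$. Writing $B = W\pa_r$ with $W \in \mathcal P_{i-k-1}$ and using the elementary identity $\pa_r D_r = D_r \pa_r - 2/r^2$, one rewrites $B D_r X = W D_r \pa_r X - 2W(X/r^2)$: the first piece, combined with its $k = 0$ counterpart subtracted off via $e D_r \D_i X$, yields (after the same combinatorial counting as in (E:DYP)) the leading $i\, \pa_r e \cdot \D_i X$ term and third-sum contributions of the form $r A(e/r)(B' D_r X)$ with $A \in \bar{\mathcal P}_{k+1}$, $B' \in \bar{\mathcal P}_{i-k-1}$, while the lower-order $W/r^2$ pieces are reabsorbed into the second sum using the identity $A(e)/r^{j+2} = r^{-j-1} A(e/r) + \text{lower-order}$ of the form prescribed by \eqref{pij}.

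The principal technical obstacle is the combinatorial bookkeeping required to verify that every new term produced by composing with $\pa_r$, $D_r$, or $1/r$ actually lands in the prescribed narrow classes $\mathcal P_{k+1}$ or $\bar{\mathcal P}_{k+1}$ (rather than in a wider ambient space of differential operators), and the simultaneous tracking of the explicit leading coefficient $i$. In the third identity an additional subtlety is that $\pa_r$ and $D_r$ fail to commute, so shuttling $D_r$ past $\D_i$ generates lower-order $1/r^2$ corrections that must be systematically reorganized into the prescribed sums via rewriting rules of the type $\pa_r e/r - e/r^2 = \pa_r(e/r)$. These are parity-sensitive case analyses, made tractable by the recursive definitions of $\mathcal P$ and $\bar{\mathcal P}$ and the structural closure results of Appendix~\ref{A:proofs}.
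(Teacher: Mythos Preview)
Your overall scheme --- induction on $i$, with the step driven by $\D_{i+1} = \D_1'\D_i$ and parity-dependent reorganization using the closure of $\mathcal P$, $\bar{\mathcal P}$ --- is exactly the paper's approach, and your treatment of \eqref{E:DXP} and \eqref{E:DYP} matches it.

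For \eqref{E:DiXP}, however, the paper does \emph{not} take your decomposition route $e\pa_r = eD_r - 2e/r$; it runs the same direct induction as for the other two identities, applying $\pa_r$ (or $D_r$) to the level-$\ell$ version of \eqref{E:DiXP} and reshuffling each summand by a parity case split. Your alternative has a genuine gap at the $[\D_i, eD_r]$ step. When you expand $\D_i(e\cdot D_rX)$ by the product rule, the $k=0$ piece is $e\cdot C(D_rX)$ with $C\in\bar{\mathcal P}_i$, which is \emph{not} the term $eD_r\D_iX$ that must be subtracted to form the commutator (because $D_r\D_i\neq \D_iD_r$ and neither coincides with $C D_r$ for a fixed $C\in\bar{\mathcal P}_i$). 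To close your argument you would need an independent identity for the operator commutator $[\D_i, D_r]$, which you have not supplied; your sentence ``combined with its $k=0$ counterpart subtracted off via $eD_r\D_iX$'' papers over exactly this mismatch. The paper's direct induction sidesteps the issue entirely, since at each step only the elementary commutators $[\pa_r,e]=[D_r,e]=\pa_r e$ and the identity $D_r\pa_r = \pa_r D_r + 2/r^2$ are invoked, never a full $[\D_i,D_r]$.

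The fix is simple: abandon the decomposition for \eqref{E:DiXP} and run the same $\D_{i+1}=\D_1'\D_i$ induction you already described for \eqref{E:DXP}. The bookkeeping is heavier (three types of summands rather than one), but the mechanism is identical.
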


\begin{proof}
{\em Proof of~\eqref{E:DiXP}.}
The proof is based on the induction on $i$. Let $i=1$. Then by using the identity $D_r \pa_r X= \pa_r D_r X + \frac{2}{ r^2}X$, 
\begin{align*}
\mathcal D_1 \left( e \pa_r X \right) = D_r  \left( e \pa_r X \right) = e D_r \pa_r X + \pa_r e \pa_r X = e \pa_r D_r X
+ \pa_r e D_r X  -2 \pa_r( \frac{e}{r}  )X 
\end{align*}
which yields \eqref{E:DiXP} for $i=1$. Suppose \eqref{E:DiXP} is valid for all $i\leq \ell$. It suffices to show \eqref{E:DiXP} for $i=\ell +1$. We will verify it when $\ell $ is odd. The other case ($\ell$ is even) will follow similarly. By using the induction assumption, 
\begin{align*}
&\mathcal D_{\ell+1} \left( e \pa_r X \right) =\pa_r \Big( e \pa_r (\mathcal D_\ell X) + \ell \pa_r e \mathcal D_\ell X + 
\sum_{1 \leq k\leq \ell \atop A\in \bar{\mathcal P}_{k}, B\in {\mathcal P}_{\ell-k}} c_k^{\ell AB} A(\frac{e}{r}) (B X)\\
 &\qquad\qquad\qquad\qquad+ 
\sum_{1 \leq k\leq \ell-1\atop  A \in \bar{\mathcal P}_{k+1},  B\in \bar{\mathcal P}_{\ell-k-1}} \bar c_k^{\ell A B}  r A(\frac{e}{r}) ( B D_r X)\Big)
 \\
&= e \pa_r \mathcal D_{\ell+1} X +(\ell +1) \pa_r e \mathcal D_{\ell +1}X \\
& + \sum_{1 \leq k\leq \ell, k:\text{odd}\atop A\in \bar{\mathcal P}_{k}, B\in {\mathcal P}_{\ell-k}} c_k^{\ell AB}\left\{ (D_r -\frac{2}{r}) A(\frac{e}{r}) (B X)+A(\frac{e}{r}) ((D_r -\frac{2}{r}) B X) \right\} \\
&+ \sum_{1 \leq k\leq \ell , k:\text{even}\atop A\in \bar{\mathcal P}_{k}, B\in {\mathcal P}_{\ell-k}} c_k^{\ell AB} \left\{ \pa_r A(\frac{e}{r}) (B X) + A(\frac{e}{r}) (\pa_r B X) \right\} \\
& + \sum_{1 \leq k\leq \ell-1, k:\text{odd}\atop  A \in \bar{\mathcal P}_{k+1},  B\in \bar{\mathcal P}_{\ell-k-1}} 
\bar c_k^{\ell A B}
\left\{ (A(\frac{e}{r})  +  r\pa_r A(\frac{e}{r})) ( B D_r X) +   rA(\frac{e}{r}) ( (D_r-\frac{2}{r}) B D_r X)  \right\} \\
&+ \sum_{1 \leq k\leq \ell-1,k:\text{even}\atop  A \in \bar{\mathcal P}_{k+1},  B\in \bar{\mathcal P}_{\ell-k-1}} 
\bar c_k^{\ell A B}
\left\{   r(D_r-\frac{1}{r}) A(\frac{e}{r}) ( B D_r X)+  r A'(\frac{e}{r}) ( \pa_r B D_r X) \right\}
\end{align*}
We note that each expression in the above summations belongs to either summation in \eqref{E:DiXP} for $i=\ell+1$. 
Proof of~\eqref{E:DXP} and~\eqref{E:DYP} follows analogously. 
\end{proof}

\section{Hardy-Sobolev embedding}\label{A:HSembedding}

Let $\chi, \; \psi \geq 0$ be smooth cutoff functions satisfying $\chi=1$ on $[0,\frac12]$, $\chi=0$ on $[\frac34,1]$ and $\psi=1$ on $[\frac12,1]$, $\psi=0$ on $[0,\frac14]$, %\bcr 
satisfying in addition
\[
\chi'(r)\le0, \ \ \psi'(r)\ge0, \ \ r\in[0,1].
\]
%\ec

\begin{lemma}[Localized Hardy inequalities] \label{L:LOCHARDY}
%\bcr 
Let $\chi,\psi$ be the above defined cut-off functions and let $u: B_1(0)\to \mathbb R$ be a given smooth radially symmetric function, where $B_1(0)=\{x, \big| |x|\le 1\}$ is the unit ball in $\mathbb R^3$. Then %\ec  
\begin{enumerate}
\item 
\be \label{hardy0}
%\bcr 
\int_0^\frac34 %\ec
 |u|^2 \chi^2 d r \lesssim \int_\frac12^\frac34 |u|^2  r^2 d r + \int_0^\frac34 |A u|^2 r^2 \chi^2 d r
\ee
%\bcr 
where $A=\mathcal D_1 = \pa_r+\frac2r$ or $A = \bar{\mathcal D}_1 = \pa_r$. % \ec
\item Let $a>1$ be given. 
\be \label{hardy1}
\int_\frac14^1 w^{a-2} |u|^2\psi^2 d r \lesssim \int_\frac14^\frac12 w^a |u|^2 d r + \int_\frac14^1 w^a |A u|^2\psi^2 d r
\ee
%\bcr 
where $A=\mathcal D_1 = \pa_r$ or $A = \bar{\mathcal D}_1 = \pa_r+\frac2r$.  %\ec
\end{enumerate}
\end{lemma}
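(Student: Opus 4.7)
The plan for Part (1) is to exploit the trivial identity $1 = \partial_r r$ and integrate by parts. For $A = \bar{\mathcal{D}}_1 = \partial_r$, I would rewrite
\[
\int_0^{3/4} u^2 \chi^2 \, dr = \int_0^{3/4} u^2 \chi^2 \partial_r(r) \, dr = \bigl[r u^2 \chi^2\bigr]_0^{3/4} - 2\int_0^{3/4} r\,u\,\partial_r u\,\chi^2\,dr - 2\int_0^{3/4} r\,u^2\,\chi\,\chi'\,dr.
\]
The boundary terms vanish: at $r=0$ by the factor $r$ and at $r=3/4$ by $\chi(3/4)=0$. Cauchy--Schwarz on the first integral produces $\eta\int u^2\chi^2 + C_\eta \int r^2(\partial_r u)^2\chi^2$, absorbable for small $\eta$; the second is supported on $[1/2,3/4]$ where $r\le 2r^2$, giving the boundary contribution $C\int_{1/2}^{3/4} u^2 r^2\,dr$. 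For $A = \mathcal{D}_1 = D_r$, I would use the algebraic identity $r\partial_r u = r D_r u - 2u$ inside the cross term; this produces an additional $-4\int u^2\chi^2$ with a favorable sign that only strengthens the absorption step.

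For Part (2), the plan is to handle separately the vacuum singularity near $r=1$ and the interior. Using the physical vacuum condition (w2) together with the smoothness and strict positivity of $w$ on $[0,1)$, I would select $r^\ast\in(1/2,1)$ close enough to $1$ that $-w'(r)\ge c_0>0$ on $[r^\ast,1]$ and $w(r^\ast)$ is as small as needed below. Split $[1/4,1] = [1/4,r^\ast]\cup [r^\ast,1]$. On $[r^\ast,1]$, integrate $\partial_r(w^{a-1} u^2\psi^2)$: the boundary vanishes at $r=1$ because $w(1)^{a-1}=0$ for $a>1$, while $\psi'\equiv 0$ throughout $[r^\ast,1]\subset[1/2,1]$, so
\[
-(a-1)\int_{r^\ast}^1 w^{a-2} w'\,u^2\psi^2\,dr \;=\; w^{a-1}(r^\ast)u^2(r^\ast)\psi^2(r^\ast) \;+\; 2\int_{r^\ast}^1 w^{a-1} u\,\partial_r u\,\psi^2\,dr.
\]
The left side dominates $(a-1)c_0\int_{r^\ast}^1 w^{a-2}u^2\psi^2\,dr$, and Cauchy--Schwarz on the cross term produces $\eta\int w^{a-2}u^2\psi^2 + C_\eta\int w^a(\partial_r u)^2\psi^2$, which is absorbable. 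For $A = \bar{\mathcal{D}}_1 = D_r$, substituting $\partial_r u = D_r u - 2u/r$ generates the parasitic term $-4\int w^{a-1} u^2/r\,\psi^2\,dr$; since $w\le w(r^\ast)$ on $[r^\ast,1]$ and $r\ge 1/4$, this is bounded by $C w(r^\ast)\int_{r^\ast}^1 w^{a-2}u^2\psi^2\,dr$ and absorbed once $r^\ast$ is chosen close enough to $1$.

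On the interior $[1/4,r^\ast]$, the weight $w$ is bounded above and below by positive constants, and the desired inequality reduces to a Poincar\'e-type bound
\[
\int_{1/4}^{r^\ast} u^2\psi^2\,dr \;\lesssim\; \int_{1/4}^{1/2} u^2\,dr + \int_{1/4}^{r^\ast} (Au)^2\psi^2\,dr.
\]
This follows by writing $u(r) = u(\bar r) + \int_{\bar r}^r \partial_r u\,ds$, squaring, averaging over $\bar r\in[1/4,1/2]$, and integrating over $r\in[1/2,r^\ast]$. The same averaging argument controls the pointwise boundary value $u^2(r^\ast)$ (produced by the IBP on the outer region) by $\int_{1/4}^{1/2} u^2\,dr + \int_{1/4}^{r^\ast}(\partial_r u)^2\,dr$, and the equivalence of all $w$-weights on $[1/4,r^\ast]$ upgrades $\partial_r$ to $D_r$ freely.

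The main obstacle is not a deep estimate but the bookkeeping: the IBP near $r=1$ generates a pointwise trace $u^2(r^\ast)$ that is not naturally an $L^2$ quantity, and matching it to the $L^2$-type right-hand side requires carefully connecting the outer IBP with the interior Poincar\'e--averaging inequality. A secondary subtlety for the $D_r$ case is that the $-4\int w^{a-1} u^2/r\,\psi^2$ term is not absorbable uniformly; it is essential that the choice of $r^\ast$ exploits the physical vacuum profile $w(r^\ast)\to 0$ as $r^\ast\to 1$ so that its coefficient can be made arbitrarily small.
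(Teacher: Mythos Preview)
Your proposal is correct and follows essentially the same standard Hardy/integration-by-parts route as the paper; the paper in fact cites \cite{Jang2014} for the $A=\partial_r$ case and says Part (2) ``follows similarly'', so your level of detail for Part (2) exceeds what the paper provides. One small difference worth noting: for Part (1) with $A=D_r$, the paper does not substitute into the cross term as you do but instead expands the exact identity $\int |\partial_r u|^2 r^2\chi^2\,dr = \int |D_r u - \tfrac{2}{r}u|^2 r^2\chi^2\,dr$, integrates the resulting cross term by parts, and reads off $2\int u^2\chi^2\,dr \le \int |D_r u|^2 r^2\chi^2\,dr$ directly (with no absorption and no boundary term needed). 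Your substitution argument reaches the same conclusion after absorption.

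Two minor sign remarks, neither of which affects validity. In Part (1), substituting $r\partial_r u = rD_r u - 2u$ into the cross term $-2\int ru\partial_r u\,\chi^2$ gives $-2\int ruD_r u\,\chi^2 + 4\int u^2\chi^2$, so the extra term is $+4\int u^2\chi^2$; moved to the left this turns the coefficient on $\int u^2\chi^2$ into $3$, which indeed strengthens the absorption as you say, but the sign in your write-up should be flipped. In Part (2), the term $-4\int_{r^\ast}^1 w^{a-1}u^2/r\,\psi^2\,dr$ appears on the right-hand side of your inequality with a \emph{negative} sign and can simply be dropped (or moved to the left as a nonnegative contribution); it is not parasitic, so the delicate choice of $r^\ast$ to make $w(r^\ast)$ small is unnecessary for this step. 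Your argument still goes through with that extra step, it is just not needed.
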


\begin{proof}
The proof is based on the standard Hardy inequality and the cutoff function argument. 
%\bcr 
For $A=\bar{\mathcal D}_1$, see~\cite{Jang2014}. The case $A=\mathcal D_1=\pa_r$ follows from %\ec
\be\label{identity01}
\begin{split}
\int_0^\frac{3}{4} |\pa_r u|^2  r^2 \chi^2 d r &= \int_0^\frac{3}{4} |D_r u -\frac{2}{r} u  |^2  r^2 \chi^2 d r \\
&= 
 \int_0^\frac{3}{4} |D_r u |^2  r^2 \chi^2 d r + 4 \int_0^\frac{3}{4} u ^2  \chi^2 d r - 4
 \int_0^\frac{3}{4} D_r u  u  r\chi^2 d r \\
 &=  \int_0^\frac{3}{4} |D_r u |^2  r^2 \chi^2 d r - 2  \int_0^\frac{3}{4} u ^2  \chi^2 d r  + 4 \int_0^\frac{3}{4} u ^2  r \chi \chi' d r 
\end{split}
\ee
%\bcr 
The very same bound implies 
\[
2 \int_0^\frac{3}{4} u ^2  \chi^2 d r \l \le  4 \int_0^\frac{3}{4} u ^2  r \chi \chi' \, d r  + \int_0^\frac{3}{4} |D_r u |^2   \chi^2 \, r^2 d r, 
\]
which immediately yields~\eqref{hardy0} with $A=D_r$.
%\ec
The localized Hardy inequality near the boundary follows similarly. % \bcr 
We note that $w\sim 1-r$ in the vicinity of the boundary $r=1$. %\ec
\end{proof}

As a consequence of the above lemma, we have that for %\bcr 
any smooth $u:B_1(0)\to \mathbb R$ and any $m\in\mathbb Z_{>0}$
\be\label{L1bound}
\| u \|_{L^1}^2 \lesssim %\int_\frac12^\frac34 |u|^2  r^2 d r +
 \int_0^\frac34 |A u|^2 r^2 \chi^2 d r  + \sum_{i=0}^{m} \int_{\frac14}^1 w^{\alpha-\lfloor \alpha \rfloor + 2m} |A_i u|^2 d r,
\ee
where either $A=D_r$ and $A_i = \D_i$ for all $i=0,1,\dots,m$, or $A=\pa_r$ and $A_i = \bar{\D}_i$ 
for all $i=0,1,\dots,m$.
and the same estimate holds with $D_r$ replaced by $\pa_r$ and $\D_i$ by $\bar{\D}_i$. 
See Lemma 3.3 of \cite{Jang2015} for the proof. Note  the term $\int_\frac12^\frac34 |u|^2  r^2 d r$ in \eqref{hardy0} has been absorbed into the second summation in \eqref{L1bound}.  We remark that away from the origin, both $\sum_{i=0}^{m} \int_{\frac14}^1 w^{\alpha-\lfloor \alpha \rfloor + 2m} |{\mathcal D}_i u|^2 d r$ 
and $\sum_{i=0}^{m} \int_{\frac14}^1 w^{\alpha-\lfloor \alpha \rfloor + 2m} |\bar{\mathcal D}^i u|^2 d r$ 
are equivalent to 
$\sum_{i=0}^{m} \int_{\frac14}^1 w^{\alpha-\lfloor \alpha \rfloor + 2m} |\pa_r^i u|^2 d r$ for any 
$m\in\mathbb Z_{>0}$. 
%\ec 

The next result concerns the $L^\infty$ bound: 

\begin{lemma}\label{L:EMBEDDINGS}
%\bcr 
Under the same assumptions as in Lemma~\ref{L:LOCHARDY} and any
$m\in\mathbb Z_{>0}$, we have
\be\label{infty0}
\left\| u\right\|_\infty^2 \lesssim \sum_{i=1}^{2} \int_0^\frac{3}{4} |B_i u |^2  r^2 d r  + \sum_{i=0}^{m+1} \int_{\frac14}^1 w^{\alpha-\lfloor \alpha \rfloor + 2m} |B_i u|^2 d r
\ee
where either $B_i=\mathcal D_i$ or $B_i = \bar{\mathcal D}_i$, $i=0,1,\dots,m+1$. %\ec 
Moreover, 
\be\label{infty}
\left\| \frac{u}{r} \right\|_\infty^2 \lesssim \sum_{i=2}^{3} \int_0^\frac{3}{4} |{\mathcal D}_i u|^2  r^2 d r  + \sum_{i=0}^{m+1} \int_{\frac14}^1 w^{\alpha-\lfloor \alpha \rfloor + 2m} |{\mathcal D}_i u|^2 d r
\ee
\end{lemma}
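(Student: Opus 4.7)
The plan is to prove both bounds by splitting $[0,1]$ into an interior region $[0,\tfrac34]$ and a boundary region $[\tfrac14,1]$ using the cut-offs $\chi,\psi$ from Lemma~\ref{L:LOCHARDY} (which may be arranged so that $\chi+\psi\ge1$), and to handle each region by an embedding tailored to the corresponding singularity: the three-dimensional Sobolev embedding $H^2(\mathbb R^3)\hookrightarrow L^\infty(\mathbb R^3)$ applied to the radial extension of $\chi u$ for the interior, and a one-dimensional Hardy--Sobolev embedding adapted to the degenerate weight $w\sim 1-r$ for the boundary. The key point underlying the interior estimate is that in both choices $B_i=\mathcal D_i$ and $B_i=\bar{\mathcal D}_i$ the second-order operator equals the radial Laplacian modulo the harmless multiplicative term $\tfrac2{r^2}$, namely $\bar{\mathcal D}_2=D_r\partial_r=\Delta_{\mathrm{rad}}$ and $\mathcal D_2=\partial_rD_r=\Delta_{\mathrm{rad}}-\tfrac2{r^2}$, so that the interior $L^2(r^2dr)$-norms of $B_1u$ and $B_2 u$ control the three-dimensional $L^2$-norm of the radial Hessian of $u$ modulo lower-order terms absorbed via Lemma~\ref{L:LOCHARDY}.

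For \eqref{infty0}, after extending $\chi u$ by zero to $\mathbb R^3$ the classical embedding gives $\|\chi u\|_\infty^2\lesssim\sum_{k=0}^2\|\nabla^k(\chi u)\|_{L^2(\mathbb R^3)}^2$; distributing derivatives onto $\chi$ yields only lower-order contributions that are absorbed by \eqref{hardy0} into $\sum_{i=1}^{2}\int_0^{3/4}|B_i u|^2 r^2 dr$ plus commutator remainders localized to $[\tfrac12,\tfrac34]$ which fall under the boundary sum. On the boundary region $r\sim1$, both $\mathcal D_i$ and $\bar{\mathcal D}_i$ are equivalent to $\partial_r^i$ modulo harmless lower-order $r^{-1}$ terms, and the claim reduces to the one-dimensional weighted embedding $\|\psi v\|_\infty^2\lesssim\sum_{i=0}^{m+1}\int_{1/4}^1 w^{\alpha-\lfloor\alpha\rfloor+2m}|\partial_r^i v|^2\,dr$. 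This is obtained by iterating \eqref{hardy1} precisely $m$ times, each iteration trading one unit of singular weight $w^{-1}$ near $r=1$ for an extra derivative, until the weight exponent lands in the non-degenerate range $[0,1)$, at which point the unweighted embedding $H^1([\tfrac14,1])\hookrightarrow L^\infty([\tfrac14,1])$ closes the estimate.

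For \eqref{infty}, I would apply \eqref{infty0} to $u/r$ with the choice $B_i=\bar{\mathcal D}_i$. The interior contribution $\sum_{i=1}^{2}\int_0^{3/4}|\bar{\mathcal D}_i(u/r)|^2 r^2 dr$ is converted to $\sum_{i=2}^{3}\int_0^{3/4}|\mathcal D_i u|^2 r^2 dr$ by the identity of Lemma~\ref{L:XtoXover}, which writes $\mathcal D_{i+1}u$ as $r\partial_r\bar{\mathcal D}_i(u/r)+(i+3)\bar{\mathcal D}_i(u/r)$, combined with the one-sided bound \eqref{XtoXoverB}. On the boundary region $r^{-1}$ is bounded and its commutator with derivatives produces only lower-order terms, so the weighted boundary sum for $u/r$ is controlled by $\sum_{i=0}^{m+1}\int_{1/4}^1 w^{\alpha-\lfloor\alpha\rfloor+2m}|\mathcal D_i u|^2 dr$. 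The main technical obstacle is the Hardy iteration near $r=1$: the exponent $\alpha-\lfloor\alpha\rfloor+2m$ is chosen precisely so that $m$ steps of \eqref{hardy1} land the weight in the non-degenerate range, and one must carefully track the integer part of $\alpha$ through the iteration while verifying that the equivalence of $\partial_r^i$ with $\mathcal D_i$ (or $\bar{\mathcal D}_i$) on $[\tfrac14,1]$ never reintroduces an $r^{-1}$-singularity that would exceed the available weight.
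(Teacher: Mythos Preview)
Your overall strategy---split at $r\sim\tfrac12$, use a Sobolev-type embedding near the origin and Hardy iteration near the boundary---is correct and matches the paper's, but there are two points of divergence worth noting.

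First, for the interior part of \eqref{infty0} you invoke the three-dimensional embedding $H^2(\mathbb R^3)\hookrightarrow L^\infty$, whereas the paper uses the one-dimensional embedding $\|u\|_\infty\lesssim\|u\|_{L^1}+\|\partial_r u\|_{L^1}$ combined with the weighted $L^1$ bound \eqref{L1bound}. Both work, but your treatment of the case $B_i=\mathcal D_i$ has a small gap: you call the $\tfrac{2}{r^2}u$ correction in $\mathcal D_2=\Delta_{\mathrm{rad}}-\tfrac{2}{r^2}$ ``harmless'' and say it is absorbed by Lemma~\ref{L:LOCHARDY}. It is not---one application of \eqref{hardy0} controls only $\int|u|^2\chi^2\,dr$, not $\int|u|^2 r^{-2}\chi^2\,dr$, which is what you would need. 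The paper instead proves by an explicit integration by parts (using the identity $\partial_r^2 u=\mathcal D_2 u-2\partial_r(u/r)$ and expanding the square) that
\[
\int_0^{3/4}|\partial_r^2 u|^2 r^2\chi^2\,dr\le\int_0^{3/4}|\mathcal D_2 u|^2 r^2\chi^2\,dr,
\]
i.e.\ the cross-terms have a definite sign, so the $\tfrac{2}{r^2}$ piece never has to be estimated separately. Once you have this identity your $H^2$ route closes; without it your argument for $B_i=\mathcal D_i$ is incomplete.

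Second, your approach to \eqref{infty}---apply \eqref{infty0} with $B_i=\bar{\mathcal D}_i$ to $u/r$ and convert the interior terms via the identity \eqref{XtoXover} and the bound \eqref{XtoXoverB}---is a genuinely cleaner alternative to the paper's proof, which instead repeats the explicit integration-by-parts computations one order higher (writing $\partial_r(u/r)$, $\partial_r^2(u/r)$ in terms of $\mathcal D_2 u$, $\mathcal D_3 u$ and checking sign-definiteness line by line). Your route packages all of that into the single estimate \eqref{XtoXoverB}, at the cost of having first proved \eqref{infty0} for the $\bar{\mathcal D}_i$ family. The boundary part is handled the same way in both arguments: on $[\tfrac14,1]$ the factor $r^{-1}$ is bounded and the operators $\mathcal D_i$, $\bar{\mathcal D}_i$, $\partial_r^i$ are all equivalent, so iterating \eqref{hardy1} until the weight exponent drops into $[0,1)$ and then using the elementary one-dimensional embedding finishes the estimate.
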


\begin{proof} The proof follows from the Sobolev embedding inequality 
\[
\|u\|_\infty \lesssim \|u\|_{L^1} + \| \pa_r u  \|_{L^1}.
\]   
While the summed norms generated by $\mathcal D_i$ or $\bar{\mathcal D}_i$ or $\pa_r^i$ are all equivalent away from the origin, the ordered derivatives near the origin require some attention.  We start with \eqref{infty0}. %\bcr 
First, we will verify~\eqref{infty0} in the case  $B_i = \bar{\mathcal D}_i$, $i=0,1,\dots,m+1$. 
By the above Sobolev embedding and~\eqref{L1bound}, 
\begin{align*}
\|u\|_\infty &\lesssim  \int_0^\frac{3}{4} |\pa_r u |^2  r^2\chi^2 d r +
\sum_{i=0}^m \int_{\frac14}^1 w^{\alpha-\lfloor \alpha \rfloor + 2m} |\bar{\D}_i u|^2 \,dr  \\
& \ \ \ \ +  \int_0^\frac{3}{4}  |D_r \pa_r u |^2 r^2 \chi^2 d r   +  \sum_{i=0}^m \int_{\frac14}^1 w^{\alpha-\lfloor \alpha \rfloor + 2m} |\D_i\pa_r u|^2 \,dr \\
& \lesssim \int_0^\frac{3}{4} \left(|\pa_r u |^2 +|D_r \pa_r u |^2\right) r^2\chi^2 d r +
\sum_{i=0}^{m+1} \int_{\frac14}^1 w^{\alpha-\lfloor \alpha \rfloor + 2m} |\bar\D_i u|^2 d r 
\end{align*}
where we have applied~\eqref{L1bound} to $\|u\|_{L^1}$ with  the choice $A= \pa_r$ and 
$A_i =\bar{\mathcal D}_i$ for $i=0,\dots,m$ and then 
 to $\|\pa_r u\|_{L^1}$ with the choice $A=D_r$ and 
$A_i =\mathcal D_i$ for $i=0,\dots,m$. For the last line,
we simply remark that away from the origin, both $\sum_{i=0}^{m} \int_{\frac14}^1 w^{\alpha-\lfloor \alpha \rfloor + 2m} |{\mathcal D}_i u|^2 d r$ 
and $\sum_{i=0}^{m} \int_{\frac14}^1 w^{\alpha-\lfloor \alpha \rfloor + 2m} |\bar{\mathcal D}^i u|^2 d r$ 
are equivalent to 
$\sum_{i=0}^{m} \int_{\frac14}^1 w^{\alpha-\lfloor \alpha \rfloor + 2m} |\pa_r^i u|^2 d r$ for any 
$m\in\mathbb Z_{>0}$. 
This proves~\eqref{infty0} when $B_i = \bar{\mathcal D}_i$ for $i=0,\dots,m+1$. 

In the case $B_i = \D_i$ for $i=0,\dots,m+1$, we apply~\eqref{L1bound} to $\|u\|_{L^1}$ with  $A= D_r$ and $A_i = \D_i$ for $i=0,\dots,m$ and to $\|\pa_r u\|_{L^1}$ with $A=\pa_r$ and $A_i = \bar\D_i$ for $i=0,\dots,m$, 
%\ec 
to obtain 
\[
\|u\|_\infty \lesssim  \int_0^\frac{3}{4} |D_r u |^2  r^2\chi^2 d r +  \int_0^\frac{3}{4}  | \pa_{r}^2 u |^2 r^2 \chi^2d r   +  \sum_{i=0}^{m+1} \int_{\frac14}^1 w^{\alpha-\lfloor \alpha \rfloor + 2m} |{\mathcal D}_i u|^2 d r 
\]
Now it suffices to show that $ \int_0^\frac{3}{4}  | \pa_{r}^2 u |^2 r^2\chi^2 d r$ is bounded by $  \sum_{i=1}^{2} \int_0^\frac{3}{4} |{\mathcal D}_i u |^2  r^2\chi^2 d r  + \sum_{i=0}^{m+1} \int_{\frac14}^1 w^{\alpha-\lfloor \alpha \rfloor + 2m} |{\mathcal D}_i u|^2 d r $. To this end, we first note that 
\be\label{identity00}
\pa_ r^2 u = \pa_r D_r u  - 2 \pa_r(\frac{u}{r}), \quad \pa_r D_r u =  r\pa_ r^2 (\frac{u}{r}) + 4\pa_r(\frac{u}{r})
\ee
Therefore, 
%\bcr
\begin{align*}
\int_0^\frac{3}{4}  |\pa_r^2 u |^2 r^2\chi^2 d r &= \int_0^\frac{3}{4}  | \pa_r D_r u  - 2 \pa_r(\frac{u}{r}) |^2 r^2\chi^2 d r\\
&=  \int_0^\frac{3}{4}  | \pa_r D_r u |^2 r^2\chi^2 d r - 12  \int_0^\frac{3}{4}  | \pa_r(\frac{u}{r}) |^2 r^2\chi^2 d r 
- 4  \int_0^\frac{3}{4}   \pa_ r^2 (\frac{u}{r})   \pa_r(\frac{u}{r}) r^3 \chi^2 d r \\
&= \int_0^\frac{3}{4}  |\D_2u |^2 r^2\chi^2 d r - 6  \int_0^\frac{3}{4}  | \pa_r(\frac{u}{r}) |^2 r^2\chi^2 d r + 
4 \int_0^\frac{3}{4}  | \pa_r(\frac{u}{r}) |^2 r^3\chi \chi' d r\\
&\le  \int_0^\frac{3}{4}  | \mathcal D_2 u |^2 r^2\chi^2 d r, 
\end{align*}
where we have used $\chi'\le0$ in the last estimate.
This yields~\eqref{infty0} for $B = D_r$ and $B_i=\mathcal D_i$ for $i=0,1,\dots,m+1$. %\ec

Next we will prove \eqref{infty}. First we have 
\[
\left\| \frac{u}{r} \right\|_\infty^2 \lesssim \int_0^\frac34 |\frac{u}{r}|^2  \chi^2 d r + \int_0^\frac34 |\pa_r(\frac{u}{r})|^2  \chi^2 d r + \sum_{i=0}^{m+1} \int_{\frac14}^1 w^{\alpha-\lfloor \alpha \rfloor + 2m} |{\mathcal D}_i u|^2 d r,
\]
where we have used the bound $\sum_{i=0}^{m+1} \int_{\frac14}^1 w^{\alpha-\lfloor \alpha \rfloor + 2m} |{\mathcal D}_i (\frac{ u}{r} )|^2 d r \lesssim \sum_{i=0}^{m+1} \int_{\frac14}^1 w^{\alpha-\lfloor \alpha \rfloor + 2m} |{\mathcal D}_i u|^2 d r$ since $\frac14\le r\le 1$. 
By applying \eqref{hardy0},  we see that 
\[
\int_0^\frac34 |\frac{u}{r}|^2  \chi^2 d r  + \int_0^\frac34 |\pa_r(\frac{u}{r})|^2  \chi^2 d r \lesssim 
\int_\frac{1}{2}^\frac{3}{4} (|u|^2 +|\pa_r u|^2) d r 
+ \underbrace{\int_0^\frac34 |\pa_r (\frac{u}{r})|^2  r^2 \chi^2  d r}_{(a)} + \underbrace{ \int_0^\frac34 |\pa_ r^2 (\frac{u}{r})|^2  r^2 \chi^2  d r}_{(b)}
\]
For $(a)$, we apply  \eqref{hardy0} to obtain 
\[
(a)\lesssim \int_\frac{1}{2}^\frac{3}{4} (|u|^2 +|\pa_r u|^2) d r  + \int_0^\frac34 |\pa_r( \r (\frac{u}{r}))|^2  r^2 \chi^2  d r
\]
Note that by using \eqref{identity00}
\[
\begin{split}
&\int_0^\frac34 |\pa_r( \r (\frac{u}{r}))|^2  r^2 \chi^2  d r = \int_0^\frac34 |\pa_r D_r u  - 3\pa_r (\frac{u}{r})|^2  r^2 \chi^2  d r \\
&=  \int_0^\frac{3}{4}  | \pa_r D_r u |^2 r^2\chi^2 d r - 15  \int_0^\frac{3}{4}  | \pa_r(\frac{u}{r}) |^2 r^2\chi^2 d r 
- 6  \int_0^\frac{3}{4}   \pa_ r^2 (\frac{u}{r})   \pa_r(\frac{u}{r}) r^3 \chi^2 d r \\
&= \int_0^\frac{3}{4}  | \pa_r D_r u |^2 r^2\chi^2 d r - 6  \int_0^\frac{3}{4}  | \pa_r(\frac{u}{r}) |^2 r^2\chi^2 d r + 
6 \int_0^\frac{3}{4}  | \pa_r(\frac{u}{r}) |^2 r^3\chi \chi' d r
\end{split}
\]
which yields 
\[
(a)\lesssim  \int_\frac{1}{2}^\frac{3}{4} (|u|^2 +|\pa_r u|^2) d r + \int_0^\frac{3}{4}  | \mathcal D_2 u |^2 r^2\chi^2 d r
\]
For $(b)$, we apply  \eqref{hardy0} again to obtain 
\[
(b)\lesssim \int_\frac{1}{2}^\frac{3}{4} (|u|^2 +|\pa_r u|^2 +| \pa_ r^2 u|^2 ) d r  + \int_0^\frac34 |\pa_r( \r^2 (\frac{u}{r}))|^2  r^2 \chi^2  d r
\]
By \eqref{identity00} and also using $\pa_ r^2 D_r u =  r \pa_r^3 (\frac{u}{r})  + 5  \pa_ r^2 (\frac{u}{r}) $, 
\[
\begin{split}
&\int_0^\frac34 |\pa_r( \r^2 (\frac{u}{r}))|^2  r^2 \chi^2  d r = \int_0^\frac34 |\pa_ r^2 D_r u  - 4\pa_ r^2 (\frac{u}{r})|^2  r^2 \chi^2  d r \\
&=  \int_0^\frac{3}{4}  | \pa_ r^2 D_r u |^2 r^2\chi^2 d r +16 \int_0^\frac{3}{4}  | \pa_ r^2(\frac{u}{r}) |^2 r^2\chi^2 d r 
- 8  \int_0^\frac{3}{4}   \pa_ r^2 D_r u  \pa_ r^2 (\frac{u}{r})  r^2 \chi^2 d r \\
&=  \int_0^\frac{3}{4}  | \pa_ r^2 D_r u |^2 r^2\chi^2 d r - 24 \int_0^\frac{3}{4}  | \pa_ r^2(\frac{u}{r}) |^2 r^2\chi^2 d r 
- 8  \int_0^\frac{3}{4}  \pa_r^3 (\frac{u}{r})   \pa_ r^2 (\frac{u}{r})  r^3 \chi^2 d r  \\
&= \int_0^\frac{3}{4}  | \pa_ r^2 D_r u |^2 r^2\chi^2 d r - 12   \int_0^\frac{3}{4}  | \pa_ r^2(\frac{u}{r}) |^2 r^2\chi^2 d r + 
8 \int_0^\frac{3}{4}  | \pa_ r^2 (\frac{u}{r}) |^2 r^3\chi \chi' d r \\
&=  \int_0^\frac{3}{4} |D_r \pa_r D_r u |^2  r^2 \chi^2 d r - 2  \int_0^\frac{3}{4} |\pa_r D_r u| ^2  \chi^2 d r - 12   \int_0^\frac{3}{4}  | \pa_ r^2(\frac{u}{r}) |^2 r^2\chi^2 d r \\
& \quad +8 \int_0^\frac{3}{4}  | \pa_ r^2 (\frac{u}{r}) |^2 r^3\chi \chi' d r+ 4 \int_0^\frac{3}{4} |\pa_r D_r u|^2  r \chi \chi' d r 
\end{split}
\]
where we have used \eqref{identity01} at the last equality. It in turn yields 
\[
(b)\lesssim  \int_\frac{1}{2}^\frac{3}{4} (|u|^2 +|\pa_r u|^2+| \pa_ r^2 u|^2) d r +  \int_0^\frac{3}{4} |\mathcal D_3 u |^2  r^2 \chi^2 d r 
\]
This finishes the proof of \eqref{infty}. 
\end{proof}

The same argument gives the following bound for $\|ru\|_\infty$: 
\be\label{inftyru}
\left\| ru\right\|_\infty^2 \lesssim \sum_{i=0}^{1} \int_0^\frac{3}{4} |B_i u |^2  r^2 d r  + \sum_{i=0}^{m+1} \int_{\frac14}^1 w^{\alpha-\lfloor \alpha \rfloor + 2m} |B_i u|^2 d r
\ee
where we apply \eqref{hardy0} just once near the origin, since $\| ru\|_\infty \lesssim \| ru \|_{L^1} + \| \pa_r (r u)\|_{L^1}$, to derive the first sum.

We now recall the Hilbert space $B^N$ with the norm: 
\begin{align*}
\|f\|_{B^N}:=\sum_{j=0}^{N}\|\D_j  f  \|_{\alpha+j}. 
\end{align*}
In what follows, we will derive the weighted $L^2$ and $L^\infty$ embedding inequalities for functions in $B^N$ based on Lemma \ref{L:LOCHARDY}, Lemma \ref{L:EMBEDDINGS}. 

\begin{lemma}[$L^2$ weighted embeddings]\label{L:L2WEIGHTED}  Let $(H,\pa_\tau H)\in B^N\times B^N$ be given. Then we have 
\begin{enumerate}
\item[(i)] 
For any $ \frac{N-\alpha }{2} \leq k\leq N$ 
\be\label{L21}
\begin{split}
\tau^{\gamma-\frac53} \int  w^{\alpha+2k-N} |\D_{k} \pa_\tau H |^2  r^2 d r+\tau^{\gamma-\frac{11}{3}} \int  w^{\alpha+2k-N} |\D_{k} H |^2  r^2 d r  \lesssim E^N \\
\tau^{\gamma-\frac83} \int  w^{\alpha+2k-N} |\D_{k} \pa_\tau H |^2  r^2 d r+\tau^{\gamma-\frac{14}{3}} \int  w^{\alpha+2k-N} |\D_{k} H |^2  r^2 d r  \lesssim D^N
\end{split}
\ee

\item[(ii)] We further assume that $\sum_{j=0}^N\int \frac{ w^{\alpha+j+1}}{(\tau+\frac23M_g )^{1+\gamma}}  |\D_{j+1} H |^2  r^2 d r <\infty$. 
Then for any  $ \frac{N-\alpha -1}{2} \leq k\leq N$ 
\be\label{L22}
\ve \int \frac{ w^{\alpha+2k+1-N}}{(\tau+\frac23M_g )^{1+\gamma}}  |\D_{k+1} H |^2  r^2 d r  \lesssim E^N 
\ee
\end{enumerate}
\end{lemma}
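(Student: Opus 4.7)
The plan is to prove both parts by a downward induction on $k$, trading one extra power of $w$ for one extra spatial derivative via the localized boundary Hardy inequality~\eqref{hardy1} from Lemma~\ref{L:LOCHARDY}. For part~(i), the base case $k=N$ is immediate since $\|\mathcal D_N H_\tau\|_{\alpha+N}^2$ and $\|\mathcal D_N H\|_{\alpha+N}^2$ appear directly as summands in the definition~\eqref{E:ENDEF} of $E^N$ (and analogously in $D^N$~\eqref{E:DNDEF}). For the inductive step, assuming the bound at level $k+1$, I would split
\[
\int_0^1 w^{\alpha+2k-N}|\mathcal D_k H|^2 r^2\,dr = \int_0^{1/4} w^{\alpha+2k-N}|\mathcal D_k H|^2 r^2\,dr + \int_{1/4}^1 w^{\alpha+2k-N}|\mathcal D_k H|^2 r^2\,dr .
\]
On $[0,1/4]$ the enthalpy $w$ is bounded below, so $w^{\alpha+2k-N}\lesssim w^{\alpha+k}$ and this contribution is absorbed into $\|\mathcal D_k H\|_{\alpha+k}^2\lesssim\tau^{11/3-\gamma}E^N$. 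On $[1/4,1]$ I would apply~\eqref{hardy1} with $u=\mathcal D_k H$ and $a=\alpha+2(k+1)-N\ge 2>1$ (the requirement $a>1$ holds precisely because $k\ge\tfrac{N-\alpha}{2}$), and rewrite $\partial_r\mathcal D_k$ in terms of $\mathcal D_{k+1}$: by the construction~\eqref{E:FUNDOP}, $\partial_r\mathcal D_k = \mathcal D_{k+1}$ when $k$ is odd, and $\partial_r\mathcal D_k = \mathcal D_{k+1}-\tfrac{2}{r}\mathcal D_k$ when $k$ is even. Since $1/r\le 4$ on $[1/4,1]$, this yields $|\partial_r\mathcal D_k H|^2\lesssim |\mathcal D_{k+1}H|^2+|\mathcal D_k H|^2$. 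The $\mathcal D_{k+1}$-contribution is controlled directly by the inductive hypothesis at level $k+1$.

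The residual $|\mathcal D_k H|^2$-contribution carries the heavier weight $w^{\alpha+2(k+1)-N}$, which does not immediately match any summand of $E^N$ unless $k\ge N-2$. I would handle this by iterating~\eqref{hardy1} finitely many times on this residual, each iteration raising the $w$-power by $2$ and spawning additional $\mathcal D_{k+1}$-terms with still heavier $w$-weights (hence bounded \emph{a fortiori} by the inductive hypothesis since $w\le 1$). After $\lceil(N-k)/2\rceil$ iterations the residual weight power reaches $\alpha+k$ and is absorbed by $\|\mathcal D_k H\|_{\alpha+k}^2\lesssim\tau^{11/3-\gamma}E^N$. The $D^N$-estimates in~(i) follow identically from the same spatial argument, only with the time exponents $\tau^{\gamma-8/3},\tau^{\gamma-14/3}$ in place of $\tau^{\gamma-5/3},\tau^{\gamma-11/3}$.

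Part~(ii) follows the same template with the additional spatially-varying weight $\Phi(\tau,r):=(\tau+\tfrac{2}{3}M_g(\tau,r))^{-(\gamma+1)}$. The base case $k=N$ matches the third summand of $E^N$ once one observes that $\tau^{-\gamma-1}|q_{-(\gamma+1)/2}\et|^2=(\tau+r^n)^{-(\gamma+1)}$, and that property~(w3), specifically~\eqref{E:FM}, yields $M_g=(\tau-1)r\partial_r(\log g)\approx(1-\tau)r^n$, from which $\tau+r^n\approx\tau+\tfrac{2}{3}M_g$ uniformly for $(\tau,r)\in(0,1]\times[0,1]$. For the inductive step, the key observation is that $\Phi$ is essentially constant in $r$ across $[1/4,1]$ uniformly in $\tau$: one checks that $\tau+\tfrac{2}{3}M_g(\tau,r)\in[\tau+c(1-\tau),\tau+C(1-\tau)]$ for $r\in[1/4,1]$ with $c,C>0$ independent of $\tau$, so $\Phi(\tau,r)/\Phi(\tau,r')$ is uniformly bounded. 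I would then pull $\Phi$ out of the Hardy inequality as an approximately-constant factor on each localized subinterval, and the induction proceeds exactly as in~(i). The principal technical obstacle is the residual $|\mathcal D_k H|^2$ term from each Hardy application, resolved by the iteration described above; the secondary subtlety in~(ii) is the uniform-in-$\tau$ control of the $\Phi$-weight, which is secured by the $\tau$-independent bounds on $M_g/((1-\tau)r^n)$.
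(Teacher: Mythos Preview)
Your approach is essentially the same as the paper's: split the integral into a piece near the origin (where $w$ is bounded below, so the weight can be raised freely) and a piece near the boundary, then iterate the localized Hardy inequality~\eqref{hardy1} on the latter. The paper carries this out directly rather than as a downward induction: for fixed $k$ it applies~\eqref{hardy1} successively $(N-k)$ times, and at the $j$-th step chooses $A=D_r$ or $A=\partial_r$ according to the parity of $k+j-1$ so that $A\,\mathcal D_{k+j-1}=\mathcal D_{k+j}$ \emph{exactly}, with no residual $\tfrac{1}{r}$-term. This lands directly on $\int w^{\alpha+N}|\mathcal D_N H|^2\,r^2\,dr$, which is a summand of $E^N$. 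Your fixed choice $A=\partial_r$ creates the residual $|\mathcal D_k H|^2$-term that you then iterate away separately; this works, but the parity-adapted choice of $A$ (which~\eqref{hardy1} explicitly permits) avoids the detour entirely. For part~(ii) the two arguments coincide: the paper simply observes that $\tau+\tfrac{2}{3}M_g$ is bounded above and below by $\tau$-independent positive constants on $[\tfrac12,1]$, so the weight $\Phi$ can be pulled out as a harmless constant before applying Hardy, exactly as you propose.
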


\begin{proof} We start with \eqref{L21}. Divide each integral appearing in the left-hand sides into two $\int = \int_0^\frac12 + \int_{\frac12}^1$. Then $w$ is strictly positive for $r\in[0,\frac12]$ and hence $w^{\alpha+2k-N}\lesssim w^{\alpha+k}$ for $r\in[0,\frac12]$, the first pieces are trivially bounded by the right-hand sides. Now for the second pieces, we apply \eqref{hardy1} repeatedly $(N-k)$ times starting with $a-2= \alpha+ 2k -N \geq 0$ to deduce the result. 

Likewise, for \eqref{L22}, we divide the integral into two pieces. Then the integral restricted to $[0,\frac12]$ is bounded by $E^N$ because $w^{\alpha+2k+1-N}\lesssim w^{\alpha+k+1}$ for $r\in[0,\frac12]$. For the integral from $\frac12$ to $1$, we first observe that $(\tau+\frac23M_g ) $ for $r\in[\frac12,1]$ is bounded from below and above by positive constants  and hence by applying  \eqref{hardy1} repeatedly $(N-k)$ times, we deduce the desired bound. 
\end{proof}

\begin{lemma}[$L^\infty$ embedding] Let $(H,\pa_\tau H)\in B^N\times B^N$ be given. Then we have
\begin{enumerate}
\item[(i)] For any $k\in \mathbb Z_{\geq 0}$ such that $k\leq \frac{N-\lfloor\alpha\rfloor -2}{2}$, 
\be\label{Linfty1}
\begin{split}
\tau^{\frac12(\gamma-\frac53)}\| \D_k \pa_\tau H \|_\infty  + \tau^{\frac12(\gamma-\frac{11}{3})}\| \D_k  H \|_\infty   \lesssim (E^N)^\frac12 \\
\tau^{\frac12(\gamma-\frac83)}\| \D_k \pa_\tau H \|_\infty  + \tau^{\frac12(\gamma-\frac{14}{3})}\| \D_k  H \|_\infty   \lesssim (D^N)^\frac12 
\end{split}
\ee
\item[(ii)] Similarily, for any $k\in \mathbb Z_{\geq 0}$ such that $k\leq \frac{N-\lfloor\alpha\rfloor -2}{2}$, 
\be\label{Linfty11}
\begin{split}
\tau^{\frac12(\gamma-\frac53)}\| \bar\D_k \pa_\tau  (\frac{H}{r}) \|_\infty  + \tau^{\frac12(\gamma-\frac{11}{3})}\| \bar\D_k  (\frac{H}{r}) \|_\infty   \lesssim (E^N)^\frac12   \\
\tau^{\frac12(\gamma-\frac83)}\| \bar\D_k \pa_\tau  (\frac{H}{r}) \|_\infty  + \tau^{\frac12(\gamma-\frac{14}{3})}\| \bar\D_k  (\frac{H}{r}) \|_\infty   \lesssim (D^N)^\frac12 
\end{split}
\ee
\end{enumerate}
\end{lemma}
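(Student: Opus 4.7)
The plan is to deduce both parts from the localized Sobolev–Hardy embedding developed earlier in the appendix, in particular \eqref{infty0}–\eqref{infty} together with \eqref{hardy0}–\eqref{hardy1} and the $\bar{\mathcal D}$-vs-$\mathcal D$ comparison \eqref{XtoXover}–\eqref{XtoXoverB}. The $\tau$-powers then come out by inspection, since both sides of each desired estimate scale in exactly the same way as the corresponding summands in $E^N$ and $D^N$.

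For part (i), I would apply \eqref{infty0} with $u=\mathcal D_kH$ and $u=\mathcal D_k\partial_\tau H$, choosing the parameter $m=N-k-1$. This yields a bound
\[
\|\mathcal D_kH\|_\infty^{2}\lesssim \sum_{i=1}^{2}\int_0^{3/4}|\mathcal D_i\mathcal D_kH|^2\,r^2dr+\sum_{i=0}^{m+1}\int_{1/4}^{1}w^{\alpha-\lfloor\alpha\rfloor+2m}|\mathcal D_i\mathcal D_kH|^2\,dr,
\]
and an analogous one with $\partial_\tau H$. Each $\mathcal D_i\mathcal D_k$ is a differential operator of order $i+k$; with the choice $m=N-k-1$ every term has order $\leq N$, so the near-origin contributions are controlled by $\|\mathcal D_{k+i}H\|_{\alpha+k+i}^{2}$ (noting that $w\approx 1$ near the origin and $r^2$ is bounded). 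The numerological point is that for $i\leq m+1=N-k$, the weight exponent $\alpha-\lfloor\alpha\rfloor+2m$ dominates $\alpha+(k+i)$ exactly when $k\leq \tfrac{N-\lfloor\alpha\rfloor-2}{2}$, which is precisely the assumption; this allows us to absorb the boundary-region integrals into the weighted norms in $E^N$ (resp.\ $D^N$). Multiplying by $\tau^{\gamma-\frac{11}{3}}$ or $\tau^{\gamma-\frac{5}{3}}$ as appropriate produces \eqref{Linfty1}, and the $D^N$-version follows identically with exponents shifted by $-1$.

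For part (ii), I would proceed analogously but working with $H/r$ in place of $H$. The embedding $\|u\|_\infty\lesssim\|u\|_{L^1}+\|\partial_ru\|_{L^1}$ applied to $u=\bar{\mathcal D}_k(H/r)$, followed by the Hardy inequality \eqref{hardy0} near the origin and \eqref{hardy1} near the boundary, reduces the task to bounding weighted $L^2$-norms of $\bar{\mathcal D}_j(H/r)$ for $j=k,k+1,k+2$. Away from $r=0$ these are trivially comparable to $\|\mathcal D_jH\|$-norms with lower-order corrections. Near the origin the crucial tool is identity \eqref{XtoXover} together with the accompanying estimate \eqref{XtoXoverB}, which gives
\[
\int_0^{3/4}|\bar{\mathcal D}_{j}(H/r)|^{2}r^{2}\chi^2\,dr+\int_0^{3/4}|r\partial_r\bar{\mathcal D}_{j}(H/r)|^{2}r^{2}\chi^2\,dr\lesssim \int_0^{3/4}|\mathcal D_{j+1}H|^{2}r^{2}\chi^2\,dr,
\]
converting all $\bar{\mathcal D}$-on-$H/r$ norms to $\mathcal D$-on-$H$ norms of one-higher order. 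Choosing again $m=N-k-1$ (now needing $k+m+2\leq N+1$, i.e. $k+i\leq N$ for the effective $\mathcal D_{k+i+1}$-terms, which is handled by the same hypothesis $k\leq\tfrac{N-\lfloor\alpha\rfloor-2}{2}$, possibly at the cost of one additional order which the margin of the inequality tolerates) allows the boundary-region integrals to be absorbed into $E^N$ or $D^N$.

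The main obstacle, and the step I expect to require the most care, is the bookkeeping in part (ii): ensuring that after applying \eqref{XtoXover} the resulting $\mathcal D_{j+1}H$-terms with $j\leq k+1$ really stay within the $N$-order energy and simultaneously fit under a weight no worse than $w^{\alpha+(j+1)}$ after the Hardy chain. Provided $k\leq\tfrac{N-\lfloor\alpha\rfloor-2}{2}$, the top order that appears is $k+2\leq\tfrac{N-\lfloor\alpha\rfloor+2}{2}\leq N$ (using $\lfloor\alpha\rfloor\geq 0$ and $N=\lfloor\alpha\rfloor+6$), so both constraints are met. All remaining arguments are formal manipulations mirroring the proof of Lemma~\ref{L:EMBEDDINGS}, so no new analytic input beyond what is already developed in Appendix~\ref{A:HSembedding} is needed.
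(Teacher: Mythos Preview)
Your approach is essentially the same as the paper's: both parts are deduced from the embedding \eqref{infty0} applied to $u=\mathcal D_kH$ (resp.\ $u=\bar{\mathcal D}_k(H/r)$), with the near-boundary integrals absorbed by the weighted norms in $E^N$, $D^N$ and the near-origin integrals in part (ii) handled via \eqref{XtoXover}--\eqref{XtoXoverB} (the paper packages the latter as Lemma~\ref{L:control}). Two small differences worth noting: the paper takes $m=\lfloor\alpha\rfloor+k+1$ rather than your $m=N-k-1$ (both choices satisfy the required weight and derivative-count inequalities under the hypothesis $k\le\tfrac{N-\lfloor\alpha\rfloor-2}{2}$), and the paper makes explicit the parity-dependent choice $B_i=\mathcal D_i$ for $k$ even, $B_i=\bar{\mathcal D}_i$ for $k$ odd, so that $B_i\mathcal D_k=\mathcal D_{i+k}$ holds literally rather than only up to lower-order $\mathcal P_{i+k}$-terms---a point you gloss over but which is easily repaired.
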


\begin{proof} We start with \eqref{Linfty1}. We present the detail for $\|\D_k \pa H\|_\infty$ and other cases follow in the same way. By using \eqref{infty0} with $u=\D_k H$ and $m=\lfloor \alpha\rfloor + k +1$, we see that 
\[
\|\D_k  H\|_\infty^2 \lesssim \sum_{i=1}^{2} \int_0^\frac{3}{4} |B_i \D_k  H |^2  r^2 d r  + \sum_{i=0}^{\lfloor \alpha\rfloor + k +2} \int_{\frac14}^1 w^{\alpha-\lfloor \alpha \rfloor + 2 (  \lfloor \alpha\rfloor + k +1) } |B_i \D_k  H |^2 d r
\]
where we take $B_i =\D_i$ for $k$ even and $B_i=\bar \D_i$ for $k$ odd. The first sum is trivially bounded by $ \tau^{-(\gamma-\frac{11}{3})} (E^N)$ or $ \tau^{-(\gamma-\frac{14}{3})} (D^N) $ since $1\lesssim w^{\alpha+i+k}$ for $r\in [0,\frac34]$. For the second sum, since 
$w^{\alpha-\lfloor \alpha \rfloor + 2 (  \lfloor \alpha\rfloor + k +1) } \lesssim w^{\alpha+ i +k}$ for $0\leq i \leq \lfloor \alpha\rfloor + k +2 $ and also the total number of derivatives appearing in the sum $ \lfloor \alpha\rfloor + k +2 + k\leq N$, it is bounded by $ \tau^{-(\gamma-\frac{11}{3})} (E^N) $ or $ \tau^{-(\gamma-\frac{14}{3})} (D^N) $. 

For \eqref{Linfty11}, we apply  \eqref{infty0} with $m=\lfloor \alpha\rfloor + k +1$ with $u=\bar\D_k (\frac{H}{r})$ and $m=\lfloor \alpha\rfloor + k +1$, 
\[
\|\bar\D_k (\frac{H}{r})\|_\infty^2 \lesssim \sum_{i=1}^{2} \int_0^\frac{3}{4} |B_i (\bar\D_k (\frac{H}{r}) ) |^2  r^2 d r  + \sum_{i=0}^{\lfloor \alpha\rfloor + k +2} \int_{\frac14}^1 w^{\alpha-\lfloor \alpha \rfloor + 2 (  \lfloor \alpha\rfloor + k +1) } |B_i (\bar\D_k (\frac{H}{r}) ) |^2 d r
\]
where we take $B_i =\bar\D_i$ for $k$ even and $B_i=\D_i$ for $k$ odd. Now for the first sum, note that 
$B_i \bar \D_k \frac1r \in {\mathcal P}_{i+k+1}$. Therefore, by Lemma \ref{L:control}, it is bounded by $\sum_{j=0}^{k+3} \int_0^\frac{3}{4} | \D_j H |^2  r^2 d r $ and thus by $ \tau^{-(\gamma-\frac{11}{3})} (E^N)$ or $ \tau^{-(\gamma-\frac{14}{3})} (D^N) $ since $k+3\leq N$. Now for the second sum, note that when $r\in [\frac14,1]$, in contrast to the first sum, $\frac{1}{r}$ does not act as a derivative, in other words, $|B_i (\bar\D_k (\frac{H}{r}) ) |^2$ is bounded by $\sum_{j=0}^{i+k} |\D_j H|^2$. Therefore, by the same reasoning as in the previous case, we obtain the result. 
\end{proof}

\begin{lemma}[$L^\infty$ $w-$weighted embedding]\label{L:LINFTYWEIGHTED} Let $(H,\pa_\tau H)\in B^N\times B^N$ be given. Then we have
\begin{itemize}
\item[(i)] 
For any $k\in \mathbb Z_{\geq 0}$ such that $k+4 \leq N$, 
\be\label{Linfty2}
\begin{split}
\tau^{\frac12(\gamma-\frac53)}\| w^k \D_{k+2} \pa_\tau H \|_\infty  + \tau^{\frac12(\gamma-\frac{11}{3})}\| w^k \D_{k+2}  H \|_\infty   \lesssim (E^N)^\frac12 \\
\tau^{\frac12(\gamma-\frac83)}\| w^k \D_{p+2} \pa_\tau H \|_\infty  + \tau^{\frac12(\gamma-\frac{14}{3})}\| w^k \D_{k+2}  H \|_\infty   \lesssim (D^N)^\frac12
\end{split}
\ee
for $k=N-3$, 
\be\label{Linfty3}
\begin{split}
\tau^{\frac12(\gamma-\frac53)}\|  r \,w^{N-3} \D_{N-1} \pa_\tau H \|_\infty  + \tau^{\frac12(\gamma-\frac{11}{3})}\| r\, w^{N-3} \D_{N-1}  H \|_\infty   \lesssim (E^N)^\frac12 \\
\tau^{\frac12(\gamma-\frac83)}\|  r \,w^{N-3} \D_{N-1} \pa_\tau H \|_\infty  + \tau^{\frac12(\gamma-\frac{14}{3})}\| r\, w^{N-3} \D_{N-1}  H \|_\infty   \lesssim (D^N)^\frac12
\end{split}
\ee
\item[(ii)]
Similarly, for any $k\in \mathbb Z_{\geq 0}$ such that $k+5 \leq N$, 
\be\label{Linfty21}
\begin{split}
\tau^{\frac12(\gamma-\frac53)}\| w^k \bar\D_{k+2} \pa_\tau (\frac{H}{r}) \|_\infty  + \tau^{\frac12(\gamma-\frac{11}{3})}\| w^k \bar\D_{k+2} (\frac{ H}{r}) \|_\infty   \lesssim (E^N)^\frac12 \\
\tau^{\frac12(\gamma-\frac83)}\| w^k \bar\D_{p+2} \pa_\tau (\frac{H}{r}) \|_\infty  + \tau^{\frac12(\gamma-\frac{14}{3})}\| w^k \bar\D_{k+2} (\frac{ H}{r}) \|_\infty   \lesssim (D^N)^\frac12 
\end{split}
\ee
for $k=N-4$, 
\be\label{Linfty31}
\begin{split}
\tau^{\frac12(\gamma-\frac53)}\|  r \,w^{N-4} \bar\D_{N-2} \pa_\tau (\frac{H}{r}) \|_\infty  + \tau^{\frac12(\gamma-\frac{11}{3})}\| r\, w^{N-4} \bar\D_{N-2} (\frac{ H }{r})\|_\infty   \lesssim (E^N)^\frac12 \\
\tau^{\frac12(\gamma-\frac83)}\|  r \,w^{N-4} \bar\D_{N-2} \pa_\tau (\frac{H}{r}) \|_\infty  + \tau^{\frac12(\gamma-\frac{14}{3})}\| r\, w^{N-4} \bar\D_{N-2} (\frac{ H }{r})\|_\infty   \lesssim (D^N)^\frac12
\end{split}
\ee
\end{itemize}
\end{lemma}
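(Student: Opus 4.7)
The strategy is to deduce each of \eqref{Linfty2}--\eqref{Linfty31} from the one-dimensional Sobolev inequality $\|v\|_\infty^2 \lesssim \|v\|_{L^1}^2 + \|\pa_r v\|_{L^1}^2$ applied to $v=w^k \mathcal D_{k+2} H$ (respectively $v=w^k \bar{\mathcal D}_{k+2}(H/r)$, $v=rw^{N-3}\mathcal D_{N-1}H$, etc.), combined with the already established weighted-$L^2$ bounds of Lemma~\ref{L:L2WEIGHTED} and repeated use of the localised Hardy inequality \eqref{hardy1}. As in the proof of~\eqref{infty0}--\eqref{infty}, I will split the domain using the cut-offs $\chi$ (supported in $[0,3/4]$) and $\psi$ (supported in $[1/4,1]$), and treat the two pieces separately.

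First I handle \eqref{Linfty2}. On the support of $\chi$ we have $w\sim 1$, so it suffices to bound $\|\chi \mathcal D_{k+2}H\|_\infty$; this follows from \eqref{infty0} applied with $u = \mathcal D_{k+2} H$ and $m=\lfloor\alpha\rfloor+k+1$, where the hypothesis $k+4\le N$ provides the two extra derivatives that \eqref{infty0} consumes (so the total order $\le \lfloor\alpha\rfloor+k+3 \le N+1$, still controlled by $E^N$ via~\eqref{L21}, after noting that when the highest order is $\D_{k+3}$ we remain in the $L^2$-regime $k+3\le N-1$). On the support of $\psi$, I apply Sobolev to $v=w^k\mathcal D_{k+2}H$; the derivative $\pa_r v$ produces $w^k\pa_r\mathcal D_{k+2}H$ together with a lower-order boundary contribution $kw^{k-1}w'\mathcal D_{k+2}H$, and both terms are estimated by Cauchy--Schwarz followed by iterated application of \eqref{hardy1}: starting from the weight $w^{2k}$ on $|\mathcal D_{k+2}H|^2$, each Hardy step trades a factor $w^{-2}$ for one extra $\pa_r$-derivative, and after $j=\lfloor\alpha\rfloor+2-k$ iterations one arrives at the weight $w^{\alpha+2(k+2+j)-N}$ on $|\mathcal D_{k+2+j}H|^2$, which by \eqref{L21} is controlled by $\tau^{11/3-\gamma}E^N$. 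The hypothesis $k+4\le N$ is exactly what guarantees the total derivative count $k+2+j+1\le N$ (the ``$+1$'' absorbs the Sobolev step). The corresponding bound for $H_\tau$ is identical, with the $D^N$-version of \eqref{L21} and the powers $\tau^{8/3-\gamma}$.

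The refinement \eqref{Linfty3} for the top index $k=N-3$ is obtained in the same way, with the crucial observation that the additional prefactor $r$ means we apply Sobolev to $v = rw^{N-3}\mathcal D_{N-1}H$; now $\pa_r v$ produces $w^{N-3}\mathcal D_{N-1}H + rw^{N-3}\pa_r\mathcal D_{N-1}H$ plus the usual $w'$-term, and in the worst summand $rw^{N-3}\pa_r\mathcal D_{N-1}H$ the presence of $r$ allows us to use just one application of Hardy (rather than a full iteration) before landing on a weighted $L^2$-norm with index $\alpha+N-N= \alpha$ on $\mathcal D_N H$, controlled by $E^N$. The proofs of \eqref{Linfty21} and \eqref{Linfty31} proceed along identical lines but with $\bar{\mathcal D}_{k+2}(H/r)$ in place of $\mathcal D_{k+2}H$: the $L^2$ bounds near the origin are converted from those on $\mathcal D_jH$ via Lemma~\ref{L:control} (which consumes one extra derivative, accounting for the strengthened condition $k+5\le N$), and near the boundary the factor $1/r$ is harmless since $r\gtrsim 1/4$.

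The main delicate point is the careful bookkeeping of derivatives in the top-order cases, where the budget is tight: specifically showing that the Sobolev step, the Hardy iteration, and the passage from $\bar{\mathcal D}$ to $\mathcal D$ via Lemma~\ref{L:control} collectively use at most $N$ derivatives. This is why \eqref{Linfty3} and \eqref{Linfty31} require the prefactor $r$ (which saves one Hardy step) and why the admissible range in \eqref{Linfty21} is shortened by one compared to \eqref{Linfty2}. All other computations are routine consequences of Lemmas~\ref{L:LOCHARDY}--\ref{L:L2WEIGHTED}.
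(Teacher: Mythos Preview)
Your approach is essentially correct and follows the same template as the paper's. The paper is slightly more streamlined: it applies \eqref{infty0} \emph{once} directly to $u=w^k\mathcal D_{k+2}H$ with $m=3+\lfloor\alpha\rfloor-k$, so that both the interior and boundary pieces come out simultaneously; the boundary sum then reduces, after distributing $B_i$ over the product $w^k\cdot\mathcal D_{k+2}H$, to terms of the form $\int_{1/4}^1 w^{\alpha+\lfloor\alpha\rfloor+6-2j}|\mathcal D_{k+2+i-j}H|^2\,dr$, and one checks $\alpha+\lfloor\alpha\rfloor+6-2j\ge \alpha+(k+2+i-j)$ together with $k+2+i\le N$. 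For \eqref{Linfty3} and \eqref{Linfty31} the paper simply invokes \eqref{inftyru} in place of \eqref{infty0}. Your hand-rolled Sobolev-plus-Hardy argument on the $\psi$-region amounts to the same thing, but two points deserve tightening. First, your stated choice $m=\lfloor\alpha\rfloor+k+1$ in \eqref{infty0} for the $\chi$-region is not the operative parameter (the near-origin sum in \eqref{infty0} is independent of $m$); if you actually used that $m$ for the boundary sum, the derivative count would overshoot $N$. Second, the saving effected by the prefactor $r$ in \eqref{Linfty3} occurs near the \emph{origin} (it replaces \eqref{infty0} by \eqref{inftyru}, whose first sum runs only over $i=0,1$), not near the boundary where $r\sim 1$; your phrasing ``the presence of $r$ allows us to use just one application of Hardy'' misidentifies the mechanism.
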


\begin{proof} The proof follows in the same spirit as in the previous lemma. For $w^k \D_{k+2} H$ in \eqref{Linfty2}, we first apply \eqref{infty0} with $u = w^k \D_{k+2} H$ and $m= 3+\lfloor \alpha \rfloor -k$: 
\[
\|w^k\D_{k+2}  H\|_\infty^2 \lesssim \sum_{i=1}^{2} \int_0^\frac{3}{4} |B_i ( w^k\D_{k+2}  H )  |^2  r^2 d r  + \sum_{i=0}^{4+\lfloor \alpha \rfloor -k} \int_{\frac14}^1 w^{\alpha-\lfloor \alpha \rfloor + 2 (  3+\lfloor \alpha \rfloor -k) } |B_i (w^k\D_{k+2}  H) |^2 d r
\]
where we take $B_i =\D_i$ for $k$ even and $B_i=\bar\D_i$ for $k$ odd. Then it is easy to see that the first sum is bounded by our energy since $k+4\leq N$. For the second sum, by using the product rule and smoothness of $w$, we first note that 
$|B_i \D_k (w^k\D_{k+2}  H) |^2  \lesssim \sum_{j=0}^{i} w^{2k-2j} | W_j H  |^2 $ where $W_j\in \mathcal P_{k+2+i-j}$.  Therefore, by further using Lemma \ref{L:control}, the second sum is bounded by 
\[
\sum_{i=0}^{4+\lfloor \alpha \rfloor -k} \sum_{j=0}^i  \int_{\frac14}^1 w^{\alpha + \lfloor \alpha \rfloor + 6 -2j } | \mathcal D_{k+2+i-j} H |^2 d r
\]
But note that $w^{\alpha + \lfloor \alpha \rfloor + 6 -2j }= w^{\alpha +k + 2+i -j + (4+ \lfloor \alpha \rfloor -k -j) } \lesssim w^{\alpha +k + 2+i -j }$ because $j\leq i \leq 4+ \lfloor \alpha \rfloor -k$. Furthermore, the total number of derivatives appearing is $k+2+ i \leq 6+\lfloor \alpha \rfloor =N $. Hence we obtain the desired bound. Other cases in \eqref{Linfty2} and \eqref{Linfty21} follow in the same way. \eqref{Linfty3} and \eqref{Linfty31} can be obtained similarly by applying \eqref{inftyru} instead of \eqref{infty0}. 
\end{proof}

\section{Local-in-time well-posedness}\label{A:LOCAL}

Let $\kappa>0$ be a sufficient small fixed number. In this section, we discuss the existence of $H$ solving \eqref{E:H1} in $[\kappa, T]$ with $S_\kappa^N(\tau)<\infty$ for all $\tau\in [\kappa, T]$ for some time $\kappa<T\leq 1$.

\begin{proposition}\label{T:LOCAL}

Let $1<\gamma<\frac43$, assume that the physical vacuum condition~\eqref{E:PHYSICALVACUUM} is satisfied,  and let $N=\lfloor \alpha\rfloor +6$. If $(H_0^\kappa,H_1^\kappa)$ satisfy $S_\kappa^N(H_0^\kappa ,H_1^\kappa )\le\sigma^2$, there exists a time $T=T(\sigma)>\kappa$ and a unique solution $\tau\to(H(\tau,\cdot))$ of the initial value problem~\eqref{E:H1} such that the map $[\kappa,T]\ni \tau\mapsto S_\kappa^N(\tau) $ is continuous and the solution satisfies the bound
\[
S_\kappa^N(\tau) \le  \tilde C
\]
where the constant $\tilde C$ depends only on $\ve$ and $\sigma$. 
\end{proposition}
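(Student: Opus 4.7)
Since $\kappa>0$ is strictly positive, the coefficient $\frac{d^2}{g^{00}}\tau^{-2}$, the effective wave speed $\ve c[\phi]/g^{00}$, and the drift $g^{01}/g^{00}$ all remain smooth and bounded uniformly on $[\kappa,T]$ for $T$ close to $\kappa$, so the only genuine degeneracy is the vacuum degeneracy at $r=1$ encoded by the weights $w^{\alpha+j}$. Equation~\eqref{E:H1} is therefore a quasilinear wave equation of the type studied in the physical-vacuum free-boundary literature (Jang-Masmoudi~\cite{JaMa2015}, Coutand-Shkoller~\cite{CoSh2012}, Jang~\cite{Jang2014}, Luo-Xin-Zeng~\cite{LXZ}, Had\v zi\'c-Jang~\cite{HaJa2016-1,HaJa2017}), and the plan is to transport their framework to our setting with only minor modifications to accommodate the smooth in $\tau$ coefficients.

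The first step is to introduce a duality-compatible linearization: given an iterate $\tilde H$ with $S_\kappa^N(\tilde H,\tilde H_\tau)$ finite, freeze the coefficients $g^{00}[\tilde\phi]$, $g^{01}[\tilde\phi]$, $c[\tilde\phi]$, $\mathscr N_0[\tilde H]$, $\mathscr L_{\text{low}}$, and $\mathscr N[\tilde H]$ at $\tilde\phi = \phia + \tau^m \tilde H/r$, and solve the resulting linear problem for $H$. For this linear problem I would implement a standard duality/Galerkin scheme in the weighted Hilbert space $B^N$: either truncate the spatial domain away from $r=1$ using a degenerate elliptic regularization (replacing $L_\alpha$ by $L_\alpha+\mu L_\alpha^2$ for small $\mu>0$) and then pass $\mu\to 0$, or build approximate solutions through a time-discretized implicit Euler scheme as in~\cite{CoSh2012}. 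The physical vacuum condition~\eqref{E:PHYSICALVACUUM} is exactly what makes the bilinear form $(L_\alpha\cdot,\cdot)_\alpha$ coercive in the $w$-weighted cascade of spaces, and this coercivity, combined with the Hardy inequalities of Appendix~\ref{A:HSembedding}, closes energy estimates for the linear problem at every level $j=0,\dots,N$.

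The second step is to perform the energy estimates for the linearized problem in exactly the same framework as Section~\ref{S:ENERGYESTIMATES}, commuting with the operators $\mathcal D_j$ and pairing with $\mathcal D_j H_\tau$ in $(\cdot,\cdot)_{\alpha+j}$. Because $\tau\geq \kappa>0$, all quantities that were singular at $\tau=0$ in Section~\ref{S:ENERGYESTIMATES} (inverse powers of $\tau$, and $c[\phi]$) are now bounded by constants $C(\kappa)$ which are allowed to depend badly on $\kappa$. Consequently the analogue of~\eqref{E:ENERGYBOUND} simplifies to a Gr\"onwall inequality of the form
\begin{equation*}
S_\kappa^N(H,H_\tau)(\tau)\le C(\kappa)\Bigl(S_\kappa^N(H_0^\kappa,H_1^\kappa)+\ve^{2M+1}\Bigr)+C(\kappa)\int_\kappa^\tau \bigl(1+S_\kappa^N(\tilde H,\tilde H_\tau)(\tau')\bigr)\,d\tau',
\end{equation*}
valid on $[\kappa,T]$ for $T-\kappa$ small. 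A contraction/bootstrap argument on the map $\tilde H\mapsto H$ in the ball $\{S_\kappa^N\le 2\tilde C\}$ of an appropriate lower-order norm (to avoid loss of derivatives, I would contract in $S_\kappa^{N-1}$ and show boundedness in $S_\kappa^N$, as is standard in this setting) produces a unique fixed point, which is the desired solution.

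The main obstacle is, as always in the physical vacuum problem, the interplay between the loss of one spatial derivative in the nonlinear coefficients $c[\phi]$, $g^{00}[\phi]$, and the weighted elliptic operator $L_\alpha$, which must be compensated by the gain built into the $w$-weighted cascade; this is precisely the algebraic structure exploited in Section~\ref{SS:MH} (cancellation in $\mathscr M[H]$) and it carries over verbatim because the cancellation is purely algebraic and independent of $\kappa$. Continuity in $\tau$ of $S_\kappa^N$ then follows from a standard weak-strong continuity argument: weak continuity from the a priori bound, norm continuity from the energy identity~\eqref{E:ENERGYIDENTITY} integrated over small time intervals, with the right-hand side controlled by the estimates of Propositions~\ref{P:LLOW}, \ref{P:COMMESTIMATES}, \ref{P:NONLINEARESTIMATE}, \ref{P:SOURCETERMESTIMATE} and Lemma~\ref{L:MH}. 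The local existence time $T(\sigma)-\kappa$ and the constant $\tilde C$ depend on $\kappa$ and $\sigma$ but not on the small parameter $\ve$ beyond what is stipulated.
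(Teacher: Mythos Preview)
Your proposal is sound in spirit and would work, but the paper's sketch differs in one technical detail worth noting. You propose a direct Picard iteration on $H$: freeze all coefficients at a previous iterate $\tilde H$, solve the resulting linear problem for $H$ in the $B^N$ framework, and close via contraction in $S_\kappa^{N-1}$ with boundedness in $S_\kappa^N$. The paper instead iterates on $\mathfrak H := D_r H$: at each step it solves the \emph{once-commuted} linear equation (the analogue of~\eqref{E:DiHP} with $i=1$) for $\mathfrak H_{j+1}$, and then recovers $H_{j+1}$ by radial integration, $H_{j+1}(r)=r^{-2}\int_0^r \mathfrak H_{j+1}(r')\,(r')^2\,dr'$. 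The advantage of the paper's choice is that the degenerate elliptic part of the commuted equation is exactly $\mathcal L_{1+\alpha}\mathfrak H_{j+1}$, which is precisely the operator for which the duality argument of Jang--Masmoudi~\cite{JaMa2009,JaMa2015} is written; by contrast, the operator acting directly on $H$ in~\eqref{E:H1} involves $\frac{1}{r^2}\pa_r[r^2 H]$ and would require a small adaptation of that duality machinery near $r=0$. Your route buys a more standard quasilinear-wave contraction structure, at the cost of having to verify that the JaMa linear theory applies to the uncommuted operator; the paper's route sidesteps this by working one derivative up and integrating back. Both are acceptable and rely on the same physical-vacuum coercivity and the same a priori estimates from Section~\ref{S:ENERGYESTIMATES}.
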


\noindent
{\em Sketch of the proof of Proposition~\ref{T:LOCAL}.}
The proof of Proposition~\ref{T:LOCAL} follows by 
the well-posedness proof for the compressible Euler system of Jang \& Masmoudi~\cite{JaMa2009,JaMa2015}. The argument of \cite{JaMa2009,JaMa2015} will render the existence theory based on an appropriate approximate scheme and  a priori bounds. To apply the result of \cite{JaMa2009,JaMa2015}, we will design the approximate scheme for $\mathfrak H:= D_r H$ and $H$ from $\mathfrak H$. We construct $j^{\text{th}}$ approximations $(\mathfrak H_j,\pa_\tau \mathfrak H_j)$ and $(H_{j}, \pa_\tau H_{j})$ as follows. The first approximation for $j=1$, we use the initial data: let $(\mathfrak H_{1}, \pa_\tau \mathfrak H_1 )=(D_rH_0^\kappa, D_r H_1^\kappa)$ and $(H_1, \pa_\tau H_1) = (H_0^\kappa, H_1^\kappa)$ where $S_\kappa^N(H_0^\kappa ,H_1^\kappa )\le\sigma^2$. Inductively we obtain the approximate solutions $(j+1)^{\text{th}}$ approximations as follows: For $j\geq 1$, let $(\mathfrak H_{j+1}, \pa_\tau \mathfrak H_{j+1})$ be the solution to the initial value problem for the following linear PDE: 
\begin{align}
&\pa_\tau^2\mathfrak H_{j+1} + 2\frac{g^{01}_j}{g^{00}_j}\pa_r  \pa_\tau  \mathfrak H_{j+1} +\frac{2m}{g^{00}_j}  \frac{ \partial_\tau \mathfrak H_{j+1}}{\tau} +\frac{d^2}{g^{00}_j}\frac{\mathfrak H_{j+1}}{\tau^2} 
+ \ve\gamma  \frac{c[\phi_j]}{g^{00}_j}  \mathcal L_{1+\alpha} \mathfrak H_{j+1} \notag \\
&  = \mathcal D_1 \left( \frac1{g^{00}_j}\left(\mathscr S(\phia) -\ve \mathscr L_{\text{low}} H_j + \mathscr N [H_j]\right)\right) + \mathcal C_1[H_j] +  \mathscr M[H_j], \label{E:approx}\\
& (\mathfrak H_{j+1}, \pa_\tau \mathfrak H_{j+1}) |_{\tau=\kappa}= (D_rH_0^\kappa, D_r H_1^\kappa). \notag 
\end{align}
Note that the schemes mimic the equation \eqref{E:DiHP} for $i=1$. The subscript $j$ implies that the coefficients appearing in the expression are evaluated by using $H_j$, $\pa_\tau H_j$. With the bounds $S^N_\kappa (H_{j}, \pa_\tau H_j)<\infty$ depending only on $\ve$ and $\sigma$, the existence of $(\mathfrak H_{j+1}, \pa_\tau \mathfrak H_{j+1})$ follows from the duality argument  in \cite{JaMa2009,JaMa2015}. By defining $H_{j+1}$ by
\[
H_{j+1} = \frac{1}{r^2}\int_0^r \mathfrak H_{j+1}  (r')^2 dr'
\]
and based on a priori estimates, we also deduce  $S^N_\kappa (H_{j+1}, \pa_\tau H_{j+1})<\infty$ whose bound depends only on $\ve$ and $\sigma$. As $j\rightarrow\infty$, after extracting a subsequence, we obtain the limit   $(H,\pa_\tau H)$ of $(H_{j}, \pa_\tau H_{j})$ that solves \eqref{E:H1} in $[\kappa, T]$ for some $T=T(\sigma)>\kappa$ with $S^N_\kappa (H, \pa_\tau H)<\infty$.

\end{document}